\definecolor{gr}{rgb}   {0.,   0.69,   0.23 }
\definecolor{bl}{rgb}   {0.,   0.5,   1. }
\definecolor{mg}{rgb}   {0.85,  0.,    0.85}
\definecolor{yl}{rgb}   {0.8,  0.7,   0.}
\definecolor{or}{rgb}  {0.7,0.2,0.2}
\newtheorem{theorem}{Theorem}[section]
\newtheorem{lemma}[theorem]{Lemma}
\newtheorem{proposition}[theorem]{Proposition}
\newtheorem{remark}[theorem]{Remark}
\newtheorem{definition}[theorem]{Definition}
\DeclareMathOperator{\sgn}{sgn}
\newcommand{\I}{\hspace{0.5mm}\text{I}\hspace{0.5mm}}
\newcommand{\II}{\text{I \hspace{-2.8mm} I} }
\newcommand{\III}{\text{I \hspace{-2.8mm} I \hspace{-2.8mm} I} }
\newcommand{\IV}{\text{I \hspace{-2.8mm} V}}
\newcommand{\V}{\text{V}}
\newcommand{\VI}{\text{V \hspace{-2.8mm} I}}
\newcommand{\noi}{\noindent}
\newcommand{\Z}{\mathbb{Z}}
\newcommand{\R}{\mathbb{R}}
\newcommand{\C}{\mathbb{C}}
\newcommand{\T}{\mathbb{T}}
\newcommand{\Gdl}{\mathcal{G}_{\dl} }
\newcommand{\Tdl}{\mathcal{T}_{\dl} }
\newcommand{\Qdl}{\mathcal{Q}_{\dl}}
\newcommand{\Qd}{\wt{\mathcal{Q}}_{\dl}}
\newcommand{\Gd}{\wt{\mathcal{G}}_\dl}
\newcommand{\Ldl}{\mathfrak{L}_\dl}
\newcommand{\Kdl}{\mathfrak{K}_\dl}
\newcommand{\TT}{\mathcal{T}}
\newcommand{\too}{\longrightarrow}
\newcommand{\BO}{\text{\rm BO} }
\newcommand{\KDV}{\text{\rm KdV} }
\newcommand{\Ha}{\mathbb{H}_a}
\DeclareMathOperator{\Law}{Law}
\DeclareMathOperator{\ord}{rank}
\let\Re=\undefined\DeclareMathOperator*{\Re}{Re}
\let\Im=\undefined\DeclareMathOperator*{\Im}{Im}
\let\P= \undefined
\newcommand{\P}{\mathbf{P}}
\newcommand{\Pc}{\mathcal{P}}
\newcommand{\E}{\mathbb{E}}
\newcommand{\EE}{\mathcal{E}}
\renewcommand{\L}{\mathcal{L}}
\newcommand{\F}{\mathcal{F}}
\newcommand{\pf}{\mathfrak{p}}
\newcommand{\hf}{\mathfrak{h}}
\newcommand{\al}{\alpha}
\newcommand{\be}{\beta}
\newcommand{\dl}{\delta}
\newcommand{\eps}{\varepsilon}
\newcommand{\kk}{\kappa}
\newcommand{\g}{\gamma}
\newcommand{\ld}{\lambda}
\newcommand{\s}{\sigma}
\newcommand{\ft}{\widehat}
\newcommand{\wt}{\widetilde}
\newcommand{\cj}{\overline}
\newcommand{\dx}{\partial_x}
\newcommand{\dt}{\partial_t}
\newcommand{\dd}{\partial}
\newcommand{\ta}{\theta}
\renewcommand{\l}{\ell}
\renewcommand{\o}{\omega}
\renewcommand{\O}{\Omega}
\newcommand{\les}{\lesssim}
\newcommand{\ges}{\gtrsim}
\newcommand{\jb}[1]
{\langle #1 \rangle}
\renewcommand{\b}{\beta}
\newcommand{\ind}{\mathbf 1}
\tikzset{
	dot/.style={circle,fill=black,draw=black,inner sep=0pt,minimum size=0.5mm},
	>=stealth,
	}
\tikzset{
	dot2/.style={circle,fill=black,draw=black,inner sep=0pt,minimum size=0.2mm},
	>=stealth,
	}
\tikzset{
	ddot/.style={circle,fill=white,draw=black,inner sep=0pt,minimum size=0.8mm},
	>=stealth,
	}
\tikzset{decision/.style={ 
        draw,
        diamond,
        aspect=1.5
    }}
\tikzset{dia2/.style
={diamond,fill=white,draw=black,inner sep=0pt,minimum size=1mm},
	>=stealth,
	}
\tikzset{dia/.style
={star,fill=black,draw=black,inner sep=0pt,minimum size=1mm},
	>=stealth,
	}
\tikzset{dia/.style
={diamond,fill=black,draw=black,inner sep=0pt,minimum size=1.3mm},
	>=stealth,
	}
\def\DeclareSymbol#1#2#3{\xsavebox{#1}{\tikz[baseline=#2,scale=0.15]{#3}}}
\def\<#1>{\xusebox{#1}}
\newsavebox{\peA}
\newsavebox{\pneA}
\newsavebox{\plA}
\newsavebox{\pgA}
\newsavebox{\pleA}
\newsavebox{\pgeA}
\newsavebox{\pezA}
\savebox{\peA}{\tikz \draw (0,0) node[shape=circle,draw,inner sep=0pt,minimum size=8.5pt] {\scriptsize  $=$};}
\savebox{\pneA}{\tikz \draw (0,0) node[shape=circle,draw,inner sep=0pt,minimum size=8.5pt] {\footnotesize $\neq$};}
\savebox{\plA}{\tikz \draw (0,0) node[shape=circle,draw,inner sep=0pt,minimum size=8.5pt] {\scriptsize $<$};}
\savebox{\pgA}{\tikz \draw (0,0) node[shape=circle,draw,inner sep=0pt,minimum size=8.5pt] {\scriptsize $>$};}
\savebox{\pleA}{\tikz \draw (0,0) node[shape=circle,draw,inner sep=0pt,minimum size=8.5pt] {\scriptsize $\leqslant$};}
\savebox{\pgeA}{\tikz \draw (0,0) node[shape=circle,draw,inner sep=0pt,minimum size=8.5pt] {\scriptsize $\geqslant$};}
\savebox{\pezA}{\tikz \draw (0,0) node[shape=circle,draw,
fill=white, 
inner sep=0pt,minimum size=8.5pt]{} ;}
\def \peB{\mathchoice
{\scalebox{.7}{{\usebox{\peA}}}}
{\scalebox{.7}{{\usebox{\peA}}}}
{\scalebox{.7}{{\usebox{\peA}}}}
{}
}
\def \pezB{\mathchoice
{\scalebox{.7}{{\usebox{\pezA}}}}
{\scalebox{.7}{{\usebox{\pezA}}}}
{\scalebox{.7}{{\usebox{\pezA}}}}
{}
}
\newcommand{\pe}{\mathbin{{\peB}}}
\newcommand{\pez}{\mathbin{{\pezB}}}
\newcommand{\PP}{\mathbb{P}}
\newcommand{\M}{\mathcal{M}}
\newcommand{\A}{\mathcal{A}}
\def\e{\varepsilon}
\newcommand{\N}{\mathbb{N}}
\newcommand{\dr}{\theta}
\newcommand{\Dr}{\Theta}
\newtheorem*{ackno}{Acknowledgements}
\renewcommand{\H}{\mathcal{H}}
\newcommand{\dtv}{d_{\rm TV}}
\newcommand{\dkl}{d_{\rm KL}}
\def\sgn{\textup{sgn}}
\newcommand{\GGM}{generalized Gibbs measure}
\newcommand{\GGMs}{generalized Gibbs measures}
\newcommand{\Id}{\textup{Id}}
\numberwithin{equation}{section}
\numberwithin{theorem}{section}
\begin{document}
\baselineskip = 14pt

\title[Statistical equilibria for ILW]
{Deep-water  and shallow-water limits of statistical equilibria for the intermediate long wave equation}

\author[A.~Chapouto,
G.~Li, and T.~Oh]
{Andreia Chapouto, Guopeng Li, and Tadahiro Oh}

\address{Andreia Chapouto, Department of Mathematics, University of California, Los Angeles, CA 90095, USA
and
School of Mathematics\\
The University of Edinburgh\\
and The Maxwell Institute for the Mathematical Sciences\\
James Clerk Maxwell Building\\
The King's Buildings\\
Peter Guthrie Tait Road\\
Edinburgh\\
EH9 3FD\\
United Kingdom, and
Laboratoire de math\'ematiques de Versailles, UVSQ, Universit\'e Paris-Saclay, CNRS, 45 avenue des 
\'Etats-Unis, 78035 Versailles Cedex, France}

\email{a.chapouto@ed.ac.uk}

\address{
Guopeng Li, School of Mathematics\\
The University of Edinburgh\\
and The Maxwell Institute for the Mathematical Sciences\\
James Clerk Maxwell Building\\
The King's Buildings\\
Peter Guthrie Tait Road\\
Edinburgh\\
EH9 3FD\\
United Kingdom,
and
Department of Mathematics and Statistics\\
Beijing Institute of Technology\\
Beijing\\ China}

\email{guopeng.li@ed.ac.uk}

\address{
Tadahiro Oh, School of Mathematics\\
The University of Edinburgh\\
and The Maxwell Institute for the Mathematical Sciences\\
James Clerk Maxwell Building\\
The King's Buildings\\
Peter Guthrie Tait Road\\
Edinburgh\\
EH9 3FD\\
 United Kingdom}

\email{hiro.oh@ed.ac.uk}

\subjclass[2020]{35Q35, 35Q53, 37K10, 60H30}

\keywords{intermediate long wave  equation; invariant measure;
\GGM;
complete integrability;
B\"acklund transform;
Benjamin-Ono equation;  Korteweg-de Vries equation}

\begin{abstract}

We study the construction of invariant measures
associated with higher order conservation laws
of  the intermediate long wave equation (ILW)
and their convergence properties in the deep-water and shallow-water limits.
By exploiting
its complete integrability,
we first carry out detailed analysis on the construction of appropriate conservation laws of ILW
at the $H^\frac k2$-level for each  $k \in \N$,
and
 establish their convergence to those
of the Benjamin-Ono equation (BO)
in the deep-water limit
and
to those of the Korteweg-de Vries equation (KdV)
in the  shallow-water limit.
In particular,  in the shallow-water limit,
we prove a rather striking
2-to-1 collapse of the conservation laws
 of ILW to those of KdV.
 Such a 2-to-1 collapse is novel in the literature
and, 
to our knowledge, this is the first construction of a complete family of 
shallow-water conservation laws with non-trivial shallow-water limits.
We then construct an infinite sequence of 
\GGMs~
for ILW
associated with these conservation laws
and prove their convergence to
the corresponding (invariant) \GGMs~ for BO and KdV
in the respective limits.
Finally, for $k \ge 3$,
we establish invariance of these measures under ILW dynamics,
and also convergence in the respective limits
 of the ILW dynamics at each equilibrium state
 to the corresponding invariant dynamics for BO and KdV
 constructed by Deng, Tzvetkov, and Visciglia (2010-2015) and Zhidkov (1996), respectively.
In particular, in the shallow-water limit,
we establish a 
2-to-1 collapse at the level of
the \GGMs~ as well as  the invariant ILW dynamics.
As a byproduct of our analysis,
we also prove  invariance of the \GGM~  associated
with the $H^2$-conservation law of KdV,
which seems to be missing in the literature.

\end{abstract}

%
\maketitle

\tableofcontents

\newpage

\section{Introduction}
\label{SEC:1}

\subsection{Intermediate long wave equation}
\label{SUBSEC:1.1}

We consider the intermediate long wave equation (ILW) on the one-dimensional torus $\T=\R/(2\pi\Z)$:
\begin{equation}\label{ILW}
\dt u - \Gdl\dx^2 u =\dx(u^2),
\qquad (t,x) \in \R\times\T.
\end{equation}

\noi
This equation was introduced in \cite{Joseph, Kubota}
as a model describing the propagation of an internal wave at the interface of a stratified fluid of
finite depth $\dl>0$,
and the unknown
 $u$ denotes the amplitude of the internal wave at the interface.
See also Remark 1.1 in \cite{LOZ}.
The operator~$\Gdl$ characterizes the phase speed and it is defined
as the following Fourier multiplier operator:
\begin{align}
\ft{\Gdl f}(n) =
\ft{\Gdl}(n) \ft f(n)
:=
-i\bigg(\coth(\dl n)  - \frac{1}{\dl n}\bigg) \ft{f}(n), \quad n\in\Z, 
\label{GG1}
\end{align}

\noi
 with the convention that $\coth(\dl n)  - \frac{1}{\dl n} =0 $ for $n=0$.

 The ILW equation \eqref{ILW}
 is an important physical model with further applications
 in the study of 
 atmospheric sciences,  oceanography, and quantum physics
 \cite{CMH78,  OB80, KB81, SH82,  BLL22}.
Furthermore,
it serves as an ``intermediate'' equation of finite depth $0 < \dl < \infty$,
providing a natural connection
between the Benjamin-Ono equation (BO), modeling fluid of infinite depth  ($\dl = \infty$),
and the  Korteweg-de Vries equation (KdV), modeling shallow water ($\dl = 0$).
From the analytical point of view,
 \eqref{ILW} is also of great  interest due to its rich structure;
 it is a dispersive equation, admitting soliton solutions.
  Moreover,  it is completely integrable with an infinite number of conservation laws \cite{KAS82, KSA, S89}.
See \cite{Saut2019, KleinSaut21} for an overview of the subject and the references therein.

Our main
goal in this paper is to advance our understanding
of convergence issues for ILW
from both the deterministic and statistical viewpoints.
In the deep-water limit ($\dl\to\infty$),
an elementary computation shows  that
 the operator $\Gdl$ in \eqref{GG1} converges to  the Hilbert transform $\H$
 with symbol\footnote{\label{FT:1}In this paper, we work with mean-zero functions.
In order to avoid ambiguity when $\H$ acts on a product
(which can be checked to have mean zero in our application),
we define $\H$ to be the Fourier multiplier operator
with multiplier
$-i \sgn(n)\cdot \ind_{n \ne 0}$.
Namely, we have $\H = \H \P_{\ne 0}$,
 where $\P_{\ne 0}$ denotes the projection onto non-zero frequencies.
 } $-i \sgn(n)$ (see Lemma~\ref{LEM:K1})
 and thus
ILW  \eqref{ILW} formally converges to~BO:
\begin{align}\label{BO}
\dt u - \H\dx^2 u = \dx(u^2).
\end{align}

\noi
Next, we consider
 the shallow-water limit ($\dl\to0$).
Recall that  ILW  \eqref{ILW} describes the motion
of the fluid interface in a stratified fluid of finite depth $\dl > 0$,
where $u$ denotes the amplitude of the internal wave at the interface.
As $\dl \to 0$, the entire fluid depth tends to $0$
and, in particular,
the amplitude $u$ of the internal wave at the interface is $O(\dl)$,  which also  tends to~$0$.
Hence, in order to observe any meaningful limiting behavior,
we need to magnify the fluid amplitude by a factor $\sim \frac 1\dl$.
This motivates us to consider the following scaling~\cite{ABFS}:
\begin{equation}
v(t,x)  = \tfrac1\dl u(\tfrac1\dl t, x),
\label{scale1}
\end{equation}

\noi
which leads to the following scaled ILW:
\begin{equation}
\dt v - \Gd \dx^2 v = \dx(v^2),
\quad
\text{where }\ \Gd := \frac1\dl \Gdl.
\label{sILW}
\end{equation}

\noi
By studying the limiting behavior
of  $\Gd\dx$ as $\dl \to 0$ (see Lemma~\ref{LEM:L1}), we see that
 the scaled ILW~\eqref{sILW} formally converges to
 the following KdV equation:
\begin{align}\label{kdv}
\dt v + \frac 13  \dx^3 v= \dx(v^2).
\end{align}

\noi
In the deep-water limit,
the Fourier multiplier of $\Gdl \dx$ converges
to that of $\H \dx$, uniformly in the frequency $n \in \Z$.
On the other hand,
such uniform convergence does not hold
in the shallow-water limit;
see Lemmas \ref{LEM:K1} and
\ref{LEM:L1}.
In this sense,
the shallow-water limit
is  {\it singular} (especially when compared to the deep-water limit).
As we see below, this singular nature of the shallow-water limit
also appears in
convergence of (suitably constructed) conservation laws of the scaled ILW
to those of KdV
(exhibiting regularity jumps)
and also in the statistical study of ILW.
See Remark~\ref{REM:sing}.

In recent years, there have been intensive research activities
on mathematical analysis of ILW;
see  \cite{ABFS, MST, MV15, MPV, IS23, CLOP, CFLOP, FLZ, LP} for the known well-posedness and ill-posedness results
for ILW.
See also \cite{ABFS, MPS, Li2022, LOZ, CLOP}
for results on convergence
of  ILW to BO in the deep-water limit ($\dl \to \infty$)
and to KdV in the shallow-water limit ($\dl \to 0$).\footnote{Strictly speaking,
in order to discuss convergence to KdV in the shallow-water limit,
we first need to apply the scaling  \eqref{scale1}
and consider  the scaled ILW \eqref{sILW}.
In this informal discussion, however, we suppress this point.}
In particular, in~\cite{LOZ},
the second and third authors with G.\,Zheng
initiated the study of convergence issues from the statistical viewpoint,
where
they constructed the Gibbs measure for ILW
and proved its convergence in the deep-water and shallow-water limits
to those for BO and KdV, respectively,
along with convergence results of  invariant Gibbs dynamics
(without uniqueness).
Our main goal in this paper is to
further the statistical study on  convergence  of ILW to BO and KdV
by exploiting the complete integrability of ILW.

The construction of invariant Gibbs measures
for Hamiltonian PDEs
was initiated in the seminal works \cite{LRS}
by Lebowitz, Rose, and Speer
and \cite{BO94, BO96} by
Bourgain.
This study lying at the intersection of dispersive PDEs
and probability theory has been particularly active
over the last two decades.
%
%
See  surveys papers \cite{OH6, BOP4}
for an overview of the subject
and the references therein.
Here, a Gibbs measure
is a probability measure
on
periodic functions (of low regularity)
associated with the Hamiltonian of a given equation.
See
\cite{LRS, BO94, OH2, Tz10, Deng15, R16, ORT, OST, CK, OST2}
for the construction of Gibbs measures
and their invariance for KdV- and BO-type equations.
For completely integrable PDEs
with infinitely many conservation laws
such as BO and KdV,
one may use higher order conservation laws to construct invariant measures
supported on Sobolev spaces of increasing smoothness.
By importing the terminology from statistical mechanics~\cite{RDYO}, 
we refer to such measures
as 
{\it \GGMs}. 
In \cite{Zhid1, Zhid2},
Zhidkov constructed an infinite sequence of invariant \GGMs~
associated with higher order conservation laws of
KdV.
In  a series of works
\cite{TV0, TV1, TV2,   DTV},
Tzvetkov and Visciglia with Deng
carried out 
an analogous program
for BO.
See also
\cite{Zhid3, NORS, GLV1, GLV2}
for related results on the one-dimensional (derivative) cubic  nonlinear Schr\"odinger equation.
In this work, we investigate
the construction
of invariant \GGMs~
associated with higher order conservation laws of ILW
and their convergence properties
in the deep-water and shallow-water limits
from both the static and dynamical viewpoints.
More precisely,
we establish the following results:

\begin{itemize}

\smallskip
\item[(i)]
For each $0 < \dl < \infty$,
we construct \GGMs,
supported on $L^2(\T)$,
associated
with the conservation laws of ILW
at the $H^\frac k2$-level
for each integer $k \ge 2$.

\smallskip
\item[(ii)]
We
 prove  convergence of these \GGMs~ to
the corresponding (invariant) 
 \GGMs~
 for BO and KdV
in the deep-water and shallow-water limits, respectively.
We also study equivalence\footnote{Namely, mutual absolute continuity.}\,/\,singularity properties
of these \GGMs.

\smallskip
\item[(iii)]
For $k \ge 3$,
we construct invariant ILW dynamics (with uniqueness) associated with these
 \GGMs~
supported on $H^{\frac {k-1}2-\eps}(\T)$, $\eps > 0$.\footnote{In the remaining part
of the paper, we use $\eps > 0$ to denote an arbitrarily small constant.
Various constants depend on $\eps$ but,
for simplicity of notation,  we suppress such $\eps$-dependence.}
Moreover, we  prove convergence in the respective limits
 of the ILW dynamics at each equilibrium state
 to the corresponding invariant dynamics for BO and KdV
 constructed by Deng, Tzvetkov, and Visciglia \cite{TV0, TV1, TV2, DTV}
 and Zhidkov \cite{Zhid1, Zhid2}, respectively.
This provides the first example of invariant ILW dynamics
(with uniqueness)
and their convergence  to invariant BO\,/\,KdV dynamics.

\end{itemize}

\noi
See Theorems \ref{THM:3} and \ref{THM:4}
for (i) and (ii), and
Theorems \ref{THM:5} and \ref{THM:6}
for (iii).
We point out that,  when $k$ is large,
the invariant dynamics is supported on smooth functions
and thus the statistical study of ILW carried out  in this work
is physically relevant.
For example, the (scaled) ILW invariant dynamics  constructed in 
Theorems \ref{THM:5} and \ref{THM:6}
enjoys a recurrence property on a phase space of smooth functions, 
described by the Poincar\'e recurrence theorem 
\cite[p.\,106]{Zhid2}; see also
\cite[Corollary~1.3]{TV1}.

In order to achieve these objectives,
we first need to carry out a detailed study
on the construction of  appropriate (polynomial) conservation laws of
the (scaled) ILW
via the
{\it  B\"acklund transform}
 (see~\eqref{BT1} in the deep-water regime\footnote{Hereafter, we use
 the terms ``deep-water regime'' and
 ``shallow-water regime''
 to refer to analysis on the ILW equation \eqref{ILW}
 and the scaled ILW equation \eqref{sILW}, respectively.}
 and \eqref{BTX1}  in the shallow-water regime).
 We then
 derive
a  suitable formulation for conservation laws
of the (scaled) ILW
in each of the deep-water and shallow-water regimes
such that
the (scaled) ILW conservation laws converge
to those of BO and KdV in the respective limits.
We remark the following two points.

\begin{itemize}

\smallskip
\item

In order to guarantee the desired convergence
of the (scaled) ILW conservation laws,
we need to use different
B\"acklund transforms
in the deep-water  and shallow-water regimes,
generating  two sets of the (scaled)
ILW conservation laws.

\smallskip
\item
The conservation laws of the (scaled) ILW
thus constructed
are  at the
$H^\frac k2$-level, $k \in \N$, just like the BO conservation laws.
On the other hand,
the KdV conservation laws are at the $H^\kk$-level, $\kk \in \N$.
Indeed,
we establish a
{\it  2-to-1 collapse of the conservation laws
 of the scaled ILW to those of KdV
in the shallow-water limit}
in the sense that,
for each $\kk \in \N$,
 {\it both}
the
$H^{\kk - \frac 12}$-level
and
$H^\kk$-level
 conservation laws of the  scaled ILW
converge
to the {\it same}
$H^\kk$-level conservation law of KdV;
see Figure \ref{FIG:1} below.
 Such a 2-to-1 collapse is novel in the literature,
explaining
the reason that
 KdV has half as many conservation laws as ILW and BO.
To our knowledge, this is the first construction of a complete family of 
 shallow-water conservation laws\footnote{In the following, 
we often refer to conservation laws for ILW (and for the scaled ILW, respectively)
as deep-water conservation laws
(and shallow-water conservation laws, respectively).}
 with non-trivial shallow-water limits.

\end{itemize}

\noi
The derivation of conservation laws
for  ILW \eqref{ILW} and the scaled ILW \eqref{sILW},
suitable to study their  deep-water and shallow-water limits, respectively,
involves rather intricate and lengthy algebraic\,/\,combinatorial  computations
and takes a non-trivial portion of this work.
Indeed,
one of the main novelties
of this work is
to provide the first rigorous justification for convergence of
(suitably constructed) conservation laws of the (scaled) ILW
to those of BO and KdV
in the respective limits,
in particular exhibiting  the 2-to-1 collapse  in the shallow-water limit;
see Theorems \ref{THM:1} and  \ref{THM:2}.
Such a 2-to-1 collapse in the shallow-water limit
also appears
at the level of measure convergence
as well as dynamical convergence,
which is
a completely new phenomenon in the statistical study of dispersive PDEs.


\begin{remark}\rm

In \cite{BO94}, Bourgain proved
invariance of the Gibbs measure for KdV
associated with the Hamiltonian
(= the $H^1$-level conservation law).
In \cite{Zhid1, Zhid2},
Zhidkov constructed  invariant \GGMs~
for KdV
associated with the $H^\kk$-level conservation laws
for $\kk \ge 3$.
See also \cite{QV, OH1, OH4, OH6, OQV, KMV}
for invariance of the white noise for KdV
(corresponding to the $\kk = 0$ case).
However,  the $\kk = 2$ case seems to be missing in the literature.
As a byproduct of our analysis,
we establish invariance of the 
\GGM~ associated
with the $H^2$-conservation law of KdV,
thus completing the picture for KdV;
see Theorem~\ref{THM:6}\,(ii).
See also a discussion in   \cite[Subsection 1.3]{TV0}.
In recent works \cite{Tz2, CF}, 
new kinds of  invariant measures
for BO and KdV have been constructed, 
based on more refined properties
of these equations
(namely, the  Birkhoff coordinates and the perturbation determinant, respectively).
It would be of interest to investigate such
construction for the (scaled) ILW.

\end{remark}

\begin{remark}\label{REM:sing}\rm
In the current work,
the singular nature of the
shallow-water regime
appears in various contexts.
First of all, we note that 2-to-1 collapse occurs in
the shallow-water convergence
of the shallow-water conservation laws (Theorem \ref{THM:2}),
the corresponding \GGMs~
(Theorem \ref{THM:4}),
and the invariant scaled ILW dynamics
(Theorem~\ref{THM:6}).
In particular, we observe regularity jumps
in all of these three settings.
Moreover, we show that,  in the deep-water regime, 
the \GGMs~
with different depth parameters
 (including the limiting one)
are all equivalent, 
while
the situation is much more complicated
 in the shallow-water regime;
(i) the \GGM~ with $0 < \dl < \infty$
and the limiting \GGM~(with $\dl = 0$)
are singular,
(ii) the odd-order \GGMs~
with different depth parameters
are singular, 
and 
(iii) the even-order \GGMs~
with different depth parameters
are equivalent. 
As a consequence,
the shallow-water \GGMs~
converge only weakly as $\dl \to 0$, while the deep-water
\GGMs~
converge in total variation as $\dl \to \infty$.
All these phenomena mentioned above
are completely new.

\end{remark}

\subsection{Conservation laws of ILW}
\label{SUBSEC:1.2}

Before we start our statistical study of the (scaled) ILW,
we first need to carry out detailed analysis on
the derivation of suitable
(polynomial) conservation laws of the (scaled) ILW
along with their convergence properties
in the deep-water and shallow-water  limits.
It is worthwhile to note that
ILW \eqref{ILW} and the scaled ILW \eqref{sILW} differ only by the scaling  \eqref{scale1}.
Hence, once we obtain conservation laws of ILW \eqref{ILW}
in a form suitable to study their deep-water limits
(as $\dl \to \infty$),
the  scaling transform \eqref{scale1}
yields
the corresponding
conservation laws of the scaled ILW~\eqref{sILW}.
However,
if we simply use such conservation laws,
then we
encounter multiple divergent terms
(with negative powers of $\dl$)
in the shallow-water limit
($\dl \to 0$).
While it is possible to manually
group divergent terms to obtain a (conditionally) convergent limit
for the first few conservation laws,
the number of divergent terms grows
larger for higher order conservation laws
and it immediately becomes intractable.
In order to overcome this issue,
we will separately derive conservation laws of the scaled ILW
in a form suitable to
study their shallow-water limits.

We first consider the deep-water regime.
In this case, we follow
the work \cite{SAK}
by Satsuma,  Ablowitz, and Kodama
and
derive conservation laws of
 ILW
\eqref{ILW}
via the B\"acklund transform~\eqref{BT1};
see also \cite{ABFS}.
While the derivation of conservation laws for ILW is well known
in the literature,
there seems to be no proof on convergence of the conservation laws
(beyond a formal proof, claiming that
such convergence should follow from convergence of
 the B\"acklund transforms).
Our main task is to derive conservation laws
of ILW in the ``correct'' form such that
they converge to BO conservation laws in the deep-water limit
and, moreover,
the quadratic part is sign-definite,  which is crucial
in defining \GGMs~
associated with these deep-water conservation laws;
see
 Section \ref{SEC:cons1}
for details.
As a result, we obtain the following conservation laws
for ILW:\footnote{In the following,
we impose the mean-zero assumption for all the equations appearing in this paper,
and hence we will not refer to the conservation of mean in the remaining part of the paper.}
\begin{align*}
\text{mean:} \ \  \int_\T u dx
\end{align*}

\noi
and $\big\{E^\dl _{\frac{k}{2}}(u)\big\}_{k \in \Z_{\ge 0}}$, where $\Z_{\ge 0} := \N \cup\{0\}$
and $E^\dl_{\frac{k}{2}}(u)$ is given by
\begin{align}
\begin{split}
E^\dl _{0}(u) & = \frac12 \|u\|^2_{L^2}, \\
E^\dl_{\frac{k}{2}}(u)
&  = \frac12\sum_{\substack{\l=0\\ \text{even}}}^{k} a_{k, \l} \| \Gdl^{\frac{k-\l}{2}} u \|^2_{\dot{H}^{\frac{k}{2}}} + R^\dl_{\frac{k}{2}}(u), \quad k\in \N,
\end{split}
\label{E00}
\end{align}

\noi
for some {\it positive}\footnote{The positivity
of the coefficients $a_{k, \l}$ plays an important role  in
defining
 the base Gaussian measures $\mu^\dl_{\frac{k}{2}}$
in~\eqref{gauss1}.}
 constants $a_{k, \l}$ (see \eqref{DE2b})
such that
\begin{align}
\sum_{\substack{\l=0\\  \text{even}}}^{k} a_{k, \l} = 1, 
\label{E0}
\end{align}

\noi
where, for odd $k$ and even $\l$,  we used the convention:
\begin{align}
 \|\Gdl^{\frac{k-\l}{2}} u \|^2_{\dot{H}^{\frac k2}}
  = \int_\T \dx^\frac{\l}{2}u \cdot (\Gdl \dx)^{k-\l} \dx^\frac{\l}{2}u dx
\label{odd1}
\end{align}

\noi
by noting that 
$\Gdl \dx$ is a self-adjoint and positive operator with 
an even multiplier
(Lemma~\ref{LEM:K1}).
Here,
the interaction potential
 $R^\dl_{\frac{k}{2}}(u)$ consists of  cubic and higher order terms in $u$;
 see \eqref{R1}.
For readers' convenience, we write down the  case $k = 1, 2$:
\begin{align}
\begin{split}
E^\dl _{\frac12}(u) & =\frac12 \| \Gdl^\frac12 u\|^2_{\dot{H}^\frac12}+ \frac13  \int_\T  u^3 dx, \\
E^\dl _{1}(u) & = \frac18 \|u\|^2_{\dot{H}^1} + \frac38 \|\Gdl u\|^2_{\dot{H}^1}
+ \int_\T  \frac14 u^4 + \frac34 u^2 \Gdl \dx u + \frac{1}{4\dl} u^3  dx.
\end{split}
\label{E1}
\end{align}

\noi
Note that $E^\dl_{\frac12}(u)$ is the Hamiltonian for ILW \eqref{ILW}.
The positivity of the constants $a_{k, \l}$ ensures that the quadratic part
of the conservation law is sign-definite.
Moreover,
in view of
Lemma~\ref{LEM:K1}, we have
\begin{align}
\sum_{\substack{\l=0\\  \text{even}}}^{k} a_{k, \l} \| \Gdl^{\frac{k-\l}{2}} u \|^2_{\dot{H}^{\frac{k}{2}}}
\sim_{k, \dl} \| u \|_{\dot H^\frac{k}{2}}^2,
\label{E1a}
\end{align}

\noi
where the equivalence is uniform in $2\le \dl <  \infty$.
Namely,
$E^\dl_{\frac{k}{2}}(u)$ is indeed an $H^\frac k2$-level conservation law
of ILW.


As $\dl \to \infty$,
the operator $\Gdl$ converges to the Hilbert transform $\H$
(see Lemma \ref{LEM:K1}).
Then, by formally taking a limit as $\dl \to \infty$ in \eqref{E00}
with \eqref{E0},
we obtain the following conservation laws
for BO \eqref{BO}:
\begin{align}
\begin{split}
E^\BO _{0}(u) & =
   E^\infty_{0}(u)  =
\frac12 \|u\|^2_{L^2}, \\
E^\BO_{\frac{k}{2}}(u)
&  =
   E^\infty_{\frac{k}{2}}(u)
  =
\frac12 \| u \|^2_{\dot{H}^{\frac{k}{2}}} + R^\BO_{\frac{k}{2}}(u), \quad k\in \N,
\end{split}
\label{E2}
\end{align}

\noi
where once again
the interaction potential
 $R^\BO_{\frac{k}{2}}(u)
 = R^\infty_{\frac{k}{2}}(u)$
 consists of  cubic and higher order terms in $u$.
For example, when  $k = 1, 2$, we have
\begin{align*}
E^\BO _{\frac12}(u) & =\frac12 \|  u\|^2_{\dot{H}^\frac12}+ \frac13  \int_\T  u^3 dx, \\
E^\BO _{1}(u) & = \frac12 \|u\|^2_{\dot{H}^1}
+ \int_\T  \frac14 u^4 + \frac34 u^2 \H \dx u  dx.
\end{align*}

\noi
We point out that these quantities
$E^\BO_{\frac{k}{2}}(u)$
can also be directly constructed
from
the B\"acklund transform~\eqref{BTB1}
for  BO (see \eqref{chi2} and \eqref{DE3}),
and thus they are indeed conserved under the flow of BO.

\begin{theorem}[deep-water regime]\label{THM:1} 

Let $k \in \Z_{\ge 0}$.
Then,
given  $0 < \dl <  \infty$,
the quantity $ E^\dl_{\frac{k}{2}}(u) $
defined in \eqref{E00}
is conserved under the flow of  ILW  \eqref{ILW}.\footnote{In discussing
conservation of a  quantity under a given equation,
we always assume that
a solution $u$ is  sufficiently smooth
such that both the quantity and the flow make sense.}
Similarly,
the quantity $ E^\BO_{\frac{k}{2}}(u) $
defined in  \eqref{E2}
is  conserved under the flow of BO \eqref{BO}.
Moreover,
 for any $u\in H^{\frac k 2}(\T)$, we have
\begin{align}
\lim_{\dl\to\infty} E^\dl_{ \frac k 2}(u) = E_{\frac k 2 }^\BO(u).
\label{E4}
\end{align}

\end{theorem}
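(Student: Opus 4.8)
The statement of Theorem~\ref{THM:1} bundles three claims: conservation of $E^\dl_{\frac k2}$ under ILW~\eqref{ILW}, conservation of $E^\BO_{\frac k2}$ under BO~\eqref{BO}, and the limit~\eqref{E4}. I would not prove the first two here: the quantities $E^\dl_{\frac k2}$ and $E^\BO_{\frac k2}$ are defined through the B\"acklund transforms~\eqref{BT1} and~\eqref{BTB1} (following~\cite{SAK}), and their conservation is part of that construction, carried out in Section~\ref{SEC:cons1}. So the content of the present statement is~\eqref{E4}, and the plan is to prove it by splitting $E^\dl_{\frac k2}(u)=\frac12\sum_{\l}a_{k,\l}\|\Gdl^{\frac{k-\l}2}u\|^2_{\dot H^{k/2}}+R^\dl_{\frac k2}(u)$ as in~\eqref{E00} and treating the quadratic part and the interaction potential separately.

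For the quadratic part I would pass to Fourier coefficients. Writing $q_\dl(n):=\coth(\dl n)-\tfrac1{\dl n}$, so that $\ft{\Gdl}(n)=-i\,q_\dl(n)$, the monotonicity of $x\mapsto\coth x-1/x$ on $(0,\infty)$ gives $q_\dl(n)\in(-1,1)$ for $n\neq0$ and $q_\dl(n)\to\sgn(n)$ as $\dl\to\infty$ with $n$ fixed. A direct computation --- using, when $k$ is odd, the convention~\eqref{odd1} together with the fact that $\Gdl\dx$ has the nonnegative even symbol $n\,q_\dl(n)=|n|\,|q_\dl(n)|$ --- yields, for every even $\l$ with $0\le\l\le k$,
\[
\|\Gdl^{\frac{k-\l}2}u\|^2_{\dot H^{k/2}}=\sum_{n\neq0}|n|^{k}\,|q_\dl(n)|^{k-\l}\,|\ft u(n)|^{2}.
\]
Hence $\frac12\sum_{\l}a_{k,\l}\|\Gdl^{\frac{k-\l}2}u\|^2_{\dot H^{k/2}}=\frac12\sum_{n\neq0}|n|^{k}|\ft u(n)|^{2}\,P_\dl(n)$ with $P_\dl(n):=\sum_{\l\text{ even}}a_{k,\l}|q_\dl(n)|^{k-\l}$. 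Since $|q_\dl(n)|<1$ we have $0\le P_\dl(n)\le\sum_{\l}a_{k,\l}=1$ by~\eqref{E0}, and since $|q_\dl(n)|\to1$ we have $P_\dl(n)\to1$ as $\dl\to\infty$ for each $n\neq0$; as $u\in H^{\frac k2}(\T)$, the sequence $n\mapsto|n|^{k}|\ft u(n)|^{2}$ is summable, so the dominated convergence theorem on $\ell^1(\Z\setminus\{0\})$ gives $\frac12\sum_{\l}a_{k,\l}\|\Gdl^{\frac{k-\l}2}u\|^2_{\dot H^{k/2}}\to\frac12\|u\|^2_{\dot H^{k/2}}$.

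For the interaction potential I would invoke the explicit formula~\eqref{R1}: $R^\dl_{\frac k2}(u)$ is a finite linear combination of multilinear terms $c\,\dl^{-p}\int_\T(L_1u)\cdots(L_mu)\,dx$ with $m\ge3$, $p\ge0$, each $L_j$ a product of factors from $\{\dx,\Gdl\dx\}$; the number of derivatives is arranged so that each integral converges absolutely and defines a functional on $H^{\frac k2}(\T)$ that is continuous uniformly in $2\le\dl<\infty$ --- this uses only the uniform bounds $\|\Gdl\|_{\mathcal{L}(L^2)}\le1$ and $\|\Gdl\dx\|_{\mathcal{L}(H^s,H^{s-1})}\lesssim1$ (Lemma~\ref{LEM:K1}), together with Sobolev embedding and interpolation on $\T$. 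As $\dl\to\infty$, Lemma~\ref{LEM:K1} gives $\Gdl\dx\to\H\dx$ in the operator norm on $\mathcal{L}(H^s,H^{s-1})$, so each $L_ju$ converges to its BO counterpart in the relevant Sobolev space while the terms with $p\ge1$ vanish; passing to the limit term by term shows $R^\dl_{\frac k2}(u)\to R^\BO_{\frac k2}(u)$, and adding this to the quadratic part gives~\eqref{E4}.

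The main obstacle is not in this argument but upstream, in Section~\ref{SEC:cons1}: one must set up the B\"acklund-transform construction so that $E^\dl_{\frac k2}$ emerges precisely in the form~\eqref{E00} --- with a sign-definite quadratic part whose coefficients satisfy~\eqref{E0} and with $R^\dl_{\frac k2}$ a manifestly $\dl$-regular multilinear expression --- since it is exactly this normal form that renders~\eqref{E4} routine. Once that form is in hand, the only delicate point in the proof of~\eqref{E4} is the uniform-in-$\dl$ continuity of the multilinear terms of~\eqref{R1} needed to justify the termwise passage to the limit, and this follows immediately from the uniform multiplier bounds for $\Gdl$ and $\Gdl\dx$.
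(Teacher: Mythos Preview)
Your overall strategy is sound and your treatment of the quadratic part is essentially the same as the paper's (dominated convergence on the Fourier side, using $|q_\dl(n)|\le 1$ and~\eqref{E0}). For the interaction potential, however, your route differs from the paper's, and your description of the structure of $R^\dl_{\frac k2}$ contains an imprecision worth flagging.

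The paper does not pass to the limit directly in each term of $R^\dl_{\frac k2}$. Instead it introduces the perturbation operator $\Qdl=\Gdl\dx-\H\dx$ (see~\eqref{Qdl1}), rewrites the recursion~\eqref{chi1} as~\eqref{DE6}, and uses the notion of \emph{deep-water rank} (Definition~\ref{DEF:ord1}) to show algebraically that the $\dl$-free part of $E^\dl_{\frac k2}$ coincides with $E^\BO_{\frac k2}$ (this is~\eqref{DE8}--\eqref{DE9}). Every monomial in the remainder $\EE^\dl_{\frac k2}$ then carries at least one factor of $\dl^{-1}$ or $\Qdl$, and since $|\ft\Qdl(n)|\le\dl^{-1}$ uniformly in $n$ (see~\eqref{Q1}), one obtains the quantitative bound $|\EE^\dl_{\frac k2}(u)|\lesssim\dl^{-1}(1+\|u\|_{H^{k/2}})^{k+1}$ (see~\eqref{DE13c}). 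This buys an explicit rate, which the paper later needs for the uniform-in-$\dl$ measure estimates.

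Your direct-limit argument can also be made to work, but the claim that $R^\dl_{\frac k2}$ is a finite sum of terms of the form $\dl^{-p}\int_\T(L_1u)\cdots(L_mu)\,dx$ with each $L_j$ acting on a single copy of $u$ is not quite right: from the recursion~\eqref{chi1}, the operator $\L_0=(\Gdl-i)\dx+\dl^{-1}$ acts on $\chi_{n-1}$, which is itself a polynomial, so for $m\ge4$ the monomials have a genuinely \emph{nested} structure (cf.\ Definition~\ref{DEF:mono1}) that integration by parts cannot in general remove. Your strategy survives this --- the uniform $L^r$-bound on $\Gdl$ (Lemma~\ref{LEM:T1}\,(iii)) and the operator-norm convergence $\Gdl\to\H$ on $L^2$ handle nested factors just as well --- but the bookkeeping is more delicate than you indicate. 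You also take for granted that the termwise limit of $R^\dl_{\frac k2}$ equals $R^\BO_{\frac k2}$; this is true, but it requires the recursive comparison~\eqref{DE7}--\eqref{DE8} between $\chi_n^\dl$ and $\chi_n^\BO$, which you should state rather than absorb into ``its BO counterpart.'' Your closing remark is exactly right: the real content is in Section~\ref{SEC:cons1}, and once the normal form~\eqref{E00} with the rank structure is in hand,~\eqref{E4} is indeed routine by either route.
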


See 
Proposition \ref{PROP:cons1}
for a proof of Theorem \ref{THM:1}.
We prove Theorem \ref{THM:1}
by first deriving microscopic  conservation laws $\chi_n^\dl$ and $\chi_n^\BO$ for ILW and BO
via  the B\"acklund transforms~\eqref{BT1} and~\eqref{BTB1}, respectively.
Our goal is to derive conservation laws in the ``correct'' form as described above.
In fact,
a straightforward derivation of ILW conservation laws
via  
the microscopic  conservation laws $\chi_n^\dl$
yields
quadratic terms of different regularities
(not just the $\dot H^\frac k2$-norms appearing in \eqref{E00}; see Lemma~\ref{LEM:chi2})
which may not be suitable for constructing (weighted)
Gaussian measures.
Moreover,
in our recursive construction of conservation laws,
we ensure that the constants $a_{k, \l}$
are all positive,
guaranteeing that the quadratic part of $E^\dl_\frac k2(u)$ is sign-definite.
This plays a crucial role
in constructing  the Gaussian measures  $\mu^\dl_\frac k2$ in~\eqref{gauss1}
and studying their further properties (Proposition \ref{PROP:gauss1}).
Similar comments apply to the shallow-water regime.

As for BO,
 identities such as $\mathcal{H}^2 = -\Id$
 (on mean-zero functions)
make it relatively straightforward to derive conservation laws  from a  recursive formulation.
As for ILW, however,
there is no such identity for
 the operator $\Gdl$.
 Moreover, there are terms with negative powers of $\dl$
 (such as the last term of $E^\dl_1(u)$ in \eqref{E1}; see also \eqref{R1})
 which all vanish in the deep-water limit.
 As a result,
 the number of terms (including  quadratic ones;
 compare the quadratic parts in~\eqref{E00}
 and~\eqref{E2})
 becomes much larger than the BO case,
 requiring
 careful combinatorial computations.
Further details on the structure of these conservation laws can be found in Section~\ref{SEC:cons1}.

In order to prove the desired deep-water convergence \eqref{E4}, 
we introduce the notion of the {\it deep-water rank}
in  Definition \ref{DEF:ord1} (see also Definition \ref{DEF:mono1})
which allows us to organize the combinatorially complex structure
of the monomials appearing in the deep-water conservation laws.
Moreover, following
the recent works \cite{IS23, CLOP, CFLOP},
we employ a perturbative viewpoint
by introducing 
 the perturbation operator $\Qdl$:\footnote{The perturbation operator considered 
 in \cite{IS23, CLOP, CFLOP} is 
 slightly different from $\Qdl$ in~\eqref{Qdl1}
 and is given by 
  $\Tdl \dx - \H\dx$, where $\Tdl$ is the Tilbert transform defined in \eqref{Til0}.}
\begin{align}
 \Qdl = \Gdl\dx - \H\dx.
\label{Qdl1}
\end{align}

\noi
In \cite{IS23, CLOP, CFLOP},
by rewriting 
 ILW \eqref{ILW}
as
\begin{equation*}
\dt u - \H\dx^2 u =\dx(u^2) + \Qdl \dx u
\end{equation*}

\noi
and exploiting a strong smoothing property of the perturbation operator
$\Qdl$ (see Lemma~2.2 in~\cite{IS23}
and Lemma~2.3 in~\cite{CLOP}), 
the authors of the aforementioned works established results on
low-regularity well-posedness of ILW, deep-water convergence, 
and 
long-time behavior of ILW solutions.
For our analysis, 
we use this perturbative viewpoint 
to replace the recursive relation~\eqref{chi1}
for the microscopic conservation laws $\chi_n^\dl$, 
involving $\Gdl$, 
by the recursive relation~\eqref{DE6}, 
involving $\H$ and $\Qdl$.
Together with the deep-water rank, 
the strong vanishing of $\Qdl$ (uniform in frequencies) as $\dl \to \infty$ (see \eqref{Q1})
will then allow us to prove~\eqref{E4}.

\medskip

Next,
we  consider the shallow-water regime.
As mentioned above,  the conservation laws
$E^\dl_{\frac{k}{2}}(u) $ in \eqref{E00}
under the scaling \eqref{scale1}
yield conservation laws of the scaled ILW \eqref{sILW}.
They are, however, not suitable for studying the shallow-water convergence,
since multiple terms diverge as $\dl \to 0$.
In order to overcome this difficulty,
we use an alternative B\"acklund transform~\eqref{BTX1}
derived in the works
\cite{GK80, Kupershmidt81}
by Gibbons and  Kupershmidt, 
from which we obtain 
a recursive relation 
for generating shallow-water microscopic conservation laws $h_n^\dl$;
see~\eqref{h1}.
There are two issues in directly using 
$\int_\T h_n^\dl dx $ to construct shallow-water conservation laws:

\smallskip
\begin{itemize}
\item[(i)]
the quadratic part comes with terms of different regularities, 
just as we saw in the deep-water regime,

\smallskip
\item[(ii)] the odd-order conservation laws
$\int_\T h_{2n+1}^\dl dx $ vanish
in the shallow-water limit; see Lemma~\ref{LEM:kdv1}\,(i).
 This in particular causes an issue
 in pursuing our statistical study
 since the corresponding \GGMs~
 would not have any meaningful limit.

\end{itemize}

\smallskip

\noi
In order to overcome these issues, 
we construct  new
microscopic conservation laws
$\wt h_{n}^\dl$ from $h_n^\dl$ (see~\eqref{ht0}),
which 
allows us to construct
a full set of  shallow-water conservation laws 
 with non-trivial shallow-water limits.
As a result, 
 we arrive at the following conservation laws $\big\{\wt E^\dl _{\frac k2}(v)\big\}_{k \in \Z_{\ge 0}}$  for the scaled ILW:
\begin{equation}
\begin{aligned}
\wt E^\dl _{0}(v) & =
\frac12 \|v\|^2_{L^2}, \\
\wt{E}^\dl_{\kk - \frac 12}(v)
&= \frac12 \sum_{\substack{\l=1\\\text{odd}}}^{2\kk-1}
\wt a_{2\kk-1, \l}\,
\dl^{\l-1} \|\Gd^{\frac{\l}{2}} v \|^2_{\dot{H}^{\kk - \frac 12}} + \wt{R}^\dl _{\kk - \frac 12}(v), \\
\wt{E}^\dl_{\kk}(v)
& = \frac12 \sum_{\substack{\l=0\\ \text{even}}}^{2\kk}
\wt a_{2\kk, \l}\,
 \dl^{\l} \| \Gd^{\frac \l 2} v \|^2_{\dot{H}^{\kk}} + \wt{R}^\dl_{ \kk }(v)
\end{aligned}
\label{E5}
\end{equation}

\noi
for $\kk \in \N$,
where, for odd $k$ and $\l$,  we used the convention:
\begin{align}
 \|\Gd^{\frac{\l}{2}} v \|^2_{\dot{H}^{\frac k2}}
  = \int_\T \dx^\frac{k-\l}{2}v \cdot (\Gd \dx)^\l \dx^\frac{k-\l}{2}v dx
\label{odd2}
\end{align}

\noi
by noting that 
$\Gd \dx$ is a self-adjoint and positive operator with 
an even multiplier
(Lemma~\ref{LEM:K1});
see also \eqref{odd1}
for an analogous convention.
Here, 
$\wt a_{k, \l}$ is 
a  positive constant (see \eqref{DEs3a})
with the following normalization on the leading coefficients:\footnote{The normalization $\wt a_{2\kk-1, 1}= 3$
is needed to cancel the factor $\frac 13$ which appears
in the shallow-water limit of $in \dl^{-1} \ft \Gdl(n)$; see Lemma \ref{LEM:L1}.
See also \eqref{T2} and \eqref{T3}.}
\begin{align}
\wt a_{2\kk-1, 1}= 3\qquad \text{and}\qquad \wt a_{2\kk, 0} = 1,
\label{E5a}
\end{align}

\noi
and the interaction potentials
$\wt{R}^\dl _{\kk - \frac 1 2}(v)$
and $\wt{R}^\dl _{\kk }(v)$
 consist of  cubic and higher order terms in $v$;
 see \eqref{Scons2}.
Here, we relabelled the indices
for
the conservation laws
(from $\frac k2$  to $\kk - \frac 12$
and $\kk$), 
since the structure of the conservation law
$\wt E^\dl _{\frac k2}(v)$
depends
sensitively
on the parity of $k$
(namely, $k = 2\kk - 1$ or $k = 2\kk$, $\kk \in \N$)
in the current shallow-water regime.
Moreover,
this relabelling allows us to
 emphasize that
$\wt{E}^\dl_{ \kk - \frac 12}(v) $
and $\wt{E}^\dl_{\kk}(v) $
appear in a pair; see \eqref{E7}.
See
 Section \ref{SEC:cons2}
for further  details on the derivation and the structure of the
shallow-water 
 conservation laws  $\wt{E}^\dl_{\kk - \frac 1 2}(v)$
 and  $\wt{E}^\dl_{\kk}(v)$
  in~\eqref{E5}.

By taking a formal limit of the B\"acklund transform
\eqref{BTX1} for the scaled ILW as $\dl \to 0$,
we obtain
the B\"acklund transform
\eqref{BTkdv} for KdV
(= the Gardner transform),
from which we derive  the following conservation laws for KdV:
\begin{align}
\begin{split}
\wt  E^\KDV _{0}(v) & =
\wt  E^0_{0}(v)
= \frac12 \|v\|^2_{L^2}, \\
\wt E^\KDV_{\kk}(v)
& = \wt  E^0 _{\kk}(v)
= \frac12 \| v\|_{\dot H^\kk}^2 + \wt R^\KDV_{\kk}(v), \quad \kk \in \N,
\end{split}
\label{E6}
\end{align}

\noi
where
the interaction potential 
 $\wt R^\KDV _{\kk}(v)$ consists of  cubic and higher order terms in $v$;
 see~\eqref{DEs4}.

We now state a result on convergence properties of
the shallow-water conservation laws, 
which is one of the main novelties of the current work.

\begin{theorem}[shallow-water regime]\label{THM:2}
Let $\kk \in \N$.
Then,
given  $0 < \dl <  \infty$,
the quantities 
$\wt  E^\dl_{0}(v) $, 
$\wt  E^\dl_{\kk - \frac 12}(v) $, 
and $\wt  E^\dl_{\kk}(v) $
defined in \eqref{E5}
are conserved under the flow of the scaled  ILW~\eqref{sILW}.
Similarly,
the quantities 
$ \wt  E^\KDV_{0}(v) $ and 
$ \wt  E^\KDV_{\kk}(v) $
defined in~\eqref{E6}
are  conserved under the flow of KdV~\eqref{kdv}.
Moreover,
 for any $v\in H^{\kk}(\T)$, we have
\begin{align}
\lim_{\dl\to0} \wt{E}^\dl_{\kk - \frac 12}(v) &=
\lim_{\dl\to0} \wt{E}^\dl_{ \kk }(v)
  = \wt  E^\KDV_{\kk}(v).
\label{E7}
\end{align}

\end{theorem}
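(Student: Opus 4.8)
The plan is to deduce Theorem~\ref{THM:2} from the structural description of the shallow-water conservation laws established in Section~\ref{SEC:cons2}, in close parallel with the proof of Theorem~\ref{THM:1}, but with the symbol asymptotics of Lemma~\ref{LEM:L1} (rather than Lemma~\ref{LEM:K1}) now carrying the argument. For the conservation statements, I would first verify that the modified microscopic densities $\wt h_n^\dl$ built in~\eqref{ht0} from the Gibbons--Kupershmidt recursion~\eqref{h1} satisfy a local conservation law $\dt \wt h_n^\dl + \dx(\,\cdot\,) = 0$ along smooth solutions of the scaled ILW~\eqref{sILW}, so that $\int_\T \wt h_n^\dl\, dx$ is conserved. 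Since each of $\wt E^\dl_{\kk-\frac12}(v)$ and $\wt E^\dl_\kk(v)$ is, by the construction in Section~\ref{SEC:cons2}, a fixed finite linear combination of such integrated densities together with $\tfrac12\|v\|_{L^2}^2$ (whose conservation is the standard $L^2$ computation), conservation under the scaled ILW follows; feeding in the $\dl\to0$ limit of~\eqref{BTX1}, i.e.\ the Gardner transform~\eqref{BTkdv}, the same argument yields conservation of $\wt E^\KDV_0(v)$ and $\wt E^\KDV_\kk(v)$ under KdV~\eqref{kdv}.

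For the convergence~\eqref{E7}, I would split each conservation law into its quadratic part and its interaction potential. For the quadratic parts, pass to the Fourier side and let $m_\dl(n)\ge0$ denote the (even) symbol of $\Gd\dx$; Lemma~\ref{LEM:L1}, together with elementary estimates, gives the pointwise limit $m_\dl(n)\to\tfrac13 n^2$ as $\dl\to0$ as well as the uniform bounds $m_\dl(n)\le\tfrac13 n^2$ and $m_\dl(n)\le |n|/\dl$. Using the conventions~\eqref{odd2}, the quadratic part of $\wt E^\dl_{\kk-\frac12}(v)$ equals $\tfrac12\sum_{n}\big(\sum_{\l\text{ odd}}\wt a_{2\kk-1,\l}\,\dl^{\l-1}n^{2\kk-1-\l}m_\dl(n)^{\l}\big)|\ft v(n)|^2$ and that of $\wt E^\dl_\kk(v)$ equals $\tfrac12\sum_{n}\big(\sum_{\l\text{ even}}\wt a_{2\kk,\l}\,\dl^{\l}n^{2\kk-\l}m_\dl(n)^{\l}\big)|\ft v(n)|^2$. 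Combining the two bounds on $m_\dl(n)$ shows every summand is dominated by a fixed constant times $n^{2\kk}|\ft v(n)|^2$, which lies in $\ell^1(\Z)$ when $v\in H^\kk(\T)$, while for $\l\ge3$ (resp.\ $\l\ge2$) the summand tends to $0$ pointwise in $n$; the normalization $\wt a_{2\kk-1,1}=3$ (resp.\ $\wt a_{2\kk,0}=1$) from~\eqref{E5a} is chosen precisely so that the single surviving term $\l=1$ (resp.\ $\l=0$) contributes exactly $\tfrac12 n^{2\kk}|\ft v(n)|^2$. Dominated convergence then shows the quadratic parts of \emph{both} $\wt E^\dl_{\kk-\frac12}$ and $\wt E^\dl_\kk$ tend to $\tfrac12\|v\|_{\dot H^\kk}^2$, which is the quadratic part of $\wt E^\KDV_\kk$.

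It remains to show $\wt R^\dl_{\kk-\frac12}(v)\to\wt R^\KDV_\kk(v)$ and $\wt R^\dl_\kk(v)\to\wt R^\KDV_\kk(v)$. Here I would use the shallow-water analog of the deep-water rank of Definition~\ref{DEF:ord1} (see also Definition~\ref{DEF:mono1}) to organize $\wt R^\dl_{\kk-\frac12}$ and $\wt R^\dl_\kk$ into finite sums of multilinear monomials, each an integral over $\T$ of a product of $v$ and its derivatives with one factor acted on by a power of $\Gd\dx$ and carried by an explicit \emph{non-negative} power of $\dl$ --- this non-negativity being exactly what the Gibbons--Kupershmidt transform~\eqref{BTX1} and the passage from $h_n^\dl$ to $\wt h_n^\dl$ are designed to guarantee, in contrast to a naive rescaling of the deep-water laws $E^\dl_{\frac k2}$. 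Since $\Gd\dx\to-\tfrac13\dx^2$ strongly as operators $H^s(\T)\to H^{s-2}(\T)$ with the uniform bound $m_\dl(n)\le\tfrac13 n^2$, and since $v\in H^\kk(\T)$ has enough regularity to make every such multilinear form absolutely convergent, every monomial carrying a strictly positive power of $\dl$ vanishes in the limit, and each monomial carrying $\dl^0$ converges to the corresponding monomial with $\Gd\dx$ replaced by $-\tfrac13\dx^2$. The combinatorics of Section~\ref{SEC:cons2} is arranged precisely so that these surviving monomials --- and only these --- reassemble into $\wt R^\KDV_\kk(v)$, the same limit for the half-integer and the integer conservation laws; this is the 2-to-1 collapse.

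The main obstacle is this last step: the combinatorial bookkeeping for the interaction potentials, where one must track the exact power of $\dl$ attached to each monomial produced by the recursion~\eqref{h1}/\eqref{ht0}, confirm that no genuinely divergent (negative $\dl$-power) monomial survives, and then identify the monomials with a nonzero $\dl\to0$ limit and verify that they reorganize into $\wt R^\KDV_\kk$ identically across the pair $(\wt E^\dl_{\kk-\frac12},\wt E^\dl_\kk)$. The singular character of the shallow-water limit --- $\Gd\dx\to-\tfrac13\dx^2$ holds only pointwise, not uniformly, in the frequency $n$ --- is what forces the hypothesis $v\in H^\kk(\T)$ and rules out the kind of uniform-in-$\dl$ smoothing shortcut that the perturbation operator $\Qdl$ in~\eqref{Qdl1} provides in the deep-water regime.
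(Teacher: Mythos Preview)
Your overall plan --- conservation via the B\"acklund-generated microscopic laws, convergence of the quadratic parts by Fourier-side dominated convergence using Lemma~\ref{LEM:L1} and the normalizations~\eqref{E5a}, and separation of the interaction potentials into $\dl$-free and $\dl$-dependent pieces organized by the shallow-water rank --- is essentially the paper's strategy (Propositions~\ref{PROP:Scons1} and~\ref{PROP:Scons2}). The paper likewise introduces the perturbation operator $\Qd=\Gd+\tfrac13\dx$ of~\eqref{Qdl2} (your ``replace $\Gd\dx$ by $-\tfrac13\dx^2$'') to isolate the $\dl$-free part, and your treatment of the quadratic parts matches the paper's Subcases~A.3 and~B.3.

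Where your proposal diverges from the paper is the identification of the $\dl$-free limit with $\wt E^{\KDV}_\kk$, which you frame as a combinatorial reassembly. For the even-order law $\wt E^\dl_\kk$ the paper does argue combinatorially: since $\#\Gd\le\#\dl$ (see~\eqref{DEs1b}), every $\dl^0$ monomial has $\#\Gd=0$, and one checks that the $\dl^0$ coefficient of $h_n$ satisfies the KdV recursion~\eqref{hkdv}, i.e.\ $h_{n,0}=h_n^{\KDV}$; a short computation then gives $\wt E^\dl_{\kk,0}=\wt E^\KDV_\kk$. But for the odd-order law $\wt E^\dl_{\kk-\frac12}$ the paper does \emph{not} attempt a direct combinatorial matching and instead uses a dynamical rigidity argument: having already shown $\wt\EE^\dl_{\kk-\frac12}(v)\to0$, it invokes the shallow-water convergence of the scaled ILW flow to the KdV flow (Lemma~\ref{LEM:GWP2}) to prove that the $\dl$-independent quantity $\wt E^\dl_{\kk-\frac12,0}$ is \emph{conserved under KdV}, and then appeals to the uniqueness of polynomial KdV conservation laws (any such law with leading quadratic part $\tfrac12\|v\|_{\dot H^\kk}^2$ must equal $\wt E^\KDV_\kk$, cf.\ \cite[Theorem~2.16]{Kupershmidt81}). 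This flow-plus-uniqueness argument is the real mechanism behind the 2-to-1 collapse for the half-integer law. Your ``combinatorics is arranged precisely so that'' description undersells this step: a direct term-by-term identification across the recursions~\eqref{h1} and~\eqref{ht0} would be substantially more delicate, and the paper's indirect route is what makes the proof tractable.
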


Theorem \ref{THM:2} establishes a
novel 2-to-1 collapse
of the shallow-water conservation laws
 to the KdV conservation laws, 
 presenting
  the first construction of a complete family of 
 shallow-water conservation laws with non-trivial shallow-water limits;
 see Remark \ref{REM:cons2} below.
From~\eqref{E7} with \eqref{E5},
we see that while there is no regularity jump
in the convergence of
$\wt{E}^\dl_{ \kk }(v)$
to
$\wt  E^\KDV_{\kk}(v)$,
there is a regularity jump by $\frac 12$
in the convergence of
$\wt{E}^\dl_{\kk - \frac 12}(v)$
to
$\wt  E^\KDV_{\kk}(v)$,
exhibiting a singular nature of the shallow-water limit.
See Figure \ref{FIG:1}.

\begin{figure}[h]
\begin{tikzpicture}
%
\path
 node (A){$\wt E^\dl_{\kk}(v)$}
    (0:4cm)
    node (K) {$\wt  E^\KDV_{\kk}(v)$}
    (-90:1.5cm)
    node (B)
     {$\wt E^\dl_{\kk - \frac 12}(v)$} ;
 \path[thick, -stealth]
   (B)      edge[=>]
     node [sloped, below] {$\dl  \to 0$}
(K)
(A) edge              node [above] {$\dl \to 0$}     (K);

\end{tikzpicture}

\caption{While there is no regularity jump
in the convergence of
$\wt{E}^\dl_{ \kk }(v)$
to
$ \wt  E^\KDV_{\kk}(v)$,
there is a regularity jump by $\frac 12$
in the convergence of
$\wt{E}^\dl_{\kk - \frac 12}(v)$.}
\label{FIG:1}
\end{figure}


See 
Propositions \ref{PROP:Scons1}
and \ref{PROP:Scons2}
for a proof of Theorem \ref{THM:2}.
As mentioned above, 
we prove Theorem \ref{THM:2}
by starting with the B\"acklund transform \eqref{BTX1}, 
which generates  microscopic conservation laws $h_n^\dl$
(see~\eqref{h1}).
We  then 
construct  new
microscopic conservation laws
$\wt h^\dl_{n}$ from $h^\dl_n$ (see~\eqref{ht0})
which allows us to overcome the issues (i) and (ii) mentioned above;
see Lemma \ref{LEM:ht2} and
Propositions \ref{PROP:Scons1}
and \ref{PROP:Scons2}.
This process and the analysis on the structure
of $\wt h_n^\dl$
 involve lengthy algebraic, combinatorial computations.
In order to handle the combinatorial complexity, 
we introduce the notion of the {\it shallow-water rank}
in Definition \ref{DEF:ord2} (see also Definition \ref{DEF:mono2}).
Furthermore, as in the deep-water case, we also employ a perturbative viewpoint
by introducing the perturbation operator $\Qd$:
\begin{align}
\Qd = \Gd + \frac 13 \dx.
\label{Qdl2}
\end{align}

\noi
We point out that our argument for proving 
the shallow-water convergence \eqref{E7}
is more subtle than the deep-water case
due to 
 the slower convergence of the perturbation operator $\Qd$ 
 (in fact, of $\Qd \dx$ which is not uniform in frequencies; see Lemma \ref{LEM:L1}\,(iv))
and also insufficient powers of $\dl$ (tending to $0$).

\begin{remark}\label{REM:cons2}\rm
(i)
The numbers of
the microscopic conservation laws
for the scaled ILW and KdV coincide.
However,
upon integration over the torus,
 the odd-order microscopic conservation laws for KdV  vanish
 (see Lemma \ref{LEM:kdv1}~(i)),
 while those for the scaled ILW (whether $h_n^\dl$ defined in \eqref{h1}
 or $\wt h_n^\dl$ defined in \eqref{ht0}) do not vanish,
 which
 leaves KdV  with half as many (non-trivial) conservation laws as the scaled ILW.

\smallskip

\noi
(ii) The family $\big\{\wt E^\dl_{\frac k2}(v)\big\}_{k \in \Z_{\ge 0}}$
(and
$\big\{\wt E^\KDV_{\kk}(v)\big\}_{\kk \in \Z_{\ge 0}}$)
is the only possible polynomial conservation laws
of the scaled ILW (and KdV, respectively)
in the sense that
any other polynomial conservation laws are given
as their linear combination;
see \cite[Theorem 4]{M3}
and 
\cite[Theorems~2.16 and 2.28]{Kupershmidt81}.

\end{remark}

\subsection{Generalized Gibbs measures
associated with the conservation laws of ILW}
\label{SUBSEC:1.3}

%
We are now ready to state our results
on the  statistical study of ILW.
In this subsection, we discuss
the construction and convergence properties
of \GGMs~ associated with the conservation laws
 of the (scaled) ILW.
We discuss the dynamical problem in the next subsection.

We first consider the deep-water regime,
including the BO case.
Given $0 < \dl \le \infty$ and $k \in \Z_{\ge 0}$,
let $E^\dl_\frac{k}2(u)$
be as in \eqref{E00} and \eqref{E2}.
Our first goal is to construct a \GGM~ 
associated with $E^\dl_\frac{k}2(u)$
whose   formal density
 is given by\footnote{Hereafter, we use
$Z_{\dl, \frac{k}{2}}$, etc. to denote various normalization constants
which may vary line by line.}
\begin{align}
\rho^\dl_{\frac{k}{2}}(du) &= Z_{\dl, \frac{k}{2}}^{-1} \exp\Big(-E^\dl_{\frac{k}{2}}(u)\Big)  du
\label{rho1}
\end{align}

\noi
as a probability measure on periodic functions\,/\,distributions.
When $k = 0$, this corresponds to the white noise on $\T$ with formal density:
\begin{align}
\rho_0(du) = Z_0^{-1}e^{-\frac 12 \|u\|_{L^2}^2} du.
\label{white1}
\end{align}

\noi
Since it is independent of $0 < \dl \le \infty$,
there is nothing to study regarding its convergence as $\dl \to \infty$.
Moreover, the white noise is supported
on $H^{-\frac 12-\eps }(\T)$,
where well-posedness of ILW and BO is completely open.
Hence, in the remaining part of this paper,
we focus on $k \ge 1$; see Remark \ref{REM:white}.
See also \cite{OH1, OH6, BO, OQV, OQS2}
for a further discussion on the white noise.

Let $k \in \N$.
Then, from \eqref{rho1} with \eqref{E00} and \eqref{E2},
we can formally write $\rho^\dl_{\frac{k}{2}}$ as
the following weighted Gaussian measure:
\begin{align}
\rho^\dl_{\frac{k}{2}}(du)
&= Z_{\dl, \frac{k}{2}}^{-1} \exp\Big(-R^\dl_{\frac{k}{2}} (u) \Big) d\mu^\dl_{ \frac k 2}(u),
\label{rho2}
\end{align}

\noi
where
$\mu^\dl_{ \frac k 2}$ is the base Gaussian measure with formal density:
\begin{align}
\mu^\dl_{\frac{k}{2}} (du) = Z_{\dl, \frac{k}{2}}^{-1}
\exp\bigg(-\frac 12 \sum_{\substack{\l=0\\  \text{even}}}^{k} a_{k, \l} \| \Gdl^{\frac{k-\l}{2}} u \|^2_{\dot{H}^{\frac{k}{2}}}\bigg)  du.
\label{gauss1}
\end{align}

\noi
More precisely,
$\mu^\dl_{ \frac k 2}$
is defined as the induced probability measure under the map:
\begin{align}
\omega \in \Omega \longmapsto X^\dl_{\frac{k}{2}}(\o) =\frac{1}{\sqrt{2\pi}} \sum_{n\in \Z^*} \frac{g_n(\omega)}{\big(T_{\dl,\frac{k}{2}}(n)\big)^\frac12} e_n,
\label{Xdl1}
\end{align}
where
$\Z^* = \Z\setminus \{0\}$, $e_n(x) = e^{inx}$,
$\{g_n\}_{n\in\Z^*}$ is a sequence of independent standard\footnote{By convention,
we assume that  $g_n$
has mean 0 and variance $2$, $n \in \Z^*$.}  complex-valued Gaussian random variables
on a probability space $(\O, \F, \PP)$
conditioned that $g_{-n} = \cj{g_n}$, $n \in \Z^*$,
and
\begin{align}
T_{\dl, \frac{k}{2}}(n) = \sum_{\substack{\l=0\\ \text{even}}}^{k} a_{k, \l}
|n|^{k} | \ft{\Gdl}(n) |^{k-\l}
\label{T1}
\end{align}

\noi
with  $\ft{\Gdl}(n)$ as in \eqref{GG1}.
In view of \eqref{E1a}
(see also \eqref{DX1}),
one can easily show that $\mu^\dl_{\frac{k}{2}}$
is supported on $H^{\frac{k-1}2 - \eps}(\T)$
but that $\mu^\dl_{\frac{k}{2}}\big( H^{\frac{k-1}2}(\T) \big) = 0$.
When $\dl = \infty$, it follows from \eqref{E0}
with $\lim_{\dl \to \infty} \Gdl = \H$
that
$T_{\infty, \frac{k}{2}} (n)  = |n|^{k}$,
and thus
 the base Gaussian measure is given by
\begin{align}
\mu^\infty_{\frac{k}{2}} (du) = Z_{\infty, \frac{k}{2}}^{-1} \exp\bigg(- \frac 12 \| u \|^2_{\dot{H}^{\frac{k}{2}}}\bigg)  du.
\label{gauss2}
\end{align}

The  interaction potential
$R^\dl_{\frac{k}{2}} (u)$, consisting of cubic and higher order terms in $u$,
is not sign-definite (see \eqref{R1} below), exhibiting super-Gaussian growth.
As a result, the expression in \eqref{rho2}
can not be normalized to be a probability measure.
Following the work \cite{LRS, BO94, OST} on the construction of focusing Gibbs measures,
we instead consider a weighted Gaussian measure  with an $L^2$-cutoff.
More precisely,
let $\eta \in C^\infty([0, \infty); [0, 1])$
be a smooth cutoff function
supported on $[0,2]$
 with $\eta\equiv 1$ on $[0,1]$.
 Then, given $0 < \dl \le \infty$ and $K > 0$, we aim to construct the 
 deep-water 
 \GGM~ of the form:\footnote{A precise
 value of $K  > 0$ does not play any role in the remaining part of this paper
 and hence, for simplicity of notation,
 we suppress the $K$-dependence of
$\rho^\dl_{\frac{k}{2}}$
and $F^\dl_{\frac{k}{2}}$
in \eqref{rho3}.
We apply the same convention to  various objects throughout the paper.}
\begin{align}
\begin{split}
\rho^\dl_{\frac{k}{2}}(du)
& = Z^{-1}_{\dl, \frac k 2} \,
F^\dl_{\frac{k}{2}}(u)
 d\mu^\dl_{\frac k 2}(u)\\
:\! & = Z^{-1}_{\dl, \frac k 2} \, \eta_K\big(\| u\|_{L^2}\big)
\exp\Big(-R^\dl_{\frac k 2}(u)\Big)  d\mu^\dl_{\frac k 2}(u),
\end{split}
\label{rho3}
\end{align}

\noi
where $\eta_K(\,\cdot\,)=\eta(K^{-1}\, \cdot\,)$.
When $k = 1$, the support of
$\mu^\dl_{\frac{k}{2}}$ is outside $L^2(\T)$
and hence we need to replace the $L^2$-cutoff by a renormalized $L^2$-cutoff.
Since the $k = 1$ is already treated in~\cite{Tz10, OST2, LOZ},
we focus on $k \ge 2$ and suppress this issue; see \cite{LOZ}
for a further discussion
on the $k = 1$ case.

We construct the \GGM~ $\rho^\dl_\frac k2$ as a (unique) limit
of its frequency-truncated versions.
Given $N\in\N$, let $\P_N$ be the Dirichlet projector onto spatial frequencies $\{|n| \leq N\}$;
see~\eqref{Diri1}.
Then,
define the truncated \GGM~
$\rho^\dl_{\frac{k}{2}, N}$ by
\begin{align}
\begin{split}
\rho^\dl_{\frac{k}{2}, N}(du) & = Z^{-1}_{\dl, \frac{k}{2}, N}  \,
F^\dl_{\frac{k}{2}}(\P_N u)  d\mu^\dl_{\frac{k}{2}}(u)\\
 & = Z^{-1}_{\dl, \frac k 2,  N} \, \eta_K\big(\| \P_Nu\|_{L^2}\big)
\exp\Big(-R^\dl_{\frac k 2}(\P_Nu)\Big)  d\mu^\dl_{\frac k 2}(u).
\end{split}
\label{rho4}
\end{align}

\noi
The following theorem shows
that, as $N\to\infty$,
$\rho^\dl_{\frac k 2, N}$ converges in total variation
to
 $\rho^\dl_{\frac{k}{2}}$ in~\eqref{rho3}
 with a uniform rate of convergence 
 for  $2\le \dl \le \infty$.
 Moreover, we establish 
 deep-water convergence (as $\dl \to \infty$) of
  $\rho^\dl_{\frac{k}{2}}$.

\begin{theorem}[deep-water regime] \label{THM:3}

Let $k \ge 2$ be an integer and $K > 0$. Then, the following statements hold.

\smallskip

\noi{\rm(i)} Let $0<\dl\leq \infty$.
Then, given any finite $p\ge 1$, we have
\begin{equation}
\lim_{N\to \infty} F^\dl_{\frac{k}{2}}(\P_N u) = F^\dl_{\frac{k}{2}}(u)
 \quad \text{in } L^p(d\mu^\dl_{\frac{k}{2}}),
\label{conv1}
\end{equation}

\noi
where the convergence is uniform in $2 \le \dl \le \infty$.
As a consequence, as $N \to \infty$, 
the truncated \GGM~ $\rho^\dl_{\frac{k}{2}, N}$ in \eqref{rho4} converges, in the sense
of~\eqref{conv1},  to the limiting \GGM~ $\rho^\dl_{\frac{k}{2}}$ in~\eqref{rho3}.
In particular, $\rho^\dl_{\frac{k}{2}, N}$ converges to $\rho^\dl_{\frac{k}{2}}$ in total variation,
and
the limiting \GGM~ $\rho^\dl_{\frac{k}{2}}$ is
equivalent
 to the Gaussian measure with the $L^2$-cutoff
$\eta_K\big(\| u\|_{L^2}\big) d \mu^\dl_\frac k2(u)$.

\smallskip

\noi {\rm(ii)}
For $0<  \dl <\infty$, the \GGMs~ $\rho^\dl_{\frac{k}{2}}$  and
$\rho^\infty_{\frac{k}{2}}$  are equivalent.
Moreover,
 as $\dl \to \infty$,
 $\rho^\dl_{\frac{k}{2}}$ converges to $\rho^\infty_{\frac{k}{2}}$ in total variation.

\end{theorem}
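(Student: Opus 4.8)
The plan is to reduce everything to quantitative control on the interaction potential $R^\dl_{\frac{k}{2}}$ via Wiener chaos estimates, uniformly in $2 \le \dl \le \infty$, and then feed this into a soft functional-analytic argument for total-variation convergence. For part (i), the key object is the ``softened'' weight $F^\dl_{\frac k2}(u) = \eta_K(\|u\|_{L^2}) \exp(-R^\dl_{\frac k2}(u))$. First I would record the explicit structure of $R^\dl_{\frac k2}(u)$ from \eqref{R1}: it is a finite sum of monomials, each of which is a space integral of a product of derivatives (and $\Gdl$-powers) of $u$, with coefficients that are polynomials in $\dl^{-1}$ and hence uniformly bounded for $\dl \ge 2$. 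Using the deep-water rank from Definition \ref{DEF:ord1} together with the support property $\mu^\dl_{\frac k2}(H^{\frac{k-1}{2}-\eps}) = 1$, each such monomial, when evaluated on a sample of $\mu^\dl_{\frac k2}$, lives in a fixed inhomogeneous Wiener chaos of finite order, and its $L^p(\mu^\dl_{\frac k2})$-norm is finite with a bound uniform in $2 \le \dl \le \infty$ (here one uses that $T_{\dl,\frac k2}(n) \sim_k |n|^k$ uniformly by \eqref{E1a}, so the covariances of $X^\dl_{\frac k2}$ are comparable uniformly, and hypercontractivity upgrades $L^2$ bounds to $L^p$). The same estimates applied to the difference of a monomial evaluated at $u$ versus at $\P_N u$ give, by a standard telescoping/multilinearity argument, $\|R^\dl_{\frac k2}(u) - R^\dl_{\frac k2}(\P_N u)\|_{L^p(\mu^\dl_{\frac k2})} \to 0$ as $N \to \infty$, uniformly in $\dl$. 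Combining this with the $L^2$-cutoff $\eta_K(\|\P_N u\|_{L^2})$, which tames the super-Gaussian growth of $\exp(-R^\dl_{\frac k2})$ so that all exponentials are in every $L^p$ (a Nelson-type argument: on the support of $\eta_K$, $R^\dl_{\frac k2}$ is bounded below up to a constant depending on $K$), and the elementary inequality $|e^a - e^b| \le (e^a + e^b)|a-b|$, yields \eqref{conv1}. Convergence in $L^1(\mu^\dl_{\frac k2})$ of the densities is exactly total-variation convergence of the measures, and positivity of the $L^1$-limit density together with the Gaussian reference measure gives the claimed equivalence with $\eta_K(\|u\|_{L^2})\,d\mu^\dl_{\frac k2}$.

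For part (ii), I would split the task into two: convergence of the base Gaussian measures $\mu^\dl_{\frac k2} \to \mu^\infty_{\frac k2}$ and convergence of the weights $F^\dl_{\frac k2} \to F^\infty_{\frac k2}$. For the Gaussians, the Feldman--H\'ajek dichotomy applies: $\mu^\dl_{\frac k2}$ and $\mu^\infty_{\frac k2}$ are diagonal Gaussians on Fourier modes with variances $T_{\dl,\frac k2}(n)^{-1}$ and $|n|^{-k}$ respectively, so equivalence holds iff $\sum_n (T_{\dl,\frac k2}(n)/|n|^k - 1)^2 < \infty$. By \eqref{T1} and Lemma~\ref{LEM:K1}, $T_{\dl,\frac k2}(n)/|n|^k = \sum_{\l \text{ even}} a_{k,\l} |\ft\Gdl(n)|^{k-\l}$, and since $|\ft\Gdl(n)| \to 1$ with $|1 - |\ft\Gdl(n)||$ decaying exponentially in $\dl|n|$ (the uniform-in-frequency deep-water decay encoded in \eqref{Q1}), the series converges and in fact $\to 0$ as $\dl \to \infty$; moreover the Radon--Nikodym derivative $d\mu^\dl_{\frac k2}/d\mu^\infty_{\frac k2}$ converges to $1$ in $L^1(\mu^\infty_{\frac k2})$, giving total-variation convergence of the base Gaussians. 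For the weights, I would push everything onto the common reference measure $\mu^\infty_{\frac k2}$: writing $\rho^\dl_{\frac k2} = Z^{-1} F^\dl_{\frac k2} \cdot (d\mu^\dl_{\frac k2}/d\mu^\infty_{\frac k2}) \, d\mu^\infty_{\frac k2}$, it suffices to show $F^\dl_{\frac k2}(u) \cdot (d\mu^\dl_{\frac k2}/d\mu^\infty_{\frac k2})(u) \to F^\infty_{\frac k2}(u)$ in $L^1(\mu^\infty_{\frac k2})$. The Radon--Nikodym factor is handled by the previous step; for $F^\dl_{\frac k2} \to F^\infty_{\frac k2}$ one uses Theorem~\ref{THM:1}, namely $E^\dl_{\frac k2}(u) \to E^\BO_{\frac k2}(u)$ pointwise, which gives $R^\dl_{\frac k2}(u) \to R^\BO_{\frac k2}(u)$ for fixed $u$ in the relevant Sobolev space, plus a uniform $L^p(\mu^\infty_{\frac k2})$ bound on $R^\dl_{\frac k2}$ (again via the uniform Wiener chaos estimates, now comparing covariances of $\mu^\dl_{\frac k2}$ to those of $\mu^\infty_{\frac k2}$, which are uniformly comparable for $\dl \ge 2$) to invoke dominated convergence; the $L^2$-cutoff keeps the exponentials uniformly integrable. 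A slight care is needed because the quadratic parts in \eqref{E00} and \eqref{E2} have different numbers of terms, so one must be careful about what is absorbed into the Gaussian and what into $R^\dl_{\frac k2}$; the convention \eqref{E0} and the normalization that the total quadratic coefficient is $1$ are what make the split consistent in the limit.

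The main obstacle I anticipate is obtaining the Wiener chaos estimates for $R^\dl_{\frac k2}$ \emph{with constants uniform in $\dl$}, for all $k \ge 2$ simultaneously. The difficulty is genuinely structural: as emphasized in the introduction, $R^\dl_{\frac k2}$ contains many more monomials than its BO counterpart (including terms carrying negative powers of $\dl$), and these monomials can in principle involve $\Gdl$-derivatives of $u$ of order up to $\frac{k-1}{2}$, which is exactly the borderline regularity for $\mu^\dl_{\frac k2}$. One must verify that every such monomial is ``subcritical'' — i.e., that after distributing derivatives the total number of them landing on any factor stays below the $\mu^\dl_{\frac k2}$-a.s. regularity threshold — and this is precisely where the deep-water rank bookkeeping of Definition~\ref{DEF:ord1} and the smoothing of the perturbation operator $\Qdl$ from \eqref{Qdl1}–\eqref{Q1} must be invoked to ``trade'' a factor of $\Qdl$ for extra smoothing uniformly in $\dl$. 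In effect, the substance of the proof is an induction on $k$ establishing both the explicit monomial structure of $R^\dl_{\frac k2}$ (carried out in Section~\ref{SEC:cons1}) and the uniform integrability bounds; once that is in hand, parts (i) and (ii) follow from the soft arguments above.
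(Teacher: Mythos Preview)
Your proposal correctly identifies most of the ingredients, and the soft arguments for part (ii) (Kakutani/Feldman--H\'ajek for the base Gaussians, pushing to a common reference measure) match the paper's route fairly closely. However, there is a genuine gap at the most crucial step in part (i): the uniform-in-$N$ (and in $\dl\ge 2$) $L^p$-integrability of the truncated density $F^\dl_{\frac k2}(\P_N u) = \eta_K(\|\P_N u\|_{L^2})\exp(-R^\dl_{\frac k2}(\P_N u))$.

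Your claim that ``on the support of $\eta_K$, $R^\dl_{\frac k2}$ is bounded below up to a constant depending on $K$'' is false. The $L^2$-cutoff controls only $\|u\|_{L^2}$, whereas $R^\dl_{\frac k2}$ involves up to $\lceil\frac{k-1}{2}\rceil$ derivatives of $u$ (see \eqref{Aeven}--\eqref{Aodd}); on the $L^2$-ball in $H^{\frac{k-1}{2}-\eps}$ the functional $R^\dl_{\frac k2}$ is unbounded in both directions, so no deterministic lower bound is available. The classical Nelson argument you invoke works for local polynomial interactions like $\Phi^4_2$ where the renormalized potential is bounded below, but not for the derivative-laden, non-sign-definite $R^\dl_{\frac k2}$ here. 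This is precisely why the paper stresses that constructing the measure with \emph{only} an $L^2$-cutoff is a genuine simplification over Tzvetkov--Visciglia, who imposed cutoffs on all lower-order conserved quantities (which effectively control the $H^{\frac{k-1}{2}}$-norm and thereby do give a deterministic bound). The paper's Proposition~\ref{PROP:V1} instead relies on the Bou\'e--Dupuis variational formula (Lemma~\ref{LEM:var0}): one bounds $\log\E[\exp(-pR^\dl_{\frac k2}(\P_N X^\dl_{\frac k2}) - pA\|\P_N X^\dl_{\frac k2}\|_{L^2}^\al)]$ by a supremum over drifts $\Theta$, and the key estimates (Lemmas~\ref{LEM:var1}--\ref{LEM:var2}) show that $|R^\dl_{\frac k2}(X^\dl_N+\Theta_N)|$ is controlled by $C_{\eps_0}(1+\|\Theta_N\|_{L^2}^\al) + \eps_0\|\Theta_N\|_{H^{k/2}}^2$, with the small $H^{k/2}$-piece absorbed by the entropy term and the $L^2$-piece by the penalty $A\|\cdot\|_{L^2}^\al$. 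Without this variational input, your inequality $|e^a-e^b|\le(e^a+e^b)|a-b|$ cannot be combined with H\"older to yield \eqref{conv1}, since the factor $e^a+e^b$ is not known to lie in any $L^q$. Once Proposition~\ref{PROP:V1} is in hand, the rest of your outline (Wiener-chaos convergence of $R^\dl_{\frac k2}(\P_N u)$, continuous mapping, Scheff\'e-type convergence for (ii)) is essentially what the paper does.
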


In \cite{LOZ}, the second and third authors with G.\,Zheng
proved an analogue of Theorem~\ref{THM:3}
for $k = 1$, corresponding to the Gibbs measures.
Theorem \ref{THM:3} extends the result in \cite{LOZ}
to any $k \ge 2$.
In \cite{TV0},
Tzvetkov and Visciglia
constructed
the \GGMs~ associated with
the  higher order conservation laws of the BO equation \eqref{BO}.
Their measures were constructed
with smooth cutoffs on  the lower order conservation laws
 $E^\BO_{\frac j 2}(u)$, for $j=0, \ldots, k-1$
 (with a renormalization for $j = k -1$).
 In Theorem \ref{THM:3},
 we only impose an $L^2$-cutoff,
 thus providing 
 a significant simplification.
We also point out that
our  construction of the \GGM~  $\rho^\dl_\frac k2$
only with  an $L^2$-cutoff
also simplifies the invariance argument in the proof of Theorem~\ref{THM:5}
as compared to \cite{TV1, TV2};
see Remarks~\ref{REM:cutoff} and \ref{REM:TV} below.
A similar comment applies to the shallow-water regime.
Our simplified construction of the \GGM~
is based on 
the Bou\'e-Dupuis variational formula~\cite{BD98, Ust14}, 
recently popularized by Barashkov and Gubinelli \cite{BG}.
See also 
the (non-)construction of focusing Gibbs measures in \cite{OOT, OST2, OOT2, TW}
via the variational formula.

The construction of the \GGM~ $\rho^\dl_\frac k2$
for each fixed $0 < \dl \le \infty$
follows from
establishing a uniform (in $N$) estimate on  the truncated densities
 $F^\dl_{\frac{k}{2}}(\P_N u) $
  in $L^p(d\mu^\dl_\frac k2)$.
On the other hand,
in order to
prove the deep-water convergence in Theorem \ref{THM:3}\,(ii),
we need to   establish an $L^p(d\mu^\dl_\frac k2)$-estimate
on
 $F^\dl_{\frac{k}{2}}(\P_N u) $,
{\it uniformly in both $N \in \N$ and $\dl \gg1$}.
Our detailed analysis 
in Section \ref{SEC:cons1}
on the structure
of the deep-water conservation laws
and 
a very robust  variation formula
(Lemma \ref{LEM:var0})
 allow us to 
 achieve these goals, working
 only with the $L^2$-cutoff.
We also
note that
the base Gaussian measures $\mu^\dl_\frac k2$ are different
  for different values of $\dl$.
  In order to overcome this issue, we indeed
  work with
the random variable $X^\dl_{\frac{k}{2}}$ in~\eqref{Xdl1}
and
  estimate
$F^\dl_{\frac{k}{2}}(\P_N X^\dl_{\frac{k}{2}}) $ in $L^p(\O)$,
 uniformly in both  $N \in \N$ and $\dl \gg1$.

Once we obtain such a uniform $L^p$-bound,
the desired deep-water convergence
essentially follows from establishing
the corresponding deep-water convergence of
the Gaussian measure $\mu^\dl_\frac k2$;
see Proposition~\ref{PROP:gauss1},
where we in fact prove convergence of~$\mu^\dl_\frac k2$ in a stronger topology
(namely,  in the Kullback-Leibler divergence
defined in Definition \ref{DEF:KL}).
As for equivalence of the \GGMs~
with different depth parameters, 
we first prove
equivalence of the Gaussian measures $\mu^\dl_\frac k2$
and $\mu^\infty_\frac k2$
via
Kakutani's theorem (Lemma \ref{LEM:kak}),
using the Gaussian Fourier series representation~\eqref{Xdl1}.
Then, the claimed equivalence
of  $\rho^\dl_{\frac{k}{2}}$ and
$\rho^\infty_{\frac{k}{2}}$
follows
from Theorem \ref{THM:3}\,(i).

\medskip

We now turn  to the shallow-water regime.
Due to the more complicated structure of the shallow-water conservation laws in \eqref{E5}
and the singular nature of their shallow-water convergence,
the construction and convergence of the \GGMs~
associated with these conservation laws require more
careful analysis.
For $0 < \dl < \infty$, we first define the base Gaussian measure $\wt{\mu}^\dl_{\frac k 2}$,
$k \in \N$,
associated with the quadratic part
of the shallow-water conservation law
$\wt E^\dl _{\frac k2}(v)$ in \eqref{E5},
by setting, for $\kk \in \N$,
\begin{align}
\begin{split}
\wt{\mu}^\dl_{\kk - \frac 12} (dv)
& = \wt{Z}^{-1}_{\dl,\kk - \frac 12} \exp\bigg(
- \frac12 \sum_{\substack{\l=1\\\text{odd}}}^{2\kk-1}
\wt a_{2\kk-1, \l}\,
\dl^{\l-1} \|\Gd^{\frac{\l}{2}} v \|^2_{\dot{H}^{\kk - \frac 12}}
\bigg) dv,\\
\wt{\mu}^\dl_{\kk} (dv)
& = \wt{Z}^{-1}_{\dl,\kk} \exp\bigg(
-  \frac12 \sum_{\substack{\l=0\\ \text{even}}}^{2\kk}
\wt a_{2\kk, \l}\,
 \dl^{\l} \| \Gd^{\frac \l 2} v \|^2_{\dot{H}^{\kk}}
\bigg) dv.
\end{split}
\label{gauss3}
\end{align}

\noi
Namely,
$\wt{\mu}^\dl_{\kk - \frac 12}$
and $\wt{\mu}^\dl_\kk$ are defined as the induced probability measures under the following maps:
\begin{align}
\begin{split}
\omega \in \Omega & \longmapsto \wt X^\dl_{\kk - \frac 12}(\o)
 =\frac{1}{\sqrt{2\pi}} \sum_{n\in \Z^*} \frac{g_n(\omega)}{\big(\wt T_{\dl,\kk - \frac{1}{2}}(n)\big)^\frac12} e_n,\\
\omega \in \Omega & \longmapsto \wt X^\dl_{\kk}(\o) =\frac{1}{\sqrt{2\pi}} \sum_{n\in \Z^*} \frac{g_n(\omega)}{\big(\wt T_{\dl, \kk }(n)\big)^\frac12} e_n,
\end{split}
\label{Xdl2}
\end{align}

\noi
respectively,
where
$\{g_n\}_{n\in\Z^*}$ is as in \eqref{Xdl1}
and
\begin{align}
\begin{split}
\wt T_{\dl, \kk - \frac{1}{2}}(n) & =
\sum_{\substack{\l=1\\\text{odd}}}^{2\kk-1}
\wt a_{2\kk-1, \l}\,
\dl^{\l-1} |n|^{2\kk-1} |\ft \Gd(n)|^\l,  \\
\wt T_{\dl, \kk }(n) & =
 \sum_{\substack{\l=0\\ \text{even}}}^{2\kk}
\wt a_{2\kk, \l}\,
 \dl^{\l}
 |n|^{2\kk} |\ft \Gd(n)|^\l.
\end{split}
\label{T2}
\end{align}

\noi
Here,   $\ft{\Gd}(n)$ is the multiplier for $\Gd$ in \eqref{sILW}, 
given by
\begin{align}
\ft{\Gd}(n) =  \dl^{-1} \ft \Gdl(n),
\label{Gd1}
\end{align}

\noi
where $\ft \Gdl(n)$ is as in \eqref{GG1}.
In view of \eqref{T6}, 
one can show that $\wt \mu^\dl_{\frac{k}{2}}$, $k \in \N$,
is supported on $H^{\frac{k-1}2 - \eps}(\T)$
but that $\wt \mu^\dl_{\frac{k}{2}}\big( H^{\frac{k-1}2}(\T) \big) = 0$.

As in the deep-water regime, we restrict our attention to $k \ge 2$
in the following such that the Gaussian measure $\wt \mu^\dl_{\frac k2}$
is supported on $L^2(\T)$.
Then, given $K > 0$,
we define the shallow-water  \GGM~
$\wt{\rho}^\dl_{\frac k2 }$
associated with $\wt E^\dl_\frac k2(v)$ in \eqref{E5}
by setting
\begin{align}
\begin{split}
\wt{\rho}^\dl_{\frac k2}(d v)
& = \wt{Z}^{-1}_{\dl, \frac k2 }\, \wt F^\dl_\frac k2(v)
 d\wt{\mu}^\dl_{ \frac k 2}(v)\\
: \!& = \wt{Z}^{-1}_{\dl, \frac k2 } \, \eta_K\big(\|v\|_{L^2}\big)
\exp\Big( - \wt{R}^\dl_{ \frac k 2}(v) \Big) d\wt{\mu}^\dl_{ \frac k 2}(v).
\end{split}
\label{rho5}
\end{align}

\noi
Given $N \in \N$,
we also define their frequency-truncated version:
\begin{align}
\begin{split}
\wt \rho^\dl_{\frac{k}{2}, N}(dv)
& = \wt Z^{-1}_{\dl, \frac{k}{2}, N} \,  \wt F^\dl_{\frac{k}{2}}(\P_N v)  d\wt \mu^\dl_{\frac{k}{2}}(v).
\end{split}
\label{rho7}
\end{align}

Let us now discuss the $\dl = 0$ case.
We note that, in taking a limit of
$\wt T_{\dl, \kk - \frac{1}{2}}(n)$
(and $\wt T_{\dl, \kk}(n)$) in \eqref{T2}
as $\dl \to 0$,
all the terms except for $\l = 1$ (and $\l = 0$, respectively)
vanish.
In fact, we have 
\begin{align}
\lim_{\dl \to 0}
\wt T_{\dl, \kk - \frac{1}{2}}(n)  =
\lim_{\dl \to 0}
\wt T_{\dl, \kk }(n)  =
 n^{2\kk}.
\label{T3}
\end{align}

\noi
See 
 Lemma \ref{LEM:Tdl}.
Hence, for  $\kk \in \N$,
we define the \GGM~
$\wt \rho^0_\kk$
associated with
the KdV conservation laws $\wt E^\KDV_{\kk}(v)$ in~\eqref{E6}
by setting
\begin{align}
\begin{split}
\wt{\rho}^0_{\kk}(dv)
& = \wt{Z}^{-1}_{0, \kk }\, \wt F^0_\kk(v)
 d\wt{\mu}^0_{\kk}(v)\\
& = \wt{Z}^{-1}_{0,\kk}\, \eta_K\big(\|v\|_{L^2}\big) \exp\Big(-\wt{R}^0_{\kk}(v)\Big)
d\wt{\mu}^0_{\kk}(v)
\end{split}
\label{rho8}
\end{align}

\noi
along with their truncated version:
\begin{align*}
\wt{\rho}^0_{\kk, N}(dv)
 = \wt{Z}^{-1}_{0, \kk, N }\, \wt F^0_\kk(\P_N v)
 d\wt{\mu}^0_{\kk}(v),
\end{align*}

\noi
where
\begin{align}
\wt \mu^0_\kk := \mu^\infty_{\kk}
\label{gauss4}
\end{align}

\noi
is the Gaussian measure
defined in
\eqref{gauss2} with $k = 2\kk$.
Namely,
$\wt \mu^0_\kk$
is  the induced probability measure under the map:
\begin{align}
\o \in\O \longmapsto \wt X^0_\kk(\o) = 
\frac{1}{\sqrt{2\pi}} \sum_{n\in \Z^*} \frac{g_n(\omega)}{\big(\wt T_{0, \kk }(n)\big)^\frac12} e_n
: = 
\frac{1}{\sqrt{2\pi}} \sum_{n\in\Z^*} \frac{g_n(\o)}{|n|^\kk} e_n.
\label{Xdl3}
\end{align}

We now state our main result
on the construction and further properties
of the shallow-water \GGMs~ $\wt \rho^\dl_{\frac k2}$ in \eqref{rho5}.

\begin{theorem}[shallow-water regime]
\label{THM:4}
Let $k \ge 2$ be an integer and   $K > 0$. Then, the following statements hold. 

\smallskip

\noi{\rm(i)} Let
$0\le\dl <\infty$.
 Then, given any finite $p\ge 1$,  we have
\begin{equation}
\lim_{N\to \infty} \wt{F}^\dl_{\frac k 2}(\P_N v)
= \wt{F}^\dl_{\frac k 2}(v) \quad \text{in } L^p(d\wt{\mu}^\dl_{\frac k2}),
\label{conv2}
\end{equation}

\noi
where the convergence is uniform  in $0<  \dl\le 1$.
As a consequence, as $N \to \infty$,  the truncated \GGM~
$\wt{\rho}^\dl_{\frac k 2, N}$ in \eqref{rho7} converges, in the sense
of~\eqref{conv2},  to the limiting \GGM~
 $\wt{\rho}^\dl_{\frac k 2}$ in~\eqref{rho5}.
In particular, $\wt \rho^\dl_{\frac{k}{2}, N}$ converges to $\wt \rho^\dl_{\frac{k}{2}}$ in total variation,
and the limiting \GGM~
$\wt \rho^\dl_{\frac{k}{2}}$ is
equivalent
 to the Gaussian measure with the $L^2$-cutoff
$\eta_K\big(\| v\|_{L^2}\big) d \wt \mu^\dl_\frac k2(v)$.
When $k$ is even, the results extend to $\dl = 0$.

\smallskip

\noi {\rm(ii)}
Let $0<\dl<\infty$.
Then,  the \GGMs~ $\wt{\rho}^{\dl}_{ \frac k2}$
and
$\wt{\rho}^0_{\lceil \frac k2\rceil}$
are  singular.
Moreover, as $\dl \to 0$,
 the measure $\wt{\rho}^\dl_{\frac k2}$  converges weakly to $\wt{\rho}^0_{\lceil \frac k2\rceil}$.
Here,   $\lceil x \rceil$ denotes
the smallest integer greater than or equal to $x$.

\smallskip

\noi {\rm(iii)}
Let   $0<\dl_1, \dl_2 <\infty$.
When $k$ is odd,  the odd-order \GGMs~ $\wt{\rho}^{\dl_1}_{\frac k2}$
and
$\wt{\rho}^{\dl_2}_{\frac k2}$
are singular.
On the other hand, 
when $k$ is even, 
the even-order \GGMs~
 $\wt{\rho}^{\dl_1}_{\frac k2}$ and 
$\wt{\rho}^{\dl_2}_{\frac k2}$ are equivalent.

\end{theorem}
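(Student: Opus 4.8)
The plan is to prove (i) as the shallow-water analogue of Theorem~\ref{THM:3}\,(i) --- the technical core --- and then to extract (ii) and (iii) as soft consequences, via Kakutani's theorem (Lemma~\ref{LEM:kak}) and the structure of the shallow-water conservation laws established in Section~\ref{SEC:cons2}. For part~(i), I would work with the Gaussian random variable $\wt X^\dl_{\frac k2}$ from~\eqref{Xdl2} (and $\wt X^0_\kk$ from~\eqref{Xdl3} when $k$ is even) and first prove the uniform bound
\[
\sup_{N\in\N}\ \sup_{0<\dl\le1}\ \big\|\wt F^\dl_{\frac k2}(\P_N\wt X^\dl_{\frac k2})\big\|_{L^p(\O)}<\infty
\qquad\text{for every finite }p\ge1,
\]
with $\dl=0$ admissible when $k$ is even (then $\wt\mu^0_\kk=\mu^\infty_\kk$ is the benign $H^\kk$-Gaussian; for odd $k$ the base Gaussian jumps in regularity at $\dl=0$, so this endpoint is excluded). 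Since $\wt R^\dl_{\frac k2}$ is not sign-definite and has super-Gaussian growth, its exponential integrability is extracted from the $L^2$-cutoff $\eta_K$ through the Bou\'e--Dupuis variational formula (Lemma~\ref{LEM:var0}), exactly as in the deep-water regime, now feeding in the description of $\wt R^\dl_{\frac k2}$ in~\eqref{Scons2}, the base multipliers~\eqref{T2}, and the uniform lower bounds $\wt T_{\dl,\kk-\frac12}(n)\gtrsim|n|^{2\kk-1}$, $\wt T_{\dl,\kk}(n)\gtrsim|n|^{2\kk}$ valid for $0<\dl\le1$ (a consequence of $|\ft{\Gd}(n)|\gtrsim1$ there; see Lemma~\ref{LEM:L1}). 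Organizing the monomials of $\wt R^\dl_{\frac k2}$ by the shallow-water rank (Definition~\ref{DEF:ord2}) and exploiting the vanishing of $\Qd=\Gd+\tfrac13\dx$ in~\eqref{Qdl2} as $\dl\to0$ then upgrades these estimates to $\wt F^\dl_{\frac k2}(\P_N v)\to\wt F^\dl_{\frac k2}(v)$ in $L^p(d\wt\mu^\dl_{\frac k2})$, uniformly in $0<\dl\le1$ (resp.\ $0\le\dl\le1$ for $k$ even). Total-variation convergence of $\wt\rho^\dl_{\frac k2,N}$ follows, and since $\wt F^\dl_{\frac k2}>0$ $\wt\mu^\dl_{\frac k2}$-a.s.\ on $\{\|v\|_{L^2}<2K\}$ and lies in every $L^p$, the limit $\wt\rho^\dl_{\frac k2}$ is equivalent to $\eta_K(\|v\|_{L^2})\,d\wt\mu^\dl_{\frac k2}(v)$.

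By part~(i), every measure appearing in (ii)--(iii) is equivalent to the corresponding Gaussian with an $L^2$-cutoff, so all equivalence\,/\,singularity statements reduce to the same statements for the base Gaussians $\wt\mu^\dl_{\frac k2}$, which I would settle via Kakutani's theorem applied to the Fourier series~\eqref{Xdl2}. The decisive input is the large-frequency asymptotics of~\eqref{T2}: since $\dl|\ft{\Gd}(n)|\to1$ as $n\to\infty$ (Lemma~\ref{LEM:L1}), one finds, for $k=2\kk$ even, $\wt T_{\dl,\kk}(n)=c\,|n|^{2\kk}\big(1+O_\dl(1/n)\big)$ with $c=\sum_{\ell\ \mathrm{even}}\wt a_{2\kk,\ell}>1$ \emph{independent of $\dl$}, while for $k=2\kk-1$ odd, $\wt T_{\dl,\kk-\frac12}(n)\sim\tfrac{c'}{\dl}|n|^{2\kk-1}$ with $c'=\sum_{\ell\ \mathrm{odd}}\wt a_{2\kk-1,\ell}$, whose leading constant scales like $\dl^{-1}$. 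Hence, for $0<\dl_1,\dl_2<\infty$: in the even case $\wt T_{\dl_1,\kk}(n)/\wt T_{\dl_2,\kk}(n)=1+O(1/n)$, so $\wt\mu^{\dl_1}_\kk\sim\wt\mu^{\dl_2}_\kk$ by Kakutani; in the odd case $\wt T_{\dl_1,\kk-\frac12}(n)/\wt T_{\dl_2,\kk-\frac12}(n)\to\dl_2/\dl_1\ne1$, so $\wt\mu^{\dl_1}_{\kk-\frac12}\perp\wt\mu^{\dl_2}_{\kk-\frac12}$ --- this gives (iii). For (ii), the even-order comparison against $\dl=0$ has $\wt T_{\dl,\kk}(n)/\wt T_{0,\kk}(n)\to c\ne1$, forcing $\wt\mu^\dl_\kk\perp\wt\mu^0_\kk$; the odd-order comparison is even simpler, since by~\eqref{T3} $\wt\mu^0_\kk$ is supported on $H^{\kk-\frac12-\eps}(\T)\subset H^{\kk-1}(\T)$ whereas $\wt\mu^\dl_{\kk-\frac12}$ charges $H^{\kk-1-\eps}(\T)\setminus H^{\kk-1}(\T)$, so the two are mutually singular by a support argument. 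Finally, the weak convergence $\wt\rho^\dl_{\frac k2}\rightharpoonup\wt\rho^0_{\lceil k/2\rceil}$ as $\dl\to0$ follows by combining the uniform-in-$\dl$ bounds of part~(i), the pointwise convergence $\wt T_{\dl,\cdot}(n)\to|n|^{2\lceil k/2\rceil}$ (which yields tightness and weak convergence of the base Gaussians in, say, $H^{\kk-1-\eps}(\T)$), and the convergence $\wt R^\dl_{\frac k2}(\wt X^\dl_{\frac k2})\to\wt R^0_{\lceil k/2\rceil}(\wt X^0_{\lceil k/2\rceil})$ in $L^p(\O)$ under the natural coupling (from Theorem~\ref{THM:2} and the Wiener-chaos estimates of part~(i)); a standard measure-convergence lemma then delivers weak convergence of the weighted measures.

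The main obstacle is part~(i): securing the variational bounds on $\wt F^\dl_{\frac k2}$ \emph{uniformly down to $\dl\to0$}. As flagged in the introduction, the perturbation operator $\Qd$ converges more slowly than its deep-water counterpart --- $\Qd\dx$ is not uniform in the frequency (Lemma~\ref{LEM:L1}\,(iv)) --- and fewer spare powers of $\dl$ are available, so extracting a uniform smoothing gain for the higher-order monomials of $\wt R^\dl_{\frac k2}$, while carrying out the shallow-water rank bookkeeping, is where the genuine work lies; parts (ii)--(iii) are, by comparison, soft.
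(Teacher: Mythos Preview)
Your approach is essentially the same as the paper's, and is correct. Part~(i) is indeed carried out via the Bou\'e--Dupuis variational formula with the structure~\eqref{Scons2} and the uniform lower bounds on $\wt T_{\dl,\frac k2}$ from Lemma~\ref{LEM:Tdl}; parts~(ii)--(iii) are reduced to the base Gaussians via part~(i) and settled by Kakutani (Proposition~\ref{PROP:gauss2}), with your large-$n$ asymptotics for $\wt T_{\dl,\frac k2}(n)$ matching the paper's computations; weak convergence in~(ii) goes through the coupling and a portmanteau argument as you outline.

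One correction worth flagging: you overstate the role of the perturbation operator $\Qd$ in part~(i). The paper's variational estimate (Lemma~\ref{LEM:Svar2}) and the convergence Lemma~\ref{LEM:SR1} do \emph{not} use $\Qd$ at all --- the uniform-in-$\dl$ control of the monomials in $\wt R^\dl_{\frac k2}$ comes instead from the uniform $L^r$-boundedness of $\dl\Gd$ (Lemma~\ref{LEM:T1}\,(iii)), which lets one simply drop the $\dl\Gd$ factors, together with the crude bound $|\ft{\Gd}(n)|\lesssim|n|$ for the single unpaired $\Gd$ that can occur when $k$ is odd. The slow convergence of $\Qd$ you mention is a genuine issue, but it enters in the proof of Theorem~\ref{THM:2} (convergence of the conservation laws as $\dl\to0$), not in the uniform integrability of the densities. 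So the ``main obstacle'' you identify is real, but it has already been absorbed into Section~\ref{SEC:cons2}; the measure-theoretic part~(i) is then a fairly direct transcription of the deep-water argument.
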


Theorem \ref{THM:4} manifests the singular nature
of the shallow-water convergence in several ways.
First of all,
Theorem \ref{THM:4} establishes a
 2-to-1 collapse
of the \GGMs~
$\wt{\rho}^\dl_{\kk - \frac 12}$ and $\wt{\rho}^\dl_{\kk}$
in the shallow-water limit.
In particular,
in the convergence
of $\wt{\rho}^\dl_{\kk - \frac 12}$
to $\wt{\rho}^0_{\kk}$, there is a regularity jump on the supports of the measures.
A similar regularity jump
of the supports was observed
in  \cite{LOZ} at the level of the scaled ILW Gibbs measure $\wt{\rho}^\dl_{\frac12}$
(constructed with a renormalized $L^2$-cutoff)
converging to the KdV Gibbs measure $\wt \rho^0_1$ (with a shifted $L^2$-cutoff);
see  \cite{LOZ} for further details.
Moreover, for $0 < \dl < \infty$, 
the \GGMs~
 $\wt{\rho}^\dl_{\kk - \frac 1 2}$
and   $\wt{\rho}^\dl_{\kk}$
  are 
 singular with respect to their shallow-water
 limit $\wt{\rho}^0_{\kk}$.
As a consequence,  while the shallow-water convergence takes place,
the convergence holds {\it only weakly},
exhibiting a sharp contrast to the  convergence in total variation observed in the deep-water limit.
We point out that, while
there is no regularity jump in the convergence of $\wt{\rho}^\dl_{\kk}$
to
$\wt{\rho}^0_{\kk}$,
stronger convergence (such as that in total variation)
is not possible due to their   singularity.
Lastly,
Theorem~\ref{THM:4}\,(iii)
exhibits an interesting dichotomy 
on singularity\,/\,equivalence 
of the odd-order and even-order \GGMs~
  with different (positive) depth parameters.

The proof of Theorem \ref{THM:4}\,(i)
is analogous to  that of
Theorem \ref{THM:3}\,(i) in the deep-water regime.
In particular, a key step is
establishing a uniform (in $N$ and also in $0 \le \dl \le 1$)
bound on
 the truncated densities
 $\wt F^\dl_{\frac{k}{2}}(\P_N u) $
  in $L^p(d \wt \mu^\dl_\frac k2)$.
As for Theorem~\ref{THM:4}\,(ii) and (iii),
we establish analogous results
for the base Gaussian measures
(Proposition \ref{PROP:gauss2}),
which are then used to deduce
the claimed results
for the \GGMs, 
following the argument in~\cite{LOZ}.

\begin{remark}\rm
In \cite[Theorem~1.5]{LOZ}, 
 singularity between the shallow-water (renormalized) Gibbs measure
$ \wt{\rho}^\dl_{ \frac 1 2}$, $0 < \dl < \infty$, 
 and its shallow-water limit 
  $\wt{\rho}^0_{ \frac 1 2}$
was established, but  
there was no discussion   
about singularity\,/\,equivalence 
of the Gibbs measures  with different (positive) depth parameters.
By noting that Proposition \ref{PROP:gauss2}
on singularity of the base Gaussian measures
with different depth parameters
also  holds for $k = 1$, 
we can extend Theorem~\ref{THM:4}\,(iii)
to the $k = 1$ case, establishing
  singularity
of the  scaled ILW  Gibbs measures  with different depth parameters.

\end{remark}

\subsection{Dynamical problem}
\label{SUBSEC:1.4}

We now state our main results
on the dynamical problems.
Given a   Banach space $B$,
we endow the space
$C(\R; B)$
with the
compact-open topology in time
(= the topology of uniform convergence on compact sets
in time).

\begin{theorem}
[deep-water regime]
\label{THM:5}

Let $k \geq 3$ be an integer. Then, the following statements hold.

\smallskip
\noi
\textup{(i)}
Let $0 < \dl \le \infty$.
Then,
the weighted Gaussian  measure  $\rho^\dl_{\frac k2}$
in~\eqref{rho3}
is invariant under the flow of ILW \eqref{ILW}
when $0 < \dl < \infty$
and under the flow of BO \eqref{BO} when $\dl = \infty$.

\smallskip
\noi
\textup{(ii)}
Given $0 < \dl \le \infty$,
there exists an $H^{\frac{k-1}{2}-\eps}$-valued random variable
 $u^\dl_0$
with $\Law (u^\dl_0) = \rho^\dl_{\frac k2}$
such that, as $\dl \to \infty$,   $u^\dl_0$ converges almost surely to $u^\infty_0$
in $H^{\frac{k-1}{2}-\eps}(\T)$.
Moreover,
as $\dl \to \infty$,
the unique global solution $u^\dl$ to ILW \eqref{ILW}
\textup{(}with the depth parameter $\dl$\textup{)}
with $u^\dl|_{t= 0} =  u^\dl_0$
converges almost surely
in $C(\R; H^{\frac{k-1}{2}-\eps}(\T))$
to the unique global solution $u$ to BO \eqref{BO}
with $u|_{t= 0} =  u^\infty_0$.

\end{theorem}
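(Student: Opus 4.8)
The plan is to prove Theorem~\ref{THM:5} in two stages: first establish invariance of the truncated \GGM~$\rho^\dl_{\frac k2, N}$ under a suitably truncated ILW flow, then pass to the limit $N \to \infty$ using the convergence results of Theorem~\ref{THM:3}\,(i) together with a local well-posedness theory for ILW at the regularity $H^{\frac{k-1}2-\eps}(\T)$ on which $\rho^\dl_{\frac k2}$ is supported. For part~(i), I would consider the frequency-truncated ILW equation $\dt u^N - \Gdl \dx^2 u^N = \P_N \dx\big((\P_N u^N)^2\big)$ (or a slight variant adapted to the structure of $E^\dl_{\frac k2}$). Because this is an ODE on the finite-dimensional space $\P_N L^2$ coupled with a linear flow on high frequencies, it is globally well-posed; one then checks that (a) the truncated conservation law $E^\dl_{\frac k2}(\P_N u)$ (hence $R^\dl_{\frac k2}(\P_N u)$ and $\|\P_N u\|_{L^2}$) and the Gaussian structure of $\mu^\dl_{\frac k2}$ are preserved—the key inputs being conservation of $E^\dl_{\frac k2}$ from Theorem~\ref{THM:1} together with a Liouville-type property for the truncated flow—so that $\rho^\dl_{\frac k2, N}$ in~\eqref{rho4} is genuinely invariant. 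The passage $N \to \infty$ is where one invokes Bourgain's invariant measure argument: using the uniform-in-$N$ $L^p(d\mu^\dl_{\frac k2})$ bound on $F^\dl_{\frac k2}(\P_N u)$ from Theorem~\ref{THM:3}\,(i) and the total variation convergence $\rho^\dl_{\frac k2, N} \to \rho^\dl_{\frac k2}$, together with a quantitative local well-posedness statement for ILW on $H^{\frac{k-1}2-\eps}$ and an approximation estimate $\|u^\dl - u^{\dl, N}\|$ controlled on $\rho^\dl_{\frac k2}$-typical data, one upgrades almost-sure local solutions to global ones and transfers invariance to the limit. The BO case $\dl = \infty$ is entirely parallel, or can simply be cited from \cite{TV1, TV2, DTV} after matching our \GGM~$\rho^\infty_{\frac k2}$ (constructed with only an $L^2$-cutoff) to theirs via Theorem~\ref{THM:3}\,(i); Remarks~\ref{REM:cutoff} and~\ref{REM:TV} indicate that the simplified cutoff does not cause trouble.

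For part~(ii), the natural choice is $u^\dl_0 = X^\dl_{\frac k2}(\o)$ conditioned (or reweighted) so that $\Law(u^\dl_0) = \rho^\dl_{\frac k2}$—concretely, take the Gaussian random variable \eqref{Xdl1} and build a coupling realizing the density $F^\dl_{\frac k2}$; since all $X^\dl_{\frac k2}$ are built from the \emph{same} i.i.d.\ Gaussian sequence $\{g_n\}$, the almost sure convergence $X^\dl_{\frac k2} \to X^\infty_{\frac k2}$ in $H^{\frac{k-1}2-\eps}(\T)$ follows from the convergence of the multipliers $T_{\dl, \frac k2}(n) \to T_{\infty, \frac k2}(n) = |n|^k$ (uniform in $n$, from Lemma~\ref{LEM:K1}) via a dominated-convergence / Kolmogorov-type argument; the reweighting is handled using the total-variation convergence $\rho^\dl_{\frac k2} \to \rho^\infty_{\frac k2}$ of Theorem~\ref{THM:3}\,(ii) and a Skorokhod-type representation. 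For the dynamical convergence $u^\dl \to u$ in $C(\R; H^{\frac{k-1}2-\eps})$, I would combine: (a) deterministic deep-water convergence of ILW to BO on compact time intervals—this is where the perturbative viewpoint of~\eqref{Qdl1} and the strong smoothing of $\Qdl$ (with its uniform-in-frequency vanishing \eqref{Q1}) enter, following \cite{IS23, CLOP, CFLOP}; (b) the invariance from part~(i), which via the usual Bourgain globalization/Fernique-type tail estimates promotes the local-in-time deterministic convergence to a global-in-time almost sure statement by controlling the measure of the set where solutions are large or where the ILW and BO flows diverge over long times. Concretely, one splits $[-T, T]$ into unit intervals, on each of which local theory gives convergence up to an error that can be made summable using the invariance of $\rho^\dl_{\frac k2}$ (uniformly in $\dl$, thanks to the uniform density bounds of Theorem~\ref{THM:3}) to bound the exceptional sets.

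\textbf{The main obstacle} I anticipate is the quantitative deterministic deep-water convergence of ILW to BO at the \emph{low} regularity $H^{\frac{k-1}2-\eps}$ that is dictated by the support of $\rho^\dl_{\frac k2}$, \emph{uniformly} over the relevant range of $\dl$ and on long time intervals. While \cite{IS23, CLOP, CFLOP} provide low-regularity well-posedness and deep-water convergence for ILW, one needs a version tailored to the conserved quantity $E^\dl_{\frac k2}$—in particular, one must know that the energy spaces defined by $E^\dl_{\frac k2}$ (with the $\dl$-dependent quadratic forms $\sum a_{k,\l}\|\Gdl^{(k-\l)/2}u\|_{\dot H^{k/2}}^2$) are comparable to $\dot H^{k/2}$ \emph{uniformly} in $2 \le \dl \le \infty$, which is exactly~\eqref{E1a}, and then leverage this uniform equivalence to get uniform-in-$\dl$ a priori control of solutions. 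A related subtlety is that the base Gaussian measures $\mu^\dl_{\frac k2}$ genuinely depend on $\dl$, so the globalization argument must be run on the $\o$-side—i.e., estimate $F^\dl_{\frac k2}(\P_N X^\dl_{\frac k2})$ in $L^p(\O)$ uniformly in $N$ and $\dl$, as the excerpt flags—rather than on a single fixed measure space. Once these uniform estimates are in hand, the structure of the argument is the by-now-standard Bourgain-invariant-measure scheme, so I expect the bulk of the work (and the genuinely new input) to be the uniform-in-$\dl$ deterministic and probabilistic estimates rather than the abstract dynamical bootstrap.
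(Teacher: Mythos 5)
The central step in your proposal is incorrect. You assert that $E^\dl_{\frac k2}(\P_N u)$ is conserved under the truncated ILW dynamics \eqref{ILW2}, via Theorem~\ref{THM:1} and a Liouville-type property, and hence that $\rho^\dl_{\frac k2, N}$ in~\eqref{rho4} is genuinely invariant under the truncated flow. This fails for every $k \ge 2$: Theorem~\ref{THM:1} gives conservation of $E^\dl_{\frac k2}$ under the \emph{full} ILW flow, but the truncated flow preserves only $\|\P_N u\|_{L^2}$ and the truncated Hamiltonian $E^\dl_{\frac12,N}$, not the higher-order truncated energies. This non-conservation, and the resulting \emph{non}-invariance of $\rho^\dl_{\frac k2, N}$, is exactly the obstruction the paper is designed to overcome. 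The actual proof of part~(i) follows the Tzvetkov--Visciglia strategy: one establishes \emph{almost} invariance of $\rho^\dl_{\frac k2, N}$ (Proposition~\ref{PROP:main3}), whose crucial input is the probabilistic asymptotic conservation statement
\[
\lim_{N\to\infty} \Big\| \tfrac{d}{dt} E^\dl_{\frac k2}(\P_N\Phi_N(t)(u))\big|_{t=0} \Big\|_{L^p(d\mu^\dl_{\frac k2})} = 0
\]
(Proposition~\ref{PROP:AAS1}), combined with the change-of-variables formula (Lemma~\ref{LEM:inv2}) and an approximation property of the truncated flow (Lemma~\ref{LEM:inv1}). Without this almost-invariance machinery your argument has no footing; it is not a technicality but the heart of the theorem.

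A secondary miscalibration: you devote your ``main obstacle'' discussion to low-regularity deep-water convergence and Bourgain-style globalization, but for $k \ge 3$ the measure $\rho^\dl_{\frac k2}$ is supported in $H^{\frac{k-1}{2}-\eps}(\T) \subset H^{1-\eps}(\T)$, which lies strictly above the $H^{1/2}$ threshold where Molinet--Vento \cite{MV15} gives deterministic, unconditional global well-posedness of ILW and BO, and where \cite{Li2022} gives deterministic deep-water convergence. There is therefore no globalization issue, no Fernique-type tail bound, and no need for a low-regularity theory tailored to $E^\dl_{\frac k2}$; the paper simply feeds the almost-sure convergence of initial data (obtained from weak convergence of the \GGMs~ in Theorem~\ref{THM:3}\,(ii) via the Skorokhod representation theorem, Lemma~\ref{LEM:Sk}) into the deterministic convergence result. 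Your outline of part~(ii) is otherwise on the right track, including the observation that the $L^p(\O)$-uniformity must be established on the $\o$-side since the base Gaussians $\mu^\dl_{\frac k2}$ vary with $\dl$.
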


\begin{theorem}
[shallow-water regime]
\label{THM:6}

\smallskip
\noi
\textup{(i)}
Let $k\ge 3$ be an integer.
  Given  $0 < \dl <  \infty$,
the weighted Gaussian  measure  $\wt \rho^\dl_{\frac k2}$
in~\eqref{rho5}
is invariant under the flow of the scaled ILW \eqref{sILW}.

\smallskip
\noi
\textup{(ii)}
Let $\kk \ge 2$ be an integer.
The weighted Gaussian  measure  $\wt \rho^0_{\kk}$
in~\eqref{rho8}
is invariant under the flow of KdV \eqref{kdv}.

\smallskip
\noi
\textup{(iii)}
Let $\kk \ge 2$ be an integer.
Given $0 <  \dl < \infty $,
there exists an $H^{\kk - 1-\eps}$-valued random variable
 $v^\dl_0$
with $\Law (v^\dl_0) = \wt \rho^\dl_{\kk- \frac 12}$
such that, as $\dl \to 0$,   $v^\dl_0$ converges almost surely
in $H^{\kk - 1-\eps}(\T)$
to $v_0$ with $\Law (v_0) = \wt \rho^0_{\kk}$.
Moreover,
as $\dl \to 0$,
the unique global solution $ v^\dl$ to the scaled ILW \eqref{sILW}
\textup{(}with the depth parameter $\dl$\textup{)}
with $v^\dl|_{t= 0} =  v^\dl_0$
converges almost surely
in $C(\R; H^{\kk-1-\eps}(\T))$
to the unique global solution $v$ to KdV \eqref{kdv}
with $v|_{t= 0} =  v_0$.

\smallskip
\noi
\textup{(iv)}
Let $\kk \ge 2$ be an integer.
Given $0 <  \dl < \infty $,
there exists an $H^{\kk - \frac 12-\eps}$-valued random variable
 $v^\dl_0$
with $\Law (v^\dl_0) = \wt \rho^\dl_{\kk}$
such that, as $\dl \to 0$,   $v^\dl_0$ converges almost surely
in $H^{k - \frac 12 -\eps}(\T)$
to $v_0$ with $\Law (v_0) = \wt \rho^0_{\kk}$.
Moreover,
as $\dl \to 0$,
the unique global solution $ v^\dl$ to the scaled ILW \eqref{sILW}
\textup{(}with the depth parameter $\dl$\textup{)}
with $v^\dl|_{t= 0} =  v^\dl_0$
converges almost surely
in $C(\R; H^{\kk-\frac 12 -\eps}(\T))$
to the unique global solution $v$ to KdV \eqref{kdv}
with $v|_{t= 0} =  v_0$.

\end{theorem}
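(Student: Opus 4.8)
The plan is to treat the four parts in the natural logical order, deriving parts (iii)--(iv) from (i)--(ii) together with the conservation-law and measure-convergence results already established in Theorems~\ref{THM:2} and~\ref{THM:4}. For part~(i), I would run the standard Bourgain globalization/invariance scheme for the frequency-truncated scaled ILW flow. Concretely, fix $k\ge 3$, so that the base Gaussian measure $\wt\mu^\dl_{\frac k2}$ is supported on $H^{\frac{k-1}{2}-\eps}(\T)$ with $\frac{k-1}{2}-\eps\ge 1-\eps$; by Theorem~\ref{THM:4}\,(i) the truncated densities $\wt F^\dl_{\frac k2}(\P_N v)$ converge in every $L^p(d\wt\mu^\dl_{\frac k2})$, so $\wt\rho^\dl_{\frac k2, N}\to\wt\rho^\dl_{\frac k2}$ in total variation and the densities are uniformly $L^p$-bounded. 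The truncated measure $\wt\rho^\dl_{\frac k2, N}$ is invariant under the truncated dynamics because the truncated flow preserves $\wt\mu^\dl_{\frac k2}$ (it decomposes into a finite-dimensional Hamiltonian ODE on $\P_N$-modes, preserving Lebesgue measure times the Gaussian weight, and a linear flow on high modes), and because the truncated conservation law $\wt E^\dl_{\frac k2}(\P_N v)$ — hence $\|\P_N v\|_{L^2}$ and $\wt R^\dl_{\frac k2}(\P_N v)$ — is preserved by the truncated flow. The only subtlety here is that the truncated system must conserve the truncated Hamiltonian \emph{and} the truncated $\wt E^\dl_{\frac k2}$; I would use the standard observation that the natural Galerkin truncation of a completely integrable PDE preserves the truncated lower-order flows modulo terms that vanish after integration, or alternatively appeal to the Hamiltonian structure in the Birkhoff-type coordinates. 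One then upgrades invariance from $\wt\rho^\dl_{\frac k2,N}$ to $\wt\rho^\dl_{\frac k2}$ by a Bourgain-type argument: the uniform $L^p$-bounds on the densities give almost-sure global well-posedness of the scaled ILW on $\supp(\wt\mu^\dl_{\frac k2})$ via the approximation $\P_N$-flow $\to$ full flow (the smoothing of the perturbation operator $\Qd$ from \eqref{Qdl2}, together with the deterministic local theory of \cite{CLOP}, controls the approximation), and invariance passes to the limit because $\wt\rho^\dl_{\frac k2,N}\to\wt\rho^\dl_{\frac k2}$ in total variation while the truncated flows converge to the full flow $\wt\rho^\dl_{\frac k2}$-almost surely.

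For part~(ii), the argument is identical with KdV in place of the scaled ILW: $\wt\mu^0_\kk=\mu^\infty_\kk$ from \eqref{gauss4} is supported on $H^{\kk-\frac12-\eps}(\T)$ with $\kk\ge 2$, so we are above $H^{\frac32-\eps}$; Zhidkov's deterministic global theory \cite{Zhid1,Zhid2} (or Bourgain's \cite{BO94} together with energy estimates at the $H^\kk$ level) supplies the local and global flow, Theorem~\ref{THM:4}\,(i) (extended to $\dl=0$, which holds here since $k=2\kk$ is even) gives $\wt\rho^0_{\kk,N}\to\wt\rho^0_\kk$ in total variation with uniform $L^p$ densities, and the same truncation-invariance plus limit-passage scheme yields invariance of $\wt\rho^0_\kk$. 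This covers in particular the $\kk=2$ case, i.e.\ invariance of the $H^2$-GGM for KdV, which is the byproduct advertised in the introduction.

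For parts (iii) and (iv), the strategy is a ``convergence at a stationary point'' argument in the spirit of \cite{LOZ}. I would first use Theorem~\ref{THM:4}\,(ii) — weak convergence of $\wt\rho^\dl_{\kk-\frac12}$ (resp.\ $\wt\rho^\dl_\kk$) to $\wt\rho^0_\kk$ as $\dl\to0$ — together with Skorokhod's representation theorem to build random variables $v^\dl_0$ with $\Law(v^\dl_0)=\wt\rho^\dl_{\kk-\frac12}$ (resp.\ $\wt\rho^\dl_\kk$) and $v_0$ with $\Law(v_0)=\wt\rho^0_\kk$ on a common probability space such that $v^\dl_0\to v_0$ almost surely in the relevant Sobolev space ($H^{\kk-1-\eps}$ for (iii), because $\wt\mu^\dl_{\kk-\frac12}$ sits in $H^{\kk-1-\eps}$; $H^{\kk-\frac12-\eps}$ for (iv)). For the dynamical statement I would propagate this convergence in time using (a) the deterministic shallow-water convergence of the scaled ILW flow to the KdV flow — this is the content of the already-cited works \cite{ABFS,LOZ,CLOP} on shallow-water convergence, made quantitative through the perturbation operator $\Qd$ and Lemma~\ref{LEM:L1}\,(iv) — combined with (b) a ``good set'' argument: using the invariant measures $\wt\rho^\dl_{\cdot}$ and Theorem~\ref{THM:4}\,(i) uniformly in $0<\dl\le1$, one shows that off a set of small measure the solutions $v^\dl$ and $v$ stay in a fixed ball of $H^{\kk-1-\eps}$ (resp.\ $H^{\kk-\frac12-\eps}$) on any compact time interval, and on that good set the deterministic convergence rate (which degrades in $\dl$ but still tends to $0$ on bounded time intervals) forces $v^\dl\to v$ in $C([-T,T];H^{\kk-1-\eps})$ (resp.\ $H^{\kk-\frac12-\eps}$); a standard Borel--Cantelli / diagonal argument then upgrades convergence along a subsequence to almost-sure convergence in $C(\R;\cdot)$ with the compact-open topology. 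I expect the main obstacle to be exactly the slow, frequency-nonuniform convergence of $\Qd\dx$ in the shallow-water limit noted after \eqref{Qdl2}: unlike the deep-water case, one cannot simply quote a uniform smoothing bound, so the deterministic shallow-water convergence rate on the good set must be squeezed out carefully — presumably by combining the energy/conservation-law bounds of Theorem~\ref{THM:2} (which are uniform in $\dl$) with a frequency-localized treatment of $\Qd\dx$, splitting into low frequencies (where $\Qd\dx\to0$ pointwise) and high frequencies (where the a~priori $H^{\kk}$-type bounds from the conserved $\wt E^\dl$ absorb the loss). The regularity jump by $\frac12$ between parts (iii) and (iv) is automatically accommodated because the two families of random data $v^\dl_0$ live in different Sobolev spaces, dictated by the supports of $\wt\mu^\dl_{\kk-\frac12}$ versus $\wt\mu^\dl_\kk$, while the limiting object $\wt\rho^0_\kk$ — supported in $H^{\kk-\frac12-\eps}$ — is the same in both.
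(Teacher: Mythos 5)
Your treatment of parts (iii) and (iv) is essentially the paper's: weak convergence of the \GGMs~ (Theorem~\ref{THM:4}\,(ii)) plus the continuous-parameter Skorokhod representation theorem (Lemma~\ref{LEM:Sk}) gives the coupled almost-sure converging initial data, and then the \emph{deterministic} shallow-water convergence result (Lemma~\ref{LEM:GWP2}\,(ii), from \cite{Li2022}) applied pathwise finishes the argument. Your proposed ``good set'' machinery is unnecessary here: since $\kk\ge 2$, the random data sits in $H^{\kk-1-\eps}$ (resp.\ $H^{\kk-\frac12-\eps}$), well above the $H^{1/2}$ threshold of Molinet--Vento and the $s>\frac12$ threshold of Lemma~\ref{LEM:GWP2}\,(ii), so the deterministic convergence applies unconditionally on every compact time interval, with no need to excise a bad set or invoke Borel--Cantelli.

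There is, however, a genuine gap at the heart of parts (i) and (ii). You assert that ``the truncated measure $\wt\rho^\dl_{\frac k2,N}$ is invariant under the truncated dynamics because \dots the truncated conservation law $\wt E^\dl_{\frac k2}(\P_N v)$ \dots is preserved by the truncated flow.'' This is false for $k\ge 2$: the Galerkin truncation destroys conservation of the higher energies, and $\frac{d}{dt}\wt E^\dl_{\frac k2}(\P_N\wt\Phi_N(t)(v))\neq 0$ in general. The only quantity trivially conserved by the truncated low-frequency system is its own Hamiltonian $\wt E^\dl_{\frac12}(\P_N v)$ and the $L^2$-norm; the truncated \GGM~ $\wt\rho^\dl_{\frac k2,N}$ is therefore \emph{not} invariant under the truncated flow, which is the whole source of difficulty and the reason the paper cannot run the Bourgain $k=1$ scheme verbatim. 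Your proposed remedies (``terms that vanish after integration,'' Birkhoff coordinates) do not help: there is no reason for the error terms to vanish, and Birkhoff coordinates are a feature of the full integrable flow, not of its Galerkin truncation. The correct replacement is the almost-invariance strategy of Tzvetkov--Visciglia: one writes $\frac{d}{dt}\wt\rho^\dl_{\frac k2,N}(\wt\Phi_N(t)(A))$ using the change-of-variables formula (Lemma~\ref{LEM:inv2}), reduces it to $\frac{d}{dt}\wt E^\dl_{\frac k2}(\P_N\wt\Phi_N(t)(v))$, and proves the \emph{probabilistic asymptotic conservation} $\lim_{N\to\infty}\big\|\frac{d}{dt}\wt E^\dl_{\frac k2}(\P_N\wt\Phi_N(t)(v))\big|_{t=0}\big\|_{L^p(d\wt\mu^\dl_{\frac k2})}=0$ (Proposition~\ref{PROP:AAS2}), from which almost invariance (Proposition~\ref{PROP:main3}) and then, via Lemma~\ref{LEM:inv1} and a PDE approximation step, invariance of the limiting measure follow. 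Without this ingredient your argument for (i) and (ii) does not go through; in particular, the $\kk=2$ (i.e.\ $H^2$-KdV) invariance that you flag as the advertised byproduct requires precisely the shallow-water version of Proposition~\ref{PROP:AAS2} at $\dl=0$.
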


Theorems \ref{THM:5} and \ref{THM:6}
establish the first construction of invariant dynamics
 (with uniqueness)
 for the (scaled) ILW equation
 and its convergence  to invariant BO\,/\,KdV dynamics.
Compare these results with the dynamical results in \cite{LOZ},
where the authors treated the $k = 1$ case
but uniqueness was missing  due to the use of a compactness argument
(in the probabilistic setting).
Furthermore,
in Theorem~\ref{THM:6}\,(iii) and (iv),
we
establish a 2-to-1 collapse at the level of invariant dynamics,
which seems to be the first such result in the literature.
We, however,  emphasize  that Theorems \ref{THM:5}
and \ref{THM:6} are certainly not a main challenge in this paper
and that it is of much more significance
to construct the ``correct''
conservation laws in Theorems \ref{THM:1}
and \ref{THM:2}, from which everything follows.

While a typical study of Gibbs measures
and the corresponding dynamics
leads to studying functions of low regularity,
we restrict our attention to $k \ge 3$
and thus the dynamical problems we consider
are not low-regularity well-posedness problems.
In fact,
Molinet and Vento~\cite{MV15}
proved
(unconditional) global well-posedness of
 ILW \eqref{ILW} (and hence of the scaled ILW~\eqref{sILW}) in $H^s(\T)$, $s\ge \frac 12$.
 In particular, 
 ILW (and the scaled ILW) is deterministically globally well-posedness
on the support of
the \GGM~ $\rho^\dl_{\frac k2}$
(and $\wt \rho^\dl_{\frac k2}$, respectively),
provided that $k \ge 3$.
See Remark~\ref{REM:white}.

Let us briefly discuss the strategy for proving 
Theorems \ref{THM:5} and \ref{THM:6}.
We focus on the deep-water regime.
Consider the following truncated ILW dynamics:
\begin{equation}
\partial_t u - \Gdl \partial_x^2 u = \P_N \dx ( \P_N u)^2.
\label{ILW2}
\end{equation}

\noi
As observed in the previous works \cite{Zhid1, NORS, TV1, TV2},
the main difficulty comes from the non-conservation of
 $E^\dl_\frac k2(\P_Nu)$ under the truncated  dynamics:
\begin{align*}
\frac{d}{dt} E^\dl_{\frac k2}(\P_N \Phi_N(t) (u_0)) \neq 0, \qquad k\geq 2,
\end{align*}

\noi
where $\Phi_N(t) = \Phi_N(t;\dl)$ denotes the solution map for the truncated ILW \eqref{ILW2},
sending $u_0$ to a solution $u(t) = \Phi_N(t) (u_0)$ at time $t$.
This in turn
implies
non-invariance of the truncated measure
$\rho^\dl_{\frac{k}{2}, N}$
defined in~\eqref{rho4}.

In order to overcome this difficulty,
we follow the strategy introduced in
\cite{TV1,TV2}
and prove almost invariance of the
 truncated measure
$\rho^\dl_{\frac{k}{2}, N}$
under the truncated dynamics \eqref{ILW2};
see Proposition \ref{PROP:main3},
where we need to take a time derivative of
the truncated density $F^\dl_{\frac{k}{2}} (\P_N \Phi_N(t)(u))$,
appearing in \eqref{rho4}.
At this point,
our construction of the truncated  \GGM~
$\rho^\dl_{\frac{k}{2}, N}$
 only with an $L^2$-cutoff
 leads to a simplification;
see Remark~\ref{REM:cutoff}.
Then, as in \cite{TV1,TV2},
the crucial ingredient is  the following
probabilistic  asymptotic conservation of
 $E^\dl_\frac k2(\P_N\Phi_N(t)(u))$ at time $t = 0$:
\begin{equation}
\lim_{N\rightarrow \infty} \bigg\| \frac{d}{dt} E^\dl_{\frac k2} (\P_N
\Phi_N(t)(u)) \Big|_{t = 0} \bigg\|_{L^p(d\mu^\dl_{\frac k2})} =0
\label{alc1}
\end{equation}

\noi
for any finite $p \ge 1$;
see Proposition \ref{PROP:AAS1}.
Thanks to our
detailed analysis 
on the structure
of the deep-water conservation laws
(see Subsection \ref{SUBSEC:A4})
with  
 a perturbative viewpoint, 
\eqref{alc1}
follows from a straightforward adaptation 
of the argument in \cite{TV0, TV1, TV2}.
We, however, point out that some care is needed
in the shallow-water regime, since 
the rate of convergence (as $N \to \infty$)
for the shallow-water probabilistic  asymptotic conservation 
for $0 < \dl < \infty$
(Proposition \ref{PROP:AAS2})
 diverges as $\dl \to 0$;
 see Remark \ref{REM:div} and Subsection \ref{SUBSEC:AC2}.

Once we establish~\eqref{alc1},
the desired invariance (Theorem~\ref{THM:5}\,(i)) follows
from \eqref{alc1} and a PDE approximation argument
(Lemma \ref{LEM:inv1}).
The  deep-water convergence
in Theorem~\ref{THM:5}\,(ii)
then
follows from the (weak) convergence of the \GGMs~
(Theorem~\ref{THM:3}), the
Skorokhod
representation theorem (Lemma~\ref{LEM:Sk}),
and the deterministic deep-water convergence result
on the ILW dynamics
 by the second author~\cite{Li2022}.
As a consequence  of the
unconditional well-posedness by Molinet and Vento \cite{MV15}
(including the case of the  limiting BO equation),
the deep-water convergence holds (pathwise) unconditionally,
namely, in the entire class  $C(\R; H^{\frac{k-1}{2}-\eps}(\T))$
without  any auxiliary function space.
A similar comment holds in the shallow-water limit.

Before we conclude this introduction by stating several remarks,
let us emphasize that the main novelties of this paper
lie in

\smallskip
\begin{itemize}
\item
the construction of appropriate conservation laws
of the (scaled) ILW
and their deep-water and shallow-water convergence
presented in Subsection \ref{SUBSEC:1.2}, and

\smallskip
\item
the construction of the \GGMs~
associated with these (scaled) ILW conservation laws
and their deep-water and shallow-water convergence
presented in Subsection  \ref{SUBSEC:1.3},

\end{itemize}

\noi
with a particular emphasis on the shallow-water convergence, 
where we establish a 2-to-1 collapse in each setting.
We note that 
 the dynamical results (Theorems \ref{THM:5} and \ref{THM:6})
follow from a straightforward
combination of the strategy in \cite{TV1, TV2}
with Theorems \ref{THM:3}
and \ref{THM:4}.

\begin{remark}\label{REM:cutoff}\rm

In proving almost invariance of the
 truncated \GGM~
$\rho^\dl_{\frac{k}{2}, N}$,
we just need to study the time derivative of
$E^\dl_\frac k2(\P_N \Phi_N(t)(u))$
thanks to the conservation of the $L^2$-norm under the
truncated dynamics~\eqref{ILW2}
and our simplified construction of the \GGMs~
only with an $L^2$-cutoff.
On the other hand,
  in \cite{TV1, TV2, DTV},
it was necessary to estimate the time derivatives
of (smooth) cutoff functions on the lower order conservation laws;
see, for example, the proof of Proposition 5.4 in \cite{TV1}.
We point out that, in \cite{TV1, TV2},
it was crucial to use {\it smooth} cutoff functions
on the lower order conservation laws precisely for this reason,
but that
our main results on the measure construction,
 invariance, and convergence
also hold even if we replace the smooth cutoff function $\eta_K\big(\|u\|_{L^2}\big)$
by a sharp $L^2$-cutoff $\ind_{\{\|u \|_{L^2} \le K\}}$
as in \cite{LRS,BO94, OST}.
See also Remarks \ref{REM:cutoff2}, 
\ref{REM:cutoff3}, 
and  \ref{REM:TV}.

\end{remark}

\begin{remark}\label{REM:KDV} \rm
Theorem \ref{THM:6}\,(ii) establishes
invariance of the \GGM~
associated with the $H^2$-conservation law
for KdV, which is missing in the literature.

\end{remark}

\begin{remark}\label{REM:white}\rm

(i)
In the current paper, we restrict our attention to  $k \ge 3$
in studying the dynamical problems.
This is due to the fact that we rely on the deterministic
global well-posedness of the (scaled) ILW
 in $H^\frac12(\T)$ by Molinet and Vento \cite{MV15}.
In a recent work~\cite{CLOP} with Pilod,
we in fact proved
 global well-posedness
 of the (scaled) ILW in $L^2(\T)$
 by viewing ILW as a perturbation of BO
 and applying the gauge transform for BO.
Thus, in order to handle the $k = 2$ case, 
using the deterministic global well-posedness in \cite{CLOP},
we need to adapt the argument in \cite{DTV}
to the ILW setting.
We plan to address this issue
together with the $k = 1$ case
in a forthcoming work
by combining the ideas from
\cite{DTV, Deng15} and \cite{CLOP, CFLOP}.

\smallskip

\noi
(ii) The $k = 0$ case corresponds to the white noise in \eqref{white1}.
It is known that the white noise is
invariant under the KdV dynamics;
see  \cite{QV, OH1, OH4, OH6, OQV, KMV}.
On the other hand,
despite
the
recent breakthrough results
by
G\'erard, Kappeler, and Topalov
\cite{GKT}
and by
Killip, Laurens, and Vi\c{s}an \cite{KLV}
on deterministic well-posedness of BO in
the entire subcritical regime
$H^s(\T)$, $s > -\frac 12$,
the question of invariance of the white noise for BO (and ILW)
remains a challenging open problem.

\end{remark}

\subsection{Organization of the paper}

In Section \ref{SEC:2}, we introduce notations and go over
deterministic and stochastic preliminary tools.
Sections \ref{SEC:cons1} and \ref{SEC:cons2}
are devoted to the construction of 
the deep-water and shallow-water conservation laws
in \eqref{E00} and \eqref{E5}, respectively, 
where we present proofs of Theorems \ref{THM:1} and \ref{THM:2}
and provide more detailed structures of
these conservation laws which play an important role in later sections.
This part occupies the major part of the paper.
In Sections \ref{SEC:DGM1} and 
\ref{SEC:SGM1}, we go over the construction 
of the \GGMs, thus providing proofs of Theorems~\ref{THM:3} and~\ref{THM:4}.
After establishing the probabilistic asymptotic conservation 
in Section~\ref{SEC:AC}, we establish invariance of the \GGMs~
and the claimed dynamical convergence in Theorems 
\ref{THM:5} and~\ref{THM:6}
in Section~\ref{SEC:INV}.

\section{Preliminaries}
\label{SEC:2}

\subsection{Notations and function spaces}
We start by introducing some  notations. Let $A\les B$ denote an estimate of the form $A\leq CB$ for some constant $C>0$. We write $A\sim B$ if $A\les B$ and $B\les A$, while $A\ll B$ will denote $A\leq c B$, for some small constant $c> 0$. 
We may write  $\les_{q}$ and $\sim_{q}$ to 
emphasize the dependence of the implicit constant on an external parameter $q$.
We will use the shorthand notation
\begin{align}
 n_{1\cdots k} := n_1 + \cdots +n_k.
 \label{sum1}
\end{align}

%
\noi
We set 
$\Z_{\ge 0} := \N \cup\{0\}$,  
$\Z^*:=\Z\setminus\{0\}$, 
and 
we let $\lceil x \rceil$ denote
the smallest integer greater than or equal to $x$.
We use $C>0$ to denote various constants, which may vary line by line,
and we may use subscripts to signify  dependence on external parameters.
We use $\eps > 0$ to denote an arbitrarily small constant.
Various constants depend on $\eps$ but,
for simplicity of notation,  we suppress such $\eps$-dependence.

Throughout this paper, we fix a probability space $(\Omega, \F, \PP)$. The realization $\omega \in\Omega$ is often omitted in writing. We will use $\Law(X)$ to denote the law of a random variable $X$.

Our convention for the Fourier transform is as follows:
\begin{align*}
\ft{f}(n) = \frac{1}{\sqrt{2\pi}} \int_\T f(x) e_{-n} dx
\quad \text{and} \quad 
f(x) = \F^{-1}(f)(x) = \frac{1}{\sqrt{2\pi}} \sum_{n\in\Z^*} \ft{f}(n) e_n(x)
\end{align*}

\noi
for $f$ on $\T$, where  $e_n(x) = e^{inx}$.
Then, we have
\begin{align*}
\|f\|_{L^2}^2 = \sum_{n \in \Z} |\ft f(n)|^2
\quad \text{and} \quad 
\ft{fg}(n) = \frac1{\sqrt{2\pi}} \sum_{\substack{n_1, n_2 \in \Z\\n = n_1 + n_2}}
\ft f(n_1) \ft g(n_2).
\end{align*}

\noi
Given $N\in\N$, we denote by $\P_N$ the Dirichlet projector onto spatial frequencies $\{|n| \leq N\}$ defined as follows:
\begin{align}
\P_{N} f (x) : = (D_N \ast f )(x) = \frac{1}{\sqrt{2\pi}} \sum_{0<|n| \leq N} \ft{f}(n) e^{inx},
\label{Diri1}
\end{align}
where $D_N(x) = \sum\limits_{|n| \leq N} e^{inx}$ is the Dirichlet kernel.
We set $\P_{>N} = \Id - \P_N$.

Given  $s\in\R$ and $1 \leq p \leq \infty$,
we define the $L^p$-based Sobolev space $W^{s,p}(\T)$ via the norm:
\begin{align*}
\| f\|_{W^{s,p}} := 
\| \jb{\dx}^s f \|_{L^p}
= 
\| \F^{-1} ( \jb{n}^s \ft{f}(n) ) \|_{L^p_x},
\end{align*}

\noi
where 
$\jb{\dx} = (1 - \dx^2)^\frac 12$
and $\jb{n}=(1+n^2)^\frac12$.
When $p=2$, we set $H^s(\T) = W^{s, 2}(\T)$.

Let $\P_{\ne 0}$ denote the projection onto non-zero frequencies.
Then, we set
$L^r_0(\T) = \P_{\ne 0}L^r(\T)$, 
$W^{s, r}_0(\T) = \P_{\ne 0}W^{s, r}(\T)$, 
and $H^s_0(\T) = \P_{\ne 0}H^s(\T)$.
Namely, they are the subspaces of $L^r(\T)$, $W^{s, r}(\T)$, 
and $H^s(\T)$, consisting of mean-zero functions, respectively.

 We will often use the short-hand notations 
 such as $L^q_T H^{s}_x$ and $L^p_\omega H^{s}_x$ for $L^q\big( [-T,T]; H^{s}(\T)\big)$ and $L^p(\Omega; H^{s}(\T))$, respectively.

\begin{lemma}
\noi{\rm(i) (interpolation).} Let  $s, s_0 \in \R$
such that $0 \le s_0 \le s$.
Then, we have 
\begin{align}
\|u\|_{H^{s_0}} &\le \|u\|^{\frac{s - s_0}{s}}_{L^2} \|u\|_{H^{s}}^{\frac{s_0}{s}}.
\label{interp1}
\end{align}

\noi{\rm(ii) (fractional Leibniz rule).} 
Let $s > 0$ and   $1<p_j,q_j, r\le\infty$ such that  $\frac{1}{p_j} + \frac{1}{q_j} =  \frac1r$, $j=1,2$.
Then, 
we have 
\begin{equation}\label{leib}
\| \jb{\dx}^s (fg) \|_{L^r} \les \|\jb{\dx}^s f\|_{L^{p_1}} \|g\|_{L^{q_1}} + \| f\|_{L^{p_2}} \|\jb{\dx}^s g \|_{L^{q_2}}.
\end{equation}

\end{lemma}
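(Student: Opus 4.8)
\emph{Part (i): the interpolation inequality \eqref{interp1}.} The plan is to reduce this to a direct Fourier-side computation followed by H\"older's inequality on the counting measure. For $u$ with Fourier coefficients $\ft u(n)$, I would write
\begin{equation*}
\|u\|_{H^{s_0}}^2 = \sum_{n\in\Z} \jb{n}^{2s_0} |\ft u(n)|^2
= \sum_{n\in\Z} \big(|\ft u(n)|^2\big)^{1-\frac{s_0}{s}} \big(\jb{n}^{2s} |\ft u(n)|^2\big)^{\frac{s_0}{s}},
\end{equation*}
which is legitimate precisely because $0 \le s_0 \le s$ guarantees $\frac{s_0}{s} \in [0,1]$ (with the degenerate cases $s_0 = 0$ and $s_0 = s$ trivial). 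Applying H\"older's inequality with exponents $\frac{s}{s-s_0}$ and $\frac{s}{s_0}$ to the two factors gives
\begin{equation*}
\|u\|_{H^{s_0}}^2 \le \Big(\sum_{n\in\Z}|\ft u(n)|^2\Big)^{1-\frac{s_0}{s}} \Big(\sum_{n\in\Z}\jb{n}^{2s}|\ft u(n)|^2\Big)^{\frac{s_0}{s}}
= \|u\|_{L^2}^{2(1-\frac{s_0}{s})} \|u\|_{H^s}^{2\frac{s_0}{s}},
\end{equation*}
and taking square roots yields \eqref{interp1}. There is no real obstacle here; the only point to be careful about is the edge cases, but those are immediate.

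\emph{Part (ii): the fractional Leibniz rule \eqref{leib}.} This is a standard Kato--Ponce type estimate, and the plan is simply to invoke it in the form adapted to the torus. I would recall the Kato--Ponce / Coifman--Meyer commutator estimate (see e.g. \cite{GO} or Grafakos--Oh) in the periodic setting: for $s>0$ and exponents $1 < p_j, q_j, r \le \infty$ with $\frac{1}{p_j} + \frac{1}{q_j} = \frac{1}{r}$ for $j = 1,2$, one has the pointwise-in-frequency decomposition of $fg$ into a "high-low" piece where the derivative falls on $f$ and a "low-high" piece where it falls on $g$, together with error terms controlled by the same norms. Running the standard paraproduct decomposition $fg = \pi_{\text{hl}}(f,g) + \pi_{\text{lh}}(f,g) + \pi_{\text{hh}}(f,g)$ and estimating each piece by Bernstein's inequality and H\"older's inequality gives the claimed bound; alternatively, one transfers the well-known $\R$-version to $\T$ via periodization. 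Since the statement is entirely classical, I would state it with a reference rather than reproving it.

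\emph{Expected main obstacle.} Neither part presents a genuine difficulty: part (i) is a one-line H\"older argument on Fourier coefficients, and part (ii) is a citation of the Kato--Ponce inequality. If anything, the only mild subtlety is making sure the endpoint exponents $p_j, q_j = \infty$ are handled correctly in the reference one cites for \eqref{leib} (the original Kato--Ponce estimate excludes some endpoints, but the refined versions of Grafakos--Oh and Bourgain--Li cover $r > 1$ with $p_j, q_j$ up to $\infty$, which is what is needed here); I would simply cite an appropriate version. Both inequalities will be used repeatedly and without further comment throughout the paper.
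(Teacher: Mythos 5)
Your proposal is correct and matches the paper's treatment: the paper gives no proof at all, simply noting after the statement that one should "See \cite{BOZ} for the fractional Leibniz rule on the torus; see also \cite[Lemma~3.4\,(i)]{GKO}." Your one-line H\"older computation for part (i) is the standard argument the paper implicitly takes for granted, and your citation-based handling of part (ii) is exactly what the paper does.
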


See \cite{BOZ} for the fractional Leibniz rule on the torus; see also 
\cite[Lemma 3.4\,(i)]{GKO}.

\subsection{On the multiplier operators}

In this paper, we work with various multiplier operators
in both the deep-water and shallow-water regimes.
In this subsection, 
we state several lemmas on key multipliers.

Given $0 < \dl < \infty$, 
let $\Kdl (n)$ be the multiplier of $\Gdl \dx$
given by 
\begin{align}
\Kdl (n) &= in \ft{\Gdl}(n) = n\coth(\dl n) - \dl^{-1},
 \quad n\in\Z^*, 
\label{sym1}
\end{align}

\noi
where $\Gdl$ and $\ft \Gdl(n)$ are as in \eqref{GG1}.
When $\dl = \infty$, we set
$\mathfrak{K}_\infty(n) = |n|$.
This multiplier plays an important role 
in the deep-water analysis.
We recall the following lemma on the basic properties
of $\Kdl(n)$; see Lemma 2.1 in \cite{LOZ} for a proof.

\begin{lemma}\label{LEM:K1}
Given $0 < \dl \le \infty$, 
let $\Kdl(n)$ be as in \eqref{sym1}.
Then, we have 
\begin{align}\label{K1}
\max\Big( 0, |n| - \frac1\dl \Big) < \Kdl(n) = \Kdl(-n) < |n|
= \mathfrak{K}_\infty(n),
\end{align}

\noi
for any $0 < \dl \le \infty$ and $n\in\Z^*$.
In particular, we have
\begin{align*}
\Kdl(n) \sim_\dl |n|
\end{align*}

\noi
for any $n \in \Z^*$, where the implicit constant is independent of 
 $2\leq \dl\leq\infty$.
Furthermore, for 
each fixed $n\in\Z^*$, $\Kdl(n)$ is strictly increasing in $\dl\geq1$ and converges to $|n|$ as $\dl\to\infty$.
\end{lemma}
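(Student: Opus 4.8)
The statement to be proven is Lemma~\ref{LEM:K1}, concerning the multiplier $\Kdl(n) = n\coth(\dl n) - \dl^{-1}$. The plan is to reduce everything to elementary estimates on the function $x \mapsto \coth x$. Since $\coth$ is odd, we have $\Kdl(-n) = (-n)\coth(-\dl n) - \dl^{-1} = n\coth(\dl n) - \dl^{-1} = \Kdl(n)$, which gives the symmetry in \eqref{K1} immediately; in particular it suffices to treat $n \in \N$.

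\textbf{The two-sided bound \eqref{K1}.} For $n \ge 1$ and $0 < \dl < \infty$, write $\Kdl(n) = n\big(\coth(\dl n) - \tfrac{1}{\dl n}\big)$. The key elementary fact is that the function $g(y) := \coth y - \tfrac 1y$ satisfies $0 < g(y) < 1$ for all $y > 0$: the upper bound $g(y) < 1$ follows from $\coth y < 1 + \tfrac 1y$, equivalently $y\cosh y < (y+1)\sinh y$, which one checks by comparing Taylor coefficients (or by noting the difference vanishes to high order at $0$ and has positive derivative); the lower bound $g(y) > 0$ follows from $\coth y > \tfrac 1y$, i.e. $y\cosh y > \sinh y$, again immediate from the power series since $\cosh y > \tfrac{\sinh y}{y}$ term by term. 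Applying this with $y = \dl n > 0$ gives $0 < \Kdl(n) < n = |n| = \mathfrak K_\infty(n)$. For the sharper lower bound $|n| - \tfrac1\dl < \Kdl(n)$, rewrite it as $\coth(\dl n) > 1$, which holds for all $\dl n > 0$; combining with $\Kdl(n) > 0$ yields $\Kdl(n) > \max(0, n - \tfrac1\dl)$. The case $\dl = \infty$ is the definition $\mathfrak K_\infty(n) = |n|$, and the strict inequalities degenerate appropriately (one may note $\coth(\dl n) \to 1^+$ and $\tfrac 1{\dl n} \to 0^+$).

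\textbf{The comparability $\Kdl(n) \sim_\dl |n|$ with constant uniform in $\dl \ge 2$.} The upper bound $\Kdl(n) < |n|$ is already established, so it remains to produce a lower bound $\Kdl(n) \ge c|n|$ with $c$ independent of $\dl \in [2,\infty]$. From \eqref{K1}, $\Kdl(n) > |n| - \tfrac 1\dl$; for $|n| \ge 2 \ge \tfrac 2\dl$ (using $\dl \ge 1$) this gives $\Kdl(n) \ge \tfrac 12 |n|$, but the borderline case $|n| = 1$ needs separate attention since $|n| - \tfrac 1\dl$ can be small. For $n = 1$ and $\dl \ge 2$, monotonicity in $\dl$ (proven next) gives $\Kdl(1) \ge \K_2(1) = \coth 2 - \tfrac 12 > 0$, a fixed positive constant; hence $\Kdl(1) \ge (\coth 2 - \tfrac 12)|n|$ on that range. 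Taking $c = \min(\tfrac 12, \coth 2 - \tfrac 12)$ gives the uniform comparability.

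\textbf{Monotonicity and the limit.} Fix $n \in \Z^*$; by symmetry take $n \ge 1$. We must show $\dl \mapsto \Kdl(n) = n\coth(\dl n) - \dl^{-1}$ is strictly increasing on $[1,\infty)$ and tends to $|n|$ as $\dl \to \infty$. Differentiating in $\dl$: $\tfrac{d}{d\dl}\Kdl(n) = -n^2 \operatorname{csch}^2(\dl n) + \dl^{-2}$, so positivity is equivalent to $\dl n \operatorname{csch}(\dl n) < 1$, i.e. $\sinh(\dl n) > \dl n$, which holds for all $\dl n > 0$ by the power series for $\sinh$. Hence $\Kdl(n)$ is strictly increasing in $\dl$ (indeed on all of $\dl > 0$, a fortiori on $\dl \ge 1$). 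For the limit, $\coth(\dl n) \to 1$ and $\dl^{-1} \to 0$ as $\dl \to \infty$, so $\Kdl(n) \to n = |n| = \mathfrak K_\infty(n)$.

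\textbf{Main obstacle.} There is no serious obstacle — the entire lemma is elementary once the two scalar inequalities $\tfrac 1y < \coth y < 1 + \tfrac 1y$ (for $y > 0$) and $\sinh y > y$ are recorded. The only mild subtlety is the borderline frequency $|n| = 1$ in the uniform comparability claim, where one cannot use $\Kdl(n) > |n| - \tfrac 1\dl$ alone and must instead invoke monotonicity in $\dl$ together with $\dl \ge 2$ to extract a fixed positive lower bound. Since the paper cites Lemma~2.1 of \cite{LOZ} for the proof, one may alternatively simply refer to that reference; the above records the short self-contained argument.
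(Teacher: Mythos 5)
Your proof is correct, self-contained, and elementary; the paper itself does not give a proof of this lemma but defers to Lemma 2.1 of \cite{LOZ}, so there is nothing to compare against directly. All the key scalar inequalities you invoke ($1/y < \coth y < 1 + 1/y$ and $\sinh y > y$ for $y > 0$) are right, the derivative computation $\frac{d}{d\dl}\Kdl(n) = -n^2\operatorname{csch}^2(\dl n) + \dl^{-2}$ is correct, and the chain $\coth(\dl n) > 1 \Rightarrow \Kdl(n) > |n| - 1/\dl$ together with $\Kdl(n) > 0$ gives the lower bound in \eqref{K1} cleanly.

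One small inefficiency worth flagging: the separate treatment of $|n| = 1$ in the uniform-comparability step, with a forward reference to monotonicity, is unnecessary. Since the claimed uniformity is only over $2 \le \dl \le \infty$, the elementary bound $\Kdl(n) > |n| - 1/\dl \ge |n| - \tfrac 12 \ge \tfrac 12|n|$ already holds for every $|n| \ge 1$ on that range (using $1/\dl \le 1/2 \le |n|/2$), so the borderline frequency is not actually borderline and the appeal to monotonicity-with-a-forward-reference can be dropped. This streamlines the argument and removes a circular-looking (though not actually circular) dependency between the comparability paragraph and the monotonicity paragraph. Everything else is fine as written.
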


Given $0 < \dl < \infty$, 
let $\Ldl (n)$ be the multiplier of $\Gd \dx$
given by 
\begin{align}
 \Ldl(n)  
 = in \ft{\Gd}(n)
 = \dl^{-1} \Kdl(n)
 = 
\dl^{-1} (n\coth(\dl n) - \dl^{-1}), 
\label{sym2}
\end{align}

\noi
where $\Gd$ 
and $\ft \Gd(n)$ are as in \eqref{sILW}
and \eqref{Gd1}, respectively.
When $\dl = 0 $, we set
$\mathfrak{L}_0(n) = \frac 13n^2$.
This multiplier plays an important role 
in the shallow-water analysis.
See (the proof of)   \cite[Lemma 2.3]{LOZ} for a proof
of the following lemma.
(Some of the bounds also follow
from 
Lemma~\ref{LEM:K1} 
with $  \Ldl(n)  
 = \dl^{-1} \Kdl(n)$.)

\begin{lemma}\label{LEM:L1}
The following statements hold.
\smallskip

\begin{itemize}
\item[\rm(i)] $0<\Ldl(n)< \min(\frac13 n^2, \frac1\dl|n|)$ for 
any $0 < \dl < \infty$ and $n\in\Z^*$.

\medskip
\item[\rm(ii)]
We have 
\begin{align}
\Ldl(n) 
  = 2 n^2 \sum_{k=1}^\infty \frac{1}{k^2\pi^2 + \dl^2n^2}.
\label{Ldl1}
\end{align}

\noi
In particular, 
for each fixed $n\in\Z^*$,
$\Ldl(n)$ is decreasing in $\dl$, 
and $\lim_{\dl \to 0} 
\Ldl(n) = \frac13 n^2
= \mathfrak{L}_0(n)
$.

\medskip
\item[\rm(iii)]
For any $0 < \dl < \infty$, we have 
\begin{align*}
\Ldl(n) \ges
\begin{cases}
\frac 1 \dl  |n|, & \text{if } \dl|n| \gg 1, \\
n^2, & \text{if } \dl|n| \les 1.
\end{cases}
\end{align*}

\medskip
\item[\rm(iv)]
Given $n \in \Z^*$, define $\hf(\dl, n)$ by  
setting $ n^2 \hf(\dl,n) =  n^2 - 3\Ldl(n)$.
Then, 
we have 
\begin{align}
& \hf(\dl,  n) =6\dl^2 n^2 \sum_{k=1}^\infty \frac{1}{k^2\pi^2(k^2\pi^2 + \dl^2 n^2)} 
\in (0, 1],
\label{hdef}\\
& \sum_{n\in\Z} \hf^2(\dl, n) = \infty, 
\quad \text{and} \quad \lim_{\dl\to 0} \hf(\dl, n) =0
\label{HX1}
\end{align}
for any $0<\dl<\infty$ and $n\in\Z^*$.

\end{itemize}

\end{lemma}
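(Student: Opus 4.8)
The plan is to derive everything from the classical Mittag--Leffler expansion of the hyperbolic cotangent,
\[
\coth z = \frac1z + 2z \sum_{k=1}^\infty \frac{1}{z^2 + k^2\pi^2},
\qquad z \neq i k\pi,
\]
applied with $z = \dl n$. Indeed, substituting into \eqref{sym2} gives
$\Ldl(n) = \dl^{-1}\big(n\coth(\dl n) - \dl^{-1}\big) = 2n^2 \sum_{k\ge 1} (k^2\pi^2 + \dl^2 n^2)^{-1}$,
which is \eqref{Ldl1} in part (ii); positivity of every summand, together with the fact that each term is decreasing in $\dl$, immediately yields monotonicity of $\Ldl(n)$ in $\dl$. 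Letting $\dl \to 0$ term by term (with domination by the convergent series $2n^2\sum_k (k\pi)^{-2} = n^2/3$, justifying the interchange of limit and sum) gives $\Ldl(n) \to 2n^2 \sum_k (k\pi)^{-2} = \frac13 n^2 = \mathfrak L_0(n)$, completing (ii). Part (i) then follows cheaply: $0 < \Ldl(n)$ from positivity of the series; $\Ldl(n) < \frac13 n^2$ since $k^2\pi^2 + \dl^2 n^2 > k^2\pi^2$ strictly; and $\Ldl(n) < \frac1\dl |n|$ is just $\dl^{-1}\Kdl(n) < \dl^{-1}|n|$ from the already-proven Lemma~\ref{LEM:K1}.

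For part (iii) I would split into the two regimes. When $\dl |n| \les 1$, comparing the sum $\sum_k (k^2\pi^2 + \dl^2 n^2)^{-1}$ to $\sum_k(k^2\pi^2 + 1)^{-1}$ (a fixed positive constant) from below gives $\Ldl(n) \ges n^2$. When $\dl |n| \gg 1$, I would bound the sum from below by keeping only those $k$ with $k^2\pi^2 \le \dl^2 n^2$, of which there are $\sim \dl|n|$ terms each of size $\gtrsim (\dl^2 n^2)^{-1}$, giving $\Ldl(n) \ges n^2 \cdot \dl |n| \cdot (\dl^2 n^2)^{-1} = \dl^{-1}|n|$; alternatively one can just invoke Lemma~\ref{LEM:K1}, which already gives $\Kdl(n) > |n| - \dl^{-1}$, hence $\Ldl(n) = \dl^{-1}\Kdl(n) > \dl^{-1}|n| - \dl^{-2} \ges \dl^{-1}|n|$ once $\dl|n| \gg 1$.

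For part (iv), by definition $n^2 \hf(\dl,n) = n^2 - 3\Ldl(n)$, so using \eqref{Ldl1},
\[
n^2 \hf(\dl,n) = n^2 - 6 n^2 \sum_{k=1}^\infty \frac{1}{k^2\pi^2 + \dl^2 n^2}
= n^2 \bigg( 1 - \frac{6}{\pi^2}\sum_{k=1}^\infty \frac{\pi^2}{k^2\pi^2 + \dl^2 n^2}\bigg),
\]
and since $\frac{6}{\pi^2}\sum_k k^{-2} = 1$, the parenthetical equals $\frac{6}{\pi^2}\sum_k\big(k^{-2}\pi^{-2} \cdot \pi^2 - \frac{\pi^2}{k^2\pi^2+\dl^2n^2}\big)$, i.e.\
$\hf(\dl,n) = 6\sum_k \big(\frac{1}{k^2\pi^2} - \frac{1}{k^2\pi^2 + \dl^2 n^2}\big) = 6\dl^2 n^2 \sum_k \frac{1}{k^2\pi^2(k^2\pi^2+\dl^2n^2)}$, which is \eqref{hdef}. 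Positivity is clear; the upper bound $\hf(\dl,n)\le 1$ follows from $k^2\pi^2+\dl^2n^2 \ge k^2\pi^2$, so $\hf(\dl,n)\le 6\dl^2n^2\sum_k (k^2\pi^2)^{-2} \cdot$ — more directly, from $\Ldl(n) > 0$ one has $n^2\hf(\dl,n) = n^2 - 3\Ldl(n) < n^2$, hence $\hf(\dl,n) < 1$, and combined with positivity we get $\hf(\dl,n)\in(0,1]$. Finally $\lim_{\dl\to0}\hf(\dl,n) = 0$ is immediate from the $6\dl^2 n^2(\cdots)$ form since the series $\sum_k (k\pi)^{-4} < \infty$ is a fixed bound, so the prefactor $\dl^2 n^2 \to 0$. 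The one genuinely delicate point — the main obstacle — is \eqref{HX1}'s divergence claim $\sum_{n\in\Z}\hf^2(\dl,n) = \infty$: here one must show $\hf(\dl,n)$ does \emph{not} decay in $n$, indeed $\hf(\dl,n)\to 1$ as $|n|\to\infty$ for fixed $\dl$. This follows because for large $|n|$ the dominant denominators have $\dl^2 n^2 \gg k^2\pi^2$ over a long range, so $\Ldl(n) = 2n^2\sum_k(k^2\pi^2+\dl^2n^2)^{-1}$ behaves like $\frac{2n^2}{\dl^2 n^2}\cdot O(\dl|n|) = o(n^2)$ — more carefully, $\Ldl(n) = \dl^{-1}(n\coth(\dl n) - \dl^{-1})$ and $\coth(\dl n)\to 1$, so $\Ldl(n)\sim \dl^{-1}|n| = o(n^2)$, whence $\hf(\dl,n) = 1 - 3\Ldl(n)/n^2 \to 1$; therefore $\hf^2(\dl,n)$ is bounded below by a positive constant for all large $|n|$, and $\sum_n \hf^2(\dl,n) = \infty$.
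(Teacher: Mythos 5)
Your proof is correct, and the approach is the natural one. The paper itself does not present a proof of Lemma~\ref{LEM:L1}; it simply cites \cite[Lemma 2.3]{LOZ} and remarks that some of the bounds follow from Lemma~\ref{LEM:K1} via $\Ldl(n) = \dl^{-1}\Kdl(n)$. Your derivation is self-contained and organized around the Mittag--Leffler partial-fraction expansion of $\coth$, which is clearly the intended route given that the series formulas~\eqref{Ldl1} and~\eqref{hdef} appear verbatim in the statement. All the individual steps check out: the algebra recovering~\eqref{Ldl1}, the term-by-term monotonicity and dominated-convergence limit in (ii), the two-regime bounds in (iii) (both the direct counting argument and the alternative via Lemma~\ref{LEM:K1} are valid), and the derivation of~\eqref{hdef} using $\sum_k (k\pi)^{-2} = 1/6$. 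Your treatment of the divergence claim in~\eqref{HX1} correctly identifies the key point that $\hf(\dl, n) \to 1$ as $|n| \to \infty$ for fixed $\dl > 0$, since $\Ldl(n) \sim \dl^{-1}|n| = o(n^2)$. Two cosmetic remarks: you show $\hf(\dl,n) < 1$ strictly, which is stronger than and consistent with the stated $\hf \in (0,1]$; and $\coth(\dl n) \to \sgn(n)$ rather than $1$ as $|n| \to \infty$, but $n\coth(\dl n) \to |n|$ either way, so your asymptotic for $\Ldl(n)$ is unaffected.
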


We also state basic bounds on the perturbation operator $\Qdl$
in \eqref{Qdl1} and $\Gd$ in \eqref{sILW}.

\begin{lemma}\label{LEM:T1}
The following bounds hold.

\smallskip

\begin{itemize}

\item[\rm(i)]
Let $\ft{\Qdl} (n)$ be the multiplier of the perturbation operator
$\Qdl = \Gdl \dx - \H\dx$
defined in~\eqref{Qdl1}.
Then, we have 
\begin{align}
|\ft{\Qdl} (n)| & \leq \frac1\dl
\label{Q1}
\end{align}

\noi
for any $0 < \dl < \infty$ and $n \in \Z^*$.

\medskip
\item[\rm(ii)]
Let $\ft \Gd(n)$ be as in \eqref{Gd1}.
Then, we have 
\begin{align}
|\ft{\Gd}(n)|  \le \min\Big(\frac1\dl, \frac13|n|\Big)
\label{dlGd}
\end{align}

\noi
for any $0 < \dl < \infty$ and $n \in \Z^*$.

\medskip
\item[\rm(iii)]
Let $ 1 < r < \infty$. 
Then, 
given $0 < \dl \le \infty$, 
the Hilbert transform $\H$
and the perturbation operator $\Qdl$ in \eqref{Qdl1} 
are  bounded on $L^r_0(\T) = \P_{\ne 0}L^r(\T)$, consisting
of mean-zero functions in $L^r(\T)$, 
satisfying 
 the bounds\textup{:}
\begin{align}
\|\H\|_{L^r_0\to L^r_0}, 
 & \le C_r < \infty, \label{Q2a}\\
 \|\Qdl\|_{L^r_0\to L^r_0} 
& \le C_{r}\dl^{-1}< \infty.
\label{Q2}
\end{align}

\noi
Moreover, given $0 <  \dl \le  \infty$, 
the operator $\Gdl = \dl \wt \Gdl $ in~\eqref{GG1}
and \eqref{sILW}
is  bounded on $L^r_0(\T)$
with 
 the bound\textup{:}
\begin{align}
 \| \Gdl\|_{L^r_0\to L^r_0}  = \|\dl \Gd\|_{L^r_0\to L^r_0} 
& \le C_{r}< \infty
\label{Q3}
\end{align}

\noi
and we also have 
\begin{align}
 \| \Gd \|_{W^{1, r}_0\to L^r_0}  
& \le C_{r}< \infty, 
\label{Q3a}
\end{align}

\noi
where 
$W^{1, r}_0(\T) = \P_{\ne 0} W^{1, r}(\T)$, 
consisting of  mean-zero functions in $W^{1, r}(\T)$.
In~\eqref{Q2a}, \eqref{Q2}, \eqref{Q3}, and \eqref{Q3a}, the constant
$C_r$ is independent of $0 < \dl \le \infty$.

\end{itemize}

\end{lemma}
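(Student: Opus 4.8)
The plan is to treat the three parts essentially independently, since (i) and (ii) are pointwise multiplier estimates while (iii) is a collection of $L^r$-boundedness statements that all reduce, via transference, to known multiplier theorems on $\T$. For part~(i), I would write out the multiplier of $\Qdl$ explicitly: by \eqref{GG1} and the definition $\H = \H\P_{\ne 0}$, one has $\ft{\Qdl}(n) = \Kdl(n) - |n| = n\coth(\dl n) - \dl^{-1} - |n|$ for $n \in \Z^*$. By Lemma~\ref{LEM:K1}, namely \eqref{K1}, we have the two-sided bound $\max(0, |n| - \tfrac1\dl) < \Kdl(n) < |n|$, which immediately gives $-\tfrac1\dl < \Kdl(n) - |n| < 0$, hence $|\ft{\Qdl}(n)| < \tfrac1\dl$, which is \eqref{Q1}. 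For part~(ii), by \eqref{Gd1} we have $\ft\Gd(n) = \dl^{-1}\ft\Gdl(n)$, and using $\Ldl(n) = in\ft\Gd(n) = \dl^{-1}\Kdl(n)$ from \eqref{sym2} together with Lemma~\ref{LEM:L1}\,(i), which states $0 < \Ldl(n) < \min(\tfrac13 n^2, \tfrac1\dl|n|)$, we divide by $|n|$ to obtain $|\ft\Gd(n)| = \Ldl(n)/|n| < \min(\tfrac13|n|, \tfrac1\dl)$, which is \eqref{dlGd}.

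For part~(iii), the estimate \eqref{Q2a} is simply the classical $L^r$-boundedness of the periodic Hilbert transform (M. Riesz), with $C_r$ depending only on $r$; since $\H = \H\P_{\ne 0}$ acts on mean-zero functions, there is no issue at frequency zero. The estimate \eqref{Q2} then follows by writing $\Qdl = \Gdl\dx - \H\dx$ and combining \eqref{Q2a} with \eqref{Q3}: indeed $\Qdl = (\Gdl\dx - \H\dx)$ is itself a Fourier multiplier operator with symbol $\ft{\Qdl}(n)$, and I would invoke a Mikhlin--Hörmander-type multiplier theorem on $\T$ (or directly the Marcinkiewicz multiplier theorem) after checking that $n\mapsto \ft{\Qdl}(n)$ satisfies the relevant discrete Hörmander condition with constant $O(\dl^{-1})$; the pointwise bound \eqref{Q1} already gives the $\ell^\infty$ bound on the symbol, and one checks that the symbol is essentially constant in sign and slowly varying so that the smoothness/variation condition contributes only a further $r$-dependent factor. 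The bound \eqref{Q3} is handled the same way: $\Gdl = \dl\Gd$ has symbol $\ft\Gdl(n) = -i(\coth(\dl n) - \tfrac1{\dl n})$, which is bounded by $1$ in absolute value uniformly in $\dl$ and $n$ (this is exactly the statement that $|\Kdl(n)| < |n|$ divided by $|n|$, i.e.\ Lemma~\ref{LEM:K1}), and whose finite differences are controlled uniformly in $\dl$, so the multiplier theorem gives $L^r_0$-boundedness with a constant independent of $\dl$. Finally, \eqref{Q3a} follows by factoring $\Gd = (\Gd\dx)\circ \dx^{-1}$ on mean-zero functions: $\Gd\dx$ has symbol $\Ldl(n)$ with $0 < \Ldl(n) < \tfrac13 n^2$, so $\ft\Gd(n)/\jb{n} = \Ldl(n)/(in\jb{n})$ is bounded (indeed $|\ft\Gd(n)| \le \tfrac13|n| \le \tfrac13\jb{n}$ by \eqref{dlGd}) and slowly varying uniformly in $\dl$, which by the multiplier theorem yields boundedness from $W^{1,r}_0$ to $L^r_0$ with $r$-dependent, $\dl$-independent constant.

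The routine but slightly delicate point — the one I would be most careful about — is verifying the discrete Hörmander/Marcinkiewicz condition \emph{uniformly in $\dl$} for the symbols $\ft{\Qdl}(n)$, $\ft\Gdl(n)$, and $\ft\Gd(n)/\jb{n}$; the pointwise bounds from Lemmas~\ref{LEM:K1} and~\ref{LEM:L1} handle the $\ell^\infty$ part, but one must also bound dyadic block variations $\sum_{n\sim 2^j}|\ft m(n+1) - \ft m(n)|$ by an absolute constant (times $\dl^{-1}$ in the case of $\Qdl$). For $\ft\Gdl$ this is standard since $\coth(\dl n) - \tfrac1{\dl n}$ is monotone on $n > 0$ with total variation $\le 1$; for $\ft{\Qdl}(n) = \Kdl(n) - |n|$ one uses that $\Kdl(n) - |n|$ is monotone in $n > 0$ with total variation $O(\dl^{-1})$ by \eqref{Q1} and monotonicity (from Lemma~\ref{LEM:K1}, $\Kdl(n)$ is increasing in $\dl$ and $\Kdl(n) \uparrow |n|$, which propagates to monotonicity of the difference in $n$). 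I would cite the periodic Marcinkiewicz multiplier theorem (or transference to $\R$) for the conclusion rather than reproving it. Everything else — parts (i) and (ii), and the reduction of (iii) to the multiplier theorem — is short algebra using the already-established Lemmas~\ref{LEM:K1} and~\ref{LEM:L1}.
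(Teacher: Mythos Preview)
Your proposal is essentially correct and follows the same overall strategy as the paper (reduce (iii) to a Fourier multiplier theorem), but via a somewhat different technical route. The paper works with the Mihlin multiplier theorem on $\R$ together with transference to $\T$: for each operator it extends the symbol smoothly to $\R$ (with a cutoff $\varphi$ near the origin), checks the derivative bounds $|\partial_\xi^k m(\xi)| \lesssim |\xi|^{-k}$, and then transfers. For $\Gdl$ the paper cannot get these derivative bounds uniformly in $\dl$ from a single argument and so splits into $\dl \gtrsim 1$ (direct Mihlin on the obvious extension $m_3$) and $0 < \dl \lesssim 1$ (using the series representation \eqref{Q4a} coming from Lemma~\ref{LEM:L1}, estimating the resulting $m_4$ via a Riemann-sum comparison to $\int_0^\infty \frac{dx}{\pi^2 x^2 + 1}$). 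Your Marcinkiewicz approach via monotonicity and bounded total variation of $\coth(\dl n) - (\dl n)^{-1}$ on $n > 0$ is arguably cleaner for $\Gdl$, since it sidesteps this case split: the symbol is monotone in $n$ with range contained in $(0,1)$ uniformly in $\dl$, so the dyadic variation is automatically $O(1)$.

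One genuine (though easily fixable) gap: your justification that $n \mapsto \ft{\Qdl}(n) = \Kdl(n) - |n|$ is monotone on $n > 0$ is incorrect as written. You invoke the fact from Lemma~\ref{LEM:K1} that $\Kdl(n)$ is increasing in $\dl$, but monotonicity in $\dl$ does not by itself yield monotonicity of the difference in $n$. What you actually need is $\tfrac{d}{dn}\Kdl(n) \le 1$ for $n > 0$, equivalently $\coth x - x\,\mathrm{csch}^2 x \le 1$ for $x = \dl n > 0$; this rearranges to $e^{-x}\sinh x \le x$, i.e.\ $1 - e^{-2x} \le 2x$, which is elementary. With this correction your Marcinkiewicz argument goes through with total variation $\le \dl^{-1}$ and gives \eqref{Q2}. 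Parts (i) and (ii) of your proposal match the paper's proof exactly (the paper simply cites \cite{CLOP} for (i) and Lemma~\ref{LEM:L1}\,(i) for (ii)).
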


\begin{remark}\label{REM:Gd1}\rm
From the series representation \eqref{Q4a} of 
$\ft \Gdl(n) = \dl \ft \Gd(n)$ below, 
we have 
\begin{align*}
\lim_{\dl \to 0} |\ft \Gdl(n)| = 
\lim_{\dl \to 0}|\dl \ft \Gd(n)| 
\les 
\lim_{\dl \to 0}\sum_{k=1}^\infty \frac{\dl |n|}{k^2\pi^2}
\sim \lim_{\dl \to 0}\dl |n|  = 0
\end{align*}

\noi
for any $n \in \Z^*$.

\end{remark}

\begin{proof}[Proof of Lemma \ref{LEM:T1}]
For~\eqref{Q1}, see the proof of Lemma 2.3 in \cite{CLOP}, 
while \eqref{dlGd} follows from Lemma \ref{LEM:L1}\,(i).

We now turn to (iii).
Let 
$\varphi \in C^\infty(\R; [0, 1])$ such that
$\varphi(\xi) = 0$ for $|\xi| \le \frac 12$ and 
$\varphi(\xi) = 1$ for $|\xi| \ge 1$.
Define a function $m_1(\xi)$ on $\R$ by setting
$m_1(\xi) = \varphi(\xi)( -i \sgn(\xi))$.
It is easy to check that 
 $|\dd_\xi^k m_1(\xi)| \les_k |\xi|^{-k}$
for any $k \in \Z_{\ge 0}$ and $\xi \in \R$.
Then, 
by the Mihlin multiplier theorem on $\R$ (see  \cite[Theorem 6.2.7 and (6.2.14)]{Gra1}), 
the Fourier  multiplier operator $T_{m_1}$
with multiplier $m_1$ is bounded on $L^r(\R)$, $1 < r < \infty$.
Noting that $m_1(n)$, $n \in \Z$,  
agrees with the multiplier for the Hilbert transform $\H$ on $\T$, 
boundedness of $\H$ on $L^r_0(\T)$
with the bound~\eqref{Q2a}
follows from 
the transference
(\cite[Theorem 4.3.7]{Gra1}),
since
$m_1(\xi)$ is continuous and thus is regulated everywhere (in the 
sense of  \cite[Definition 4.3.6]{Gra1}).

From \eqref{Qdl1}, we have 
\begin{align*}
\ft \Qdl(n) = 
 n \big(\coth (\dl n) - \sgn (n)\big) - \dl^{-1}
 = 
 \frac {2|n|}{e^{2\dl| n|} - 1} - \dl^{-1}
\end{align*}

\noi
for $n \in \Z^*$.
Let $m_2(\xi) 
= \varphi(\xi) \big(\frac {2|\xi|}{e^{2\dl |\xi|} - 1}- \dl^{-1}\big)$.
We easily see that $|\dd_\xi^k m_2(\xi)| \les_{k} \dl^{-1}|\xi|^{-k}$
for any $k \in \Z_{\ge 0}$ and $\xi \in \R$.
Hence, the bound \eqref{Q2}  follows from 
the Mihlin multiplier theorem on $\R$
and the transference as discussed above.

From \eqref{GG1}, we have 
\begin{align*}
\ft \Gdl(n) = 
-i\bigg(1+
 \frac 2{e^{2\dl n} - 1} - \frac 1{\dl n}\bigg) .
\end{align*}

\noi
By setting  $m_3(\xi) 
= -i \varphi(\xi) \big(1+ \frac 2{e^{2\dl \xi} - 1} - \frac 1{\dl \xi}\big)$, 
it follows from the Mihlin multiplier theorem on~$\R$ 
and the transference as discussed above that 
\begin{align}
 \| \Gdl\|_{L^r_0\to L^r_0}  
& \le C_{r, \dl}^{(1)}< \infty, 
\label{Q4}
\end{align}

\noi
where the constant 
$C_{r, \dl}^{(1)}$ is independent of $1 \les \dl \le \infty$.

In order to prove uniform boundedness
of $\Gdl$ for small $\dl$, 
we need to use a series representation.
 From \eqref{sym2} and \eqref{Ldl1} in Lemma \ref{LEM:L1}, 
we have 
\begin{align}
\ft \Gdl(n) = \dl \ft \Gd(n) = 
\frac{\dl \Ldl(n)}{in} 
&=  -2i \sum_{k=1}^\infty \frac{\dl n}{k^2\pi^2+ \dl^2 n^2}. 
\label{Q4a}
\end{align}

\noi
Let $m_4(\xi)
= -2i \varphi(\xi)  \sum_{k=1}^\infty \frac{\dl \xi}{k^2\pi^2+ \dl^2 \xi^2}$.  
Then, we have 
\begin{align*}
|m_4(\xi)| 
=  2 \varphi(\xi) \sum_{k=1}^\infty
\frac{1}{\pi^2 (\frac{k}{\dl | \xi|})^2+1} \frac{1}{\dl |\xi|}
 \le 2 \int_0^\infty \frac{dx}{\pi^2 x^2 + 1} \les 1
\end{align*}

\noi
for any $\xi \in \R$.
Similarly, a direct computation shows that 
 $|\dd_\xi^k m_4(\xi)| \les_{\dl, k} |\xi|^{-k}$
for any $k \in \Z_{\ge 0}$ and $\xi \in \R$, 
where the implicit constant  is independent of $0 < \dl \les 1$.
Hence, from 
the Mihlin multiplier theorem on $\R$
and the transference as discussed above, 
we obtain 
\begin{align}
 \| \Gdl\|_{L^r_0\to L^r_0}  
  = \|\dl \Gd\|_{L^r_0\to L^r_0} 
& \le C_{r, \dl}^{(2)}< \infty, 
\label{Q5}
\end{align}

\noi
where the constant 
$C_{r, \dl}^{(2)}$ is independent of $0 < \dl \les 1$.
Hence, the bound \eqref{Q3} follows from~\eqref{Q4}
and \eqref{Q5}.

Lastly, the bound \eqref{Q3a} follows from a similar discussion as above, 
using the series representation \eqref{Q4a}, written as 
\begin{align*}
 (in)^{-1} \ft \Gd(n) 
=  -2 \sum_{k=1}^\infty \frac{1}{k^2\pi^2+ \dl^2 n^2}. 
\end{align*}

This concludes the proof of Lemma \ref{LEM:T1}.
\end{proof}

Lastly,  we recall several identities.
First, recall  Cotlar's identity \cite{Cotlar}:
\begin{align}
2 \H\big( (\H u)  u  \big) = (\H u)^2   - u^2
\label{Cot1}
\end{align}

\noi
for a function $u$ on $\R$.
From \eqref{Cot1} and the ``polarization'' $2uv = (u+ v)^2 - u^2 - v^2$,
we obtain the following identity:
\begin{align}
\H\big( (\H u)  v + u \H v \big) = (\H u)( \H v ) - uv
\label{Cot2}
\end{align}

\noi
for functions $u$ and $v$ on $\R$.
In the current periodic setting,
we instead have
\begin{align}
\H\big( (\H u)  v + u \H v \big) = \P_{\ne 0} \big((\H u)( \H v ) - uv\big)
\label{Cot3}
\end{align}

\noi
for  mean-zero functions $u$ and $v$
on $\T$,
since we have $\H = \H \P_{\ne 0}$ by our convention; see
Footnote \ref{FT:1}.
However, by noting
that $(\H u)( \H v ) - uv$ has mean zero on $\T$
(recall that
$\H$ has a symbol
$-i \sgn(n)\cdot \ind_{n \ne 0}$),
we see that \eqref{Cot3} reduces to \eqref{Cot2} even on $\T$,
provided that $u$ and $v$ have mean zero on $\T$.

The following lemma establishes an analogous but slightly different identity
for the Tilbert transform $\Tdl$.

\begin{lemma}\label{LEM:T2}
Given $0 < \dl < \infty$,
let $\Tdl$
be the Tilbert transform defined as the Fourier multiplier operator
with
 multiplier
\begin{align}
\ft{\Tdl}(n) =
\begin{cases}
-i \coth(\dl n), &  n\in\Z^*,\\
0, & n = 0.
\end{cases}
\label{Til0}
\end{align}

\noi
Then, we have
\begin{align}
\Gdl = \Tdl - \dl^{-1} \dx^{-1}\P_{\ne 0}
\label{Til1}
\end{align}
and
\begin{align}
\Tdl\big((\Tdl u) v + u\Tdl v\big) =
\P_{\ne 0}\big(
(\Tdl u) (\Tdl v) - uv\big)
\label{Til2}
\end{align}

\noi
for  mean-zero functions $u$ and $v$
on $\T$,
where $\P_{\ne 0}$ denotes the projection onto non-zero frequencies.

\end{lemma}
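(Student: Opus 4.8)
The plan is to verify \eqref{Til1} by a direct computation on the Fourier side, and then to deduce \eqref{Til2} by a polarization argument from Cotlar's identity, making careful use of the relation between $\Tdl$ and $\H$. First I would observe that \eqref{Til1} is immediate: the multiplier of the right-hand side at a nonzero frequency $n$ is $-i\coth(\dl n) - \dl^{-1}(in)^{-1}\cdot(-1) \cdot \ldots$; more precisely, $\ft{\Tdl}(n) - \dl^{-1} (in)^{-1} = -i\coth(\dl n) - \dl^{-1}(in)^{-1}$, and since $(in)^{-1} = -i/n$, this equals $-i\coth(\dl n) + i\dl^{-1}/n \cdot (-1)\cdots$ — one checks it matches $-i(\coth(\dl n) - \frac{1}{\dl n}) = \ft\Gdl(n)$ from \eqref{GG1}, with both sides vanishing at $n = 0$ by convention. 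This is a one-line multiplier check.

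For \eqref{Til2}, the main idea is that the Tilbert transform satisfies an exact Cotlar-type identity even though it is not the Hilbert transform. The cleanest route I would take is the \emph{conjugation trick}: the multiplier $-i\coth(\dl n)$ equals $-i\sgn(n)$ times $|\coth(\dl n)|$, but $|\coth(\dl n)| \ne 1$, so $\Tdl$ is not a ``rotation'' and one cannot simply transplant Cotlar's identity. Instead, I would exploit the fact that $\coth(\dl n) = \sgn(n)\coth(\dl|n|) = \sgn(n)(1 + \frac{2}{e^{2\dl|n|}-1})$, so $\ft\Tdl(n) = -i\sgn(n) - i\sgn(n)\cdot\frac{2}{e^{2\dl|n|}-1}$, and write $\Tdl = \H + \Rf_\dl$ where $\Rf_\dl$ has an \emph{even} multiplier $-i\sgn(n)\frac{2}{e^{2\dl|n|}-1}$. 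Then expand both sides of \eqref{Til2} into Hilbert-transform pieces (which obey \eqref{Cot2}/\eqref{Cot3}) plus $\Rf_\dl$-correction pieces, and check that the corrections cancel. The cleanest way to organize this, though, is probably just to verify \eqref{Til2} directly at the level of Fourier coefficients: writing both sides as sums over $n = n_1 + n_2$ of products $\ft u(n_1)\ft v(n_2)$ with explicit multiplier weights, the identity \eqref{Til2} becomes the algebraic statement that, for all nonzero $n, n_1, n_2$ with $n = n_1 + n_2$,
\begin{align*}
\ft\Tdl(n)\big(\ft\Tdl(n_1) + \ft\Tdl(n_2)\big) = \ft\Tdl(n_1)\ft\Tdl(n_2) - 1,
\end{align*}
together with the bookkeeping that the left side of \eqref{Til2} automatically has mean zero (the outer $\Tdl$ kills the zero mode, matching the $\P_{\ne 0}$ on the right). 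Substituting $\ft\Tdl(m) = -i\coth(\dl m)$ turns this into the hyperbolic-cotangent addition identity $\coth(a+b)(\coth a + \coth b) = \coth a\coth b + 1$, which holds whenever $a + b \ne 0$; here $a = \dl n_1$, $b = \dl n_2$, $a + b = \dl n \ne 0$, so it applies. I would also separately handle the degenerate cases $n_1 = 0$ or $n_2 = 0$ (where one factor is the zero mode), for which the mean-zero hypothesis on $u, v$ makes the corresponding terms drop out on both sides.

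The step I expect to be the (mild) obstacle is the careful treatment of the zero frequency and the $\P_{\ne 0}$ projections: one must make sure that (a) the convention $\ft\Tdl(0) = 0$ is used consistently, (b) the right-hand side of \eqref{Til2} genuinely has nonzero mean in general (unlike the Hilbert-transform case in \eqref{Cot3}, where $(\H u)(\H v) - uv$ happens to be mean-zero) — indeed this is precisely why the projection $\P_{\ne 0}$ cannot be dropped here, in contrast to \eqref{Cot2} — and (c) the bilinear sums converge, which is harmless for the smooth test functions in our applications. Everything else is the elementary $\coth$ addition formula plus the multiplier computation for \eqref{Til1}.
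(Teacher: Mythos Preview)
Your proposal is correct and matches the paper's approach: the paper simply says \eqref{Til1} follows from comparing \eqref{GG1} and \eqref{Til0}, and \eqref{Til2} ``follows from a direct computation'' with details omitted. Your Fourier-side reduction of \eqref{Til2} to the hyperbolic addition formula $\coth(a+b)(\coth a + \coth b) = \coth a \coth b + 1$ is exactly the ``direct computation'' the paper has in mind, and your bookkeeping of the zero mode (including the observation that $\P_{\ne 0}$ is genuinely needed here, since $\coth^2(\dl n) - 1 \ne 0$) is accurate.
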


\begin{proof}
The identity \eqref{Til1} follows from \eqref{GG1}
and \eqref{Til0}, while the identity \eqref{Til2}
follows from a direct computation.  We omit details.
\end{proof}

%
%

\subsection{On discrete convolutions}

Next, we recall the following basic lemma on a discrete convolution.

\begin{lemma}\label{LEM:SUM}
Let  $\al, \be \in \R$ satisfy
\[ \al \ge \be \ge 0 \qquad \text{and}\qquad  \quad \al+ \be > 1.\]
\noi
Then, we have
\[
 \sum_{n = n_1 + n_2} \frac{1}{\jb{n_1}^\al \jb{n_2}^\be}
\les \frac 1{\jb{n}^{ \be - \ld}}\]

\noi
for any $n \in \Z$, 
where $\ld = 
\max( 1- \al, 0)$ when $\al\ne 1$ and $\ld = \eps$ when $\al = 1$ for any $\eps > 0$.

\end{lemma}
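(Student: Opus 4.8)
The final statement is the discrete convolution estimate in Lemma~\ref{LEM:SUM}. The plan is to reduce to the standard Peetre-type inequality for sums over $\mathbb{Z}$ by a dyadic decomposition in the sizes of $n_1$ and $n_2$, carefully tracking the endpoint behaviour when $\alpha = 1$.

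\medskip

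\emph{Setup and splitting.} First I would fix $n \in \mathbb{Z}$ and split the sum according to which of $|n_1|, |n_2|$ is comparable to $|n|$ and which is much smaller. Write $S = \sum_{n = n_1 + n_2} \jb{n_1}^{-\alpha} \jb{n_2}^{-\beta}$. Since $n = n_1 + n_2$, at least one of $|n_1|, |n_2| \gtrsim |n|$, so I decompose $S \le S_1 + S_2$, where in $S_1$ we have $\jb{n_1} \gtrsim \jb{n}$ and in $S_2$ we have $\jb{n_2} \gtrsim \jb{n}$ (with the overlap region $\jb{n_1} \sim \jb{n_2} \gtrsim \jb{n}$ counted in either piece).

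\medskip

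\emph{Estimating the two pieces.} For $S_1$: using $\jb{n_1} \gtrsim \jb{n}$ we bound $\jb{n_1}^{-\alpha} \le \jb{n}^{-(\alpha - \beta)} \jb{n_1}^{-\beta}$ when $\alpha \ge \beta$ (a convexity/monotonicity step, pulling out the ``extra'' decay $\alpha - \beta \ge 0$ as a power of $\jb{n}$). This leaves $S_1 \lesssim \jb{n}^{-(\alpha - \beta)} \sum_{n = n_1 + n_2} \jb{n_1}^{-\beta}\jb{n_2}^{-\beta}$. Since $\alpha + \beta > 1$ and $\alpha \ge \beta$, we may not have $\beta > \frac12$, so instead I keep the $\jb{n_1}^{-\alpha}$ and argue directly: on the region $\jb{n_1}\gtrsim \jb{n}$, $\sum_{n_1} \jb{n_1}^{-\alpha}\jb{n-n_1}^{-\beta}$ is bounded by first summing $\jb{n-n_1}^{-\beta}$ over $|n_1 - n| \le \jb{n}$ (contributing $\lesssim \jb{n}^{\max(1-\beta,0)+}$, or a logarithm/constant depending on $\beta$) times $\jb{n}^{-\alpha}$, plus the tail $\sum_{|n_1| \gg \jb{n}} \jb{n_1}^{-\alpha-\beta} \lesssim \jb{n}^{1 - \alpha - \beta}$ since $\alpha + \beta > 1$. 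A short case check ($\beta < 1$, $\beta = 1$, $\beta > 1$) shows both contributions are $\lesssim \jb{n}^{-(\beta - \lambda)}$ with $\lambda$ as defined. For $S_2$: by symmetry of the roles (now $\jb{n_2}\gtrsim\jb{n}$), one obtains $S_2 \lesssim \jb{n}^{-\beta} \sum_{n_1} \jb{n_1}^{-\alpha} \cdot (\text{bounded factor from } \jb{n_2}^{-\beta+\beta})$; here the sum $\sum_{n_1}\jb{n_1}^{-\alpha}$ over the relevant range is finite when $\alpha > 1$, and produces an extra $\jb{n}^{1-\alpha}$ or $\jb{n}^{0+}$ when $\alpha = 1$, again giving $\lesssim \jb{n}^{-(\beta - \lambda)}$.

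\medskip

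\emph{Main obstacle.} The delicate point is the endpoint $\alpha = 1$ (and relatedly $\beta = 1$), where the naive tail sum $\sum \jb{n_1}^{-\alpha}$ over $|n_1| \lesssim \jb{n}$ diverges logarithmically; this is exactly why the statement replaces $\max(1-\alpha,0) = 0$ by $\eps$ in that case, absorbing the $\log\jb{n}$ into $\jb{n}^\eps$. I would handle this by dyadic decomposition $|n_1| \sim 2^j$ for $1 \le 2^j \lesssim \jb{n}$: each dyadic block contributes $\lesssim 2^{j(1-\alpha)}$ times an appropriate power of $\jb{n}$, and summing over the $O(\log \jb{n})$ blocks when $\alpha = 1$ gives the $\jb{n}^\eps$ loss after crudely bounding $\log\jb{n} \lesssim_\eps \jb{n}^\eps$. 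Everything else is a routine but slightly tedious bookkeeping of cases on the relative sizes of $\alpha, \beta$ versus $1$; no new idea is needed beyond the dyadic split and the Peetre inequality $\jb{n} \lesssim \jb{n_1}\jb{n_2}$.
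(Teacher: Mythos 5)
Your argument is correct and is exactly the standard elementary proof: split according to which of $|n_1|, |n_2|$ dominates $|n|$, then split each piece once more into the range where the small variable is $\les |n|$ (summing the small variable directly, with a $\log$ at $\al=1$ or $\be=1$ absorbed by $\jb{n}^{\eps}$) and the tail where both are $\gg|n|$ (which contributes $\jb{n}^{1-\al-\be}$). For what it's worth, the paper does not give its own proof of this lemma --- it simply remarks that the estimate ``follows from elementary computations'' and cites Ginibre--Tsutsumi--Velo \cite{GTV} and Mourrat--Weber--Xu \cite{MWX}, both of which proceed essentially as you do. Two small presentational points: the first attempted reduction in your $S_1$ estimate (peeling off $\jb{n}^{-(\al-\be)}$ and reducing to the $\be,\be$ case) is a dead end, as you yourself notice, and could simply be omitted rather than narrated; and the $S_2$ bound has a garbled expression ``$\jb{n_2}^{-\be+\be}$'' --- what you mean is just $\jb{n_2}^{-\be}\les\jb{n}^{-\be}$ on that region, after which $\sum_{|n_1|\les|n|}\jb{n_1}^{-\al}$ gives $O(1)$, $O(\log\jb{n})$, or $O(\jb{n}^{1-\al})$ according to whether $\al>1$, $\al=1$, or $\al<1$, each matching the claimed $\jb{n}^{-\be+\ld}$. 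A clean write-up would present the single two-level splitting from the start rather than reconstruct it mid-argument.
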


Lemma \ref{LEM:SUM} follows
from elementary  computations.
See, for example,  
 \cite[Lemma 4.2]{GTV} and \cite[Lemma 4.1]{MWX}.

\subsection{Tools from stochastic analysis}

We first recall  the Wiener chaos estimate, 
which is a consequence of the  hypercontractivity 
of the Ornstein-Uhlenbeck semigroup due to Nelson~\cite{Ne65}.
See, for example,   \cite[Theorem I.22]{Simon}.
See also \cite[Proposition 2.4]{TTz}.

\begin{lemma}[Wiener chaos estimate]\label{LEM:hyp}
Let $\mathbf{g}=\{g_n\}_{n\in\Z}$ be an independent family of standard complex-valued Gaussian random variables satisfying $g_{-n} = \cj{g_n}$. Given $k\in\N$, let $\{Q_j\}_{j\in\N}$ be a sequence of polynomials in $\mathbf{g}$ of degree at most $k$. Then, for any finite $p\ge 1$,  we have
\begin{align*}
\bigg\| \sum_{j\in\N} Q_j(\mathbf{g}) \bigg\|_{L^p(\O)} \leq (p-1)^{\frac{k}{2}} \bigg\| \sum_{j\in\N} Q_j (\mathbf{g}) \bigg\|_{L^2(\O)}.
\end{align*}

\end{lemma}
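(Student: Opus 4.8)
The plan is to obtain the stated estimate as an immediate consequence of Nelson's hypercontractivity inequality for the Ornstein--Uhlenbeck semigroup; the claim is purely Gaussian and carries no PDE content, so I would simply quote the relevant functional inequality and run a short ``twisting'' argument. We may assume $p\ge 2$. Write $F:=\sum_{j\in\N}Q_j(\mathbf g)$; by hypothesis this series converges in $L^2(\Omega)$ (its convergence in $L^p$ is part of the conclusion). Viewing $\mathbf g$ as generated by a single family of i.i.d.\ real Gaussians (the reality constraint $g_{-n}=\bar g_n$ makes this harmless), each monomial in $\mathbf g$ of degree at most $k$ can be rewritten, through the relation between ordinary powers and Hermite polynomials (Wick ordering), as a finite linear combination of Wick-ordered monomials of degree at most $k$, each of which belongs to a single homogeneous Gaussian chaos $\mathcal H_m$ with $m\le k$. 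Hence every $Q_j(\mathbf g)$, and therefore its $L^2$-limit $F$, lies in the closed subspace $\mathcal H_{\le k}:=\bigoplus_{m=0}^{k}\mathcal H_m$; I would then write $F=\sum_{m=0}^{k}F_m$ for the corresponding $L^2$-orthogonal decomposition.

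Next I would invoke hypercontractivity. The Ornstein--Uhlenbeck semigroup $(T_t)_{t\ge0}$ acts on $\mathcal H_m$ as multiplication by $e^{-mt}$, and Nelson's theorem asserts that $T_t\colon L^2(\Omega)\to L^q(\Omega)$ is a contraction as soon as $q-1\le e^{2t}$. Given $p\ge2$, take $t=\tfrac12\log(p-1)\ge0$, so that $e^{2t}=p-1$ and $\|T_tG\|_{L^p}\le\|G\|_{L^2}$ for every $G\in L^2(\Omega)$. Applying this to $G:=\sum_{m=0}^{k}e^{mt}F_m$, for which $T_tG=\sum_{m=0}^{k}e^{-mt}e^{mt}F_m=F$, and using the $L^2$-orthogonality of the $F_m$ together with $e^{mt}\le e^{kt}$ for $0\le m\le k$, I get
\[
\|F\|_{L^p}=\|T_tG\|_{L^p}\le\|G\|_{L^2}=\Big(\sum_{m=0}^{k}e^{2mt}\|F_m\|_{L^2}^2\Big)^{1/2}\le e^{kt}\Big(\sum_{m=0}^{k}\|F_m\|_{L^2}^2\Big)^{1/2}=(p-1)^{k/2}\|F\|_{L^2},
\]
which is exactly the asserted bound. (Applying Nelson's estimate separately on each $\mathcal H_m$ and summing by the triangle inequality would cost an extra factor $k+1$; the clean constant $(p-1)^{k/2}$ is precisely what the twisting by $G$ buys.)

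I do not anticipate a genuine obstacle: the entire weight of the argument is carried by Nelson's hypercontractivity estimate \cite{Ne65} (equivalently, the Gaussian logarithmic Sobolev inequality), which I would cite from \cite[Theorem~I.22]{Simon} or \cite[Proposition~2.4]{TTz}. Those references in fact state the conclusion directly for finite sums of Wiener chaoses, so the only point meriting a brief elementary remark is the reduction of a ``polynomial of degree $\le k$ in $\mathbf g$'' to an ``element of $\mathcal H_{\le k}$'' via Wick ordering, together with the observation that $\mathcal H_{\le k}$ is closed in $L^2(\Omega)$, so this membership passes to the limit $j\to\infty$ defining $F$.
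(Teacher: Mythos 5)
Your argument is correct and is precisely the standard hypercontractivity proof that the paper implicitly appeals to by citing Nelson \cite{Ne65} and \cite[Theorem I.22]{Simon}; the paper itself does not reproduce a proof, so your write-up simply supplies the details behind those references. The one small point worth being careful about, which you already handle by assuming $p\ge 2$, is that the constant $(p-1)^{k/2}$ is only meaningful for $p\ge 2$ (for $1\le p<2$ the bound is just H\"older), a convention the paper leaves implicit in stating ``finite $p\ge 1$.''
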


Next, we recall 
Kakutani's theorem~\cite{Kakutani48} in the Gaussian
setting
(or  the Feldman-H\'ajek theorem \cite{Feldman, Hajek};
see also \cite[Theorem 2.9]{DP}).
For the following version of Kakutani's theorem, 
see Lemma 3.2 in \cite{LOZ}.

\begin{lemma}[Kakutani's theorem]\label{LEM:kak}
Let $\{A_n\}_{n\in\Z^*}$ and $\{B_n\}_{n\in \Z^*}$ be two sequences of independent, real-valued, mean-zero Gaussian random variables with $\E[A^2_n] = a_n >0$ and $\E[B_n^2] = b_n >0$ for all $n\in\Z^*$. Then, the laws of the sequences $\{A_n\}_{n\in\Z^*}$ and $\{B_n\}_{n\in\Z^*}$ are equivalent if and only if
\begin{equation*}
\sum_{n\in\Z^*} \bigg( \frac{a_n}{b_n} - 1 \bigg)^2 < \infty.
\end{equation*}
It they are not equivalent, then they are singular.
\end{lemma}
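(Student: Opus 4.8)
The plan is to realize the laws of $\{A_n\}_{n\in\Z^*}$ and $\{B_n\}_{n\in\Z^*}$ as infinite products of one-dimensional centered Gaussian measures and to invoke Kakutani's dichotomy theorem for infinite product measures, thereby reducing the whole question to an explicit Gaussian integral. Since the $A_n$ (resp.\ the $B_n$) are independent, the law of $\{A_n\}_{n\in\Z^*}$ is the product measure $\mu = \bigotimes_{n\in\Z^*}\mu_n$ on $\R^{\Z^*}$ with $\mu_n = \mathcal N(0, a_n)$, and the law of $\{B_n\}_{n\in\Z^*}$ is $\nu = \bigotimes_{n\in\Z^*}\nu_n$ with $\nu_n = \mathcal N(0, b_n)$. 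As $a_n, b_n > 0$, each pair $(\mu_n, \nu_n)$ consists of mutually absolutely continuous nondegenerate Gaussians, so the abstract Kakutani theorem for infinite products guarantees that $\mu$ and $\nu$ are \emph{either} equivalent \emph{or} mutually singular, and that they are equivalent if and only if
\[
\prod_{n\in\Z^*}\rho_n > 0, \qquad \rho_n := \int_\R \sqrt{\tfrac{d\mu_n}{d\nu_n}(x)}\, d\nu_n(x) \in (0,1],
\]
the $\rho_n$ being the Hellinger affinities of the marginals; equivalently, since $0 < \rho_n \le 1$, equivalence holds iff $\sum_{n\in\Z^*}(1-\rho_n) < \infty$.

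Next I would compute $\rho_n$ in closed form. Writing $p_n, q_n$ for the Lebesgue densities of $\mu_n, \nu_n$, one has $\rho_n = \int_\R \sqrt{p_n(x)q_n(x)}\, dx$, and evaluating the resulting Gaussian integral gives
\[
\rho_n = \frac{\sqrt 2\, (a_n b_n)^{1/4}}{(a_n+b_n)^{1/2}} = f\!\left(\frac{a_n}{b_n}\right), \qquad f(t) := \frac{\sqrt 2\, t^{1/4}}{(1+t)^{1/2}}\quad (t>0).
\]
The function $f$ is continuous on $(0,\infty)$ and satisfies $0 < f(t) \le 1$ with equality iff $t=1$ (by AM--GM, $1+t \ge 2\sqrt t$). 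A second-order expansion at $t=1$ — using $(\log f)'(1) = 0$ and $(\log f)''(1) = -\tfrac 18$ — gives $1 - f(t) = \tfrac{1}{16}(t-1)^2 + O((t-1)^3)$ as $t\to 1$; in particular, on any compact interval $[c,C]\subset(0,\infty)$ one has the uniform comparison $1 - f(t)\sim_{c,C}(t-1)^2$.

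Finally I would combine these. Set $t_n := a_n/b_n$, so $1-\rho_n = 1 - f(t_n)$. If $\sum_n(t_n-1)^2 < \infty$, then $t_n \to 1$, hence $t_n\in[\tfrac12,2]$ for all but finitely many $n$; for those $n$ we get $1-\rho_n \les (t_n-1)^2$, while the finitely many remaining terms each satisfy $1-\rho_n < 1$, so $\sum_n(1-\rho_n) < \infty$ and $\mu \sim \nu$. Conversely, suppose $\sum_n(t_n-1)^2 = \infty$. If $t_n \not\to 1$, then $\rho_n = f(t_n)\not\to 1$, so $\prod_n\rho_n = 0$; and if $t_n\to 1$, then $1-\rho_n \ges (t_n-1)^2$ for $n$ large, so $\sum_n(1-\rho_n)=\infty$ and again $\prod_n\rho_n = 0$. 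In either case Kakutani's dichotomy forces $\mu\perp\nu$, which also yields the last assertion that non-equivalence implies singularity. The only genuinely delicate ingredient is the abstract Kakutani dichotomy for infinite products; if a self-contained argument is desired, it follows by applying the martingale convergence theorem to the likelihood ratios $d\mu|_{\F_N}/d\nu|_{\F_N}$ of the restrictions to the first $N$ coordinates and checking uniform integrability. Everything else — the closed form for $\rho_n$ and the comparison $1-\rho_n \sim (t_n-1)^2$ away from the degenerate limits $t_n\to 0,\infty$ — is a routine one-dimensional Gaussian computation.
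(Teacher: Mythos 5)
The paper does not actually prove this lemma; it simply cites Lemma~3.2 of the reference \cite{LOZ} and moves on, since the statement is a standard form of the Kakutani/Feldman--H\'ajek dichotomy for diagonal Gaussian measures. Your argument is the canonical self-contained proof: realize both laws as infinite product measures, invoke Kakutani's dichotomy for infinite products, compute the Hellinger affinity $\rho_n = \sqrt{2}\,(a_nb_n)^{1/4}(a_n+b_n)^{-1/2} = f(a_n/b_n)$ with $f(t) = \sqrt{2}\,t^{1/4}(1+t)^{-1/2}$, and observe via $(\log f)'(1)=0$, $(\log f)''(1)=-\tfrac18$ that $1-f(t)\sim\tfrac1{16}(t-1)^2$ near $t=1$, so that $\sum_n(1-\rho_n)<\infty$ if and only if $\sum_n(t_n-1)^2<\infty$. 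The Gaussian integral, the Taylor computation, and the case analysis (splitting on whether $t_n\to1$) are all correct, and the final appeal to martingale convergence for the dichotomy itself is the right remedy if one wants to avoid citing Kakutani outright. This is sound and complete; it simply supplies the proof that the paper outsources.
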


We now state   the  Skorokhod representation
theorem along a continuous parameter.
While \cite[Theorem 6.7]{Billingsley}
is stated for convergence along a discrete parameter, 
a straightforward modification (in particular, in 
 \cite[line 2 on p.\,71]{Billingsley})
 yields the following lemma.
See also \cite{OQS}
for an application of 
the  Skorokhod representation
theorem along a continuous parameter.

\begin{lemma}[Skorokhod representation theorem]\label{LEM:Sk}

Let $\M$ be a complete
separable metric space \textup{(}i.e.~a Polish space\textup{)}.
Fix
 $a, b \in \R$ with $a < b$.
Suppose that
 probability measures
 $\{\rho_\dl\}_{\dl \in [a, b)}$
 on $\M$ converge  weakly   to a probability measure $\rho$
as $\dl \to b$.
Then, there exist a probability space $(\wt \O, \wt \F, \wt\PP)$
and random variables $X_\dl, X:\wt \O \to \M$
such that
\begin{align*}
\Law( X_\dl) = \rho_\dl
\qquad \text{and}\qquad
\Law(X) = \rho ,
\end{align*}

\noi
and $X_\dl$ converges $\wt\PP$-almost surely to $X$ as $\dl\to b$.

\end{lemma}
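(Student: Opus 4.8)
The plan is to reproduce, on the fixed probability space $(\wt\O,\wt\F,\wt\PP):=\big([0,1),\,\mathcal B([0,1)),\,\mathrm{Leb}\big)$, the classical nested-partition construction behind Skorokhod's representation theorem (as in \cite[Theorem~6.7]{Billingsley}), making the single change that the convergence $\rho_n(B)\to\rho(B)$ used there for $\rho$-continuity sets $B$ is replaced by its continuous-parameter analogue $\rho_\dl(B)\to\rho(B)$ as $\dl\to b$, which is an equally immediate consequence of the portmanteau theorem. The discreteness of the index is not used anywhere else in the argument; this is exactly the ``straightforward modification'' alluded to in the excerpt.

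Concretely, I would first fix, for each $m\in\N$, a finite Borel partition $\{B_{m,0},B_{m,1},\dots,B_{m,k_m}\}$ of $\M$ with (a) $\mathrm{diam}(B_{m,i})<1/m$ for $1\le i\le k_m$ and $\rho(B_{m,0})<2^{-m}$; (b) every $B_{m,i}$ a $\rho$-continuity set, i.e.\ $\rho(\partial B_{m,i})=0$; and (c) the level-$(m+1)$ partition refining the level-$m$ one. Such partitions exist by the usual device (cover a countable dense subset of $\M$ by balls of radius in $(\tfrac1{2m},\tfrac1m)$ with $\rho$-null boundaries — possible since at most countably many concentric spheres carry $\rho$-mass — disjointify, and absorb the remainder into $B_{m,0}$; intersect consecutive levels for (c)), and I would fix a global enumeration of the pairs $(m,i)$ respecting the tree structure in (c). Then, for each $\dl\in[a,b)$ and also for $\dl=b$ (setting $\rho_b:=\rho$), I would partition $[0,1)$ at each level $m$ into consecutive half-open intervals $I^\dl_{m,i}$ of lengths $\mathrm{Leb}(I^\dl_{m,i})=\rho_\dl(B_{m,i})$, listed in the fixed order and nested across levels in accordance with (c), and define $X_\dl$ on $[0,1)$ exactly as in \cite{Billingsley}: $X_\dl(\o)$ is the point of $\M$ selected by the nested cells $B_{m,i^\dl_m(\o)}$ with $\o\in I^\dl_{m,i^\dl_m(\o)}$ (this is well defined off a Lebesgue-null set, the intersection being a singleton once $\o$ has left the garbage cells, which by Borel--Cantelli happens for all large $m$ since $\sum_m\rho(B_{m,0})<\infty$ and $\rho_\dl(B_{m,0})\to\rho(B_{m,0})$). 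Setting $X:=X_b$, the identity $\mathrm{Leb}(X_\dl\in B_{m,i})=\mathrm{Leb}(I^\dl_{m,i})=\rho_\dl(B_{m,i})$, together with the fact that the cells $B_{m,i}$ generate $\mathcal B(\M)$, yields $\Law(X_\dl)=\rho_\dl$ and $\Law(X)=\rho$.

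It then remains to prove $X_\dl\to X$ $\wt\PP$-almost surely as $\dl\to b$. I would take $\o$ outside the countable set of all endpoints of the intervals $I^b_{m,i}$ and outside the null set on which $\o\in I^b_{m,0}$ for infinitely many $m$. For each $m$ large enough that $i^b_m(\o)\ne 0$, the two endpoints of $I^b_{m,i^b_m(\o)}$ lie strictly on either side of $\o$ and are \emph{finite} partial sums of the quantities $\rho_\dl(B_{m,j})$, each of which converges as $\dl\to b$; hence there is a threshold $\eta(\o,m)>0$ such that $\o\in I^\dl_{m,i^b_m(\o)}$ for all $\dl\in(b-\eta(\o,m),b)$. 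For such $\dl$ one has $i^\dl_m(\o)=i^b_m(\o)$, so $X_\dl(\o)$ and $X(\o)$ both lie in $B_{m,i^b_m(\o)}$ (or, at worst, in its closure), a set of diameter $<1/m$; letting $m\to\infty$ gives $d_\M(X_\dl(\o),X(\o))\to 0$.

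The main obstacle is precisely this last step: passing from convergence along arbitrary sequences $\dl_n\uparrow b$ — which is all a black-box application of \cite[Theorem~6.7]{Billingsley} would give, and where the exceptional null set could depend on the sequence — to genuine convergence along the continuum requires the stabilization threshold $\eta$ to depend on $\o$ and $m$ only. This is what forces the construction to be reproduced rather than merely invoked, and it is made possible by the fact that at each fixed level $m$ only finitely many partial sums $\dl\mapsto\sum_{j}\rho_\dl(B_{m,j})$ are involved. The surrounding bookkeeping — the choice of the $\rho$-continuity partitions, the Borel--Cantelli argument on the cells $B_{m,0}$, and the measurability of $\o\mapsto X_\dl(\o)$ — is entirely routine and identical to \cite{Billingsley}.
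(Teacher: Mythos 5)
Your proposal takes the same route as the paper: reproduce Billingsley's nested-partition construction on $([0,1),\mathrm{Leb})$ and observe that the only place the discreteness of the index enters is the portmanteau step $\rho_\dl(B)\to\rho(B)$ for $\rho$-continuity sets $B$, which holds equally well along the continuum. You also correctly isolate the one genuinely new point: the exceptional null set must be produced once and for all, independently of the approach sequence, and this works because at each fixed level $m$ only \emph{finitely many} partial sums $\dl\mapsto\sum_{j\le i}\rho_\dl(B_{m,j})$ are involved, each converging as $\dl\to b$. This is exactly the "line 2 on p.\ 71" modification the paper alludes to.

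One step is stated more loosely than it should be: the parenthetical Borel--Cantelli justification of the a.e.\ well-definedness of $X_\dl$ for a \emph{fixed} $\dl<b$. From $\sum_m\rho(B_{m,0})<\infty$ and the pointwise-in-$m$ convergence $\rho_\dl(B_{m,0})\to\rho(B_{m,0})$ as $\dl\to b$ one cannot infer $\sum_m\rho_\dl(B_{m,0})<\infty$ for a given $\dl$, since the partition was built using $\rho$ alone. In Billingsley's discrete proof this is dealt with by choosing the compact covered at level $m$ to absorb compacts for $\rho_j$, $j\le m$ (possible because the countably many $\rho_n$'s can be exhausted, and the sequence is uniformly tight). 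For the continuum you cannot enumerate, so you should either invoke tightness of $\{\rho_\dl:\dl\text{ near }b\}$ (which follows via Prokhorov from the weak convergence along arbitrary $\dl_n\to b$) to build the partition accordingly, or — more simply and fully sufficient here — define $X_\dl$ for $\dl$ away from $b$ by any measurable device giving $\Law(X_\dl)=\rho_\dl$ (e.g.\ a Borel isomorphism of $\M$ onto a subset of $[0,1]$), since the nested-cell coupling is only exploited in the limit $\dl\to b$. As you explicitly defer "the surrounding bookkeeping" to Billingsley, this is a presentational gap rather than a conceptual error, but it is worth being precise: the claim as written (that Borel--Cantelli applies for each $\dl$ because $\rho_\dl(B_{m,0})\to\rho(B_{m,0})$) does not hold as stated.
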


%
%
%
%
%
%
%
%
%

Lastly, we recall the notion
of the total variation distance  and the 
Kullback-Leibler divergence (= relative entropy).
See Subsection 2.3 in \cite{LOZ} 
for a further  discussion on 
various modes of convergence
for  probability measures
and random variables;
see also 
\cite[Chapter~3]{Pollard}, 
 \cite[Chapter~2]{Tsy}, 
and 
 \cite{Dudley_book}.

\begin{definition}\label{DEF:KL}\rm

(i) 
Given   two
probability measures 
 $\mu$ and $\nu$
on a measurable space $(E, \mathcal{E})$,
the  total variation distance
$\dtv $ of
$\mu$ and $\nu$ is given by
  \begin{align}
\dtv(\mu, \nu) : = \sup\big\{  \vert \mu(A) - \nu(A) \vert : A\in\mathcal{E} \big\}.
\label{KL0}
 \end{align}
This metric induces a much stronger topology than the one induced by the weak convergence;
see \cite{LOZ} for such an example.

\smallskip

\noi
(ii)
Given two probability measures  $\mu$ and $\nu$, 
 the  Kullback-Leibler divergence $\dkl (\mu, \nu)$ between $\mu$ and $\nu$ 
 is defined by 
\begin{align}
\dkl(\mu, \nu) =
\begin{cases}
\displaystyle
\int_E \log \frac{d\mu}{d\nu} d\mu, & \text{if } \mu \ll \nu, \\
\infty, & \text{otherwise},
\end{cases}
\label{KL4}
\end{align}

\noi
which is nothing but the relative entropy of $\mu$ with respect to $\nu$.
\end{definition}

While the total variation distance
is a  metric, the  Kullback-Leibler divergence is not a metric.
For example, $\dkl(\,\cdot, \cdot\,)$ is not symmetric, and moreover,
the symmetrized version $\dkl(\mu, \nu) + \dkl(\mu, \nu)$ is not a metric,
either.
If $\mu$ and $\nu$ are product measures
of the form $\mu = \bigotimes_{n \in \N} \mu_n$
and $\nu = \bigotimes_{n \in \N} \nu_n$, then we have
\begin{align}
\dkl(\mu, \nu)
= \sum_{n \in \N} \dkl(\mu_n, \nu_n) .
\label{KL4a}
\end{align}

Lastly, we recall 
Pinsker's inequality;
see Lemma~2.5 in \cite{Tsy}
for a proof.

\begin{lemma}\label{LEM:KL2}
Let $\dtv$  and $\dkl$ be as in Definition \ref{DEF:KL}.
Then, we have
\begin{align*}
\dtv(\mu, \nu) \le
\frac {\sqrt{\dkl(\mu, \nu)}}{\sqrt2}  .
\end{align*}

\noi
In particular, convergence in 
 the  Kullback-Leibler divergence
 implies convergence in total variation.

\end{lemma}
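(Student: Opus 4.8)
The plan is to reduce the inequality to a one–dimensional estimate via a data–processing (coarsening) argument, and then prove the scalar estimate by a convexity computation.

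First I would dispose of the trivial case: if $\mu \not\ll \nu$, then $\dkl(\mu,\nu) = \infty$ by \eqref{KL4} and the bound is vacuous, so I may assume $\mu \ll \nu$ and set $f = \frac{d\mu}{d\nu}$. Recall from \eqref{KL0} that the supremum defining $\dtv(\mu,\nu)$ is attained on the set $A = \{f \ge 1\}$: indeed, since $\int (f-1)\,d\nu = 0$, one has $\mu(B)-\nu(B) = \int_B (f-1)\,d\nu$ for every measurable $B$, which is maximized in absolute value by $B=A$, giving $\dtv(\mu,\nu) = \mu(A) - \nu(A)$. Next I would push $\mu$ and $\nu$ forward under the map $\ind_A$, obtaining the two Bernoulli measures with parameters $p := \mu(A)$ and $q := \nu(A)$. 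By the log–sum inequality applied to the pieces of $\int f \log f \, d\nu$ over $A$ and $A^c$ (equivalently, monotonicity of the Kullback–Leibler divergence under coarsening to the partition $\{A, A^c\}$), one gets
\begin{align*}
\dkl(\mu,\nu) \;\ge\; p \log\frac{p}{q} + (1-p)\log\frac{1-p}{1-q} \;=:\; \phi(p,q).
\end{align*}
Thus it suffices to establish the scalar inequality $\phi(p,q) \ge 2(p-q)^2$ for all $p, q \in [0,1]$.

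For the scalar estimate I would fix $q \in (0,1)$ and examine $g(p) := \phi(p,q) - 2(p-q)^2$ on $[0,1]$. One checks $g(q) = 0$ and $g'(q) = 0$, while
\begin{align*}
g''(p) = \frac{1}{p} + \frac{1}{1-p} - 4 \;\ge\; 0,
\end{align*}
since $\frac{1}{p} + \frac{1}{1-p} \ge 4$ by the arithmetic–harmonic mean inequality; hence $g$ is convex with a critical point at $p = q$, so $g \ge 0$ on $[0,1]$. The boundary cases $q \in \{0,1\}$ are immediate. Combining this with the previous display yields $\dkl(\mu,\nu) \ge 2(p-q)^2 = 2\,\dtv(\mu,\nu)^2$, i.e. $\dtv(\mu,\nu) \le \sqrt{\dkl(\mu,\nu)}/\sqrt{2}$. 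The final assertion — that convergence in the Kullback–Leibler divergence implies convergence in total variation — is then immediate.

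This argument is entirely routine, so there is no genuine obstacle; the only points deserving care are the standard facts invoked in the reduction, namely that the supremum in $\dtv$ is realized on $\{f \ge 1\}$ and that $\dkl$ does not increase under the coarsening to $\{A, A^c\}$ (see \cite[Chapter~3]{Pollard} and \cite[Chapter~2]{Tsy}). Since the paper only needs the stated inequality as a tool and a complete proof appears in \cite[Lemma~2.5]{Tsy}, it would also be reasonable simply to cite that reference rather than reproduce the computation above.
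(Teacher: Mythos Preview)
Your proof is correct and is the standard argument for Pinsker's inequality; in fact it is essentially the proof given in \cite[Lemma~2.5]{Tsy}. The paper itself does not prove this lemma at all and simply cites that reference, exactly as you anticipated in your final paragraph.
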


\section{Deep-water conservation laws}
\label{SEC:cons1}

In this section, we present a proof of 
Theorem~\ref{THM:1}.
Namely, 
we  construct conservation laws in the deep-water regime
and establish their 
deep-water convergence; see Proposition~\ref{PROP:cons1}.
Moreover, we need to ensure that these deep-water conservation laws
are suitable for constructing the associated \GGMs~
along with their
deep-water convergence 
 (Theorem~\ref{THM:3})
 whose details are presented in 
Section \ref{SEC:DGM1}.

In Subsection~\ref{SUBSEC:A1},
 we
 start with the B\"acklund transform \eqref{BT1}
 for ILW
and derive a recursive formulation for microscopic conservation laws $\chi_n$;
see  \eqref{chi1}.
In Subsection~\ref{SUBSEC:A2},
we then study the structure
of the microscopic conservation laws $\chi_n$.
In particular, 
we introduce the notion of the {\it deep-water rank}  of a monomial 
(Definition \ref{DEF:ord1})
and also carry out detailed analysis on 
the linear and quadratic parts of the microscopic conservation laws
(Lemmas~\ref{LEM:chi1} and~\ref{LEM:chi2}).
In  Subsection~\ref{SUBSEC:A3},
we  define deep-water  conservation laws $E^\dl_\frac k2(u)$
and, by exploiting a perturbative viewpoint (see \eqref{DE6}), 
we establish
 convergence of the
deep-water  conservation laws $E^\dl_\frac k2(u)$
to the corresponding conservation laws for the BO equation~\eqref{BO}
(Proposition~\ref{PROP:cons1}).
We then take a closer look at
the cubic terms  in $u$
and determine their structures
(Lemmas \ref{LEM:cub1} and~\ref{LEM:cub2}).
This analysis on the cubic terms  plays an important  role in
the construction of  the associated \GGMs;
see Section \ref{SEC:DGM1} for further details. 
Finally, in Subsection~\ref{SUBSEC:A4},
by putting everything together,
we state the final structure
of the deep-water conservation laws~$E^\dl_\frac k2(u)$.

In this and the next sections,  we often suppress the dependence
on the depth parameter $0 < \dl < \infty$
for simplicity of notation
and, unless otherwise stated, we assume that all the functions under consideration
are smooth.

\subsection{B\"acklund transforms and microscopic conservation laws}
\label{SUBSEC:A1}

We first recall
 the B\"acklund transform for ILW \eqref{ILW}
 derived in \cite{SAK} (modulo constants);
 see also  \cite{KAS82, LR, Matsuno84}.
 Let $0 < \dl < \infty$ and $\mu >0$.
Given a (smooth) solution $u$ to ILW \eqref{ILW},
the B\"acklund transform yields a function $U = U(u;  \mu)$
such that the pair $(u, U)$
satisfies the following system:\footnote{The derivation in \cite{SAK}
 is on the real line but it equally applies to the current
 mean-zero periodic setting.}
\begin{align}
\begin{split}
2 u & = \mu(e^U-1) + (\Gdl  -i)\dx U + \dl^{-1}U ,
 \\
U_t & = \mu(e^U-1)U_x + \Gdl U_{xx} + U_x \Gdl U_x + \dl^{-1} U U_x.
\end{split}
\label{BT1}
\end{align}

\noi
Indeed,  given a smooth function $u$ on $\T\times \R$,
by assuming that $U$ is of the form
\begin{align}
U = \sum\limits_{n = 1}^\infty\mu^{-n}  \chi_n,
\label{BT1a}
\end{align}

\noi
where
 $\{\chi_n \}_{n \in \N} = \{\chi_n^\dl \}_{n \in \N}$
 is  a sequence
of smooth functions,  independent of $\mu >0 $,
we can use the first equation in \eqref{BT1}
to construct a smooth function $U = U(u; \mu)$
as follows.
Define the operator $\L_0 = \L_0(\dl)$ by setting
\begin{align}
\L_0 = (\Gdl  -i)\dx  + \dl^{-1}.
\label{BT1aa}
\end{align}

\noi
Then,
by rewriting the first equation in
 \eqref{BT1}
as
\begin{align*}
\mu U = 2u  - \mu \sum_{n = 2}^\infty\frac1{n!} U^n - \L_0U,
\end{align*}

\noi
substituting \eqref{BT1a}, and
comparing the powers of $\mu$,
we obtain the following recursive definition for $\chi_n= \chi^\dl_n$ in terms of $u$:
\begin{align}
\begin{split}
\chi_1 & = 2u, \\
\chi_n & = -\sum_{j=2}^n \frac{1}{j!}
\sum_{\substack{n_1, \dots, n_j \in \N\\n_{1\cdots j} = n}} \chi_{n_1} \cdots \chi_{n_j}
- \L_0 \chi_{n-1},
 \quad n\ge 2,
\end{split}
\label{chi1}
\end{align}

\noi
where  the notation  $n_{1\cdots j}  = n_1 + \cdots + n_j$ is as in \eqref{sum1}.
Hence, 
using \eqref{BT1a} and \eqref{chi1}, 
we can  construct $U$ from $u$.

The following lemma on  equivalence of ILW \eqref{ILW}
and its B\"acklund transform \eqref{BT1}
is well known but
seems to be
part of the folklore whose proof we could not find.
For readers' convenience,
we present its proof,
where, in the converse direction,
we impose the  additional assumption \eqref{BT1a}.

\begin{lemma}\label{LEM:BT1}
 Let $0 < \dl < \infty$.
Given $\mu \ge 0$, let $u:\R \times \T \to \R$ and $U:\R \times \T \to \C$
be smooth functions, satisfying the
first equation in \eqref{BT1}.
If
 $U$ satisfies
 the second equation in~\eqref{BT1},
then
$u = u(U)$
satisfies ILW \eqref{ILW}.
Conversely,
if $u$ satisfies  ILW \eqref{ILW}
and if, in addition, $U$ is of the form \eqref{BT1a},
then $U = U(u; \mu)$, $\mu > 0$,
satisfies
 the second equation in~\eqref{BT1}.
\end{lemma}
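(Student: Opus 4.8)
The plan is to verify the equivalence by direct substitution, treating the two directions separately but exploiting the same core computation. First I would take the first equation of \eqref{BT1}, namely $2u = \mu(e^U - 1) + (\Gdl - i)\dx U + \dl^{-1} U$, and differentiate it in $t$ to get
\begin{align*}
2u_t = \mu e^U U_t + (\Gdl - i)\dx U_t + \dl^{-1} U_t = \big(\mu e^U + \L_0\big) U_t,
\end{align*}
with $\L_0$ as in \eqref{BT1aa}. Similarly, differentiating in $x$ gives $2u_x = \big(\mu e^U + \L_0\big) U_x$, so that $2\dx(u^2) = 2\dx\big(u \cdot u\big)$ can be rewritten using $4 u u_x = 2u\big(\mu e^U + \L_0\big)U_x$, and $2\Gdl \dx^2 u = \Gdl\dx\big(\mu e^U + \L_0\big)U_x$. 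The strategy is then to substitute the ILW equation $2u_t = 2\Gdl \dx^2 u + 2\dx(u^2)$ and simplify, hoping to isolate the operator $\big(\mu e^U + \L_0\big)$ acting on the difference between $U_t$ and the right-hand side of the second equation in \eqref{BT1}.

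For the forward direction (assuming the second equation in \eqref{BT1} and deducing ILW), I would plug the expression for $U_t$ from the second equation directly into $2u_t = \big(\mu e^U + \L_0\big)U_t$ and show, using the first equation to eliminate $\mu(e^U-1)$ in favor of $2u - \L_0 U$, that the result collapses to $2\Gdl\dx^2 u + 2\dx(u^2)$. The key algebraic manipulations involve commuting $\Gdl\dx$ past products and using the definition $\L_0 = (\Gdl - i)\dx + \dl^{-1}$; the terms quadratic in $U_x$ and the terms $U\Gdl U_{xx}$, $\dl^{-1}UU_x$ should recombine (via the product rule and the first equation) into $\dx(u^2)$ after accounting for the factor $2u = \mu(e^U - 1) + \L_0 U$. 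I expect this to be the most delicate part: one must carefully track how the nonlinear terms $\mu(e^U - 1)U_x + U_x\Gdl U_x + \dl^{-1}UU_x$ interact with $\L_0$ when multiplied by $\mu e^U$, since the operator $\Gdl$ does not satisfy a simple algebraic identity (unlike $\H$ for BO), and one cannot naively pull $\Gdl$ inside products.

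For the converse direction, assuming $u$ solves ILW and $U$ has the form \eqref{BT1a} with the $\chi_n$ defined recursively by \eqref{chi1}, I would argue that the computation above shows
\begin{align*}
\big(\mu e^U + \L_0\big)\Big( U_t - \mu(e^U-1)U_x - \Gdl U_{xx} - U_x\Gdl U_x - \dl^{-1}UU_x\Big) = 0.
\end{align*}
The operator $\mu e^U + \L_0$ is, for $\mu > 0$ and $U$ sufficiently small (which holds for the formal/convergent series \eqref{BT1a} when $\mu$ is large, or more precisely when viewed order-by-order in $\mu^{-1}$), invertible on mean-zero functions: indeed $\L_0 = (\Gdl - i)\dx + \dl^{-1}$ has symbol $(\ft\Gdl(n) - i)(in) + \dl^{-1}$ which one checks is nonvanishing for $n \neq 0$, and the $\mu e^U$ term is a bounded perturbation once $\mu$ is large. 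The cleanest way to exploit the ansatz \eqref{BT1a} is to expand everything in powers of $\mu^{-1}$ and match coefficients: since the $\chi_n$ are defined precisely so that the first equation holds at every order, the same order-by-order matching forces the bracketed expression above to vanish identically. The main obstacle here is justifying the invertibility/uniqueness step rigorously — i.e.\ ruling out a nonzero element of the kernel of $\mu e^U + \L_0$ — which is why the hypothesis \eqref{BT1a} is imposed: it reduces the claim to a formal power series identity where cancellation is forced term by term, sidestepping analytic subtleties. I would present the converse via this formal-series matching, noting that the forward direction's computation is an identity valid for any smooth $U$ satisfying the first equation.
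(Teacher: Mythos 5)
Your forward direction is essentially the paper's route: both hinge on establishing the factorization identity
\begin{align*}
2\big(\dt u - \Gdl\dx^2 u - \dx(u^2)\big) = \big(\mu e^U + \L_0\big)\Big[U_t - \mu(e^U-1)U_x - \Gdl U_{xx} - U_x\Gdl U_x - \dl^{-1}UU_x\Big]
\end{align*}
(the paper's (3.5)), and you correctly flag that the delicate step is handling products with $\Gdl$; the paper invokes the Tilbert-transform identity (Lemma~\ref{LEM:T2}) and anti-self-adjointness of $\Gdl$ for exactly this purpose, though you do not name a concrete tool.

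For the converse, your proposal has two gaps. First, the invertibility route cannot be made to work as stated and you largely concede this: $\mu e^U + \L_0$ is not obviously invertible since $\mu e^U$ is multiplication by a non-constant $\C$-valued function, and sending $\mu \to \infty$ makes that term dominate rather than become a small perturbation of $\L_0$. Second, and more substantively, the sentence ``since the $\chi_n$ are defined precisely so that the first equation holds at every order, the same order-by-order matching forces the bracketed expression above to vanish identically'' conflates two distinct matchings and elides the actual argument. The definition of $\chi_n$ via the first equation only tells you that $F(U)=\sum_{n\ge 1}\mu^{-n}f_n$ is well-defined as a formal series; the vanishing of the $f_n$ does \emph{not} follow from that definition but requires a separate induction using the equation $\L^U F = 0$ (which comes from $u$ solving ILW). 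Concretely, expanding $\mu e^U F = \sum_{m\ge0}\mu^{-m}A_{m+1}$ and $\L_0 F = \sum_{n\ge1}\mu^{-n}\L_0 f_n$, the order-$\mu^{-n}$ coefficient of $\L^U F = 0$ reads $A_{n+1}+\L_0 f_n = 0$, and the inductive hypothesis $f_1=\cdots=f_n=0$ gives $A_{n+1}=f_{n+1}$ and $\L_0 f_n = 0$, hence $f_{n+1}=0$. You gesture at this but never present the induction, so as written the converse is not proved. For comparison, the paper takes a slightly different route: from $\L^U F = 0$ it derives the $L^2$-identity $\mu^2\|e^U F\|_{L^2}^2 = \|\L_0 F\|_{L^2}^2$ and then matches Laurent coefficients of this \emph{scalar} identity, using the positivity $\int_\T |A_n|^2\,dx \geq 0$ to conclude $A_n=0$ at each step; this sidesteps the pointwise-matching argument and is marginally more robust, but your pointwise version would also work if you spelled out the induction.
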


\begin{remark}\rm
 It is a priori not  clear to us, if, given a smooth function  $u$, 
we can find $U$ which satisfies the first equation in \eqref{BT1}.
The extra assumption \eqref{BT1a}
indeed guarantees existence of such $U$
as explained above.

\end{remark}

\begin{proof}[Proof of Lemma \ref{LEM:BT1}]

From
the first equation in  \eqref{BT1}
 with Lemma \ref{LEM:T2}
 and the anti self-adjointness of $\Gdl$
 (which in particular implies that $U_x \Gdl U_x$ has mean zero on $\T$),
 a direct (but rather lengthy) computation yields
 \begin{align}
\begin{split}
&2  (\dt u - \Gdl \dx^2 u - 2u \dx u )\\
& \quad = (\mu e^U  + (\Gdl -i) \dx + \dl^{-1})
\big[U_t - \mu(e^U-1) U_x - \Gdl U_{xx}- U_x \Gdl U_x - \dl^{-1} U U_x\big].
\end{split}
\label{BT2}
\end{align}

\noi
Hence, if $U$ satisfies the second equation
in  \eqref{BT1},  then we see that $u$ satisfies \eqref{ILW}.

Next, we prove the converse direction.
Given a smooth function $U$ on $\R \times \T$,
define $\L^U = \L^U(\mu)$ by
 \begin{align}
 \L^U = \mu e^U +  (\Gdl-i) \dx+ \dl^{-1}
 = \mu e^U + \L_0,
\label{BT3}
 \end{align}

\noi
where $\L_0$ is as in \eqref{BT1aa}.
Suppose that $u$ is a smooth solution to \eqref{ILW}
and that $U = U(u;\mu)$ of the form \eqref{BT1a} is defined by the first equation in \eqref{BT1}
(as explained above).
Then, from~\eqref{BT2} and \eqref{BT3}, we have
\begin{align}
\L^U F(U) = 0,
\label{BT4}
\end{align}

\noi
where
 $F(U)= F(U; \mu)$ denotes the expression in the brackets
on the right-hand side of \eqref{BT2}.
Our goal is to show that $F(U) = 0$.
In the following, we carry out analysis for fixed $t\in \R$
but we suppress the $t$-dependence
 for simplicity of notation.

From \eqref{BT4} with \eqref{BT3}, we have
\begin{align}
0 =\jb{\L^U F, \L_0 F}_{L^2} = \jb{\mu e^UF, \L_0 F}_{L^2}
+  \|\L_0 F\|_{L^2}^2.
\label{BT4a}
\end{align}

\noi
Then, from \eqref{BT4} with \eqref{BT3} and \eqref{BT4a}, 
 we obtain
\begin{align}
\begin{split}
0 & =\|\L^U F\|_{L^2}^2  
= \mu^2 \|e^UF\|_{L^2}^2 
+ 2\Re \jb{\mu e^UF, \L_0 F}_{L^2}+ \|\L_0 F\|_{L^2}^2\\
& = \mu^2 \|e^UF\|_{L^2}^2 - \|\L_0 F\|_{L^2}^2.
\end{split}
\label{BT5}
\end{align}

\noi
On the other hand,
from  the assumption \eqref{BT1a}
and \eqref{BT2}
(in particular, by expanding $e^U$ in the Taylor series),  
we have
\begin{align}
F(U)  = \sum_{n = 1}^\infty \mu^{-n} f_n
\label{BT5a}
\end{align}

\noi
 for some smooth functions $f_n$, independent of $\mu >0 $.
Expanding $U$ and $F$ through their series representations
 \eqref{BT1a} and~\eqref{BT5a}, we have
\begin{align}
\begin{split}
e^U F &  = \bigg( 1 + \sum_{j =  1}^\infty \frac{1}{j!} U^j \bigg)
\bigg( \sum_{n = 1}^\infty \mu^{-n} f_n \bigg) \\
& = \bigg( 1 + \sum_{n= 1}^\infty \mu^{-n} \sum_{j=1}^n \frac{1}{j!}
\sum_{\substack{n_1, \dots, n_j = 1\\n_{1\cdots j}=n}}^\infty \chi_{n_1} \cdots \chi_{n_j}\bigg)
\bigg( \sum_{n= 1}^\infty \mu^{-n} f_n \bigg) \\
& = \mu^{-1} f_1 + \sum_{n= 2}^\infty \mu^{-n} \bigg( f_n +
\sum_{\substack{n_{12}=n\\1\le n_1 \le n-1}}  f_{n_1} \wt{U}_{n_2}  \bigg) \\
& =: \sum_{n = 1}^\infty \mu^{-n} A_n,
\end{split}
\label{BT6}
\end{align}

\noi
where
\begin{align}
\begin{split}
A_1  & = f_1, \qquad 
A_n  = f_n + \sum_{\substack{n_{12}=n\\1\le n_1 \le n-1}} f_{n_1} \wt{U}_{n_2}, \quad n \ge 2, \\
\wt{U}_{n} & =  \sum_{j=1}^n  \frac{1}{j!} \sum_{n_{1\cdots j}=n} \chi_{n_1} \cdots \chi_{n_j},
\quad n \in \N.
\end{split}
\label{BT6a}
\end{align}

\noi
Then,
by substituting \eqref{BT5a} and \eqref{BT6} in \eqref{BT5},
we see that the coefficient
of each power of $\mu$ must be $0$.
From the coefficient of  $\mu^0$ with \eqref{BT6a}, we obtain
\begin{align}
 f_1 =A_1 =  0.
\label{BT7}
\end{align}

\noi
From the coefficient of $\mu^{-1}$, we have
\begin{align*}
 \int_\T A_1 \cj {A_2} + A_2 \cj {A_1} dx = 0
\end{align*}

\noi
which is satisfied in view of \eqref{BT7}.
From the coefficient of $\mu^{-2}$, we have
\begin{align}
0 =  \int_\T A_1 \cj{A_3} + |A_2|^2 + A_3 \cj{A_1} dx - \int_\T |\L_0 f_1|^2 \, dx = \int_\T |A_2|^2  dx,
\label{BT7a}
\end{align}

\noi
where we used \eqref{BT7} at the second equality.
Hence, from the definition of $A_2$ in
\eqref{BT6a} with~\eqref{BT7} and \eqref{BT7a},
we conclude that
\begin{align}
f_2 = A_2 = 0.
\label{BT7b}
\end{align}

\noi
In view of \eqref{BT7} and \eqref{BT7b},
we see that the coefficient of $\mu^{-3}$ in \eqref{BT5} is $0$.

We now proceed by induction and  show
 $f_n = 0$ for any $n\in\N$.
Given $n \ge 3$, assume that $f_k = 0$ for any $1\le k \le n-1$.
Then,
from \eqref{BT6a},
we see that
\begin{align}
A_k = 0
\label{BT7c}
\end{align}

\noi
 for any $1 \le k \le n-1$.
Then, from  the coefficient of
 $\mu^{-2n+2}$
in \eqref{BT5}, we obtain
\begin{align*}
0 & =  \sum_{n_{12}=2n} \int_\T A_{n_1}\cj{A_{n_2}}  dx
- \sum_{n_{12}= 2n-2} \int_\T \L_0 f_{n_1} \cj{ \L_0 f_{n_2}}  dx  =  \int_\T |A_n|^2  dx,
\end{align*}

\noi
where
we used the inductive hypothesis and \eqref{BT7c} at the second equality.
Namely, we have $A_n = 0$
and thus, we obtain from \eqref{BT6a}
and the inductive hypothesis that $f_n = 0$
for any $n \in \N$.
Therefore,
from \eqref{BT5a},
we conclude that $F(U) = 0$.
Namely, $U$
satisfies
 the second equation in \eqref{BT1}.
\end{proof}

Suppose that $U$ satisfies  \eqref{BT1}.
Then,
from the second equation in \eqref{BT1},
we see that the mean of $U$ is conserved.\footnote{In this paper, we impose
the mean-zero assumption on a solution $u$
to  ILW \eqref{ILW} (and BO \eqref{BO} when $\dl = \infty$), but we do {\it not}
impose such a condition on $U$ which is constructed from $u$ as in \eqref{BT1a} and \eqref{chi1}.
A similar comment applies to the shallow-water case.}
Then, by assuming that $U$
is of the form \eqref{BT1a}, we formally obtain
\begin{align*}
0 = \frac d{dt}
 \int_\T U dx
=
\sum\limits_{n = 1}^\infty\mu^{-n} \cdot  \frac d{dt}
 \int_\T\chi_ndx
\end{align*}

\noi
for any $\mu > 0$.
By comparing the coefficients of $\mu^{-n}$
on both sides,
we then conclude that
\begin{align*}
\frac d{dt}
\int_\T \chi_n dx
= 0.
\end{align*}

\noi
Namely,
for each $n \in \N$, $\chi_n$ is a microscopic conservation law for ILW \eqref{ILW}.
In Subsection \ref{SUBSEC:A3}, 
we will construct deep-water conservation laws $E^\dl_{\frac k2}(u)$
by taking suitable linear combinations of 
(the real part of) $\int_\T \chi_n dx $;
see Proposition \ref{PROP:cons1}.

Recall from
Lemma \ref{LEM:K1}
that   $\Gdl$ converges to $\H$ as $\dl\to\infty$.
Thus, in the deep-water limit ($\dl \to \infty$),
the B\"acklund transform \eqref{BT1} for ILW
reduces to that for  BO \eqref{BO}:
\begin{align}
\begin{split}
2 u & = \mu (e^U - 1) + \H U_x - i U_x, \\
U_t & = \mu(e^U - 1) U_x + \H U_{xx} + U_x \H U_x.
\end{split}
\label{BTB1}
\end{align}

\noi
Note that
Lemma \ref{LEM:BT1} also holds for $\dl = \infty$
with the same proof.
By taking $\dl \to \infty$ in \eqref{chi1} and~\eqref{BT1aa},
we obtain the following  recursive definition
for the BO microscopic conservation laws $\chi_n^\BO$:
\begin{equation}
\begin{aligned}
\chi_1^\BO& = 2u, \\
\chi_n^\BO& = - \sum_{j=2}^n \frac{1}{j!}
\sum_{\substack{n_1, \dots, n_j \in \N\\n_{1\cdots j} = n}}
 \chi_{n_1}^\BO \cdots \chi_{n_j}^\BO - (\H-i)\dx \chi_{n-1}^\BO, \quad n\ge 2.
\end{aligned}
\label{chi2}
\end{equation}

\subsection{On the deep-water microscopic conservation laws}
\label{SUBSEC:A2}

\noi
In this subsection,
 we study the structure of the deep-water microscopic conservation laws $\chi_n$.
 From the recursive formulation~\eqref{chi1},
we see that $\chi_n$ consists of monomial terms
in $u$ with $\dx$ and $\dl^{-1}$.\footnote{Also with $\Gdl$.
However, we do not need to keep track of the number of $\Gdl$ in the following.}
We first introduce the notion of the {\it deep-water rank}  of a monomial $p(u)$, 
which plays an important role
in studying the structure of the deep-water conservation laws
constructed
in the next subsection.
See \cite[p.\,118]{Matsuno84}
for a similar concept 
in a 
 simpler context of the BO equation.

\begin{definition}\label{DEF:ord1}
\rm
Given a monomial $p(u)$,
define $\#u$, $\#\dx$, and  $\#\dl^{-1}$ 
by
\begin{align*}
\#u &  = \#u(p(u)) = \text{homogeneity of $p(u)$ in $u$},\\
\#\dx &  = \#\dx(p(u))= \text{number of (explicit) appearance of $\dx$ in $p(u)$},\\
\#\dl^{-1}&  = \#\dl^{-1}(p(u))= \text{number of (explicit) appearance of $\dl^{-1}$ in $p(u)$}.
\end{align*}

\noi
We then define the {\it deep-water rank}  of a monomial $p(u)$, denoted by $\ord(p)$, by setting
\begin{align}
\ord (p)  = \# u + \# \dx + \# \dl^{-1}.
\label{ord1a}
\end{align}

\noi
For simplicity, we refer to the deep-water rank as the rank in the following.\footnote{In Definition \ref{DEF:ord2}, 
we also define the shallow-water rank, which we also refer to as the rank.
Since we never consider the deep-water and shallow-water regimes
at the same time, there is no confusion in using this convention.
The same comment applies to the notation $\ord(p)$ introduced in \eqref{ord1a}
and \eqref{ord1b}.}
If all monomials in a given polynomial $p(u)$ have the same rank, say $n$,
then we say that the polynomial $p(u)$ (and its integral over $\T$) has rank $n$.

\end{definition}

The following lemma characterizes  the rank of  $\chi_n$.

\begin{lemma}\label{LEM:chi0}
Let $0 < \dl < \infty$. Given any  $n \in \N$,
the deep-water microscopic conservation law 
$\chi_n = \chi_n^\dl$
defined in~\eqref{chi1}
has rank $n$.

\end{lemma}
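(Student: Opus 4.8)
The plan is to prove Lemma~\ref{LEM:chi0} by induction on $n$, directly from the recursive definition~\eqref{chi1}. First I would check the base case $n=1$: since $\chi_1 = 2u$, we have $\#u = 1$, $\#\dx = 0$, $\#\dl^{-1} = 0$, so $\ord(\chi_1) = 1$, as desired. For the inductive step, fix $n \ge 2$ and assume that $\chi_m$ has rank $m$ for all $1 \le m \le n-1$. There are two contributions to $\chi_n$ in~\eqref{chi1}, and I would analyze each.

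For the nonlinear term, consider a monomial arising in a product $\chi_{n_1} \cdots \chi_{n_j}$ with $2 \le j \le n$ and $n_{1\cdots j} = n$. A monomial in this product is a product of monomials $p_i(u)$, one from each $\chi_{n_i}$, each of rank $n_i$ by the inductive hypothesis. Since $\#u$, $\#\dx$, $\#\dl^{-1}$ are all additive under products of monomials, the resulting monomial has rank $\sum_{i=1}^j n_i = n$. For the linear term $\L_0 \chi_{n-1}$, recall from~\eqref{BT1aa} that $\L_0 = (\Gdl - i)\dx + \dl^{-1}$. Applying $\dx$ to a monomial $p(u)$ of rank $n-1$ produces, by the Leibniz rule, a sum of monomials each of which has the same $\#u$ as $p(u)$, the same $\#\dl^{-1}$, but $\#\dx$ increased by exactly one; hence each such monomial has rank $n$. (The operator $\Gdl$, by the footnote in Definition~\ref{DEF:ord1}, does not affect the rank count and may be ignored.) Applying $\dl^{-1}$ to $p(u)$ increases $\#\dl^{-1}$ by one while leaving $\#u$ and $\#\dx$ unchanged, again producing rank $n$. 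Thus every monomial in $\L_0 \chi_{n-1}$ has rank $n$.

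Combining the two parts, every monomial appearing in $\chi_n$ as given by~\eqref{chi1} has rank $n$, which is precisely the statement that the polynomial $\chi_n$ has rank $n$ in the sense of Definition~\ref{DEF:ord1}. This completes the induction.

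The argument is essentially bookkeeping, so there is no serious obstacle; the only point requiring a modicum of care is the observation that rank is additive under products of monomials (so that the nonlinear term behaves correctly) and that differentiation $\dx$ acts on a monomial via the Leibniz rule by redistributing a single new $\dx$ across the factors without changing $\#u$ or $\#\dl^{-1}$ — in particular, one should note that even when $\dx$ lands on a factor that is itself $\dx^a u$, the count $\#\dx$ increases by exactly one and $\#u$ is unchanged, so the rank is correctly incremented. One should also remark at the outset that $\chi_n$ is independent of $\mu$ (as already noted after~\eqref{BT1a}), so that the recursion~\eqref{chi1} is the complete and well-defined description of $\chi_n$ to which the induction applies.
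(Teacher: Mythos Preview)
Your proof is correct and follows essentially the same inductive approach as the paper's own proof. The paper's version is more terse, simply noting that by Definition~\ref{DEF:ord1} and~\eqref{BT1aa} the monomials $\chi_{n_1}\cdots\chi_{n_j}$ and $\L_0\chi_{n-1}$ all have rank $n$, whereas you spell out explicitly the additivity of rank under products and the effect of each piece of $\L_0$; both arguments are the same in substance.
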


\begin{proof}
We prove the claim by induction.
Since $\chi_1 = 2u$,
the claim  holds when $n = 1$.
Now, given  $n\ge 2$,  assume that the claim holds for $\chi_k$
for any  $1\leq k \leq n-1$.
Hence,  $\chi_k$ has rank $k$
for any  $1\leq k \leq n-1$.
Then,  in view of Definition \ref{DEF:ord1}
with \eqref{BT1aa}, we
see that
the monomials $\chi_{n_1} \cdots \chi_{n_j}$ (with $n_1 + \cdots + n_j = n$)
and $\L_0 \chi_{n-1}$,  appearing in \eqref{chi1},
all have rank $n$,
and thus the claim holds.
Therefore, by induction,
we conclude the proof of Lemma \ref{LEM:chi0}.
\end{proof}

In the next subsection,
 by taking a suitable linear combination of  $\chi_n$, $1\le n \le k+2$,
we will construct deep-water conservation laws
$E^\dl_{\frac{k}{2}}(u)$
 in the form \eqref{E00},
where
 all quadratic (in~$u$) terms
have  exactly $k$ derivatives
with  {\it positive} coefficients.
We point out that  the positivity
of the coefficients is crucial in
constructing
 the base Gaussian measures $\mu^\dl_{\frac{k}{2}}$
in~\eqref{gauss1}.
Thus, our main goal in this subsection
is to obtain
 a precise
 description
of  the quadratic part of the conservation law $ \int_\T \chi_n  dx$;
see Lemma \ref{LEM:chi2}.
For this purpose, we first need to understand the structure
of the linear part of $\chi_n$.

\begin{lemma}\label{LEM:chi1}
Let $0 < \dl < \infty$. Given $n \in \N$, let $L_n = L_n^\dl$ be the linear \textup{(}in $u$\textup{)}
part
of
$\chi_n = \chi_n^\dl$
defined in~\eqref{chi1}.
Then, we have
\begin{align}
L_n = 2(-\L_0)^{n-1}u
\label{chi4}
\end{align}

\noi
for any $n \in \N$.
In particular, under the current mean-zero assumption on $u$, we have
\begin{align}
\int_\T L_n dx = 0
\label{chi5}
\end{align}

\noi
for any $n \in \N$.

\end{lemma}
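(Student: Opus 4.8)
The plan is to prove \eqref{chi4} by induction on $n$, extracting the linear part from the recursive definition \eqref{chi1}. First I would observe that the base case $n=1$ is immediate: $\chi_1 = 2u$ is already linear, so $L_1 = 2u = 2(-\L_0)^0 u$. For the inductive step, fix $n \ge 2$ and assume $L_k = 2(-\L_0)^{k-1}u$ for all $1 \le k \le n-1$. Now I would examine the two pieces on the right-hand side of \eqref{chi1}. The first piece is the sum $-\sum_{j=2}^n \frac{1}{j!} \sum_{n_{1\cdots j}=n} \chi_{n_1}\cdots \chi_{n_j}$; since each factor $\chi_{n_i}$ has no constant term (by Lemma \ref{LEM:chi0}, each $\chi_{n_i}$ has rank $n_i \ge 1$, hence $\#u \ge 1$), every product with $j \ge 2$ factors is at least quadratic in $u$, and so this entire piece contributes nothing to the linear part $L_n$. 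The second piece is $-\L_0 \chi_{n-1}$; since $\L_0$ in \eqref{BT1aa} is a linear operator, its linear part is $-\L_0 L_{n-1}$. Combining, $L_n = -\L_0 L_{n-1} = -\L_0 \cdot 2(-\L_0)^{n-2} u = 2(-\L_0)^{n-1}u$, which closes the induction and gives \eqref{chi4}.

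For the second assertion \eqref{chi5}, I would use the explicit form $\L_0 = (\Gdl - i)\dx + \dl^{-1}$ from \eqref{BT1aa}. Expanding $(-\L_0)^{n-1}$ as a sum of monomials in the operators $(\Gdl - i)\dx$ and $\dl^{-1}$, note that the term with no factor of $(\Gdl-i)\dx$ at all is $(-\dl^{-1})^{n-1}u$, while every other term contains at least one factor of $\dx$ on the outside (since the operators $(\Gdl - i)\dx$ and $\dl^{-1}\cdot$ commute, one can always arrange a $\dx$ to be the outermost operator in any monomial that contains at least one $(\Gdl-i)\dx$ factor). The integral over $\T$ of any $\dx$-exact expression vanishes, so $\int_\T (-\L_0)^{n-1}u\, dx = (-\dl^{-1})^{n-1}\int_\T u\, dx$. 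Under the mean-zero assumption $\int_\T u\, dx = 0$, this is zero, and hence $\int_\T L_n\, dx = 2(-\dl^{-1})^{n-1}\int_\T u\, dx = 0$.

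I do not anticipate a genuine obstacle here; the only point requiring mild care is the bookkeeping in the second part, namely justifying that every monomial in $(-\L_0)^{n-1}$ other than the pure $(-\dl^{-1})^{n-1}$ term is $\dx$-exact. This follows from the commutativity of $\dl^{-1}\cdot$ (multiplication by a constant) with $\dx$ and with $\Gdl$, which lets one pull a derivative to the front. Alternatively, one can argue at the level of Fourier coefficients: the multiplier of $(-\L_0)^{n-1}$ is $\big(i n \ft{\Gdl}(n) + n - \dl^{-1}\big)^{n-1}$ (using $\widehat{\Gdl}(n) = -i(\coth(\dl n) - \frac{1}{\dl n})$ from \eqref{GG1}), and since $u$ has mean zero, $\ft{u}(0) = 0$, so the $n=0$ Fourier mode of $L_n$ is zero, which is exactly \eqref{chi5}. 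Either route is short, so the lemma should present no real difficulty.
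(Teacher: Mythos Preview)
Your proposal is correct and follows essentially the same approach as the paper: induction on $n$ using the recursion \eqref{chi1}, noting that the $j\ge 2$ sum is at least quadratic so $L_n = -\L_0 L_{n-1}$, and then invoking the mean-zero assumption on $u$ together with the structure of $\L_0$ in \eqref{BT1aa} for \eqref{chi5}. The paper is terser on the second part (it simply cites \eqref{BT1aa} and the mean-zero hypothesis), whereas you spell out the commutativity/Fourier argument; note a harmless sign slip in your stated multiplier for $-\L_0$, which does not affect the conclusion since $\ft{u}(0)=0$.
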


\begin{proof}
When $n = 1$, it follows from \eqref{chi1} that $L_1 = 2u$.
First, note that each monomial in
 $\chi_n$ is at least linear in $u$.
Thus,  the first term on the right-hand side of the second equation in~\eqref{chi1}
is at least quadratic in $u$.
Hence, it follows from \eqref{chi1}
that
\begin{align*}
L_n = -\L_0  L_{n-1}
\end{align*}

\noi
for any $n \ge 2$,
which yields \eqref{chi4}, since $L_1 = 2u$.
The second claim \eqref{chi5} follows
from~\eqref{chi4} with \eqref{BT1aa}
and  the mean-zero assumption on $u$.
\end{proof}

The following lemma
provides  a description of
the quadratic part of the macroscopic conservation law $  \int_\T \chi_n  dx$
(whose quadratic part turns out to be real-valued).

\begin{lemma}\label{LEM:chi2}
Let $0 < \dl < \infty$. Given $n \in \N$, let $Q_n = Q_n^\dl$ be the quadratic
 \textup{(}in $u$\textup{)}
part
of $\chi_n = \chi_n^\dl$
defined in~\eqref{chi1}.
Then, we have
 $Q_1=0$ and
\begin{align}
 \int_\T Q_n  dx
&  = 2(-1)^{n+1} \sum_{m=0}^{n-2} \frac{1}{\dl^{n-2-m}}
\binom{n}{m+2}
 \sum_{\substack{\l=0\\ \textup{even}}}^{m}  \binom{m+1}{\l+1} \| \Gdl^{\frac{m-\l}{2}} u \|^2_{\dot{H}^{\frac{m}{2}}}
\label{quad1}
\end{align}

\noi
for any $n \ge 2$, 
where we used the convention \eqref{odd1}.

\end{lemma}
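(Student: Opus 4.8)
The plan is to extract the quadratic-in-$u$ part of the recursion \eqref{chi1} and solve the resulting linear recursion explicitly. First I would observe that in the recursion
\[
\chi_n = -\sum_{j=2}^n \frac{1}{j!}\sum_{n_{1\cdots j}=n}\chi_{n_1}\cdots\chi_{n_j} - \L_0\chi_{n-1},
\]
the only contribution to the quadratic part comes from (a) the $j=2$ term with both factors linear, i.e. $-\tfrac12\sum_{n_1+n_2=n}L_{n_1}L_{n_2}$, and (b) $-\L_0 Q_{n-1}$. Using Lemma \ref{LEM:chi1}, $L_m = 2(-\L_0)^{m-1}u$, so the quadratic recursion reads
\begin{equation*}
Q_n = -2\sum_{\substack{n_1+n_2=n\\ n_1,n_2\ge 1}} (-\L_0)^{n_1-1}u\,(-\L_0)^{n_2-1}u - \L_0 Q_{n-1},\qquad Q_1 = Q_2 = 0,
\end{equation*}
where $Q_2=0$ follows because $L_1 L_1 = 4u^2$ has zero integral after we pass to $\int_\T$ — more precisely, I would work directly with $\int_\T Q_n\,dx$ from the start, since only that quantity is claimed. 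Unwinding the recursion for the macroscopic quantity gives
\[
\int_\T Q_n\,dx = -2\sum_{m=2}^{n}(-\L_0)^{n-m}\Big[\textstyle\int_\T \sum_{a+b=m,\,a,b\ge1}(-\L_0)^{a-1}u\,(-\L_0)^{b-1}u\,dx\Big],
\]
and then integration by parts moves all the $\L_0$'s onto a single factor.

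The key computational step is to evaluate $\int_\T (-\L_0)^{p}u\cdot(-\L_0)^{q}u\,dx$ in terms of the quadratic forms $\|\Gdl^{(m-\l)/2}u\|^2_{\dot H^{m/2}}$ appearing in \eqref{quad1}. Since $\L_0 = (\Gdl - i)\dx + \dl^{-1}$ and $\Gdl\dx$ is self-adjoint with even multiplier (Lemma \ref{LEM:K1}), while $\dx$ is anti-self-adjoint, I would expand $(-\L_0)^p$ on the Fourier side. With multiplier $-\ft{\L_0}(n) = -in\ft\Gdl(n) - n + \dl^{-1} = -\Kdl(n) - n - (-\dl^{-1})$... more usefully, writing $-\ft\L_0(n) = -(in\ft\Gdl(n)) - n + \dl^{-1}$; since $in\ft\Gdl(n)=\Kdl(n)$ is real and even, the relevant pairing $\overline{(-\ft\L_0(n))^p}\,(-\ft\L_0(n))^q$ summed against $|\ft u(n)|^2$ collapses combinatorially. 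The binomial structure $\binom{n}{m+2}$ is exactly what one gets from summing the telescoped $\sum_{a+b=m}$ together with the outer $\sum_{m}(-\L_0)^{n-m}$ collapsing a geometric-type sum; and the inner $\binom{m+1}{\l+1}$ with the parity restriction on $\l$ comes from expanding $(\Kdl(n) - n)^{?}$... rather, from collecting like powers of $\ft\Gdl(n)$ after binomially expanding the symbol of $\L_0^{m}$ and noting that odd powers of $n$ (relative to $\Kdl$) cancel upon symmetrization $n\to -n$ since $|\ft u(n)|^2$ is even. I would verify the precise binomial identities by comparing low-order cases ($n=2,3,4$, reproducing \eqref{E1}) against the claimed formula, and then prove the general identity by a generating-function or direct induction argument on $n$.

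The main obstacle I anticipate is bookkeeping the two nested binomial coefficients correctly, i.e. disentangling which combinatorial factor comes from the telescoping of $-\L_0 Q_{n-1}$ (this should produce the $\binom{n}{m+2}$ and the prefactor $1/\dl^{n-2-m}$, coming from $m$ "true" derivatives and $n-2-m$ factors of $\dl^{-1}$ used up in reaching level $m$) versus which comes from expanding the symbol of a fixed power of $\L_0$ into its $\Gdl$-graded pieces (this should produce $\binom{m+1}{\l+1}$ with the even-$\l$ restriction, together with the convention \eqref{odd1} when $m$ is odd). A clean way to organize this is to introduce the generating function $\Phi(z) = \sum_{n\ge1}z^n\int_\T Q_n\,dx$, derive $\Phi(z) = \tfrac{-2z^2}{1 + z\L_0}\big(\tfrac{1}{1+z\L_0}u, \tfrac{1}{1+z\L_0}u\big)_{L^2}$-type closed form, then read off the coefficient of $z^n$ by a partial-fraction / Taylor expansion in $z$, using $\ft\L_0(n)$-eigenvalues; the parity collapse is automatic on the Fourier side because each summand is symmetric under $n\mapsto -n$ after using self-adjointness of $\Gdl\dx$ and the sign change of $\dx$. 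Once the closed form is in hand, matching it to the right-hand side of \eqref{quad1} is a finite binomial identity that can be checked by a standard Vandermonde-type argument. Care is needed with signs: the alternating factor $(-1)^{n+1}$ and the overall factor $2$ should be tracked through each step, and the $\dl^{-1}$-homogeneity should match the rank count from Lemma \ref{LEM:chi0} (a monomial in $\int_\T Q_n\,dx$ has rank $n$, homogeneity $2$ in $u$, $m$ derivatives counted with weight, hence $n-2-m$ factors of $\dl^{-1}$, consistent with \eqref{quad1}).
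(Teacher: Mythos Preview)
Your overall strategy matches the paper's: derive the quadratic recursion $Q_n = -\tfrac12\sum_{n_1+n_2=n} L_{n_1}L_{n_2} - \L_0 Q_{n-1}$, evaluate the integrated source term by binomially expanding the symbol of $\L_0$, then unwind the linear recursion and collapse the resulting double sum with standard binomial identities. Two concrete errors, however, need correcting. First, $\int_\T Q_2\,dx \ne 0$: since $L_1=2u$ and $Q_1=0$ you get $Q_2=-2u^2$, hence $\int_\T Q_2\,dx = -2\|u\|_{L^2}^2$, which is precisely the $n=2$ case of \eqref{quad1}; there is no reason for $\int_\T u^2\,dx$ to vanish under the mean-zero assumption on $u$. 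Second, in your unwinding you apply $(-\L_0)^{n-m}$ to a scalar, which is meaningless. The correct observation is that $\int_\T \L_0 f\,dx = \dl^{-1}\int_\T f\,dx$ (the $(\Gdl-i)\dx$ part integrates to zero), so passing to the macroscopic quantity yields the scalar recursion $\int_\T Q_n\,dx = A_n - \dl^{-1}\int_\T Q_{n-1}\,dx$ with $A_n$ the integrated source, and unwinding produces $\sum_{j=0}^{n-2}(-\dl)^{-j}A_{n-j}$ --- factors of $\dl^{-1}$, not of $\L_0$.

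The paper carries this out directly (see \eqref{quad8a}): the triple-binomial expansion of $\int_\T L_{n_1}L_{n_2}\,dx$ is simplified via the identity \eqref{aux1} to produce the coefficient $\binom{n-1}{m+1}\binom{m+1}{\l+1}$, and then unwinding together with \eqref{aux2} upgrades $\binom{n-1}{m+1}$ to $\binom{n}{m+2}$. Your generating-function alternative would work too, but the paper's route via concrete binomial identities is cleaner and avoids having to extract Taylor coefficients.
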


Before proceeding to a proof of Lemma \ref{LEM:chi2},
we recall the following combinatorial formulas.
For  non-negative integers $q, \l, m, n$ satisfying $n \geq q\geq 0$, we have
\begin{align}
\sum_{k=0}^\l \binom{\l-k}{m} \binom{q+k}{n} &= \binom{\l+q+1}{m+n+1}, \label{aux1} \\
\sum_{k=0}^{\l} \binom{k}{m}& = \binom{\l+1}{m+1} , \label{aux2}
\end{align}

\noi
where it is understood that
\begin{align}
\binom k m = 0 \quad \text{ for $k < m$}.
\label{aux3}
\end{align}

\noi
For \eqref{aux1}, see
\cite[(5.26) on p.\,169]{GKP}.
The second identity \eqref{aux2}
follows
from setting $n=q=0$ in \eqref{aux1}.

\begin{proof}[Proof of Lemma \ref{LEM:chi2}]
From \eqref{chi1}, we have $Q_1 = 0$ and
\begin{align}
Q_n = -\frac12 \sum_{n_{12}=n} L_{n_1} L_{n_2} - \L_0 Q_{n-1}
\label{quad2}
\end{align}

\noi
for $n \ge 2$.

We first study the first term on the right-hand side of \eqref{quad2}.
By Lemma \ref{LEM:chi1}, \eqref{BT1aa},
and integration by parts with the anti self-adjointness of $\Gdl$, we have
\begin{align}
-& \frac12\int_\T  \sum_{n_{12}=n} L_{n_1} L_{n_2} dx\notag \\
&= -2 \sum_{n_{12}=n} (-1)^{n-2}
\int_\T (\L_0^{n_1-1}  u ) (\L_0^{n_2-1}  u )  dx\notag \\
& = 2 (-1)^{n+1} \sum_{n_{12}=n}
\int_\T u (i\dx + \Gdl\dx +\dl^{-1})^{n_1-1} (-i\dx + \Gdl \dx + \dl^{-1})^{n_2-1} u  dx \notag \\
& = 2 (-1)^{n+1} \sum_{n_{12}=n} \sum_{m_1=0}^{n_1-1} \sum_{m_2=0}^{n_2-1} \frac{1}{\dl^{{n_{12}-2 -m_{12}}}} \binom{n_1-1}{m_1} \binom{n_2-1}{m_2} \notag \\
&\qquad \times \int_\T u (i \dx + \Gdl \dx)^{m_1} (-i\dx + \Gdl\dx)^{m_2} u  dx \notag \\
& = 2 (-1)^{n+1}  \sum_{m=0}^{n-2} \frac{1}{\dl^{n-2-m}}  \sum_{n_{12}=n} \sum_{\substack{0\le m_1 \le n_1-1\\ 0\le m_2 \le n_2-1\\ m=m_{12}}} \binom{n_1-1}{m_1} \binom{n_2-1}{m_2}\notag \\
& \qquad \times \int_\T u(i \dx +\Gdl\dx)^{m_1} (-i\dx + \Gdl \dx )^{m_2} u  dx.
\label{quad3}
\end{align}

\noi
For fixed $0\le m\le n-2$, we can write the inner sums as
\begin{align}
\begin{split}
& \sum_{n_{12}=n}  \sum_{\substack{0\le m_1 \le n_1-1\\ 0\le m_2 \le n_2-1\\ m=m_{12}}}  \binom{n_1-1}{m_1} \binom{n_2-1}{m_2} \\
& \quad \quad \times
\int_\T u(i \dx +\Gdl\dx)^{m_1} (-i\dx + \Gdl \dx )^{m_2} u  dx\\
&\quad  = \sum_{n_{12}=n} \sum_{\substack{0\le m_1 \le n_1-1\\ 0\le m_2 \le n_2-1\\ m=m_{12}}} \sum_{\substack{0\le \l_1\le m_1\\ 0\le \l_2 \le m_2}}
 \binom{n_1-1}{m_1} \binom{n_2-1}{m_2} \\
 &\quad  \quad \times
 \binom{m_1}{\l_1}  \binom{m_2}{\l_2} (-1)^{\l_2}  i^{\l_{12}}  \int_\T u \Gdl^{m-\l_{12}} \dx^m u  dx.
\end{split}
\label{quad4}
\end{align}

\noi
In view of the anti self-adjointness of 
$\Gdl$ and $\dx$, we see that the integral on the right-hand side vanishes unless
 $\l_{12} = \l_1 + \l_2$ is even.
Then, we have
\begin{align}
 \eqref{quad4}
 = \sum_{\substack{\l= 0\\ \text{even}}}^m (-1)^{\frac \l2}
\al_{n, m, \l}
\int_\T u \Gdl^{m-\l} \dx^m u  dx ,
\label{quad5}
\end{align}

\noi
where the coefficient $\al_{n, m, \l}$ is given by
\begin{align*}
\al_{n,  m, \l}=
\bigg[ \sum_{n_{12}=n}  \sum_{\substack{0\le m_1 \le n_1-1\\ 0\le m_2 \le n_2-1\\ m=m_{12}}} \sum_{\substack{0\le \l_1\le m_1\\ 0\le \l_2\le m_2\\ \l_{12}= \l}}\binom{n_1-1}{m_1} \binom{n_2-1}{m_2} \binom{m_1}{\l_1} \binom{m_2}{\l_2}  (-1)^{\l_2}  \bigg] .
\end{align*}

\noi
By \eqref{aux3}
and applying  \eqref{aux1} for the sum in $n_1$ (with
$k = n_1$, $q=0$, $\l=n-2$, $m=m-m_1$, and $n=m_1$) and
then  in $m_1$ (with $k = m_1$, $q=0$, $\l=m$,  $m=\l-\l_1$, and $n=\l_1$), we have
\begin{align}
\al_{n,  m, \l}
 & = \sum_{\substack{0 \le m_1 \le m}} \sum_{\substack{0\le \l_1\le m_1\\ 0 \le \l-\l_1 \le m-m_1}} \sum_{n_1=m_1+1}^{n-1+m_1-m}\binom{n_1-1}{m_1} \binom{n-n_1-1}{m-m_1}\notag\\
 & \hphantom{XXXXXXXX}\times  \binom{m_1}{\l_1} \binom{m-m_1}{\l-\l_1} (-1)^{\l-\l_1}\notag\\
 & = \sum_{0\le m_1\le m} \sum_{\substack{0\le \l_1\le m_1\\ 0\le \l - \l_1 \le m-m_1}} \binom{m_1}{\l_1} \binom{m-m_1}{\l-\l_1}\notag\\
  & \hphantom{XXXXXXXX}\times   (-1)^{\l-\l_1} \sum_{n_1=0}^{n-2} \binom{n_1}{m_1} \binom{n-2-n_1}{m-m_1}
  \notag\\
 & = (-1)^\l \binom{n-1}{m+1} \sum_{\l_1=0}^\l \sum_{m_1=\l_1}^{m-(\l-\l_1)} \binom{m_1}{\l_1} \binom{m-m_1}{\l-\l_1}(-1)^{\l_1} \notag\\
 & =  (-1)^\l \binom{n-1}{m+1} \binom{m+1}{\l+1}\sum_{\l_1=0}^\l (-1)^{\l_1} \notag\\
 & = \binom{n-1}{m+1} \binom{m+1}{\l+1},
\label{quad7}
\end{align}

\noi
where we used the fact that $\l$ is even in the last step.
Hence, it follows from \eqref{quad5},  \eqref{quad7},
and
integration by parts,  using the facts that $\Gdl \dx$ is self-adjoint and
that $\l$ is even
with the convention \eqref{odd1}, 
 that
\begin{align}
 \eqref{quad4}
 = \binom{n-1}{m+1} \sum_{\substack{\l=0\\\text{even}}}^m \binom{m+1}{\l+1} \|\Gdl^{\frac{m-\l}{2}} u \|^2_{\dot{H}^{\frac{m}{2}}}.
\label{quad8}
\end{align}

Therefore, putting \eqref{quad2}, \eqref{quad3}, \eqref{quad4}, and \eqref{quad8} together
with \eqref{BT1aa},
we obtain the following recursive relation:
\begin{align}
\begin{split}
 \int_\T Q_n  dx & = 2 (-1)^{n+1} \sum_{m=0}^{n-2} \frac{1}{\dl^{n-2-m}} \binom{n-1}{m+1} \sum_{\substack{\l=0\\ \text{even}}}^m \binom{m+1}{\l+1} \| \Gdl^{\frac{m-\l}{2}} u \|^2_{\dot{H}^{\frac{m}{2}}} \\
& \quad - \frac1\dl  \int_\T Q_{n-1} dx \\
& =: A_n - \frac1\dl  \int_\T Q_{n-1}dx,
\end{split}
\label{quad8a}
\end{align}

\noi
for $n \ge 2$ and  $Q_1 = 0$.
 Let $\be_{n,m,\l} = 2 \binom{n-1}{m+1} \binom{m+1}{\l+1}$.
 Then, from \eqref{quad8a}, we have
 \begin{align*}
\int_\T Q_n d x& =  \sum_{j=0}^{n-2} \frac{(-1)^j}{\dl^j} A_{n-j} \\
& = \sum_{j=0}^{n-2} (-1)^{n-j+1 +j} \sum_{m=0}^{n-j-2} \frac{1}{\dl^{n-j-2-m+j}} \sum_{\substack{\l=0\\\text{even}}}^m \be_{n-j,m,\l} \|\Gdl^{\frac{m-\l}{2}} u \|^2_{\dot{H}^{\frac{m}{2}}} \\
& = (-1)^{n+1} \sum_{m=0}^{n-2} \frac{1}{\dl^{n-2-m}} \sum_{\substack{\l=0\\\text{even}}}^{m} \bigg(\sum_{j=0}^{n-2-m} \be_{n-j, m, \l}\bigg) \| \Gdl^{\frac{m-\l}{2}} u \|^2_{\dot{H}^{\frac{m}{2}}} \\
& = 2 (-1)^{n+1} \sum_{m=0}^{n-2} \frac{1}{\dl^{n-2-m}} \sum_{\substack{\l=0\\\text{even}}}^{m} \binom{m+1}{\l+1} \binom{n}{m+2} \| \Gdl^{\frac{m-\l}{2}} u\|^2_{\dot{H}^{\frac{m}{2}}},
\end{align*}

\noi
where, in the last step, we used
\eqref{aux3} and  \eqref{aux2}:
\begin{align*}
\sum_{j=0}^{n-2-m} \be_{n-j,m,\l} &  =2 \binom{m+1}{\l+1} \sum_{j=0}^{n-2-m} \binom{n-1-j}{m+1}
 = 2 \binom{m+1}{\l+1}\sum_{j=m+1}^{n-1} \binom{j}{m+1}\\
& = 2 \binom{m+1}{\l+1} \binom{n}{m+2}.
\end{align*}

\noi
This proves \eqref{quad1}.
\end{proof}

\subsection{Construction and convergence of  the deep-water conservation laws}
\label{SUBSEC:A3}

In this subsection,
by taking a linear combination of $ \Re \int_\T \chi_n dx$,
we construct deep-water conservation laws
and
prove
their convergence to the corresponding
BO conservation laws,
where,  in the latter,
a perturbative viewpoint plays a crucial role;
see Proposition~\ref{PROP:cons1}.

Given $0 < \dl < \infty$,
let  $\Qdl$ be the perturbation operator defined in~\eqref{Qdl1}.
Then,
from  \eqref{BT1aa} and \eqref{Qdl1},  we can rewrite \eqref{chi1} as
\begin{align}
\begin{split}
\chi_1 & = 2u, \\
\chi_n & =-\sum_{j=2}^n \frac{1}{j!} 
\sum_{\substack{n_1, \dots, n_j \in \N\\n_{1\cdots j} = n}}
\chi_{n_1} \cdots \chi_{n_j}\\
& \quad - (\H -i )\dx \chi_{n-1} - (\dl^{-1} + \Qdl) \chi_{n-1}, \quad n\ge 2.
\end{split}
\label{DE6}
\end{align}

\noi
We introduce the classes $\Pc_n(u)$
of (monic)\footnote{Here, ``monic" means that the fundamental form in \eqref{mono1}
is monic.} monomials in $u$,
adapted to our perturbative viewpoint~\eqref{DE6}.
Compare these with the classes of monomials
introduced in  \cite[Section~2]{TV0}
in the context of the BO equation,
where there is no perturbation operator $\Qdl$.

\begin{definition}
\label{DEF:mono1}
\rm
Let $u \in C^\infty(\T)$ with $\int_\T u dx = 0$.
Given $n \in \N$,
define
the classes $\Pc_n(u)$ of monomials by setting
\begin{align*}
\Pc_1 (u)&  = \Big\{  \H^{\al_1} \Qdl^{\be_1} \dx^{\g_1} u :   \al_1\in \{0,1\}, \,
  \be_1, \g_1 \in \Z_{\ge 0}\Big\}, \\
\Pc_2 (u)&  = \Big\{  \big[\H^{\al_1} \Qdl^{\be_1} \dx^{\g_1} u\big] \big[\H^{\al_2} \Qdl^{\be_2} \dx^{\g_2} u\big]:
 \al_1,\al_2 \in\{0,1\}, \,   \be_1 , \be_2, \g_1,\g_2 \in \Z_{\ge 0} \Big\}, \\
\Pc_n (u)&  = \bigg\{  \prod_{\l=1}^k \H^{\al_\l} \Qdl^{\be_\l} p_{j_\l}(u) :
k \in \{2, \ldots, n\}, \, j_\l \in\N, \, j_{1\cdots k} = n,
    \\
& \hphantom{lXXXXXXXXXXX}
p_{j_\l} (u) \in \Pc_{j_\l}(u),\,
\al_\l \in \{0,1\},  \,  \be_\l \in \Z_{\ge 0}
 \bigg\}.\end{align*}

 \noi
Here, we used the fact that,
in \eqref{DE6}, the Hilbert transform comes with $\dx$,
namely, $\H$ always acts on mean-zero functions
and that
 $\H^2 = -\Id$
on mean-zero functions; see also Footnote~\ref{FT:1}.
Given a monomial $p(u) \in \Pc_n(u)$,
we define
its fundamental form  $\wt p(u)  \in \Pc_n(u)$
which is obtained
 by dropping  all the $\H$ and $\Qdl$ operators
 appearing in $p(u)$.

 Given a monomial $p(u) \in \Pc_n(u)$,
 suppose that its fundamental form $\wt p(u)$ is given by
\begin{align}
\wt p(u) = \prod_{j=1}^n \dx^{\g_j} u.
\label{mono1}
\end{align}
 Then, we have
 \begin{align*}
\#u (p(u))  = n\qquad \text{and}
\qquad
\#\dx  (p(u)) = \gamma_1 + \dots + \gamma_n
\end{align*}

\noi
in the sense of  Definition \ref{DEF:ord1}.
With our perturbative viewpoint \eqref{DE6}, we also let
\begin{align*}
\#\Qdl &  = \#\Qdl(p(u))= \text{number of appearance of $\Qdl$ in $p(u)$}.
\end{align*}

\noi
Then, from Lemma \ref{LEM:chi0} and \eqref{ord1a} with \eqref{Qdl1}, we have\footnote{Recall from Definition \ref{DEF:ord1}
that $\#\dx$ counts the number of {\it explicit} appearance of $\dx$.
Namely, it does not count $\dx$ appearing in the definition \eqref{Qdl1} of $\Qdl$.}
\begin{align}
n = \ord(p) = \# u + \# \dx +  \# \dl^{-1} + \# \Qdl 
\label{mono2}
\end{align}

\noi
for any monomial $p(u)$ in
the $n$th microscopic conservation law $\chi_n$.
Moreover, given a monomial $p(u) \in \Pc_n(u)$
with its fundamental form $\wt p(u)$ as in \eqref{mono1},
we let $|p(u)|$  denote the maximum number of (explicit) derivatives
(applied to a single factor $u$)
by setting
\begin{align*}
|p(u)| &=\max_{j=1, \dots, n} \g_j,
\end{align*}

\noi
where $\g_j$ is as in \eqref{mono1}.

\end{definition}

We are now ready to define the deep-water conservation laws $E^\dl_\frac k2(u)$
and present a proof of
Theorem \ref{THM:1}.
Recall that we are working under the mean-zero assumption on $u$
throughout the paper.

\begin{proposition}\label{PROP:cons1}
\textup{(i)}
Let $0 < \dl < \infty$.
Given $k\in \Z_{\ge 0}$,
define $E^\dl_{\frac{k}{2}}(u)$ recursively by
\begin{align}
E^\dl_{\frac{k}{2}}(u)
&  = \frac{(-1)^{k+1}}{4 b_k} 
\bigg[ \Re \int_\T  \chi_{k+2}^\dl  dx
- 4(-1)^{k+1}\sum_{j=0}^{k-1} \frac{1}{\dl^{k-j}}
 \binom{k+2}{j+2}
b_j
  E^\dl_{\frac j2}(u) \bigg]
\label{DE1}
\end{align}

\noi
with the understanding that the second sum vanishes when $k = 0$,
where $\chi_n^\dl$ is the microscopic conservation law defined  in \eqref{chi1}
and $b_k$ is given by 
\begin{align}
b_k = \sum_{\substack{\l=0\\ \textup{even}}}^{k}
 \binom{k+1}{\l+1} .
\label{DE1a}
\end{align}

\noi
Then,
$E^\dl_{\frac{k}{2}}(u) $ is conserved under the flow of ILW~\eqref{ILW}.
Moreover,
\begin{itemize}
\item[\rm (i.a)]
 $E^\dl_{\frac{k}{2}}(u) $ has
  rank $k+2$
in the sense of Definition \ref{DEF:ord1},

\smallskip
\item[\rm (i.b)]
 the  quadratic part 
 of  $E^\dl_{\frac{k}{2}}(u) $
 does not depend on $\dl$ in an explicit manner and
 is given by
\begin{align}
\frac 1{2 b_k}
 \sum_{\substack{\l=0\\ \textup{even}}}^{k}
 \binom{k+1}{\l+1} \| \Gdl^{\frac{k-\l}{2}} u \|^2_{\dot{H}^{\frac{k}{2}}} , 
\label{DE2}
\end{align}

\noi
where we used the convention \eqref{odd1}.

\end{itemize}

\noi
In particular, we have
\begin{align}
E^\dl_0(u) &= \frac12 \|u\|^2_{L^2},
\label{DE2a}
\end{align}

\noi
and, defining $a_{k, \l}$ by
\begin{align}
a_{k, \l} = 
\frac 1{ b_k}
  \binom{k+1}{\l+1},
\label{DE2b}
\end{align}

\noi
the conservation law  $E^\dl_{\frac{k}{2}}(u)$ in \eqref{DE1}
reduces to the form
\eqref{E00} with \eqref{E0}.

\medskip

\noi
\textup{(ii)}
By extending the definition \eqref{DE1} to the  case $\dl = \infty$,
we obtain the following conservation laws for BO \eqref{BO}\textup{:}
\begin{align}
E^\BO_{\frac k 2}(u) &  =  \frac{(-1)^{k+1} }{4b_k}
 \Re \int_\T \chi_{k+2}^\BO  dx ,
\label{DE3}
\end{align}

\noi
where $\chi_n^\BO$
is the microscopic conservation law for BO defined  in \eqref{chi2}.
Moreover,

\begin{itemize}
\item[\rm (ii.a)]
 $E^\BO_{\frac{k}{2}}(u) $ has
  rank $k+2$ with $\#\dl^{-1} = 0$,

\smallskip
\item[\rm (ii.b)]
 $E^\BO_{\frac{k}{2}}(u) $
is of the form \eqref{E2}.

\end{itemize}

\noi
Given $0 < \dl < \infty$,
 we have
\begin{align}
E^\dl_{\frac k 2}(u) = E_{\frac k 2}^\BO (u)  + \EE^\dl_{\frac k 2}(u), 
\label{DE3b}
\end{align}

\noi
where $\EE^\dl_{\frac k 2}(u)$
consists of  the terms in $E^\dl_{\frac k 2}(u)$
which depend on  $\dl$ in an explicit manner
and the terms with $\Gdl \dx$, where $\Gdl \dx$ is replaced by
the perturbation operator $\Qdl$ defined in~\eqref{Qdl1}.
Moreover, we have
\begin{align}
\lim_{\dl\to\infty}
\EE^\dl_{\frac k 2}(u) = 0,
\qquad \text{namely,}\qquad
\lim_{\dl\to\infty} E^\dl_{\frac k 2}(u) = E_{\frac k 2 }^\BO(u)
\label{DE4}
\end{align}

\noi
 for any $u\in H^{\frac k 2}(\T)$.

\end{proposition}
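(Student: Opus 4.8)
The plan is to prove Proposition~\ref{PROP:cons1} by induction on $k$, leveraging the recursive structure in \eqref{DE1} together with the detailed knowledge of the linear and quadratic parts of $\chi_n^\dl$ already established in Lemmas~\ref{LEM:chi1} and~\ref{LEM:chi2}. The conservation of $E^\dl_{\frac k2}(u)$ under ILW is essentially immediate: each $\chi_n^\dl$ is a microscopic conservation law (as derived from the B\"acklund transform via Lemma~\ref{LEM:BT1}), so $\Re\int_\T \chi_n^\dl\,dx$ is conserved, and $E^\dl_{\frac k2}(u)$ is by \eqref{DE1} a (recursively defined) linear combination of such conserved quantities; hence it too is conserved. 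The real work is to verify (i.a), (i.b), and the convergence claim (ii).

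For (i.a) and (i.b), I would argue as follows. First, by Lemma~\ref{LEM:chi0}, $\chi_{k+2}^\dl$ has rank $k+2$, and the correction terms $\frac{1}{\dl^{k-j}}E^\dl_{\frac j2}(u)$ in \eqref{DE1} contribute $\#\dl^{-1}=k-j$ plus (by the inductive hypothesis) rank $j+2$, totalling $k+2$; this gives (i.a). For the quadratic part, only $\Re\int_\T Q_{k+2}^\dl\,dx$ from Lemma~\ref{LEM:chi2} and the quadratic parts of the lower $E^\dl_{\frac j2}(u)$ contribute. Plugging the explicit formula \eqref{quad1} for $\int_\T Q_{k+2}\,dx$ into \eqref{DE1} and using the inductive hypothesis \eqref{DE2} for each $E^\dl_{\frac j2}(u)$, the $\dl$-dependent (i.e.\ $m<k$) terms must cancel exactly — this is where the combinatorial identity \eqref{aux2} (applied in the guise of $\sum_{j} \binom{m+1}{\l+1}\binom{n}{m+2}$-type sums, already used in the proof of Lemma~\ref{LEM:chi2}) is invoked, and where the specific choice of the coefficient $\binom{k+2}{j+2}b_j$ in \eqref{DE1} is engineered precisely to kill those terms. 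What survives is the $m=k$ term of \eqref{quad1}, namely $2(-1)^{k+1}\sum_{\l \text{ even}}\binom{k+1}{\l+1}\|\Gdl^{\frac{k-\l}2}u\|^2_{\dot H^{k/2}}$, which after multiplication by $\frac{(-1)^{k+1}}{4b_k}$ yields exactly \eqref{DE2}. That the coefficients $a_{k,\l}=b_k^{-1}\binom{k+1}{\l+1}$ are positive and sum to $1$ follows directly from \eqref{DE1a}, giving \eqref{E0}; and \eqref{DE2a} is the base case $k=0$, where $\chi_2^\dl = -2u^2 - \L_0(2u)$ so $\Re\int_\T \chi_2^\dl\,dx = -2\|u\|_{L^2}^2$ (the $\L_0$ term integrating to zero by mean-zero) and $b_0 = 1$, giving $E^\dl_0(u) = \frac14\cdot 2\|u\|_{L^2}^2 = \frac12\|u\|_{L^2}^2$.

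For part (ii), the BO statement \eqref{DE3} is obtained by the same recursion with $\dl = \infty$: since all $\frac{1}{\dl^{k-j}}$-terms vanish, \eqref{DE1} collapses to \eqref{DE3}, and the fact that $\chi_n^\BO$ is the $\dl\to\infty$ limit of $\chi_n^\dl$ (compare \eqref{chi1} and \eqref{chi2}, using $\Gdl\to\H$ and $\dl^{-1}\to 0$) shows $E^\BO_{\frac k2}$ is conserved under BO; (ii.a) and (ii.b) follow from the rank count (now with $\#\dl^{-1}=0$) and from \eqref{DE2} specialized via $\Gdl\to\H$, $\H^2=-\Id$ on mean-zero functions, which collapses the quadratic sum to $\frac12\|u\|_{\dot H^{k/2}}^2$ since $\sum_{\l \text{ even}}a_{k,\l}=1$. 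The decomposition \eqref{DE3b} is then bookkeeping: write $\chi_{k+2}^\dl$ using the perturbative recursion \eqref{DE6} instead of \eqref{chi1}, so that every monomial either carries an explicit negative power of $\dl$ or carries at least one $\Qdl$ (the $\H$-only, $\dl$-free monomials reassembling exactly into $E^\BO_{\frac k2}(u)$), and collect all such monomials into $\EE^\dl_{\frac k2}(u)$.

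The main obstacle will be the convergence claim \eqref{DE4}. The difficulty is that a generic monomial in $\EE^\dl_{\frac k2}(u)$ — say of homogeneity $r$ in $u$ with $|p(u)|$ derivatives on a single factor — must be controlled in terms of $\|u\|_{H^{k/2}}$ only, with a bound that vanishes as $\dl\to\infty$. I would handle this by: (a) using the rank identity \eqref{mono2}, $k+2 = \#u + \#\dx + \#\dl^{-1} + \#\Qdl$, to trade derivatives against the homogeneity, so that for $r=\#u\ge 3$ there are at most $k+2-r$ explicit derivatives available, strictly fewer than $k$ once $r\ge 3$, leaving room to place the extra derivatives via Sobolev/fractional-Leibniz (Lemma~1.1\,(ii)) and interpolation (Lemma~1.1\,(i)); (b) for the quadratic monomials with $\Qdl$, using the uniform smoothing bound $|\ft\Qdl(n)|\le \dl^{-1}$ from \eqref{Q1} (equivalently the $L^r_0$-operator bound \eqref{Q2}) to gain a factor $\dl^{-1}$; and (c) for the explicitly $\dl$-dependent monomials, the factor $\dl^{-\#\dl^{-1}}$ with $\#\dl^{-1}\ge 1$ directly provides the decay, the spatial integral being controlled by $\|u\|_{H^{k/2}}$ after the derivative count in (a). Boundedness of $\Gdl$ on $L^r_0$ uniformly in $\dl$ (\eqref{Q3}) lets the $\Gdl$ factors be absorbed harmlessly throughout. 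Assembling these estimates monomial-by-monomial — there are finitely many for each fixed $k$ — gives $|\EE^\dl_{\frac k2}(u)| \lesssim_{k,u} \dl^{-1} \to 0$, which is \eqref{DE4}. The somewhat delicate point is ensuring the derivative-counting in (a) genuinely leaves enough regularity for every monomial type, including the cubic ones where the bound is tightest; this is precisely what the structural analysis of $\Pc_n(u)$ in Definition~\ref{DEF:mono1} and the subsequent Lemmas on cubic terms are set up to make routine.
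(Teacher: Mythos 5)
Your proposal is correct and follows essentially the same route as the paper's proof: induction on $k$ with Lemmas~\ref{LEM:chi0} and~\ref{LEM:chi2} driving (i.a) and (i.b) via the exact cancellation of the $m<k$ terms in \eqref{quad1} against the correction terms in \eqref{DE1}, the perturbative recursion \eqref{DE6} and the observation $\chi_{n,0}=\chi_n^\BO$ for the decomposition \eqref{DE3b}, and the rank identity \eqref{mono2} plus the uniform bound \eqref{Q1} on $\Qdl$ for the convergence \eqref{DE4}. Two small inaccuracies of attribution, neither of which affects the argument: the cancellation in (i.b) is a direct algebraic consequence of substituting \eqref{quad1} and the inductive hypothesis into \eqref{DE1} (the identity \eqref{aux2} is only used inside the proof of Lemma~\ref{LEM:chi2}, not again at this stage); and the paper's proof of \eqref{DE4} does not invoke Lemmas~\ref{LEM:cub1}--\ref{LEM:cub2} (those are used later for the measure construction), relying instead on the direct derivative count $|p(u)|\le\lceil\tfrac{k-1}{2}\rceil$ from \eqref{DE11a} together with Fourier-side Young/Cauchy--Schwarz estimates in the $\F L^{s,1}$ scale.
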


\begin{proof}
Proceeding inductively,
we see that
 $E^\dl_{\frac k 2}(u)$ in \eqref{DE1}
 is defined as a  linear combination of the conserved quantities $\Re\int \chi_n dx$.
 Hence,
 $E^\dl_{\frac k 2}(u)$ is also  conserved under the flow of ILW (and of BO when $\dl = \infty$).

From \eqref{DE1} and \eqref{chi1} with \eqref{BT1aa}, 
we see that \eqref{DE2a} holds
and thus
the claim (i.a)  holds true when $k = 0$.
We proceed by induction and assume that the claim (i.a)
holds for any $0 \le j \le k-1$.
On the one hand,
it follows from Lemma~\ref{LEM:chi0} that
$\Re \int_\T \chi_{k+2} \, dx$ has rank $k+2$.
On the other hand, by the inductive hypothesis,
we see that
$\dl^{-(k-j)} E^\dl_{\frac j2}(u)$ has rank $(k-j) + (j+2) = k+2$.
Hence,  we conclude from \eqref{DE1} that  $E^\dl_{\frac k 2}(u)$ has rank $k+2$.
The claim (ii.a) follows from Lemma \ref{LEM:chi0}
(which also holds for $\dl = \infty$).

Next, we prove the claim (i.b).
Once again, we proceed by induction.
In view of \eqref{DE2a},
the claim (i.b) holds when $k = 0$.
Now,
assume that the claim (i.b)
holds for $E^\dl_{\frac{j}{2}}(u) $,  $0 \le j \le k-1$.
Then,
from \eqref{DE1}
and the inductive hypothesis,
the quadratic part in $u$ of $E^\dl_\frac k2(u)$
is given by
\begin{align*}
& \frac{(-1)^{k+1}}{4b_k} 
\bigg[ \Re \int_\T  Q_{k+2}  dx 
- 2(-1)^{k+1}\sum_{j=0}^{k-1} \frac{1}{\dl^{k-j}}
 \binom{k+2}{j+2}
 \sum_{\substack{\l=0\\ \textup{even}}}^{j}
 \binom{j+1}{\l+1} \| \Gdl^{\frac{j-\l}{2}} u \|^2_{\dot{H}^{\frac{j}{2}}} \bigg],
\end{align*}

\noi
which agrees with \eqref{DE2}
in view of
 Lemma \ref{LEM:chi2},
 thus proving the claim (i.b).
The claim (ii.b) directly follows from
\eqref{DE3}  and Lemma \ref{LEM:chi2} (which
 also holds for $\dl = \infty$)
 with 
$\| \Gdl^{\frac{m-\l}{2}} u \|^2_{\dot{H}^{\frac{m}{2}}}\Big|_{\dl = \infty}
= \|  u \|^2_{\dot{H}^{\frac{m}{2}}}$
and \eqref{DE1a}.

Let $\chi_{n, 0}$ be the $\dl$-free
 part of $\chi_n$.
Namely, 
$\chi_{n, 0}$
is the collection of the monomials in $\chi_n$
 which do not depend on $\dl$
in an explicit manner or on $\Qdl$.
 Then, from \eqref{DE6},
we see that
 $\chi_{n,0}$ satisfies the following recursive relation:
\begin{align}
\begin{split}
\chi_{1,0}& = 2u, \\
\chi_{n, 0}& = - \sum_{j=2}^n \frac{1}{j!}
\sum_{\substack{n_1, \dots, n_j \in \N\\n_{1\cdots j} = n}}
 \chi_{n_1,0} \cdots \chi_{n_j, 0}
- (\H-i)\dx \chi_{n-1,0}, \quad n \ge 2.
\end{split}
\label{DE7}
\end{align}

\noi
By comparing \eqref{DE7} and \eqref{chi2}, we see
that
 \begin{align}
\chi_{n,0} = \chi_n^\BO
\label{DE8}
 \end{align}

\noi
 for any $n \in \N$.
Now, let $E^\dl_{\frac k 2, 0}(u)$  be
 the $\dl$-free part of
  $E^\dl_{\frac k 2}(u)$
defined in \eqref{DE1}.
Then, from~\eqref{DE1} and  \eqref{DE3} with \eqref{DE8}, we obtain
\begin{align}
 E^\dl_{\frac k 2, 0}(u) =\frac{(-1)^{k+1}}{4b_k}
\bigg[ \Re \int_\T  \chi_{k+2,0} \, dx  \bigg] = E_{\frac k 2}^\BO(u).
\label{DE9}
\end{align}

\noi
Hence, from \eqref{DE3b}, \eqref{DE9},
and the definition of
$ E^\dl_{\frac k 2, 0}(u)$, we have
\begin{align}
 \EE^\dl_{\frac k 2}(u) =
E^\dl_{\frac k 2}(u) -
E^\dl_{\frac k 2, 0}(u),
\label{DE10}
\end{align}

\noi
where  all the terms in
$ \EE^\dl_{\frac k 2}(u)$ must involve
 at least one power of $\dl^{-1}$ or  $\Qdl$.

 It remains to prove \eqref{DE4}.
When $k = 0$,
we have $E^\dl_0(u) = E_{0 }^\BO(u)
= \frac12 \|u\|^2_{L^2} $
and thus there is nothing to prove.
In the following, we assume $k \ge 1$.
From 
\eqref{DE3b} with 
Parts (i.a) and (ii.a)
(see also \eqref{mono2}), 
\eqref{DE10}, 
 and \eqref{chi5} in Lemma~\ref{LEM:chi1},
we see that  any monomial in  $\EE^\dl_{\frac k 2}(u)$ satisfies
\begin{align}
\# u + \# \dx  + \# \dl^{-1} + \# \Qdl = k+2,
\quad \# \dl^{-1} + \# \Qdl \ge 1,  \quad \text{and} \quad \# u \ge 2,
\label{DE11}
\end{align}

\noi
where it is understood that
$\# \dx$ counts the number of {\it explicit} appearance of $\dx$
(namely, it does not count $\dx$ appearing in $\Qdl$).
In particular, we have
\begin{align}
\#\dx \le k - 1.
\label{DE11a}
\end{align}
Thus, each monomial in
 $\EE^\dl_{\frac k 2}(u)$ is of the form
 $\dl^{-m}\int_\T p(u)  dx$ for some
 $0\le m \le k$
 and
   $p(u)\in \Pc_j(u)$
   with
$j = \#u \in\{2, \dots, k+2-m\}$.
Then, by integration by parts
with Definitions \ref{DEF:ord1} and \ref{DEF:mono1}
and \eqref{DE11a}, we have
\begin{align}
\begin{split}
\EE^\dl_{\frac k 2}(u)
& =\sum_{m=1}^{k}    \sum_{j =2}^{k+2-m}
\sum_{\l =0}^{k+2-m-j} \sum_{\substack{p(u) \in \Pc_j(u)\\
\eqref{DE11} \text{ holds}\\ \#\dx(p) = \l \\
|p(u)| \le \lceil \frac{k-1}{2} \rceil}  } \dl^{-m}
c(p) \int_\T p(u)  dx \\
& \quad + \sum_{j=2}^{k+1} \sum_{\l=0}^{k+1-j}
\sum_{\substack{p(u) \in \Pc_j(u) \\
\eqref{DE11} \text{ holds}\\ \#\dx (p)= \l\\
\#\Qdl(p) \ge  1  \\ |p(u)| \le \lceil \frac{k-1}{2} \rceil}}
c(p) \int_\T p(u)  dx 
\end{split}
\label{DE12}
\end{align}

\noi
for some constants $c(p) \in \R$, independent of $\dl$, 
where $\lceil x \rceil$ denotes
the smallest integer greater than or equal to $x$.
As for the second term on the right-hand side of \eqref{DE12},
we have $\#\Qdl\ge 1$ since it does not depend on $\dl$ in an explicit manner
but is part of $\EE^\dl_{\frac k 2}(u) $
(which must involve at least one power of $\dl^{-1}$ or  $\Qdl$).

Fix $p(u) \in \Pc_j(u)$
with $|p(u)| \le \lceil \frac{k-1}{2} \rceil$,  appearing  on the right-hand side of \eqref{DE12}.
(i)~When $k = 1$, we have
$\lceil \frac{k-1}{2}\rceil = 0$ and thus there is no explicit $\dx$
on any factor of $u$ in $p(u)$.
Then, 
 by
Cauchy-Schwarz's inequality, 
we have 
\begin{align}
\bigg|\int_\T p(u)  dx \bigg|
\les \|u \|_{L^2}^2
\les \big(1 + \|u \|_{H^{\frac k2}}\big)^{k+1}.
\label{DE13}
\end{align}

\noi
(ii)~When $k\ge 3$ is odd, it follows from  \eqref{DE11a}
that there are at most two factors of $u$ in $p(u)$ with
$\lceil \frac{k-1}{2} \rceil = \frac{k-1}2$ derivatives,
while all the other factors have at most
$\frac{k-3}2  < \frac k2  - \frac 12$ derivatives.
In this case,  
 by viewing the product on the physical side as
an iterated convolution on the Fourier side
and 
applying 
Young's inequality 
and 
Cauchy-Schwarz's inequality,\footnote{Here, we work on the Fourier side
such that we can drop $\H$ and apply \eqref{Q1} when $\#\Qdl(p) \ge 1$
by 
simply  putting  
all the terms in an absolute value on the Fourier side.
This is due to the fact  that these operators  may act on a product.
Namely, it is not enough to  apply 
the $L^2$-boundedness of $\H$ or \eqref{Q1}
to  the right-hand sides of 
 \eqref{DE13a} and~\eqref{DE13b}.
A similar comment applies to the proof of Proposition \ref{PROP:Scons2}.} 
we have
\begin{align}
\bigg|\int_\T p(u)  dx \bigg|
\les
\big(1 + \|u \|_{H^{\frac{k-1}{2}}}\big)^{2}
 \big(1 + \|u \|_{\F L^{\frac{k-3}{2}, 1}}\big)^{k-1}
\les \big(1 + \|u \|_{H^{\frac k2}}\big)^{k+1}, 
\label{DE13a}
\end{align}

\noi
where $\F L^{s, r}(\T)$ denotes the Fourier-Lebesgue space defined by the norm:
\begin{align}
\| u \|_{\F L^{s, r}} = \| \jb{n}^s \ft u(n) \|_{\l^r_n}.
\label{FL1}
\end{align}

\noi
(iii)~When $k\ge 2$ is even, it follows from  \eqref{DE11a}
that the maximum number of derivatives on any factor is
$\lceil \frac{k-1}{2} \rceil = \frac{k}2$.
Moreover, there can be at most one term with
$\frac{k}2$-many derivatives, while
all the other terms have at most
$\frac{k}2 - 1$ derivatives.
Hence, proceeding as in \eqref{DE13a}, we have
\begin{align}
\bigg|\int_\T p(u)  dx \bigg|
\les \big(1+ \| u \|_{H^{\frac k2}}\big)\big(1 + \|u \|_{\F L ^{\frac{k}{2} - 1, 1}}\big)^{k}
\les \big(1 + \|u \|_{H^{\frac k2}}\big)^{k+1}.
\label{DE13b}
\end{align}

\noi
In all the cases (i), (ii), and (iii),
if, in addition, we have  $\#\Qdl(p(u)) \ge  1$,
in view of \eqref{Q1},
we obtain a power of $\dl^{-1}$ in \eqref{DE13}, 
\eqref{DE13a}, and \eqref{DE13b}.
Hence, from \eqref{DE12}, \eqref{DE13}, \eqref{DE13a}, and \eqref{DE13b}
with $ \# \dl^{-1} + \# \Qdl \ge 1$, we obtain
\begin{align}
|\EE^\dl_{\frac k 2}(u) |
 \les \frac1\dl \big( 1 + \| u\|_{H^{\frac k 2}}\big)^{k+1} \too 0,
\label{DE13c}
\end{align}

\noi
as $\dl \to \infty$.
This proves \eqref{DE4} for $k \ge 1$.
This concludes the proof of Proposition \ref{PROP:cons1}.
\end{proof}

In the remaining part of this subsection,
we take a closer look at the structure of the cubic terms in the 
deep-water conservation law
$E^\dl_{\frac k 2}(u)$.

\begin{lemma}\label{LEM:cub1}
Let  $k\in\N$.
Then,  the cubic \textup{(}in $u$\textup{)} part in $E^\dl_{\frac k 2}(u)$
consists of monomials  of the form
\begin{align}
\frac{1}{\dl^\s}\int \prod_{j=1}^3 \big(\Qdl^{\al_j} (\H\dx)^{\be_j} \dx^{\g_j} u\big)  dx
\label{DF1}
\end{align}

\noi
for some
 $\s, \al_j, \b_j,\g_j \in \Z_{\ge 0}$ such that $\s + \sum_{j=1}^3 (\al_j+\be_j+\g_j)=k-1$.

\end{lemma}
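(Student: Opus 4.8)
The plan is to argue by induction on $k$ using the recursive definition \eqref{DE1}, which exhibits $E^\dl_{\frac k2}(u)$ as a linear combination of $\Re\int_\T\chi_{k+2}\,dx$ and of the lower order quantities $\dl^{-(k-j)}E^\dl_{\frac j2}(u)$, $0\le j\le k-1$. Multiplying a monomial of the shape \eqref{DF1} with $\s+\sum_j(\al_j+\be_j+\g_j)=j-1$ by $\dl^{-(k-j)}$ again produces such a monomial, now with exponent sum $(k-j)+(j-1)=k-1$; hence it suffices to prove that the cubic part of $\Re\int_\T\chi_{k+2}\,dx$ is a linear combination of monomials \eqref{DF1}. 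The constraint $\s+\sum_j(\al_j+\be_j+\g_j)=k-1$ is then automatic: by Lemma~\ref{LEM:chi0} the monomial $\chi_{k+2}$ has rank $k+2$, so a cubic monomial therein has $\#u=3$ and, by \eqref{mono2}, $\#\dl^{-1}+\#\Qdl+\#\dx=k-1$, which for the form \eqref{DF1} equals $\s+\sum_j(\al_j+\be_j+\g_j)$ since each factor $\H\dx$ carries one explicit $\dx$ and no count for $\H$.

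\textbf{The cubic recursion.} Let $C_n$ denote the cubic part of $\chi_n$. From the perturbative recursion \eqref{DE6} one reads off
\[
C_n = -\frac12\sum_{n_{12}=n}\big(L_{n_1}Q_{n_2}+Q_{n_1}L_{n_2}\big)-\frac16\sum_{n_{123}=n}L_{n_1}L_{n_2}L_{n_3}-(\H-i)\dx C_{n-1}-(\dl^{-1}+\Qdl)C_{n-1},
\]
where $L_n$ and $Q_n$ are the linear and quadratic parts of $\chi_n$. Since the multipliers of $(\H-i)\dx$ and of $\Qdl$ vanish at $n=0$, integrating over $\T$ kills the last two terms, leaving
\[
\int_\T C_n\,dx=-\sum_{n_{12}=n}\int_\T L_{n_1}Q_{n_2}\,dx-\frac16\sum_{n_{123}=n}\int_\T L_{n_1}L_{n_2}L_{n_3}\,dx-\frac1\dl\int_\T C_{n-1}\,dx.
\]
The last term is absorbed by an inner induction on $n$ (base case $n=3$, where $\int_\T C_3\,dx=\frac83\int_\T u^3\,dx$), and the middle term is already of the desired form once the structure of $L_n$ is known; so the first term is the crux.

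\textbf{Structural input and conclusion.} I would use: (a) $L_n=2(-\L_0)^{n-1}u$ by \eqref{chi4}, with $\L_0=(\H-i)\dx+\Qdl+\dl^{-1}$, whence --- because $\H\dx$, $\dx$, $\Qdl$ and the scalar $\dl^{-1}$ commute as Fourier multipliers, and $((\H-i)\dx)^p=\sum_q\binom{p}{q}(-i)^{p-q}(\H\dx)^q\dx^{p-q}$ --- the quantity $L_n$, and more generally $P\,L_n$ for any product $P$ of the mutually commuting operators $\L_0$ and $\check\L_0$ (see (c)), is a linear combination of single-$u$ monomials $\dl^{-\s}\Qdl^{\al}(\H\dx)^{\be}\dx^{\g}u$; (b) the unfolding $Q_n=-\frac12\sum_{m\ge0}(-\L_0)^m\sum_{p_{12}=n-m}L_{p_1}L_{p_2}$ of \eqref{quad2}; (c) $\L_0$ has the real multiplier $n(\coth(\dl n)+1)$, so its transpose for the bilinear pairing $(f,g)\mapsto\int_\T fg\,dx$ is the structurally identical operator $\check\L_0=(\H+i)\dx+\Qdl+\dl^{-1}$. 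Transferring $(-\L_0)^m$ off the product $L_{p_1}L_{p_2}$ onto the lone factor $L_{n_1}$ gives
\[
\int_\T L_{n_1}Q_{n_2}\,dx=-\frac12\sum_{m\ge0}\sum_{p_{12}=n_2-m}\int_\T\big[(-\check\L_0)^mL_{n_1}\big]L_{p_1}L_{p_2}\,dx,
\]
where now all three factors are single-$u$ nice monomials; expanding, this is a linear combination of $\dl^{-\s}\int_\T\prod_{j=1}^3\big(\Qdl^{\al_j}(\H\dx)^{\be_j}\dx^{\g_j}u\big)\,dx$, and the analogous (in fact simpler) treatment applies to $\int_\T L_{n_1}L_{n_2}L_{n_3}\,dx$. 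Since $\H$, $\dx$, $\Qdl$ preserve real-valuedness, these integrals are real for real $u$, so $\Re$ in \eqref{DE1} yields real coefficients; the rank bookkeeping above then shows every term has the exponent sum $n-3$ at step $n$, and feeding this into the inner induction on $n$ and then the outer induction via \eqref{DE1} proves the lemma.

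\textbf{Main obstacle.} The delicate point is precisely this transfer step. The operator $\L_0$ --- in particular its piece $\H\dx$ --- acting on a product does \emph{not} distribute over the factors, so $C_n$ is genuinely not a sum of products of three single-$u$ terms prior to integration; what saves the argument is that, after integrating over $\T$, self-adjointness of $\L_0$ (up to the harmless replacement $\L_0\rightsquigarrow\check\L_0$) moves every such operator onto a single linear factor, where it collapses to the canonical ordered form. One should also verify that no stray $\H$ unaccompanied by a $\dx$ can ever appear: this holds because $\H$ enters \eqref{DE6}, hence $\L_0$ and $\check\L_0$, only through $\H\dx$, so every $\H$ in the resulting monomials is paired with a $\dx$, and the factor shapes $\Qdl^{\al}(\H\dx)^{\be}\dx^{\g}u$ of \eqref{DF1} are indeed the only ones that occur.
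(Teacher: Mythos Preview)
Your proof is correct and rests on the same key mechanism as the paper's: moving Fourier multiplier operators off a product of two factors onto the remaining single factor via adjointness, which is exactly what the cubic setting ($\#u=3$) permits. The difference is one of packaging. You unfold the recursions for $C_n$ and $Q_n$ explicitly, identify the source of the nested operators as the $(-\L_0)^m$ in $Q_n=-\tfrac12\sum_m(-\L_0)^m\sum L_{p_1}L_{p_2}$, and then transfer $(-\L_0)^m$ onto the lone linear factor $L_{n_1}$ using $\L_0^*=\check\L_0$. The paper instead works directly with the final integrand: any cubic monomial in $\chi_n$ built via \eqref{DE6} is an integral of three $u$-factors with $\H\dx$, $\dx$, $\Qdl$, $\dl^{-1}$ scattered over them (possibly on two-factor subproducts), and since there are exactly three factors, one integration by parts moves any operator sitting on a two-factor block onto the third factor. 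This bypasses the need to track $Q_n$ or set up inner inductions, and makes the rank count \eqref{mono2} immediate. Your approach has the merit of making the algebraic structure completely explicit (and correctly handles the subtlety that $\L_0$ has a real multiplier so its transpose $\check\L_0$ is of the same shape), while the paper's is shorter because it applies the transfer at the last step rather than threading it through the recursion.
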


\begin{proof}
From \eqref{DE1},  we know that $E^\dl_{\frac k 2}(u)$
is given by a linear combination of
\begin{align*}
\frac{1}{\dl^{k-j}} \int \Re \chi_{j+2}  dx
\end{align*}

\noi
for $j=0, \ldots, k$.
Then, \eqref{DF1} follows 
from \eqref{DE6} (which in particular implies that $\H$ must come with $\dx$), 
\eqref{mono2}, 
and noting that if $\Qdl$, $\H\dx$, or $\dx$ acts on the product
of two factors, then by integration by parts, we can move it onto the
remaining factor
(thanks to the fact that $\#u = 3$).
\end{proof}

Fix $k \in \N$.
Integrating by parts, we can rewrite the cubic terms in $E^\dl_{\frac{k}{2}}(u)$
to guarantee that they  have at most $m = \lceil \frac {k-1}2\rceil$ derivatives on each factor.
While
most contributions  have at most $m-1$ derivatives on each factor,
we need to pay particular attention to the terms
where there is a factor with $m$ derivatives
(even after integration by parts).
Let
$B^\dl_\frac k2(u)$ denote the collection
of the worst contributions from the cubic terms in $E^\dl_{\frac{k}{2}}(u)$,
defined by

\begin{itemize}
\item[(i)]
when $k = 2m$ for some $m \in \N$, 
\begin{align}
B^\dl_{\frac{2m}{2}}(u) &= \sum_{\substack{p(u) \in \mathcal{P}_3(u)\\ \wt{p}(u) = u \dx^{m-1}u\dx^m u
\\ \#\Qdl(p) = 0}} c(p) \int_\T p(u)  dx ,
\label{DF3a}
\end{align}

\item[(ii)]
when $k = 2m+1$ for some $m \in \Z_{\ge 0}$, 
\begin{align}
B^\dl_{\frac{2m+1}{2}}(u) &= \sum_{\substack{p(u) \in \mathcal{P}_3(u)\\ \wt{p}(u) = u \dx^{m}u\dx^m u \\
\#\Qdl(p) = 0}} c(p) \int_\T p(u)  dx 
\label{DF3b}
\end{align}

\end{itemize}

\noi
for some constants $c(p) \in \R$, independent of $\dl$, 
where $\wt p(u)$ denotes the fundamental form
of a monomial $p(u)$
in the sense of Definition \ref{DEF:mono1}.

We have the following characterization of $B^\dl_\frac k2(u)$.

\begin{lemma}\label{LEM:cub2}
\textup{(i)}
Let $m \in \N$
and $0 < \dl < \infty$.
Then, there exist  $c_0,c(p) \in \R$, independent of $\dl$, such that
\begin{align}
B^\dl_{\frac{2m}{2}}(u) & = c_0 \int_\T u (\H\dx^{m-1}u) (\dx^{m} u )  dx + \sum_{\substack{p(u) \in \mathcal{P}_3(u)\\ |p(u)| \leq m-1\\ \#\Qdl(p) = 0}} {c}(p) \int_\T p(u)  dx , 
\label{DF4}
\end{align}

\noi
where 
$ \mathcal{P}_3(u)$, $|p(u)|$, and $\#\Qdl(p)$ are as in Definition \ref{DEF:mono1}.

\smallskip

\noi
\textup{(ii)}
Let $m\in\Z_{\ge 0}$
and $0 < \dl < \infty$.
 Then, $B^\dl_{\frac{2m+1}{2}}(u)$ is given by
\begin{align}
B^\dl_{\frac{2m+1}{2}}(u)
& =
\sum_{\vec \al = (\al_1, \al_2, \al_3) \in \mathcal{A}}
 c_{\vec \al}
\int_\T (\H^{\al_1} u ) (\H^{\al_2} \dx^{m} u) (\H^{\al_3} \dx^m u ) dx
\label{DF5}
\end{align}
for some constants $c_{\vec \al}  \in \R$,
 independent of $\dl$,
where the index set $\mathcal A$ is given by 
\begin{align}
\mathcal A = \big\{(0, 0, 0), (0, 0, 1),  (0, 1, 1)\big\}.
\label{DF6}
\end{align}

\end{lemma}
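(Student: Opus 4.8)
The plan is to reduce $B^\dl_{\frac k2}(u)$ to a short list of canonical cubic monomials via Lemma~\ref{LEM:cub1}, and then collapse that list using Cotlar's identity~\eqref{Cot2} and repeated integration by parts. By Lemma~\ref{LEM:cub1}, every cubic term of $E^\dl_{\frac k2}(u)$ has the form $\dl^{-\s}\int_\T\prod_{j=1}^3\big(\Qdl^{\al_j}(\H\dx)^{\be_j}\dx^{\g_j}u\big)\,dx$ with $\s+\sum_{j=1}^3(\al_j+\be_j+\g_j)=k-1$. For a monomial contributing to $B^\dl_{\frac k2}(u)$ one has $\#\Qdl=0$, hence $\al_j=0$, and since $[\H,\dx]=0$ and $\H^2=-\Id$ on mean-zero functions (Footnote~\ref{FT:1}), each factor equals $\pm\H^{\eps_j}\dx^{d_j}u$ with $\eps_j\in\{0,1\}$ and $d_j=\be_j+\g_j$. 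By~\eqref{DF3a}--\eqref{DF3b} the collected monomials are exactly those with fundamental form $u\dx^{m-1}u\dx^m u$ (if $k=2m$) resp.\ $u\dx^m u\dx^m u$ (if $k=2m+1$), so the multiset $\{d_1,d_2,d_3\}$ is $\{0,m-1,m\}$ resp.\ $\{0,m,m\}$; then $\s+\sum_jd_j=\s+(k-1)=k-1$ forces $\s=0$, so there is no explicit $\dl$ and all coefficients are $\dl$-independent. Reordering the commuting factors so the derivative counts increase, $B^\dl_{\frac k2}(u)$ is a finite $\dl$-independent linear combination of monomials $\int_\T(\H^{\eps_1}u)(\H^{\eps_2}\dx^{d_2}u)(\H^{\eps_3}\dx^{d_3}u)\,dx$.

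\emph{Part (ii).} When $k=2m+1$ the two factors carrying $\dx^m u$ are interchangeable, leaving at most six monomials: the three indexed by $\mathcal A$ in~\eqref{DF6} (no Hilbert transform on the factor $u$) and three with a Hilbert transform on $u$. I would eliminate the latter three by three applications of~\eqref{Cot2} with $v:=\dx^m u$: the identity with the triple $(u,v,v)$ gives $2\int_\T(\H u)(\H v)v\,dx=\int_\T uv^2\,dx-\int_\T u(\H v)^2\,dx$; the identity with $(\H u,v,v)$ (using $\H^2u=-u$) gives $\int_\T(\H u)(\H v)^2\,dx=\int_\T(\H u)v^2\,dx+2\int_\T uv(\H v)\,dx$; and multiplying the rearranged identity $(\H u)v=\H\big((\H u)(\H v)-uv\big)-u(\H v)$ by $v$, integrating, and using anti-self-adjointness of $\H$ gives $\int_\T(\H u)v^2\,dx=-\int_\T(\H u)(\H v)^2\,dx$. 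Solving these three relations expresses $\int_\T(\H u)v^2\,dx$, $\int_\T(\H u)(\H v)^2\,dx$, and $\int_\T(\H u)(\H v)v\,dx$ as explicit $\dl$-independent combinations of the three $\mathcal A$-monomials $\int_\T uv^2\,dx$, $\int_\T uv(\H v)\,dx$, $\int_\T u(\H v)^2\,dx$, and collecting coefficients proves~\eqref{DF5}.

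\emph{Part (i).} When $k=2m$ with $m\ge2$ (the case $m=1$ being read off from~\eqref{E1}), the factors $u,\dx^{m-1}u,\dx^m u$ are distinct, giving up to eight monomials, labelled $(\eps_1\eps_2\eps_3)$ by the positions of the Hilbert transforms. I would first use that $E^\dl_{\frac k2}(u)$ is invariant under the reflection $u(x)\mapsto u(-x)$: an induction on~\eqref{chi1}--\eqref{BT1aa} shows $\chi_n[u(-\,\cdot\,)](x)=\cj{\chi_n[u]}(-x)$, hence $\Re\int_\T\chi_n\,dx$---and so each $E^\dl_{\frac k2}(u)$ via~\eqref{DE1}---is reflection-invariant; since $(\eps_1\eps_2\eps_3)$ acquires the sign $(-1)^{\sum_j(\eps_j+d_j)}$ and $\sum_jd_j=2m-1$ is odd, the three configurations with $\eps_1+\eps_2+\eps_3$ even, namely $(110),(101),(011)$, have zero coefficient. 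For the remaining four: integration by parts shows $(000)$ and $(100)$ are of lower order (the fundamental form becomes $\dx u\,\dx^{m-1}u\,\dx^{m-1}u$, so $|p(u)|\le m-1$); integration by parts gives $(001)=-\int_\T u(\H\dx^{m-1}u)(\dx^m u)\,dx+(\text{lower order})$; and the triple-Hilbert consequence of~\eqref{Cot2}, namely $(111)=(100)+(010)+(001)$, is of lower order after these substitutions. Hence $(010)=\int_\T u(\H\dx^{m-1}u)(\dx^m u)\,dx$ is the only surviving top-order monomial, and collecting coefficients yields~\eqref{DF4}.

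The main obstacle is the bookkeeping: carrying the fundamental form through each integration by parts to certify $|p(u)|\le m-1$ on the remainders, and handling the small-$m$ edge cases in which factors of the fundamental form coincide (so that fewer distinct monomials occur and part (i) effectively reduces to part (ii)-type reasoning). In part (i) the genuinely delicate point is the removal of $(110)$ and $(101)$: Cotlar's identity and integration by parts alone yield only $(110)+(101)\equiv0$ modulo lower-order terms, so one really needs the reflection invariance---equivalently, a direct check that the recursion~\eqref{DE6} assigns $(110)$ and $(101)$ equal coefficients---to conclude.
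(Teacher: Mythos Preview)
Your approach and the paper's are entirely different. The paper's proof is two sentences: from \eqref{DF3a}--\eqref{DF3b} and \eqref{mono2} one has $\#\dl^{-1}=\#\Qdl=0$, so every monomial in $B^\dl_{\frac k2}(u)$ lies in the $\dl$-free part $E^\dl_{\frac k2,0}(u)=E^{\BO}_{\frac k2}(u)$ (by \eqref{DE9}); the result then follows directly by quoting \cite[Proposition~2.2]{TV1} for part~(i) and \cite[Proposition~8.1]{TV2} for part~(ii). Your route---Cotlar's identity plus reflection invariance---is more self-contained, and for part~(i) it works (the minor slip that $(000)$ is also reflection-odd is harmless, since you dispose of it by integration by parts anyway).

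Part~(ii), however, has a genuine gap. Your ``rearranged identity'' carries a sign error: from $\H\big((\H u)v+u\H v\big)=(\H u)(\H v)-uv$ and $\H^{-1}=-\H$ one gets $(\H u)v=-\H\big((\H u)(\H v)-uv\big)-u(\H v)$, not $+\H(\cdots)$. With the correct sign, multiplying by $v$, integrating, and using anti-self-adjointness yields
\[
\int_\T(\H u)v^2\,dx=\int_\T(\H u)(\H v)^2\,dx-2\int_\T uv(\H v)\,dx,
\]
which is \emph{exactly your relation~2 rearranged}. So Cotlar's identity supplies only two independent linear relations among the six monomials, and one cannot reduce to the three $\mathcal A$-monomials by Cotlar alone. (Your stated relation~3, $\int(\H u)v^2=-\int(\H u)(\H v)^2$, is simply false for general mean-zero $u,v$.)

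The repair is to reuse the reflection-invariance argument you already set up for part~(i). For $k=2m+1$ the derivative count is $2m$, so a monomial $(\eps_1\eps_2\eps_3)$ picks up $(-1)^{\eps_1+\eps_2+\eps_3}$ under $u\mapsto u(-\cdot)$; reflection invariance of $B^\dl_{\frac{2m+1}2}(u)$ therefore forces the coefficients of $(100)$, $(001)$, and $(111)$ to vanish. This leaves $(000)$, $(011)$, and $(101)$, and relation~1 then expresses $(101)$ as $\tfrac12\big((000)-(011)\big)$, giving \eqref{DF5} (with $c_{(0,0,1)}=0$).
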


\begin{proof}
(i) 
From \eqref{DF3a}
and \eqref{mono2}, 
we see that 
any monomial in $B^\dl_{\frac{2m}{2}}(u)$ satisfies
 $\# \dl^{-1} = \# \Qdl =0$.
Namely, it  must come from  the $\dl$-free
part  $E^\dl_{\frac {2m} 2, 0}(u)$, 
and hence from $E_{\frac {2m} 2}^\BO(u)$
in view of \eqref{DE9}.
Then, \eqref{DF4} follows
from  \cite[Proposition 2.2]{TV1}.

\medskip

\noi
(ii) Similarly, 
any monomial in $B^\dl_{\frac{2m+1}{2}}(u)$ satisfies
 $\# \dl^{-1} = \# \Qdl =0$
 and thus 
  must come from  the $\dl$-free
part  $E^\dl_{\frac {2m+12} 2, 0}(u)$, 
and hence from $E_{\frac  {2m+12} 2}^\BO(u)$
in view of \eqref{DE9}.
Then, \eqref{DF5} follows
from  
\cite[Proposition~8.1]{TV2}.
\end{proof}

\subsection{Structure of the deep-water conservation laws}
\label{SUBSEC:A4}

We conclude this section by stating the final structure
of the deep-water conservation law $E^\dl_{\frac k2}(u)$
defined in \eqref{DE1}.

Let $k \in \Z_{\ge 0}$.
Then, from (the proof of) Proposition \ref{PROP:cons1},
 we
see that
the deep-water conservation law $E^\dl_{\frac k2}(u)$
defined in \eqref{DE1}
is indeed given by \eqref{E00}:
\begin{align}
\begin{split}
E^\dl _{0}(u) & = \frac12 \|u\|^2_{L^2}, \\
E^\dl_{\frac{k}{2}}(u)
&  = \frac12\sum_{\substack{\l=0\\ \text{even}}}^{k} a_{k, \l} \| \Gdl^{\frac{k-\l}{2}} u \|^2_{\dot{H}^{\frac{k}{2}}} + R^\dl_{\frac{k}{2}}(u), \quad k\in \N,
\end{split}
\label{cons1}
\end{align}

\noi
with  $a_{k, \l}$ as in \eqref{DE2b}.
Here, the interaction potential  $R^\dl_{\frac{k}{2}}(u)$,
satisfying $\# u \ge 3$,  is given by
\begin{align}
R^\dl_{\frac{k}{2}}(u) = 
A^\dl_{\frac k 2,\frac{k}{2}}(u) + \sum_{\l=1}^{k-1} \frac{1}{\dl^{k-\l}} A^\dl_{\frac k 2,\frac{\l}{2}}(u),
\label{R1}
\end{align}

\noi
where $A^\dl_{\frac k 2,\frac{\l}{2}}(u)$ is defined as follows,  depending on the parity of $\l$;
when $\l = 2m$ is even, we have
\begin{align}
A^\dl_{\frac k 2,\frac{2m}{2}}(u)
&=
\sum_{\substack{p(u) \in \mathcal{P}_3(u) \\ \wt{p}(u) = u \dx^{m-1}u \dx^m u \\ \#\Qdl(p) = 0}}
c(p) \int_\T p(u)  dx \notag \\
& \quad +  \sum_{\substack{p(u) \in \mathcal{P}_j(u), \\ j=3, \ldots, 2m+2 \\
\#\dx(p)+ \#\Qdl(p)   = 2m +2 - j\\ |p(u)| \leq m-1 }} \hspace{-5mm}
c(p) \int_\T p(u)  dx
\label{Aeven}
\end{align}

\noi
with suitable constants  $c(p) \in \R$,
 independent of $\dl$, 
while,
when $\l = 2m+1$ is odd, we have
\begin{align}
A^\dl_{\frac k2,\frac{2m+1}{2}}(u)
&=
\sum_{\substack{p(u) \in \mathcal{P}_3(u) \\ \wt{p}(u) = u \dx^{m}u \dx^m u \\
\#\Qdl(p) = 0}} c(p) \int_\T p(u)  dx \notag \\
& \quad + \sum_{\substack{p(u) \in \mathcal{P}_3(u) \\ 
\wt{p}(u) = \dx^\kk u \dx^{m-1}u \dx^m u, \, \kk = 0, 1 \\
\#\Qdl(p) = 1-\kk}} c(p) \int_\T p(u)  dx\notag \\
& \quad +  \sum_{\substack{p(u) \in \mathcal{P}_4(u) \\ \wt{p}(u) = u^2 \dx^{m-1}u \dx^m u \\
\#\Qdl(p) = 0}} c(p) \int_\T p(u)  dx \notag \\
& \quad +  \sum_{\substack{p(u) \in \mathcal{P}_j(u), \\ j=3, \ldots, 2m+3 \\
\#\dx(p) + \#\Qdl(p) = 2m+3 - j\\ |p(u)| \leq m-1}}
 \hspace{-1cm} c(p) \int_\T p(u)  dx
  \label{Aodd}
\end{align}

\noi
with the understanding that, when $m = 0$, 
there is no contribution from the second, third, and fourth
terms on the right-hand side of \eqref{Aodd}.
In particular, we have\footnote{Note that there is no $\H$ in \eqref{Aodd1}
since $\H$ must come with  $\dx$ (before integration by parts)
but $\#\dx = 0$.}
\begin{align}
A^\dl_{\frac k2,\frac{1}{2}}(u) = c_0 \int_\T u^3 dx
\label{Aodd1}
\end{align}

\noi
for some constant $c_0 \in \R$, 
 independent of $\dl$.
Here, we used the following facts:
\smallskip
\begin{itemize}
\item
When $\l = 2m$ and $|p(u)| = m$, 
it follows from Proposition \ref{PROP:cons1}\,(i.a)
(see also \eqref{DE11}) with \eqref{R1} that 
\begin{align*}
 \# \dx    + \# \Qdl = k+2 - \#u - \# \dl^{-1}\le  2m -1, 
\end{align*}

\noi
which implies that $\wt{p}(u) = u \dx^{m-1}u \dx^m u$
(after integration by parts)
since, if $\wt{p}(u) = \dx^{\kk-1} u \dx^{m-\kk}u \dx^m u$
for some $2\le \kk \le \frac{m+1}{2}$,
then integration by parts allows us to reduce
to the case $|p(u)| \le m-1$.

\smallskip
\item
Similarly, when $\l = 2m + 1$ and $|p(u)| = m$, 
we have 
\begin{align*}
 \# \dx    + \# \Qdl = k+2 - \#u - \# \dl^{-1}\le  2m +3 - \#u. 
\end{align*}

\noi
Thus, when $\#u = 4$, we must have $\wt{p}(u) = u^2 \dx^{m-1}u \dx^m u$. 
When $\#u = 3$, we have
either $\wt{p}(u) = u \dx^{m}u \dx^m u$
or $\dx^\kk u \dx^{m-1}u \dx^m u$
with $\#\Qdl = 1-\kk$,  $\kk = 0, 1$
(after integration by parts).

\end{itemize}

\smallskip

\noi
Moreover, it follows from
Lemma~\ref{LEM:cub2} that
 the leading  contribution
 of the cubic terms in $A^\dl_{\frac k 2, \frac{2m}{2}}(u)$
 is given by
\begin{align*}
 c_0 \int_\T u (\H \dx^{m-1} u ) (\dx^m u) dx
\end{align*}
\noi

\noi
for some $c_0 \in \R$,
while
 the leading  contribution
 of the cubic terms in $A^\dl_{\frac{k}{2}, \frac{2m+1}{2}}(u)$
 is given by
\begin{align*}
\sum_{\vec \al = (\al_1, \al_2, \al_3)\in \A}c_{\vec \al}
\int_\T (\H^{\al_1}u)(\H^{\al_2} \dx^m u )( \H^{\al_3} \dx^m u ) dx
\end{align*}

\noi
for some constants $c_{\vec \al} \in \R$,
where
 $\A=\{(0,0,0), (0,0,1),
(0,1,1)\}$
is as in \eqref{DF6}.

\section{Shallow-water conservation laws}
\label{SEC:cons2}

In this section, we present a proof of 
Theorem~\ref{THM:2}.
Namely, 
we construct  conservation laws in the shallow-water regime
and establish their  shallow-water convergence;
see Propositions~\ref{PROP:Scons1} and~\ref{PROP:Scons2}.
In particular, we establish a 2-to-1 collapse of the shallow-water conservation laws
to the KdV conservation laws, 
as stated in \eqref{E7}.
The construction
of a full set of 
 shallow-water conservation laws with non-trivial shallow-water limits
 is particularly important 
 in order to study the shallow-water convergence problem
 for the associated 
 \GGMs;
 see
Section \ref{SEC:SGM1}
for further details.

As explained in Section \ref{SEC:1}, 
while the scaling transform in \eqref{scale1} applied to the deep-water conservation laws $E^\dl_{\frac k 2}(u)$ constructed in Section~\ref{SEC:cons1} 
yields conservation laws for the scaled ILW~\eqref{sILW}, 
they are not suitable for studying  
  the shallow-water convergence problems.
In order to overcome this issue,   
  we start with an alternative  B\"acklund transform~\eqref{BTX1} for the scaled ILW and derive a recursive formulation for the microscopic conservation laws $h_n$
 in Subsection~\ref{SUBSEC:B1}.  
In Subsection~\ref{SUBSEC:B2}, we then study the structure of the microscopic conservation laws $h_n$ and,
by taking suitable linear combinations of $h_n$, 
we construct alternative
microscopic conservation laws 
 $\wt{h}_n$ (see  \eqref{ht0}), 
 whose quadratic parts, upon integration over $\T$, 
have fixed numbers of derivatives;  
see Lemma~\ref{LEM:ht2}. 
Moreover, these new
microscopic conservation laws 
 $\wt{h}_n$ 
 guarantee non-trivial shallow-water limits
 for the associated shallow-water conservation laws.
  In Subsection~\ref{SUBSEC:B3}, we then define the shallow-water conservation laws $\wt{E}^\dl_{\frac k2 }(v)$ and establish  a 2-to-1 collapse of the shallow-water conservation laws
(Propositions~\ref{PROP:Scons1}
  and~\ref{PROP:Scons2}). Lastly, by combining the earlier results, we state the final structure of the shallow-water conservation laws $\wt{E}^\dl_{\frac k2 }(v)$
  in Subsection~\ref{SUBSEC:B4}.
Analogously to the deep-water case studied in the previous section, 
we introduce the notion of the {\it shallow-water rank} (Definition \ref{DEF:ord2}),
which is more complicated than the deep-water rank, reflecting
the more challenging nature of the shallow-water problem.
This shallow-water rank
plays a crucial role in understanding the structure
of the shallow-water conservation laws, 
thus allowing us to prove the desired 2-to-1 collapse.


\subsection{B\"acklund transforms and microscopic conservation laws}
\label{SUBSEC:B1}

We first recall the B\"acklund transform for the 
scaled ILW \eqref{sILW} derived in \cite{GK80, Kupershmidt81} (modulo constants).
Let $0<\dl<\infty$ and $\mu>0$. Given a (smooth) solution $v$ to the scaled ILW \eqref{sILW}, the B\"acklund transform yields a function $V=V(v;\mu)$ such that the pair $(v,V)$ satisfies the following system:
\begin{align}
\begin{split}
v & =  \frac{1}{2\dl^2} \big\{ 2i\mu \dl V - (1-i \mu^{-1}\dl) (e^{2i\mu \dl V} - 1)  \big\}
+\mu V_x + i\mu \dl \Gd V_x,
\\
 V_t & =  \frac{1}{\dl^2}   \big\{ 2i\mu \dl V - (1-i \mu^{-1}\dl ) (e^{2i\mu \dl V} - 1) \big\} V_x
+ \Gd V_{xx} +2i \mu \dl V_x \Gd V_x, 
\end{split}
\label{BTX1}
\end{align}

\noi
where $\Gd$ is as in \eqref{sILW}.
For now,  assume that $V$ is of the form
\begin{equation}
\label{BTX1a}
V = \sum_{n=0}^\infty \mu^n h_n,
\end{equation}
where $\{h_n\}_{n\in\Z_{\ge0}} = \{h^\dl_n\}_{n\in\Z_{\ge0}}$ is a sequence of smooth functions, independent of $\mu>0$.
Then, by first rewriting the first equation in \eqref{BTX1} as
\begin{equation}
V =- v - \frac{1}{2\dl^2} \sum_{j=2}^\infty \frac{1}{j!} (2i\mu \dl V)^j
+ 
\frac{i\mu^{-1}}{2 \dl}
 \sum_{j=2}^\infty \frac{1}{j!} (2i\mu \dl V)^j
 + \mu \wt{\L}_0 V,
\label{BTX1b}
\end{equation}

\noi
where $\wt{\L}_0 = \wt{\L}_0(\dl)$ is given by 
\begin{equation}
\label{BTX1aa}
\wt{\L}_0 = (1 + i \dl \Gd)\dx  , 
\end{equation}

\noi
substituting \eqref{BTX1a}, and comparing powers of $\mu$, we obtain the following recursive definition for $h_n = h^\dl_n$ in terms of $v$:
\begin{equation}\label{h1}
\begin{aligned}
h_0 & = -v , \\
h_1 & = 
-i \dl h_0^2 + \wt{\L}_0 h_0
= -i \dl v^2 - \wt{\L}_0 v, \\
h_n & = -\frac{1}{2\dl^2} \sum_{j=2}^n \frac{(2i\dl)^j}{j!}
 \sum_{\substack{n_1, \ldots, n_j \in \Z_{\ge 0} \\ n_{1\cdots j} = n-j}} 
 h_{n_1} \cdots h_{n_j} \\
& \quad + \frac{i}{2\dl} \sum_{j=2}^{n+1} \frac{(2i\dl)^j}{j!} 
\sum_{\substack{n_1, \ldots, n_j \in \Z_{\ge 0} \\ n_{1\cdots j} = n+1-j}}
 h_{n_1} \cdots h_{n_j} + \wt{\L}_0 h_{n-1}, \quad   n\ge 2,
\end{aligned}
\end{equation}

\noi
where  $n_{1\cdots j} = n_1 + \cdots +n_j$ is as in \eqref{sum1}.
Hence, as in the deep-water case, 
 given a smooth function $v$ on $\T\times \R$, 
by recursively constructing $h_n$ via \eqref{h1}
and using \eqref{BTX1a}, we
can 
 construct a smooth function $V=V(v;\mu)$.

In the following lemma, 
we establish 
 equivalence between 
 the scaled ILW \eqref{sILW} and its B\"acklund transform \eqref{BTX1}. As its proof does not seem to be readily available in the literature, we include it for readers' convenience. 
As in the deep-water case,  we impose the additional assumption \eqref{BTX1a}
 in the converse direction.

\begin{lemma}
\label{LEM:BTX1}
Let $0<\dl <\infty$. Given $\mu>0$, let $v:\R\times\T \to \R$ and $V:\R\times\T \to \C$ be smooth functions, satisfying the first equation in \eqref{BTX1}. If $V$ satisfies the second equation in \eqref{BTX1}, then $v=v(V)$ satisfies the scaled ILW \eqref{sILW}. Conversely, if $v$ satisfies the scaled ILW \eqref{sILW} and if, in addition, $V$ is of the form \eqref{BTX1a}, then $V=V(v;\mu)$ satisfies the second equation in~\eqref{BTX1}.
\end{lemma}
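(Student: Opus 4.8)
The plan is to mirror, in the shallow-water setting, the proof of Lemma~\ref{LEM:BT1} for the deep-water B\"acklund transform~\eqref{BT1}. First I would carry out the algebraic identity analogous to~\eqref{BT2}: differentiating the first equation in~\eqref{BTX1} in $t$ and substituting the scaled ILW equation~\eqref{sILW} (and, in the converse direction, substituting the second equation in~\eqref{BTX1}), one should be able to factor the result so that
\begin{align*}
2\dl^2\big(\dt v - \Gd\dx^2 v - \dx(v^2)\big)
= \mathcal{D}^V\big[\text{(expression in brackets of the second equation)}\big],
\end{align*}
where $\mathcal{D}^V = \mathcal{D}^V(\mu)$ is a first-order differential operator of the form
$\mathcal{D}^V = \mu\, a(V)\, \dd_x + (1 + i\dl\Gd)\dx = \mu\, a(V)\, \dd_x + \wt\L_0$
for a suitable smooth coefficient $a(V)$ (built from $e^{2i\mu\dl V}$), with $\wt\L_0$ as in~\eqref{BTX1aa}. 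This is the shallow-water counterpart of~\eqref{BT3}. The forward direction is then immediate: if $V$ satisfies the second equation in~\eqref{BTX1}, the bracket vanishes, so $v$ solves~\eqref{sILW}. Here one uses that $\Gd$ is anti self-adjoint (so that $V_x\Gd V_x$ has mean zero, paralleling the remark in the proof of Lemma~\ref{LEM:BT1}) and Lemma~\ref{LEM:T2} (the Tilbert transform identity) to reorganize the nonlinear terms; these are exactly the ingredients already used in the deep-water case.

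For the converse direction I would follow the variational/positivity argument verbatim from the proof of Lemma~\ref{LEM:BT1}. Assume $v$ solves~\eqref{sILW} and $V = \sum_{n\ge 0}\mu^n h_n$ is constructed from~\eqref{BTX1a}--\eqref{h1} via the first equation in~\eqref{BTX1}. Let $F(V)$ denote the bracket expression in the second equation of~\eqref{BTX1}, so that $\mathcal{D}^V F(V) = 0$. Expanding $F(V) = \sum_{n\ge 0}\mu^n f_n$ as a power series in $\mu$ (using the Taylor expansion of $e^{2i\mu\dl V}$ together with~\eqref{BTX1a}), the key point is that $\mathcal{D}^V$ splits as $\wt\L_0 + \mu\, a(V)\dd_x$, where $\wt\L_0 = (1 + i\dl\Gd)\dx$ and its leading symbol $in(1 - \dl\Ldl(n)/n)\cdot\text{(\dots)}$ — more simply, $\wt\L_0$ has trivial kernel on mean-zero functions since its multiplier $in(1 + i\dl\ft\Gd(n)) = in - \dl\Ldl(n)\cdot(\dots)$; in fact $in + \dl\,(\text{real})\ne 0$ for $n\ne 0$ — so $\|\wt\L_0 F\|_{L^2} \gtrsim \|F\|$ modulo the mean. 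Then the same ``take inner products and compare powers of $\mu$'' induction as in~\eqref{BT4a}--\eqref{BT7c} forces each coefficient $f_n = 0$: from $0 = \|\mathcal{D}^V F\|_{L^2}^2$ one extracts, order by order in $\mu$, identities of the shape $\int_\T |A_n|^2\,dx = 0$ (with $A_n$ the analogue of~\eqref{BT6a}, built from $f_j$ and the $h_j$'s), yielding $A_n = 0$ and hence $f_n = 0$ by induction. One caveat: in~\eqref{BTX1a} the series starts at $n = 0$ and the powers of $\mu$ are \emph{positive} (versus negative in~\eqref{BT1a}), so the bookkeeping of which power of $\mu$ controls which $f_n$ is slightly different; I would reindex carefully but expect no conceptual change, since the structure $\mathcal{D}^V = \wt\L_0 + \mu(\dots)\dd_x$ still has the ``invertible part'' $\wt\L_0$ appearing at the bottom of the $\mu$-grading.

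The main obstacle I anticipate is the first step — verifying the exact factorization identity analogous to~\eqref{BT2} — because the shallow-water B\"acklund transform~\eqref{BTX1} is algebraically heavier than~\eqref{BT1}: it involves the combination $2i\mu\dl V - (1 - i\mu^{-1}\dl)(e^{2i\mu\dl V} - 1)$ with mixed powers of $\mu$, and one must track how $\dt$ hitting the first equation, combined with $\Gd\dx^2 v$ and $\dx(v^2)$, reassembles into $\mathcal{D}^V$ applied to the bracket. I would handle this by differentiating the first equation in~\eqref{BTX1}, substituting $V_t$ from the (putative or assumed) second equation, and then checking that all the terms not proportional to the bracket cancel — this is a finite but lengthy computation, best organized by grouping according to powers of $\mu$ and using $\Gd\dx(V_x\Gd V_x)$-type identities from Lemma~\ref{LEM:T2}. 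A secondary technical point is confirming that $\wt\L_0 = (1 + i\dl\Gd)\dx$ indeed has no kernel on nonzero frequencies: its multiplier is $in + i\dl\cdot in\ft\Gd(n) = in - \dl\Ldl(n)$ with $\Ldl(n) > 0$ real and $in$ imaginary, so the multiplier is $-\dl\Ldl(n) + in \ne 0$ for $n \ne 0$ — this is exactly what makes the $\|\wt\L_0 F\|_{L^2} = 0 \Rightarrow F = \text{const}$ step work, and it is the shallow-water analogue of the role played by $\L_0$ in~\eqref{BT1aa}. Once these two points are nailed down, the rest of the proof is a transcription of the deep-water argument, and I would simply write ``we proceed exactly as in the proof of Lemma~\ref{LEM:BT1}'' for the induction, filling in only the reindexing needed because of~\eqref{BTX1a}.
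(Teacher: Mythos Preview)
Your overall strategy matches the paper's: factor the scaled ILW equation through an operator applied to the bracket of the second equation in~\eqref{BTX1}, read off the forward direction, and for the converse expand $\|(\text{operator})\wt F\|_{L^2}^2 = 0$ in powers of $\mu$ and induct. But your guess for the operator is wrong in a way that derails the converse mechanism as you describe it.

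The actual factorization (equations~\eqref{BTX2}--\eqref{BTX3} in the paper) yields $\wt\L^V = -(1+i\mu\dl^{-1})e^{2i\mu\dl V} + i\mu\L_0$, with $\L_0 = (\Gdl - i)\dx + \dl^{-1}$ the \emph{deep-water} operator of~\eqref{BT1aa}, not $\wt\L_0$. This is a multiplication operator plus $i\mu$ times a Fourier multiplier --- not a first-order differential operator $\mu\,a(V)\dd_x + \wt\L_0$ as you propose. Consequently, at the bottom of the $\mu$-grading $\wt\L^V$ reduces to multiplication by $-1$, not to $\wt\L_0$; your kernel analysis for $\wt\L_0$ is correct but irrelevant here. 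The induction starts for a different reason: exactly as in~\eqref{BT4a}--\eqref{BT5}, the cross term in $\|\wt\L^V\wt F\|_{L^2}^2$ cancels, leaving
\[
(1+\mu^2\dl^{-2})\,\|e^{2i\mu\dl V}\wt F\|_{L^2}^2 \;-\; \mu^2\,\|\L_0\wt F\|_{L^2}^2 \;=\; 0,
\]
and it is the extra factor of $\mu^2$ on the $\L_0$-term, combined with $\wt F = \sum_{n\ge 0}\mu^n\wt f_n$ in \emph{non-negative} powers, that makes the $\mu^0$-coefficient read $\|\wt f_0\|_{L^2}^2 = 0$ and launch the induction. If you carry out the factorization carefully you will discover this structure; just be aware that your a priori picture of both the operator and the reason the induction starts needs to be replaced.
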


\begin{remark}\rm
(i)
As in  deep-water case, 
given   a smooth function $v$ on $\R \times \T$.
 it is in general not clear to us 
how to find a function $V$ satisfying the first equation in \eqref{BTX1}
 without the extra assumption \eqref{BTX1a}.

\smallskip

\noi(ii)  Fix $0<\dl<\infty$ and $\mu > 0$.
With  $\wt{\L}_0$  as in \eqref{BTX1aa}, 
define  $\wt{\L}_1 = \wt{\L}_1(\dl,\mu)$ by 
\begin{equation*}
\wt{\L}_1  = \Id  - \mu \wt{\L}_0 = \Id  - \mu(1 + i\dl\Gd)\dx.
\end{equation*}

\noi
 Then, we can rewrite \eqref{BTX1b} as
\begin{equation}
\label{BTX1bb}
V = \wt{\L}_1^{-1} \bigg(- v - \frac{1}{2\dl^2} \sum_{j=2}^\infty \frac{1}{j!} (2i\mu \dl V)^j
+ 
\frac{i\mu^{-1}}{2 \dl}
 \sum_{j=2}^\infty \frac{1}{j!} (2i\mu \dl V)^j \bigg), 
\end{equation}

\noi
where $\wt{\L}_1^{-1}$ is the Fourier multiplier operator with multiplier:
\begin{equation*}
\ft{\wt{\L}_1^{-1}}(n) =
\begin{cases}
\big(1- i \dl^{-1}\mu (\dl n + \dl n \coth(\dl n) -1)\big)^{-1}, & n\in\Z^*, \\
1, & n=0.
\end{cases}
\end{equation*}

\noi
Note  that $|\ft{\wt{\L}_1^{-1}}(n)|\le 1$ for any $n \in \Z$.
Fix $T>0$ and $v\in C([-T,T]; H^s(\T))$ for some $s\gg1$. Then, 
for sufficiently small $\mu = \mu(\|v\|_{C_T H^s_x}, \dl)>0$, a standard contraction argument shows that there exists a unique solution $V=V(v;\mu)$ to \eqref{BTX1bb} in the ball 
$B\subset C([-T,T];H^s(\T))$
of radius $\sim \| v\|_{C_T H^s_x}$ centered at the origin. We should, however, note that, 
even if $v$ satisfies the scaled ILW \eqref{sILW},  it is not clear if $V$ defined in \eqref{BTX1bb}  satisfies the second equation in \eqref{BTX1}.

\end{remark}

We now present a proof of Lemma~\ref{LEM:BTX1}.
\begin{proof}[Proof of Lemma~\ref{LEM:BTX1}]

We proceed as in the proof of Lemma~\ref{LEM:BT1}.
From the first equation in~\eqref{BTX1} and  Lemma~\ref{LEM:T2}
with $\Gd = \dl^{-1} \Gdl$, 
a direct computation yields
\begin{equation}
\begin{aligned}
& \dt v - \Gd\dx^2 v - 2 v \dx v  \\
& \quad 
=\wt \L^V \Big[ V_t - \frac{1}{\dl^2} 
\big\{ 2i\mu \dl V - (1- i \mu^{-1}\dl ) (e^{2i\mu \dl V} - 1) \big\} V_x  - \Gd V_{xx} -2i \mu \dl V_x  \Gd V_x\Big]
\end{aligned}
\label{BTX2}
\end{equation}

\noi
where the operator $\wt \L^V = \wt \L^V(\mu)$ is given by
\begin{equation}
\label{BTX3}
\begin{aligned}
\wt \L^V &= -(1+i\mu \dl^{-1}) e^{2i\mu \dl V} 
+ i \mu \big( (\dl \Gd - i)\dx + \dl^{-1}   \big)\\
&  = -(1+i\mu \dl^{-1}) e^{2i\mu \dl V} + i \mu  \L_0
\end{aligned}
\end{equation}

\noi
with $ \L_0$ as in \eqref{BT1aa}.
Hence, if $V$ satisfies  the second equation in \eqref{BTX1}, 
then it follows from~\eqref{BTX2}  that $v$ satisfies the  scaled ILW \eqref{sILW}.

Next, we prove the converse direction. 
Suppose that $v$ is a smooth solution to \eqref{sILW}
and that $V = V(v;\mu)$ of the form \eqref{BTX1a} is defined by the first equation in \eqref{BTX1}
(as explained above).
Then, from \eqref{BTX2} and \eqref{BTX3}, we have
\begin{equation}
\label{BTX4}
\wt \L^V \wt F(V) =0,
\end{equation}

\noi
where $\wt F(V)=\wt F(V;\mu)$ denotes the expression in the brackets on the right-hand side of \eqref{BTX2}.
In the following, we show that $\wt F(V)=0$. 
For simplicity, we will suppress the $t$-dependence. From \eqref{BTX4} and \eqref{BTX3}, we have
\begin{equation}
0 = \jb{\wt \L^V \wt F, i \mu \L_0 \wt F}_{L^2} = - (1+ i \mu\dl^{-1})
  \jb{e^{2i\mu \dl V} \wt F, 
i \mu \L_0 \wt F}_{L^2} 
+ \mu^2 \| \L_0 \wt F \|_{L^2}^2.
\label{BTX4a}
\end{equation}

\noi
Then, arguing as in \eqref{BT5}, 
it follows from \eqref{BTX4} with \eqref{BTX3} and \eqref{BTX4a} that 
\begin{equation}
\label{BTX5}
0 = \| \wt \L^V \wt F \|_{L^2}^2  = (1+\mu^2 \dl^{-2}) \| e^{2i\mu \dl V} \wt F \|^2_{L^2} - \mu^2 \| \L_0 \wt F \|^2_{L^2}.
\end{equation}

From the assumption \eqref{BTX1a}, we have
\begin{equation}
\label{BTX5a}
\wt F(V) = \sum_{n=0}^\infty \mu^n \wt f_n
\end{equation}

\noi
for some smooth functions $\wt f_n$ independent of $\mu>0$. 
Then, from \eqref{BTX1a} and \eqref{BTX5a}, we have
\begin{equation}
\label{BTX6}
\begin{aligned}
e^{2i\mu\dl V} \wt F 
& = \bigg( 1 + \sum_{j=1}^\infty \frac{1}{j!} (2i \mu \dl V)^j \bigg)  \bigg(\sum_{n=0}^\infty \mu^n \wt f_n \bigg)  \\
& = \bigg( 1 + \sum_{n=1}^\infty \mu^n  \sum_{j=1}^n \frac{(2i\dl)^j}{j!} \sum_{\substack{n_1, \ldots, n_j =0 \\ n_{1\cdots j}=n-j}}^\infty h_{n_1} \cdots h_{n_j} \bigg) \bigg(\sum_{n=0}^\infty \mu^n \wt f_n \bigg) \\
& = : \sum_{n=0}^\infty \mu^n B_n,
\end{aligned}
\end{equation}

\noi
where
\begin{equation}
\label{BTX6a}
\begin{aligned}
B_0 & = \wt f_0, \qquad 
 B_n =\wt  f_n + \sum_{\substack{n_{12}=n\\0\le n_1 \le n-1}} \wt f_{n_1} \wt{V}_{n_2}, \quad n\in \N,\\
\wt{V}_n & = \sum_{j=1}^{n} \frac{(2i\dl)^j}{j!} \sum_{\substack{n_1,\ldots, n_j =0 \\ n_{1\cdots j} = n-j}}^\infty h_{n_1} \cdots h_{n_j} , \quad n \in \N.
\end{aligned}
\end{equation}

\noi
By substituting \eqref{BTX5a} and \eqref{BTX6} in \eqref{BTX5}, we see that the coefficient of each power of $\mu$ must be 0. From the coefficient of $\mu^0$
with \eqref{BTX6a}, we obtain
\begin{equation}
\label{BTX7}
B_0 = \wt f_0 = 0,
\end{equation}

\noi
while from the coefficient of $\mu^1$, we have
\begin{equation*}
2 \Re \int_\T B_0 \cj{B}_1  dx = 0,
\end{equation*}

\noi
which holds in view of \eqref{BTX7}. 
From the coefficient of $\mu^2$, we obtain
\begin{equation}
\label{BTX7a}
0 = \int_\T \dl^{-2} |B_0|^2 + |B_1|^2 + 2 \Re B_0\cj B_2 - |\L_0 \wt f_0|^2 dx  = \int_\T |B_1|^2 dx  ,
\end{equation}

\noi
where we used \eqref{BTX7} at the second equality. 
Hence,  from the definition of $B_1$ in \eqref{BTX6a}
with~\eqref{BTX7} and \eqref{BTX7a}, 
we obtain
\begin{equation}
\label{BTX7b}
\wt f_1 = B_1 = 0.
\end{equation}

We now proceed by induction and show that $\wt f_n=0$ for $n\in\Z_{\ge0}$. 
Given $n\ge2$, assume that $\wt f_k=0$ for $0\le k \le n-1$. Then, from \eqref{BTX6a}, we see that
\begin{equation}
\label{BTX7c}
B_k =0
\end{equation}
for $0\le k \le n-1$. From the coefficient of $\mu^{2n}$ in \eqref{BTX5}, we get
\begin{align*}
0 & =  \sum_{ n_{12}=2n}
 \int_\T B_{n_1} \cj B_{n_2} dx 
 + \dl^{-2} \sum_{n_{12}=2n-2} \int_\T B_{n_1} \cj B_{n_2}
  - \sum_{n_{12}=2n-2} \int_\T \L_0 \wt f_{n_1} \cj{\L_0\wt  f_{n_2}} dx \\
& = \int_\T |B_n|^2  dx, 
\end{align*}

\noi
where we used 
the inductive hypothesis and
\eqref{BTX7c} at the second equality. 
Hence, we obtain  $B_n = 0$, 
and, 
from \eqref{BTX6a} and the inductive hypothesis, we conclude that $\wt f_n=0$
for any $n \in \Z_{\ge 0}$.
Therefore,
from \eqref{BTX5a},
we conclude that $\wt F(V) = 0$.
Namely, $V$
satisfies
 the second equation in \eqref{BTX1}.
\end{proof}

Suppose that $V$ satisfies \eqref{BTX1}. Then, from the second equation in \eqref{BTX1}
with the fact that 
 $V_x \Gd V_x$ has mean zero on $\T$
 (which follows from the anti self-adjointness of $\Gd$), 
we see that the mean of $V$ is conserved. If $V$ is also of the form \eqref{BTX1a}, we formally obtain
\begin{align*}
0 = \frac{d}{dt} \int_\T  V \dx = \sum_{n=0}^\infty \mu^n \cdot \frac{d}{dt} \int_\T h_n d x
\end{align*}

\noi
for any $\mu>0$. 
Since the coefficients of all powers of $\mu$ must be 0, we conclude that
\begin{equation}
\label{BTX8}
\frac{d}{dt} \int_\T h_n dx =0
\end{equation}

\noi
for any $n\in\Z_{\ge0 }$.
Namely, for any $n\in\Z_{\ge0 }$, $h_n$ is a microscopic conservation law for the scaled ILW \eqref{sILW}. 
As in the deep-water case, 
we will define the shallow-water conservation laws $\wt{E}^\dl_{\frac k 2}(u)$ as 
(the real part of) 
suitable linear combinations  
of 
$\int_\T h_n dx$; see \eqref{ht0} and Proposition~\ref{PROP:Scons1}.

We conclude this subsection by recalling the B\"acklund transform 
for KdV \eqref{kdv}
and its microscopic conservation laws.
Recall from Lemma~\ref{LEM:L1} that $\Gd\dx$ converges to $-\frac13\dx^2$ as $\dl\to0$. Thus, in the shallow-water limit ($\dl\to0$), the B\"acklund transform \eqref{BTX1} for the scaled ILW~\eqref{sILW} converges to that for KdV~\eqref{kdv}, also known as the  Gardner transform \cite{M1}:
\begin{equation}
\label{BTkdv}
\begin{aligned}
v&= -V + \mu^2 V^2 + \mu V_x , \\
V_t & = -2 ( V- \mu^2 V^2) V_x - \frac13 V_{xxx}.
\end{aligned}
\end{equation}

\noi
Note that Lemma~\ref{LEM:BTX1} also holds for the case $\dl=0$. 
Moreover, by setting $\dl = 0$ in \eqref{BTX1aa} and~\eqref{h1}
we obtain the following recursive definition for the KdV microscopic conservation laws $h^\KDV_n = h^0_n$:
\begin{equation}
\begin{aligned}
h^{\KDV}_0 & = -v, \\
h^{\KDV}_1 & = -\dx v, \\
h^{\KDV}_n & =   \sum_{\substack{n_1, n_2 \in  \Z_{\ge 0} \\n_{12}=n-2}} h^{\text{KdV}}_{n_1} h^{\text{KdV}}_{n_2} + \dx h^{\text{KdV}}_{n-1} , \quad n\ge 2.
\end{aligned}
\label{hkdv}
\end{equation}

\noi
Arguing as above, we have
\begin{equation}
\label{BTX9}
\frac{d}{dt} \int_\T h_n^\KDV dx =0
\end{equation}

\noi
for $n\in\Z_{\ge0 }$.

The following lemma collects well-known properties
of 
 the KdV microscopic conservation laws $h^\KDV_n$.

\begin{lemma}
\label{LEM:kdv1}

{\rm(i)} Given any $n\in\Z_{\ge0}$, we have 
\begin{equation}
\label{hkdv1}
\int_\T h^\KDV_{2n+1} dx = 0.
\end{equation}

\noi
{\rm(ii)} Given $n\in\Z_{\ge0}$, let $L^\KDV_n$ and $Q^\KDV_n$ denote 
the linear and quadratic {\rm(}in $v${\rm)} parts of $h^\KDV_n$
defined in \eqref{hkdv}, respectively. Then, we have
\begin{equation}
\label{hkdv1b}
L^\KDV_n = - \dx^{n} v.
\end{equation}
Moreover, under the mean-zero assumption on $v$, we have
\begin{align}
\label{hkdv1a}
\int_\T L^\KDV_{n} dx& = 0, \\
\label{hkdv1aa}
\int_\T Q^\KDV_{2n} dx& = (-1)^{n-1} \|v\|^2_{\dot{H}^{n-1}} ,  \quad n \in \N.
\end{align}


\end{lemma}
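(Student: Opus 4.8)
The plan is to prove Lemma~\ref{LEM:kdv1} by a careful inductive analysis of the recursion~\eqref{hkdv}, treating the linear and quadratic parts separately and keeping track of parity. First I would establish \eqref{hkdv1b}: since every monomial in $h^\KDV_n$ is at least linear in $v$, the quadratic-and-higher terms in \eqref{hkdv} come only from the $h^\KDV_{n_1} h^\KDV_{n_2}$ sum, so the linear part satisfies $L^\KDV_n = \dx L^\KDV_{n-1}$ with $L^\KDV_0 = -v$, giving $L^\KDV_n = -\dx^n v$ immediately. Then \eqref{hkdv1a} is just integration by parts under the mean-zero assumption (for $n\ge 1$ the integrand is an exact derivative; for $n=0$ it is $-\int_\T v\,dx = 0$).

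Next I would handle the quadratic part. Extracting the quadratic terms from \eqref{hkdv} gives the recursion $Q^\KDV_n = \sum_{n_{12}=n-2} L^\KDV_{n_1} L^\KDV_{n_2} + \dx Q^\KDV_{n-1}$, with $Q^\KDV_0 = Q^\KDV_1 = 0$. Integrating over $\T$ kills the $\dx Q^\KDV_{n-1}$ term, so $\int_\T Q^\KDV_n\,dx = \sum_{n_1+n_2 = n-2} \int_\T L^\KDV_{n_1} L^\KDV_{n_2}\,dx = \sum_{n_1+n_2=n-2} (-1)^{n_1}\int_\T \dx^{n_1} v \cdot \dx^{n_2} v\,dx$, using \eqref{hkdv1b}. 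Integrating by parts $n_1$ times, each term equals $(-1)^{n_1}(-1)^{n_1}\int_\T v\,\dx^{n_1+n_2} v\,dx = \int_\T v\,\dx^{n-2} v\,dx$, which vanishes when $n-2$ is odd (again by integration by parts, since then $v\,\dx^{n-2}v = \tfrac12\dx(\cdots)$ plus lower exact-derivative terms — more precisely $\int_\T v\,\dx^{2m+1}v\,dx = 0$ by antisymmetry) and equals $(-1)^{m}\|v\|_{\dot H^m}^2$ when $n - 2 = 2m$. Summing over the $n-1$ choices of $(n_1,n_2)$ with $n_1+n_2 = n-2$ then gives $\int_\T Q^\KDV_n\,dx = (n-1)\cdot[\text{that common value}]$; so when $n = 2\kk$ this is $(2\kk-1)(-1)^{\kk-1}\|v\|_{\dot H^{\kk-1}}^2$, which is off from \eqref{hkdv1aa} by the factor $2\kk - 1$. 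This discrepancy tells me the normalization in \eqref{hkdv} as I have read it must differ by combinatorial constants, or — more likely — that the intended statement already absorbs such a constant; I would reconcile this by recomputing the quadratic recursion with the exact coefficients from \eqref{h1} at $\dl = 0$ (the $\tfrac{(2i\dl)^j}{j!}$ factors and the $\tfrac1{2\dl^2}$, $\tfrac{i}{2\dl}$ prefactors collapse in a specific way as $\dl\to 0$), which is the step I expect to require the most care.

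With the quadratic computation in hand, \eqref{hkdv1} for the \emph{full} $h^\KDV_{2n+1}$ follows by a parity argument on the whole recursion rather than term by term. The key observation is a grading: assign to $v$ the weight $1$ and to $\dx$ the weight $1$, so that $h^\KDV_m$ is homogeneous of weight $m+1$ (this is immediate by induction from \eqref{hkdv}, since $h^\KDV_{n_1}h^\KDV_{n_2}$ has weight $(n_1+1)+(n_2+1) = n_1+n_2+2 = n$ when $n_1+n_2 = n-2$, matching $\dx h^\KDV_{n-1}$). Then $\int_\T h^\KDV_{2n+1}\,dx$ is a polynomial conservation law of weight $2n+2$ that is \emph{even} in $v \mapsto -v$ parity but has an odd total derivative count in each monomial: every monomial $\prod \dx^{a_i} v$ with $\sum(a_i + 1) = 2n+2$ and an even number of factors has $\sum a_i$ even, while with an odd number of factors $\sum a_i$ is odd. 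The cleanest route, which I would follow, is to invoke the classical fact (e.g. via the scaling $x\mapsto -x$, $v(x)\mapsto v(-x)$ symmetry of KdV, under which $h^\KDV_n(x) \mapsto (-1)^{?} h^\KDV_n(-x)$, derivable from \eqref{hkdv} by induction) that $\int_\T h^\KDV_n\,dx$ is invariant under $x\mapsto -x$ only when its total derivative degree is even, forcing the odd-indexed integrals to vanish. I would verify the precise reflection parity of $h^\KDV_n$ by induction on \eqref{hkdv} — under $v(x)\mapsto v(-x)$ one gets $h^\KDV_n \mapsto (-1)^n \widetilde{h}^\KDV_n$ where $\widetilde{\phantom{h}}$ denotes spatial reflection — and then integrating: $\int_\T h^\KDV_{2n+1}\,dx = (-1)^{2n+1}\int_\T h^\KDV_{2n+1}\,dx = -\int_\T h^\KDV_{2n+1}\,dx$, hence it is zero. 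The main obstacle, as noted, is pinning down the exact constants coming out of the $\dl\to 0$ limit of \eqref{h1} so that \eqref{hkdv} and \eqref{hkdv1aa} are mutually consistent; once the bookkeeping is fixed the parity argument is robust.
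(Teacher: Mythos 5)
Your computation of the linear part and \eqref{hkdv1a} is correct and matches the paper. The two other pieces have problems.

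\textbf{The quadratic computation contains a sign error, not a normalization discrepancy.} From \eqref{hkdv1b} you have $L^\KDV_{n_j} = -\dx^{n_j} v$ (a single overall minus, not $(-1)^{n_j}$). Thus $L^\KDV_{n_1} L^\KDV_{n_2} = \dx^{n_1} v\,\dx^{n_2} v$ with no extra sign. You nonetheless wrote the sum as $\sum_{n_1+n_2=n-2}(-1)^{n_1}\int_\T \dx^{n_1}v\,\dx^{n_2}v\,dx$ and then, after integrating by parts $n_1$ times, obtained a product of two factors $(-1)^{n_1}$ that cancel, making every summand equal. Only the integration-by-parts factor $(-1)^{n_1}$ is actually present, so the correct result is
\begin{align*}
\int_\T Q^\KDV_n\,dx = \sum_{n_1=0}^{n-2}(-1)^{n_1}\int_\T v\,\dx^{n-2}v\,dx,
\end{align*}
and when $n=2\kk$ the alternating sum over $n-1 = 2\kk-1$ (an odd number of) terms equals $1$, giving $\int_\T v\,\dx^{2\kk-2}v\,dx = (-1)^{\kk-1}\|v\|_{\dot H^{\kk-1}}^2$, exactly \eqref{hkdv1aa}. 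The factor $2\kk-1$ you obtained is entirely an artifact of the spurious $(-1)^{n_1}$; there is no hidden normalization to reconcile, and you should delete the paragraph about revisiting the $\dl\to 0$ limit of \eqref{h1}.

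\textbf{The reflection-parity argument for \eqref{hkdv1} has a real gap.} The induction giving $h^\KDV_n[\tilde v](x) = (-1)^n h^\KDV_n[v](-x)$, where $\tilde v(x) = v(-x)$, is correct. But after substituting $y = -x$ in the integral this yields only $\int_\T h^\KDV_n[\tilde v]\,dx = (-1)^n\int_\T h^\KDV_n[v]\,dx$, i.e.\ it relates the functional evaluated at $\tilde v$ to the functional evaluated at $v$. It does \emph{not} give the self-referential identity $\int_\T h^\KDV_{2n+1}[v]\,dx = -\int_\T h^\KDV_{2n+1}[v]\,dx$ unless one also knows that the functional $v\mapsto \int_\T h^\KDV_{2n+1}[v]\,dx$ is reflection-invariant, which is precisely the kind of statement you would need to prove, and invoking ``the classical fact'' here risks circularity (completeness/uniqueness of the KdV conserved hierarchy is usually stated in terms of the even-indexed quantities, whose sufficiency presupposes the vanishing you are trying to establish). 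For comparison, polynomials of odd reflection parity such as $\int_\T(\dx v)^3\,dx$ are generically nonzero; parity alone never forces an integral to vanish. As a minor point, your grading (weight $1$ for both $v$ and $\dx$, with $h^\KDV_n$ homogeneous of weight $n+1$) is inconsistent: $h^\KDV_2 = v^2 - \dx^2 v$ mixes weights $2$ and $3$ under it. The correct rank (Definition~\ref{DEF:ord2}) gives $v$ weight $1$ and $\dx$ weight $\tfrac12$, so $h^\KDV_n$ has rank $1+\tfrac n2$; this does not directly affect your parity claim but indicates a misreading of the structure. The paper's actual proof of \eqref{hkdv1} does not rely on parity: it shows directly, by a combinatorial induction on the recursion \eqref{hkdv}, that $h^\KDV_{2n+1}$ is an exact spatial derivative (the identity \eqref{hkdv2}), from which \eqref{hkdv1} follows immediately; the alternative generating-function trick $V_- = -\tfrac1{2\mu}\dx\log(1-2\mu^2 V_+)$ appears only as a heuristic and is explicitly flagged as formal.
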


\begin{proof}
(i) 
The identity \eqref{hkdv1}
is well known in the literature; see \cite[Proposition~2.7]{Kupershmidt81}
for a formal proof.  See also \cite[(25) on p.\,1207]{M2}.
For readers' convenience, we first present the argument in \cite{Kupershmidt81, M2}.
Let $V$ be as in  \eqref{BTX1a} (with $\dl = 0$) and write it as
\begin{align}
V = V_+ + V_- := \sum_{n=0}^\infty \mu^{2n} h_{2n}^\KDV+ 
\sum_{n=0}^\infty \mu^{2n+1} h_{2n+1}^\KDV.
\label{hkdv1x}
\end{align}

\noi
By substituting \eqref{hkdv1x} into 
the first equation in \eqref{BTkdv}
and collecting the contributions coming from the odd powers of $\mu$, 
we obtain
\begin{align*}
0 = -V_- + 2 \mu^2 V_+ V_- + \mu \dx V_+.
\end{align*}

\noi
Then, by solving for $V_-$, we obtain
\begin{align}
V_- = \frac{\mu \dx V_+}
{1 -  2 \mu^2 V_+} = - \frac{1}{2\mu} \dx \log (1 -  2 \mu^2 V_+), 
\label{hkdv1y}
\end{align}

\noi
from which we conclude that 
\begin{align*}
0 = \int_\T V_- dx = \sum_{n=0}^\infty \mu^{2n+1}\cdot \int_\T  h_{2n+1}^\KDV dx.
\end{align*}

\noi
Since the coefficient of $\mu^{2n+1}$ must be 0 for each $n \in \Z_{\ge 0}$, we conclude \eqref{hkdv1}.
We point out that the step in~\eqref{hkdv1y} is rather formal without any justification.

In the following, we present an alternative proof of \eqref{hkdv1}, using the recurrence relation \eqref{hkdv}.
We claim that, for $n\in\Z_{\ge0}$, the odd-order microscopic conservation laws $h^\KDV_{2n+1}$ satisfy the following identity:
\begin{equation}
h^\KDV_{2n+1} = \dx \bigg( \sum_{j=1}^{n+1} \frac{2^{j-1}}{j}
 \sum_{\substack{ n_1, \ldots, n_{j} \in  \Z_{\ge 0}  \\ n_{1\cdots j} = n+1 - j  }} h^\KDV_{2n_1} \cdots h^\KDV_{2n_{j}} \bigg),
\label{hkdv2}
\end{equation}

\noi
from which \eqref{hkdv1} follows. 

We proceed by induction on $n \in \Z_{\ge0}$.
When  $n=0$, it follows from \eqref{hkdv} that 
\begin{equation}
h^\KDV_1 = \dx h^\KDV_0 = \text{RHS of \eqref{hkdv2}} |_{n=0}.
\label{hkdv2a}
\end{equation}

\noi
When  $n=1$, from \eqref{hkdv} and \eqref{hkdv2a},  we have
\begin{align*}
h^\KDV_3 & = 2 h_0^\KDV h_1^\KDV + \dx h^\KDV_2 \\
 &= 2 h_0^\KDV \dx h^\KDV_0 + \dx h^{\KDV}_2 \\
& = \dx \big( (h^\KDV_0)^2 + h^\KDV_2 \big) \\
& = \text{RHS of \eqref{hkdv2}} |_{n=1}.
\end{align*}

Fix an integer $n\ge 2$ and assume that \eqref{hkdv2} holds for $0 \le k \le n-1$. 
Then, from \eqref{hkdv} and the inductive hypothesis, we have
\begin{align}
\begin{split}
h^\KDV_{2n+1} 
& = 2 \sum_{\substack{k_1, k_1 \in  \Z_{\ge 0} \\k_{12}=n-1 }} h^\KDV_{2k_1} h^\KDV_{2k_2 + 1} + \dx h^\KDV_{2n} \\
& = 2 \sum_{\substack{k_1, k_2 \in  \Z_{\ge 0}  \\ k_{12} = n-1}} h^\KDV_{2k_1} \dx \bigg( \sum_{j=1}^{k_2+1} \frac{2^{j-1}}{j}  \sum_{\substack{m_1, \ldots, m_j  \in \Z_{\ge 0}  \\ m_{1\cdots j} = k_2 +1 - j}}
h^\KDV_{2m_1} \cdots h^\KDV_{2m_j} \bigg) + \dx h_{2n}^\KDV.
\end{split}
\label{hkdv3}
\end{align}

\noi
By rearranging the right-hand side of \eqref{hkdv3}, we obtain

\noi
\begin{equation}
\begin{aligned}
h_{2n+1}^\KDV & =  \sum_{j=1}^{n} \frac{2^j}{j} \sum_{k_1=0}^{n-j} 
\sum_{\substack{m_1, \ldots, m_j \in \Z_{\ge 0} \\ m_{1\cdots j} = n-k_1 -j}} \sum_{\substack{0 \le \al_1,\ldots, \al_j \le 1 \\ \al_{1\cdots j} = 1}} h^\KDV_{2k_1} \prod_{\l=1}^j \dx^{\al_\l} h^\KDV_{2 m_\l}  + \dx h^\KDV_{2n} \\
& = \sum_{j=1}^n \frac{2^j}{j} \sum_{\substack{m_1, \ldots, m_{j+1} \in \Z_{\ge 0} \\ m_{1\cdots (j+1)} = n-j}} \sum_{\substack{ 0\le \al_1, \ldots, \al_{j+1} \le 1 \\ \al_{1\cdots j} = 1 \\ \al_{j+1} =0   }} \prod_{\l=1}^{j+1} \dx^{\al_\l} h_{2 m_\l}^{\KDV} + \dx h^\KDV_{2n}.
\end{aligned}
\label{hkdv3aa}
\end{equation}

\noi
Note that we have 
\begin{equation}
\begin{aligned}
&  \sum_{\substack{m_1, \ldots, m_{j+1} \in \Z_{\ge 0} \\ m_{1\cdots (j+1)} = n-j}}
 \sum_{\substack{ 0\le \al_1, \ldots, \al_{j+1} \le 1 \\ \al_{1\cdots j} = 1 \\ \al_{j+1} =0   }} \prod_{\l=1}^{j+1} \dx^{\al_\l} h_{2 m_\l}^{\KDV} \\
& \hphantom{XXXXXX} = j   \sum_{\substack{m_1, \ldots, m_{j+1} \in \Z_{\ge 0} \\ m_{1\cdots (j+1)} = n-j}}  (\dx h^\KDV_{2m_1}) h^\KDV_{2m_2} \cdots h^\KDV_{2m_{j+1}} 
\end{aligned}
\label{hkdv3a}
\end{equation}

\noi
for $1\le j \le n$.
Similarly, we have
\begin{equation}
\begin{aligned}
&  \sum_{\substack{m_1, \ldots, m_{j+1} \in \Z_{\ge 0} \\ m_{1\cdots (j+1)} = n-j}} \sum_{\substack{ 0\le \al_1, \ldots, \al_{j+1} \le 1 \\ \al_{1\cdots (j+1)} = 1  }} \prod_{\l=1}^{j+1} \dx^{\al_\l} h_{2 m_\l}^{\KDV} \\
& \hphantom{XXXXXX}
 = (j+1)  \sum_{\substack{m_1, \ldots, m_{j+1} \in \Z_{\ge 0} \\ m_{1\cdots (j+1)} = n-j}}  (\dx h^\KDV_{2m_1}) h^\KDV_{2m_2} \cdots h^\KDV_{2m_{j+1}} .
\end{aligned}
\label{hkdv3b}
\end{equation}

\noi
Hence, from 
\eqref{hkdv3aa},  \eqref{hkdv3a},  \eqref{hkdv3b}, 
and the product rule,
 we obtain
\begin{align*}
h^\KDV_{2n+1} 
& =  \sum_{j=1}^n \frac{2^j}{j+1}  
\sum_{\substack{m_1, \ldots, m_{j+1} \in \Z_{\ge 0} \\ m_{1\cdots (j+1)} = n-j}}
 \sum_{\substack{ 0\le \al_1, \ldots, \al_{j+1} \le 1 \\ \al_{1\cdots (j+1)} = 1  }} \prod_{\l=1}^{j+1} \dx^{\al_\l} h_{2 m_\l}^{\KDV} + \dx h^\KDV_{2n} \\
& = \dx \bigg( \sum_{j=2}^{n+1} \frac{2^{j-1}}{j} 
\sum_{\substack{m_1, \ldots, m_{j} \in \Z_{\ge 0} \\ m_{1\cdots j} = n+1-j}} h^\KDV_{2m_1} \cdots h^\KDV_{2m_j} \bigg) + \dx h^\KDV_{2n} \\
& = \dx \bigg( \sum_{j=1}^{n+1} \frac{2^{j-1}}{j} 
\sum_{\substack{m_1, \ldots, m_{j} \in \Z_{\ge 0} \\ m_{1\cdots j} = n+1-j}}
h^\KDV_{2m_1} \cdots h^\KDV_{2m_j} \bigg)
\end{align*}

\noi
which agrees with the right-hand side of \eqref{hkdv2}. 
Therefore, by induction, we conclude that  the identity \eqref{hkdv2} 
holds for any $n \in \Z_{\ge 0}$.

\medskip

\noi
(ii) 
From \eqref{hkdv},  we see that \eqref{hkdv1b} holds for $n=0,1$. 
Fix  $n \ge 2$. Since $h^\KDV_n$ is at least linear in $v$,  
it follows from \eqref{hkdv} that 
\begin{align*}
L^\KDV_n = \dx L^\KDV_{n-1},
\end{align*}

\noi
which yields \eqref{hkdv1b} by induction. 
The identity \eqref{hkdv1a} follows from \eqref{hkdv1b} and the mean-zero assumption on $v$
(which is needed for $n = 0$).

Let $n\in\N$. From  \eqref{hkdv} and \eqref{hkdv1b}, we have 
\begin{align*}
\int_\T Q^\KDV_{2n} dx & = \sum_{n_{12}=2n-2} \int_\T L_{n_1}^\KDV L_{n_2}^\KDV dx 
 = \sum_{n_1=0}^{2n-2} (-1)^{n_1} \int_\T v \dx^{2n-2} v dx \\
& = (-1)^{n-1} \|v\|^2_{\dot{H}^{n-1}},
\end{align*}

\noi
which yields \eqref{hkdv1aa}.
\end{proof}

\subsection{On the shallow-water microscopic conservation laws}
\label{SUBSEC:B2}

In this subsection, 
by taking linear combinations of $h_n$ defined in~\eqref{h1}, 
we construct  new
microscopic conservation laws
$\wt h_{n}$ (see~\eqref{ht0}).
In the next subsection, we 
use these new microscopic conservation laws
$\wt h_{n}$ 
to
construct the shallow-water conservation laws $\wt{E}^\dl_{\frac k 2}(v)$ in~\eqref{DEs}. 
The introduction of 
the new microscopic conservation laws
$\wt h_{n}$ 
 allows us to guarantee that 
\begin{itemize}
\item[(i)] all the quadratic terms in  $\wt{E}^\dl_{\frac k 2}(v)$
have exactly $k$ derivatives with \textit{positive} coefficients 
(Proposition \ref{PROP:Scons1}; see also Lemma \ref{LEM:ht2}), 

\smallskip

\item[(ii)]
the shallow-water conservation laws $\wt{E}^\dl_{\frac k 2}(v)$ have non-trivial limits as $\dl\to0$
(Proposition~\ref{PROP:Scons2}).

\end{itemize}

\smallskip

\noi
While the point (i) already appeared in the deep-water regime, 
the point (ii)
 is a new difficulty specific to  the shallow-water regime.
Indeed, in view of Lemma \ref{LEM:kdv1}\,(i), 
if we simply use (the real part of) $\int_\T h_n dx$ to define a shallow-water conservation law, 
 then we would get a trivial shallow-water limit
 when $n$ is odd in the sense that 
 $\int_\T h_{2n+1} dx$ converges to $0$ as $\dl \to 0$.
 This in particular causes an issue
 in pursuing our statistical study
 since the corresponding \GGM~
 would not have any meaningful limit.
As we see in the next subsection, 
the new microscopic conservation laws $\wt h_n$
allow us to construct
a full set of  shallow-water conservation laws 
 with non-trivial shallow-water limits
 as claimed in Theorem~\ref{THM:2}.

We first state a lemma on 
the structure of the linear part of 
the microscopic conservation laws
$h_n$.

\begin{lemma}
\label{LEM:h1}
Let $0<\dl<\infty$. Given $n\in\Z_{\ge0}$, let ${L}_n = {L}^\dl_n$ be the linear \textup{(}in $v$\textup{)} part of $h_n = h^\dl_n$ defined in \eqref{h1}.
Then, we have
\begin{equation}
\label{h4}
{L}_n =  -(\wt{\L}_0)^{n} v
\end{equation}
for any $n\in\Z_{\ge0}$, 
where $\wt{\L}_0$ is as in \eqref{BTX1aa}. In particular, under the current mean-zero assumption on $v$, we have
\begin{equation}
\label{h5}
\int_\T {L}_n dx = 0
\end{equation}
for any $n\in\Z_{\ge0}$.
\end{lemma}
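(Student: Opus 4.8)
The plan is to follow closely the proof of Lemma~\ref{LEM:chi1} in the deep-water regime, proceeding by induction on $n\in\Z_{\ge0}$. First I would record the base cases directly from the recursion \eqref{h1}: since $h_0=-v$ we have $L_0=-v=-(\wt{\L}_0)^0 v$, and since $h_1=-i\dl v^2-\wt{\L}_0 v$, its linear part is $L_1=-\wt{\L}_0 v=-(\wt{\L}_0)^1 v$, so \eqref{h4} holds for $n=0,1$.

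Next, as an auxiliary observation, I would verify by induction that every $h_n$, $n\in\Z_{\ge0}$, is at least linear in $v$ (that is, contains no constant term): this is clear for $n=0,1$, and for $n\ge2$ each product $h_{n_1}\cdots h_{n_j}$ with $j\ge2$ appearing in the two sums on the right-hand side of \eqref{h1} is at least quadratic in $v$ by the inductive hypothesis, while $\wt{\L}_0 h_{n-1}$ is at least linear. Consequently, for $n\ge2$, the two sums over $j\ge2$ in \eqref{h1} contribute only terms of degree $\ge2$ in $v$, so the linear part of $h_n$ comes entirely from the term $\wt{\L}_0 h_{n-1}$; in other words,
\[
L_n=\wt{\L}_0 L_{n-1}, \qquad n\ge 2.
\]
Combined with $L_1=-\wt{\L}_0 v$, induction yields $L_n=-(\wt{\L}_0)^n v$ for all $n\in\Z_{\ge0}$, which is \eqref{h4}.

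Finally, for \eqref{h5}: when $n=0$, $\int_\T L_0\,dx=-\int_\T v\,dx=0$ by the current mean-zero assumption on $v$; when $n\ge1$, using \eqref{BTX1aa} we may write $(\wt{\L}_0)^n v=\dx\big[(1+i\dl\Gd)(\wt{\L}_0)^{n-1}v\big]$, which is the $x$-derivative of a smooth periodic function and therefore has zero mean on $\T$. This completes the argument.

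I do not expect a genuine obstacle here, as the statement is the exact shallow-water analogue of Lemma~\ref{LEM:chi1}. The only mildly delicate point — and the one I would be careful to state explicitly — is the bookkeeping that the nonlinear sums in \eqref{h1} genuinely range over $j\ge2$, so they cannot contribute to the linear part, together with the (routine) check that each $h_k$ has no constant term, which is what makes those products at least quadratic.
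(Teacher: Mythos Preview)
Your proposal is correct and follows essentially the same argument as the paper's proof: base cases from \eqref{h1}, the observation that each $h_n$ is at least linear in $v$ so the nonlinear sums in \eqref{h1} cannot contribute to $L_n$, the recursion $L_n=\wt{\L}_0 L_{n-1}$ for $n\ge2$, and then \eqref{h5} from \eqref{BTX1aa} together with the mean-zero assumption on $v$ (the latter being needed only for $n=0$). Your write-up is slightly more explicit than the paper's in spelling out why the $j\ge2$ sums contribute no linear terms, but the approach is identical.
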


\begin{proof}

In view of \eqref{h1}, 
the claims \eqref{h4} and \eqref{h5}
obviously hold when $n=0,1$.
Noting that  each monomial in $h_n$ is at least linear in $v$, 
we see that 
the linear contribution 
on the right-hand side of the third equation in 
\eqref{h1} comes from the last term
$\wt{\L}_0 h_{n-1}$, 
and thus we have 
\begin{align*}
{L}_n = \wt{\L}_0 {L}_{n-1}
\end{align*}
for any $n\ge 2$, which yields \eqref{h4} by induction. The second claim \eqref{h5} follows from \eqref{h4} with 
the mean-zero assumption on $v$
(which is needed for $n = 0$)
 and 
\eqref{BTX1aa}.
\end{proof}

We now introduce
 new microscopic conservation laws $\wt{h}_n = \wt{h}^\dl_n$,  $n\in\N$, 
 by setting
\begin{equation}
\label{ht0}
\wt h_{n}  = \frac{(-i\dl)^{\pf(n)}}{\dl^2} \sum_{j=1}^{n} \frac{1}{(i\dl)^{n-j}} h_j , 
\end{equation}

\noi
where $\pf(n)$ denotes the parity of an integer $n$ defined by 
\begin{align}
\pf(n) = 
\begin{cases}
0, & \text{if $n$ is even},\\
1, & \text{if $n$ is odd}.
\end{cases}
\label{par1}
\end{align}

\noi
As we see in Lemma \ref{LEM:ht2} below, 
the quadratic parts of (the integrals of) these new 
microscopic conservation laws $\wt{h}_n$
have simple structures.
In Proposition \ref{PROP:Scons1}, 
we use  $\wt{h}_n$
to construct the shallow-water conservation laws
$\wt E_\frac k2^\dl(v)$.

The next lemma provides a more convenient form  for $\wt{h}_n$ 
which we will use in the subsequent analysis.

\begin{lemma}\label{LEM:ht2a}
Let $0<\dl<\infty$. Given $n\in\N$, the  new microscopic conservation law $\wt{h}_{n}$
 defined in \eqref{ht0} can be written as follows\textup{:}
\begin{equation}
\label{ht1}
\begin{aligned}
\wt{h}_{n} & = - \frac{(-i\dl)^{\pf(n)}}{2i\dl^3} \sum_{j=2}^{n+1} \frac{(2i\dl)^j}{j!} 
\sum_{\substack{n_1, \ldots, n_j \in \Z_{\ge 0} \\ n_{1\cdots j} = n+1 - j}} h_{n_1} \ldots h_{n_j}\\ 
& \quad + 
\frac{(-i\dl)^{\pf(n)}}{\dl^2}
\sum_{j=1}^{n} \frac{1}{(i\dl)^{n-j}} \wt{\L}_0 h_{j-1}, 
\end{aligned}
\end{equation}

\noi
where  $h_j$,  $\wt{\L}_0$, and $\pf(n)$ are as in 
 \eqref{h1}, 
\eqref{BTX1aa}, and \eqref{par1}, respectively.
\end{lemma}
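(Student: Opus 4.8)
The plan is to prove \eqref{ht1} by a direct computation starting from the defining formula \eqref{ht0} for $\wt h_n$, substituting the recursive relation \eqref{h1} for each $h_j$ appearing in the sum, and reorganizing the resulting double sum. First I would write
\[
\wt h_n = \frac{(-i\dl)^{\pf(n)}}{\dl^2}\sum_{j=1}^n \frac{1}{(i\dl)^{n-j}} h_j,
\]
and treat the $j=1$ term separately (using $h_1 = -i\dl h_0^2 - \wt\L_0 h_0$, which matches the $j=1$ contribution of each of the two sums on the right-hand side of \eqref{ht1}, noting that the $j=2$ term of the first sum in \eqref{ht1} produces exactly the quadratic term $h_0^2$) and, for $2\le j\le n$, insert the third line of \eqref{h1} for $h_j$. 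The term $\wt\L_0 h_{j-1}$ coming from \eqref{h1} reproduces, after shifting the summation index, the second sum on the right-hand side of \eqref{ht1} — here one must be careful that the $j=1$ case and the endpoint $j=n$ are accounted for consistently, and that the prefactors $(i\dl)^{-(n-j)}$ line up correctly under the index shift.

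The main work is then to show that the two polynomial-in-$h$ sums coming from the third line of \eqref{h1} — one with prefactor $-\frac{1}{2\dl^2}\frac{(2i\dl)^\ell}{\ell!}$ over partitions summing to $j-\ell$, and one with prefactor $\frac{i}{2\dl}\frac{(2i\dl)^\ell}{\ell!}$ over partitions summing to $j+1-\ell$ — telescope when summed against $\frac{1}{(i\dl)^{n-j}}$ over $j$. The key algebraic point is that in the combination $\sum_{j=2}^n \frac{1}{(i\dl)^{n-j}}\big(\text{(first sum at level }j\text{)} + \text{(second sum at level }j\text{)}\big)$, the ``second sum at level $j$'' and the ``first sum at level $j+1$'' have matching partition constraints (both involve partitions summing to $j+1-\ell$), and after accounting for the powers of $i\dl$ they combine: the second-sum contribution at level $j$ carries $\frac{i}{2\dl}(2i\dl)^\ell (i\dl)^{-(n-j)}$ while the first-sum contribution at level $j+1$ carries $-\frac{1}{2\dl^2}(2i\dl)^\ell (i\dl)^{-(n-j-1)}$, and since $-\frac{1}{2\dl^2}(i\dl) = -\frac{i}{2\dl} = -\big(\frac{i}{2\dl}\big)\cdot\frac{(i\dl)}{(i\dl)}$... one checks these have \emph{opposite} signs and equal magnitude, so almost everything cancels, leaving only the boundary terms: the first sum at level $j=n+1$ (which has no matching second-sum partner) and possibly a low-level leftover. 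The surviving first-sum-at-level-$n+1$ term, after multiplying by the overall $\frac{(-i\dl)^{\pf(n)}}{\dl^2}$ and the residual power of $i\dl$, should be exactly $-\frac{(-i\dl)^{\pf(n)}}{2i\dl^3}\sum_{j=2}^{n+1}\frac{(2i\dl)^j}{j!}\sum_{n_{1\cdots j}=n+1-j} h_{n_1}\cdots h_{n_j}$, which is the first term on the right-hand side of \eqref{ht1}.

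I expect the main obstacle to be bookkeeping: tracking the powers of $i\dl$ and $(-i\dl)^{\pf(n)}$ correctly through the index shift, and verifying that the telescoping leaves precisely the stated boundary term with the correct constant $-\frac{1}{2i\dl^3}$ (equivalently $\frac{i}{2\dl^3}$). A secondary subtlety is the role of the parity factor $(-i\dl)^{\pf(n)}$: it is a fixed overall constant depending only on $n$, so it simply factors through the whole computation and plays no role in the telescoping — but one should state this explicitly to avoid confusion, since it is precisely this factor that later makes the leading coefficients real and positive (as needed in \eqref{E5a}). Once the two sums are shown to match term by term, the proof is complete; this is a purely algebraic verification with no analytic input, so the only risk is an arithmetic slip, which I would guard against by checking the identity by hand for small $n$ (say $n=1,2,3$) against the explicit formulas in \eqref{h1}.
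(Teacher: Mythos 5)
Your plan is the same route as the paper's: for $j\ge 2$ substitute the recursion \eqref{h1} into $\sum_{j=1}^n (i\dl)^{-(n-j)} h_j$, absorb the prefactor, observe that the two polynomial-in-$h$ sums telescope, and collect the $\wt{\L}_0 h_{j-1}$ contributions. The pairing you identify (the ``second sum at level $j$'' against the ``first sum at level $j+1$'', both over partitions with $n_{1\cdots\ell}=j+1-\ell$) and your verification that the coefficients are exact negatives, $-\tfrac{1}{2\dl^2}\cdot\tfrac{1}{(i\dl)^{n-j-1}} = -\tfrac{i}{2\dl}\cdot\tfrac{1}{(i\dl)^{n-j}}$, is precisely the crux and is correct.

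Two bookkeeping slips would derail the execution as written. First, from \eqref{h1} one has $h_1 = -i\dl\,h_0^2 + \wt{\L}_0 h_0$, not $-i\dl\,h_0^2 - \wt{\L}_0 h_0$: since $h_0=-v$, rewriting $h_1 = -i\dl v^2 - \wt{\L}_0 v$ in terms of $h_0$ flips the sign on the linear term, and with your sign the $j=1$ term would not match the $j=1$ contribution of the second sum in \eqref{ht1}. Second, the claim that ``the $j=2$ term of the first sum in \eqref{ht1} produces exactly the quadratic term $h_0^2$'' holds only for $n=1$; for $n\ge2$ the constraint there is $n_1+n_2=n-1>0$, so $h_0^2$ never appears in the target. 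What actually happens is that the telescoping leaves \emph{two} unpaired boundary terms: the first-sum piece at level $j=2$ (whose partition constraint is $n_1+n_2=0$, so it collapses to $(i\dl)^{-(n-2)}\,h_0^2$ once one evaluates $-\tfrac{1}{2\dl^2}\cdot\tfrac{(2i\dl)^2}{2}=1$) and the second-sum piece at level $j=n$ (which, after the index shift, is the first sum on the right-hand side of \eqref{ht1}). The $h_0^2$-part of the $j=1$ term, namely $-(i\dl)^{-(n-2)}h_0^2$ with the correct sign, cancels against that low-level boundary term — not against anything visible in \eqref{ht1}. With these two corrections your computation reproduces the paper's identity \eqref{ht1aa}.
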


\begin{proof}
 From \eqref{ht0} and \eqref{h1}, we have
\begin{align}
\label{ht1a}
\wt{h}_{1} = \frac{1}{i\dl} h_1  = - h_0^2 + \frac{1}{i\dl} \wt{\L}_0 h_0,
\end{align}

\noi
 which agrees with \eqref{ht1}.

Let $n \ge 2$.
From \eqref{h1}, we have
\begin{equation}
\label{ht1aa}
\begin{aligned}
& \sum_{j=1}^n \frac{1}{(i\dl)^{n-j}} h_j \\
& \ \  =   \frac{1}{(i\dl)^{n-1}} h_1  + \frac{1}{2} \sum_{j=2}^{n} \frac{1}{(i\dl)^{n+2-j}} \sum_{k=2}^j \frac{(2i\dl)^k}{k!} \sum_{\substack{n_1, \ldots, n_k\in \Z_{\ge 0} \\ n_{1\cdots k} = j-k}} h_{n_1} \cdots h_{n_k} \\
& \ \ \quad - \frac{1}{2}  \sum_{j=3}^{n+1} \frac{1}{(i\dl)^{n+2-j}} \sum_{k=2}^{j} \frac{(2i\dl)^k}{k!} \sum_{\substack{n_1, \ldots, n_k=0 \\ n_{1\cdots k} = j-k}}^\infty h_{n_1} \cdots h_{n_k} + \sum_{j=2}^n \frac{1}{(i\dl)^{n-j}} \wt{\L}_0 h_{j-1} \\
&\ \  = - \frac{1}{2i\dl} \sum_{j=2}^{n+1} \frac{(2i\dl)^j}{j!} 
 \sum_{\substack{n_1, \ldots, n_j\in \Z_{\ge 0} \\ n_{1\cdots j} = n+1-j}} h_{n_1} \cdots h_{n_j} + \sum_{j=1}^n \frac{1}{(i\dl)^{n-j}} \wt{\L}_0 h_{j-1},
\end{aligned}
\end{equation}

\noi
where we used \eqref{ht1a} at the last equality. 
Then, \eqref{ht1} follows from \eqref{ht0} and \eqref{ht1aa}.
\end{proof}

The following lemma 
shows that  the quadratic part of the macroscopic conservation law $\int_\T \wt{h}_n dx$
has a particularly simple structure, involving only the $\dot H^{\frac{n-1} 2}$-norm.

\begin{lemma}
\label{LEM:ht2}

Let $0<\dl<\infty$. Given $n\in\N$, let $\wt{Q}_n = \wt{Q}_n^\dl$ be the quadratic \textup{(}in $v$\textup{)} part of $\wt{h}_n = \wt{h}_n^\dl$ defined in \eqref{ht0}.
Then, we have
\begin{equation}
\label{hquad1}
 \int \wt{Q}_{n} dx  
 = (-1)^{\frac{n-2-\pf(n)}{2}} \sum_{\substack{\l=0\\ \pf(\l) = \pf(n-1)}}^{n-1} \binom{n}{\l} 
 \dl^{\l - 1 + \pf(n)}    \| \Gd^{\frac \l 2 } v \|^2_{\dot{H}^{\frac{n-1}2}}, \\
\end{equation}

\noi
where 
$\pf(n)$ is as in \eqref{par1}
and  we used the convention \eqref{odd2}
when $n$ is even and $\l$ is odd.

\end{lemma}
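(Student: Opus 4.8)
The plan is to prove \eqref{hquad1} directly from the explicit form of $\wt h_n$ in Lemma \ref{LEM:ht2a}, by extracting the quadratic part and then recognizing the resulting sum as a binomial expansion of a power of $\wt{\mathcal L}_0 = (1 + i\dl\Gd)\dx$. First I would note that the quadratic contribution to $\wt h_n$ comes from two sources in \eqref{ht1}: (a) the $j = 2$ term of the first sum, which is $-\frac{(-i\dl)^{\pf(n)}}{2i\dl^3}\cdot\frac{(2i\dl)^2}{2}\sum_{n_1 + n_2 = n-1} h_{n_1} h_{n_2}$, where each $h_{n_i}$ is replaced by its linear part $L_{n_i}$, and (b) the second sum $\frac{(-i\dl)^{\pf(n)}}{\dl^2}\sum_{j=1}^n (i\dl)^{-(n-j)}\wt{\mathcal L}_0 h_{j-1}$, where each $h_{j-1}$ is replaced by its quadratic part $Q_{j-1}$. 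This immediately gives a recursion of the same flavor as \eqref{quad2} in the deep-water case: $\wt Q_n$ equals a ``seed'' quadratic term (from source (a)) plus a $\wt{\mathcal L}_0$-twisted sum of the lower $\wt Q_{j-1}$'s. The cleanest route, though, is probably to avoid the recursion and instead telescope: since $\wt h_n = \frac{(-i\dl)^{\pf(n)}}{\dl^2}\sum_{j=1}^n (i\dl)^{-(n-j)} h_j$, its quadratic part is $\wt Q_n = \frac{(-i\dl)^{\pf(n)}}{\dl^2}\sum_{j=1}^n (i\dl)^{-(n-j)} Q^{(h)}_j$, where $Q^{(h)}_j$ is the quadratic part of $h_j$; so I would first compute $\int_\T Q^{(h)}_j\,dx$ and then sum.

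To compute $\int_\T Q^{(h)}_j\,dx$, I would use Lemma \ref{LEM:h1}: the linear part of $h_j$ is $L_j = -(\wt{\mathcal L}_0)^j v$. From \eqref{h1}, the quadratic part $Q^{(h)}_j$ satisfies a recursion whose inhomogeneous term is (from the $\frac{i}{2\dl}\cdot\frac{(2i\dl)^2}{2}$ piece with $j$-index $= 2$) a constant multiple of $\sum_{n_1 + n_2 = j-1} L_{n_1} L_{n_2}$, plus $\wt{\mathcal L}_0 Q^{(h)}_{j-1}$; unwinding this recursion and integrating, using the anti-self-adjointness of $\Gd$ so that $\int_\T v\, (\wt{\mathcal L}_0^* )^{a}(\wt{\mathcal L}_0)^{b} v\,dx$ picks out only even total powers of $i\dx$ and of $\Gd$, one gets $\int_\T Q^{(h)}_j\,dx$ as an explicit sum over $m$ and even $\ell$ of $\dl^{\text{(something)}}\|\Gd^{(m-\ell)/2} v\|_{\dot H^{m/2}}^2$ with binomial coefficients — exactly parallel to Lemma \ref{LEM:chi2}, and with the same combinatorial identities \eqref{aux1}–\eqref{aux2} doing the bookkeeping. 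Then summing $\frac{(-i\dl)^{\pf(n)}}{\dl^2}\sum_j (i\dl)^{-(n-j)}\int_\T Q^{(h)}_j$ and collapsing the sum over $j$ via \eqref{aux2} should produce, after simplification, the single-regularity expression on the right-hand side of \eqref{hquad1}. The key algebraic miracle to verify is that all the cross-terms cancel so that only the $\dot H^{(n-1)/2}$-norm survives, with the stated sign $(-1)^{(n-2-\pf(n))/2}$, the parity restriction $\pf(\ell) = \pf(n-1)$, and the power $\dl^{\ell - 1 + \pf(n)}$.

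Alternatively — and this may be the slicker presentation — I would try to recognize the answer structurally: write $\int_\T \wt Q_n\,dx$ as $\frac{(-i\dl)^{\pf(n)}}{\dl^2}\int_\T v\cdot \mathcal M_n v\,dx$ for an explicit Fourier multiplier $\mathcal M_n$ built from $\sum_{j} (i\dl)^{-(n-j)}$ times the quadratic-form multiplier of $h_j$. Using $L_j = -\wt{\mathcal L}_0^j v$ and the recursion for $Q^{(h)}_j$, the multiplier $\mathcal M_n$ should telescope into something like $c_n\,(\text{Re}\,\wt{\mathcal L}_0)^{\,?}\cdot(\text{Im part})^{\,?}$; more concretely, on the Fourier side $\wt{\mathcal L}_0$ acts by $in(1 - \dl \ft\Gdl(n)/\dl) = \text{(real)} + i\,n$... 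I would expand $(1 + i\dl\Gd)\dx$ acting at frequency $n$ as $in + i\dl \cdot in\ft\Gd(n) = in - \dl \Ldl(n)$ (using $in\ft\Gd(n) = \Ldl(n)$ from \eqref{sym2}, noting $\Ldl$ is real and even), so $\wt{\mathcal L}_0$ has symbol $-\dl\Ldl(n) + in$. Then $\sum_{n_1+n_2 = j-1} L_{n_1}L_{n_2}$ corresponds to $\sum_{n_1+n_2=j-1}(-\dl\Ldl + in)^{n_1}(-\dl\Ldl - in)^{n_2}$ after integration forces conjugation, and the binomial sum $\sum_\ell \binom{n}{\ell}\dl^{\ell}(\Ldl)^{\ell}$ with the parity constraint is exactly what you get from expanding $\big((-\dl\Ldl)^2 + n^2\big)^{\lfloor\cdot\rfloor}$ or a single power $(\dl\Ldl + \text{stuff})$. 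The main obstacle I expect is the careful combinatorial accounting in the telescoping sum over $j$ in \eqref{ht0}: keeping track of the powers of $i\dl$, the parity shifts coming from $\pf(n)$ versus $\pf(j)$, and confirming the sign $(-1)^{(n-2-\pf(n))/2}$ emerges correctly. This is entirely analogous to the deep-water computation in Lemma \ref{LEM:chi2} (which uses \eqref{aux1}, \eqref{aux2}, \eqref{aux3}), so I would model the proof on that one, and the identity \eqref{aux2} applied to the $j$-sum is what finally produces the single binomial coefficient $\binom{n}{\ell}$.
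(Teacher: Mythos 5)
Your telescoping route is correct in principle --- it does collapse to the paper's expression, as one can check --- but it misses the shortcut that makes the paper's computation one-shot, and you have misjudged the internal structure of $\int_\T Q^{(h)}_j\,dx$. The key observation is that $\wt{\L}_0 = (1+i\dl\Gd)\dx$ is a pure-derivative operator, so $\int_\T \wt{\L}_0 f\,dx = 0$ for any $f$. Upon integrating \eqref{ht1} over $\T$, the entire second sum therefore vanishes, and the quadratic part of $\int_\T \wt{h}_n\,dx$ comes \emph{only} from the $j=2$ term of the first sum with each $h_{n_i}$ replaced by its linear part $L_{n_i}$; this gives $\int_\T \wt{Q}_n\,dx = \frac{(-i\dl)^{\pf(n)}}{i\dl}\sum_{n_{12}=n-1}\int_\T L_{n_1}L_{n_2}\,dx$ in one step, with no telescoping or unwinding. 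The same vanishing also simplifies your route: $\int_\T\wt{\L}_0 Q^{(h)}_{j-1}\,dx = 0$, so $\int_\T Q^{(h)}_j\,dx$ is just the two seed terms $a_{j-2} - i\dl\,a_{j-1}$ (with $a_m := \sum_{n_{12}=m}\int_\T L_{n_1}L_{n_2}\,dx$), not a multi-regularity expression ``exactly parallel to Lemma~\ref{LEM:chi2}''. The deep-water recursion \eqref{quad2} unwinds into a full sum over regularity levels precisely because $\L_0 = (\Gdl - i)\dx + \dl^{-1}$ contains the zeroth-order term $\dl^{-1}$, which survives integration; that obstruction is absent here.

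Two smaller corrections. The combinatorial identity that ultimately produces $\binom{n}{\l}$ in \eqref{hquad1} is the Rothe--Hagen identity \eqref{comb3}, applied to the Vandermonde-type double sum from expanding $(1-i\dl\Gd)^{n_1}(1+i\dl\Gd)^{n_2}$, rather than \eqref{aux1}--\eqref{aux2} from the deep-water case. And your symbol calculation has a sign slip: $(1+i\dl\ft\Gd(n))(in) = in + i\dl\Ldl(n)$, which is purely imaginary, not $in - \dl\Ldl(n)$; you have introduced a spurious factor of $i$.
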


Before proceeding to a proof of Lemma~\ref{LEM:ht2}, 
we first recall the so-called  Rothe-Hagen identity; see \cite[(5.63) on p.\,202]{GKP}. For integers $m,n,p$ with $p\ge0$, we have
\begin{equation}
\label{comb3}
\sum_{\l=0}^p \binom{m -1 + \l}{\l} \binom{ n + p  - 1-\l}{p-\l} = \binom{m + n + p - 1}{p},
\end{equation}

\noi
where we use 
 the convention \eqref{aux3}.

\begin{proof}[Proof of Lemma~\ref{LEM:ht2}]

Let $n\in\N$.
From  Lemma \ref{LEM:ht2a} with \eqref{BTX1aa}, we have
\begin{equation}
\label{ht2a}
\int_\T \wt{Q}_{n} dx  = \frac{(-i\dl)^{\pf(n)}}{i \dl} 
\sum_{n_{12}=n -1} \int_\T L_{n_1} L_{n_2} dx, 
\end{equation}

\noi
where $L_n=L^\dl_n$ denotes the linear (in $v$) part of
the original microscopic conservation law $h_n$.
Proceeding as in \eqref{quad3}, \eqref{quad4}, and \eqref{quad5}
with  \eqref{ht2a}, Lemma \ref{LEM:h1},  \eqref{BTX1aa}, and integration by parts (along with the 
anti self-adjointness of $\Gd$), we have
\begin{align}
\begin{split}
\int_\T \wt{Q}_n dx 
& =\frac{(-i\dl)^{\pf(n)}}{i \dl} 
 \sum_{n_{12}=n-1} (-1)^{n_1} \int_\T v (1 - i \dl \Gd )^{n_1} (1+i\dl\Gd)^{n_2} \dx^{n-1}v dx \\
& =\frac{(-i\dl)^{\pf(n)}}{i \dl}  \sum_{n_{12}= n-1}  \sum_{m_1=0}^{n_1} \sum_{m_2=0}^{n_2}  \binom{n_1}{m_1} \binom{n_2}{m_2} (-1)^{n_1+m_1} \\
& \hphantom{XXXXXXXXXXXXXX}
 \times (i\dl)^{m_{12}} \int_\T v \,\Gd^{m_{12}}\dx^{n-1} v dx   \\
& = 
(-1)^{\frac{n-2-\pf(n)}{2}} \sum_{\substack{\l=0\\ \pf(\l) = \pf(n-1)}}^{n-1}\al_{n, \l}\, \dl^{\l - 1 + \pf(n)}  
\| \Gd^{\frac \l 2} v \|^2_{\dot{H}^{\frac{n-1}{2}}} ,
\end{split}
\label{ht2aa}
\end{align}

\noi
%
where, in the last step,  we used the convention \eqref{odd2}
when $n$ is even and $\l$ is odd.
Here, 
 the coefficient $\al_{n, \l}$ is given by 
\begin{equation*}
\al_{n, \l} = \sum_{n_{12}=n-1} 
\sum_{\substack{0\le m_1\le n_1\\ 0 \le m_2 \le n_2\\ \l=m_{12}}} \binom{n_1}{m_1}\binom{n_2}{m_2} (-1)^{n_1+m_1} .
\end{equation*}

\noi
By a change of variables ($\wt n_1 = n_1 - m_1$)
and \eqref{comb3} (with $m=\wt{n}_1+1$, $\l = m_1$, $n =n-\wt{n}_1 -\l $, and $p=\l$), we have
\begin{align}
\begin{split}
\al_{n, \l} & = \sum_{\substack{0\le n_1 \le n-1\\ 0 \le m_1 \le n_1 \\ 0 \le \l - m_1 \le n-1- n_1}} \binom{n_1}{m_1} \binom{n-1-n_1}{\l-m_1}  (-1)^{n_1+m_1} \\
& = \sum_{0\le \wt{n}_1 \le n-1-\l} (-1)^{\wt{n}_1} 
\sum_{m_1=0}^\l \binom{\wt{n}_1+m_1}{m_1} \binom{n-1-\wt{n}_1 - m_1}{\l-m_1}  \\
& =\binom{n}{\l} \sum_{0\le \wt{n}_1 \le n-1-\l} (-1)^{\wt{n}_1} \\
& =  \binom{n}{\l},
\end{split}
\label{ht2aaa}
\end{align}

\noi
where we used the fact that, in view of \eqref{aux3},  the contribution 
after the second equality vanishes
when 
$\wt n_1 > n - 1 - \l$
and that $n - 1 - \l$ is even under the condition $\pf(\l) = \pf(n-1)$.
Therefore,
the identity
\eqref{hquad1} follows from \eqref{ht2aa}
and \eqref{ht2aaa}.
\end{proof}

We conclude this subsection by introducing
the {\it shallow-water rank}, which plays an important role 
in understanding the structure of the shallow-water conservation laws.
See \cite{M2, M3} for a similar concept 
in a simpler context of the KdV equation.

\begin{definition}\label{DEF:ord2}
\rm
Given a monomial $p(v)$,
define $\# v$, $\#\dx$, $\#\Gd$, and $\#\dl$
by
\begin{align*}
\# v &  = \# v(p(v)) = \text{homogeneity of $p(v)$ in $v$},\\
\#\dx &  = \#\dx(p(v))= \text{number of (explicit) appearance of $\dx$ in $p(v)$},\\
\#\Gd&  = \#\Gd(p(v))= \text{number of (explicit) appearance of $\Gd$ in $p(v)$},\\
\#\dl&  = \#\dl (p(v))= \text{number of (explicit) appearance of $\dl$ in $p(v)$}.
\end{align*}

\noi
We then define the {\it shallow-water rank} of a monomial $p(v)$, denoted by $\ord(p)$, by setting
\begin{align}
\ord (p)  = \# v + \frac12 \big(\# \dx + \#\Gd -  \# \dl\big).
\label{ord1b}
\end{align}

\noi
For simplicity, we refer to the shallow-water rank as the rank in the following.
If all monomials in a given polynomial $p(v)$ have the same  rank, say $n$,
then we say that the polynomial $p(v)$ (and its integral over $\T$) has rank $n$.

\end{definition}

From 
\eqref{h1} and  \eqref{ht0}, 
we see that 
 both $h_n$ and $\wt{h}_n$
 consist of monomials in $v$ with $\dx$, $\dl$, and $\Gd$.
The following lemma characterizes the rank of the microscopic conservation laws $h_n$ and $\wt{h}_n$ for the scaled ILW \eqref{sILW} and of the KdV microscopic conservation laws $h^\KDV_n$ defined in \eqref{hkdv}

\begin{lemma}
\label{LEM:h0}

\noi
{\rm(i)} Given any $n\in\Z_{\ge0}$, the shallow-water microscopic conservation law $h_n$ defined in \eqref{h1} has rank $1 + \frac12 n $.
Moreover, each monomial in $h_n$ satisfies 
\begin{equation}
\label{saux1a}
0\le \#\Gd \le \min(\#\dx, \#\dl),   \quad \# v + \#\dx \le n +1, 
\quad \text{and} \quad 
0 \le  \#\dl\le n.
\end{equation}

\noi{\rm(ii)} Given any $n\in\Z_{\ge0}$, the KdV microscopic conservation law $h^\KDV_n$ defined in \eqref{hkdv} has rank $1 + \frac12 n $ with $\#\dl=\#\Gd=0$.

\noi{\rm(iii)} Given any $n\in\N$, 
the new shallow-water microscopic conservation law $\wt{h}_{n}$ 
 defined in~\eqref{ht0} has  rank $2 + \frac 12 n - \frac 12 \pf(n)$, 
 where $\pf(n)$ is as  in \eqref{par1}.
Moreover, each monomial in $\int_\T \wt h_ndx$ satisfies 
\begin{equation}
\begin{split}
& 0 \le \#\Gd\le \min(\#\dx, \#\dl+ 1 -  \pf(n)),  \quad   \# v + \#\dx \le n+1, 
\quad \text{and}\\ \quad
& 0 \le  \#\dl\le n + \pf(n) - 2.
\end{split}
\label{DEX1}
\end{equation}

\end{lemma}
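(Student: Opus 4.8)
The plan is to establish all three parts by induction on $n$, tracking how each elementary operation affects the shallow-water rank of Definition~\ref{DEF:ord2} and the counters $\#v,\#\dx,\#\Gd,\#\dl$. The bookkeeping rules are: applying $\dx$ to a monomial and expanding by the product rule raises $\#\dx$ by exactly $1$ (hence the rank by $\tfrac12$); applying $\Gd$ raises $\#\Gd$ by $1$ (hence the rank by $\tfrac12$); multiplying by $\dl$ raises $\#\dl$ by $1$ and lowers the rank by $\tfrac12$; and taking a product of monomials adds the four counters and adds the ranks. In particular, $\wt{\L}_0=(1+i\dl\Gd)\dx$ raises the rank by $\tfrac12$, and multiplying a rank-$r$ quantity by $\dl^{m}$ yields a rank-$(r-\tfrac m2)$ quantity.

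For part~(i), I would check the base cases $h_0=-v$ and $h_1=-i\dl v^2-\dx v-i\dl\Gd\dx v$ directly, and then run the inductive step on the formula for $h_n$, $n\ge2$, in \eqref{h1}: each monomial there is $\dl^{j-2}h_{n_1}\cdots h_{n_j}$ with $n_{1\cdots j}=n-j$, or $\dl^{j-1}h_{n_1}\cdots h_{n_j}$ with $n_{1\cdots j}=n+1-j$, or a monomial of $\wt{\L}_0 h_{n-1}$, and in all three cases the rank computes to $1+\tfrac n2$. For \eqref{saux1a}, writing a product monomial as $\dl^{a}\prod_i p_i$ with $p_i$ a monomial of $h_{n_i}$ and $a\ge0$, the inductive bounds $\#\Gd(p_i)\le\min(\#\dx(p_i),\#\dl(p_i))$, $\#v(p_i)+\#\dx(p_i)\le n_i+1$, $\#\dl(p_i)\le n_i$ sum to $\#\Gd\le\#\dx$, $\#\Gd\le\sum_i\#\dl(p_i)\le a+\sum_i\#\dl(p_i)=\#\dl$, $\#v+\#\dx\le n_{1\cdots j}+j\le n+1$, $\#\dl\le a+n_{1\cdots j}\le n$; and $\wt{\L}_0 h_{n-1}$ only raises $\#\dx$ by $1$ (and, in its $i\dl\Gd\dx$ term, also $\#\Gd$ and $\#\dl$ by $1$), which preserves all three inequalities because monomials of $h_{n-1}$ satisfy $\#v+\#\dx\le n$. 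Part~(ii) follows either by the same induction applied to \eqref{hkdv} (where no $\dl$ or $\Gd$ occurs, so $\#\dl=\#\Gd=0$), or by observing that $h^\KDV_m$ is exactly the $\#\dl=0$ part of $h^\dl_m$ (compare \eqref{h1} at $\#\dl=0$ with \eqref{hkdv}), so that (ii) follows from (i) with $\#\dl=0$ forcing $\#\Gd=0$.

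For part~(iii), the rank claim is immediate from part~(i) via \eqref{ht0}: $\wt h_n$ is a linear combination of $\dl^{\,j-n+\pf(n)-2}h_j$, $1\le j\le n$, each of rank $(1+\tfrac j2)-\tfrac12(j-n+\pf(n)-2)=2+\tfrac n2-\tfrac{\pf(n)}2$. For \eqref{DEX1} I would work from the reorganized form \eqref{ht1} of Lemma~\ref{LEM:ht2a}: its second sum integrates to zero over $\T$, since each $\wt{\L}_0 h_{j-1}=\dx h_{j-1}+i\dl\Gd\dx h_{j-1}$ has zero mean on $\T$, so $\int_\T\wt h_n\,dx$ is a linear combination of terms $\dl^{\,\pf(n)-3+j}\int_\T h_{n_1}\cdots h_{n_j}\,dx$ with $2\le j\le n+1$ and $n_{1\cdots j}=n+1-j$. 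For a monomial $\dl^{\,\pf(n)-3+j}\prod_i p_i$ arising from such a product, part~(i) gives $\#\Gd\le\#\dx$, $\#v+\#\dx\le n_{1\cdots j}+j=n+1$, and $\#\dl=\pf(n)-3+j+\sum_i\#\dl(p_i)$; since $\sum_i\#\dl(p_i)\le n_{1\cdots j}=n+1-j$ this yields $\#\dl\le n+\pf(n)-2$, and since $j\ge2$ and $\#\Gd\le\sum_i\#\dl(p_i)$ we get $\#\dl\ge\pf(n)-1+\#\Gd$, i.e.\ $\#\Gd\le\#\dl+1-\pf(n)$ and (as $\#\Gd\ge0$) $\#\dl\ge\pf(n)-1$. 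Hence all of \eqref{DEX1} holds termwise \emph{except} that, when $n$ is even, the $j=2$ block can produce monomials with $\#\dl=-1$, namely those with $\sum_i\#\dl(p_i)=0$, i.e.\ where $p_1p_2$ is a monomial of $h^\KDV_{n_1}h^\KDV_{n_2}$. But these assemble, after integration, into $\sum_{n_{12}=n-1}\int_\T h^\KDV_{n_1}h^\KDV_{n_2}\,dx=\int_\T\big(h^\KDV_{n+1}-\dx h^\KDV_{n}\big)\,dx=\int_\T h^\KDV_{n+1}\,dx$ by \eqref{hkdv}, which vanishes since $n+1$ is odd (Lemma~\ref{LEM:kdv1}\,(i)); so this exceptional family cancels and \eqref{DEX1} follows.

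The main obstacle is precisely this last point in~(iii): \eqref{DEX1} is \emph{not} a termwise consequence of part~(i), because negative powers of $\dl$ genuinely appear in $\wt h_n$ as a differential polynomial, and one must recognize that the only monomials violating the $\dl$-lower bounds are exactly those killed, upon integration, by the vanishing of the odd-order KdV densities. Choosing to argue from \eqref{ht1} rather than \eqref{ht0} is what isolates this single cancellation (and disposes of the $\wt{\L}_0$-part for free); the rest of (i)--(iii) is a routine, if somewhat lengthy, induction in the counters $\#v,\#\dx,\#\Gd,\#\dl$.
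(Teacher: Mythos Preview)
Your proposal is correct and follows essentially the same approach as the paper: induction on the recursion \eqref{h1} for parts~(i) and~(ii), and for part~(iii) passing to the reorganized form \eqref{ht1} of Lemma~\ref{LEM:ht2a}, discarding the $\wt{\L}_0$-sum upon integration, and handling the exceptional $j=2$, $\#\dl=-1$ contribution (for even $n$) by identifying it with $\dl^{-1}\int_\T h^{\KDV}_{n+1}\,dx$, which vanishes by Lemma~\ref{LEM:kdv1}\,(i). The paper's write-up is terser in the inductive bookkeeping for \eqref{saux1a}, but the key cancellation you isolate is exactly the one the paper uses.
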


\begin{proof}
(i) 
From \eqref{h1} and \eqref{BTX1aa}
with \eqref{ord1b}, we see that 
 the first claim holds for $n=0,1$. 
 Fix an integer  $n\ge 2$ and  assume that the first claim holds for $h_k$ with $0\le k \le n-1$.
 Then,  we see that 
 the summand (with fixed $j = 2, \dots, n$) of 
 the first sum (in $j$) on the right-hand side  of \eqref{h1} has rank $j + \frac12 (n_1 + \cdots + n_j) - \frac12 (j-2) = 1 + \frac12 n$, where we used  $n_1+\cdots + n_j = n-j$.
Similarly,  the summand (with fixed $j = 2, \dots, n+1$) of 
 the second sum (in $j$) on the right-hand side  of~\eqref{h1} has rank 
  $j + \frac12 (n_1 + \cdots + n_j) - \frac12 (j-1) = 1 + \frac12 n$, 
   where we used
$n_1+ \cdots + n_j = n+1-j$.
In view of \eqref{BTX1aa}
with the inductive hypothesis, 
the last term on the right-hand side of  \eqref{h1} also has rank $1 + \frac12 n$. 
Therefore, by induction, we conclude that $h_n$ has rank $1 + \frac 12 n$.

The first bound in \eqref{saux1a} follows from the fact  that 
$\Gd$  comes with $\dl$ and $\dx$ in \eqref{h1} through $\wt{\L}_0$ defined in \eqref{BTX1aa}.
From \eqref{h1}, we see that the second and third bounds in 
\eqref{saux1a} hold when $n = 0, 1$.
For $n \ge 2$, the claim follows easily 
from  induction, using \eqref{h1} with \eqref{BTX1aa}.

\medskip

\noi
(ii)  While this result was shown  in \cite{M2}, 
we present a  proof for readers' convenience.
From~\eqref{hkdv}, 
we see that the claim holds for $n = 0, 1$.
 Fix an integer  $n\ge 2$ and  assume that the claim holds for $h_k^\KDV$ with $0\le k \le n-1$.
 Then,
 by the inductive hypothesis, we see that 
all monomials $h_{n_1}^\KDV h_{n_2}^\KDV$ (with $n_1 + n_2= n-2)$ and $\dx h_{n-1}^\KDV$ 
in \eqref{hkdv}
have rank $1 + \frac12 n$.
Therefore, the claim follows from  induction.

\medskip

\noi
(iii)
 From from \eqref{ht0}, \eqref{ord1b},  and Part (i), 
 we easily see that 
 $\wt{h}_{n}$  has  rank $2 + \frac 12 n - \frac 12 \pf(n)$.
From Lemma \ref{LEM:ht2a} with \eqref{BTX1aa}, we have 
\begin{equation}
\int_\T\wt{h}_{n}dx   = - \frac{(-i\dl)^{\pf(n)}}{2i\dl^3} \sum_{j=2}^{n+1} \frac{(2i\dl)^j}{j!} 
\sum_{\substack{n_1, \ldots, n_j \in \Z_{\ge 0} \\ n_{1\cdots j} = n+1 - j}}
\int_\T h_{n_1} \ldots h_{n_j} dx.
\label{DEX2}
\end{equation}

\noi
The  first two bounds in \eqref{DEX1} follow
from \eqref{DEX2} and \eqref{saux1a}.
The upper bound 
$  \#\dl\le n + \pf(n) - 2$
follows from \eqref{ht0}
and \eqref{saux1a}.

It remains to show $\#\dl \ge 0$.
When $n$ is odd, 
 $\#\dl \ge 0$
follows 
from \eqref{DEX2} and \eqref{saux1a}.
When $n$ is even,  
the contribution from $j \ge 3$ in~\eqref{DEX2} satisfies $\#\dl \ge 0$.
It remains to consider the contribution from $j = 2$ in \eqref{DEX2}.
In view of the third bound in \eqref{saux1a}, 
 it remains to consider 
\begin{equation}
\label{saux3}
\dl^{-1}\sum_{\substack{n_1, n_2 \in \Z_{\ge 0}\\n_{12}=n-1}} \int_\T h_{n_1, 0} h_{n_2, 0}  dx, 
\end{equation}

\noi
where $h_{n,0}=h_{n,0}^\dl$
denotes  the coefficient of $\dl^0$ in $h_n$.\footnote{Note that $h_{n,0}$ can still depend on $\dl$ through $\Gd$.} In fact, we show that \eqref{saux3} vanishes. 
From \eqref{h1} with \eqref{BTX1aa}, we obtain the following recurrence relation for $h_{n,0}$:
\begin{equation}
\label{hn0}
\begin{aligned}
h_{0,0} & = -v, \\
h_{1,0} & = - \dx v, \\
h_{n,0} & = \sum_{\substack{n_1, n_2 \in \Z_{\ge 0}\\n_{12}=n-2}} h_{n_1,0} h_{n_2,0} + \dx h_{n,0}.
\end{aligned}
\end{equation}
By comparing \eqref{hn0} and \eqref{hkdv}, we see that
\begin{equation}
\label{hn00}
h_{n,0} = h^\KDV_n
\end{equation}

\noi
for any $n\in\Z_{\ge 0}$. 
In particular, from \eqref{saux3}, \eqref{hn0}, and \eqref{hn00}, we have 
\begin{align*}
\eqref{saux3} = \dl^{-1} \int_\T h^\KDV_{n+1} dx
\end{align*}

\noi
which vanishes in view of 
Lemma \ref{LEM:kdv1}\,(i) since $n+1$ is odd.
\end{proof}

\subsection{2-to-1 collapse of the shallow-water conservation laws}
\label{SUBSEC:B3}

In this subsection,
we construct  shallow-water conservation laws and prove their convergence to the corresponding KdV conservation laws, 
thus establishing Theorem \ref{THM:2}.

As in Definition \ref{DEF:mono1}, 
we introduce the  classes $\wt{\Pc}_n(v)$ of (monic) monomials in $v$, adapted to the 
current shallow-water setting.

\begin{definition}\rm
\label{DEF:mono2}

Let $v\in C^\infty(\T)$ with $\int_\T  v dx =0$. Given $n\in\N$, we defined the classes $\wt{\Pc}_n(v)$ of monomials by setting
\begin{align*}
\wt{\Pc}_1 (v)&  = \Big\{ \Gd^\al \dx^{\b} v : \,   \al, \b \in \Z_{\ge0 } \Big\}, \\
\wt{\Pc}_2 (v)&  = \Big\{ \big[\Gd^{\al_1} \dx^{\be_1} v\big] \big[\Gd^{\al_2} \dx^{\be_2} v\big]: \,  \al_1,\al_2, \be_1 , \be_2 \in \Z_{\ge0} \Big\}, \\
\wt{\Pc}_n (v)&  = \bigg\{ \prod_{\l=1}^k \Gd^{\al_\l} \dx^{\be_\l} p_{j_\l}(v) : \, k\in\{2, \ldots, n\}, \ j_\l \in \N, \ j_{1\cdots k} =n, \\
& \hspace{4cm}   p_{j_\l}(v) \in \wt{\Pc}_{j_\l}(v), \  \al_\l, \be_\l \in\Z_{\ge0}\bigg\}.
\end{align*}

\noi
Given a monomial $p(v) \in \wt{\Pc}_n(v)$, we define its fundamental form $\wt{p}(v) \in \wt{\Pc}_n(v)$ which is obtained by dropping  all the $\Gd$ operators appearing in $p(v)$.
Moreover, given a   monomial $p(v)$ 
with the  fundamental form of the form:
\begin{equation*}
\wt{p}(v) = \prod_{j=1}^n \dx^{\be_j} v, 
\end{equation*}

\noi
we let $|p(u)|$ denote the maximum number of derivatives on each factor by setting
\begin{align*}
|p(v)| = \max_{j=1, \ldots, n}\be_j.
\end{align*}

\noi
Given a monomial $p(v)\in \wt \Pc_n(v)$, 
we also define   its  semi-fundamental form
$\cj p(v)$, 
obtained
 by dropping  all the  $\dl \Gd$  operators
 appearing in $p(v)$
 (but keeping $\Gd$ unpaired  with $\dl$).
We use  semi-fundamental forms
only  in Subsection \ref{SUBSEC:AC2}.

\end{definition}

We are now ready to define the shallow-water conservation laws $\wt{E}^\dl_{\frac k 2}(v)$ and 
prove their shallow-water convergence. 
Theorem~\ref{THM:2} follows from Propositions~\ref{PROP:Scons1} and~\ref{PROP:Scons2}.
Recall that we are working under the mean-zero assumption on $v$
throughout the paper.

\begin{proposition}
\label{PROP:Scons1}

{\rm(i)} Let $0<\dl<\infty$. 
Define $\wt{E}^\dl_{\frac k 2}(v)$, $k \in \Z_{\ge0}$,  by setting
\begin{equation}
\label{DEs}
\begin{aligned}
\wt{E}^\dl_0(v) & = - \frac12 \int_\T \wt{h}_1^\dl dx, \\
 \wt{E}^\dl_{\kk-\frac12}(v) & = (-1)^{\kk+1} \frac{3}{4\kk} \Re \int_\T \wt{h}^\dl_{2\kk} dx , \\
 \wt{E}^\dl_{\kk}(v) & = (-1)^{\kk+1} \frac12 \Re \int_\T \wt{h}^\dl_{2\kk+1} dx 
\end{aligned}
\end{equation}

\noi
for  $\kk\in\N$, where $ \wt{h}_n^\dl$ is the microscopic conservation law defined in \eqref{ht0}.
Then, $\wt{E}^\dl_{\frac k 2}(v)$ is conserved under the flow of the scaled ILW \eqref{sILW}. 
Moreover, for any $k \in \Z_{\ge 0}$,
$\wt{E}^\dl_{\frac k 2}(v)$ satisfies 
the following statements.

\smallskip

\noi
\textup{(i.a)} When $k = 2\kk-1$, $\kk \in \N$, is odd, 

\smallskip

\begin{enumerate}

\item[\rm(i.a.1)]
the odd-order shallow-water conservation law
 $\wt{E}^\dl_{\kk-\frac12}(v)$ has rank $\kk+2$ in the sense of Definition~\ref{DEF:ord2}, 
 where each  monomial in  $\wt{E}^\dl_{\kk-\frac12}(v)$ satisfies 
\begin{align}
\begin{split}
& 0 \le \#\Gd\le \min(\#\dx, \#\dl+1), \quad \# v + \#\dx \le 2\kk+1 ,   \quad \text{and} \\
& 0 \le  \#\dl\le 2\kk  - 2, 
\end{split}
\label{DEs1a}
\end{align}

\smallskip

\item[\rm(i.a.2)]
 the quadratic {\rm(}in $v${\rm)} part of  $\wt{E}^\dl_{\kk-\frac12}(v)$ is given by
\begin{equation}
\label{DEs2a}
\frac{3}{4\kk} \sum_{\substack{\l=1\\ \textup{odd}}}^{2\kk-1} \binom{2\kk}{\l} \dl^{\l-1} \|\Gd^{\frac\l2} v\|^2_{\dot{H}^{\kk-\frac12}} , 
\end{equation}

\noi
where  we used the convention \eqref{odd2}.

\end{enumerate}

\smallskip

\noi
\textup{(i.b)} When $k = 2\kk$, $\kk \in \Z_{\ge 0}$, is even, 

\smallskip
\begin{enumerate}

\item[\rm(i.b.1)]
the even-order shallow-water conservation law
 $\wt{E}^\dl_{\kk}(v)$ has rank $\kk+2$ in the sense of Definition~\ref{DEF:ord2}, 
 where each monomial in  $\wt{E}^\dl_{\kk}(v)$ satisfies
\begin{equation}
\label{DEs1b}
\begin{split}
& 0\le \#\Gd \le \min(\#\dx, \#\dl),  \quad \# v + \#\dx \le 2\kk + 2,
 \quad \text{and}\\
& 0 \le  \#\dl\le 2\kk, 
\end{split}
\end{equation}

\smallskip

\item[\rm(i.b.2)]
the quadratic {\rm(}in $v${\rm)} part of  $\wt{E}^\dl_{\kk}(v)$ is given by
\begin{equation}
\label{DEs2b}
\frac12 \sum_{\substack{\l=0\\\textup{even}}}^{2\kk} \binom{2\kk+1}{\l} \dl^\l \| \Gd^{\frac \l2} v \|^2_{\dot{H}^\kk}.
\end{equation}

\end{enumerate}

\noi
In particular, we have
\begin{equation}
\label{DEs3}
\wt{E}^\dl_0(v) = \frac12 \|v\|^2_{L^2},
\end{equation}

\noi
and, defining $\wt{a}_{k, \l}$, $k \in \N$,  by
\begin{equation}
\label{DEs3a}
\wt{a}_{2\kk-1, \l} =  \frac{3}{2\kk} \binom{2\kk}{\l} \qquad \text{and} \qquad
\wt{a}_{2\kk, \l} = \binom{2\kk+1}{\l},
\end{equation}

\noi
the conservation law $\wt{E}^\dl_{\frac k 2}(v)$ in \eqref{DEs} reduces to the form \eqref{E5}
with \eqref{E5a}.

\medskip

\noi
\textup{(ii)} Given $\kk \in\Z_{\ge0}$, we obtain the following conservation laws for KdV \eqref{kdv}:
\begin{equation}
\label{DEs4}
\wt E^\KDV_\kk(v) = (-1)^\kk \frac12 \int_\T h^\KDV_{2\kk+2}(v) dx,
\end{equation}
where $h^\KDV_n$ is the microscopic conservation law for KdV defined in \eqref{hkdv}. Moreover,

\smallskip
\begin{enumerate}
\item[\rm(ii.a)] 
$\wt E^\KDV_\kk(v)$ has rank $\kk+2$ with $\# \dl = \#\Gd=0$, 

\smallskip

\item[\rm(ii.b)] 
$\wt E^\KDV_\kk(v)$ is of the form \eqref{E6}.
\end{enumerate}

\end{proposition}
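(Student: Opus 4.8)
The plan is to establish Proposition~\ref{PROP:Scons1} in parallel with the deep-water analysis of Proposition~\ref{PROP:cons1}, using the new microscopic conservation laws $\wt h_n^\dl$ and the structural information from Lemmas~\ref{LEM:ht2} and~\ref{LEM:h0}. First I would verify conservation: since each $\wt h_n^\dl$ is a fixed linear combination of the $h_j^\dl$ (by \eqref{ht0}), and each $\int_\T h_j^\dl\,dx$ is conserved by \eqref{BTX8}, it follows that $\Re\int_\T \wt h_n^\dl\,dx$ is conserved under the scaled ILW flow; hence so is every $\wt E^\dl_{\frac k2}(v)$ defined in~\eqref{DEs}. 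For the KdV side, the analogous statement uses \eqref{BTX9} directly, and by Lemma~\ref{LEM:kdv1}(i) the odd-order integrals vanish, which is precisely why only the $h^\KDV_{2\kk+2}$ appear in~\eqref{DEs4}.

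Next I would prove the rank claims (i.a.1), (i.b.1), and (ii.a). For KdV this is immediate from Lemma~\ref{LEM:h0}(ii): $h^\KDV_{2\kk+2}$ has rank $1+\frac12(2\kk+2)=\kk+2$ with $\#\dl=\#\Gd=0$. For the scaled ILW, Lemma~\ref{LEM:h0}(iii) gives $\wt h_{2\kk}^\dl$ rank $2+\frac12(2\kk)-\frac12\pf(2\kk)=\kk+2$ and $\wt h_{2\kk+1}^\dl$ rank $2+\frac12(2\kk+1)-\frac12=\kk+2$, while the bounds~\eqref{DEX1} specialize (with $n=2\kk$, so $\pf(n)=0$, and $n=2\kk+1$, so $\pf(n)=1$) to exactly~\eqref{DEs1a} and~\eqref{DEs1b}. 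The quadratic parts (i.a.2), (i.b.2) come from Lemma~\ref{LEM:ht2}: with $n=2\kk$ one gets $\int\wt Q_{2\kk}\,dx = (-1)^{\kk-1}\sum_{\l \text{ odd}}^{2\kk-1}\binom{2\kk}{\l}\dl^{\l-1}\|\Gd^{\l/2}v\|^2_{\dot H^{\kk-1/2}}$, and multiplying by the prefactor $(-1)^{\kk+1}\frac{3}{4\kk}$ from~\eqref{DEs} produces~\eqref{DEs2a}; similarly $n=2\kk+1$ gives~\eqref{DEs2b} after multiplying by $(-1)^{\kk+1}\frac12$. The identity $\wt E^\dl_0(v)=-\frac12\int\wt h_1^\dl\,dx = \frac12\|v\|_{L^2}^2$ follows from \eqref{ht1a} and Lemma~\ref{LEM:h1}, yielding~\eqref{DEs3}; defining $\wt a_{k,\l}$ as in~\eqref{DEs3a} and noting $\binom{2\kk}{1}=2\kk$ gives $\wt a_{2\kk-1,1}=3$ and $\binom{2\kk+1}{0}=1$ gives $\wt a_{2\kk,0}=1$, matching~\eqref{E5a}, so~\eqref{DEs} indeed reduces to the form~\eqref{E5}. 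Finally (ii.b) follows from Lemma~\ref{LEM:kdv1}(ii): $\int_\T Q^\KDV_{2\kk+2}\,dx = (-1)^\kk\|v\|^2_{\dot H^\kk}$, so the quadratic part of $\wt E^\KDV_\kk$ is $\frac12\|v\|^2_{\dot H^\kk}$, consistent with~\eqref{E6}.

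The main obstacle I anticipate is \emph{positivity} of the coefficients $\wt a_{k,\l}$ and the careful bookkeeping of the bounds~\eqref{DEs1a}, \eqref{DEs1b} on $\#\Gd$, $\#\dx$, $\#\dl$ for \emph{all} monomials (not just quadratic ones). Positivity here is actually easy—$\wt a_{2\kk-1,\l}=\frac{3}{2\kk}\binom{2\kk}{\l}>0$ and $\wt a_{2\kk,\l}=\binom{2\kk+1}{\l}>0$—which is a genuine payoff of routing through $\wt h_n$ rather than the raw $h_n$; the real work is tracking that the combinatorial sums in Lemma~\ref{LEM:ht2} survive the convention~\eqref{odd2} and that the parity constraint $\pf(\l)=\pf(n-1)$ in~\eqref{hquad1} lines up with "$\l$ odd" when $n=2\kk$ and "$\l$ even" when $n=2\kk+1$. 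I would also need to confirm that the cross terms introduced by taking $\Re$ do not spoil the rank count—this is automatic since $\Re$ does not change which monomials appear, only possibly their coefficients. The shallow-water convergence statement~\eqref{E7}, which is the heart of Theorem~\ref{THM:2}, is deferred to Proposition~\ref{PROP:Scons2} and will require the perturbative operator $\Qd$ in~\eqref{Qdl2} together with Lemma~\ref{LEM:L1}(iv) and the relation~\eqref{hn00} identifying the $\dl$-free part of $h_n$ with $h^\KDV_n$; that is where the 2-to-1 collapse is actually forced, since both $\wt T_{\dl,\kk-1/2}$ and $\wt T_{\dl,\kk}$ converge to $n^{2\kk}$ by~\eqref{T3} and the interaction potentials both converge to $\wt R^\KDV_\kk$.
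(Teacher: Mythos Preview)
Your proposal is correct and follows essentially the same approach as the paper: conservation via \eqref{ht0} and \eqref{BTX8} (and \eqref{BTX9} for KdV), the rank and monomial bounds from Lemma~\ref{LEM:h0}, the quadratic parts from Lemma~\ref{LEM:ht2} specialized to $n=2\kk$ and $n=2\kk+1$, the base case~\eqref{DEs3} from \eqref{ht1a} with \eqref{BTX1aa}, and (ii.b) from Lemma~\ref{LEM:kdv1}\,(ii). Your explicit sign and parity checks (e.g.\ that $\pf(\l)=\pf(n-1)$ in \eqref{hquad1} yields $\l$ odd for $n=2\kk$ and $\l$ even for $n=2\kk+1$, and that the prefactors $(-1)^{\kk+1}$ cancel against $(-1)^{(n-2-\pf(n))/2}$) are more detailed than the paper's own proof, which simply cites the lemmas.
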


\begin{proof}

The conservation of $\wt{E}^\dl_{\frac k 2}(v)$ 
under the flow of the scaled ILW~\eqref{sILW}
follows from  \eqref{ht0} and  \eqref{BTX8}.
Similarly, 
the conservation of $\wt E^\KDV_\kk(v)$
under the flow of KdV \eqref{kdv} 
follows from~\eqref{BTX9}.
The claims (i.a.1), (i.b.1), and (ii.a)
follow from 
Lemma~\ref{LEM:h0}.

The claim \eqref{DEs2b} for $\kk = 0$, namely \eqref{DEs3}, 
follows from 
\eqref{ht1a} with \eqref{BTX1aa}.
More generally, the claims (i.a.2) and (i.b.2) follow
from  Lemma \ref{LEM:ht2}.
Lastly, the claim in (ii.b)  follows from 
 Lemma~\ref{LEM:kdv1}~(ii).
\end{proof}

\begin{proposition}
\label{PROP:Scons2}

Given $k \in \N$ and $0<\dl<\infty$, we have 
\begin{equation*}
 \wt{E}^\dl_{\frac k2}(v) = \wt E^\KDV_{\lceil \frac k2 \rceil}(v)
+ \wt{\EE}^\dl_{\frac k2}(v), 
\end{equation*}

\noi
where
$\lceil x \rceil$ denotes
the smallest integer greater than or equal to $x$, 
and
 $\wt{E}^\dl_{\frac k 2}(v)$ and $\wt  E^\KDV_{\lceil \frac k2 \rceil}(v)$ are as in \eqref{DEs} and \eqref{DEs4}, respectively. 
Here,  $\wt{\EE}^\dl_{\frac k2}(v)$ consists of the terms in $\wt{E}^\dl_{\frac k 2}(v)$
which depend on $\dl$ in an explicit manner and those with $\Gd$, where $\Gd$ is replaced by 
the perturbation operator $\Qd = \Gd + \frac13 \dx$
defined in \eqref{Qdl2}.
Moreover, \noi
for any $\kk \in \N$ and $v\in H^\kk(\T)$, 
we have
\begin{equation}
\label{EEc1}
\lim_{\dl\to0} \wt{\EE}^\dl_{\kk-\frac12}(v) = \lim_{\dl\to0} \wt{\EE}^\dl_{\kk}(v) =0,
\end{equation}
namely
\begin{equation*}
\lim_{\dl\to0} \wt{E}^\dl_{\kk-\frac12}(v) = \lim_{\dl\to0} \wt{E}^\dl_{\kk}(v)  = \wt  E^\KDV_{\kk}(v).
\end{equation*}

\end{proposition}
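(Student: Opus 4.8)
The plan is to prove Proposition~\ref{PROP:Scons2} by combining the structural decomposition of the shallow-water conservation laws in~\eqref{DEs} with the strong convergence $\Qd \dx \to 0$ (in a suitable sense) encoded in Lemma~\ref{LEM:L1}\,(iv), mimicking the deep-water argument of Proposition~\ref{PROP:cons1} but paying close attention to the powers of $\dl$. First I would introduce the $\dl$-free parts of the microscopic conservation laws: let $h_{n,0}$ be the coefficient of $\dl^0$ in $h_n$ after replacing $\Gd$ by $-\frac13\dx$ (equivalently, dropping $\Qd$), and recall from~\eqref{hn00} that $h_{n,0} = h_n^\KDV$. Tracking this through the definition~\eqref{ht0} of $\wt h_n$ and then through~\eqref{DEs}, I would show that the ``$\dl$-free, $\Qd$-free'' part of $\wt E^\dl_{\frac k2}(v)$ is exactly $\wt E^\KDV_{\lceil k/2\rceil}(v)$; this is the shallow-water analogue of~\eqref{DE9}, and it uses that in~\eqref{ht0} the leading ($j=n$, i.e.\ $h_n$) term carries the normalization $(-i\dl)^{\pf(n)}/\dl^2 \cdot \dl^0$ which matches the normalizations $\frac{3}{4\kk}$ and $\frac12$ in~\eqref{DEs} against the factor in~\eqref{DEs4}. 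This \emph{defines} $\wt\EE^\dl_{\frac k2}(v) := \wt E^\dl_{\frac k2}(v) - \wt E^\KDV_{\lceil k/2\rceil}(v)$, and by construction every monomial of $\wt\EE^\dl_{\frac k2}(v)$ contains at least one explicit power of $\dl$ (positive, i.e.\ $\#\dl \ge 1$) \emph{or} at least one $\Qd$.

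Next I would establish the quantitative bound giving~\eqref{EEc1}. Using the rank and the bounds~\eqref{DEs1a}, \eqref{DEs1b} from Proposition~\ref{PROP:Scons1}, together with the fact (from~\eqref{hkdv1a}, \eqref{hkdv1aa}, and Lemma~\ref{LEM:ht2}) that the linear part of $\wt E^\dl_{\frac k2}(v)$ integrates to zero so that $\#v \ge 2$ on every surviving monomial, I would argue as in~\eqref{DE11}--\eqref{DE11a} that on each monomial $\#\dx$ is bounded by roughly $k-1$, hence the total number of derivatives distributed among the $\#v \ge 2$ factors is at most $k$ (after integration by parts using Definition~\ref{DEF:mono2}), with at most two factors carrying the maximal number $\sim k/2$. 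Then, exactly as in~\eqref{DE13}--\eqref{DE13b} (product on the physical side $=$ iterated convolution on the Fourier side, Young plus Cauchy--Schwarz, Fourier--Lebesgue norms controlled by $H^{k/2} \subset H^{\lceil k/2\rceil}$), I would obtain $|\int_\T p(v)\,dx| \lesssim (1+\|v\|_{H^{\lceil k/2\rceil}})^{N}$ for the relevant monomials $p$. The key point is the gain: a factor $\#\dl \ge 1$ contributes a literal $\dl \to 0$; a factor $\Qd$ is handled on the Fourier side where $|\widehat{\Qd}(n)| = |\widehat\Gd(n) + \frac13(in)^{-1}\cdot in| \cdot \ldots$ — more precisely $\Qd\dx$ has multiplier $-\frac13 n^2 \hf(\dl,n)$ with $\hf(\dl,n)\in(0,1]$ and $\hf(\dl,n)\to 0$ as $\dl\to 0$ by~\eqref{HX1}, while $\Qd$ itself has multiplier $\frac{1}{3in}(3\Ldl(n)-n^2)\cdot(\text{sign})$, i.e.\ $O(|n|\hf(\dl,n))$; in both cases one extracts a decaying-in-$\dl$ factor after dominated convergence. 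Putting this together gives $|\wt\EE^\dl_{\frac k2}(v)| \lesssim \omega(\dl)\,(1+\|v\|_{H^{\lceil k/2\rceil}})^{N} \to 0$ as $\dl\to 0$ for fixed $v \in H^{\lceil k/2\rceil}(\T)$, which is~\eqref{EEc1}.

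The main obstacle, and the reason this is more delicate than the deep-water case, is the one flagged in the text after Remark~\ref{REM:cons2}: the convergence of $\Qd$ is \emph{not uniform in frequency} (unlike $\Qdl$ in the deep-water regime, where $|\widehat{\Qdl}(n)| \le \dl^{-1}$ uniformly in $n$ by~\eqref{Q1}), and there may be ``insufficient powers of $\dl$'' — a monomial might carry a high power of $\dl$ \emph{and} several derivatives, so one cannot crudely bound $\dl^m |n|^j$ by $\dl^{m}$ times $|n|^j$ and hope $\dl^m$ alone wins. The resolution is to exploit the precise constraint linking $\#\dl$, $\#\Gd$, and $\#\dx$ in Lemma~\ref{LEM:h0} (each $\Gd$ is born paired with a $\dl$ and a $\dx$ inside $\wt\L_0$, so $\#\Gd \le \min(\#\dx,\dots)$ and $\#\dl$ does not exceed what the derivative count allows), and to use the \emph{semi-fundamental form} of Definition~\ref{DEF:mono2}: write $\dl\Gd = \dl\widehat\Gd(n)$, which by~\eqref{dlGd} satisfies $|\dl\widehat\Gd(n)| \le \min(1,\frac{\dl}{3}|n|) \le 1$ \emph{uniformly}, so each paired $\dl\Gd$ is harmless and the \emph{unpaired} powers of $\dl$ (of which there is at least one on every monomial of $\wt\EE^\dl_{\frac k2}$, or else an unpaired $\Qd$) are the ones producing the $\dl\to 0$ gain. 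I would carry out the bound monomial-by-monomial after moving all operators off products by integration by parts (legitimate since $\#v = 3$ or $4$ for the worst terms, by Proposition~\ref{PROP:Scons1}), splitting into the cases $\#\dl^{\mathrm{unpaired}} \ge 1$ (dominated convergence: $\dl^m \to 0$) and $\#\Qd \ge 1$ with no spare $\dl$ (dominated convergence on the Fourier side using $\hf(\dl,n)\to 0$ pointwise and $\hf \le 1$). A similar remark as in Footnote on the proof of Proposition~\ref{PROP:cons1} applies: when a $\Qd$ or $\Gd$ would act on a product, one works entirely on the Fourier side, placing every symbol in absolute value, so that $L^r$-boundedness of the operators acting on products is never needed. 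This yields~\eqref{EEc1} and hence the stated $2$-to-$1$ collapse $\lim_{\dl\to0}\wt E^\dl_{\kk-\frac12}(v) = \lim_{\dl\to0}\wt E^\dl_{\kk}(v) = \wt E^\KDV_{\kk}(v)$.
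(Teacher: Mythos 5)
Your plan for proving \eqref{EEc1} — splitting each monomial of $\wt\EE^\dl_{\frac k2}$ according to whether it carries an unpaired power of $\dl$ (giving a literal $\dl\to 0$ factor) or an unpaired $\Qd$ (giving $\hf(\dl,n)\to 0$ pointwise, handled by dominated convergence, with the paired $\dl\Gd$'s bounded uniformly via $|\dl\ft\Gd(n)|\le 1$) — is essentially what the paper does, though the paper is more careful about the quadratic $\#v=2$ subcases, where a frequency threshold $|n|\sim\dl^{-1/2}$ is introduced rather than invoking dominated convergence directly. That part of your proposal is substantially correct.

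The genuine gap is in your Part 2, the identification of the $\dl$-free part with $\wt E^\KDV_{\lceil k/2\rceil}(v)$. You assert that this follows by ``tracking through the definitions'' using $h_{n,0}=h_n^\KDV$, ``the shallow-water analogue of \eqref{DE9}.'' For the even case $k=2\kk$ this is correct and straightforward: by \eqref{DEs1b} every monomial with a $\Gd$ also carries a $\dl$, so the $\dl$-free part is literally $\Gd$-free, and $\wt E^\dl_{\kk,0}(v)=(-1)^\kk\frac12\sum_{n_{12}=2\kk}\int h_{n_1,0}h_{n_2,0}\,dx=(-1)^\kk\frac12\int h^\KDV_{2\kk+2}\,dx=\wt E^\KDV_\kk(v)$ directly from \eqref{hn00} and \eqref{hkdv}. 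But for the odd case $k=2\kk-1$, the analogy breaks: by \eqref{DEs1a}, a monomial can have $\#\dl=0$ yet $\#\Gd=1$, so the $\dl$-free part requires replacing that unpaired $\Gd$ by $-\frac13\dx$. After this substitution, the resulting polynomial is not obtained by simply freezing $h_n\mapsto h_{n,0}$ in the recursion: in the expansion of $\int\wt h_{2\kk}\,dx$, the $j=2$ block carries a prefactor proportional to $\dl^{-1}$, so extracting the $\dl^0$ contribution requires the coefficient of $\dl^1$ in $\sum_{n_{12}=2\kk-1}\int h_{n_1}h_{n_2}\,dx$, not the $\dl^0$ coefficient (which in fact vanishes, by Lemma~\ref{LEM:kdv1}\,(i)). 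Combining that with the $j=3$ contribution and the $\Gd\to-\frac13\dx$ replacement, one would need a non-trivial combinatorial identity to recognize the result as $\wt E^\KDV_\kk(v)$. The paper does not attempt this at all. Instead it (a)~reads off from \eqref{DEs2a} that the quadratic part of $\wt E^\dl_{\kk-\frac12,0}(v)$ equals $\frac12\|v\|^2_{\dot H^\kk}$, (b)~proves that $\wt E^\dl_{\kk-\frac12,0}$ is conserved under the KdV flow — via a PDE limiting argument using the well-posedness and shallow-water convergence of Lemma~\ref{LEM:GWP2} together with the already-established \eqref{EEc1} — and then (c)~invokes the uniqueness of polynomial conservation laws for KdV with prescribed leading quadratic part (\cite[Theorem~2.16]{Kupershmidt81}) to conclude the identification. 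This indirect conservation-plus-uniqueness step is the key idea your proposal is missing, and without it (or a substitute algebraic argument that you have not supplied) the odd-order identification is unproved.

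A secondary issue: you assert that every monomial of $\wt\EE^\dl_{\frac k2}$ has $\#v\ge 2$ and proceed as if the interaction-potential analysis covers all of them. But the quadratic ($\#v=2$) contributions, such as $\dl^{\l-1}\|\Gd^{\l/2}v\|^2_{\dot H^{\kk-\frac12}}$ for $\l\ge 3$, are part of $\wt\EE^\dl$ too, and they do not fit the Fourier-Lebesgue/Young argument you sketch; they need a separate frequency-splitting estimate (the paper's Subcases A.3, B.3). This is fixable within your framework, but you should flag it.
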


In the deep-water case (Proposition \ref{PROP:cons1}), 
the deep-water convergence followed from an explicit power of $\dl^{-1}$;
see \eqref{DE13c}.
In the current shallow-water setting, 
the situation is more subtle
due 
to the slower convergence of the perturbation operator $\Qd$ in \eqref{Qdl2}
and also insufficient powers of $\dl$ (tending to $0$), 
and thus we need to proceed with more care.

\begin{proof}[Proof of Proposition \ref{PROP:Scons2}]

Let $\wt{E}^\dl_{\frac k 2, 0}(v)$ be the $\dl$-free part of $\wt{E}^\dl_{\frac k2 }(v)$ defined in \eqref{DEs}, namely the collection of all terms that do not depend on $\dl$ explicitly and 
with $\Gd$ replaced by $-\frac13\dx$;\footnote{\label{FT:2}This latter process of replacing $\Gd$ 
by $-\frac 13\dx$ is relevant only when $k$ is odd.
When $k = 2\kk$ is even, it follows from  \eqref{DEs1b}
that 
 $\#\Gd \le \#\dl$ holds for any monomial in $\wt E^\dl_{\kk}(v)$
 and thus any monomial with $ \#\Gd\ge 1$ 
 (which implies $\#\dl \ge 1$) is removed
 in constructing the $\dl$-free part.}
 see also Remark \ref{REM:Gd1}.
Then,  we define $\wt{\EE}^\dl_{\frac k 2}(v)$ by
\begin{equation}
\label{EEb}
\wt{\EE}^\dl_{\frac k 2}(v) = \wt{E}^\dl_{\frac k 2}(v) - \wt{E}^\dl_{\frac k2, 0}(v),
\end{equation}

\noi
where all the terms in $\wt{\EE}^\dl_{\frac k2 }(v)$ involve at least one power of $\dl$ or
the perturbation  operator $\Qd = \Gd + \frac13\dx$.

\medskip

\noi
\underline{\bf Part 1:}
In this part, we prove \eqref{EEc1}.
In the following, we assume  $0<\dl \le 1$.

\smallskip

\noi
$\bullet$  {\bf Case A:}
 $k=2\kk-1$ is odd.
\\
\indent
From Proposition~\ref{PROP:Scons1}~(i.a), we see that 
any  monomial in $\wt{\EE}^\dl_{\kk-\frac12}(v)$ satisfies
\begin{equation}
\begin{aligned}
\label{EEodd1}
& \# v + \frac12 (\# \dx + \#\Gd - \#\dl) = \kk +2, \quad \# v \ge 2, \\
&    0 \le \#\Gd \le \min(\#\dx, \#\dl+1), \quad  \# v + \#\dx \le 2\kk+1,
\quad \text{and} \quad 0 \le \#\dl \le 2\kk - 2, 
\end{aligned}
\end{equation}

\noi
where we ignore the extra $\dx$ operator in $\Qd = \Gd+ \frac13\dx$ in determining $\#\dx$
but count occurrences of $\Qd$  in $\#\Gd$.
With our perturbative viewpoint \eqref{Qdl2}, we set
\begin{align*}
\#\Qd &  = \#\Qd(p(v))= \text{number of appearance of $\Qd$ in $p(v)$}.
\end{align*}

\noi
Then, 
 from~\eqref{EEb}
and the definition of the $\dl$-free part 
 $\wt E^\dl_{\kk-\frac12, 0}(v)$, 
we also have 
\begin{equation}
\label{EEodd1a}
\text{(a) } \#\dl \ge 1, \quad\text{or} \quad 
\text{(b) } \#\dl=0, \quad \#\Qd = 1, 
\end{equation}
where the terms satisfying (b) arise from those in $\wt{E}^\dl_{\kk-\frac12}(v)$ with no powers of $\dl$ 
(which in particular implies $\#\Gd \le 1$ in view of \eqref{EEodd1})
and one instance of $\Gd$, which we rewrite as $\Gd = - \frac13\dx + \Qd$ with the first contribution going into $\wt{E}^\dl_{\kk-\frac12, 0}(v)$ 
 and the latter going into $\wt{\EE}^\dl_{\kk-\frac12}(v)$.
Thus, each monomial in $\wt{\EE}^\dl_{\kk-\frac12}(v)$ is of the form $\dl^m \int_\T p(v) dx$ for some 
$0\le m \le 2\kk-2$ and $p(v) \in \wt{\Pc}_j(v)$ with $j= \# v \in \{2, \ldots, 2\kk+1\}$. 
Then, after integration by parts, 
we have 
\begin{equation}
\wt{\EE}^\dl_{\kk-\frac12}(v) = \sum_{m=0}^{2\kk-2} \sum_{j=2}^{\kk+2 + \frac12m} \sum_{\l=0}^{2\kk+1-j} \sum_{\substack{ p(v) \in \wt{\Pc}_j(v) \\ \text{\eqref{EEodd1}-\eqref{EEodd1a} hold}  \\ \#\dx (p) = \l \\ \# \Gd(p) \le \#\dl+1 = m+1 \\ |p(v)| \le \kk  }}  c(p)  \dl^m \int_\T p(v) d x.
\label{EEodd2}
\end{equation}

 Fix $p(v) \in \wt{\Pc}_j(v)$ with $|p(v)| \le \kk$ appearing on the right-hand side of \eqref{EEodd2}. Let $\wt{p}(v) = \prod_{k=1}^j \dx^{\be_j} v$ denote the fundamental form of the monomial  $p(v)$, 
 where  $\be_1 + \cdots + \be_j \le 2\kk+1-j$ and $0\le \be_k \le \kk$.
We note from \eqref{EEodd1} with $\#v \ge 2$ that there exists at most one $k$ such that $\be_k = \kk$.

\smallskip

\noi
$\bullet$ {\bf Subcase A.1:}
 $m=\#\dl \ge 1$ and $\wt m =  \#\Gd \le \#\dl$.\\
 \indent
 From \eqref{dlGd}
 in Lemma \ref{LEM:T1} with  $0<\dl\le1$, we have 
\begin{equation}
\label{EEodd2a}
|\dl^m \ft{\Gd^{\wt{m}}}(n) | \les \dl^m \min\Big( \frac1\dl, |n| \Big)^{m} \le
\min( 1,  \dl |n|)
\le  \dl |n|
\end{equation}

\noi
for any $n\in\Z^*$. 
Note from \eqref{EEodd1} that $\# \dx \le 2\kk-1$. Consequently, accounting for the loss of 
one derivative 
from estimating $\dl^m \Gd^{\wt{m}}$ in \eqref{EEodd2a}, 
$p(v)$ has at most $2\kk$ derivatives in total. 
Hence, by integration by parts, we can reduce it to the form where there are at most two factors of $v$ with $\kk$ derivatives, while the remaining factors have at most $\kk-1 < \kk-\frac12$ derivatives. 
Then, 
 by
viewing the product on the physical side as
an iterated convolution on the Fourier side
and applying 
Young's inequality 
and 
Cauchy-Schwarz's inequality, 
we have 
\begin{equation}
\label{EEodd2b}
\begin{split}
\bigg| \dl^m \int_\T p(v) dx \bigg| 
& \les \dl \big(1+\|v\|_{H^\kk}\big)^2 
\big(1+ \| v\|_{\F L ^{\kk-1, 1}}\big)^{2\kk-1}
 \les \dl \big(1+ \|v\|_{H^\kk}\big)^{2\kk+1}\\
& \too0, 
\end{split}
\end{equation}

\noi
as $\dl \to 0$, 
where the $\F L^{s, r}$-norm is as in \eqref{FL1}.

\smallskip

\noi
$\bullet$ {\bf Subcase A.2:}
 $m = \#\dl \ge 1$, $\#\Gd = \#\dl+1$,  and $j = \#v\ge3$.\\ 
 \indent
 From \eqref{dlGd}
 in Lemma \ref{LEM:T1} with  $0<\dl\le1$, we have 
\begin{equation}
\label{EEodd2c}
|\dl^m \ft{\Gd^{m+1}}(n) | \le \dl^m \min\Big( \frac1\dl, |n|\Big)^{m+1} \le \dl |n|^2
\end{equation}

\noi
for any $n\in\Z^*$.
Note from \eqref{EEodd1} that $\#\dx \le 2\kk+1- j \le 2\kk-2$. Thus, accounting for the loss of 2 derivatives from  \eqref{EEodd2c}, we have at most $2\kk$ derivatives in total. 
Hence, by proceeding as in Subcase A.1, 
we obtain \eqref{EEodd2b}.

\smallskip

\noi
$\bullet$ {\bf Subcase A.3:}
 $m = \#\dl \ge 1$, $\#\Gd = \#\dl+1$,  and $j = \#v = 2$.\\
\indent
Since $\dl^mp(v)$ is quadratic in $v$ with $\#\dl \ge 1$, 
it follows from 
\eqref{DEs2a} in Proposition~\ref{PROP:Scons1} that the collection of such  $\dl^mp(v)$ is given by 
\begin{equation}
\label{EEodd3a}
P(v) = \frac{3}{4\kk}\sum_{\substack{\l=3\\ \text{odd}}}^{2\kk-1} \binom{2\kk}{\l} \dl^{\l-1} \| \Gd^{\frac \l 2} v \|^2_{\dot{H}^{\kk-\frac12}}.
\end{equation}

\noi
From \eqref{dlGd} in Lemma \ref{LEM:T1}, we have
\begin{equation}
\label{EEodd3b}
| \dl^{\l-1} \ft {\Gd^\l}(n) n^{2\kk-1} | 
\les
\begin{cases}
\eta^{\l-1} |n|^{2\kk}, & \text{for }\dl|n| \le \eta, \\
|n|^{2\kk}, & \text{for }\dl|n| > \eta
\end{cases}
\end{equation}

\noi
for any $n\in\Z^*$ and $\eta>0$.
Therefore, from \eqref{EEodd3a} and \eqref{EEodd3b}
and choosing $\eta = \sqrt{\dl}$, 
it follows from the dominated convergence theorem that 
\begin{equation*}
\bigg| \int_\T P(v) dx \bigg|  \les \dl \| v\|^2_{H^\kk} + \sum_{|n| > \frac{1}{\sqrt{\dl}}} |n|^{2\kk} |\ft{v}(n)|^2
\too0, 
\end{equation*}

\noi
as $\dl \to 0$.

\smallskip

\noi
$\bullet$ {\bf Subcase A.4:}
 $m = \#\dl =0$ and $\#\Qd = 1$.\\
\indent
In this case,   we can write $p(v)$ as 
\begin{equation*}
p(v) = \Qd
\bigg(\prod_{k=1}^{j_0} \dx^{\be_k} v \bigg)
 \cdot \prod_{k=j_0+1}^j \dx^{\be_k} v  
\end{equation*}

\noi
for some $2 \le j \le \kk +2$ with $1\le j_0 \le j$, 
where $\be_1 + \cdots + \be_j \le 2\kk+1-j$
and $0\le \be_k \le \kk$ for any $k = 1, \dots, j$
with 
 at most one $k$ such that $\be_k = \kk$.
 From Lemma~\ref{LEM:L1} with \eqref{Qdl2}, we have 
 $|\ft{\Qd}(n)| = \frac13 |n| |\hf(\dl, n) |$, 
 where $\hf$ is as  in \eqref{hdef}.
Under $n_{1\dots j} = 0$, 
it follows from the triangle inequality that
 $\max_{k = 1, \dots, j}|n_k|\sim n^*_2$, 
 where  $n^*_2$ denotes the second largest in $|n_k|$, $k = 1, \dots, j$.
Then,   by  \eqref{hdef}
 and 
Young's 
and Cauchy-Schwarz's inequalities, 
we have
\begin{equation}
\label{EEodd5a}
\begin{aligned}
\bigg| \int_\T p(v) dx \bigg| 
& \les \sum_{\substack{n_1, \ldots, n_j \in \Z^*\\ n_{1\cdots j} =0}}
\max_{k = 1, \dots, j}|n_k| \cdot  |\hf(\dl, n_{1\cdots j_0})|  \prod_{k=1}^j |n_k|^{\be_k} |\ft{v}(n_k)| \\
& \sim  \sum_{\substack{n_1, \ldots, n_j \in \Z^*\\ n_{1\cdots j} =0}}
n^*_2 \cdot  |\hf(\dl, n_{1\cdots j_0})|  \prod_{k=1}^j |n_k|^{\be_k} |\ft{v}(n_k)| \\
& \les (1+ \| v \|_{H^\kk})^{\kk+2},
\end{aligned}
\end{equation}

\noi
uniformly in $0<\dl<\infty$, 
where,  in the last step, 
we  used the fact that there exists at most one $k$ such that $\be_k = \kk$.
 Moreover, from the pointwise (in $n$) bound
 and convergence of $\hf$  in~\eqref{HX1}  and the dominated convergence theorem, 
 we conclude that 
$\lim_{\dl \to 0} \eqref{EEodd5a} =0$.
 
 \smallskip
 
Therefore, putting all the subcases together, we obtain
\begin{equation}
\label{EEc1x}
\lim_{\dl\to0} \wt{\EE}^\dl_{\kk-\frac12}(v)  =0.
\end{equation}

\medskip

\noi
$\bullet$ 
{\bf Case B:}  $k=2\kk$ is even.\\
\indent
From Proposition~\ref{PROP:Scons1}~(i.b), we see that 
any  monomial in $\wt{\EE}^\dl_{\kk}(v)$ satisfies
\begin{equation}
\begin{aligned}
\label{EEaux1}
& \# v + \frac12 (\# \dx + \#\Gd - \#\dl) = \kk +2, \quad  \# v \ge 2,\\
&     \quad 0 \le \#\Gd \le \min(\#\dx, \#\dl), \quad  \# v + \#\dx \le 2\kk+2, 
\quad\text{and} \quad
1\le  \# \dl \le 2\kk, 
\end{aligned}
\end{equation}

\noi
where we used the fact that the terms with $\#\dl = 0$ 
(which implies $\#\Gd = 0$ in view of~\eqref{DEs1b})
are included in the $\dl$-free part $\wt{E}^\dl_{\kk, 0}(v)$.
Thus, each monomial in $\wt{\EE}^\dl_{\kk}(v)$ is of the form $\dl^{m} \int_\T p(v) dx$ for some $1\le m \le 2\kk$ and $p(v) \in \wt{\Pc}_j(v)$ with $j=\# v \in\{2, \ldots, 2\kk+2 \}$.
 Then, after integration by parts, 
we have
\begin{equation}
\wt{\EE}^\dl_{\kk} (v)  =
\sum_{m=1}^{2\kk} \sum_{j=2}^{\kk+2+  \frac 1 2m } \sum_{\l=0}^{2\kk+2-j} \sum_{\substack{ p(v) \in\wt{\Pc}_j(v) \\ \eqref{EEaux1} \text{ holds} \\ \#\dx(p) = \l \\ \#\Gd(p) \le m \\ |p(v) | \le \kk      }} c(p) 
\dl^m \int_\T p(v) dx .
\label{EEaux2}
\end{equation}

Fix $p(v) \in \wt{\Pc}_j(v)$ with $|p(v)| \le \kk$ appearing on the right-hand side of \eqref{EEaux2}. Let $\wt{p}(v) = \prod_{k=1}^j \dx^{\be_j} v$ denote the fundamental form of the monomial $p(v)$, where $0\le \be_k \le \kk$ and $\be_1 + \cdots + \be_j \le 2\kk +2 -j$.
Note that, unlike Case A, 
there can be two  $k$'s such that $\be_k = \kk$.

\smallskip

\noi
$\bullet$ {\bf Subcase B.1:}
 $m = \#\dl \ge \#\Gd+1$.\\
 \indent 
Proceeding as in  \eqref{EEodd2b}
with  \eqref{EEodd2a}, 
(but using the upper bound by $1$ in \eqref{EEodd2a}), we have 
\begin{equation}
\label{EEaux3i}
\bigg|  \dl^m \int_\T p(v)  dx \bigg| 
 \les \dl \big(1+ \|v\|_{H^\kk}\big)^{2\kk+2}
 \too0, 
\end{equation}

\noi
as $\dl \to 0$.

\smallskip

\noi
$\bullet$ {\bf Subcase B.2:}
 $m=\#\dl = \#\Gd \ge 1$ and $j = \#v \ge 3$.\\
 \indent
By noting that  at most one factor of $v$ in $p(v)$ has $\kk$ derivatives, 
we can proceed as in Subcase A.1
and obtain \eqref{EEaux3i}.

\noi
\smallskip

\noi
$\bullet$ {\bf Subcase B.3:}
$m=\#\dl = \#\Gd \ge 1$ and $j = \#v =2$.\\
\indent
Since $\dl^m p(v)$ is quadratic in $v$ with $\#\dl \ge 1$, 
it follows from 
\eqref{DEs2b} in Proposition~\ref{PROP:Scons1} that the collection of such  $\dl^m p(v)$ is given by 
\begin{equation}
\label{EEaux3iii}
\frac12 \sum_{\substack{m=2\\\text{even}}}^{2\kk} \binom{2\kk+1}{m} \dl^m \| \Gd^{\frac m 2} v \|^2_{\dot{H}^\kk}.
\end{equation}

\noi
From \eqref{dlGd} in Lemma \ref{LEM:T1}, we have 
\begin{equation}
\label{EEaux3iiia}
| \dl^{m} \ft {\Gd^m}(n) n^{2\kk} | 
\les \begin{cases}
\eta^m |n|^{2\kk} , & \text{for }\dl|n| \le \eta, \\
|n|^{2\kk}, & \text{for }\dl|n| > \eta
\end{cases}
\end{equation}

\noi
for any $n\in\Z^*$ and $\eta>0$.
Therefore, from \eqref{EEaux3iii} and \eqref{EEaux3iiia}, and choosing $\eta = \sqrt{\dl}$, 
it follows from the dominated convergence theorem that 
\begin{equation*}
\dl^m \| \Gd^{\frac m2} v\|^2_{\dot{H}^\kk}  \le \dl \| v\|^2_{\dot{H}^\kk} + \sum_{|n| > \frac{1}{\sqrt{\dl}}} |n|^{2\kk} |\ft{v}(n)|^2\too0, 
\end{equation*}

\noi
as $\dl \to 0$.

\smallskip
 
Therefore, putting all the subcases together, we obtain
\begin{equation}
\lim_{\dl\to0} \wt{\EE}^\dl_{\kk}(v)  =0.
\label{EEc1y}
\end{equation}

\noi
From \eqref{EEc1x}
and \eqref{EEc1y}, we obtain
\eqref{EEc1}.

\medskip

\noi
\underline{\bf Part 2:}
It remains to show
\begin{equation}
\label{EE0}
\wt{E}^\dl_{\kk-\frac12 ,0}(v) = \wt{E}^\dl_{\kk, 0}(v) = \wt E^\KDV_{\kk}(v).
\end{equation}

We first consider $\wt{E}^\dl_{\kk, 0}(v)$.
In this case, from 
\eqref{DEs} and \eqref{ht1}
in Lemma \ref{LEM:ht2a}, we have
\begin{equation}
\label{EEa}
\wt{E}^\dl_{\kk, 0}(v)  = (-1)^{\kk} \frac12 \Re \sum_{n_{12}=2\kk} \int_\T h_{n_1, 0} h_{n_2,0}dx  ,
\end{equation}

\noi
where 
$h_{n, 0}$ denotes  the coefficient of $\dl^0$ in $h_n$.
Then, from 
 \eqref{EEa},   \eqref{hn00},  \eqref{hkdv}, and \eqref{DEs4}, we have 
\begin{equation*}
\begin{aligned}
\wt{E}^\dl_{\kk,0}(v) &= (-1)^\kk \frac12 \sum_{n_{12}=2\kk } \int_\T h^\KDV_{n_1} h^\KDV_{n_2} dx 
 = (-1)^\kk \frac12 \int_\T h_{2\kk+2}^\KDV dx \\
& = \wt E^\KDV_\kk(v),
\end{aligned}
\end{equation*}

\noi
which establishes \eqref{EE0} for $\wt{E}^\dl_{\kk,0}(v)$.

Next, we  show \eqref{EE0} for $\wt{E}^\dl_{\kk-\frac12, 0}(v)$. 
Since $\wt{E}^\dl_{\kk-\frac12,0}(v)$ 
is the  $\dl$-free part of  $\wt{E}^\dl_{\kk-\frac12}(v)$, 
it follows from \eqref{DEs2a} that 
 the  quadratic part in $\wt{E}^\dl_{\kk-\frac12,0}(v)$ is given by 
\begin{equation*}
\frac{3}{4\kk} \binom{2\kk}{1} \frac13 \| v\|^2_{\dot H^\kk} = \frac12 \|v\|_{\dot H^\kk}^2,
\end{equation*}

\noi
which agrees with the quadratic part of $E^\KDV_{\kk}(v)$ in \eqref{E6}. 
Suppose for now that  $\wt{E}^\dl_{\kk-\frac12, 0}(v)$ is conserved for KdV \eqref{kdv}.
Then, we can conclude 
\eqref{EE0}  for $\wt{E}^\dl_{\kk-\frac12,0}(v)$
from the fact that 
any polynomial conservation law under KdV 
with $\|v\|^2_{\dot{H}^\kk}$  as its  leading order quadratic part can be written as a linear combination of the conservation laws $\wt E^\KDV_{k}(v)$ for $k=0, \ldots, \kk$
defined in~\eqref{DEs4}; 
see, for example,  \cite[Theorem 2.16]{Kupershmidt81}.

In the following, we show that 
 $\wt{E}^\dl_{\kk-\frac12, 0}(v)$ is indeed conserved under KdV \eqref{kdv}. 
Let $v_0\in H^\kk(\T)$ and $0<\dl \le 1$. From Lemma~\ref{LEM:GWP2}, 
there exists $T>0$,  independent of $0 < \dl \le 1$,  and unique solutions $v^\dl, v^\KDV \in C([-T,T];H^\kk(\T))$ 
to the scaled ILW  and KdV, respectively, with $v^\dl\vert_{t=0} = v^\KDV \vert_{t=0} = v_0$ such that 
\begin{align}
\lim_{\dl\to0} \| v^\dl - v^\KDV \|_{L^\infty_T H^\kk_x} =0
\label{EE1}
\end{align}

\noi
and
\begin{align}
\| v^\dl \|_{L^\infty_T H^\kk_x} \le \| v^\KDV \|_{L^\infty_T H^\kk_x} + 1
\label{EE1a}
\end{align}

\noi
for any $0 < \dl \le 1$.

From \eqref{EEb} and the conservation of  $\wt{E}^\dl_{\kk-\frac12}$  under the scaled ILW, 
we can write 
\begin{equation}
\label{EE1aa}
\begin{aligned}
&\wt{E}^\dl_{\kk-\frac12,0}(v^\KDV(t)) - \wt{E}^\dl_{\kk-\frac12, 0}(v_0) \\
& =  
\wt{E}^\dl_{\kk-\frac12,0}(v^\KDV(t)) - \wt{E}^\dl_{\kk-\frac12}(v_0) 
+ \wt{\EE}^\dl_{\kk-\frac12}(v_0) \\
& =  
\wt{E}^\dl_{\kk-\frac12,0}(v^\KDV(t)) - \wt{E}^\dl_{\kk-\frac12}(v^\dl(t)) 
+ \wt{\EE}^\dl_{\kk-\frac12}(v_0) \\
& = \Big( \wt{E}^\dl_{\kk-\frac12, 0} (v^\KDV(t)) - \wt{E}^\dl_{\kk - \frac12, 0}(v^\dl(t)) \Big) - \wt{\EE}^\dl_{\kk-\frac12} (v^\dl(t)) + \wt{\EE}^\dl_{\kk-\frac12}(v_0) .
\end{aligned}
\end{equation}

\noi
From Proposition \ref{PROP:Scons1}\,(i.a)
with Definitions~\ref{DEF:ord2} and \ref{DEF:mono2},
we see that any monomial in 
  $\wt{E}^\dl_{\kk - \frac12, 0}(v)$
  is of the form  $\int_\T p(v) dx$ for some  $p(v) \in \wt{\Pc}_j(v)$ for $j=2, \ldots, \kk+2$, 
  satisfying\footnote{Here, before considering the $\dl$-free part, 
it is possible to have   $\# \dl(p) = 0 $  but $\#\Gd(p) =1$ under \eqref{DEs1a}.
Then, in constructing the $\dl$-free part, we replaced $\Gd$ by $- \frac13 \dx$
which increased $\#\dx$ by 1.
This explains the upper bound $2\kk + 2$ in \eqref{EE1x}.} 
\begin{equation}
\# \dl(p) = \#\Gd(p) =0, \quad \# v + \# \dx \le 2\kk + 2, \quad \text{and} 
\quad |p(v)| \le \kk,
\label{EE1x}
\end{equation}

\noi
where the last bound follows from the fact that 
$\#v \ge 2$ (which implies $\# \dx \le 2\kk $)
and  integration by parts. 
Thus,  there exist at most two factors of $v$ with $\kk$ derivatives, while the remaining factors have at most $\kk-1 < \kk-\frac12$.
Moreover, by using   the multilinearity of the polynomials $p(v)$
and applying 
Young's inequality 
and 
Cauchy-Schwarz's inequality
on the Fourier side as in \eqref{EEodd2b},  
\eqref{EE1a},  and~\eqref{EE1}, we have
\begin{align}
\label{EE2a}
\begin{split}
| \wt{E}^\dl_{\kk-\frac12, 0}(v^\KDV(t)) - \wt{E}^\dl_{\kk-\frac12, 0}(v^\dl(t)) |
&   \les \big(1+\|v^\KDV\|_{L^\infty_T H^\kk_x}\big)^{\kk+1} \| v^\dl - v^\KDV \|_{L^\infty_T H^\kk_x }\\
&  \too 0,
\end{split}
\end{align}

\noi
as $\dl\to0$.
On the other hand, from \eqref{EEc1}, we have 
\begin{equation}
\label{EE2b}
 | \wt{\EE}^\dl_{\kk-\frac12} (v^\dl(t)) |+ |\wt{\EE}^\dl_{\kk-\frac12}(v_0)| \to 0, 
\end{equation}

\noi
as $\dl\to0$.
Lastly, 
recalling that, by definition,   $\wt{E}^\dl_{\kk-\frac12, 0}$ is independent of $\dl$,
it follows from~\eqref{EE1aa}, \eqref{EE2a}, and \eqref{EE2b} that 
\begin{equation*}
\wt{E}^\dl_{\kk-\frac12, 0}(v^\KDV(t)) = \wt{E}^{\dl}_{\kk-\frac12, 0}(v_0)
\end{equation*}

\noi
for any $ |t| \le T$.
This  shows that $\wt{E}^\dl_{\kk-\frac12, 0}(v)$ is  conserved under KdV 
and thus must be equal to $E^\KDV_{\kk}(v)$
since they have the same quadratic part.
This concludes 
 the proof of Proposition~\ref{PROP:Scons2}.
\end{proof}

\subsection{Structure of the shallow-water conservation laws}
\label{SUBSEC:B4}

We conclude this section by stating the final structure of the shallow-water conservation laws $\wt{E}^\dl_{\frac k2 }(v)$ defined in \eqref{DEs}.

Let $k\in\Z_{\ge0}$. Then, from 
(the proofs of) Propositions~\ref{PROP:Scons1}
and~\ref{PROP:Scons2}, we see that the shallow-water conservation law $\wt{E}^\dl_{\frac k2 }(v)$ defined in \eqref{DEs} is indeed given by \eqref{E5}:

\noi
\begin{equation}
\label{Scons1}
\begin{aligned}
\wt{E}^\dl_0(v) & = \frac12 \|v\|^2_{L^2}, \\
\wt{E}^\dl_{\kk-\frac12}(v)& = \frac12 \sum_{\substack{\l=1\\\text{odd}}}^{2\kk-1}
 \wt{a}_{2\kk-1, \l} \, \dl^{\l-1} \| \Gd^{\frac \l 2} v\|^2_{\dot{H}^{\kk-\frac12}} + \wt{R}^\dl_{\kk-\frac12}(v), \\
\wt{E}^\dl_\kk (v) & = \frac12 \sum_{\substack{\l=0\\ \text{even}}}^{2\kk} 
\wt{a}_{2\kk, \l}\, \dl^\l \| \Gd^{\frac \l2 } v \|^2_{\dot{H}^\kk} + \wt{R}^\dl_{\kk}(v),
\end{aligned}
\end{equation}

\noi
for $\kk\in\N$  with $\wt{a}_{k, \l}$ as in \eqref{DEs3a}. 
Here, the interaction potentials
 $\wt{R}^\dl_{\kk-\frac12}(v)$ and $\wt{R}^\dl_{\kk}(v)$, satisfying $\#v\ge 3$,  are 
given by 
\begin{equation}
\label{Scons2}
\begin{aligned}
\wt{R}^\dl_{\kk-\frac12}(v) 
& = \sum_{m=0}^{2\kk-2}  \sum_{j=3}^{\kk+1} \sum_{\substack{p(v) \in \wt{\Pc}_j(v) \\ \#\Gd(p) = m + 1
\le \#\dx(p)\\ 
\#\dx(p) = 2\kk +3 - 2j \\ |p(v)| \le \kk -1}} c(p) \dl^m   \int_\T p(v) dx\\
& \quad + \sum_{m=0}^{2\kk-2} \sum_{j=3}^{\kk+2 + \frac 12 m} \sum_{\substack{ p(v) \in \wt{\Pc}_j(v) \\ 
\#\Gd(p) \le \min(m,  \#\dx (p)) \\ \# \dx(p) \le 2\kk+1-j \\ |p(v)| \le \kk -1 }} c(p) \dl^m  \int_\T p(v) dx ,\\
\wt{R}^\dl_{\kk}(v) 
&  = \sum_{m=0}^{2\kk} \sum_{j=3}^{\kk+2 + \frac 12 m} \sum_{\substack{ p(v) \in \wt{\Pc}_j(v) \\ 
\#\Gd(p) \le\min(m, \#\dx(p)) \\ \#\dx(p) \le 2\kk+2 -j \\ |p(v)| \le \kk  } } c(p) \dl^m \int_\T p(v) dx
\end{aligned}
\end{equation}

\noi
with suitable constants $c(p) \in \R$, independent of $\dl$, 
where the condition $\#\dx(p) = 2\kk +3 - 2j$
in the first sum comes from 
the fact that 
$\wt{R}^\dl_{\kk-\frac12}(v) $ has rank $\kk + 2$ with
$ \#\Gd = \#\dl + 1$.

\section{Deep-water \GGMs}
\label{SEC:DGM1}

In this section,  we 
study the construction and convergence properties
of the \GGMs~ in the deep-water regime, 
claimed in 
Theorem~\ref{THM:3}.
As pointed out in~\cite{LOZ}, 
the main observation is the following:
 
 \smallskip
 
 \begin{itemize}
 \item[(i)]
 In order to construct the limiting \GGM~ $\rho^\dl_\frac k2$ 
 for each fixed $0 < \dl \le \infty$
 (Theorem \ref{THM:3}\,(i)), 
it suffices to prove an
$L^p$-integrability of 
the truncated density $F^\dl_\frac k2 (\P_N u)$, 
uniformly in $N \in \N$ but for each fixed $0 < \dl \le \infty$.

\smallskip

\item[(ii)] In order to prove convergence of the \GGMs~
in the deep-water limit  (Theorem~\ref{THM:3}\,(ii)), 
we need to establish
 an
$L^p$-integrability of 
the truncated density $F^\dl_\frac k2 (\P_N u)$, 
uniformly in 
 {\it both} $N \in \N$ and $\dl \gg 1$. 
 
 \end{itemize}

\smallskip

\noi
Here, we need to study an $L^p$-integrability of  
the truncated density 
  with respect to the Gaussian measure~$\mu^\dl_\frac k2$ in \eqref{gauss1}
which is different for different values of $\dl$.
In order to establish a uniform (in $\dl$) bound, 
it is therefore more convenient to work with the associated Gaussian process $X^\dl_\frac k2$
in \eqref{Xdl1}
and the underlying probability measure $\PP$ on $\O$.

The main tool for proving such  a uniform (in $N \in \N$ and $\dl \gg1$)
 $L^p$-integrability bound
 is a  variational approach; see Subsection \ref{SUBSEC:DGM2}, 
 where the structural property
 of the deep-water conservation laws $E^\dl_\frac k2(u)$
 established in 
  Section \ref{SEC:cons1} plays a crucial role.
 Another key ingredient in establishing 
  convergence
 of the deep-water \GGMs~ $\rho^\dl_\frac k2$
is the corresponding convergence property of 
the associated Gaussian measures $\mu^\dl_\frac k2$.
In Subsection \ref{SUBSEC:DGM1}, we carry out detailed analysis
on 
the  Gaussian measures $\mu^\dl_\frac k2$; see Proposition \ref{PROP:gauss1}.

In Subsection \ref{SUBSEC:DGM3}, 
we present a proof of Theorem \ref{THM:3}\,(i)
on the construction of the deep-water \GGM~ for fixed $0 < \dl \le \infty$.
We then present 
 a proof of Theorem~\ref{THM:3}\,(ii)
on deep-water convergence  of the \GGM~ 
in Subsection \ref{SUBSEC:DGM4}.

In the following, 
we
often suppress the dependence on  $k$ and $K$
for  various constants.
We apply this convention in the remaining part of this paper.
Recall also our convention for using 
 $\eps > 0$ to denote an arbitrarily small constant, 
 where we suppress $\eps$-dependence of various constants.

\subsection{Equivalence  of the base Gaussian measures}
\label{SUBSEC:DGM1}

Given 
$0< \dl \le \infty$ and 
$k\in\N$, 
let $\mu^\dl_{\frac k 2}$
be  the  Gaussian measure  defined in \eqref{gauss1} (and \eqref{gauss2} when $\dl = \infty$).
Recall that 
 $\mu^\dl_{\frac{k}{2}}$ is the induced probability measure 
 under the map $\o \in \O \mapsto X^\dl_{\frac{k}{2}}(\o)$, 
 where $X^\dl_{\frac{k}{2}}$ is as in \eqref{Xdl1}.
 From Lemma~\ref{LEM:K1} with \eqref{T1} and \eqref{E0}, we have 
\begin{align}\label{DX1}
0\le T_{\dl, \frac k 2} (n) \sim_\dl |n|^k 
\qquad \text{and} \qquad \lim_{\dl\to\infty}T_{\dl,\frac k 2}(n) = 
T_{\infty, \frac{k}{2}} (n)  = |n|^k 
\end{align}

\noi
for any $0<\dl \le \infty$ and $n\in\Z^*$, 
where the implicit constant in the first bound is independent of   $2 \le \dl \le \infty$. 
With \eqref{DX1}, one can easily show that
 $X^\dl_{\frac{k}{2}}$ belongs to  
 $W^{\frac{k-1}{2}-\eps, r}(\T) \setminus W^{\frac{k-1}{2}, r}(\T)$ for any $1 \le r \le \infty$,   almost surely. 
The following proposition
establishes regularity, equivalence, and convergence properties
of the Gaussian measure $\mu^\dl_\frac k2$
and the associated random variable 
$X^\dl_\frac k2$, 
which generalizes 
 Proposition 3.1 in \cite{LOZ}
on the $k = 1$ case; see also 
 Proposition 3.4 in~\cite{LOZ}.

\begin{proposition}\label{PROP:gauss1}
Let $k \in \N$.  Then, the following statements hold.

\smallskip

\noi{\rm(i)}
Given any $0 < \dl \le \infty$, 
  finite $p \ge 1$,  and $1 \le r \le \infty$, 
the sequence $\{\P_N X^\dl_{\frac k 2}  \}_{N\in\N}$ 
converges
to   $X^\dl_{\frac{k}{2}}$ in $L^p(\Omega; W^{\frac{k-1}{2}-\eps, r}(\T))$
as $N \to \infty$.
 Moreover, given any finite $p \ge 1$  
 and $1 \le r \le \infty$, 
 there exist  $C_\dl = C_\dl( p, r) > 0$ and $\ta > 0$ such that 
\begin{align}
\sup_{N\in\N} \sup_{2\leq \dl \leq \infty} 
\Big\|  \| \P_N X^\dl_{\frac{k}{2}} \|_{W^{\frac{k-1}{2}-\eps, r}_x} \Big\|_{L^p(\Omega)} &< C_\dl  <\infty, 
 \label{DX2}\\
\sup_{2\leq \dl \leq \infty} \Big\| \| \P_M X^\dl_{\frac{k}{2}} - \P_N X^\dl_{\frac{k}{2}} 
\|_{W^{\frac{k-1}{2}-\eps, r}_x} \Big\|_{L^p(\Omega) } & \leq \frac{C_\dl}{N^\ta}
 \label{DX3}
\end{align}

\noi
for any $M\geq N\ge 1$,
where the constant $C_{\dl}$ is  independent of $2 \leq \dl \leq \infty$.
 In particular, the rate of convergence is uniform in $2\leq \dl \leq \infty$.

\smallskip

\noi
{\rm(ii)}  
Given any finite $p \ge 1$ and $ 1\le r \le \infty$,  
 $X^\dl_{\frac{k}{2}}$ converges to 
$X^\infty_{\frac k 2}$ in $L^p(\Omega;W^{\frac{k-1}{2}-\eps, r}(\T))$
 as $\dl\to\infty$. In particular, $\mu^\dl_{\frac{k}{2}}$ converges weakly to $\mu^\infty_{\frac{k}{2}}$ as $\dl\to \infty$.

\smallskip
\noi
{\rm(iii)} 
For any $0<\dl<\infty$, the Gaussian measures $\mu^\dl_{\frac{k}{2}}$ and $\mu^\infty_{\frac{k}{2}}$ are equivalent.

\smallskip
\noi{\rm(iv)} 
As $\dl \to \infty$, the measure $\mu^\dl_{\frac{k}{2}}$ converges to $\mu^\infty_{\frac{k}{2}}$ in the Kullback-Leibler divergence
defined in Definition \ref{DEF:KL}. In particular, $\mu^\dl_{\frac{k}{2}}$ converges to $\mu^\infty_{\frac{k}{2}}$ in total variation
as $\dl \to \infty$.

\end{proposition}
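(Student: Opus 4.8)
The plan is to prove Proposition~\ref{PROP:gauss1} in order, with the bulk of the work going into parts~(i) and~(iii); parts~(ii) and~(iv) then follow by soft arguments. For part~(i), I would work with the Gaussian Fourier series representation \eqref{Xdl1} directly. Since $\{g_n\}_{n\in\Z^*}$ is a family of complex Gaussians (with the reality constraint $g_{-n}=\cj{g_n}$), each Littlewood-Paley piece of $\P_N X^\dl_{\frac k2}$ lies in a fixed Wiener chaos, so by the Wiener chaos estimate (Lemma~\ref{LEM:hyp}) it suffices to control the second moments, i.e.\ to bound $\E\big[|\ft{X^\dl_{\frac k2}}(n)|^2\big] = \jb{n}^{-1}\cdot\big(T_{\dl,\frac k2}(n)\big)^{-1}\cdot(\text{const})$ and then sum $\jb{n}^{(k-1-\eps)r}\big(T_{\dl,\frac k2}(n)\big)^{-r/2}$ — here I would invoke the lower bound $T_{\dl,\frac k2}(n)\gtrsim |n|^k$ from \eqref{DX1}, which by Lemma~\ref{LEM:K1} is \emph{uniform} in $2\le\dl\le\infty$, and use that $\{(k-1-\eps)-k/2\}<-1/2$ is negative enough to make the sum converge. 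The tail estimate \eqref{DX3} (and hence the convergence of $\P_N X^\dl_{\frac k2}$) comes from the same computation restricted to $|n|>N$, producing a polynomial gain $N^{-\ta}$ with $\ta=\ta(\eps)>0$, again uniformly in $2\le\dl\le\infty$. The passage from $L^p(\Omega;W^{s,r}_x)$ bounds to $W^{s,r}$-valued random variables is standard (one can, e.g., first treat $r=2$ via Plancherel and then upgrade to general $1\le r\le\infty$ by interpolation/embedding on $\T$, or argue directly on the Fourier side).

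For part~(ii), I would show that $X^\dl_{\frac k2}\to X^\infty_{\frac k2}$ in $L^p(\Omega;W^{\frac{k-1}2-\eps,r}(\T))$ as $\dl\to\infty$. Fixing the \emph{same} Gaussian randomness $\{g_n\}$, the difference $X^\dl_{\frac k2}-X^\infty_{\frac k2}$ has Fourier coefficients $\jb{n}^{-1/2}\big((T_{\dl,\frac k2}(n))^{-1/2}-(T_{\infty,\frac k2}(n))^{-1/2}\big)g_n$, and again by Lemma~\ref{LEM:hyp} it is enough to estimate the second moment. From \eqref{DX1}, $T_{\dl,\frac k2}(n)\to T_{\infty,\frac k2}(n)=|n|^k$ pointwise, and the uniform lower bound $T_{\dl,\frac k2}(n)\gtrsim|n|^k$ for $2\le\dl\le\infty$ gives a summable $\dl$-independent dominating function $\jb{n}^{(k-1-\eps)r}|n|^{-kr/2}$; the dominated convergence theorem then yields the $L^p(\Omega;W^{s,r}_x)$-convergence, and weak convergence of $\mu^\dl_{\frac k2}$ to $\mu^\infty_{\frac k2}$ follows immediately (convergence in probability of the random variables implies convergence in distribution of their laws).

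For part~(iii), I would apply Kakutani's theorem (Lemma~\ref{LEM:kak}). Writing $X^\dl_{\frac k2}$ and $X^\infty_{\frac k2}$ in the real Fourier basis (pairing $n$ and $-n$), the coordinates are independent real Gaussians with variances $a_n^{(\dl)}\asymp\big(\jb{n}\,T_{\dl,\frac k2}(n)\big)^{-1}$ and $b_n=\big(\jb{n}\,|n|^k\big)^{-1}$ respectively, so by Lemma~\ref{LEM:kak} equivalence is equivalent to $\sum_{n}\big(\tfrac{a_n^{(\dl)}}{b_n}-1\big)^2<\infty$, i.e.\ $\sum_n\big(\tfrac{|n|^k}{T_{\dl,\frac k2}(n)}-1\big)^2<\infty$. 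This is the one genuinely quantitative point: I would use the series/asymptotic expansion of $\ft\Gdl(n)$ (or equivalently of $\Kdl(n)=n\coth(\dl n)-\dl^{-1}$ from \eqref{sym1}) together with \eqref{T1} to show that $|n|^k-T_{\dl,\frac k2}(n)=O_\dl(|n|^{k-1})$ as $|n|\to\infty$ — more precisely, $\Kdl(n)=|n|+O(\dl^{-1}e^{-2\dl|n|})$, so $|\ft\Gdl(n)|^{k-\l}=1+O(|n|^{-1})$ uniformly, hence $\tfrac{|n|^k}{T_{\dl,\frac k2}(n)}-1=O(|n|^{-1})$, and $\sum_n |n|^{-2}<\infty$. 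Thus $\mu^\dl_{\frac k2}\sim\mu^\infty_{\frac k2}$. I expect extracting the correct decay rate $O(|n|^{-1})$ for $\tfrac{|n|^k}{T_{\dl,\frac k2}(n)}-1$ — carefully tracking the sum over even $\l$ in \eqref{T1} and the binomial coefficients $a_{k,\l}$ with $\sum_\l a_{k,\l}=1$ — to be the main obstacle, though it is a direct computation once set up.

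Finally, for part~(iv), I would strengthen the convergence in~(ii) to convergence in the Kullback-Leibler divergence. By~(iii) each $\mu^\dl_{\frac k2}$ ($2\le\dl<\infty$) is absolutely continuous with respect to $\mu^\infty_{\frac k2}$, and since both are product measures in the real Fourier coordinates, \eqref{KL4a} gives $\dkl\big(\mu^\dl_{\frac k2},\mu^\infty_{\frac k2}\big)=\sum_n\dkl\big(\mathcal N(0,a_n^{(\dl)}),\mathcal N(0,b_n)\big)$. Using the explicit formula $\dkl\big(\mathcal N(0,a),\mathcal N(0,b)\big)=\tfrac12\big(\tfrac ab-1-\log\tfrac ab\big)$ and the elementary estimate $x-1-\log x\lesssim(x-1)^2$ for $x$ near $1$, this is bounded by $\sum_n\big(\tfrac{a_n^{(\dl)}}{b_n}-1\big)^2=\sum_n\big(\tfrac{|n|^k}{T_{\dl,\frac k2}(n)}-1\big)^2$; since the summands are $O(|n|^{-2})$ with a $\dl$-dependent constant that in fact tends to $0$ as $\dl\to\infty$ (e.g.\ via the exponential bound $\Kdl(n)=|n|+O(\dl^{-1}e^{-2\dl|n|})$ and dominated convergence over $n$), we conclude $\dkl\big(\mu^\dl_{\frac k2},\mu^\infty_{\frac k2}\big)\to0$ as $\dl\to\infty$. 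Convergence in total variation is then immediate from Pinsker's inequality (Lemma~\ref{LEM:KL2}). The only care needed here is uniform control of the tail of $\sum_n(\cdots)$ to pass the limit $\dl\to\infty$ inside, which the exponential decay of the error in $\Kdl(n)$ supplies comfortably.
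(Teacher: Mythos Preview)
Your approach matches the paper's in all four parts. One correction: the asymptotic $\Kdl(n)=|n|+O(\dl^{-1}e^{-2\dl|n|})$ you invoke in~(iii) and~(iv) is false---writing $\coth(x)=1+\tfrac{2e^{-2x}}{1-e^{-2x}}$ for $x>0$ gives $\Kdl(n)=|n|-\dl^{-1}+O(|n|e^{-2\dl|n|})$, so the error is $O(\dl^{-1})$, not exponentially small. This does not affect your conclusions: the two-sided bound $|n|-\dl^{-1}<\Kdl(n)<|n|$ from Lemma~\ref{LEM:K1} already yields $|\ft\Gdl(n)|^{k-\l}=1+O((\dl|n|)^{-1})$ and hence, using $\sum_\l a_{k,\l}=1$, the estimate $\tfrac{|n|^k}{T_{\dl,\frac k2}(n)}-1=O((\dl|n|)^{-1})$, which is exactly what you need for Kakutani in~(iii) and for the KL computation in~(iv). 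The paper packages this same bound as $0\le|n|^k-T_{\dl,\frac k2}(n)\lesssim\dl^{-1}|n|^{k-1}$ for $|n|>\dl^{-1}$ (its \eqref{DX3b}), and in part~(ii) uses it to extract the explicit rate $\|X^\dl_{\frac k2}-X^\infty_{\frac k2}\|_{L^p_\o W^{\frac{k-1}2-\eps,r}_x}\lesssim\dl^{-1/2}$ in place of your soft dominated-convergence argument. (Also, in~(iii) your variances should be $a_n^{(\dl)}=(T_{\dl,\frac k2}(n))^{-1}$, without the extra $\jb{n}^{-1}$ factor; this is harmless since it cancels in the ratio $a_n^{(\dl)}/b_n$.)
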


\begin{remark}\label{REM:gauss1}
\rm

The regularity threshold $s =\frac{k-1}{2}$ is sharp in the sense
that $X^\dl_\frac k2$ does not belong to $W^{\frac{k-1}{2}, r}(\T)$
for any $r\ge 1$, almost surely.
The same comment applies to the shallow-water regime
(Proposition \ref{PROP:gauss2}).

\end{remark}

Proposition \ref{PROP:gauss1}
follows from a straightforward modification of the proof of Proposition 3.1 in \cite{LOZ}
(and also of Proposition 3.4 in \cite{LOZ} with $k = 1$), 
but for readers' convenience
and due to its importance, we present  its proof.

\begin{proof}[Proof of Proposition \ref{PROP:gauss1}]
(i) We only prove the difference estimate \eqref{DX3} since
\eqref{DX2} follows from a similar computation.
We first consider the case $r < \infty$.
Without loss of generality, we assume that $p\ge r\ge 1$.
From 
 Minkowski's inequality, the Wiener chaos estimate (Lemma~\ref{LEM:hyp}), 
 \eqref{Xdl1}, 
 and~\eqref{DX1}, we have 
\begin{align}
\begin{split}
\| & \P_M X^\dl_{\frac{k}{2}} - \P_N X^\dl_{\frac{k}{2}}\|_{L^p_\omega W^{\frac{k-1}{2}-\eps, r}_x} 
 \leq p \big\| \jb{\dx}^{\frac{k-1}{2}-\eps}(\P_M X^\dl_{\frac{k}{2}} - \P_N X^\dl_{\frac{k}{2}})\big\|_{L^r_x L^2_\o}
  \\
& = p  \bigg(\sum_{N<|n|\leq M} \frac{\jb{n}^{k-1-2\eps}}{T_{\dl, \frac k 2}(n)} \bigg)^\frac12 
 \sim_\dl p \bigg( \sum_{N < |n| \leq M} \frac{1}{|n|^{1+2\eps}}\bigg)^\frac12 \les  \frac{p}{N^{\theta }}
\end{split}
\label{DX3a}
\end{align}

\noi
for some $\ta > 0$.
Note that the implicit constant is independent  of $2 \leq \dl \leq \infty$.
When $r = \infty$, from Sobolev's inequality, we have 
\begin{align*}
\|  \P_M X^\dl_{\frac{k}{2}} - \P_N X^\dl_{\frac{k}{2}}\|_{L^p_\omega W^{\frac{k-1}{2}-\eps, \infty}_x} 
\les \|  \P_M X^\dl_{\frac{k}{2}} - \P_N X^\dl_{\frac{k}{2}}\|_{L^p_\omega W^{\frac{k-1}{2}-\frac 12 \eps, r_0}_x} 
\end{align*}

\noi
for some finite $r_0 \gg1$.
Then, we can repeat the computation in \eqref{DX3a}.

\medskip

\noi
(ii) 
From \eqref{T1} (with \eqref{E0} and \eqref{sym1}) and 
 Lemma \ref{LEM:K1}, we have 
\begin{align}
0\le |n|^k - T_{\dl, \frac{k}{2}}(n) 
& = \sum_{\substack{\l=0 \\ \text{even}}}^k a_{k, \l} |n|^\l \big[ |n|^{k-\l} - (\Kdl(n))^{k-\l} \big] \nonumber\\
& <  \sum_{\substack{\l=0 \\  \text{even}}}^k a_{k, \l} |n|^\l \big[|n|^{k-\l} - (|n| - \dl^{-1})^{k-\l} \big]  \nonumber\\
& = \dl^{-1} \sum_{\substack{\l=0 \\  \text{even}}}^{k-1} a_{k, \l}   \sum_{j=0}^{k-\l-1}|n|^{k-1-j} (|n| - \dl^{-1})^{j} \nonumber,
\end{align}

\noi
from which we obtain
\begin{align}\label{DX3b}
0\le |n|^k - T_{\dl, \frac{k}{2}}(n) 
\les
\begin{cases}
\dl^{-1} |n|^{k-1}, & \text{for } |n| > \dl^{-1},\\
\dl^{-k} , & \text{for } |n|\le  \dl^{-1}
\end{cases}
\end{align}

\noi
for any $n \in \Z^*$.
Let $2\le \dl < \infty$.
Then, 
proceeding as in \eqref{DX3a} with \eqref{DX1} and \eqref{DX3b}, we have 
\begin{equation}\label{DX4}
\begin{aligned}
 \| X^\dl_{\frac{k}{2}} - X^\infty_{\frac k 2} \|_{L^p_\o W^{\frac{k-1}{2} - \eps, r }_x}
& \le p  \bigg( \sum_{n\in\Z^*}  \jb{n}^{k-1-2\eps} \bigg[ \frac{1}{(T_{\dl,\frac{k}{2}}(n))^\frac12} - \frac{1}{|n|^{\frac{k}{2}}} \bigg]^2 \bigg)^\frac12 \\
& \les p \bigg( \sum_{n\in\Z^*} \frac{1}{\jb{n}^{1+2\eps}} \frac{|n|^{k} - T_{\dl,\frac{k}{2}}(n)}{T_{\dl, \frac{k}{2}}(n)}  \bigg)^\frac12\\
& \les \frac p{\dl^\frac 12} \bigg(\sum_{n \in \Z^*} \frac{1}{\jb{n}^{2+2\eps}} \bigg)^\frac12 \les \frac{p}{\dl^\frac12} \too 0, 
\end{aligned}
\end{equation}

\noi
as $\dl \to \infty$, 
where, in the second inequality, we used 
the fact that $\sqrt{a}-\sqrt{b}\leq \sqrt{a-b}$ for $a\geq b \geq 0$.
Recalling that  $\mu^\dl_{\frac{k}{2}} = \Law(X^\dl_{\frac{k}{2}})$
for $0 < \dl \le \infty$, 
we then conclude weak convergence of $\mu^\dl_{\frac{k}{2}}$ to $\mu^\infty_{\frac{k}{2}}$.

\medskip

\noi
(iii) Let $0 < \dl \le \infty$. Recalling $g_{-n} = \cj {g_n}$, we have 
\begin{align*}
X^\dl_{\frac k 2}(x;\omega)
& = \frac{1}{\sqrt{2\pi}} \sum_{n\in\N} \frac{2}{(T_{\dl,  \frac{k}{2}}(n))^\frac12} 
\Big( \Re g_n \cos(nx) - \Im g_n \sin(nx) \Big).
\end{align*}

\noi
For $n\in\N$, set
\begin{align*}
A_n &= \frac{\Re g_n}{(T_{\dl,  \frac{k}{2}}(n))^\frac12},  \quad  A_{-n} = - \frac{\Im g_n}{(T_{\dl,  \frac{k}{2}}(n))^\frac12}, \quad
B_n = \frac{\Re g_n}{|n|^{ \frac{k}{2}}}, \quad  B_{-n}= - \frac{\Im g_n}{|n|^{ \frac{k}{2}}}
\end{align*}
with $a_{\pm n} = \E[A^2_{\pm n}] = (T_{\dl,  \frac{k}{2}}(n))^{-1}$ and $b_{\pm n} = \E[B^2_{\pm n}] = |n|^{-k}$. Then, using \eqref{DX1} and \eqref{DX3b}, we have
\begin{align}
\begin{split}
\sum_{n\in \Z^*} \bigg( \frac{a_n}{b_n} - 1 \bigg)^2 
&  = \sum_{n\in \Z^*} \bigg( \frac{|n|^k}{T_{\dl, \frac{k}{2}}(n)} -1  \bigg)^2\\
&  \les \frac{1}{\dl^{2k}} \sum_{0<|n| \le \frac1\dl} \frac{1}{|n|^{2k}} + \frac{1}{\dl^2} \sum_{|n| > \frac1\dl} \frac{1}{|n|^2} < \infty.
\end{split}
\label{DX6}
\end{align}

\noi
Hence, from  Kakutani's theorem (Lemma~\ref{LEM:kak}), we conclude 
equivalence of  $\mu^\dl_{\frac{k}{2}}$ and $\mu^\infty_{\frac{k}{2}}$.

\medskip

\noi
(iv) In the following, we focus on proving 
convergence in the Kullback-Leibler divergence.
Once we prove convergence in the Kullback-Leibler divergence, 
convergence in total variation follows from 
Pinsker's inequality (Lemma \ref{LEM:KL2}).

Given $0< \dl \le \infty$, we first write $\mu^\dl_\frac k2$   as the product
of Gaussian measures on $\R$:
\begin{align*}
d\mu^\dl_\frac k2
& = \bigg(\bigotimes_{n \in \N} \frac{T_{\dl, \frac k2}^\frac 12(n)}{\sqrt {2\pi}}
e^{-\frac 1{2} T_{\dl, \frac k2}(n) (\Re \ft u(n))^2 } d\Re \ft u(n)\bigg)\\
& \quad \times  \bigg(\bigotimes_{n \in \N} \frac{T_{\dl, \frac k2}^\frac 12(n)}{\sqrt {2\pi}}
e^{-\frac 1{2} T_{\dl, \frac k2}(n) (\Im \ft u(n))^2 } d\Im \ft u(n)\bigg).
\end{align*}

\noi
With $x = (x_1, x_2) \in \R^2$, we then have
\begin{align}
d\mu^\dl_\frac k2
& = \bigotimes_{n \in \N} \frac{T_{\dl, \frac k2}(n)}{ 2\pi}
e^{-\frac 1{2} T_{\dl, \frac k2}(n) |x|^2} dx
=: \bigotimes_{n \in \N} d\mu^{\dl, (n)}_{\frac k2}
\label{KD1}
\end{align}

\noi
 The Radon-Nikodym derivative $d \mu^{\dl, (n)}_\frac k2/ d \mu^{\infty, (n)}_\frac k2$
is given by
\begin{align}
\frac{d \mu^{\dl, (n)}_\frac k2}{d \mu^{\infty, (n)}_\frac k2}
= \frac{T_{\dl, \frac k2}(n)}{n^k} e^{\frac 1{2} (n^k - T_{\dl, \frac k2}(n)) |x|^2}.
\label{KD2}
\end{align}

\noi
Then, arguing as in \cite[(3.9)]{LOZ} with 
Part (iii), \eqref{KL4a},  \eqref{KL4}, 
 \eqref{KD1},  and \eqref{KD2}, we have
\begin{align}
 \dkl\big(\mu^\dl_\frac k2, \mu^\infty_\frac k2\big)
= \sum_{n \in \N} \dkl (\mu^{\dl, (n)}_\frac k2,  \mu^{\infty, (n)}_\frac k2)
= \sum_{n \in \N}  \phi\bigg(\frac{n^k}{T_{\dl, \frac k2}(n)}\bigg),
\label{KD3}
\end{align}

\noi
where $\phi(t) = t - 1 - \log t$.
Note that $\phi(1) = 0$ and $\phi'(t) > 0$ for $t > 1$.
Recalling from Lemma \ref{LEM:K1} with \eqref{T1}
that  $\frac{n^k}{T_{\dl, \frac k2}(n)}$ decreases to 1 as $\dl \to \infty$, 
we see that the summand on the right-hand side of \eqref{KD3}
decreases to $\phi(1) = 0$
as $\dl \to \infty$.
Moreover, 
by noting  $\phi(t) \le (t - 1)^2$ for $t \ge 1$, it follows from \eqref{DX6} that 
\begin{align*}
\sum_{n \in \N}  \phi\bigg(\frac{n^k}{T_{\dl, \frac k2}(n)}\bigg)
\le
\sum_{n \in \N}  \frac{(n^k - T_{\dl, \frac k2}(n))^2}{T_{\dl, \frac k2}^2(n)}<\infty
\end{align*}

\noi
for any $\dl > 0$.
Hence, by 
the dominated convergence theorem, we 
conclude that 
\begin{align*}
\lim_{\dl \to \infty}
 \dkl\big(\mu^\dl_\frac k2, \mu^\infty_\frac k2\big)
= \lim_{\dl \to \infty}\sum_{n \in \N}  \phi\bigg(\frac{n^k}{T_{\dl, \frac k2}(n)}\bigg)
= \sum_{n \in \N} \lim_{\dl \to \infty} \phi\bigg(\frac{n^k}{T_{\dl, \frac k2}(n)}\bigg)
= 0.
\end{align*}

\noi
This concludes the proof
of Proposition \ref{PROP:gauss1}.
\end{proof}

\subsection{Uniform integrability of the deep-water densities}
\label{SUBSEC:DGM2}
Our main goal in this subsection is to 
establish the following uniform (in $N$ and also in $2\le \dl \le\infty$)
$L^p$-integrability of  the truncated density $
F^\dl_{\frac k2}(\P_N u)$ defined in \eqref{rho4}, where $F^\dl_{\frac{k}{2}}(u)$ is given by 
\begin{align}
\begin{split}
F^\dl_{\frac{k}{2}}(u)  =  \eta_K\big(\| u\|_{L^2}\big)
\exp\Big(-R^\dl_{\frac k 2}(u)\Big) 
\end{split}
\label{rhox1}
\end{align}

\noi
for a fixed constant  $K>0$.

\begin{proposition}\label{PROP:V1}
Let $k \ge 2$ be an integer and $K > 0$.
Then, given any  $0<\dl\leq \infty$ and finite  $p\ge 1$,
there exists $ C_{\dl, p}> 0$ such that 
\begin{align*}
\sup_{N\in\N}  \| F^\dl_{\frac{k}{2}}( \P_N X^\dl_{\frac{k}{2}}) \|_{L^p(\Omega)} 
& = \sup_{N\in\N} \| F^\dl_{\frac{k}{2}}( \P_N u) \|_{L^p(d\mu^\dl_{\frac{k}{2}})}
\leq C_{\dl, p} < \infty, 
\end{align*}

\noi
where the constant $C_{\dl, p}$ is  independent of 
$2\le \dl \le \infty$.
Here, 
$X^\dl_\frac k2$ and 
$\mu^\dl_\frac k2$ are as in \eqref{Xdl1} and~\eqref{gauss1}, respectively.
\end{proposition}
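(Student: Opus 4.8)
The plan is to use the Bou\'e--Dupuis variational formula (Lemma~\ref{LEM:var0}, referenced earlier) to bound the exponential moments
$\E\big[ F^\dl_{\frac k2}(\P_N X^\dl_{\frac k2})^p\big]$
uniformly in both $N\in\N$ and $2\le\dl\le\infty$. After pulling the harmless cutoff factor $\eta_K(\|\P_N u\|_{L^2})^p$ through (it is bounded by $\ind_{\{\|\P_N u\|_{L^2}\le 2K\}}$), the task reduces to controlling
$\E\big[\exp\big(-p\,R^\dl_{\frac k2}(\P_N X^\dl_{\frac k2})\big)\,\ind_{\{\|\P_N X^\dl_{\frac k2}\|_{L^2}\le 2K\}}\big]$,
and the variational formula rewrites the logarithm of this quantity as a supremum over drifts $\theta$ of
$\E\big[ -p\,R^\dl_{\frac k2}\big(\P_N(X^\dl_{\frac k2}+I(\theta))\big) - \tfrac12\|\theta\|_{L^2_tL^2_x}^2\big]$,
still with the $L^2$-cutoff in force on $\P_N(X^\dl_{\frac k2}+I(\theta))$. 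The heart of the argument is then a pointwise-in-$\theta$ estimate showing that $-R^\dl_{\frac k2}$ evaluated on the shifted field is dominated by a small multiple of $\|\theta\|_{L^2_tL^2_x}^2$ plus terms controlled by the $L^2$-cutoff and by finite moments of the Gaussian piece, \emph{with all implicit constants independent of $\dl\in[2,\infty]$}.

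\textbf{Key steps.}
First I would record the structural description of $R^\dl_{\frac k2}$ from Subsection~\ref{SUBSEC:A4} (equations \eqref{R1}--\eqref{Aodd}): each monomial has $\#u\ge 3$, at most $\lceil\frac{k-1}{2}\rceil$ derivatives on any single factor, and any $\dl$-dependence comes only through nonnegative powers of $\dl^{-1}$ or through $\Qdl$, for which $|\ft\Qdl(n)|\le\dl^{-1}$ by \eqref{Q1}; hence \emph{every} explicitly $\dl$-dependent or $\Qdl$-carrying contribution is uniformly bounded by its $\dl=\infty$ counterpart (indeed is $O(\dl^{-1})$ smaller), so for uniformity purposes it suffices to treat the $\dl$-free part $R^\dl_{\frac k2,0}=R^\BO_{\frac k2}$ together with the crude bound $\|\Qdl\|_{L^r_0\to L^r_0}\le C_r\dl^{-1}$ from \eqref{Q2}. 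Second, for the $\dl$-free cubic leading terms (those of the form \eqref{DF4}, \eqref{DF5}), I would use the precise parity/structure of $B^\dl_{\frac k2}$: when $k=2m$ the worst cubic is $\int u\,(\H\dx^{m-1}u)(\dx^m u)\,dx$, which after integration by parts distributes at most $m-\frac12<\frac k2$ effective derivatives per factor, and similarly for $k=2m+1$ using $\mathcal A=\{(0,0,0),(0,0,1),(0,1,1)\}$; each is estimated by interpolation \eqref{interp1} / fractional Leibniz \eqref{leib} into $\|u\|_{L^2}^{1+}\|u\|_{H^{(k-1)/2-\eps}}^{2-}$-type products. Third, splitting $\P_N(X^\dl_{\frac k2}+I(\theta))$, the purely Gaussian multilinear terms are handled by Wiener chaos (Lemma~\ref{LEM:hyp}) plus Proposition~\ref{PROP:gauss1}\,(i) (which gives uniform-in-$\dl$ $L^p_\omega W^{(k-1)/2-\eps,r}$ bounds on $\P_N X^\dl_{\frac k2}$); the terms linear or higher in the drift $I(\theta)$ are controlled using $\|I(\theta)\|_{H^1}\le\|\theta\|_{L^2_tL^2_x}$, the $L^2$-cutoff (which caps the lowest-regularity norm of the shifted field), and Young's inequality to absorb a small constant times $\|\theta\|_{L^2_tL^2_x}^2$ into the $-\frac12\|\theta\|^2$ term of the variational functional. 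Fourth, collecting all bounds yields $\sup_\theta\big(\cdots\big)\le C_{\dl,p}$ with $C_{\dl,p}$ nondecreasing in $\dl^{-1}$ and hence bounded by $C_{\infty,p}$ on $[2,\infty]$; exponentiating gives the claim, and the case $\dl=\infty$ is literally the BO computation already implicit in \cite{TV0,TV1,TV2}.

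\textbf{Main obstacle.}
The delicate point is the interplay between the $L^2$-cutoff and the super-Gaussian growth of $-R^\dl_{\frac k2}$: since we impose \emph{only} an $L^2$-cutoff (rather than smooth cutoffs on lower conservation laws as in \cite{TV1,TV2}), one cannot directly bound the high-regularity norms of the shifted field, and the leading cubic terms with $m$ derivatives on one factor must genuinely be tamed by integration by parts before estimating — this is exactly why the structural Lemmas~\ref{LEM:cub1}--\ref{LEM:cub2} and the explicit form \eqref{Aeven}--\eqref{Aodd} are needed. A secondary subtlety is verifying that the estimate is \emph{uniform} in $\dl$: one must check that no step silently uses $\dl$-dependent constants — in particular the $L^r$-boundedness of $\H$ and $\Gdl$ on mean-zero functions (Lemma~\ref{LEM:T1}\,(iii), \eqref{Q2a}, \eqref{Q3}) is invoked precisely because those bounds are $\dl$-independent, and the deep-water rank bookkeeping of Definition~\ref{DEF:ord1} guarantees the number of monomials and their derivative counts do not blow up with $\dl$. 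Once these two points are handled, the rest is a routine (if lengthy) multilinear estimate, and I would present it by induction on the number of factors $\#u$ in the monomials of $R^\dl_{\frac k2}$, peeling off the leading cubic case first.
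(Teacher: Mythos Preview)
Your overall strategy (Bou\'e--Dupuis variational formula, structural decomposition of $R^\dl_{\frac k2}$, uniformity via the bounds on $\Qdl$) matches the paper's, but two concrete ingredients are missing.

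First, you cannot carry the $L^2$-cutoff through Lemma~\ref{LEM:var0} as an indicator: the hypothesis $\E[|G(\P_N X^\dl_{\frac k2})|^p]<\infty$ fails if $G$ takes the value $-\infty$, and after the variational rewriting the supremum runs over \emph{all} drifts $\theta\in\Ha$, so there is no constraint of the form $\|\P_N(X+I(\theta))\|_{L^2}\le 2K$ available to ``cap the lowest-regularity norm of the shifted field''. The paper's device is to bound $\eta_K(\|u\|_{L^2})\le C_{A,K}\exp(-A\|u\|_{L^2}^\al)$ and absorb the exponential into $G$ (this is Proposition~\ref{PROP:V2}); after the variational step one obtains the coercive term $-pA\|\P_N(X^\dl_{\frac k2}+\Theta)\|_{L^2}^\al$, which via \eqref{var6a}--\eqref{var6} yields $-\tfrac{pA}{2}\|\Theta_N\|_{L^2}^\al$ and, for $A=A(p)$ large, absorbs all the $\|\Theta_N\|_{L^2}^\al$ contributions coming from the interaction potential. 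Without this replacement the variational functional has no mechanism to control $\|\Theta_N\|_{L^2}$.

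Second, your treatment of the leading cubic term is too optimistic. For $k=2m$ the monomial $\int u\,\H\dx^{m-1}u\,\dx^m u\,dx$ cannot be reduced by integration by parts to something with at most $(k-1)/2-\eps$ derivatives on every factor, and the mixed term $\int \Theta_N\,\dx^{m-1}X^\dl_N\,\dx^m X^\dl_N\,dx$ genuinely involves $\dx^{k/2}X^\dl_N$, which lies outside the $W^{(k-1)/2-\eps,r}$ regularity supplied by Proposition~\ref{PROP:gauss1}\,(i). The paper handles this in Lemma~\ref{LEM:var2} by two separate observations: the purely Gaussian cubic has \emph{zero expectation} (equation~\eqref{DDV12}), and the term linear in $\Theta_N$ is controlled via Cauchy--Schwarz together with the bilinear probabilistic estimate of Lemma~\ref{LEM:var0a}, giving $\E\big[\|\P_{\ne 0}(\dx^{m-1}X^\dl_N\,\dx^m X^\dl_N)\|_{H^{-k/2}}^2\big]<\infty$ uniformly in $N$ and in $2\le\dl\le\infty$. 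The analogous step for $k=2m+1$ uses \eqref{DDV3}--\eqref{DDV4}. Your interpolation scheme does cover the lower-order contributions (this is Lemma~\ref{LEM:var1}), but not these leading ones. A minor correction: the drift bound is $\|I_{\dl,\frac k2}(\theta)(1)\|_{H^{k/2}}^2\le a_\dl\int_0^1\|\theta\|_{L^2}^2\,dt$ (equation~\eqref{var2}), not $H^1$.
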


We prove  Proposition~\ref{PROP:V1}
via a variational approach, popularized by  Barashkov and Gubinelli~\cite{BG}.
In particular, 
we follow the approach
developed in \cite{OOT, OST2, OOT2, TW} 
to study 
focusing Gibbs measures. 
Recall that
\begin{align*}
\eta_K (x) 
\le \ind_{\{|x|\le 2K\}} \le
 \exp\big( -  A |x|^\al\big) \exp\big(A (2K)^\al\big)
\end{align*}

\noi
for any $K, A , \al > 0$.
Then, 
by setting
\begin{align*}
\F^\dl_{ \frac{k}{2}}(u)
=\exp\Big(-R^\dl_{\frac{k}{2}}(u) - A \| u\|_{L^2}^{\al} \Big)
\end{align*}

\noi
for some $A, \al> 0$ (to be chosen later), 
we have 
\begin{align*}
F^\dl_{\frac{k}{2}}(\P_N u) \le C_{A, K} \cdot 
\F^\dl_{ \frac{k}{2}}(\P_N u).
\end{align*}

\noi
Hence,
Proposition \ref{PROP:V1} follows once we prove the following
uniform bound on $\F^\dl_{ \frac{k}{2}}(\P_N u)$.

\begin{proposition}\label{PROP:V2}
Let $k \ge 2$ be an integer.
Then, given  any $0<\dl\leq \infty$ and finite  $p\ge 1$,
there exist $A = A(p) > 0$,  $\al > 0$,  and $C_{\dl, p} > 0$ such that 
\begin{align}
&\sup_{N\in \N} \|\F^\dl_\frac k2 (\P_NX^\dl_{\frac{k}{2}})  \|_{L^p(\O)}
=
\sup_{N \in \N}\|\F^\dl_\frac k2 (\P_N u) \|_{L^p(d\mu^\dl_{\frac{k}{2}})}
\le C_{\dl, p} < \infty, 
\label{V10}
\end{align}

\noi
where the constant $C_{\dl, p}$ is  independent of 
$2\le \dl \le \infty$.
Here, $A = A(p)$ is independent of $2\le \dl \le \infty$, while
 $\al$ is independent of $p$ and $0 < \dl \le \infty$.

\end{proposition}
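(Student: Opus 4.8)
The plan is to establish Proposition~\ref{PROP:V2} by the Bou\'e--Dupuis variational formula (Lemma~\ref{LEM:var0}), reducing the uniform exponential bound to a deterministic drift estimate for the interaction potential $R^\dl_{\frac k2}$, whose structure is given by \eqref{R1}, \eqref{Aeven}, \eqref{Aodd} and Lemmas~\ref{LEM:cub1}--\ref{LEM:cub2}. Since $\|\F^\dl_{\frac k2}(\P_N X^\dl_{\frac k2})\|_{L^p(\O)}^p = \E\big[\exp\big(-pR^\dl_{\frac k2}(\P_N X^\dl_{\frac k2}) - pA\|\P_N X^\dl_{\frac k2}\|_{L^2}^\al\big)\big]$, applying Lemma~\ref{LEM:var0} to the functional $w\mapsto pR^\dl_{\frac k2}(\P_Nw) + pA\|\P_Nw\|_{L^2}^\al$ (with $w\sim\mu^\dl_{\frac k2}$) expresses $-\log\|\F^\dl_{\frac k2}(\P_NX^\dl_{\frac k2})\|_{L^p}^p$ as the infimum over drifts $\Theta$ of
\[
\E\Big[pR^\dl_{\frac k2}(Y_N + \Theta_N) + pA\|Y_N + \Theta_N\|_{L^2}^\al + \tfrac12\sum_{n\in\Z^*}T_{\dl,\frac k2}(n)|\ft\Theta(n)|^2\Big],
\]
where $Y_N := \P_NX^\dl_{\frac k2}$ and $\Theta_N := \P_N\Theta$. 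By \eqref{DX1} (equivalently \eqref{E1a}) the cost term dominates $c_0\|\Theta_N\|_{\dot H^{\frac k2}}^2$ with $c_0>0$ independent of $2\le\dl\le\infty$, so it suffices to bound the infimand from below, uniformly in $N$ and $2\le\dl\le\infty$.

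The crux is the following deterministic estimate: for every $\eta>0$ there are $\al>0$ (independent of $p$) and $C_\eta>0$ with
\[
\big|R^\dl_{\frac k2}(Y_N + \Theta_N)\big| \le \eta\|\Theta_N\|_{\dot H^{\frac k2}}^2 + \eta\|Y_N + \Theta_N\|_{L^2}^\al + C_\eta\big(1 + \mathcal{R}(Y_N)\big),
\]
uniformly in $N$ and $2\le\dl\le\infty$, where $\mathcal{R}(Y_N)$ is a finite sum of powers of $\|Y_N\|_{W^{\frac{k-1}2-\eps,r}}$ and $\|Y_N\|_{\F L^{s,1}}$ over finitely many indices. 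Granting this, one chooses $\eta = \eta(p)$ small and then $A = A(p)$ large, absorbs the first two terms into the cost and the $L^2$-cutoff contributions (using $\|\Theta_N\|_{L^2}\le\|Y_N+\Theta_N\|_{L^2} + \|Y_N\|_{L^2}$ for the latter), takes expectations, and invokes the uniform moment bounds of Proposition~\ref{PROP:gauss1}\,(i) to control $\E[\mathcal{R}(Y_N)]$; this yields \eqref{V10}. To prove the displayed estimate, I would expand $R^\dl_{\frac k2}$ into its monomials via \eqref{R1}--\eqref{Aodd}, substitute $Y_N + \Theta_N$, and expand multilinearly. Every monomial has homogeneity $\#u\ge3$ and hence, by Definition~\ref{DEF:ord1} (cf.\ \eqref{mono2}), at most $k-1$ explicit derivatives $\#\dx$ in total, distributed so that at most two factors carry the maximal derivative count; moreover $\H$, $\Qdl$, $\Gdl$ are bounded on $L^r_0(\T)$ uniformly in $2\le\dl\le\infty$ by Lemma~\ref{LEM:T1}, and every explicit power of $\dl^{-1}$ or $\Qdl$ is bounded by a constant (cf.\ \eqref{Q1}) and only helps. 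Placing the $Y_N$-factors in the low-regularity norms $\F L^{s,1}$ (mimicking the deterministic bounds \eqref{DE13a}--\eqref{DE13b}, with at most $\lceil\frac{k-1}2\rceil-1$ derivatives on any such factor after integration by parts) and the $\Theta_N$-factors in $L^2$-based norms, estimating on the Fourier side by Young's and Cauchy--Schwarz's inequalities (or by H\"older and the fractional Leibniz rule \eqref{leib}), and using the interpolation inequality \eqref{interp1} to trade any excess $\dot H^{\frac k2}$-regularity of $\Theta_N$ against $L^2$-regularity, each term is bounded by a product of the form $(\text{polynomial in the }Y_N\text{-norms})\cdot\|\Theta_N\|_{L^2}^{\b_1}\|\Theta_N\|_{\dot H^{\frac k2}}^{\b_2}$ with $\b_1>0$ and $\b_2<2$; Young's inequality then gives the claim.

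The main obstacle, and where most of the work lies, is this termwise drift estimate, which must be verified uniformly over the combinatorially intricate family of monomials in $R^\dl_{\frac k2}$ and uniformly in $2\le\dl\le\infty$. The critical case is that of the leading cubic terms identified in Lemmas~\ref{LEM:cub1}--\ref{LEM:cub2} --- for $k=2m$ the term $\int_\T u\,(\H\dx^{m-1}u)\,(\dx^m u)\,dx$, and for $k=2m+1$ the terms $\int_\T(\H^{\al_1}u)(\H^{\al_2}\dx^m u)(\H^{\al_3}\dx^m u)\,dx$ with $(\al_1,\al_2,\al_3)\in\mathcal A$ --- where two drift factors carry essentially $\frac k2$ derivatives, so that after integration by parts (to keep strictly fewer than $\lceil\frac{k-1}2\rceil$ derivatives on any $Y_N$-factor, consistent with its regularity) one is left with $\|\Theta_N\|_{\dot H^{\frac k2}}$ to the power $2-\frac1k$ (or $2-\frac1m$), just below the quadratic threshold; one must moreover distribute the remaining $L^\infty$-norm when all three factors are drift via Sobolev embedding and interpolation so as not to overshoot. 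Uniformity in $\dl$ throughout is guaranteed by \eqref{DX1} (Cameron--Martin comparison), Lemma~\ref{LEM:T1} (uniform operator bounds and the $\dl^{-1}$-gain for $\Qdl$), and Proposition~\ref{PROP:gauss1} (uniform Gaussian moments). The case $\dl=\infty$ is identical with $\Gdl$ replaced by $\H$ and is essentially the Benjamin--Ono estimate of~\cite{TV0}.
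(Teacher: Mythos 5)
Your variational framework and the treatment of the sub-leading pieces of $R^\dl_{\frac k2}$ match the paper's strategy (that is Lemma~\ref{LEM:var1} there). But your central claim --- a purely pathwise (deterministic) estimate of the form
\begin{align*}
\big|R^\dl_{\frac k2}(Y_N + \Theta_N)\big| \le \eta\|\Theta_N\|_{\dot H^{\frac k2}}^2 + \eta\|Y_N + \Theta_N\|_{L^2}^\al + C_\eta\big(1 + \mathcal{R}(Y_N)\big)
\end{align*}
with $\mathcal{R}(Y_N)$ a finite combination of $W^{\frac{k-1}2-\eps,r}$- and $\F L^{s,1}$-norms of $Y_N$ --- fails for the leading piece $A^\dl_{\frac k2,\frac k2}$, and this is precisely where a probabilistic argument is indispensable. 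Take $k=2m+1$ and the monomial $\int_\T u\,(\H^{\al_2}\dx^m u)(\H^{\al_3}\dx^m u)\,dx$ from Lemma~\ref{LEM:cub2}\,(ii): expanding $u = Y_N + \Theta_N$ multilinearly produces the contributions $\int_\T Y_N\,(\dx^m Y_N)^2\,dx$ and $\int_\T \Theta_N\,(\dx^m Y_N)^2\,dx$, where both Gaussian factors carry $m = \frac{k-1}{2}$ derivatives. Pathwise, $\dx^m Y_N \in H^{-\frac12-\eps}$ (since $Y_N$ is a.s.\ outside $W^{\frac{k-1}{2},r}$; see Remark~\ref{REM:gauss1}), so $(\dx^m Y_N)^2$ is not a priori a distribution and in particular cannot be bounded by the norms in $\mathcal{R}(Y_N)$. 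One also cannot integrate by parts to get both $Y_N$-factors below $m$ derivatives: moving a derivative off one factor necessarily raises the count on the other.

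What makes the argument work in the paper is that one splits $R^\dl_{\frac k2}$ via \eqref{R1} and treats the term $\l=k$ differently (Lemma~\ref{LEM:var2}), bounding $\E\big[A^\dl_{\frac k2,\frac k2}(Y_N+\Theta_N)\big]$ rather than $A^\dl_{\frac k2,\frac k2}(Y_N+\Theta_N)$ pathwise. Two genuinely probabilistic inputs are then used: the pure-Gaussian cubic $\E\big[\int_\T Y_N (\dx^m Y_N)^2\,dx\big]=0$ by the vanishing of odd Gaussian moments (see \eqref{DDV12}, \eqref{DDV3}); and the drift-linear term is handled by Cauchy--Schwarz,
\begin{align*}
\Big|\int_\T \Theta_N (\dx^m Y_N)^2\,dx\Big| \le \eps_0\|\Theta_N\|_{H^{k/2}}^2 + C_{\eps_0}\big\|\P_{\ne0}\big((\dx^m Y_N)^2\big)\big\|_{H^{-k/2}}^2,
\end{align*}
where the quantity $\big\|\P_{\ne0}\big((\dx^m Y_N)^2\big)\big\|_{H^{-k/2}}$ is controlled in expectation via the Wiener chaos estimate (Lemma~\ref{LEM:var0a}, built on Lemma~\ref{LEM:hyp}), uniformly in $N$ and $2\le\dl\le\infty$. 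No deterministic substitute for this step exists, so the splitting into Lemma~\ref{LEM:var1} (deterministic, $1\le\l\le k-1$) and Lemma~\ref{LEM:var2} (probabilistic, $\l=k$) is not a convenience but a necessity, and your proposal needs to incorporate it.
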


Let us introduce some notations before stating the variational formula.
Let  $\Ha$ denote the collection of drifts,
which are progressively measurable processes
belonging to
$L^2([0,1]; L^2_0(\T))$, $\PP$-almost surely, 
where
$L^2_0(\T) = \P_{\ne 0} L^2(\T)$.
We also define $\TT_{\dl, \frac{k}{2}}$
to be the 
 Fourier multiplier operator
with multiplier $T_{\dl, \frac{k}{2}}(n)$ defined in~\eqref{T1}.

We now state the Bou\'e-Dupuis variational formula \cite{BD98, Ust14}.
See also \cite[Theorem 3.2]{Zhang}
and \cite[Appendix A]{TW}.

\begin{lemma}\label{LEM:var0}
Let $k \ge 2$ be an integer,  $0 < \dl \le \infty$, 
and $N \in \N$.
Suppose that  $G:C^\infty(\T) \to \R$
is measurable such that $\E\big[|G(\P_N X^\dl_{\frac k2})|^p\big] < \infty$
and $\E\big[\big|\exp  \big( G(\P_NX^\dl_{\frac k2} )\big)\big|^q \big] < \infty$ 
for some $1 < p, q < \infty$ with $\frac 1p + \frac 1q = 1$.
Then, we have
\begin{align*}
&  \log \E\Big[\exp\big(G(\P_N X^\dl_{\frac k2})\big)\Big]\\
& \quad = \sup_{\dr \in  \Ha}
\E\bigg[ G\big( \P_NX^\dl_{\frac k2} + \P_N I_{\dl, \frac k 2}(\dr)(1)\big) - \frac{1}{2} \int_0^1 \| \dr(t) \|_{L^2_x}^2 dt \bigg],
\end{align*}

\noi
where  
$I_{\dl, \frac k 2}(\dr)$ is  defined by
\begin{align*}
I_{\dl, \frac k 2}  (\dr)(t) = \int_0^t \TT_{\dl, \frac{k}{2}}^{-\frac 12}     \dr(t') dt'
\end{align*}

\noi
and the expectation $\E = \E_\PP$
is taken with respect to the underlying probability measure~$\PP$.

\end{lemma}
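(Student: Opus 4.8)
Looking at the final statement, it is the Boué–Dupuis variational formula (Lemma~\ref{LEM:var0}), stated here in the context of the truncated Gaussian measure $\mu^\dl_{\frac k2}$ and its frequency truncation $\P_N X^\dl_{\frac k2}$.

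The plan:

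\medskip

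The plan is to reduce the claim to the classical Boué--Dupuis variational formula for Brownian motion on Wiener space, which is stated in \cite{BD98, Ust14} (and in the convenient form of \cite[Theorem 3.2]{Zhang} or \cite[Appendix A]{TW}). First I would set up the underlying Wiener structure: writing $X^\dl_{\frac k2} = \frac{1}{\sqrt{2\pi}} \sum_{n \in \Z^*} T_{\dl, \frac k2}^{-1/2}(n) g_n e_n$ as in \eqref{Xdl1}, one realizes $X^\dl_{\frac k2}$ as the time-one value of a cylindrical Brownian motion, namely $X^\dl_{\frac k2} = W^{\dl}(1)$ where $W^\dl(t) = \frac{1}{\sqrt{2\pi}}\sum_{n \in \Z^*} T_{\dl, \frac k2}^{-1/2}(n) \beta_n(t) e_n$ and $\{\beta_n\}$ is a suitable family of (complex-valued, with $\beta_{-n} = \cj{\beta_n}$) Brownian motions with $\beta_n(1) = g_n$. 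In operator form, $W^\dl(t) = \TT_{\dl, \frac k2}^{-1/2} B(t)$ where $B$ is the cylindrical Brownian motion on $L^2_0(\T)$ associated with $\{\beta_n\}$; this is exactly the $I_{\dl, \frac k2}$ construction applied to white-noise-type drifts. Since $k \ge 2$, the series converges in $L^2_0(\T)$ (indeed in $H^{\frac{k-1}{2}-\eps}$, by Proposition~\ref{PROP:gauss1}), so $W^\dl$ is a genuine $L^2_0(\T)$-valued process and the abstract Wiener space framework applies.

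\medskip

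Next I would apply the abstract Boué--Dupuis formula to the functional $u \mapsto G(\P_N u)$ on this Wiener space. The abstract formula gives
$$
\log \E\big[\exp(G(\P_N X^\dl_{\frac k2}))\big] = \sup_{v} \E\Big[G\big(\P_N W^\dl(1) + \P_N \textstyle\int_0^1 \TT_{\dl,\frac k2}^{-1/2} v(t)\, dt\big) - \tfrac12 \int_0^1 \|v(t)\|_{L^2_x}^2\, dt\Big],
$$
where the supremum runs over progressively measurable $L^2([0,1];L^2_0(\T))$-valued drifts $v$; this is precisely the asserted identity after identifying $\dr = v$ and $I_{\dl,\frac k2}(\dr)(t) = \int_0^t \TT_{\dl,\frac k2}^{-1/2} \dr(t')\,dt'$. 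The integrability hypotheses $\E[|G(\P_N X^\dl_{\frac k2})|^p] < \infty$ and $\E[|\exp(G(\P_N X^\dl_{\frac k2}))|^q] < \infty$ with conjugate exponents $p, q$ are exactly the standard conditions under which the Boué--Dupuis representation is valid (they guarantee that both sides are finite and that the variational problem is well-posed); I would simply invoke this. One should also note that $\P_N$ commutes with $\TT_{\dl, \frac k2}$ (both are Fourier multipliers), so $\P_N W^\dl(1) = \P_N X^\dl_{\frac k2}$ as already written in the statement.

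\medskip

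The only genuinely technical point --- and the main obstacle --- is that the classical Boué--Dupuis theorem is usually phrased on the Wiener space of $\R^d$-valued or finite-dimensional Brownian motion, or on a fixed abstract Wiener space, whereas here the Cameron--Martin space depends on $\dl$ through the multiplier $T_{\dl, \frac k2}(n)$. However, for each \emph{fixed} $0 < \dl \le \infty$ this is just one particular abstract Wiener space (with Cameron--Martin norm $\|\TT_{\dl,\frac k2}^{1/2}\,\cdot\,\|_{L^2}$), so no uniformity in $\dl$ is needed for this lemma --- uniformity is handled separately in Propositions~\ref{PROP:V1} and~\ref{PROP:V2}. Thus I would state the lemma as a direct specialization of \cite[Theorem 1.1]{Ust14} (or the version in \cite[Appendix A]{TW}), checking only that $\TT_{\dl,\frac k2}^{-1/2}$ is Hilbert--Schmidt from $L^2_0(\T)$ into $H^{\frac{k-1}{2}-\eps}(\T)$ — which follows from $T_{\dl,\frac k2}(n) \sim_\dl |n|^k$ in \eqref{DX1} — so that the stochastic integral defining $W^\dl$ makes sense as an $H^{\frac{k-1}{2}-\eps}$-valued (hence $C^\infty$-argument-compatible after the further mollification needed for $G$) process. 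Since $G$ is only assumed measurable on $C^\infty(\T)$ but is always evaluated at $\P_N(\cdot)$, which lands in the finite-dimensional space $\P_N L^2_0(\T) \subset C^\infty(\T)$, there is no regularity issue in evaluating $G$. I would therefore present the proof as: reduce to the cited abstract formula, verify the Hilbert--Schmidt/convergence conditions using \eqref{DX1} and $k \ge 2$, and conclude.
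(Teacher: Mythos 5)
The paper does not give a proof of Lemma~\ref{LEM:var0}: it simply states the formula and cites \cite{BD98, Ust14}, with pointers to \cite[Theorem 3.2]{Zhang} and \cite[Appendix A]{TW}. Your write-up makes explicit exactly the reduction those citations are meant to supply — realizing $X^\dl_{\frac k2} = \TT_{\dl,\frac k2}^{-1/2}B(1)$ for a cylindrical Brownian motion $B$ on $L^2_0(\T)$, checking that $\TT_{\dl,\frac k2}^{-1/2}$ is Hilbert--Schmidt into $H^{\frac{k-1}{2}-\eps}(\T)$ via \eqref{DX1} with $k\ge 2$, noting $\P_N$ commutes with the multiplier so $\P_N I_{\dl,\frac k2}(\dr)(1)$ lands in a finite-dimensional space where $G$ is defined, and invoking the abstract variational formula with the $(p,q)$-integrability hypotheses — so your argument is correct and follows the same route the paper intends, just spelled out.
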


Given finite $p \ge 1$,  we prove Proposition \ref{PROP:V2}
by applying the variational formula (Lemma~\ref{LEM:var0})
for 
 \begin{align}
 G(u)=- p \big(R^\dl_{\frac{k}{2}}( u) + A  \| u \|_{L^2}^{\al} \big).
\label{var1a}
 \end{align}
 
 \noi
Given $N \in \N$, 
it is easy to see that $G(u)$ satisfies the assumption of Lemma \ref{LEM:var0}
in view of 
the bound $R^\dl_{\frac{k}{2}}( \P_N u) \le C_N < \infty$, 
uniformly in $u \in L^2(\T)$ (see, for example, the proof of Lemma~\ref{LEM:FN2}), 
and 
Proposition \ref{PROP:gauss1}.
Proceeding as in the proof of  \cite[Lemma 4.7]{GOTW}
with~\eqref{T1}, 
we have 
\begin{align}
\| I_{\dl, \frac k 2}(\theta)(1) \|_{H^\frac{k}{2}_x}^2 \leq a_\dl \int_0^t \| \theta (t) \|^2_{L^2_x}  dt
\label{var2}
\end{align}

\noi
 for any $\theta \in \Ha$, 
where the constant $a_\dl>0$ is independent of  $2 \leq \dl \leq \infty$.
Then, from Lemma~\ref{LEM:var0}  and \eqref{var2}, 
we see that 
Proposition~\ref{PROP:V2} follows once
we establish an 
upper bound on 
\begin{align}
\begin{split}
\mathcal{M}_{\dl,\frac k2 , p, N}(\Dr) 
& =\E \bigg[ -p R^\dl_{\frac{k}{2}}( 
\P_N X^\dl_{\frac k 2} + \P_N\Dr) \\
& \hphantom{XXX}- p A\| \P_N X^\dl_{\frac k 2} + \P_N\Dr \|_{L^2}^{\al} 
- \frac12 a_\dl^{-1}  \|\Dr\|^2_{H^{\frac{k}{2}}} \bigg]
\end{split}
\label{var3}
\end{align}

\noi
for each $0 < \dl \le \infty$, 
uniformly in $N\in\N$  and $\Dr\in H^{\frac{k}{2}}_0(\T) = \P_{\ne 0}H^{\frac{k}{2}}(\T)$
and also 
uniformly in $2\le \dl \le \infty$.

For simplicity of notation, 
we set 
\begin{align}
X^\dl_N =  \P_N X^\dl_{\frac k 2}\qquad \text{and}\qquad 
\Dr_N = \P_N \Dr, 
\label{var4}
\end{align}

\noi
where both $X^\dl_{\frac k 2}$ and $\Dr$ have mean zero on $\T$.
Then, from \eqref{R1}, we have 
\begin{align}
R^\dl_{\frac{k}{2}} (X^\dl_N + \Dr_N) 
& = A^\dl_{\frac k2 , \frac{k}{2}}(X^\dl_N + \Dr_N)
+ \sum_{\l=1}^{k-1} \frac{1}{\dl^{k-\l}} A^\dl_{\frac k 2, \frac{\l}{2}}( X^\dl_N + \Dr_N), 
\label{R2}
\end{align}

\noi
where
 $A^\dl_{\frac k 2,\frac{\l}{2}}(u)$  is as in 
\eqref{Aeven}, \eqref{Aodd}, and \eqref{Aodd1}.
In the following, 
we separately estimate
the contribution
from the lower order terms ($1\le \l \le k-1$) in \eqref{R2}, 
which is handled deterministically
(Lemma \ref{LEM:var1}), 
and from the leading order term ($\l = k$) in \eqref{R2}, 
for which we need to use probabilistic estimates; see  Lemma \ref{LEM:var2}.

The first lemma controls the lower order terms 
in \eqref{R2}.

\begin{lemma}\label{LEM:var1}
Let $k\ge 2$ be an integer.
Given $0<\dl\leq \infty$ and small $\eps_0 > 0$,  
 there exist $\al \ge 1$,  independent of $\dl$ and $\eps_0$,  and 
  $C_{\dl, \eps_0}>0$ such that
\begin{align}
|A^\dl_{\frac k 2, \frac{\l}{2}} (u_1 + u_2)| 
\leq C_{\dl, \eps_0} \Big( 1 + \|u_1\|^\al_{W^{\frac{k-1}{2}-\eps, \infty}} + \|u_2\|_{L^2}^{\al} \Big) 
+ \eps_0 \|u_2\|_{H^\frac{k}{2}}^2
\label{DV0}
\end{align}

\noi
for any $1 \le \l \le k-1$, 
where the constant   $C_{\dl, \eps_0}>0$  is independent of $2\le \dl \le \infty$
and  $\al$ is independent of  $0 < \dl \le \infty$.

\end{lemma}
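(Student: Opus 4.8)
\textbf{Plan of proof for Lemma~\ref{LEM:var1}.}
The plan is to establish the deterministic bound \eqref{DV0} by expanding $A^\dl_{\frac k2,\frac\l2}(u_1+u_2)$ into a sum of terms according to how many of the factors $u$ are taken to be $u_1$ and how many are $u_2$, and then estimating each such term individually. Recall from \eqref{Aeven}, \eqref{Aodd}, and \eqref{Aodd1} that, for $1\le\l\le k-1$, the quantity $A^\dl_{\frac k2,\frac\l2}(u)$ is a finite sum of monomial integrals $\int_\T p(u)\,dx$ with $p(u)\in\mathcal P_j(u)$, $j=\#u\ge 3$, and with the key structural constraint $|p(u)|\le\lceil\frac{\l-1}{2}\rceil\le\lceil\frac{k-2}{2}\rceil\le\frac{k-1}{2}$, so that every factor of $u$ carries at most $\frac{k-1}{2}$ derivatives (after integration by parts). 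Moreover, in each such monomial, at most two factors can carry the maximal number of derivatives, while all remaining factors carry strictly fewer than $\frac{k}{2}-\frac12$ derivatives, exactly as was exploited in \eqref{DE13a}--\eqref{DE13b} in the proof of Proposition~\ref{PROP:cons1}. The operators $\H$ and $\Qdl$ appearing in $p$ are handled by passing to the Fourier side and applying \eqref{Q2a}, \eqref{Q1}, or simply bounding them by $1$ in absolute value on the frequency side (as in Footnote and \eqref{DE13a}).

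First I would fix a monomial $p(u_1+u_2)\in\mathcal P_j(u)$ and write $p(u_1+u_2)=\sum p(u_{i_1},\dots,u_{i_j})$ over all choices $(i_1,\dots,i_j)\in\{1,2\}^j$, where $p(u_{i_1},\dots,u_{i_j})$ denotes the corresponding multilinear term. For a term in which $r$ factors are $u_2$ and $j-r$ factors are $u_1$, I would estimate $\big|\int_\T p(u_{i_1},\dots,u_{i_j})\,dx\big|$ by viewing the integral as an iterated convolution on the Fourier side, dropping all $\H$'s and using $|\widehat{\Qdl}(n)|\le 1$, and then applying Young's and Cauchy--Schwarz's inequalities. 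The factors of $u_1$ are placed in $W^{\frac{k-1}{2}-\eps,\infty}$ (equivalently, their Fourier coefficients are controlled in $\ell^\infty$ with weight $\jb{n}^{\frac{k-1}{2}-\eps}$, which dominates $\jb{n}^{|p|}$ since $|p|\le\frac{k-1}{2}$ up to the $\eps$-loss which is absorbed by summing a convergent series), while at most two of the factors of $u_2$ are placed in $H^{\frac k2}$ and the remaining factors of $u_2$ are placed in a Fourier--Lebesgue space $\mathcal FL^{\frac{k-1}{2}-\eps,1}\hookrightarrow$ (embedded via interpolation and Sobolev) in $L^2$, or, when $r$ is small, all of $u_2$'s factors in lower-regularity spaces. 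The upshot is a bound of the schematic form $C\,(1+\|u_1\|_{W^{\frac{k-1}{2}-\eps,\infty}})^{j-r}\,(1+\|u_2\|_{H^{\frac k2}})^{\min(r,2)}(1+\|u_2\|_{L^2})^{(r-2)_+}$ for the mixed term, since $H^{\frac k2}$-regularity is needed on at most two $u_2$-factors. Here the implicit constant is independent of $2\le\dl\le\infty$ by Lemma~\ref{LEM:K1}, Lemma~\ref{LEM:T1}, and the fact that the coefficients $c(p)$ in \eqref{Aeven}--\eqref{Aodd} are $\dl$-independent; the $\dl$-dependence is only through explicit inverse powers $\dl^{-(k-\l)}$ which are harmless for fixed $\dl$ and, for $2\le\dl\le\infty$, bounded by $2^{-(k-\l)}\le 1$.

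To pass from a power of $\|u_2\|_{H^{\frac k2}}$ (at most quadratic) to the claimed form with a single $\eps_0\|u_2\|_{H^{\frac k2}}^2$, I would use Young's inequality: for the generic mixed term, $\|u_2\|_{H^{\frac k2}}$ appears to a power $\le 2$, so writing $C(1+\|u_1\|_{W^{\frac{k-1}{2}-\eps,\infty}})^{\al_0}(1+\|u_2\|_{L^2})^{\al_0}\cdot\|u_2\|_{H^{\frac k2}}^{\theta}$ with $\theta\in\{0,1,2\}$ and applying $ab^\theta\le\eps_0 b^2 + C_{\eps_0}a^{2/(2-\theta)}$ (for $\theta<2$) or directly absorbing when $\theta=2$ yields \eqref{DV0} with $\al$ chosen to be the maximal exponent arising across the finitely many monomials in all $A^\dl_{\frac k2,\frac\l2}$ for $1\le\l\le k-1$; crucially $\al$ depends only on $k$, not on $\dl$ or $\eps_0$. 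One subtlety worth flagging: the pure term $p(u_2,\dots,u_2)$ (all factors $u_2$) must also be controlled, and here one cannot afford $H^{\frac k2}$ on more than two factors; this is precisely where the constraint $|p(u)|\le\frac{k-1}{2}$ (hence each factor having $<\frac k2$ derivatives except at most two) is essential, and the remaining factors are absorbed into $\|u_2\|_{L^2}$ via the Fourier--Lebesgue embedding. The main obstacle, as in the proof of Proposition~\ref{PROP:cons1}, is bookkeeping: one must verify that across all monomials $p$ in \eqref{Aeven}--\eqref{Aodd1}, the number of factors requiring $H^{\frac k2}$-control never exceeds two and the derivative count on each factor never exceeds $\frac{k-1}{2}$, which follows from Proposition~\ref{PROP:cons1}\,(i.a) together with \eqref{DE11a}; once this structural fact is in hand, the estimates themselves are routine.
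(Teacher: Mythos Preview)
Your overall strategy---multilinear expansion of $p(u_1+u_2)$, estimating on the Fourier side with $\H$ and $\Qdl$ dropped, and then Young's inequality---is the same as the paper's. However, there is a genuine gap in your handling of the terms with two or more $u_2$-factors. You write that the resulting power $\theta$ of $\|u_2\|_{H^{k/2}}$ satisfies $\theta\in\{0,1,2\}$ and that when $\theta=2$ you can ``directly absorb''. But when $\theta=2$ your schematic bound reads
\[
C\,(1+\|u_1\|_{W^{\frac{k-1}{2}-\eps,\infty}})^{j-r}(1+\|u_2\|_{L^2})^{(r-2)_+}\,\|u_2\|_{H^{k/2}}^{2},
\]
and since $j\ge 3$ the prefactor is at least $C(1+\|u_1\|_{W^{\frac{k-1}{2}-\eps,\infty}})$ when $r=2$, or at least $C(1+\|u_2\|_{L^2})$ when $r\ge 3$. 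This is \emph{not} bounded by $\eps_0$, so it cannot be absorbed into $\eps_0\|u_2\|_{H^{k/2}}^2$. (Your suggestion of placing the remaining $u_2$-factors in $\mathcal FL^{\frac{k-1}{2}-\eps,1}$ makes things worse: controlling that norm requires $H^{\frac k2-\eps'}$, which after interpolation contributes an additional positive power of $\|u_2\|_{H^{k/2}}$, pushing the total exponent above $2$.)

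The paper closes this gap by never placing a $u_2$-factor directly in $H^{k/2}$. Since every factor carries at most $|p(u)|\le \lceil\frac{\l-1}{2}\rceil<\frac k2$ derivatives, one instead bounds each $u_2$-factor in $H^{\alpha_i}$ (or $H^{\alpha_i+\frac12+\eps}$ after Sobolev for the $L^\infty$-factors) and then applies the interpolation $\|u_2\|_{H^{s_0}}\le\|u_2\|_{L^2}^{1-2s_0/k}\|u_2\|_{H^{k/2}}^{2s_0/k}$. Summing the exponents gives a total power $\gamma$ of $\|u_2\|_{H^{k/2}}$ that is \emph{strictly} less than $2$; see \eqref{DV4}, \eqref{DV6a}, \eqref{DV7}, and \eqref{DV8}. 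With $\gamma<2$, Young's inequality $ab^\gamma\le \eps_0 b^2+C_{\eps_0}a^{2/(2-\gamma)}$ then yields \eqref{DV0}. Your argument becomes correct once you insert this interpolation step in place of the crude bound $\|u_2\|_{H^{\alpha_i}}\le\|u_2\|_{H^{k/2}}$.
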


The next lemma controls the leading contribution in \eqref{R2}.

\begin{lemma}\label{LEM:var2}
Let $k\ge 2$ be an integer.
Given $0<\dl\leq \infty$ and small $\eps_0 > 0$,  
 there exist $\al \ge 1$,  independent of $\dl$ and $\eps_0$,  and 
  $C_{\dl, \eps_0}>0$ such that
\begin{align}
\big|\E \big[ A^\dl_{\frac k2 , \frac{k}{2}}(X^\dl_N+\Dr_N) \big] \big|
 \leq 
 C_{\dl, \eps_0}+ 
 \E\Big[ C_{\dl, \eps_0} \|\Dr_N\|_{L^2}^{\al}   + \eps_0 \|\Dr_N\|_{H^\frac{k}{2}}^2 \Big], 
\label{DDV1}
\end{align}

\noi
uniformly in $N \in \N$, 
where the constant   $C_{\dl, \eps_0}>0$  is independent of $2\le \dl \le \infty$
and  $\al$ is independent of  $0 < \dl \le \infty$.

\end{lemma}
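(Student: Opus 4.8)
The plan is to prove Lemma~\ref{LEM:var2} by expanding $A^\dl_{\frac k2,\frac k2}(X^\dl_N + \Dr_N)$ using its explicit structure from \eqref{Aeven}--\eqref{Aodd} (with $\l = k$, i.e. $m = \lceil \frac{k-1}{2}\rceil$), separating the terms by their homogeneity in $\Dr_N$ and by how many derivatives land on the stochastic factor $X^\dl_N$ versus the drift $\Dr_N$. Writing $X^\dl_N + \Dr_N$ into each monomial $p(\,\cdot\,)$ of degree $j \ge 3$ and multiplying out, we obtain a sum of terms of the form $\int_\T \prod_\l (\text{operators})\big[\text{some factors }X^\dl_N\big]\big[\text{some factors }\Dr_N\big]\,dx$. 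The purely stochastic terms (no $\Dr_N$) are bounded in expectation by $C_\dl$ using Proposition~\ref{PROP:gauss1}, noting that $X^\dl_N \in W^{\frac{k-1}{2}-\eps,\infty}$ uniformly in $N$ and in $2 \le \dl \le \infty$, and that by the structure recorded in Subsection~\ref{SUBSEC:A4} each factor of $X^\dl_N$ carries at most $\lceil\frac{k-1}{2}\rceil$ derivatives (after integration by parts), which for $k$ odd means at most $\frac{k-1}{2} < \frac k2$ and for $k$ even means at most $\frac k2$ with at most one such factor.

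Next I would treat the terms containing at least one factor of $\Dr_N$. First note that if $\Dr_N$ appears with $\# \dx(\Dr_N) \le \lceil\frac{k-1}{2}\rceil - 1 < \frac k2 - \frac12$ derivatives, or if the highest-derivative factor (with $m = \lceil\frac{k-1}2\rceil$ derivatives) is an $X^\dl_N$, then we can estimate $\Dr_N$ in $L^2$ (by putting all other factors in suitable Fourier--Lebesgue or $H^{\frac k2-\frac12}$ norms via Young's and Cauchy--Schwarz's inequalities on the Fourier side, as in \eqref{DE13a}--\eqref{DE13b}), gaining a bound $\les_\dl \big(1 + \|X^\dl_N\|_{W^{\frac{k-1}2-\eps,\infty}}\big)^{j-1}\|\Dr_N\|_{L^2}^{\text{(number of $\Dr_N$)}}$; taking expectation, by Hölder's inequality in $\o$ and Proposition~\ref{PROP:gauss1}, and then Young's inequality in the $\|\Dr_N\|_{L^2}$-power versus a suitable power $\al$, this is absorbed into $C_{\dl,\eps_0} + \E[C_{\dl,\eps_0}\|\Dr_N\|_{L^2}^\al]$ with no need for the $H^{\frac k2}$-norm. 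The only genuinely dangerous contributions are those where some factor of $\Dr_N$ carries the full $m = \lceil\frac{k-1}2\rceil$ derivatives; by Subsection~\ref{SUBSEC:A4} these come from the leading cubic (and, when $k$ is odd, also the quartic $u^2\dx^{m-1}u\dx^m u$) pieces recorded in Lemmas~\ref{LEM:cub1}--\ref{LEM:cub2}, and there can be at most two such top-order factors. In that situation one of the two top-order factors may be $\Dr_N$ and the other $X^\dl_N$ (or, when $k$ is even, at most one top-order factor overall, so the remaining factors are lower order), so after pairing the $\dot H^{\frac k2}$-energy of $\Dr_N$ with itself via Cauchy--Schwarz we pick up at most one power of $\|\Dr_N\|_{H^{\frac k2}}$ multiplied by $\|X^\dl_N\|_{W^{\frac{k-1}2-\eps,\infty}}$ times lower-order Sobolev norms of $X^\dl_N$ and $\Dr_N$; an application of Young's inequality turns $\|\Dr_N\|_{H^{\frac k2}}$ (appearing to the first power) into $\eps_0\|\Dr_N\|_{H^{\frac k2}}^2 + C_{\dl,\eps_0}\|X^\dl_N\|_{W^{\frac{k-1}2-\eps,\infty}}^2$, which after taking expectation is bounded by $C_{\dl,\eps_0}$.

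The one case requiring slightly more care is when \emph{two} factors of $\Dr_N$ carry $m$ derivatives simultaneously (possible only for $k$ odd, from the cubic term with $\wt p(u) = u\dx^m u\dx^m u$ and also the quartic term, where $m = \frac{k-1}2$): there the integrand is $\int_\T (\H^{\al_1}X^\dl_N)(\H^{\al_2}\dx^m\Dr_N)(\H^{\al_3}\dx^m\Dr_N)\,dx$ (or its quartic analogue with an extra $X^\dl_N$ or $\Dr_N$ of zero derivatives). Here I would note $m = \frac{k-1}{2} = \frac k2 - \frac12$, so that $\dx^m\Dr_N$ is only half a derivative short of $\dot H^{\frac k2}$, and estimate $\|\dx^m \Dr_N\|_{L^4}^2 \les \|\Dr_N\|_{H^{m+\frac14}}^2 \les \|\Dr_N\|_{L^2}^{2\gamma}\|\Dr_N\|_{H^{\frac k2}}^{2(1-\gamma)}$ for some $\gamma > 0$ by interpolation \eqref{interp1} (with a Sobolev embedding $W^{\frac14,4} \hookleftarrow H^{\frac12}$ on $\T$ to pay for the $L^4$), together with $\|X^\dl_N\|_{L^2}$; since $2(1-\gamma) < 2$, Young's inequality absorbs the $\|\Dr_N\|_{H^{\frac k2}}$-power into $\eps_0\|\Dr_N\|_{H^{\frac k2}}^2$ plus a power of $\|\Dr_N\|_{L^2}$, and taking expectation with Proposition~\ref{PROP:gauss1} closes the estimate. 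I expect this last sub-case --- ensuring the interpolation exponent genuinely stays below the critical power $2$ so that Young's inequality produces an absorbable $\eps_0\|\Dr_N\|_{H^{\frac k2}}^2$ --- to be the main technical obstacle, together with bookkeeping that all implicit constants are uniform in $2 \le \dl \le \infty$ (which follows from the $\dl$-independence of the constants $c(p)$ in Subsection~\ref{SUBSEC:A4}, the uniform bound \eqref{DX2} in Proposition~\ref{PROP:gauss1}, and \eqref{E1a}).

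Collecting all cases and choosing $\al$ as the largest power of $\|\Dr_N\|_{L^2}$ that arises (independent of $\dl$ and $\eps_0$) yields \eqref{DDV1}, uniformly in $N \in \N$ and in $2 \le \dl \le \infty$, which is precisely the claim of Lemma~\ref{LEM:var2}.
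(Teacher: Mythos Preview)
Your overall strategy — expand $p(X^\dl_N+\Dr_N)$ by multilinearity and sort terms by how the top derivatives are distributed — is the right one, and your treatment of the case where $\Dr_N$ carries the highest derivative(s) is close to what the paper does. But there is a genuine gap in how you handle the terms where the \emph{stochastic} factor $X^\dl_N$ carries the top derivatives.

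You write that if ``the highest-derivative factor is an $X^\dl_N$, then we can estimate $\Dr_N$ in $L^2$'' and bound the rest by powers of $\|X^\dl_N\|_{W^{\frac{k-1}{2}-\eps,\infty}}$ via Proposition~\ref{PROP:gauss1}. This fails: the top-derivative factor in the leading cubic of $A^\dl_{\frac k2,\frac k2}$ is $\dx^{m}X^\dl_N$ with $m=\lceil\frac{k-1}{2}\rceil$, i.e.\ $m=\frac{k-1}{2}$ for odd $k$ and $m=\frac{k}{2}$ for even $k$. But $X^\dl_N$ is \emph{not} in $W^{\frac{k-1}{2},r}(\T)$ for any $r$ (see Remark~\ref{REM:gauss1} and the remark right after the statement of Lemma~\ref{LEM:var2}), so no pathwise bound via Proposition~\ref{PROP:gauss1} is available for those factors. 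Concretely, two terms are missed:
\begin{itemize}
\item The pure stochastic cubic $\E\big[\int_\T X^\dl_N\,\dx^{m-1}X^\dl_N\,\dx^m X^\dl_N\,dx\big]$ (even $k$) or $\E\big[\int_\T X^\dl_N(\dx^m X^\dl_N)^2\,dx\big]$ (odd $k$). These are not bounded pathwise; the paper disposes of them using the odd-moment cancellation $\E[g_{n_1}g_{n_2}g_{n_3}]=0$ (see \eqref{DDV3} and \eqref{DDV12}).
\item The mixed term with $\Dr_N$ in the \emph{low}-derivative slot, e.g.\ $\int_\T \Dr_N(\dx^m X^\dl_N)^2\,dx$ (odd $k$) or $\int_\T \Dr_N\,\dx^{m-1}X^\dl_N\,\dx^m X^\dl_N\,dx$ (even $k$). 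Here you cannot control $(\dx^m X^\dl_N)^2$ pathwise. The paper instead dualizes, spending the full $H^{\frac k2}$-norm on $\Dr_N$ and placing the stochastic product into $H^{-\frac k2}$, where it \emph{is} bounded in expectation by the bilinear estimate of Lemma~\ref{LEM:var0a}; see \eqref{DDV4} and \eqref{DDV16}--\eqref{DDV17}. This probabilistic bilinear ingredient is the missing idea in your proposal.
\end{itemize}
Once these two points are inserted, the remainder of your argument (interpolation plus Young's inequality for the terms where $\Dr_N$ carries top derivatives, uniformity in $2\le\dl\le\infty$) goes through essentially as you describe.
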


\begin{remark}\rm
From \eqref{Aeven} and 
\eqref{Aodd}, we see that the leading order term $A^\dl_{\frac k2 , \frac{k}{2}}(u)$
involves two factors with $\dx^\frac{k-1}{2} u$.\footnote{When $k$ is even, it is 
$|\dx|^\frac 12 \dx^\frac{k-2}{2} u$ to be precise.}
Since $X^\dl_{\frac k2}$ does not belong to $W^{\frac{k-1}{2}, r}(\T)$
for any $1\le r \le \infty$, almost surely, 
we need to rely on a probabilistic argument in handling
the leading order term 
$A^\dl_{\frac k2 , \frac{k}{2}}(X^\dl_N+\Dr_N)$
in Lemma \ref{LEM:var2}.

\end{remark}

We now present a proof of 
Proposition~\ref{PROP:V2}
(and hence of Proposition~\ref{PROP:V1})
by assuming Lemmas \ref{LEM:var1} and \ref{LEM:var2}.

\begin{proof}[Proof of  Proposition~\ref{PROP:V2}]
It follows from  \eqref{var3}, Lemmas \ref{LEM:var1} and \ref{LEM:var2}, 
and Proposition \ref{PROP:gauss1}\,(i) that given any small $\eps_0 > 0$,  we have 
\begin{align}
\begin{split}
\mathcal{M}_{\dl,\frac k 2,  p, N}(\Dr) 
& \leq pC_{\dl, \eps_0}+
\E\Big[\big(Cp \eps_0 - \tfrac12 a_\dl^{-1}\big) \|\Dr_N\|_{H^\frac{k}{2}}^{2}\\
& \quad +p C_{\dl, \eps_0}  \|\Dr_N\|_{L^2}^\al
- pA \|X^\dl_N+\Dr_N\|_{L^2}^\al  \Big], 
\end{split}
\label{var5}
\end{align}

\noi
uniformly in $N\in \N$ 
and $\Dr\in H^{\frac{k}{2}}(\T)$.
Moreover, the constant
$C_{\dl, \eps_0}$ is independent of 
 $2\le \dl \le \infty$.
From  \cite[(5.35)]{OOT}, given $\al > 0$, 
there exists $C_\al > 0$ such that 
\begin{align}
|a+b|^\al \ge \frac 12 |a|^\al - C_\al |b|^\al
\label{var6a}
\end{align}

\noi
for any $a, b \in \R$.
Thus, we have 
\begin{align}
- \| X^\dl_N+\Dr_N\|_{L^2}^\al
\le - \frac 12
\|  \Dr_N\|_{L^2}^\al
+ C_\al \| X^\dl_N\|_{L^2}^\al
\label{var6}
\end{align}

\noi
for some $C_\al>0$.
Hence, by choosing $\eps_0 = \eps_0(p, a_\dl)
= \eps_0(p, \dl)
>0$ sufficiently small
and then $A = A(p, \eps_0) = A(p, \dl) \gg1$
sufficiently large, 
it follows from \eqref{var5} and \eqref{var6} with 
 Proposition~\ref{PROP:gauss1}\,(i)
 that 
 \begin{align}
\begin{split}
\mathcal{M}_{\dl,\frac k 2,  p, N}(\Dr) 
& \les_{\dl, p} 1, 
\end{split}
\label{var7}
\end{align}

\noi
uniformly in $N\in \N$ 
and $\Dr\in H^{\frac{k}{2}}(\T)$, 
where the implicit constant is  independent of 
 $2\le \dl \le \infty$.
Since $a_\dl$ is independent of  $2\le \dl \le \infty$, 
the constants $\eps_0$ and $A$ 
are also  independent of  $2\le \dl \le \infty$.
 Therefore, 
\eqref{V10} with uniformity in $2\le \dl \le \infty$ follows from 
\eqref{var7} and Lemma~\ref{LEM:var0}.
This proves
Proposition~\ref{PROP:V2}
 (and hence Proposition~\ref{PROP:V1}).
\end{proof}

The remaining part of this subsection is devoted to proofs of 
Lemmas \ref{LEM:var1} and \ref{LEM:var2}.
We first state  a preliminary lemma.

\begin{lemma}\label{LEM:var0a}

Let $k \ge 2$ be an integer.
Let $0<\dl\leq \infty$
and
$s, \s_1, \s_2 \in \R$ with $\s_1 \le \s_2 \le \frac k2$
such that
$2k - 2\s_1 - 2\s_2 > 1$ and 
\begin{align}
s < 
\begin{cases}
k - \s_1- \s_2 - 1, & \text{if }k - 2\s_1 < 1,\\
  \frac 12k - \s_2 - \frac 12, & \text{if }k - 2\s_1 \ge 1.
\end{cases}
\label{VL3}
\end{align}

\noi
Then, given any finite $p \ge 1$, $1\le r \le \infty$, 
and $\al_1, \al_2 \in \{0, 1\}$, we have 
\begin{align}
\E\bigg[ \big\|\P_{\ne 0} \big((\jb{\dx}^{\s_1}\H^{\al_1}\P_N X^\dl_{\frac k 2})(\jb{\dx}^{\s_2}
\H^{\al_2}
\P_N  X^\dl_{\frac k 2})\big)\big\|_{W^{s, r}}^p  \bigg]
< C_\dl<\infty,
\label{VL2}
\end{align}

\noi
uniformly in $N \in \N$, 
where the constant $C_\dl>0$ is independent of  $2\leq \dl \leq \infty$.

\end{lemma}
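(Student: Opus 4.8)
The plan is to reduce the $W^{s,r}$-norm of the projected quadratic expression to a discrete convolution estimate on the Fourier side and then apply the Wiener chaos estimate (Lemma~\ref{LEM:hyp}) together with Lemma~\ref{LEM:SUM}. First I would treat the case $r < \infty$ and, without loss of generality, assume $p \ge r \ge 1$. Writing $u_j^\dl = \jb{\dx}^{\s_j}\H^{\al_j}\P_N X^\dl_{\frac k2}$, the product $\P_{\ne 0}(u_1^\dl u_2^\dl)$ lies in the second Wiener chaos, so by Minkowski's inequality (to exchange $L^p_\omega$ and $L^r_x$, using $p \ge r$) and Lemma~\ref{LEM:hyp},
\begin{align*}
\big\| \P_{\ne 0}(u_1^\dl u_2^\dl)\big\|_{L^p_\omega W^{s,r}_x}
\les p \, \big\| \jb{\dx}^s \P_{\ne 0}(u_1^\dl u_2^\dl)\big\|_{L^r_x L^2_\omega}.
\end{align*}
Using the Gaussian Fourier series representation~\eqref{Xdl1} and computing the $L^2_\omega$-norm via $g_{-n} = \cj{g_n}$, for each fixed $n \ne 0$ one gets
\begin{align*}
\big\| \widehat{u_1^\dl u_2^\dl}(n)\big\|_{L^2_\omega}
\les \bigg( \sum_{n = n_1 + n_2}
\frac{\jb{n_1}^{2\s_1}\jb{n_2}^{2\s_2}}
{T_{\dl,\frac k2}(n_1) T_{\dl,\frac k2}(n_2)}\bigg)^{\!\frac12}
\sim_\dl
\bigg( \sum_{n = n_1 + n_2}
\frac{1}{\jb{n_1}^{2(k-\s_1)} \jb{n_2}^{2(k-\s_2)}}\bigg)^{\!\frac12},
\end{align*}
where in the last step I invoke the two-sided bound $T_{\dl,\frac k2}(n) \sim_\dl \jb{n}^k$ from \eqref{DX1} (valid uniformly for $2 \le \dl \le \infty$, which is exactly where the uniformity in the statement comes from). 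The exponents $\al = k - \s_1$ and $\be = k - \s_2$ satisfy $\al \ge \be \ge 0$ (since $\s_1 \le \s_2 \le \frac k2 \le k$) and $\al + \be = 2k - \s_1 - \s_2 > 1$ (since $2k - 2\s_1 - 2\s_2 > 1$ and $\s_1 \le \s_2$ gives $2k - \s_1 - \s_2 \ge 2k - 2\s_2 > \tfrac12 + \s_2 \ge \tfrac12$... more simply, $2k - \s_1 - \s_2 \ge 2k - 2\s_2 > 1$ by the hypothesis with $\s_1$ replaced downward; in any case the hypothesis $2k - 2\s_1 - 2\s_2 > 1$ forces it). Hence Lemma~\ref{LEM:SUM} applies and yields
\begin{align*}
\big\| \widehat{u_1^\dl u_2^\dl}(n)\big\|_{L^2_\omega}
\les_\dl \frac{1}{\jb{n}^{(k-\s_2) - \ld}},
\end{align*}
with $\ld = \max(1 - (k-\s_1), 0)$ when $k - \s_1 \ne 1$ and $\ld = \eps$ (arbitrarily small) when $k - \s_1 = 1$; a short case check shows $(k-\s_2) - \ld$ equals $\min(k - \s_1 - \s_2, \, \frac k2 - \s_2 + \frac12^{-})$ up to the $\eps$-loss, matching \eqref{VL3}.

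Next I would finish the $r < \infty$ case: since $s < (k-\s_2) - \ld$ by \eqref{VL3}, we have $\jb{n}^s \cdot \jb{n}^{-((k-\s_2)-\ld)} = \jb{n}^{-(1+2\eps')}$ for some $\eps' > 0$, so
\begin{align*}
\big\| \jb{\dx}^s \P_{\ne 0}(u_1^\dl u_2^\dl)\big\|_{L^r_x L^2_\omega}
\les_\dl \big\| \jb{n}^{-(1+2\eps')}\big\|_{\l^r_n \text{ (or } \l^\infty_n\text{)}}
< \infty
\end{align*}
uniformly in $N$; here one uses $\ell^2_n \hookrightarrow \ell^r_n$ for $r \ge 2$ and a crude $\ell^1_n$ bound for $r < 2$, or simply absorbs everything into an $\ell^2$-sum since $1 + 2\eps' > \frac12$ generously. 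For $r = \infty$ I would, exactly as in the proof of Proposition~\ref{PROP:gauss1}\,(i) (see \eqref{DX3a}), use Sobolev embedding $W^{s + \frac12\eps, r_0} \hookrightarrow W^{s, \infty}$ for large finite $r_0$, verify that $s + \frac12\eps$ still satisfies a strict inequality of the form \eqref{VL3} (shrinking $\eps$ if necessary), and reduce to the already-treated finite-$r_0$ case. The uniformity of the constant $C_\dl$ in $2 \le \dl \le \infty$ is inherited at every step from the uniform equivalence $T_{\dl,\frac k2}(n) \sim \jb{n}^k$ in \eqref{DX1}.

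\textbf{Main obstacle.} There is no deep difficulty here; the lemma is essentially bookkeeping. The one point requiring genuine care is the verification that the exponent produced by Lemma~\ref{LEM:SUM} --- namely $(k - \s_2) - \ld$ with the two-branch formula for $\ld$ --- matches the two-branch threshold \eqref{VL3} in all cases, including the boundary case $k - 2\s_1 = 1$ (i.e.\ $k - \s_1 = \frac{k+1}{2}$, not quite $1$ unless $k=1$) versus $k - \s_1 = 1$; one must track which regime ($\al < 1$, $\al = 1$, $\al > 1$) the pair $(k-\s_1, k-\s_2)$ falls into and confirm the bookkeeping. I would organize this as a short explicit case analysis on the sign of $k - 2\s_1 - 1$. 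Everything else --- Minkowski, Wiener chaos, the Gaussian second-moment computation, and the final summability --- is routine and identical in spirit to \eqref{DX3a}.
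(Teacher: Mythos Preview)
Your overall strategy is exactly the paper's: reduce via Minkowski and the Wiener chaos estimate (Lemma~\ref{LEM:hyp}) to an $L^2_\omega$-computation on the Fourier side, substitute $T_{\dl,\frac k2}(n)\sim_\dl|n|^k$ from~\eqref{DX1}, and apply Lemma~\ref{LEM:SUM}. However, there is a concrete arithmetic slip in the exponents that propagates through the argument and is precisely the source of the case-analysis mismatch you flag at the end.

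After substituting $T_{\dl,\frac k2}(n)\sim_\dl|n|^k$, the inner sum is
\[
\sum_{n=n_1+n_2}\frac{\jb{n_1}^{2\s_1}\jb{n_2}^{2\s_2}}{T_{\dl,\frac k2}(n_1)\,T_{\dl,\frac k2}(n_2)}
\ \sim_\dl\
\sum_{n=n_1+n_2}\frac{1}{\jb{n_1}^{\,k-2\s_1}\jb{n_2}^{\,k-2\s_2}},
\]
not $\jb{n_j}^{-2(k-\s_j)}$; you have $\jb{n_1}^{2\s_1}/|n_1|^{k}=\jb{n_1}^{-(k-2\s_1)}$. Accordingly, Lemma~\ref{LEM:SUM} must be applied with $\al=k-2\s_1$ and $\be=k-2\s_2$, not $k-\s_j$. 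With the correct exponents, the hypothesis $\al+\be>1$ is \emph{literally} $2k-2\s_1-2\s_2>1$ (no handwaving needed), and the branch $\al\gtrless 1$ in Lemma~\ref{LEM:SUM} is \emph{exactly} the branch $k-2\s_1\gtrless 1$ in~\eqref{VL3}. The ``explicit case analysis'' you anticipate then collapses to a one-line verification: one needs $-2s+(k-2\s_2)+\min(k-2\s_1-1,0)>1$, which is~\eqref{VL3}. The tension you noticed between your $\al=k-\s_1$ and the threshold $k-2\s_1=1$ was the symptom of this slip.

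A secondary remark: the paper simply reduces to $p=r=2$. Your route through $L^r_xL^2_\omega$ for general finite $r$ does happen to work here, but only because for this bilinear Gaussian expression the Fourier coefficients at distinct nonzero frequencies are uncorrelated (Wick's theorem forces $n=m$ once $n_1+n_2\ne 0$), so $\|\jb{\dx}^s f(x)\|_{L^2_\omega}$ is in fact constant in $x$; this should be stated, as it is not automatic for a general second-chaos object.
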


\begin{remark}\label{REM:var0a}\rm
The bound \eqref{VL2} also holds
even if we replace
$\jb{\dx}^{\s_j}$ by $\jb{\dx}^{\s_j - \kk_j}\dx^{\kk_j} $
for any $\kk_j \in \N$ with $\kk_j \le \s_j$, $j = 1, 2$.

\end{remark}

\begin{proof}
We only present the case $\al_1 = \al_2 = 0$
since the other cases can be treated in a similar manner.
Proceeding as in the proof of Proposition \ref{PROP:gauss1}\,(i)
with  Lemma \ref{LEM:hyp}, it suffices to consider the case $p = r = 2$. 
We first consider the case $k - 2\s_1 \ne 1$.
From \eqref{Xdl1}, 
\eqref{DX1}, and  
Lemma \ref{LEM:SUM}, we have 
\begin{align*}
\text{LHS of \eqref{VL2}}
 & \les \E \Bigg[  \sum_{n\in \Z^*} \jb{n}^{2s} \bigg| \sum_{\substack{n=n_1+n_2\ne 0\\ 0<|n_j| \leq N}}  \frac{g_{n_1} g_{n_2}}
{(T_{\dl, \frac k2}(n_1) T_{\dl ,  \frac k2} (n_2))^\frac12} \jb{n_1}^{\s_1} \jb{n_2}^{\s_2} \bigg|^2 \Bigg] \\
& \sim_\dl  \sum_{n \in \Z} \jb{n}^{2s}
\sum_{\substack{n=n_1+n_2\\0<|n_j| \leq N}} 
\frac1{\jb{n_1}^{k - 2\s_1} \jb{n_2}^{k - 2\s_2}}\\
& \les  \sum_{n \in \Z} \frac{1}{\jb{n}^{-2s + k - 2\s_2 + \min (k - 2\s_1 - 1, 0)}}< \infty, 
\end{align*}

\noi
provided that \eqref{VL3} holds, 
where the implicit constant can be chosen to be independent
of  $2 \leq \dl \leq \infty$.
When $k - 2\s_1 = 1$, in view of Lemma \ref{LEM:SUM}, 
essentially the same computation
as above holds, yielding \eqref{VL2} under~\eqref{VL3}.
\end{proof}

We now present a proof of 
Lemma \ref{LEM:var1}.

\begin{proof}[Proof of Lemma \ref{LEM:var1}]
Fix  an integer $k \ge 2$.
We first consider the case $\l = 1$.
From 
\eqref{Aodd1}, 
 Sobolev's inequality, \eqref{interp1}, and Young's inequality, we have 
\begin{align*}
|A_{\frac k2, \frac 12}(u_1+u_2) |
& \le  C \Big(\|u_1\|^3_{L^3} +   \|u_2\|^{\frac{3k-1}{k}}_{L^2}\|u_2\|^{\frac{1}{k}}_{H^{\frac{k}{2}}}\Big)\\
& \le  C_{\eps_0}\Big( \|u_1\|_{W^{\frac{k-1}{2}-\eps, \infty}}^{3} +\|u_2\|_{L^2}^{\frac{2(3k-1)}{2k-1}} \Big)+ \eps_0 \|u_2\|^2_{H^{\frac{k}{2}}} 
\end{align*}

\noi
for any $\eps_0> 0$.

In the following, we consider the case $2 \le \l \le k - 1$, 
which in particular implies $k \ge 3$.

\smallskip

\noi
$\bullet$ {\bf Case 1:} 
$\l= 2m$ for some $1 \le m \le \frac{k-1}{2}$.\\
\indent
Let us consider the first term on the right-hand side of \eqref{Aeven}.
Noting that $\#\Qdl = 0$, it
suffices to consider a monomial of the form 
 $p(u) = u \dx^{m-1} u \dx^m u$
 thanks to 
 Lemma \ref{LEM:cub1} and 
 the $L^r$-boundedness of the Hilbert transform $\H$
 for $ 1 < r < \infty$ (Lemma \ref{LEM:T1}\,(iii)).\footnote{For the terms with the $W^{s, \infty}$-norm, 
 we can instead work with  the $W^{s, r}$-norm with $r\gg 1$
 to exploit the boundedness of the Hilbert transform.
 This may cause an $\eps$-loss of regularity (via Sobolev's inequality; 
 see, for example, \eqref{DV5})
 but does not affect the end result.
Thus,  for simplicity of the presentation,  we  work with 
 the $W^{s, \infty}$-norm in the following.
 The same comment applies to the proofs of Lemmas \ref{LEM:var2}
 and \ref{LEM:Svar2}.}
By crudely estimating with 
(fractional) integration by parts, 
Cauchy-Schwarz's inequality, 
and the fractional Leibniz rule~\eqref{leib}, 
we have 
\begin{align}
\bigg| \int_\T u \dx^{m-1}u \dx^m u dx \bigg| 
  \le \| u \dx^{m-1}u\|_{H^\frac 14}
\|u\|_{H^{m- \frac 14}}
 \les \|u\|_{W^{\frac 14, \infty}} \|u\|_{H^{\frac {2k-3}4}}^2.
\label{DV2}
\end{align}

\noi
On the one hand, we have 
\begin{align}
\|u\|_{W^{\frac 14, \infty}}
 + \|u\|_{H^{\frac {2k-3}4}}
\les \| u \|_{W^{\frac{k -1}{2}-\eps, \infty}}.
\label{DV3}
\end{align}

\noi
On the other hand, by 
Sobolev inequality and 
\eqref{interp1}, we have 
\begin{align}
\|u\|_{W^{\frac 14, \infty}}
\|u\|_{H^{\frac {2k-3}4}}^2
\les 
\|u\|_{H^{\frac {3+\eps}4 }} \|u\|_{H^{\frac {2k-3}4}}^2
\les \| u \|_{L^2}^{3- \g} \|u\|_{H^\frac k2}^\g, 
\label{DV4}
\end{align}

\noi
where
$\g =\frac{3+\eps}{2k} +  \frac{2k- 3}{k}    < 2$.
Hence, the bound \eqref{DV0} in this case follows from substituting $u = u_1 + u_2$ into
the left-hand side of  \eqref{DV2}
and applying 
multilinearity of $p(u)$, 
\eqref{DV3},  \eqref{DV4}, and Young's inequality.

Next, we consider
the second term on the right-hand side of \eqref{Aeven}.
Recall from Definition~\ref{DEF:mono1}
that a monomial $p(u) \in \mathcal{P}_j(u)$
comes with the Hilbert transform $\H$
and the perturbation operator $\Qdl$ in \eqref{Qdl1}
over various factors in a {\it nested} manner.
In the following, we first apply H\"older's inequality iteratively 
to estimate $\int_\T p(u) dx$
and, in doing so, 
we can simply drop 
the Hilbert transform $\H$
and the perturbation operator $\Qdl$, 
 thanks to their $L^r$-boundedness
 for $ 1 < r < \infty$ (Lemma \ref{LEM:T1}\,(iii)).\footnote{As  pointed out above,  the $W^{s, \infty}$-norm in \eqref{DV5} 
 should really be  the $W^{s, r}$-norm with $r\gg 1$.
 For simplicity of the presentation,  however, we work with 
 the $W^{s, \infty}$-norm in the following.}
Thus, the contribution from 
the second term on the right-hand side of \eqref{Aeven}
is bounded by 
\begin{align}
\begin{split}
& 
C_\dl
 \sum_{j=3}^{2m+2}
 \sum_{\#\dx =0}^{2m+2-j} 
  \sum_{\substack{  \al_1, \dots ,  \al_j=  0 \\ \al_{1 \cdots j} = \#\dx}} ^{m-1}
 \|u\|_{H^{\al_1}} \|u\|_{H^{\al_2}} 
 \prod_{i=3}^j \|u\|_{W^{\al_i, \infty}}\\
  & \quad \le
  C_\dl 
\sum_{j=3}^{2m+2}
 \sum_{\substack{  \al_1, \dots ,  \al_j=  0 \\ \al_{1 \cdots j} = 2m+2- j}} ^{m-1}
 \|u\|_{H^{\al_1}} \|u\|_{H^{\al_2}} 
 \prod_{i=3}^j \|u\|_{W^{\al_i, \infty}}, 
\end{split}
\label{DV5}
\end{align}

\noi
where $C_\dl $ is independent of $ 2\le \dl \le \infty$
in view of \eqref{Q1}.
Since $\al_i \le m-1 \le \frac{k-1}{2} - 1$, we have 
\begin{align}
\|u\|_{W^{\al_i, \infty}}
\les \|u\|_{W^{\frac{k-1}{2} - \eps, \infty}}. 
\label{DV6}
\end{align}

\noi
On the other hand, by setting 
\begin{align}
\g  = \frac{2}{k}\bigg(
\al_1 +  \al_2 + \sum_{i = 3}^j \Big(\al_i + \frac 12 + \eps\Big)\bigg)
\le \frac{2}{k}\bigg(2m+1 - \frac 12 j + (j-2)\eps\bigg) < 2, 
\label{DV6a}
\end{align}

\noi
it follows from Sobolev's inequality and  \eqref{interp1} that 
\begin{align}
\begin{split}
 \|u\|_{H^{\al_1}} \|u\|_{H^{\al_2}} 
 \prod_{i=3}^j \|u\|_{W^{\al_i, \infty}}
& \les  \|u\|_{H^{\al_1}} \|u\|_{H^{\al_2}} 
 \prod_{i=3}^j \|u\|_{H^{\al_i+\frac 12 + \eps}}\\
& \les \|u \|_{L^2}^{j - \g}   \|u\|_{H^\frac k2}^{\g}.
\end{split}
\label{DV7}
\end{align}

\noi
Hence, the bound \eqref{DV0} in this case follows from substituting 
 $u = u_1 + u_2$ into
the left-hand side of  \eqref{DV5}
and applying \eqref{DV6},  \eqref{DV7}, and Young's inequality
(since $\g < 2$).

\medskip

\noi
$\bullet$ {\bf Case 2:} 
$\l=2m+1$ for $1\le m \le \frac{k-2}{2}$.\\
\indent
Proceeding as in \eqref{DV5}, 
we can bound the contribution 
from  \eqref{Aodd} by 
 \begin{align}
\begin{split}
 &   \sum_{j=3}^{2m+3}  \sum_{\#\dx =0}^{2m+3 -j} 
 \sum_{\substack{p(u) \in \mathcal{P}_j(u) \\ \wt{p}(u) = \prod_{i=1}^j \dx^{\al_i} u \\
 \#\Qdl(p) = 2m+3-j-\#\dx \\
0 \le  \al_1, \dots, \al_j \le m}}
 \bigg| \int_\T p(u)  dx \bigg| \\
 & \quad \le
 C_\dl \sum_{j=3}^{2m+3} 
  \sum_{\substack{ \al_1, \dots, \al_j = 0\\ \al_{1\cdots j}= 2m+3-j}}^m
  \| u\|_{H^{\al_1}} \|u\|_{H^{\al_2}} 
 \prod_{i =3}^j \|u\|_{W^{\al_i, \infty}}, 
\end{split}
\label{DV7a}
 \end{align}

\noi
where $C_\dl $ is independent of $ 2\le \dl \le \infty$.
Since $\al_i \le m \le \frac{k-2}{2} $, \eqref{DV6} still holds in this case.
By noting that 
\begin{align}
\g  = \frac{2}{k}\bigg(
\al_1 +   \al_2 + \sum_{i = 3}^j \Big(\al_i + \frac 12 + \eps\Big)\bigg)
\le \frac{2}{k}\bigg(2m+2 - \frac 12 j +  (j-2)\eps\bigg) < 2
\label{DV8}
\end{align}

\noi
in this case, 
the bound \eqref{DV7}  holds with $\g$  in \eqref{DV8}.
Hence, the bound \eqref{DV0} in this case also follows from substituting 
 $u = u_1 + u_2$ into
the left-hand side of  \eqref{DV7a}
and applying~\eqref{DV6},  \eqref{DV7}, and Young's inequality.
\end{proof}

We now present a proof of
Lemma \ref{LEM:var2} on the case $\l = k$, 

\begin{proof}[Proof of Lemma \ref{LEM:var2}]

We separately estimate the contribution
in  the cases $k = 2m $ and $k = 2m+1$.
In the following, all the estimates hold uniformly in $N \in \N$
and
 all the constants are understood to be uniform in $2\le \dl \le \infty$.
As explained in the proof of Lemma \ref{LEM:var1}, 
for simplicity of the presentation, 
we drop 
the Hilbert transform $\H$
and the perturbation operator $\Qdl$, 
appearing in a monomial $p(u) \in \Pc_j(u)$.
We recall that 
$X^\dl_N$ and $\Dr_N$ in \eqref{var4} have mean zero on $\T$.

\smallskip

\noi
$\bullet$ {\bf Case 1:} 
$k = 2m$ for some $m \in \N$.\\
\indent
Any monomial $p(u)$ in the second term on the right-hand side of \eqref{Aeven}
satisfies $|p(u)| \le m - 1$.
Thus, 
 we can estimate
the contribution from the second  term on the right-hand side of \eqref{Aeven}
deterministically as in Case~1 of the proof of Lemma \ref{LEM:var1};
see \eqref{DV5}-\eqref{DV7}.
In particular, note that \eqref{DV6} and \eqref{DV6a} hold even when $m = \frac k2$
(with $j \ge 3$).
Then, by taking an expectation and applying 
Proposition \ref{PROP:gauss1}\,(i), 
we obtain \eqref{DDV1}.

We now consider the first term on the right-hand side of 
 \eqref{Aeven}.
By dropping $\H$ and $\Qdl$
(as explained in Case 1 of the proof of Lemma \ref{LEM:var1}), 
it suffices to estimate  $p(u) = u \dx^{m-1} u \dx^m u$.
 Then, we have 
\begin{align}
\begin{split}
&  \E\bigg[\int_\T p(X^\dl_N + \Dr_N) dx \bigg]\\
& \quad  = 
\E\bigg[ \int_\T  X^\dl_N \dx^{m-1} X^\dl_N \dx^m X^\dl_N + (X^\dl_N+\Dr_N )\dx^{m-1} (X^\dl_N+\Dr_N) \dx^m \Dr_N \\
& \hphantom{XXXX}
+ (X^\dl_N+\Dr_N)\dx^{m-1}\Dr_N \dx^m X^\dl_N + \Dr_N \dx^{m-1} X^\dl_N \dx^m X^\dl_N dx\bigg].
\end{split}
\label{DDV11a}
\end{align}

Noting that $\E[g_{n_1}g_{n_2}g_{n_3}] = 0$
for any $n_1, n_2, n_3 \in \Z^*$, 
it follows from \eqref{Xdl1} that 
\begin{align}
\E \bigg[ \int_\T X^\dl_N \dx^{m-1} X^\dl_N \dx^m X^\dl_N  dx \bigg] =0.
\label{DDV12}
\end{align}

\noi
As for the second term on the right-hand side of \eqref{DDV11a}, 
it follows from H\"older's inequality, 
Sobolev's inequality, 
and \eqref{interp1} that 
\begin{align}
\bigg|&  \int_\T (X^\dl_N+\Dr_N )\dx^{m-1} (X^\dl_N+\Dr_N) \dx^m \Dr_N dx \bigg|\notag \\
& \le  \|X^\dl_N+\Dr_N\|_{L^\infty} \|X^\dl_N+\Dr_N\|_{H^\frac{k - 2}{2}} \|\Dr_N\|_{H^\frac k2} \notag \\
& \les
\Big( \|X^\dl_N\|_{L^\infty} + \|\Dr_N\|_{H^\frac{1+\eps}{2}}\Big)
\Big(\|X^\dl_N\|_{H^\frac{k - 2}{2}} + \|\Dr_N\|_{H^\frac{k - 2}{2}}\Big)
\|\Dr_N\|_{H^\frac k2} \label{DDV13}\\
& \les
\Big( \|X^\dl_N\|_{L^\infty} + \|\Dr_N\|_{L^2}^{\frac{k-1-\eps}{k}}
\|\Dr_N\|_{H^\frac k 2}^\frac{1+\eps}{k}\Big)\notag \\
& \quad \times \Big(\|X^\dl_N\|_{H^\frac{k - 2}{2}} 
+ \|\Dr_N\|_{L^2}^{\frac{2}{k}}
\|\Dr_N\|_{H^\frac k2}^\frac{k-2}{k}\Big)
\|\Dr_N\|_{H^\frac k2}.
\notag 
\end{align}

\noi
By noting that $\frac{1+ \eps}{k} + \frac{k-2}k + 1 < 2$
and  applying Young's inequality and 
Proposition \ref{PROP:gauss1}\,(i), we obtain
\begin{align}
  \E\big[\eqref{DDV13}\big]
\le C + \E\Big[C_{\eps_0} \| \Dr_N \|_{L^2}^\al +   \eps_0 \| \Dr_N\|_{H^\frac{k}2}^2\Big]
\label{DDV14}
\end{align}

\noi
for some $\al \gg 1$.
\noi
As for the third term on the right-hand side of \eqref{DDV11a}, 
integrating by parts, we have 
\begin{align}
\begin{split}
\int_\T
(X^\dl_N+\Dr_N)\dx^{m-1}\Dr_N \dx^m X^\dl_N dx 
& = - \int_\T \dx(X^\dl_N+\Dr_N) \dx^{m-1} \Dr_N \dx^{m-1} X^\dl_N dx\\
& \quad - \int_\T (X^\dl_N+\Dr_N) \dx^{m} \Dr_N  \dx^{m-1} X^\dl_N  dx,
\end{split}
\label{DDV15}
\end{align}

\noi
where the first term on the right-hand side 
satisfies $|p(u)| \le m - 1$
and thus can be 
handled 
in the same manner as  the second term on the right-hand side of \eqref{Aeven}
discussed above, 
while the second term on the right-hand side of \eqref{DDV15} was already treated in \eqref{DDV13}. 
Finally, as for the 
last term on the right-hand side of \eqref{DDV11a}, 
by Cauchy-Schwarz's inequality 
(recall that $\Dr_N$ has mean zero on $\T$)
and Cauchy's inequality, we have\footnote{Here, 
we crucially use the fact that the Hilbert transform (if any) does not act on multiple factors but 
acts only on each factor; see Lemma \ref{LEM:cub1}.}
\begin{align}
\begin{split}
\bigg| \int_\T \Dr_N \dx^{m-1} X^\dl_N \dx^m X^\dl_N  dx \bigg| 
& \le \| \Dr_N\|_{H^m}
\big\| \P_{\ne 0} (\dx^{m-1} X^\dl_N \dx^m X^\dl_N)\big\|_{H^{-m}}\\
& \le
\eps_0 \| \Dr_N\|_{H^\frac k2}^2
+ C_{\eps_0} \big\| \P_{\ne 0} ( \dx^{\frac k2 -1} X^\dl_N \dx^\frac k2 X^\dl_N)\big\|_{H^{-\frac k2}}^2.
\end{split}
\label{DDV16}
\end{align}

\noi
Then, 
from  Lemma \ref{LEM:var0a}
 (see also Remark \ref{REM:var0a}), 
we obtain
\begin{align}
  \E\big[\eqref{DDV16}\big]
& \leq \eps_0 \E\Big[\|\Dr_N\|^2_{H^{\frac{k}{2}}} \Big] + 
 C_{\eps_0} .
\label{DDV17}
\end{align}

Putting everything together, we conclude that 
\eqref{DDV1} holds in this case.

\medskip

\noi
$\bullet$ {\bf Case 2:} 
$k = 2m + 1$ for some $m \in \N$.\\
\indent
Any monomial $p(u)$ in the last term on the right-hand side of \eqref{Aodd}
satisfies $|p(u)| \le m - 1 =  \frac{k-3}{2}$, 
and thus we can estimate
the last term on the right-hand side of \eqref{Aodd}
deterministically as in Case~2 of the proof of Lemma \ref{LEM:var1}.
Then, by taking an expectation and applying Proposition \ref{PROP:gauss1}\,(i), 
we obtain \eqref{DDV1}.

We now consider the 
 first term on the right-hand side of  \eqref{Aodd}.
As before, it suffices to consider $p(u) = u  \dx^m u  \dx^m u$.
Then, we have   
\begin{align}
\begin{split}
 \E\bigg[\int_\T p(X^\dl_N + \Dr_N) dx \bigg]
& = \E \bigg[ \int_\T X^\dl_N (\dx^m X^\dl_N)^2 + \Dr_N (\dx^m X^\dl_N)^2 \\
& \hphantom{XXX}+ (X^\dl_N+\Dr_N) \dx^m \Dr_N  \dx^m (2X^\dl_N+\Dr_N) dx \bigg].
\end{split}
\label{DDV2}
\end{align}

\noi
Noting that $\E[g_{n_1}g_{n_2}g_{n_3}] = 0$
for any $n_1, n_2, n_3 \in \Z^*$, 
it follows from \eqref{Xdl1} that 
\begin{align}
\E \bigg[ \int_\T X^\dl_N (\dx^m X^\dl_N)^2 dx \bigg] =0.
\label{DDV3}
\end{align}

\noi
From 
Cauchy-Schwarz's inequality 
(recall that $\Dr_N$ has mean zero on $\T$)
and Cauchy's inequality, we have
\begin{align*}
\bigg| \int_\T \Dr_N (\dx^m X^\dl_N)^2  dx \bigg|
& \le \|\Dr_N\|_{H^{m+\frac12}} 
\big\|\P_{\ne 0}((\dx^m X^\dl_N)^2)\big\|_{H^{-m-\frac12}} \\
& \leq \eps_0 \|\Dr_N\|^2_{H^{\frac{k}{2}}} 
+ C_{\eps_0} \big\|\P_{\ne 0}((\dx^{\frac{k-1}{2}} X^\dl_N)^2)\big\|_{H^{-\frac k2}}^2.
\end{align*}

\noi
Then, 
from Lemma \ref{LEM:var0a}
 (see also Remark \ref{REM:var0a}), 
we obtain
\begin{align}
\bigg| \E\bigg[\int_\T \Dr_N (\dx^m X^\dl_N)^2  dx\bigg] \bigg|
& \leq \eps_0 \E\Big[\|\Dr_N\|^2_{H^{\frac{k}{2}}}\Big]  + 
 C_{\eps_0} .
\label{DDV4}
\end{align}

\noi
As for the last term on the right-hand side of \eqref{DDV2}, 
from  (fractional) integration by parts, Cauchy-Schwarz's inequality, the fractional Leibniz rule~\eqref{leib}, 
 \eqref{interp1}, and Sobolev's inequality, we have
\begin{align}
\begin{split}
\bigg| & \int_\T (X^\dl_N+\Dr_N) \dx^m \Dr_N \dx^m (2X^\dl_N+\Dr_N)  dx \bigg| \\
& \les \| (X^\dl_N+\Dr_N) \dx^\frac{k-1}{2}\Dr_N \|_{H^\frac 14} 
\Big(\| X^\dl_N \|_{H^{\frac{k-1}{2} - \frac 14}} + \|\Dr_N\|_{H^{\frac{k-1}{2} - \frac 14}}\Big) \\
& \les \Big(\| X^\dl_N\|_{W^{\frac 14, \infty}}
+ \| \Dr_N\|_{H^{\frac{3+\eps}4 }}\Big)
\|\Dr_N \|_{H^\frac{2k-1}{4}} \\
& \quad \times
\Big(\| X^\dl_N \|_{H^{\frac{k-1}{2} - \eps}} + 
\|\Dr_N\|_{L^2}^\frac{3}{2k} \|\Dr_N\|_{H^{\frac{k}{2}}}^\frac{2k-3}{2k} \Big) \\
& \les \Big(\| X^\dl_N\|_{W^{\frac 14, \infty}}
+ \|\Dr_N\|_{L^2}^\frac{2k - 3- \eps}{2k} \|\Dr_N\|_{H^{\frac{k}{2}}}^\frac{3+ \eps}{2k} \Big)\\
& \quad \times \|\Dr_N\|_{L^2}^\frac{1}{2k} \|\Dr_N\|_{H^{\frac{k}{2}}}^\frac{2k-1}{2k} 
\Big(\| X^\dl_N \|_{H^{\frac{k-1}{2} - \eps}} + 
\|\Dr_N\|_{L^2}^\frac{3}{2k} \|\Dr_N\|_{H^{\frac{k}{2}}}^\frac{2k-3}{2k} \Big).
\end{split}
\label{DDV5}
\end{align}

\noi
By noting that 
$\frac{3+ \eps}{2k} + 
\frac{2k-1}{2k}  + \frac{2k-3}{2k} < 2$
and  applying Young's inequality and 
Proposition \ref{PROP:gauss1}\,(i), 
 we obtain
\begin{align}
  \E\big[\eqref{DDV5}\big]
   \le C_{\eps_0} + \E\Big[C_{\eps_0} \| \Dr_N \|_{L^2}^\al +   \eps_0 \| \Dr_N\|_{H^\frac{k}2}^2\Big]
\label{DDV6}
\end{align}

\noi
for some $\al \gg 1$.

Next, we consider the 
second term on the right-hand side of  \eqref{Aodd}.
In this case, it suffices to consider 
 $\wt p(u) = \dx^\kk u  \dx^{m-1}u \dx^m u$
 with $\#\Qdl = 1-\kk$, $k = 0, 1$.
 Moreover, we may assume $m = \frac{k-1}{2}\ge 2$ since
 the case $m = 1$ reduces to 
 $p(u) = u  \dx^m u  \dx^m u$ (or even easier  $p(u) = u^2  \dx u$)
 which we handled above. 
 Proceeding as in~\eqref{DDV5} with 
 Lemma \ref{LEM:T1}\,(iii), we can bound the contribution in this case by 
 \begin{align}
\bigg| & \int_\T \dx^\kk (X^\dl_N+\Dr_N) \dx^{m-1} (X^\dl_N+\Dr_N) \dx^m 
(X^\dl_N+\Dr_N)  dx \bigg| \notag \\
& \les_\dl  \| \dx^\kk (X^\dl_N+\Dr_N) \dx^\frac{k-3}{2}(X^\dl_N+\Dr_N) \|_{H^\frac 14} \notag \\
& \quad \times
\Big(\| X^\dl_N \|_{H^{\frac{k-1}{2} - \frac 14}} + \|\Dr_N\|_{H^{\frac{k-1}{2} - \frac 14}}\Big) \notag \\
& \les_\dl  \Big(\| X^\dl_N\|_{W^{\frac 54, \infty}}
+ \| \Dr_N\|_{H^{\frac{7+\eps}4 }}\Big)
\|(X^\dl_N+\Dr_N) \|_{H^\frac{2k-5}{4}} 
\label{DDV7}
\\
& \quad \times
\Big(\| X^\dl_N \|_{H^{\frac{k-1}{2} - \eps}} + 
\|\Dr_N\|_{L^2}^\frac{3}{2k} \|\Dr_N\|_{H^{\frac{k}{2}}}^\frac{2k-3}{2k} \Big) \notag \\
& \les  \Big(\| X^\dl_N\|_{W^{\frac 54, \infty}}
+ \|\Dr_N\|_{L^2}^\frac{2k - 7- \eps}{2k} \|\Dr_N\|_{H^{\frac{k}{2}}}^\frac{7+ \eps}{2k} \Big)\notag \\
& \quad \times 
\Big(\| X^\dl_N \|_{H^{\frac{k-1}{2} - \eps}} + 
\|\Dr_N\|_{L^2}^\frac{5}{2k} \|\Dr_N\|_{H^{\frac{k}{2}}}^\frac{2k-5}{2k} \Big)\notag \\
& \quad \times
\Big(\| X^\dl_N \|_{H^{\frac{k-1}{2} - \eps}} + 
\|\Dr_N\|_{L^2}^\frac{3}{2k} \|\Dr_N\|_{H^{\frac{k}{2}}}^\frac{2k-3}{2k} \Big).\notag 
\end{align}

\noi
By noting that 
$\frac{7+ \eps}{2k} + 
\frac{2k-5}{2k}  + \frac{2k-3}{2k} < 2$
and  applying Young's inequality and 
Proposition \ref{PROP:gauss1}\,(i),  we obtain
\begin{align*}
  \E\big[\eqref{DDV7}\big]
\le C_{\dl, \eps_0} + 
\E\Big[C_{\dl, \eps_0}\| \Dr_N \|_{L^2}^\al +   \eps_0 \| \Dr_N\|_{H^\frac{k}2}^2\Big]
\end{align*}

\noi
for some $\al \gg 1$, 
where the constant $C_{\dl, \eps_0} > 0$ is  independent of $2\le \dl \le \infty$.

Lastly, we consider the 
third term on the right-hand side of  \eqref{Aodd}.
In this case, it suffices to consider 
 $p(u) =  u^2  \dx^{m-1}u \dx^m u$.
Then,  proceeding as in~\eqref{DDV5}, we have\footnote{Here, 
if there is the Hilbert transform $\H$ acting on 
multiple factors including the last factor
$\dx^m (2X^\dl_N+\Dr_N)$, 
then by the anti self-adjointness of $\H$, 
we  move the action of $\H$ onto the remaining factors
(which in particular does not include the last factor
$\dx^m (2X^\dl_N+\Dr_N)$ such that we can apply Cauchy-Schwarz's inequality
to separate the last factor as in the first step of \eqref{DDV9}.}
 \begin{align}
\bigg| & \int_\T  (X^\dl_N+\Dr_N)^2 \dx^{m-1} (X^\dl_N+\Dr_N) \dx^m (X^\dl_N+\Dr_N)  dx \bigg| 
\notag \\
& \les \|  (X^\dl_N+\Dr_N)^2 \dx^\frac{k-3}{2}(X^\dl_N+\Dr_N) \|_{H^\frac 14} 
\Big(\| X^\dl_N \|_{H^{\frac{k-1}{2} - \frac 14}} + \|\Dr_N\|_{H^{\frac{k-1}{2} - \frac 14}}\Big) \notag \\
& \les \Big(\| X^\dl_N\|_{W^{\frac 14, \infty}}^2
+ \| \Dr_N\|_{H^{\frac{3+\eps}4 }}^2\Big)
\|(X^\dl_N+\Dr_N) \|_{H^\frac{2k-5}{4}} \notag \\
& \quad \times
\Big(\| X^\dl_N \|_{H^{\frac{k-1}{2} - \eps}} + 
\|\Dr_N\|_{L^2}^\frac{3}{2k} \|\Dr_N\|_{H^{\frac{k}{2}}}^\frac{2k-3}{2k} \Big) 
\label{DDV9} \\
& \les \Big(\| X^\dl_N\|_{W^{\frac 14, \infty}}^2
+ \|\Dr_N\|_{L^2}^\frac{2k - 3- \eps}{k} \|\Dr_N\|_{H^{\frac{k}{2}}}^\frac{3+ \eps}{k} \Big)\notag \\
& \quad \times 
\Big(\| X^\dl_N \|_{H^{\frac{k-1}{2} - \eps}} + 
\|\Dr_N\|_{L^2}^\frac{5}{2k} \|\Dr_N\|_{H^{\frac{k}{2}}}^\frac{2k-5}{2k} \Big)\notag \\
& \quad \times 
\Big(\| X^\dl_N \|_{H^{\frac{k-1}{2} - \eps}} + 
\|\Dr_N\|_{L^2}^\frac{3}{2k} \|\Dr_N\|_{H^{\frac{k}{2}}}^\frac{2k-3}{2k} \Big).
\notag 
\end{align}

\noi
By noting that 
$\frac{3+ \eps}{k} + 
\frac{2k-5}{2k}  + \frac{2k-3}{2k} < 2$
and  applying Young's inequality and 
Proposition \ref{PROP:gauss1}\,(i),  we obtain
\begin{align*}
  \E\big[\eqref{DDV9}\big]
\le C_{\eps_0} + \E\Big[C_{\eps_0} \| \Dr_N \|_{L^2}^\al +   \eps_0 \| \Dr_N\|_{H^\frac{k}2}^2\Big]
\end{align*}

\noi
for some $\al \gg 1$.

This concludes the proof of Lemma \ref{LEM:var2}.
\end{proof}

\subsection{Construction of the deep-water \GGMs}
\label{SUBSEC:DGM3}

In this subsection, we present a proof of
Theorem~\ref{THM:3}\,(i).
We first state a lemma
on 
uniform (in $N$ and $\dl$)
convergence of the truncated interaction potential 
$R^\dl_{\frac{k}{2}} (\P_N X^\dl_{\frac{k}{2}})$, 
where $R^\dl_{\frac{k}{2}}(u)$ 
and $X^\dl_{\frac{k}{2}}$ are as in 
\eqref{R1}
 and \eqref{Xdl1}, respectively.

\begin{lemma}\label{LEM:R1}
Let $k \ge 2$ be an integer.
Given any $0 < \dl \le \infty$
and  finite $p \ge 1$,  
the sequence $\{R^\dl_{\frac{k}{2}} (\P_N X^\dl_{\frac{k}{2}})\}_{N\in\N}$
converges
to   $R^\dl_{\frac{k}{2}} (X^\dl_{\frac{k}{2}})$
 in $L^p(\Omega)$ as $N \to \infty$.
 Moreover, given any finite $p \ge 1$, 
 there exist    $C_{\dl} = C_{\dl}(p) > 0$  and $\ta > 0$ such that 
\begin{align*}
\sup_{N\in\N} \sup_{2\leq \dl \leq \infty} \|R^\dl_{\frac{k}{2}} (\P_N X^\dl_{\frac{k}{2}}) \|_{L^p(\Omega) } &
\le C_\dl < \infty, \\
\| R^\dl_{\frac{k}{2}} (\P_M X^\dl_{\frac{k}{2}}) - R^\dl_{\frac{k}{2}} (\P_N X^\dl_{\frac{k}{2}}) \|_{L^p(\Omega) } & \leq \frac{C_{\dl}}{N^\theta}
\end{align*}

\noi
for any $M\geq N \geq 1$, 
where the constant $C_{\dl}$ is independent of $2 \leq \dl \leq \infty$.
In particular, the rate of convergence is uniform in $2\leq \dl \leq \infty$.

\end{lemma}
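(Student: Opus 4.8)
The plan is to reduce the convergence statement for $R^\dl_{\frac k2}$ to the already-established convergence of the Gaussian process $X^\dl_{\frac k2}$ in Sobolev spaces (Proposition~\ref{PROP:gauss1}) together with the structural description of the interaction potential from \eqref{R1}, \eqref{Aeven}, \eqref{Aodd}, and \eqref{Aodd1}. The key point is that every monomial $p(u)$ appearing in $R^\dl_{\frac k2}(u)$ is multilinear in $u$, so that, writing $u = \P_N X^\dl_{\frac k2}$ and $w = \P_M X^\dl_{\frac k2} - \P_N X^\dl_{\frac k2}$, the difference $\int_\T p(\P_M X^\dl_{\frac k2})\,dx - \int_\T p(\P_N X^\dl_{\frac k2})\,dx$ is a sum of terms each having at least one factor equal to $w$. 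One can then estimate each such multilinear term by the same type of bounds used in the proof of Lemmas~\ref{LEM:var1} and~\ref{LEM:var2}, except that now the worst-case factors carry at most $\lceil\frac{k-1}{2}\rceil$ derivatives, which is strictly less than $\frac k2$, so the process is in the relevant Sobolev class almost surely and no renormalization is needed.

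First I would reduce to the case $p = r = 2$ via Minkowski's inequality and the Wiener chaos estimate (Lemma~\ref{LEM:hyp}), exactly as in the proof of Proposition~\ref{PROP:gauss1}\,(i), since $R^\dl_{\frac k2}(\P_N X^\dl_{\frac k2})$ is a (finite) sum of polynomials in the Gaussians $\{g_n\}$ of bounded degree. Next I would fix a single monomial $p(u) \in \Pc_j(u)$, $3 \le j \le k+2$, with $|p(u)| \le \lceil\frac{k-1}{2}\rceil$ and $\#\dx(p) + \#\Qdl(p) + \#\dl^{-1}(p) = k+2 - j$, and — after dropping $\H$ and $\Qdl$ using Lemma~\ref{LEM:T1}\,(iii), with the usual $W^{s,\infty}\to W^{s,r}$, $r \gg 1$, trick — bound a typical multilinear term with one distinguished factor $w$ by
\begin{align*}
\bigg| \int_\T p_1(u,\dots,u,w)\,dx \bigg|
\les \|w\|_{W^{\s_0,\infty}} \prod_{i=1}^{j-1} \|X^\dl_N\|_{W^{\s_i,\infty}}
\end{align*}
(or with the $H^{\s}$-norm on the at-most-two highest-regularity factors, handled as in \eqref{DDV2}--\eqref{DDV5}), where each $\s_i \le \lceil\frac{k-1}{2}\rceil \le \frac{k-1}{2} < \frac{k-1}{2}$ after possibly integrating by parts. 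Taking expectations and invoking the difference estimate \eqref{DX3} of Proposition~\ref{PROP:gauss1}\,(i) for the $w$-factor and the uniform bound \eqref{DX2} for the remaining factors — all in $W^{\frac{k-1}{2}-\eps,\infty}(\T)$ — yields a bound of the form $C_\dl N^{-\ta}$ with $\ta > 0$, uniformly over $2 \le \dl \le \infty$ since all these estimates from Proposition~\ref{PROP:gauss1} are uniform in that range and, by \eqref{Q1} and Lemma~\ref{LEM:T1}\,(iii), so are the operator norms of $\Qdl$ and $\H$. The uniform-boundedness claim $\sup_{N,\dl}\|R^\dl_{\frac k2}(\P_N X^\dl_{\frac k2})\|_{L^p(\O)} < \infty$ follows from the same multilinear estimates applied without the distinguished $w$-factor, using \eqref{DX2} alone. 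Convergence in $L^p(\O)$ is then immediate from the Cauchy criterion supplied by the telescoping difference estimate.

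The main obstacle — really the only nontrivial point — is the bookkeeping for the handful of "critical" monomials in \eqref{Aeven}, \eqref{Aodd}, namely those with a factor $\dx^{m}u$ (where $m = \lceil\frac{k-1}{2}\rceil$), for which a crude H\"older estimate would require the at-most-$W^{\frac{k-1}{2}-\eps,\infty}$ regularity of $X^\dl_{\frac k2}$ to absorb $m$ derivatives and fails by an $\eps$. This is resolved exactly as in the proof of Lemma~\ref{LEM:var2}: when the distinguished factor $w$ carries $m$ derivatives one pairs it against another $m$-derivative factor via Cauchy--Schwarz and the smoothing bilinear estimate of Lemma~\ref{LEM:var0a} (with Remark~\ref{REM:var0a}), which places the product $\P_{\ne0}(\dx^{m}X^\dl_N\,\dx^{m}X^\dl_N)$ in $H^{-\frac k2}$ with bounds uniform in $\dl$; when $w$ carries strictly fewer derivatives one integrates by parts to move a derivative off the $m$-derivative factor first. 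Since $w = \P_{>N}^{\le M} X^\dl_{\frac k2}$ has the extra $N^{-\ta}$ gain built into \eqref{DX3} (and Lemma~\ref{LEM:var0a} applied to the tail projection retains such a gain by the same Lemma~\ref{LEM:SUM} computation restricted to $|n| > N$), the difference estimate goes through with a uniform rate. I would remark that this lemma is the deep-water analogue of Lemma~3.3 in \cite{LOZ} and its proof is a routine, if slightly lengthy, repetition of the estimates already carried out for Lemmas~\ref{LEM:var1} and~\ref{LEM:var2}, so in the final write-up I would state it and refer back to those proofs rather than reproducing every case.
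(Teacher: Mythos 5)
Your overall plan (reduce to $p=2$ via the Wiener chaos estimate, treat each monomial via multilinearity with one distinguished $w = \P_M X^\dl_{\frac k2} - \P_N X^\dl_{\frac k2}$ factor, use deterministic $W^{s,\infty}$-estimates for the low-order terms, and handle $\H,\Qdl$ uniformly in $\dl$ via Lemma~\ref{LEM:T1} and \eqref{Q1}) matches the paper's proof for the non-critical monomials. However, your treatment of the critical monomials --- the ones carrying a factor with $m = \lceil\frac{k-1}{2}\rceil$ derivatives --- contains a genuine gap, and the paper in fact handles these by a different argument.

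The gap: you propose to "pair $w$ against another $m$-derivative factor via Cauchy--Schwarz and the smoothing bilinear estimate of Lemma~\ref{LEM:var0a}," invoking \eqref{DDV2}--\eqref{DDV5} as the template. But in Lemma~\ref{LEM:var2} that step is powered by the \emph{drift} $\Dr_N\in H^{\frac k2}$: one estimates $\int_\T \Dr_N(\dx^m X^\dl_N)^2\,dx \le \|\Dr_N\|_{H^{m+\frac12}}\|\P_{\ne 0}((\dx^m X^\dl_N)^2)\|_{H^{-m-\frac12}}$, which only works because the first factor is finite for the drift. Here the distinguished factor $w$ is a piece of $X^\dl_{\frac k2}$ and lives only in $H^{\frac{k-1}{2}-\eps}(\T)$, so $\|w\|_{H^{m+\frac12}}=\|w\|_{H^{\frac k2}}=\infty$ a.s.\ and the naive analogy fails. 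One could try to salvage it by choosing a less greedy Cauchy--Schwarz split (e.g.\ $\|w\|_{H^{m-\frac14}}\|\dots\|_{H^{-m+\frac14}}$, checking the hypotheses of Lemma~\ref{LEM:var0a}, and then applying H\"older in $\o$ to the product of two correlated random norms), but none of this is spelled out in your proposal, and the even-$k$ critical monomial $u\,\dx^{m-1}u\,\dx^{m}u$ has only one $m$-derivative factor, so there is not even "another $m$-derivative factor" to pair with. The paper instead works directly on the Fourier side: it writes the cubic critical contribution as a third-order Wiener chaos, observes that there is no pair among $(n_1,n_2,n_3)$ under $n_{123}=0$, applies Jensen's inequality to reduce to the diagonal pairing, and estimates the resulting sum (using $\max(|n_1|,|n_2|)\gtrsim N$) to get a clean $N^{-1}$ decay --- see \eqref{RN5a}--\eqref{RN6} and \eqref{RN8}--\eqref{RN9}. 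The quartic critical term $\V$ requires a further no-pair/two-pair decomposition which your proposal does not address at all. So while your low-order and uniformity bookkeeping is fine, the heart of the lemma needs the explicit pairing/Jensen argument rather than the Cauchy--Schwarz-with-drift argument you import from Lemma~\ref{LEM:var2}.
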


Before proceeding to a proof of Lemma~\ref{LEM:R1}, 
we present a sketch of a proof of  Theorem~\ref{THM:3}\,(i).

\begin{proof}[Proof of Theorem~\ref{THM:3}\,(i)]
Given finite $p \ge 1$, 
the claimed $L^p(\O)$-convergence \eqref{conv1} of the truncated density
$F^\dl_{\frac{k}{2}}(\P_N X^\dl_{\frac{k}{2}})$, which is uniform in $ 2\le \dl \le \infty$, follows
from 

\smallskip

\begin{itemize}
\item[(i)] 
 the uniform (in $N\in \N$) $L^q(\O)$-bounds, for each $0 < \dl \le \infty$,  on 
the truncated density 
$F^\dl_{\frac{k}{2}}(\P_N X^\dl_{\frac{k}{2}})$ for some $q > p$
(Proposition \ref{PROP:V1}), 
which is also uniform 
in $2\leq \dl\le \infty$, 

\smallskip

\item[(ii)] 
the  convergence in probability, for each $0 < \dl \le \infty$,  of
$R^\dl_{\frac{k}{2}}(\P_N X^\dl_{\frac{k}{2}})$ (Lemma \ref{LEM:R1})
and 
$\| \P_N X^\dl_{\frac{k}{2}}\|_{L^2_x}$
(Proposition \ref{PROP:gauss1}\,(i)), 
which is  uniform 
in $2\leq \dl\le \infty$,

\smallskip

\item[(iii)] 
the uniform continuous mapping theorem
(Lemma 2.9 in \cite{LOZ})
which implies uniform (in $2\le \dl\le \infty$)
 convergence in probability of
the truncated density 
$F^\dl_{\frac{k}{2}}(\P_N u)$ for some $q > p$.

\end{itemize}

\noi
See Step 2 in the proof of  \cite[Proposition 3.6]{LOZ}
for further details.
See also 
\cite[Remark~3.8]{Tz08}
and the end of the proof of \cite[Proposition 1.2]{OTh}.

\end{proof}

\begin{remark}\label{REM:cutoff2}
\rm 

As mentioned in Remark \ref{REM:cutoff}, 
it is also possible to work with 
a sharp $L^2$-cutoff $\ind_{\{\|u \|_{L^2} \le K\}}$
in place of the smooth $L^2$-cutoff
$ \eta_K\big(\| u\|_{L^2}\big)$.
In this case, from the almost sure convergence of 
$\| \P_N X^\dl_{\frac{k}{2}}\|_{L^2}$
which is uniform in $2\le \dl\le \infty$
(Proposition \ref{PROP:gauss1}\,(i))
and the fact that 
\begin{align}
\PP\Big(\| X^\dl_{\frac{k}{2}}\|_{L^2} = K\Big) = 0
\label{RN3}
\end{align}

\noi
for any $0 < \dl \le \infty$ and $K > 0$,
we obtain  almost sure convergence of 
$\ind_{\{\|\P_N X^\dl_{\frac{k}{2}} \|_{L^2} \le K\}}$, 
(and hence convergence in $L^q(\O)$, $1 \le q < \infty$,  via the bounded convergence theorem) 
which is uniform in $2\le \dl\le \infty$,  
and the rest follows as above.
As for \eqref{RN3}, see Lemma 2.4 in~\cite{OST2}
(in a more complicated situation of the renormalized $L^2$-norm).

\end{remark}

Before presenting  a proof of 
Lemma \ref{LEM:R1}, let us introduce a definition.

\begin{definition}\label{DEF:pair}\rm
Given $\{ n_j\}_{j = 1}^\l\subset \Z^*$, 
we say that we have a 
 {\it pair}, if
there exist distinct
$j_1, j_2 \in \{1, \dots, \l\}$ such that 
$n_{j_1}  + n_{j_2} = 0$.
For example, 

\smallskip
\begin{itemize}
\item[(i)]
when $\l = 3$, there is no pair 
under $n_{123} = 0$, since $n_j \ne 0$, $j = 1, 2, 3$, 

\smallskip
\item[(ii)]
when $\l = 4$, we have either no pair or two pairs.

\end{itemize}

\end{definition}

We now present a proof of 
 Lemma~\ref{LEM:R1}.

\begin{proof}[Proof of Lemma~\ref{LEM:R1}]

Fix an integer $k \ge 2$ and finite $p \ge 1$.
Let $A_{\frac k 2, \frac\l2}$ be as in \eqref{Aeven}, \eqref{Aodd}, and \eqref{Aodd1}.
Then, in view of \eqref{R1}, 
it suffices to prove that there exist 
 $C_{\dl} = C_\dl(p)> 0$
and $\ta > 0$ 
 such that 
\begin{align*}
\|A_{\frac k 2, \frac\l2}(  X^\dl_M) - A_{\frac k 2, \frac\l2}( X^\dl_N) \|_{L^p(\Omega)} & \leq  \frac{C_{\dl}}{N^{\ta}}, 
\end{align*}

\noi
for any $M \ge N\ge 1$ and $1 \le \l \le k$, 
where the constant $C_{\dl}$ is independent of $2\leq \dl\leq\infty$
and $1 \le \l \le k$.
Here, we used the notation \eqref{var4}.
In the following,  all the implicit constants are  independent of $N,  M \in \N$ and $ 2 \le \dl \le \infty$.
In view of the Wiener chaos estimate (Lemma \ref{LEM:hyp}), 
we only consider the case $p = 2$.

\medskip

We first consider the case  $\l=1$. 
By 
Proposition \ref{PROP:gauss1}\,(i), we have 
\begin{align*}
\|A_{\frac k 2, \frac12}(X^\dl_M) - A_{\frac k 2 , \frac12}(X^\dl_N)\|_{L^2_\o} 
& \les  \|X^\dl_M-X^\dl_N\|_{L^{6}_\o L^3_x} 
\Big(\|X^\dl_M\|^2_{L^{6}_\o L^3_x}  + \|X^\dl_N\|^2_{L^{6}_\o L^3_x} \Big) \\
&  \les_\dl \frac{1}{N^{\frac{k-1}{2}-\eps}}.
\end{align*}

\smallskip

\noi
$\bullet$ {\bf Case 1:} 
$\l = 2m $ for some $m \in \N$ with $m \le \frac k2$.\\
\indent
Denote the $j$th term on the right-hand side of \eqref{Aeven} by $A^{(j)}_{\frac{k}{2}, \frac\l 2}$, 
$j = 1, 2$.
Then, we have 
\begin{align}
\begin{split}
A_{\frac{k}{2}, \frac\l 2}(X^\dl_M) - A_{\frac{k}{2}, \frac\l2}( X^\dl_N)  
& = 
\Big(A^{(1)}_{\frac{k}{2}, \frac\l 2}(X^\dl_M) - A^{(1)}_{\frac{k}{2}, \frac\l2}( X^\dl_N)\Big)\\
& \quad + \Big(A^{(2)}_{\frac{k}{2}, \frac\l 2}(X^\dl_M) - A^{(2)}_{\frac{k}{2}, \frac\l2}( X^\dl_N)\Big)\\
& =: \I + \II.
\end{split}
\label{RN5}
\end{align}

We first estimate $\I$.
Without loss of generality, 
we assume that 
 $ p(u) = u \dx^{m-1} u \dx^m u$. 
 Then, from \eqref{Xdl1}, we have 
 \begin{align}
\begin{split}
\|\I\|^2_{L^2(\Omega)}
&  = \frac 1{(2\pi)^2} \E \bigg[ \sum_{n_{123}=0} 
\Big(\ind_{\substack{0<|n_j| \leq M\\ j=1,2,3}} 
- \ind_{\substack{0<|n_j| \leq N\\ j=1,2,3}} \Big)
  \frac{g_{n_1} g_{n_2} g_{n_3}  (in_2)^{m-1} (in_3)^{m}}{ (T_{\dl, \frac k 2}(n_1) T_{\dl, \frac k2}(n_2) 
  T_{\dl, \frac k2}(n_3))^\frac12}  \\
& \hphantom{XXXX}\times \sum_{\wt n_{123}=0} 
\Big(\ind_{\substack{0<|\wt n_j| \leq M\\ j=1,2,3}} 
- \ind_{\substack{0<|\wt n_j| \leq N\\ j=1,2,3}} \Big)
  \frac{\cj{g_{\wt n_1} g_{\wt n_2} g_{\wt n_3}} (i \wt n_2)^{m-1}(i\wt  n_3)^{m}}
  { (T_{\dl, \frac k 2}(\wt n_1) T_{\dl, \frac k 2}(\wt n_2) T_{\dl, \frac k 2}(\wt n_3))^\frac12}\bigg].
\end{split}
\label{RN5a}
\end{align}

\noi
Note that, in the sums above,   there is no pair in $(n_1, n_2, n_3)$
and  in $(\wt n_1, \wt n_2, \wt n_3)$
in the sense of Definition~\ref{DEF:pair}.\footnote{In particular, 
$\I$ belongs to the homogeneous Wiener chaoses of order $3$.}
Thus, 
in order to compute the expectation above, 
we need to take all possible pairings between $(n_1, n_2, n_3)$
and $(\wt n_1, \wt n_2, \wt n_3)$.
By Jensen's inequality, however, 
we see that it suffices to consider the case
$n_j = \wt n_j$, $j = 1, 2, 3$.
See the discussion on $\<31p>$ in Section 4 of \cite{MWX}.
See also Section~10 in~\cite{Hairer}
and the proof of Lemma B.1 in \cite{OOT}.

As a result, 
by applying  \eqref{DX1}
and noting that $\max(|n_1|, |n_2|) \ges N$ in \eqref{RN5a} (which follows from 
$\max_{j = 1, 2, 3} |n_j| >  N$ and the triangle inequality), 
we have 
\begin{align}
\begin{split}
\| \I \|_{L^2(\O)}^2 
&\les_\dl \sum_{n_{123} = 0} 
\Big(\ind_{\substack{0<| n_j| \leq M\\ j=1,2,3}} 
- \ind_{\substack{0<| n_j| \leq N\\ j=1,2,3}} \Big)^2
 \frac{1}{\jb{n_1}^k\jb{n_2}^{k - 2m + 2}\jb{n_3}^{k - 2m}} \\
& \les \sum_{\substack{n_1, n_2 \in \Z\\\max(|n_1|, |n_2|) \ges N}}
 \frac{1}{\jb{n_1}^k\jb{n_2}^{2}}
   \les\frac{1}{N},
\end{split}
\label{RN6}
\end{align}

\noi
where, in the last step, we used the fact that 
 $k \ge 2$.

Next, we consider
 $\II$ in \eqref{RN5}.
 From the definition of 
 $A^{(2)}_{\frac{k}{2}, \frac\l 2}$, 
 we have $|p(u)|\le m - 1\le \frac k2 - 1$.
 Then, from 
 multilinearity of the monomials, 
 \eqref{DV5},  \eqref{DV6}, 
\begin{align}
\|u\|_{H^{\al_i}}
\le \|u\|_{H^{m-1}}
\les \|u\|_{H^{\frac{k-1}{2} - \eps - (\frac 12-\eps)}}
\label{RN6a}
\end{align}

\noi
for $i =1, 2$, 
 and Proposition \ref{PROP:gauss1}\,(i), 
 we have 
  \begin{align*}
 \| \II \|_{L^2(\O)}
&  \les  N^{-\frac 12+\eps}
\Big(1 + \|X^\dl_M \|_{L^{4m+4}_\o  W^{\frac{k-1}{2} - \eps, \infty}}^{2m+1}
+ \|X^\dl_N \|_{L^{4m+4}_\o W^{\frac{k-1}{2} - \eps, \infty}}^{2m+1} \Big)\\
& \hphantom{Xl}\times
\|X^\dl_M - X^\dl_N\|_{L^{4m+4}_\o W^{\frac{k-1}{2} - \eps, \infty}}\\
& \les N^{-\frac 12 + \eps}.
\end{align*}

\smallskip

\noi
$\bullet$ {\bf Case 2:} 
$\l = 2m + 1 \ge 3$ for some $m \in \N$ with $m \le \frac{k-1}2$.\\
\indent
In this case, we have $k \ge 3$.
In view of \eqref{Aodd}, we write 
\begin{align*}
A^\dl_{\frac{k}{2}, \frac{2m+1}{2}}(X^\dl_M) - A^\dl_{\frac{k}{2}, \frac{2m+1}{2}}( X^\dl_N)  
& =: \III + \IV + \V + \VI, 
\end{align*}

\noi
where
$\III$, $\IV$,  $\V$, and $\VI$ correspond to the contributions
from the first, second,  third, and fourth terms on the right-hand side of \eqref{Aodd}.

We first estimate $\III$ and $\IV$ when $k \ge 3$.
Proceeding as in \eqref{RN5a} and \eqref{RN6}
with Lemma~\ref{LEM:SUM}, we have 
\begin{align}
\begin{split}
\| \III \|_{L^2(\O)}^2 
&\les_\dl \sum_{n_{123} = 0} 
\Big(\ind_{\substack{0<| n_j| \leq M\\ j=1,2,3}} 
- \ind_{\substack{0<| n_j| \leq N\\ j=1,2,3}} \Big)^2
 \frac{1}{\jb{n_1}^k\jb{n_2}^{k - 2m}\jb{n_3}^{k - 2m}} \\
& \les \sum_{\substack{n_{123} = 0\\\max(|n_1|, |n_2|) \ges N}}
 \frac{1}{\jb{n_1}^k\jb{n_2}\jb{n_3}}
   \les\frac{1}{N^{1-\eps}}.
\end{split}
\label{RN8}
\end{align}

\noi
Similarly, with~\eqref{Q1}
and $k \ge 3$, we have
\begin{align}
\begin{split}
\| \IV \|_{L^2(\O)}^2 
&\les_\dl 
\sum_{\kk = 0}^1\frac 1{\dl^{1-\kk}}
\sum_{\substack{n_{123} = 0\\\max(|n_1|, |n_2|) \ges N}}
 \frac{1}{\jb{n_1}^{k-2\kk} \jb{n_2}^{k-2m + 2}\jb{n_3}^{k-2m}}\\
& \les_\dl \frac{1}{N^{1-\eps}}, 
\end{split}
\label{RN9}
\end{align}

\noi
where we used the fact that $k - 2\kk \ge 1$.
In handling $\III$ and $\IV$, 
we used the fact that there is no pair in $(n_1, n_2, n_3)$.

Let us now consider  $\V$ coming form the third term on the right-hand side of \eqref{Aodd}.
Without loss of generality, 
we assume that 
 $ p(u) = u^2 \dx^{m-1} u \dx^m u$. 
 Then, from \eqref{Xdl1}, we have 
 \begin{align}
\V
  = \frac 1{(2\pi)^\frac{3}{2}}  \sum_{n_{1234 }=0} 
\Big(\ind_{\substack{0<|n_j| \leq M\\ j=1,\dots, 4}} 
- \ind_{\substack{0<|n_j| \leq N\\ j=1,\dots, 4}} \Big)
  \frac{\prod_{j = 1}^4 g_{n_j}  (in_3)^{m-1} (in_4)^{m}}{ \prod_{j = 1}^4 (T_{\dl, \frac k 2}(n_j))^\frac 12}.
\label{RN9a}
\end{align}

\noi
Since it is quartic, 
we have either no pair or two pairs
in the sense of Definition \ref{DEF:pair}.
Write $\V = \V_1 + \V_2$, 
where $\V_1$ denotes the contribution from frequencies with no pair
in 
\eqref{RN9a}, 
while 
$\V_2$ denotes the contribution from frequencies with two pairs in 
\eqref{RN9a}.
Then, proceeding with Jensen's inequality  as in \eqref{RN5}-\eqref{RN5a}
with \eqref{DX1}, 
we have 
\begin{align*}
\| \V_1 \|_{L^2(\O)}^2 
&\les_\dl 
\sum_{\substack{n_{1234} = 0\\\max(|n_1|, |n_2|, |n_3|) \ges N}}
 \frac{1}{\jb{n_1}^{k} \jb{n_2}^k\jb{n_3}^3\jb{n_4}}
\les \frac{1}{N^{2}}.
\end{align*}

\noi
On the other hand, when there are  two pairs in \eqref{RN9a}, 
by Minkowski's inequality, we have 
\begin{align*}
\| \V_2 \|_{L^2(\O)}
&\les_\dl 
\sum_{\substack{n_1, n_2 \in \Z^*\\ \max(|n_1|, |n_2|) \ges N}}
 \frac{1}{\jb{n_1}^{2} \jb{n_2}^2}
\les \frac{1}{N}.
\end{align*}

Lastly, we bound $\VI$
corresponding to the last term on the right-hand side of \eqref{Aodd}, 
where
 we have $|p(u)|\le m - 1\le \frac {k-1}2 - 1$.
 Then, 
 proceeding as in Case 2 of the proof of Lemma \ref{LEM:var1}
 with 
 \eqref{RN6a} 
 and Proposition \ref{PROP:gauss1}\,(i), 
 we have 
  \begin{align*}
& \| \VI \|_{L^2(\O)}\\
& \quad \le  N^{-1 + \eps}
\Big(\|X^\dl_M \|_{L^{4m+6}_\o W^{\frac{k-1}{2} - \eps, \infty}}^{2m+2} 
+ \|X^\dl_N \|_{L^{4m+6}_\o W^{\frac{k-1}{2} - \eps, \infty}}^{2m+2} \Big)\\
& \hphantom{XXl}\times
\|X^\dl_M - X^\dl_N\|_{L^{4m+6}_\o W^{\frac{k-1}{2} - \eps, \infty}}\\ 
& \quad \les N^{-1 + \eps}. 
\end{align*}

This concludes
the proof of Lemma \ref{LEM:R1}.
\end{proof}

\subsection{Deep-water convergence of the \GGMs}
\label{SUBSEC:DGM4}

In this subsection, we present a   proof of Theorem~\ref{THM:3}\,(ii).
We first state  a preliminary result 
on  the $L^p(\O)$-convergence properties of the truncated density
$F^\dl_{\frac{k}{2}} (\P_N u)$ defined  in \eqref{rhox1}.

\begin{lemma}\label{LEM:FN1}
Let $k \ge 2$ be an integer and $K > 0$.
Then, given any finite $p \ge 1$ and $N\in\N$, we have 
\begin{align}
\lim_{\dl\to\infty} \|F^\dl_{\frac{k}{2}} (\P_N X^\dl_{\frac{k}{2}}) - F^\infty_{\frac{k}{2}} (\P_N X^\infty_{\frac k 2}) \|_{L^p(\O)} & = 0, \label{FN1} \\
\lim_{\dl\to\infty} \|F^\dl_{\frac{k}{2}} (\P_N X^\infty_{\frac k 2}) - 
F^\infty_{\frac{k}{2}} (\P_N X^\infty_{\frac k 2}) \|_{L^p(\O)} & = 0. \label{FN2}
\end{align}

\end{lemma}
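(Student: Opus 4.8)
\textbf{Proof strategy for Lemma~\ref{LEM:FN1}.}
The plan is to prove \eqref{FN1} and \eqref{FN2} by exploiting the explicit form \eqref{rhox1} of the truncated density, namely
$F^\dl_{\frac{k}{2}}(u) = \eta_K(\|u\|_{L^2}) \exp\big(-R^\dl_{\frac{k}{2}}(u)\big)$, together with the deep-water convergence of the conservation laws (Proposition~\ref{PROP:cons1}) and of the base Gaussian fields (Proposition~\ref{PROP:gauss1}). Since $N\in\N$ is fixed throughout, everything takes place at the level of smooth trigonometric polynomials $\P_N u$, so there are no low-regularity difficulties: the maps $u\mapsto R^\dl_{\frac k2}(\P_N u)$ and $u\mapsto \eta_K(\|\P_N u\|_{L^2})$ are genuinely Lipschitz (indeed polynomial/smooth) on balls of $L^2(\T)$, with constants depending on $N$ and $K$ but, crucially, \emph{uniformly} in $2\le\dl\le\infty$ after we track the $\dl$-dependence of $R^\dl_{\frac k2}$.

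First I would prove \eqref{FN2}, which is the cleaner of the two since only the \emph{density} varies with $\dl$, not the underlying field. Writing $Y = \P_N X^\infty_{\frac k2}$, I would use the elementary bound $|e^{-a}-e^{-b}| \le |a-b|(e^{-a}+e^{-b})$ together with $0\le\eta_K\le 1$ to get
\[
\big|F^\dl_{\frac k2}(Y) - F^\infty_{\frac k2}(Y)\big| \le \big|R^\dl_{\frac k2}(Y) - R^\infty_{\frac k2}(Y)\big|\Big(e^{-R^\dl_{\frac k2}(Y)} + e^{-R^\infty_{\frac k2}(Y)}\Big)\ind_{\{\|Y\|_{L^2}\le 2K\}}.
\]
Then I would invoke \eqref{DE3b}--\eqref{DE4} from Proposition~\ref{PROP:cons1}: on the fixed finite-dimensional space $\P_N L^2(\T)$ we have $R^\dl_{\frac k2}(Y) - R^\infty_{\frac k2}(Y) = \wt R$-type remainder terms, each carrying either an explicit power of $\dl^{-1}$ or a factor of $\Qdl$, and by \eqref{Q1} (or \eqref{Q2}) together with the finite-dimensionality one gets $|R^\dl_{\frac k2}(\P_N u) - R^\infty_{\frac k2}(\P_N u)| \lesssim_{N} \dl^{-1}(1+\|\P_N u\|_{L^2})^{k+1}$ — essentially the estimate \eqref{DE13c} restricted to $\P_N$. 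Combined with the $L^2$-cutoff controlling $\|Y\|_{L^2}$ and with H\"older's inequality to absorb the exponential factors (using that $e^{-R^\infty_{\frac k2}(\P_N\cdot)}\in L^q(d\mu^\infty_{\frac k2})$ for all $q<\infty$ by Proposition~\ref{PROP:V1} applied at $\dl=\infty$, or more cheaply by the crude bound $R^\dl_{\frac k2}(\P_N u)\ge -C_N$), a dominated-convergence argument gives \eqref{FN2}.

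For \eqref{FN1} I would interpolate through the intermediate quantity $F^\dl_{\frac k2}(\P_N X^\infty_{\frac k2})$:
\[
\big\|F^\dl_{\frac k2}(\P_N X^\dl_{\frac k2}) - F^\infty_{\frac k2}(\P_N X^\infty_{\frac k2})\big\|_{L^p(\O)} \le \big\|F^\dl_{\frac k2}(\P_N X^\dl_{\frac k2}) - F^\dl_{\frac k2}(\P_N X^\infty_{\frac k2})\big\|_{L^p(\O)} + \big\|F^\dl_{\frac k2}(\P_N X^\infty_{\frac k2}) - F^\infty_{\frac k2}(\P_N X^\infty_{\frac k2})\big\|_{L^p(\O)}.
\]
The second term tends to $0$ by \eqref{FN2}. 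For the first term, I would use the uniform (in $2\le\dl\le\infty$) local Lipschitz continuity of $u\mapsto F^\dl_{\frac k2}(\P_N u)$ on $L^2$-balls together with the almost sure convergence $X^\dl_{\frac k2}\to X^\infty_{\frac k2}$ in $L^2(\T)$ supplied by Proposition~\ref{PROP:gauss1}\,(i) (indeed in $L^p(\O;W^{\frac{k-1}2-\eps,r})$ with a rate uniform in $2\le\dl\le\infty$). The Lipschitz constant is the delicate point: $\eta_K$ is globally Lipschitz, but the exponential $e^{-R^\dl_{\frac k2}}$ can be large where $R^\dl_{\frac k2}$ is very negative, so one must bound $\big|e^{-R^\dl_{\frac k2}(\P_N u)} - e^{-R^\dl_{\frac k2}(\P_N w)}\big|$ by $|R^\dl_{\frac k2}(\P_N u) - R^\dl_{\frac k2}(\P_N w)|$ times a factor that is integrable in $\o$ uniformly in $\dl$ — here one uses Proposition~\ref{PROP:V1}, which gives exactly $\sup_N\sup_{2\le\dl\le\infty}\|F^\dl_{\frac k2}(\P_N X^\dl_{\frac k2})\|_{L^q(\O)} < \infty$, and a similar uniform $L^q$-bound for $e^{-R^\dl_{\frac k2}(\P_N X^\infty_{\frac k2})}$ obtained from Proposition~\ref{PROP:V1} at $\dl=\infty$ plus the uniform closeness of the two fields. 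Then H\"older's inequality (splitting exponents so that the Lipschitz-difference factor sits in a high $L^q$ and the exponential-bound factors in a complementary exponent), the polynomial structure of $R^\dl_{\frac k2}(\P_N\cdot)$ on the fixed finite-dimensional space, and the uniform $L^p(\O)$-convergence $\|\P_N X^\dl_{\frac k2} - \P_N X^\infty_{\frac k2}\|_{L^2_x}\to 0$ close the argument.

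\textbf{Main obstacle.} The routine parts (DCT, H\"older, the explicit $\dl^{-1}$-gain in $R^\dl_{\frac k2}-R^\infty_{\frac k2}$) are standard once one has Propositions~\ref{PROP:cons1}, \ref{PROP:gauss1}, and~\ref{PROP:V1} in hand. The one place requiring genuine care is controlling the \emph{Lipschitz difference of the exponentials uniformly in $\dl$}: one cannot afford any $\dl$-dependence blowing up in the implicit constants, so the uniform $L^q(\O)$-integrability of $F^\dl_{\frac k2}(\P_N X^\dl_{\frac k2})$ and of $e^{-R^\dl_{\frac k2}(\P_N X^\infty_{\frac k2})}$ (for $q$ as large as needed), both furnished by Proposition~\ref{PROP:V1} together with the uniform $L^p$-closeness of $X^\dl_{\frac k2}$ to $X^\infty_{\frac k2}$, must be used as the workhorse rather than any pointwise-in-$\o$ estimate on $R^\dl_{\frac k2}$.
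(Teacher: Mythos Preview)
Your outline is workable but more elaborate than necessary, and your diagnosis of the ``main obstacle'' is inverted. The paper's proof rests on a single deterministic observation (isolated as Lemma~\ref{LEM:FN2}): since $N$ is fixed and the cutoff $\eta_K(\|\P_N u\|_{L^2})$ is present, one has
\[
\sup_{2\le\dl\le\infty}\ \sup_{u\in L^2(\T)}\ \big|F^\dl_{\frac k2}(\P_N u)\big|\ \le\ C_{k,K,N}\ <\ \infty,
\]
because on $\{\|\P_N u\|_{L^2}\le 2K\}$ Bernstein's inequality bounds every Sobolev norm of $\P_N u$ by a power of $N$ times $K$, and the $\dl$-dependence of $R^\dl_{\frac k2}$ enters only through $\dl^{-m}$ and $\Qdl$, both harmless for $\dl\ge 2$. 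You actually allude to this (``more cheaply by the crude bound $R^\dl_{\frac k2}(\P_N u)\ge -C_N$'') but then abandon it. With this bound in hand, both \eqref{FN1} and \eqref{FN2} reduce to pointwise-in-$\omega$ convergence (using that $\P_N X^\dl_{\frac k2}(\omega)\to\P_N X^\infty_{\frac k2}(\omega)$ in $H^{\frac k2}$ for each fixed $\omega$, together with the decomposition \eqref{DE3b} to handle the $\dl$-dependent pieces of $R^\dl_{\frac k2}$) plus the bounded convergence theorem. No H\"older splitting, no appeal to Proposition~\ref{PROP:V1}.

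Your Lipschitz route via Proposition~\ref{PROP:V1} also has a gap you do not address: Proposition~\ref{PROP:V1} controls $F^\dl_{\frac k2}(\P_N X^\dl_{\frac k2})=\eta_K(\cdot)\,e^{-R^\dl_{\frac k2}(\cdot)}$ in $L^q$, not the bare exponential $e^{-R^\dl_{\frac k2}(\P_N X^\dl_{\frac k2})}$. Any Lipschitz decomposition of $F^\dl(\P_N X^\dl)-F^\dl(\P_N X^\infty)$ produces a cross term with the cutoff evaluated at one field and the exponential at the other, and without its matching cutoff that exponential (of a degree-$(k{+}2)$ polynomial in Gaussians) is not in any $L^q$. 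The cure is exactly the deterministic bound above, after which the whole Lipschitz machinery becomes redundant. So contrary to your final paragraph, the pointwise-in-$\omega$ (indeed deterministic) estimate \emph{is} the workhorse, and Proposition~\ref{PROP:V1} plays no role in this lemma.
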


In order to prove  Lemma \ref{LEM:FN1}, 
we need the following auxiliary lemma
on 
an $N$-dependent 
(but uniform in $2\le \dl\le \infty$)
deterministic bound
on the truncated density $F^\dl_{\frac{k}{2}}(\P_N u) $.

\begin{lemma}\label{LEM:FN2}
Let $k \ge 2$ be an integer and $K > 0$.
Then, given any $N\in\N$, 
there exists a finite constant $C_{k, K, N}>0$ such that 
\begin{align}\label{unif1}
\sup_{2\leq \dl \leq \infty}
\big|F^\dl_{\frac{k}{2}}(\P_N u) \big| \leq C_{k, K,  N} < \infty
\end{align}

\noi
for any $u \in L^2(\T)$.
\end{lemma}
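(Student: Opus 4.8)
\textbf{Proof plan for Lemma \ref{LEM:FN2}.}
The plan is to exploit the explicit structure of $R^\dl_{\frac k2}(u)$ recorded in \eqref{R1}, \eqref{Aeven}, \eqref{Aodd}, and \eqref{Aodd1}, together with the fact that the frequency projector $\P_N$ regularizes arbitrarily. Fix $N \in \N$ and $u \in L^2(\T)$. Since $\P_N u$ is a trigonometric polynomial supported on $\{0 < |n| \le N\}$, for every $s \ge 0$ we have the crude bound $\|\P_N u\|_{H^s_x} \le N^s \|\P_N u\|_{L^2} \le N^s \|u\|_{L^2}$, and likewise $\|\P_N u\|_{W^{s,\infty}_x} \les_s N^{s + \frac12} \|u\|_{L^2}$ by Sobolev embedding. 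Because of the cutoff $\eta_K\big(\|\P_N u\|_{L^2}\big)$ appearing in $F^\dl_{\frac k2}(\P_N u)$, the density vanishes unless $\|\P_N u\|_{L^2} \le 2K$, so on the support of $F^\dl_{\frac k2}(\P_N u)$ every Sobolev and Fourier–Lebesgue norm of $\P_N u$ is bounded by a constant depending only on $k$, $K$, $N$ (and the order $s$, which is itself controlled by $k$).

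With these a priori bounds in hand, the next step is to show $|R^\dl_{\frac k2}(\P_N u)| \le C_{k, K, N}$ uniformly in $2 \le \dl \le \infty$. Each term in \eqref{R1} is of the form $\dl^{-(k-\l)} A^\dl_{\frac k2, \frac\l2}(\P_N u)$ with $0 \le k - \l \le k$, so the prefactors $\dl^{-(k-\l)}$ are bounded by $1$ for $\dl \ge 1$ (in particular for $\dl \ge 2$), and equal $0$ in the limiting case $\dl = \infty$ except for the leading term; hence they contribute no growth. Each $A^\dl_{\frac k2, \frac\l2}(\P_N u)$ is a finite sum of monomials $c(p) \int_\T p(\P_N u)\,dx$ with $p \in \Pc_j(u)$, $j \le k+3$; estimating such an integral by passing to the Fourier side and applying Young's and Cauchy--Schwarz's inequalities — exactly as in \eqref{DE13a}, \eqref{DE13b}, and \eqref{DDV5} — and using that $\H$ is bounded on $L^r_0$ and that $\|\Qdl\|_{L^r_0 \to L^r_0} \le C_r \dl^{-1} \le C_r/2$ for $\dl \ge 2$ by Lemma \ref{LEM:T1}\,(iii), one bounds $|\int_\T p(\P_N u)\,dx|$ by a power of $\big(1 + \|\P_N u\|_{H^{k}_x} + \|\P_N u\|_{\F L^{k,1}}\big)$, with all constants independent of $\dl \in [2,\infty]$. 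Combining with the a priori polynomial-in-$N$ bounds on these norms on the support of the cutoff yields $|R^\dl_{\frac k2}(\P_N u)| \le C_{k,K,N}$.

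Finally, since $F^\dl_{\frac k2}(\P_N u) = \eta_K\big(\|\P_N u\|_{L^2}\big) \exp\big(-R^\dl_{\frac k2}(\P_N u)\big)$ with $0 \le \eta_K \le 1$, we obtain
\begin{align*}
\big|F^\dl_{\frac k2}(\P_N u)\big| \le \exp\big(|R^\dl_{\frac k2}(\P_N u)|\big) \le \exp(C_{k,K,N}) =: C'_{k,K,N} < \infty,
\end{align*}
uniformly in $2 \le \dl \le \infty$ and in $u \in L^2(\T)$, which is \eqref{unif1}. I expect no real obstacle here: the statement is an $N$-dependent bound, so one does not need the delicate $\dl$-uniform integrability of Proposition \ref{PROP:V1}; the only point requiring a little care is keeping the estimates on the monomials uniform in $\dl$, which is guaranteed by the $\dl$-independence of the constants $c(p)$ and by the uniform operator bounds for $\H$ and $\Qdl$ from Lemma \ref{LEM:T1}\,(iii).
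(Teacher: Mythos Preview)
Your proposal is correct and follows essentially the same approach as the paper's proof: use the $L^2$-cutoff to restrict to $\|\P_N u\|_{L^2}\le 2K$, apply Bernstein's inequality to control all needed norms of $\P_N u$ by powers of $N$ and $K$, and then bound each monomial in $R^\dl_{\frac k2}$ using the structure from \eqref{R1}, \eqref{Aeven}, \eqref{Aodd}, \eqref{Aodd1} together with the $\dl$-uniform operator bounds for $\H$ and $\Qdl$ from Lemma~\ref{LEM:T1}\,(iii) and the trivial bound $\dl^{-(k-\l)}\le 1$ for $\dl\ge 2$. The paper's proof is a two-sentence sketch of exactly this argument; your write-up simply spells out the details (one harmless slip: the homogeneity satisfies $j\le k+2$, not $k+3$).
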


\begin{proof}
From  \eqref{rhox1}, 
we have 
\begin{align*}
F^\dl_{\frac{k}{2}}(\P_N u)
=  \eta_K\big(\| \P_N u\|_{L^2}\big)
\exp\Big(-R^\dl_{\frac k 2}(\P_N u)\Big)  , 
\end{align*}

\noi
where $R^\dl_{\frac k 2}(u)$ is as in \eqref{R1}.
Recall from \eqref{R1}, \eqref{Aeven}, \eqref{Aodd},  and \eqref{Aodd1}
that $R^\dl_{\frac k 2}(u)$
is a polynomial in $u$ with derivatives, non-positive powers of $\dl$, 
$\Qdl$, and the Hilbert transform.
Then, 
the bound \eqref{unif1} follows from 
Bernstein's inequality  ($\|\P_N u\|_{W^{s, \infty}}\les N^{s + \frac 12} \|\P_N u\|_{L^2}$
for any $s \ge 0$), 
Lemma \ref{LEM:T1}, 
and the fact that $\|\P_N u\|_{L^2}\le 2K$.
\end{proof}

We now present a proof of Lemma \ref{LEM:FN1}.

\begin{proof}[Proof of Lemma \ref{LEM:FN1}]

We first prove \eqref{FN2}.
From (the proof of) \eqref{DE4} in Proposition~\ref{PROP:cons1}, 
we have 
\begin{align}
\lim_{\dl \to \infty} R^\dl_\frac k2(u) = 
R^\infty_\frac k2(u)
\label{FN3}
\end{align}

\noi
for any 
$u \in H^\frac k2 (\T)$, 
where
$R^\infty_{\frac{k}{2}}(u)
= 
R^\BO_{\frac{k}{2}}(u)$ is as in \eqref{E2}. 
Then, 
from \eqref{rhox1} and \eqref{FN3}, 
we obtain 
\begin{align}
\lim_{\dl \to \infty} F^\dl_{\frac{k}{2}} (\P_N X^\infty_{\frac k 2}(\o)) 
= 
F^\infty_{\frac{k}{2}} (\P_N X^\infty_{\frac k 2}(\o)) 
\label{FN4}
\end{align}

\noi
for each $\o \in \O$.
Therefore, \eqref{FN2} follows from \eqref{FN4}, 
Lemma \ref{LEM:FN2}, and the bounded convergence theorem.


Next, we prove \eqref{FN1}.
For each fixed  $N \in \N$, 
it follows from \eqref{Xdl1} and \eqref{DX1}
that 
$\P_N X^\dl_{\frac{k}{2}}$
converges to $\P_N X^\infty_{\frac k 2}$
in $H^\frac k2(\T)$ for any $\o \in \O$.
Thus, we have 
\begin{align}
\lim_{\dl \to \infty}  \eta_K\big(\| \P_N X^\dl_{\frac{k}{2}}(\o)\|_{L^2}\big)
=  \eta_K\big(\| \P_N X^\infty_{\frac{k}{2}}(\o)\|_{L^2}\big)
\label{FN5}
\end{align}

\noi
for any $\o \in \O$.
We claim that 
\begin{align}
\begin{split}
& \lim_{\dl \to \infty} 
\ind_{\{\|\P_N X^\dl_{\frac{k}{2}}(\o)\|_{L^2}\le 2K\}}
 R^\dl_\frac k2( \P_N X^\dl_{\frac{k}{2}}(\o))\\
& \quad =  
\ind_{\{\|\P_N X^\infty_{\frac{k}{2}}(\o)\|_{L^2}\le 2K\}}
R^\infty_\frac k2( \P_N X^\infty_{\frac{k}{2}}(\o)).
\end{split}
\label{FN6}
\end{align}

\noi
For now, let us assume \eqref{FN6} and prove \eqref{FN1}.
From \eqref{rhox1}, \eqref{FN5},  and \eqref{FN6}, 
we have
\begin{align}
\lim_{\dl \to \infty}  F^\dl_\frac k2( \P_N X^\dl_{\frac{k}{2}}(\o))
=  F^\infty_\frac k2( \P_N X^\infty_{\frac{k}{2}}(\o))
\label{FN6a}
\end{align}

\noi
for any  $\o \in \O$.
Then, \eqref{FN1} follows from \eqref{FN6a}, 
Lemma \ref{LEM:FN2}, and the bounded convergence theorem.

It remains to prove \eqref{FN6}.
Recall the definition \eqref{R1} of $R^\dl_\frac k2(u)$.
As for the second term on the right-hand side of \eqref{R1}, 
arguing as in the proof of Lemma \ref{LEM:FN2}, we have 
\begin{align}
\sup_{u \in L^2}
\bigg|\ind_{\{\|\P_N u\|_{L^2}\le 2K\}}\sum_{\l=1}^{k-1} \frac{1}{\dl^\l} A_{\frac k 2,\frac{k-\l}{2}}(\P_N u)\bigg|
\les \frac {C_N}\dl \too 0, 
\label{FN7}
\end{align}

\noi
as $\dl \to \infty$.
Let 
$D^\dl_\frac k2(u)$
be the collection of monomials $p(u)$ in $A^\dl_{\frac k2 , \frac{k}{2}}(u)$
such that $\#\Qdl(p) \ge 1$.
In view of 
\eqref{Q1}, 
we have 
\begin{align}
\sup_{u \in L^2}
\bigg|\ind_{\{\||\P_N  u\|_{L^2}\le 2K\}}
D^\dl_\frac k2(\P_Nu)\bigg|
\les \frac {C_N}\dl \too 0, 
\label{FN8}
\end{align}

\noi
as $\dl \to \infty$.

Let  $G^\dl_\frac k2 (u) = A^\dl_{\frac k2, \frac k2}(u) - D^\dl_\frac k2(u)$.
Then, by noting that $G^\dl_\frac k2$ is $\dl$-free
in the sense of the proof of Proposition \ref{PROP:cons1}, 
we see that $G^\dl_\frac k2(u)$ is independent of $\dl$
and that  $G^\dl_\frac k2(u) = R^\infty_\frac k2(u)$.
See also~\eqref{DE7} and 
\eqref{DE9}.
Hence, from the $H^\frac k2$-convergence of 
$\P_N X^\dl_{\frac{k}{2}}(\o)$
to $\P_N X^\infty_{\frac k 2}(\o)$, 
we obtain
\begin{align}
\lim_{\dl \to \infty}
G^\dl_\frac k2(\P_N X^\dl_{\frac{k}{2}}(\o))
 = \lim_{\dl \to \infty}
R^\infty_\frac k2(\P_N X^\dl_{\frac{k}{2}}(\o))
= R^\infty_\frac k2(\P_N X^\infty_{\frac k 2}(\o))
\label{FN9}
\end{align}

\noi
for any $\o \in \O$.
Hence, the claim \eqref{FN6} follows from  \eqref{FN7},  \eqref{FN8},  and  \eqref{FN9} with \eqref{R1}.
\end{proof}

We conclude this section by presenting  a  proof of Theorem~\ref{THM:3}\,(ii).
\begin{proof}[Proof of Theorem~\ref{THM:3}\,(ii)]

The claimed equivalence
of  $\rho^\dl_{\frac{k}{2}}$ and
$\rho^\infty_{\frac{k}{2}}$  
follows from 
(a) 
the equivalence
of  $\rho^\dl_{\frac{k}{2}}$ and 
 the Gaussian measure with the $L^2$-cutoff
$\eta_K\big(\| u\|_{L^2}\big) d \mu^\dl_\frac k2(u)$
for any $0 < \dl \le \infty$, 
and 
(b) Proposition \ref{PROP:gauss1}\,(iii).

In the following,
by closely following the argument in 
Subsection 3.3 of  \cite{LOZ}, 
we prove
convergence in total variation of
 $\rho^\dl_{\frac{k}{2}}$  to $\rho^\infty_{\frac{k}{2}}$. 
Given any  $N\in\N$, from the triangle inequality, we have 
\begin{align*}
\dtv \big(\rho^\dl_\frac k2 ,\rho^\infty_{\frac k2}\big) 
& \leq \dtv\big(\rho^\dl_{\frac k2}, \rho^\dl_{\frac k2, N}\big) 
+ \dtv\big(\rho^\dl_{\frac k2, N}, \rho^\infty_{\frac k2, N}\big) 
+ \dtv \big(\rho^\infty_{\frac k2, N}, \rho^\infty_{\frac k2}\big)\\
& \leq 2 \sup_{2\leq \dl \leq \infty} 
\dtv\big(\rho^\dl_{\frac k2}, \rho^\dl_{\frac k2, N}\big) 
+ \dtv \big(\rho^\dl_{\frac k2, N}, \rho^\infty_{\frac k2, N}\big)
\end{align*}

\noi
for any $2 \le \dl < \infty$.
In view of the uniform (in $2\le \dl \le \infty$)
convergence in total variation of $\rho^\dl_{\frac k2, N}$ to $\rho^\dl_{\frac k2}$
(Theorem~\ref{THM:3}(i)), 
by taking 
 a limit as $\dl\to\infty$ and then  a limit as $N\to\infty$, we obtain
\begin{align*}
\lim_{\dl\to\infty} 
\dtv \big(\rho^\dl_\frac k2 ,\rho^\infty_{\frac k2}\big) 
 \leq \lim_{N\to\infty} \lim_{\dl\to\infty} 
 \dtv\big(\rho^\dl_{\frac k2, N}, \rho^\infty_{\frac k2, N}\big).
\end{align*}

\noi
Hence, it suffices to prove 
\begin{align}
\lim_{\dl\to\infty} 
\dtv \big(\rho^\dl_{\frac k2, N}, \rho^\infty_{\frac k2, N}\big)
= 0
\label{DT2}
\end{align}

\noi
for some $N \in \N$ (but we will prove \eqref{DT2} for any $N \in \N$).

Recall from \eqref{FN1} in Lemma \ref{LEM:FN1} that 
the partition function 
$Z_{\dl,\frac k2,  N} = \| F^\dl_\frac k2(\P_Nu)\|_{L^1(d\mu^\dl_\frac k2)}$ 
of the truncated ILW \GGM~ $\rho^\dl_{\frac k2, N}$
converges
to the partition function 
$Z_{\infty,\frac k2,  N} = \| F^\infty_\frac k2(\P_Nu)\|_{L^1(d\mu^\infty_\frac k2)}$ 
of the truncated BO \GGM~ $\rho^\infty_{\frac k2, N}$.
Then, 
proceeding as in \cite[(3.57)]{LOZ}, 
with 
 \eqref{KL0}, we have
\begin{align}
\begin{split}
&  \lim_{\dl \to \infty} \dtv \big(\rho^\dl_{\frac k2, N}, \rho^\infty_{\frac k2, N}\big)
 =  \lim_{\dl \to \infty}\sup_{A\in\mathcal{B}} \big|\rho^\dl_{\frac k2, N}(A) - \rho^\infty_{\frac k2, N}(A)\big| \\
& \quad \le Z_{\infty,\frac k2,  N} ^{-1}
 \lim_{\dl \to \infty}
 \int_{H^{\frac{k-1}{2}-\e} } 
| F^\dl_\frac k2(\P_Nu)  -  F^\infty_\frac k2 (\P_N u) |\, d \mu^\infty_\frac k2(u) \\
 & \quad \quad  +  Z_{\infty,\frac k2,  N} ^{-1}
 \lim_{\dl \to \infty}
  \int_{H^{\frac{k-1}{2}-\e}} 
F^\dl_\frac k2(\P_N u)\bigg|  \frac {d\mu^\dl_\frac k2}{d \mu^\infty_\frac k2} (u) - 1\bigg|\, d \mu^\infty_\frac k2(u)\\
& \quad =:  \lim_{\dl \to \infty} \I_{\dl, N} +  \lim_{\dl \to \infty}\II_{\dl,N},   
\end{split}
\label{DT3}
\end{align}

\noi
where  $\mathcal{B}$ denote the Borel sets in $H^{\frac{k-1}{2}-\eps}(\T)$.
From \eqref{FN2} in Lemma \ref{LEM:FN1}, 
we obtain 
\begin{align}
\lim_{\dl \to \infty} \I_{\dl, N} = 0
\label{DT4}
\end{align}

\noi
for any $N \in \N$.
On the other hand, from Lemma \ref{LEM:FN2}, 
 Scheff\'e's theorem
(Lemma 2.1 in \cite{KS}; see also Proposition 1.2.7 in \cite{Tsy}), 
and Proposition~\ref{PROP:gauss1}\,(iv), we obtain
\begin{align}
\begin{split}
\lim_{\dl\to\infty}\II_{\dl, N}
& \leq  C_{k,K,N} 
 \lim_{\dl \to \infty}  \int_{H^{\frac{k-1}{2}-\e}} 
\bigg|  \frac {d\mu^\dl_\frac k2}{d \mu^\infty_\frac k2} (u) - 1\bigg|\, d \mu^\infty_\frac k2(u)\\
& = 2 C_{k,K,N} 
\lim_{\dl\to\infty} \dtv\big(\mu^\dl_\frac k2,  \mu^\infty_\frac k2\big)
= 0
\end{split}
\label{DT5}
\end{align}

\noi
for any $N \in \N$.
Therefore, \eqref{DT2} follows from \eqref{DT3}, \eqref{DT4}, 
and \eqref{DT5}.
\end{proof}

\section{Shallow-water \GGMs}
\label{SEC:SGM1}

In this section, 
we go over the construction of the \GGMs~ in the shallow-water regime
and study their convergence properties in the shallow-water limit
(Theorem~\ref{THM:4}\,(i) and (ii)).
As in the deep-water regime studied in the previous section, 
there are two main tasks:

 \smallskip
 
 \begin{itemize}
 \item[(i)]
 We need to establish
 an
$L^p$-integrability of 
the truncated density $\wt F^\dl_\frac k2 (\P_N v)$, 
which is 
uniform in 
 {\it both} $N \in \N$ and $0 < \dl \ll 1$. 
As in Section \ref{SEC:DGM1}, 
we employ a variational approach.
Thanks to our detailed analysis 
in Section \ref{SEC:cons2}
on the structure
of the shallow-water conservation laws, 
this part of analysis via the variational formula
follows closely to that in the deep-water regime.
See Subsection \ref{SUBSEC:SGM2}.

\smallskip

\item[(ii)] 
We also need to establish weak convergence
of 
the base Gaussian measures
$\wt \mu^\dl_\frac k2$ in \eqref{gauss3}
(see also \eqref{gauss4}).
See Proposition \ref{PROP:gauss2}.

 \end{itemize}

 \noi
 Unlike the deep-water case
 (Proposition \ref{PROP:gauss1}), 
 the convergence of the base Gaussian measures
 holds only weakly.
In Proposition \ref{PROP:gauss2}, 
we also establish
singularity of 
the base Gaussian measures
$\wt \mu^\dl_\frac k2$ 
and the limiting 
base Gaussian measure
$\wt \mu^0_{\lceil \frac{k}{2}\rceil}$, 
which also exhibits a sharp contrast 
to the deep-water case.
Furthermore, 
we establish 
an interesting dichotomy
on singularity\,/\,equivalence
for   the odd-order and 
 even-order Gaussian measures 
with different depth parameters $\dl$;
see Proposition \ref{PROP:gauss2}.

In Subsection \ref{SUBSEC:SGM3}, 
we present a proof of Theorem \ref{THM:4}\,(i)
on the construction of the shallow-water \GGM~ for fixed $0 \le \dl <  \infty$, 
while we  present 
 a proof of Theorem~\ref{THM:4}\,(ii) and (iii)
in Subsection \ref{SUBSEC:SGM4}.

\subsection{Singularity and equivalence  of the base Gaussian measures}
\label{SUBSEC:SGM1}

Given
$0\le  \dl < \infty$ and $k \in \N$, 
let $\wt{\mu}^\dl_{\frac k 2}$ be 
 the Gaussian measure defined   in \eqref{gauss3}
and \eqref{gauss4}. 
 Recall that 
 $\wt \mu^\dl_{\frac{k}{2}}$ is the induced probability measure 
 under the map $\o \in \O \mapsto \wt X^\dl_{\frac{k}{2}}(\o)$, 
 where $\wt X^\dl_{\frac{k}{2}}$ is as in~\eqref{Xdl2}
 and \eqref{Xdl3}.

The following proposition 
generalizes
Proposition 4.1 in \cite{LOZ}
(see also Proposition 4.2 in~\cite{LOZ}), 
where 
the $k = 1$ case was treated.
In this proposition, 
we in particular establish a 2-to-1 collapse
of the Gaussian measure $\wt \mu^\dl_\frac k2$
in the shallow-water limit.
Furthermore, we point out that
the Gaussian measure
$\wt \mu^\dl_\frac k2$ and its shallow-water limit
$\wt \mu^0_\frac k2$
are singular
and that 
 convergence 
of~$\wt \mu^\dl_\frac k2$
holds only weakly, 
which presents a stark contrast to the deep-water case
(Proposition \ref{PROP:gauss1}).

\begin{proposition}\label{PROP:gauss2}
Let $k\in\N$.  Then, the following statements hold.

\smallskip

\noi{\rm(i)}
Given any $0 \le  \dl <  \infty$, 
  finite $p \ge 1$,  and $1 \le r \le \infty$, 
the sequence $\{\P_N \wt X^\dl_{\frac k 2}  \}_{N\in\N}$ 
converges
to   $\wt  X^\dl_{\frac{k}{2}}$ in $L^p(\Omega; W^{\frac{k-1}{2}-\eps, r}(\T))$
as $N \to \infty$.
 Moreover, given any finite $p \ge 1$  
 and $1 \le r \le \infty$, 
 there exist  $C_\dl = C_\dl( p, r) > 0$ and $\ta > 0$ such that 
\begin{align}
\sup_{N\in\N} \sup_{0<  \dl \leq 1} 
\Big\|  \| \P_N \wt X^\dl_{\frac{k}{2}} \|_{W^{\frac{k-1}{2}-\eps, r}_x} \Big\|_{L^p(\Omega)} &< C_\dl  <\infty, 
 \label{SX2}\\
\sup_{0<  \dl \leq 1} \Big\| \| \P_M \wt  X^\dl_{\frac{k}{2}} - \P_N \wt X^\dl_{\frac{k}{2}} 
\|_{W^{\frac{k-1}{2}-\eps, r}_x} \Big\|_{L^p(\Omega) } & \leq \frac{C_\dl}{N^\ta}
 \label{SX3}
\end{align}

\noi
for any $M\geq N\ge 1$,
where the constant $C_{\dl}$ is independent of $0 <  \dl \leq 1$.
 In particular, the rate of convergence is uniform in $0<  \dl \leq 1$.
When $k$ is even, \eqref{SX2} and \eqref{SX3} hold
including $\dl = 0$.

\smallskip

\noi{\rm(ii)}
Given any finite $p \ge 1$ and $ 1\le r \le \infty$,  
 $\wt X^\dl_{\frac{k}{2}}$ converges to 
$\wt X^0_{\lceil \frac k 2 \rceil }$ in $L^p(\Omega;W^{\frac{k-1}{2}-\eps, r}(\T))$
 as $\dl\to\infty$, 
where  $\lceil x \rceil$ denotes
the smallest integer greater than or equal to $x$.
  In particular, $\wt \mu^\dl_{\frac{k}{2}}$ converges weakly to $\wt \mu^0_{\lceil \frac{k}{2}\rceil}$ as $\dl\to \infty$.

\smallskip

\noi{\rm(iii)}
Let   $0<\dl<\infty$.
Then,  the Gaussian measures $\wt \mu^{\dl}_{\frac k2}$ and
$\wt{\mu}^0_{\lceil \frac k2 \rceil}$
are singular.

\smallskip

\noi{\rm(iv)}
Let  $\kk \in \N$ and  $0<\dl_1, \dl_2<\infty$.
Then,  the odd-order Gaussian measures $\wt \mu^{\dl_1}_{\kk-\frac12}$ and
$\wt{\mu}^{\dl_2}_{\kk-\frac12}$ are  singular, 
while the even-order Gaussian measures $\wt \mu^{\dl_1}_{\kk}$ and
$\wt{\mu}^{\dl_2}_{\kk}$ are equivalent.

\end{proposition}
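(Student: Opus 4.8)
\textbf{Proof strategy for Proposition \ref{PROP:gauss2}.}

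The plan is to follow the same structure as the proof of Proposition \ref{PROP:gauss1}, replacing the deep-water multiplier $T_{\dl,\frac k2}(n)$ by the shallow-water multipliers $\wt T_{\dl,\kk-\frac12}(n)$ and $\wt T_{\dl,\kk}(n)$ from \eqref{T2}, and using Lemma~\ref{LEM:L1} and Lemma~\ref{LEM:Tdl} in place of Lemma~\ref{LEM:K1} and \eqref{DX1}. First I would record the analogue of \eqref{DX1}: from \eqref{T2}, \eqref{E5a}, and Lemma~\ref{LEM:L1}\,(i) one has $\wt T_{\dl,\frac k2}(n)\sim_\dl |n|^k$ for $0<\dl<\infty$, while from \eqref{T3} (Lemma~\ref{LEM:Tdl}) the limit as $\dl\to 0$ is $n^{2\kk}$ regardless of the parity of $k$ — this is precisely the source of the $2$-to-$1$ collapse and of the regularity jump when $k=2\kk-1$. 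For uniformity in $0<\dl\le 1$ I would use that the leading coefficients $\wt a_{2\kk-1,1}=3$ and $\wt a_{2\kk,0}=1$ together with Lemma~\ref{LEM:L1}\,(iii) give a lower bound $\wt T_{\dl,\frac k2}(n)\gtrsim |n|^k$ with constant independent of $0<\dl\le 1$. Parts (i) and (ii) then follow by the exact computation of \eqref{DX3a}–\eqref{DX4}: Minkowski's inequality, the Wiener chaos estimate (Lemma~\ref{LEM:hyp}), and the Gaussian Fourier series \eqref{Xdl2}; for (ii), convergence to $\wt X^0_{\lceil k/2\rceil}$ in $L^p_\o W^{\frac{k-1}2-\eps,r}_x$ uses the pointwise convergence $\wt T_{\dl,\frac k2}(n)\to n^{2\kk}$ from \eqref{T3} plus the dominated convergence theorem, exactly as in \eqref{DX4} (but here there is no explicit rate — this is only weak convergence). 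The even-$k$ extension to $\dl=0$ in (i) is immediate since $\wt\mu^0_\kk=\mu^\infty_\kk$ by \eqref{gauss4}, already covered by Proposition~\ref{PROP:gauss1}\,(i) with $k=2\kk$.

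For (iii) and (iv) the key tool is Kakutani's theorem (Lemma~\ref{LEM:kak}), applied to the real and imaginary parts of the Gaussian coefficients as in the proof of Proposition~\ref{PROP:gauss1}\,(iii). For (iii): comparing $\wt\mu^\dl_{\frac k2}$ with $\wt\mu^0_{\lceil k/2\rceil}$ amounts to studying
\[
\sum_{n\in\Z^*}\left(\frac{n^{2\kk}}{\wt T_{\dl,\frac k2}(n)}-1\right)^2,
\]
with $\kk=\lceil k/2\rceil$. When $k=2\kk$ is even, $n^{2\kk}-\wt T_{\dl,\kk}(n)=\sum_{\l\ge 2,\,\mathrm{even}}\wt a_{2\kk,\l}\dl^\l |n|^{2\kk}|\ft\Gd(n)|^\l$, and using $|\ft\Gd(n)|\le\frac13|n|$ from Lemma~\ref{LEM:L1}\,(i) one sees the ratio $n^{2\kk}/\wt T_{\dl,\kk}(n)\to 1$ fast enough in $1/|n|$ only for $\l$ such that $\dl^\l|\ft\Gd(n)|^\l\lesssim |n|^{-1}$ uniformly — in fact the leading correction term $\l=2$ behaves like $\dl^2\Ldl(n)^2/n^2$, and by Lemma~\ref{LEM:L1}\,(iv) one has $n^2-3\Ldl(n)=n^2\hf(\dl,n)$ with $\sum_n\hf^2(\dl,n)=\infty$; a short computation shows the summand is $\asymp(1-\hf(\dl,n))^2$ up to lower order, so the series diverges and $\wt\mu^\dl_\kk\perp\wt\mu^0_\kk$. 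When $k=2\kk-1$ is odd, the normalization $\wt a_{2\kk-1,1}=3$ makes the $\l=1$ term of $\wt T_{\dl,\kk-\frac12}(n)$ equal to $3\dl^0|n|^{2\kk-1}|\ft\Gd(n)|=3|n|^{2\kk-2}\Ldl(n)=n^{2\kk}(1-\hf(\dl,n))$, so $n^{2\kk}/\wt T_{\dl,\kk-\frac12}(n)-1\asymp\hf(\dl,n)$ to leading order and again $\sum_n\hf^2(\dl,n)=\infty$ gives singularity. For (iv): with $\dl_1\ne\dl_2$ one studies $\sum_n(\wt T_{\dl_1,\frac k2}(n)/\wt T_{\dl_2,\frac k2}(n)-1)^2$; in the odd case the $\l=1$ leading terms give a ratio $\asymp\Ldl[\dl_1](n)/\Ldl[\dl_2](n)$, whose difference from $1$ is governed by $\hf(\dl_1,n)-\hf(\dl_2,n)$, and one checks (using the series \eqref{Ldl1}, \eqref{hdef}) that this does not decay, forcing divergence and hence singularity, whereas in the even case the ratio $\wt T_{\dl_1,\kk}(n)/\wt T_{\dl_2,\kk}(n)\to 1$ like $|n|^{-2}$ (the correction is $O(\dl^2|\ft\Gd(n)|^2/n^2)=O(|n|^{-2})$ uniformly), so the series converges and the measures are equivalent.

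The main obstacle is the singularity analysis in (iii) and (iv): unlike the deep-water case where $\Kdl(n)\to|n|$ at rate $\dl^{-1}$ uniformly in $n$ (Lemma~\ref{LEM:K1}) giving convergence of the Kakutani series, here the relevant quantity $\hf(\dl,n)$ from Lemma~\ref{LEM:L1}\,(iv) satisfies $\sum_n\hf^2(\dl,n)=\infty$, so one must carefully extract the leading-order behavior of $n^{2\kk}/\wt T_{\dl,\frac k2}(n)-1$ and $\wt T_{\dl_1,\frac k2}(n)/\wt T_{\dl_2,\frac k2}(n)-1$ and show it is comparable to (a nonzero multiple of) $\hf(\dl,n)$ or $\hf(\dl_1,n)-\hf(\dl_2,n)$ rather than something summable — the parity of $k$ enters decisively because it determines whether the leading term in $\wt T_{\dl,\frac k2}(n)$ is the $\l=1$ term (carrying one power of $\ft\Gd$, hence one factor of $\hf$) or the $\l=0$ term (carrying none, so the correction comes only from $\l=2$, which is $O(|n|^{-2})$ and summable). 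Once this comparison is made precise via the series representations \eqref{Ldl1} and \eqref{hdef}, the conclusions follow directly from Lemma~\ref{LEM:kak}, and the transfer from the base Gaussian measures to the \GGMs~ in Theorem~\ref{THM:4}\,(ii)–(iii) proceeds as in \cite{LOZ} using Theorem~\ref{THM:4}\,(i).
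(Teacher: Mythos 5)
Your overall plan (follow the structure of Proposition~\ref{PROP:gauss1}, swap in the shallow-water multipliers $\wt T_{\dl,\frac k2}$ via Lemma~\ref{LEM:Tdl}, and apply Kakutani's theorem for (iii)--(iv)) is the same as the paper's, and parts (i), (ii), the odd case of (iii), and the even case of (iv) are essentially correct. However, the quantitative mechanism you propose in the even case of (iii) and in the odd case of (iv) is wrong, and in each instance would actually yield the \emph{opposite} conclusion (equivalence instead of singularity) if pursued to the end.

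\textbf{Part (iii), even $k=2\kk$.} You claim the Kakutani summand is $\asymp (1-\hf(\dl,n))^2$ and deduce divergence from $\sum_n \hf^2(\dl,n)=\infty$. But these are different series: from \eqref{hdef}, $1-\hf(\dl,n) = 3\Ldl(n)/n^2$, and since $\Ldl(n)\sim |n|/\dl$ for $\dl|n|\gg 1$ (Lemma~\ref{LEM:L1}\,(iii)), one has $1-\hf(\dl,n)\sim 3/(\dl|n|)$, so $\sum_n (1-\hf(\dl,n))^2 < \infty$. Your comparison therefore gives a \emph{convergent} series. The actual summand is $(\wt T_{\dl,\kk}(n)-n^{2\kk})/n^{2\kk} = \sum_{\l\ge 2,\,\text{even}}\wt a_{2\kk,\l}\,\dl^\l\Ldl^\l(n)/n^\l$, and since $\dl\Ldl(n)/|n| = |\ft\Gdl(n)|\to 1$ as $|n|\to\infty$, this summand tends to the positive constant $\sum_{\l\ge 2}\wt a_{2\kk,\l}$; it is $\gtrsim 1$ for $|n|\gg\dl^{-1}$, which is what forces divergence. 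The paper's proof makes exactly this observation directly from \eqref{T5} and Lemma~\ref{LEM:L1}\,(iii).

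\textbf{Part (iv), odd $k=2\kk-1$.} You claim the deviation of $\wt T_{\dl_1,\kk-\frac12}(n)/\wt T_{\dl_2,\kk-\frac12}(n)$ from $1$ is governed by $\hf(\dl_1,n)-\hf(\dl_2,n)$ and that this "does not decay." In fact $\hf(\dl_1,n)-\hf(\dl_2,n) = 3\big(\Ldl[\dl_2](n)-\Ldl[\dl_1](n)\big)/n^2 = O(1/|n|)$, which decays and is square-summable. The correct mechanism, as in the paper, is that $\wt T_{\dl,\kk-\frac12}(n)/n^{2\kk-1}\to \dl^{-1}\sum_{\l\,\text{odd}}\wt a_{2\kk-1,\l}$ as $|n|\to\infty$, so the Kakutani ratio tends to $\dl_2/\dl_1\ne 1$ and the summand tends to $(\dl_2/\dl_1 - 1)^2 > 0$, which gives divergence. (A related, though less consequential, imprecision: in the even case of (iv) the ratio deviates from $1$ like $O(1/|n|)$ rather than your claimed $O(1/n^2)$; this comes from the $O(1/(\dl n))$ tail of $\coth(\dl n)-(\dl n)^{-1}-1$, but $O(1/|n|)$ is still square-summable so your conclusion survives.)

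In short: the parity dichotomy in (iv) and the singularity in (iii) are not driven by $\hf(\dl,n)$ or its difference, but by whether the normalized multipliers $\wt T_{\dl,\frac k2}(n)/n^{2\kk}$ (respectively $\wt T_{\dl_1}/\wt T_{\dl_2}$) converge to $1$ at a square-summable rate (even case) or to a constant $\ne 1$ (odd case of (iv)) or to $0$ (odd case of (iii)) — you should recompute the leading-order behavior directly from \eqref{T5}, \eqref{E5a}, and Lemma~\ref{LEM:L1} rather than attempting to package everything through $\hf$.
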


Before proceeding to a proof of Proposition~\ref{PROP:gauss2}, we 
state a preliminary lemma on 
the multiplier
$\wt T_{\dl, \frac k2}$ defined in \eqref{T2}.

\begin{lemma}\label{LEM:Tdl}
Let $\kk\in\N$ and $0<\dl<\infty$.
 Then, we have 
\begin{align}\label{T4}
\lim_{\dl\to0} \wt{T}_{\dl,\kk-\frac12}(n) = \lim_{\dl\to0} \wt{T}_{\dl, \kk}(n)
= n^{2\kk} = \wt{T}_{0,\kk}(n) 
\end{align}

\noi
for any $n \in \Z^*$.
More precisely, given any small $\eps_0 > 0$, 
there exists $C> 0$ such that 
\begin{align}
\big|\wt{T}_{\dl,\kk-\frac12}(n) - n^{2\kk}\big|
&    \le n^{2\kk} |\hf(\dl, n)|
+ C \dl^{\eps_0} |n|^{2\kk+\eps_0},
\label{T4a}\\
\big| \wt{T}_{\dl, \kk}(n)- n^{2\kk}\big|
&  \les
 \dl^{\eps_0} |n|^{2\kk+\eps_0}
\label{T4b}
\end{align}

\noi
for any $0 < \dl \le \infty$ and $n\in\Z^*$, 
where $\hf(\dl, n)$ is as in Lemma \ref{LEM:L1}\,(iv).
Moreover, given $\kk \in \N$ and $0 <  \dl < \infty$, we have
\begin{align}
 |n|^{2\kk - 1}
 \les_\dl \wt{T}_{\dl,\kk-\frac12}(n)
\les 
  \min (\dl^{-1} |n|^{2\kk-1},n^{2\kk})
  \qquad \text{and}
  \qquad 
\wt{T}_{\dl, \kk}(n)
\sim |n|^k, 
\label{T6}
\end{align}

\noi
uniformly in $n\in \Z^*$, 
where the implicit constant in the first bound  is independent
of $0 < \dl \le 1$.

\end{lemma}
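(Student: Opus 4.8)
The plan is to prove Lemma~\ref{LEM:Tdl} by unwinding the definitions \eqref{T2} of $\wt T_{\dl, \kk - \frac12}(n)$ and $\wt T_{\dl, \kk}(n)$ together with the series representations of $\ft\Gd(n)$ and the auxiliary function $\hf(\dl, n)$ available from Lemmas~\ref{LEM:L1} and~\ref{LEM:T1}. First I would recall from \eqref{Gd1}, \eqref{sym2}, and Lemma~\ref{LEM:L1}\,(i)--(ii) that $|\ft\Gd(n)| = \dl^{-1}|\ft\Gdl(n)| = \Ldl(n)/|n|$, so that $|\ft\Gd(n)| \le \min(\dl^{-1}, \frac13|n|)$ by \eqref{dlGd}, and that $\Ldl(n) \to \frac13 n^2$ as $\dl \to 0$ for each fixed $n$. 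Combining this with the normalizations \eqref{E5a} (namely $\wt a_{2\kk-1,1}=3$ and $\wt a_{2\kk,0}=1$), the $\l=1$ term of $\wt T_{\dl,\kk-\frac12}(n)$ equals $3|n|^{2\kk-1}|\ft\Gd(n)| = 3 |n|^{2\kk-2}\Ldl(n) \to n^{2\kk}$, while the $\l=0$ term of $\wt T_{\dl,\kk}(n)$ is exactly $n^{2\kk}$; all higher-order terms ($\l \ge 3$ odd, resp.\ $\l \ge 2$ even) carry at least one explicit factor of $\dl$ against $|\ft\Gd(n)|^\l \le (\frac13|n|)^\l$, hence are $O(\dl |n|^{2\kk+1}) \to 0$ for each fixed $n$. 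This establishes the pointwise limits \eqref{T4}, and identifies $\wt T_{0,\kk}(n) = n^{2\kk}$, consistent with \eqref{gauss4} and \eqref{Xdl3}.

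Next I would turn to the quantitative bounds \eqref{T4a} and \eqref{T4b}. For \eqref{T4b}, I would write $\wt T_{\dl,\kk}(n) - n^{2\kk} = \sum_{\l=2, \text{ even}}^{2\kk} \wt a_{2\kk,\l}\, \dl^\l |n|^{2\kk}|\ft\Gd(n)|^\l$; using $|\ft\Gd(n)| \le \min(\dl^{-1}, \frac13|n|)$, each summand is bounded by $\wt a_{2\kk,\l}\, |n|^{2\kk}(\dl |\ft\Gd(n)|)^{\l - \eps_0}(\dl|\ft\Gd(n)|)^{\eps_0} \les (\dl|\ft\Gd(n)|)^{\eps_0}|n|^{2\kk} \cdot 1^{\l-\eps_0} \les \dl^{\eps_0}|n|^{2\kk+\eps_0}$ after interpolating the bound $\dl|\ft\Gd(n)| \le \min(1, \frac13\dl|n|) \le (\frac13\dl|n|)^{\eps_0}$, giving \eqref{T4b} with a constant depending only on $\kk$ and $\eps_0$. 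For \eqref{T4a} the only subtlety is isolating the genuinely slow term: the $\l=1$ contribution to $\wt T_{\dl,\kk-\frac12}(n)$ is $3|n|^{2\kk-2}\Ldl(n)$, and using the definition \eqref{hdef} of $\hf$, namely $n^2\hf(\dl,n) = n^2 - 3\Ldl(n)$, we get $|3|n|^{2\kk-2}\Ldl(n) - n^{2\kk}| = n^{2\kk}|\hf(\dl,n)|$; the remaining terms ($\l \ge 3$ odd) are handled exactly as in the even case, producing the extra $C\dl^{\eps_0}|n|^{2\kk+\eps_0}$, which yields \eqref{T4a}.

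Finally, for the two-sided bounds \eqref{T6}: since every term in $\wt T_{\dl,\kk-\frac12}(n)$ is nonnegative, the upper bound $\wt T_{\dl,\kk-\frac12}(n) \le n^{2\kk}$ would follow by combining the $\l=1$ identity with the estimate that the sum of the higher-order terms is dominated by $n^{2\kk}$ — here I would use $3\Ldl(n) \le n^2$ (from Lemma~\ref{LEM:L1}\,(i)) and, for $\l \ge 3$, $\wt a_{2\kk-1,\l}\,\dl^{\l-1}|n|^{2\kk-1}|\ft\Gd(n)|^\l \le \wt a_{2\kk-1,\l}\,|n|^{2\kk-1}(\dl|\ft\Gd(n)|)^{\l-1}|\ft\Gd(n)| \le \wt a_{2\kk-1,\l}\,\frac13|n|^{2\kk}$, so that a Binomial-type identity in the $\wt a_{2\kk-1,\l} = \frac{3}{2\kk}\binom{2\kk}{\l}$ (see \eqref{DEs3a}) or a crude bound with a suitably chosen proportionality constant closes it; the other upper bound $\wt T_{\dl,\kk-\frac12}(n) \les \dl^{-1}|n|^{2\kk-1}$ follows from $|\ft\Gd(n)| \le \dl^{-1}$ term by term. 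For the lower bound $\wt T_{\dl,\kk-\frac12}(n) \ges_\dl |n|^{2\kk-1}$, I would simply keep the $\l=1$ term and invoke Lemma~\ref{LEM:L1}\,(iii), which gives $\Ldl(n) \ges_\dl |n|$ (the constant depending on $\dl$ but uniformly for $0 < \dl \le 1$ one can use $\Ldl(n) \ges \min(\dl^{-1}|n|, n^2) \ges |n|$ when $\dl \le 1$), so $3|n|^{2\kk-2}\Ldl(n) \ges_\dl |n|^{2\kk-1}$. The bound $\wt T_{\dl,\kk}(n) \sim |n|^{k} = |n|^{2\kk}$ is immediate since the $\l=0$ term already equals $n^{2\kk}$ (lower bound) and \eqref{T4b} controls the rest (upper bound). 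The main obstacle — though a mild one — is bookkeeping the interpolation exponent $\eps_0$ cleanly across all the $\l$-sums while keeping the constants independent of $\dl \in (0,1]$; the key input throughout is the uniform bound $|\ft\Gd(n)| \le \min(\dl^{-1}, \frac13|n|)$ from \eqref{dlGd} together with the precise cancellation encoded in $\hf$ via \eqref{hdef}. I would reference \cite[Lemma 2.3]{LOZ} for the parts already worked out there and present only the new estimates \eqref{T4a}--\eqref{T4b} and \eqref{T6} in detail.
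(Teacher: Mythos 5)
Your approach is the same as the paper's: expand \eqref{T2} using $\Ldl(n) = |n|\,|\ft\Gd(n)|$, isolate the $\l=1$ (resp.~$\l=0$) term via the normalization \eqref{E5a}, and control the remainder by pairing $\dl$ against $\ft\Gd$ via \eqref{dlGd}, using the identity $n^2\hf(\dl,n) = n^2 - 3\Ldl(n)$ for \eqref{T4a}. Your estimates for \eqref{T4a}--\eqref{T4b} and your treatment of the lower bound $\wt T_{\dl,\kk-\frac12}(n)\ges_\dl|n|^{2\kk-1}$ via Lemma~\ref{LEM:L1}\,(iii) are correct and in fact spell out details the paper's proof merely gestures at.

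There is one slip at the end: you cannot deduce $\wt T_{\dl,\kk}(n)\les n^{2\kk}$ from \eqref{T4b}. That estimate gives $\wt T_{\dl,\kk}(n)\le n^{2\kk}+C\dl^{\eps_0}|n|^{2\kk+\eps_0}$, and the error term grows faster than $n^{2\kk}$ as $|n|\to\infty$ for any fixed $\dl>0$, so it does not yield an $n$-uniform two-sided comparison. The correct route is the same term-by-term argument you already carry out for $\wt T_{\dl,\kk-\frac12}$: by \eqref{dlGd} each summand of $\wt T_{\dl,\kk}(n)$ is bounded by $\wt a_{2\kk,\l}\,(\dl|\ft\Gd(n)|)^{\l}|n|^{2\kk}\le \wt a_{2\kk,\l}\,|n|^{2\kk}$, giving $\wt T_{\dl,\kk}(n)\le\big(\sum_{\l}\wt a_{2\kk,\l}\big)|n|^{2\kk}$ uniformly in $\dl>0$ and $n$. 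Since the $\l=0$ term already gives the matching lower bound $n^{2\kk}$, this closes \eqref{T6}.
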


\begin{proof}
From \eqref{T2} and \eqref{sym2}, we have
\begin{align}
\begin{split}
\wt T_{\dl, \kk - \frac{1}{2}}(n) & =
\sum_{\substack{\l=1\\\text{odd}}}^{2\kk-1}
\wt a_{2\kk-1, \l}\,
\dl^{\l-1}
\Ldl^\l(n)
 n^{2\kk- \l-1}, \\
\wt T_{\dl, \kk }(n) & =
 \sum_{\substack{\l=0\\ \text{even}}}^{2\kk}
\wt a_{2\kk, \l}\,
 \dl^{\l}
\Ldl^\l(n)
 n^{2\kk-\l}.
\end{split}
\label{T5}
\end{align}

\noi
As for $\wt T_{\dl, \kk }(n)$, 
the contribution from $\l = 0$ is 
the only term which survives in the limit as $\dl \to 0$, 
which yields \eqref{T4} in view of  
the normalization \eqref{E5a}.
As for $\wt T_{\dl, \kk- \frac 12  }(n)$, 
the contribution from $\l = 1$ is 
the only term which survives in the limit as $\dl \to 0$, 
which yields \eqref{T4} in view of  
Lemma \ref{LEM:L1} and 
the normalization \eqref{E5a}.
The bounds \eqref{T4a} and \eqref{T4b} follow
from \eqref{T5}, 
\eqref{E5a}, and Lemma \ref{LEM:L1}.

The upper bounds in \eqref{T6} follows
from \eqref{T5} and Lemma \ref{LEM:L1}\,(i).
As for the lower bounds in \eqref{T6}, 
it follows from \eqref{T5}, \eqref{E5a}, the positivity of the constants
$\wt a_{k, \l}$ (see \eqref{DEs3a}), 
 and Lemma~\ref{LEM:L1}\,(iii) that 
\begin{align*}
\wt{T}_{\dl, \kk - \frac{1}{2}}(n)  \ge 3 \Ldl(n) n^{2\kk -2} &\ge
\begin{cases}
\frac1\dl |n|^{2\kk-1}, &  \dl|n| \gg 1, \\
n^{2\kk}, & \dl|n| \les 1
\end{cases}\\
& \ges_{\dl} |n|^{2\kk-1}, 
\end{align*}

\noi
uniformly in $n\in\Z^*$, 
where the implicit constant is independent
of $0 < \dl \le 1$.
On the other hand, from \eqref{T5}, we have 
$\wt T_{\dl, \kk} (n) \ge n^{2\kk}$. 
Therefore, we obtain \eqref{T6}.
\end{proof}

We now present a proof of Proposition \ref{PROP:gauss2}.
While 
 it follows from the lines in  the proof of Proposition 4.1 in \cite{LOZ}, 
 the argument on singularity\,/\,equivalence presented below is more involved due to the
 more complex nature of the multiplier $\wt T_{\dl, \frac k2}$ in \eqref{T2}.

\begin{proof}[Proof of Proposition~\ref{PROP:gauss2}]
(i) 
With Lemma \ref{LEM:Tdl}, 
the bounds \eqref{SX2} and \eqref{SX3} follow
from a slight modification of   the proof of Proposition \ref{PROP:gauss1}\,(i).

\medskip

\noi
(ii)
From  Lemma \ref{LEM:Tdl}
(with $\eps_0 < 2\eps$)
and Lemma \ref{LEM:L1}\,(iv), we have 
\begin{align}
\jb{n}^{2\kk - 2-2\eps} \frac{|n^{2\kk} - \wt T_{\dl,\kk - \frac{1}{2}}(n)|}{\wt T_{\dl, \kk - \frac 12}(n) 
n^{2\kk}} 
\les
\frac{ \hf(\dl, n)}{\jb{n}^{1+2\eps}} + 
\frac{\dl^{\eps_0}}{\jb{n}^{1+2\eps-\eps_0}}  
\too 0,  
\label{T7}
\end{align}

\noi
as $\dl \to 0$, 
for each $n \in \Z^*$.
Moreover, we have 
$\eqref{T7} \les \jb{n}^{-1-2\eps + \eps_0}$
for any $0 < \dl \le 1$ and $n \in \Z^*$.
Hence, from the dominated convergence theorem, we have 
\begin{align}
\lim_{\dl \to 0}\sum_{n \in \Z^*} \eqref{T7}
= 0.
\label{T7a}
\end{align}

\noi
Similarly, 
we have 
\begin{align}
\jb{n}^{2\kk - 1-2\eps} \frac{|n^{2\kk} - \wt T_{\dl,\kk }(n)|}{\wt T_{\dl, \kk}(n) 
n^{2\kk}} 
\les
\frac{\dl^{\eps_0}}{\jb{n}^{1+2\eps-\eps_0}}  
\too 0,  
\label{T8}
\end{align}

\noi
as $\dl \to 0$, 
for each $n \in \Z^*$.
Hence, from the dominated convergence theorem, we have 
\begin{align}
\lim_{\dl \to 0}\sum_{n \in \Z^*} \eqref{T8}
= 0.
\label{T8a}
\end{align}

\noi
Then, proceeding as in \eqref{DX4} with \eqref{T7a} and \eqref{T8a}, 
we obtain the desired $L^p$-convergence of 
 $\wt X^\dl_{\frac{k}{2}}$ to $\wt X^0_{\lceil \frac k 2 \rceil }$ as $\dl \to 0$
 in $W^{\frac{k-1}{2}-\eps, r}(\T)$, 
 which in turn implies weak convergence of 
   $\wt \mu^\dl_{\frac{k}{2}}$
   to $\wt \mu^0_{\lceil \frac{k}{2}\rceil}$.

\medskip

\noi
(iii)
As in the proof of Proposition \ref{PROP:gauss1}\,(iii),
given $0 < \dl < \infty$ and $n \in \N$, set
\begin{align*}
A_n^\dl &= \frac{\Re g_n}{(\wt T_{\dl,  \frac{k}{2}}(n))^\frac12},
\quad  A^\dl_{-n} = - \frac{\Im g_n}{(\wt T_{\dl,  \frac{k}{2}}(n))^\frac12}, \quad
B_n = \frac{\Re g_n}{|n|^{ \lceil \frac{k}{2}  \rceil}}, \quad
B_{-n}= - \frac{\Im g_n}{|n|^{ \lceil\frac{k}{2} \rceil}}
\end{align*}

\noi
such that 
\begin{align*}
a^\dl_{\pm n} = \E[(A^\dl_{\pm n})^2] = (\wt T_{\dl,  \frac{k}{2}}(n))^{-1}
\qquad \text{and} \qquad
b_{\pm n} = \E[B^2_{\pm n}] = |n|^{-2\lceil \frac k2\rceil}.
\end{align*}

\noi
In the following, we consider the cases $k = 2\kk - 1$ and $k = 2\kk$
such that $\lceil \frac k2\rceil = \kk$.

Let  $k = 2\kk - 1$.
From \eqref{T5}, \eqref{E5a},
and Lemma \ref{LEM:L1}, we have
\begin{align*}
\frac{b_n}{a^\dl_n}- 1
& = \frac{\wt T_{\dl,  \kk -\frac 12}(n) - n^{2\kk}}{n^{2\kk}}
 = - \hf(\dl, n)
+ \sum_{\substack{\l=3\\\text{odd}}}^{2\kk-1}
\wt a_{2\kk-1, \l}\,
\dl^{\l-1}
\Ldl^\l(n)
 n^{- \l-1}\\
& =  - \hf(\dl, n) + O(\dl^{-1}|n|^{-1}).
\end{align*}

\noi
Thus, from \eqref{var6a} and
Lemma \ref{LEM:L1}\,(iv), we have
\begin{align*}
\sum_{n \in \Z^*}\bigg(\frac{b_n}{a^\dl_n}- 1 \bigg)^2
\ge \frac 12
\sum_{n \in \Z^*}\hf^2(\dl, n) - C\sum_{n \in \Z^*}\frac{1}{\dl^{2}|n|^{2}}
= \infty.
\end{align*}

\noi
Hence, from Kakutani's theorem (Lemma \ref{LEM:kak}),
we conclude that $\wt \mu^\dl_{\kk- \frac 12}$
and $\wt \mu^0_\kk$ are singular.

Next, we consider the case
$k = 2\kk$.
From \eqref{T5},
Lemma \ref{LEM:L1}\,(iii),
and the positivity of the coefficients $\wt a_{2\kk, \l}$,
we have
\begin{align*}
\frac{b_n}{a^\dl_n}- 1 = \frac{\wt T_{\dl, \kk}(n) - n^{2\kk}}{n^{2\kk}}
=  \sum_{\substack{\l=2\\ \text{even}}}^{2\kk}
\wt a_{2\kk, \l}\,
 \dl^{\l}
\Ldl^\l(n)
 n^{-\l}
 \ges 1
\end{align*}

\noi
for $|n| \gg \dl^{-1}$.
Hence, from Kakutani's theorem (Lemma \ref{LEM:kak}),
we conclude that $\wt \mu^\dl_\kk$
and $\wt \mu^0_\kk$ are singular.

\medskip

\noi (iv) 
Fix  $\kk \in \N$ and $0<\dl_1<\dl_2<\infty$.
In view of Kakutani's theorem (Lemma \ref{LEM:kak}),
the claimed 
 singularity of $\wt{\mu}^{\dl_1}_{\kk-\frac12}$ and $\wt{\mu}^{\dl_2}_{\kk-\frac12}$ and equivalence of $\wt{\mu}^{\dl_1}_{\kk}$ and $\wt{\mu}^{\dl_2}_{\kk}$
 follows once we prove

\begin{align}
 \sum_{n\in\Z^*} \bigg(  \frac{\wt T_{\dl_1, \kk-\frac12}(n) - \wt T_{\dl_2,  \kk - \frac12}(n)}
{\wt T_{\dl_1,  \kk-\frac12}(n)} \bigg)^2 = \infty
 \label{T9a}
\end{align}

\noi
and 
\begin{align}
 \sum_{n\in\Z^*} \bigg(  \frac{\wt T_{\dl_1, \kk}(n) - \wt T_{\dl_2,  \kk}(n)}
{\wt T_{\dl_1,  \kk}(n)} \bigg)^2 < \infty.\label{T9b}
\end{align}

We first show \eqref{T9a}. From \eqref{T5} and \eqref{sym2}, for any $\dl>0$, we have
\begin{align*}
 \wt{T}_{\dl, \kk-\frac12}(n) 
& = \dl^{-1} n^{2\kk-1} \sum_{\substack{\l=1\\\text{odd}}}^{2\kk-1} \wt{a}_{2\kk-1, \l}  (\coth(\dl n) - (\dl n)^{-1})^\l, 
\end{align*}

\noi
from which  we obtain
\begin{align*}
\lim_{n\to \infty} \frac{\wt{T}_{\dl, \kk-\frac12} (n)}{n^{2\kk-1}} = \dl^{-1} \sum_{\substack{\l=1\\\text{odd}}}^{2\kk-1} \wt{a}_{2\kk-1, \l}.
\end{align*}

\noi
This  in turn implies
\begin{align*}
\lim_{n\to \infty} \bigg(  \frac{\wt T_{\dl_1, \kk-\frac12}(n) - \wt T_{\dl_2,  \kk - \frac12}(n)}
{\wt T_{\dl_1,  \kk-\frac12}(n)} \bigg)^2 = \frac{(\dl_2 - \dl_1)^2}{\dl_2^2} \ne 0,
\end{align*}

\noi
yielding \eqref{T9a}.

Next, we show \eqref{T9b}. 
From \eqref{T5} and \eqref{sym2}, for any $\dl>0$, we have
\begin{align}
\begin{split}
\wt{T}_{\dl, \kk}(n)
& =  n^{2\kk}  \sum_{\substack{\l=0\\\text{even}}}^{2\kk} \wt{a}_{2\kk, \l} 
\big( \coth(\dl n) - (\dl n)^{-1}\big)^\l .
\end{split}
\label{T10b}
\end{align}

\noi 
Then, from \eqref{T10b} with \eqref{E5a}, we have 
\begin{align}
\begin{split}
& n^{-2\kk} \big( \wt{T}_{\dl_1, \kk}(n) - \wt{T}_{\dl_2, \kk}(n) \big)\\
&\quad = 
\bigg( \coth(\dl_1 n)  - \coth(\dl_2 n) - \frac{\dl_2 - \dl_1}{\dl_1\dl_2n }  \bigg) \\
& \quad \quad \times \sum_{\substack{\l=2\\\text{even}}}^{2\kk} \wt{a}_{2\kk, \l} \sum_{j=0}^{\l-1} 
\big(\coth(\dl_1 n) - (\dl_1 n)^{-1}\big)^{\l-1-j}  \big(\coth(\dl_2 n) - (\dl_2 n)^{-1} \big)^{j} .
\end{split}
\label{T10bb}
\end{align}

\noi
Combining \eqref{T10b} and \eqref{T10bb}, we have 
\begin{align}
\bigg(  \frac{\wt T_{\dl_1, \kk}(n) - \wt T_{\dl_2,  \kk}(n)}
{\wt T_{\dl_1,  \kk}(n)} \bigg)^2
= 
 C_n D^2_n,
\label{T10c}
\end{align}

\noi
where $C_n$ and $D_n$ are given by 
\begin{align}
\begin{split}
C_n & = \big(\coth(\dl_1 n) - \coth(\dl_2 n)\big)^2 
+ \frac{(\dl_2-\dl_1)^2}{\dl_1^2 \dl_2^2n^2}  \\
& \quad -  \frac{2(\dl_2-\dl_1)\big(\coth(\dl_1n) - \coth(\dl_2 n)\big)}{\dl_1 \dl_2 n}, \\
D_n & = \frac{ \displaystyle\sum_{\substack{\l=2\\\text{even}}}^{2\kk} \wt{a}_{2\kk, \l} \sum_{j=0}^{\l-1}  \big(\coth(\dl_1n) - (\dl_1 n)^{-1}\big)^{\l-1-j} \big(\coth(\dl_2 n) - (\dl_2 n)^{-1}\big)^j   }
{\displaystyle \sum_{\substack{\l=0\\\text{even}}}^{2\kk} \wt{a}_{2\kk, \l}
\big(\coth(\dl_1n) - (\dl_1 n)^{-1}\big)^{\l}    }  .
\end{split}
\label{T10cc}
\end{align}

\noi
From \eqref{T10cc},  the positivity of the constants $\wt{a}_{2\kk, \l}$, 
\eqref{sym1}, and Lemma \ref{LEM:K1}, 
we have 
\begin{align}
 D_n 
\sim 1
\label{T10d}
\end{align}

\noi
for any $|n| \gg \dl_1^{-1}$.
On the other hand, from Cauchy's inequality with   $\dl_2 > \dl_1 > 0$, we have 
\begin{align}
\begin{split}
|C_n|
&  \les_{\dl_1, \dl_2}
\frac{\max \big(e^{2(-\dl_1 + \dl_2)n}, e^{2(\dl_1 - \dl_2) n}\big)}
{(e^{\dl_1 n} - e^{-\dl_1 n})^2 (e^{\dl_2 n} - e^{-\dl_2 n})^2 }
+ \frac1{n^2}
\\
& \les_{\dl_1,\dl_2}  \frac{1}{e^{2\dl_1 |n|}} + \frac{1}{n^2}, 
\end{split}
\label{T10e}
\end{align}

\noi
which is summable in $n \in \Z^*$.
Hence, \eqref{T9b} follows
from \eqref{T10c}, \eqref{T10d}, and \eqref{T10e}. 
\end{proof}

\begin{remark}\rm
We point out that 
for the summability of 
the right-hand side of  \eqref{T10c}, 
it is crucial to have $\dl_1>0$;
see the right-hand side of \eqref{T10e}.
As we proved in Proposition \ref{PROP:gauss2}\,(ii), 
 \eqref{T9b} indeed fails
when $\dl_1 = 0$.
\end{remark}

\subsection{Uniform integrability
 of the shallow-water densities}
\label{SUBSEC:SGM2}

In this subsection, 
we establish the  uniform (in $N$ and also in $0<  \dl \le 1$)
$L^p$-integrability of  the truncated density $
\wt F^\dl_{\frac k2}(\P_N v)$, 
 where $\wt F^\dl_{\frac{k}{2}}(v)$ is as in \eqref{rho5}:
\begin{align}
\begin{split}
\wt F^\dl_{\frac{k}{2}}(v)  =  \eta_K\big(\| v\|_{L^2}\big)
\exp\Big(-\wt R^\dl_{\frac k 2}(v)\Big) 
\end{split}
\label{SV0}
\end{align}

\noi
for a fixed constant  $K>0$.

\begin{proposition}\label{PROP:SV1}
Let $k \ge 2$ be an integer and $K > 0$.
Then, given any  $0<\dl <  \infty$ and finite  $p\ge 1$,
there exists $ C_{\dl, p}> 0$ such that 
\begin{align}
\sup_{N\in\N}  \| \wt F^\dl_{\frac{k}{2}}( \P_N \wt X^\dl_{\frac{k}{2}}) \|_{L^p(\Omega)} 
& = \sup_{N\in\N} \|\wt  F^\dl_{\frac{k}{2}}( \P_N v) \|_{L^p(d\wt \mu^\dl_{\frac{k}{2}})}
\leq C_{\dl, p} < \infty, 
\label{SV1}
\end{align}

\noi
where the constant $C_{\dl, p}$ is  independent of 
$0< \dl \le 1$.
Here, 
$\wt X^\dl_\frac k2$ and 
$\wt \mu^\dl_\frac k2$ are as in \eqref{Xdl2} and \eqref{gauss3}, respectively;
see also \eqref{Xdl3} and \eqref{gauss4}.
When $k$ is even, \eqref{SV1} also holds
for $\dl = 0$.

\end{proposition}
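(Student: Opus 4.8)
The plan is to follow the variational strategy already set up in Section~\ref{SEC:DGM1} for the deep-water regime, replacing the deep-water objects by their shallow-water counterparts. As in the proof of Proposition~\ref{PROP:V1}, it suffices to majorize $\wt F^\dl_{\frac k2}(\P_N v)$ by $\wt{\F}^\dl_{\frac k2}(\P_N v) := \exp\big(-\wt R^\dl_{\frac k2}(\P_N v) - A\|\P_N v\|_{L^2}^\al\big)$ (up to a constant depending only on $K$ and $A$), and then to prove a uniform $L^p$-bound on $\wt{\F}^\dl_{\frac k2}(\P_N \wt X^\dl_{\frac k2})$. For this we invoke the Bou\'e--Dupuis formula (Lemma~\ref{LEM:var0}), now with the multiplier $\wt T_{\dl,\frac k2}(n)$ from \eqref{T2} in place of $T_{\dl,\frac k2}(n)$; the analogue of \eqref{var2}, namely $\|\wt I_{\dl,\frac k2}(\theta)(1)\|_{H^{\frac k2}}^2 \le a_\dl \int_0^1 \|\theta(t)\|_{L^2}^2\,dt$ with $a_\dl$ uniform in $0 < \dl \le 1$, follows from the lower bound $\wt T_{\dl,\frac k2}(n) \gtrsim_\dl |n|^k$ in \eqref{T6} of Lemma~\ref{LEM:Tdl} (uniform in $0 < \dl \le 1$). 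Thus the problem reduces to bounding from above the shallow-water analogue $\wt{\mathcal M}_{\dl,\frac k2,p,N}(\Dr)$ of \eqref{var3}, uniformly in $N\in\N$, $\Dr \in H^{\frac k2}_0(\T)$, and $0 < \dl \le 1$.

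First I would decompose $\wt R^\dl_{\frac k2}(\wt X^\dl_N + \Dr_N)$ using the explicit structure \eqref{Scons2} from Subsection~\ref{SUBSEC:B4}: it is a sum over $m$ (a power of $\dl$, hence bounded by $1$ since $0 < \dl \le 1$) and over monomials $p(v) \in \wt\Pc_j(v)$ with $\#\Gd(p) \le \#\dx(p)$, $\#v + \#\dx \le 2\lceil\frac k2\rceil + 2$, and $|p(v)| \le \lceil\frac k2\rceil$ — except for the ``leading'' monomials of the form $\wt p(v) = v\,\dx^{m-1}v\,\dx^m v$ or $v\,\dx^m v\,\dx^m v$ appearing in the quadratic-looking worst terms. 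The lower-order terms (with $|p(v)| \le \lceil\frac k2\rceil - 1$, i.e.\ all factors below the critical regularity threshold, or with $\#\dl \ge 1$) are handled deterministically, exactly as in Lemma~\ref{LEM:var1}: after dropping $\Gd$ via $L^r_0$-boundedness (the bound $\|\Gd\|_{W^{1,r}_0 \to L^r_0} \le C_r$ from \eqref{Q3a}, uniform in $\dl$) one estimates by H\"older, Sobolev, interpolation \eqref{interp1}, and Young, obtaining a bound of the form $C_{\dl,\eps_0}(1 + \|\wt X^\dl_N\|_{W^{\frac{k-1}2-\eps,\infty}}^\al + \|\Dr_N\|_{L^2}^\al) + \eps_0\|\Dr_N\|_{H^{\frac k2}}^2$; here crucially the total number of derivatives, after accounting for the $\Gd$'s, stays below $k$ so that the interpolation exponent $\g < 2$. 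The leading (critical-regularity) monomials require probabilistic input, mirroring Lemma~\ref{LEM:var2}: one expands in $\wt X^\dl_N$ and $\Dr_N$, uses $\E[g_{n_1}g_{n_2}g_{n_3}] = 0$ to kill the purely-Gaussian term, estimates the terms with two or more $\Dr_N$ factors deterministically with one derivative to spare, and for the term with a single $\Dr_N$ factor pairs it against $\P_{\ne0}$ of a product of two copies of $\dx^{\lceil\frac k2\rceil - 1}\wt X^\dl_N$, controlled in $H^{-\lceil k/2\rceil}$ by the shallow-water analogue of Lemma~\ref{LEM:var0a}. That analogue — uniform in $0 < \dl \le 1$ boundedness in $L^p(\Omega; W^{s,r})$ of $\P_{\ne0}\big((\jb{\dx}^{\s_1}\Gd^{\al_1}\P_N\wt X^\dl_{\frac k2})(\jb{\dx}^{\s_2}\Gd^{\al_2}\P_N\wt X^\dl_{\frac k2})\big)$ for $s,\s_j$ in the admissible range — follows from $\wt T_{\dl,\frac k2}(n) \gtrsim_\dl |n|^k$ uniformly in $0 < \dl \le 1$ (\eqref{T6}), the $L^r_0$-boundedness of $\Gd$ (note $\Gd$ is applied to a single high-frequency factor, so \eqref{Q3a} applies), the Wiener chaos estimate (Lemma~\ref{LEM:hyp}), and the discrete convolution bound (Lemma~\ref{LEM:SUM}). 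Assembling these, plus \eqref{var6a} to absorb $-A\|\wt X^\dl_N + \Dr_N\|_{L^2}^\al$ into $-\frac A2\|\Dr_N\|_{L^2}^\al + C\|\wt X^\dl_N\|_{L^2}^\al$, and choosing $\eps_0$ small (depending on $a_\dl$, hence uniform in $0 < \dl \le 1$) and then $A$ large, one gets $\wt{\mathcal M}_{\dl,\frac k2,p,N}(\Dr) \lesssim_{\dl,p} 1$ uniformly in $N$ and in $0 < \dl \le 1$, and Lemma~\ref{LEM:var0} yields \eqref{SV1}. For $k$ even, Lemma~\ref{LEM:Tdl} (and Proposition~\ref{PROP:gauss2}\,(i)) give the same bounds including $\dl = 0$, where $\wt\mu^0_{\kk} = \mu^\infty_\kk$ and $\wt R^0_\kk$ is the KdV interaction potential, so the identical argument applies.

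The main obstacle I anticipate is keeping the derivative count under control in the presence of the operators $\Gd$: unlike the Hilbert transform $\H$, which simply contributes a bounded factor, $\Gd$ in the shallow-water regime can cost up to one derivative in the worst case (and $\dl\Gd$ costs essentially nothing but comes paired with a power of $\dl$). One must therefore be careful, for each monomial in \eqref{Scons2}, to use the constraints $\#\Gd(p) \le \#\dx(p)$ and $\#v + \#\dx \le 2\lceil\frac k2\rceil + 2$ to verify that after replacing each $\Gd$ by a derivative (via \eqref{Q3a}) the total number of derivatives distributed over the $\#v \ge 3$ factors still permits at most two factors at the critical regularity $\lceil\frac k2\rceil$ (with the rest strictly below), so that the resulting interpolation exponent is $< 2$ and Young's inequality closes. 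This bookkeeping is exactly parallel to the deep-water case but is complicated by the fact that the shallow-water rank (Definition~\ref{DEF:ord2}) mixes $\#\dx$, $\#\Gd$, and $\#\dl$; the structural results of Proposition~\ref{PROP:Scons1} and the display \eqref{Scons2} are designed precisely to make this tractable, so the argument is more a careful transcription of Lemmas~\ref{LEM:var1}--\ref{LEM:var2} than a genuinely new difficulty.
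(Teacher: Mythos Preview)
Your overall strategy matches the paper's: reduce to bounding $\wt\F^\dl_{\frac k2}$, apply the Bou\'e--Dupuis formula (restated as Lemma~\ref{LEM:Svar0}), and prove a single estimate on $\E[\wt R^\dl_{\frac k2}(\wt X^\dl_N+\Dr_N)]$ (the paper's Lemma~\ref{LEM:Svar2}), separating lower-order monomials treated deterministically from the leading cubic monomials requiring the vanishing third Gaussian moment and the shallow-water analogue of Lemma~\ref{LEM:var0a}.

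There is, however, a real gap in your handling of $\Gd$. You invoke \eqref{Q3a} and propose to ``replace each $\Gd$ by a derivative''. This does not close. By the rank constraint (Proposition~\ref{PROP:Scons1}\,(i.b.1)), a monomial $\dl^m p(v)$ in $\wt R^\dl_\kk$ with $\#v=j$ satisfies $\#\dx+\#\Gd=2(\kk+2-j)+m$; for $j=3$ and $m=2\kk$ this equals $4\kk-2=2k-2$. After your substitution you face a cubic term carrying $2k-2$ derivatives, and the interpolation exponent $\g$ in the analogue of \eqref{DV6a} is roughly $4$, so Young's inequality cannot absorb $\|\Dr_N\|_{H^{k/2}}^\g$ into $\eps_0\|\Dr_N\|_{H^{k/2}}^2$. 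The constraint you cite, $\#\Gd\le\#\dx$, is irrelevant here; the relevant one is $\#\Gd\le\#\dl$ (even case) or $\#\Gd\le\#\dl+1$ (odd case) from \eqref{Scons2}. The fix---which you mention parenthetically but do not actually use---is \eqref{Q3}: $\dl\Gd$ is bounded on $L^r_0$ uniformly in $\dl$. Since the prefactor $\dl^m$ supplies enough $\dl$'s to pair with every $\Gd$ (with at most one left over, in the odd case), one simply drops each $\dl\Gd$ at no derivative cost, leaving the effective derivative count at $\#\dx$ (plus possibly one), for which the deep-water argument transfers directly. Accordingly, the paper's Lemma~\ref{LEM:Svar0a} features $(\dl\Gd)^{\al_j}$, not $\Gd^{\al_j}$.
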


As in the deep-water case, 
by defining
\begin{align*}
\wt \F^\dl_{ \frac{k}{2}}(v)
=\exp\Big(-\wt R^\dl_{\frac{k}{2}}(v) - A \| v\|_{L^2}^{\al} \Big), 
\end{align*}

\noi
we 
see that Proposition \ref{PROP:SV1} follows once we prove the following
bound on $\wt \F^\dl_{ \frac{k}{2}}(\P_N v)$.

\begin{proposition}\label{PROP:SV2}
Let $k \ge 2$ be an integer.
Then, given  any $0<\dl <  \infty$ and finite  $p\ge 1$,
there exist $A = A(p) > 0$,  $\al > 0$,  and $C_{\dl, p} > 0$ such that 
\begin{align}
&\sup_{N\in \N} \|\wt \F^\dl_\frac k2 (\P_N \wt X^\dl_{\frac{k}{2}})  \|_{L^p(\O)}
=
\sup_{N \in \N}\|\wt  \F^\dl_\frac k2 (\P_N v) \|_{L^p(d\wt \mu^\dl_{\frac{k}{2}})}
\le C_{\dl, p} < \infty, 
\label{SV10}
\end{align}

\noi
where the constant $C_{\dl, p}$ is  independent of 
$0<  \dl \le 1$.
Here, $A = A(p)$ is independent of $0 < \dl \le 1$, while
 $\al$ is independent of $p$ and $0 < \dl < \infty$.
When $k$ is even, \eqref{SV10} also holds
for $\dl = 0$.

\end{proposition}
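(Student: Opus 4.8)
The plan is to mirror the deep-water argument from Section~\ref{SEC:DGM1} (Propositions~\ref{PROP:V1} and~\ref{PROP:V2}), replacing the deep-water structural input by the shallow-water structure of $\wt R^\dl_{\frac k2}(v)$ recorded in \eqref{Scons2}, and replacing Proposition~\ref{PROP:gauss1} by Proposition~\ref{PROP:gauss2}. As in the deep-water case, Proposition~\ref{PROP:SV2} follows once we bound, uniformly in $N\in\N$ and $0<\dl\le1$ (and also for $\dl=0$ when $k$ is even), the quantity
\begin{align*}
\wt{\mathcal M}_{\dl,\frac k2,p,N}(\Dr)
& = \E\bigg[-p\wt R^\dl_{\frac k2}\big(\P_N\wt X^\dl_{\frac k2}+\P_N\Dr\big)
- pA\|\P_N\wt X^\dl_{\frac k2}+\P_N\Dr\|_{L^2}^\al
- \tfrac12 a_\dl^{-1}\|\Dr\|_{H^{\frac k2}}^2\bigg],
\end{align*}
via the Bou\'e--Dupuis formula (Lemma~\ref{LEM:var0}), applied with $G(v)=-p(\wt R^\dl_{\frac k2}(v)+A\|v\|_{L^2}^\al)$. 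The first step is therefore to record the shallow-water analogue of \eqref{var2}: using $\wt T_{\dl,\frac k2}(n)\gtrsim_\dl |n|^{k-1}$ together with the more precise lower bounds in \eqref{T6} of Lemma~\ref{LEM:Tdl}, the drift term $\wt I_{\dl,\frac k2}(\theta)(1)=\int_0^1 \wt{\mathcal T}_{\dl,\frac k2}^{-1/2}\theta(t')\,dt'$ satisfies $\|\wt I_{\dl,\frac k2}(\theta)(1)\|_{H^{k/2}}^2\le a_\dl\int_0^1\|\theta(t)\|_{L^2}^2\,dt$ with $a_\dl$ bounded uniformly for $0<\dl\le1$; this uses that the quadratic form $\wt T_{\dl,\frac k2}(n)$ controls $|n|^k$ from below with the right constant only because $\wt a_{2\kk-1,1}=3$ cancels the $\tfrac13$ in $\Ldl(n)\to\tfrac13 n^2$ (Lemma~\ref{LEM:L1}), and that $\wt T_{\dl,\kk}(n)\ge n^{2\kk}$ directly.

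The second step is the deterministic estimate on the lower-order terms, the shallow-water analogue of Lemma~\ref{LEM:var1}. By \eqref{Scons2}, every monomial in $\wt R^\dl_{\kk-\frac12}(v)$ of positive $\dl$-power or with one perturbation operator $\Qd$ has $|p(v)|\le\kk-1<\frac k2-\frac12$, and similarly every monomial in $\wt R^\dl_{\kk}(v)$ with $\#\dl\ge1$ has $|p(v)|\le\kk$ but there can be at most one factor carrying $\kk$ derivatives (as in Case~1 of the proof of Lemma~\ref{LEM:var2}); combined with the boundedness of $\Gd$ on $W^{1,r}_0$ and of $\Qd$ on $L^r_0$ (Lemma~\ref{LEM:T1}), Sobolev's inequality, the interpolation \eqref{interp1}, the fractional Leibniz rule \eqref{leib}, and Young's inequality, this yields for $u_1=\P_N\wt X^\dl_{\frac k2}$, $u_2=\P_N\Dr$ a bound of the form $|\wt A^\dl_{\dots}(u_1+u_2)|\le C_{\dl,\eps_0}(1+\|u_1\|_{W^{(k-1)/2-\eps,\infty}}^\al+\|u_2\|_{L^2}^\al)+\eps_0\|u_2\|_{H^{k/2}}^2$ with $C_{\dl,\eps_0}$ uniform in $0<\dl\le1$ (the $\dl$-powers only help). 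The only genuinely shallow-water point here is that $\Qd$ does not smooth like $\Qdl$; but the monomials containing $\Qd$ without compensating $\dl$-powers are quadratic and therefore are handled exactly as in Subcase~A.4 of the proof of Proposition~\ref{PROP:Scons2}, using the pointwise bound $|\ft\Qd(n)|=\tfrac13|n|\,|\hf(\dl,n)|\le\tfrac13|n|$ from \eqref{hdef} — this is a deterministic $O(1)$ loss of one derivative on one factor, which is affordable because the remaining factors carry at most $\kk-1$ derivatives.

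The third step is the probabilistic estimate on the leading quadratic-type terms, the analogue of Lemma~\ref{LEM:var2}; this is the main obstacle. The leading cubic term in $\wt E^\dl_{\frac k2}(v)$ involves two factors with roughly $\lceil\frac{k-1}2\rceil$ derivatives, and since $\wt X^\dl_{\frac k2}\notin W^{(k-1)/2,r}$ almost surely, these must be treated by splitting $\P_N\wt X^\dl_{\frac k2}+\P_N\Dr$ into the Gaussian part $\wt X^\dl_N$ and the drift $\Dr_N$, expanding multilinearly, using $\E[g_{n_1}g_{n_2}g_{n_3}]=0$ to kill the purely Gaussian term, and estimating the mixed terms with a shallow-water version of Lemma~\ref{LEM:var0a} (a stochastic bound on $\|\P_{\ne0}((\jb{\dx}^{\s_1}\wt X^\dl_N)(\jb{\dx}^{\s_2}\wt X^\dl_N))\|_{W^{s,r}}$ in $L^p(\O)$, uniform in $0<\dl\le1$, which follows from the Wiener chaos estimate (Lemma~\ref{LEM:hyp}), the Fourier series \eqref{Xdl2}, the two-sided bound $\wt T_{\dl,\frac k2}(n)\sim_\dl|n|^k$ with $\dl$-independent implicit constant for $0<\dl\le1$, and the discrete convolution Lemma~\ref{LEM:SUM}). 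The subtlety compared to the deep-water case is that some of the leading cubic monomials of $\wt E^\dl_{\kk-\frac12}(v)$ carry $\Gd$ operators (see \eqref{Scons2}, where $\#\Gd=\#\dl+1$), so when peeling off one factor by Cauchy--Schwarz one must move any $\Gd$ or $\Qd$ off the high-derivative factor by (anti) self-adjointness; this works because $\Gd\dx$ is self-adjoint with even multiplier (Lemma~\ref{LEM:K1}) and because the pointwise bound $|\dl^m\ft{\Gd^{\wt m}}(n)|\le\min(1,\dl|n|)^{\wt m}$ from \eqref{dlGd} shows the $\dl$-powers dominate the derivative counts exactly as in the proof of Proposition~\ref{PROP:Scons2}. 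Once these three steps are in place, one inserts \eqref{var6a}--\eqref{var6} to absorb $\|\Dr_N\|_{L^2}^\al$ into $-\tfrac12 A\|\Dr_N\|_{L^2}^\al$, chooses $\eps_0=\eps_0(p,a_\dl)$ small and then $A=A(p,\eps_0)$ large (both uniform in $0<\dl\le1$ since $a_\dl$ is), and concludes $\wt{\mathcal M}_{\dl,\frac k2,p,N}(\Dr)\lesssim_{\dl,p}1$ uniformly in $N$ and $\Dr$, hence \eqref{SV10} and then \eqref{SV1}. For the even case $k=2\kk$ and $\dl=0$, one has $\wt\mu^0_\kk=\mu^\infty_\kk$ by \eqref{gauss4}, so the bound reduces to the already-proven BO/KdV case (or to taking $\dl\to0$ in the uniform estimate), and the same argument goes through.
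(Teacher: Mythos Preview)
Your overall architecture matches the paper's: apply the Bou\'e--Dupuis formula (stated as Lemma~\ref{LEM:Svar0}), record the drift bound \eqref{Svar2}, prove a shallow-water version of Lemma~\ref{LEM:var0a} (this is Lemma~\ref{LEM:Svar0a}), control $\wt R^\dl_{\frac k2}(\wt X^\dl_N+\Dr_N)$ by a mix of deterministic and probabilistic estimates, and finish with \eqref{var6a}--\eqref{var6}. The paper packages the third step into a single Lemma~\ref{LEM:Svar2} rather than two separate lemmas, but the content is the same.

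There is, however, a real confusion in your step~2. You assert that ``the monomials containing $\Qd$ without compensating $\dl$-powers are quadratic'' and can be handled deterministically via Subcase~A.4 of Proposition~\ref{PROP:Scons2}. Both parts are wrong. Every monomial in $\wt R^\dl_{\frac k2}$ has $\#v\ge 3$, so none is quadratic. More importantly, for a \emph{uniform} bound you only have $|\ft\Qd(n)|\le\tfrac13|n|$ --- the extra factor $\hf(\dl,n)$ that drove Subcase~A.4 is merely $\le 1$ and buys nothing here --- so the $\Qd$ piece behaves exactly like the $-\tfrac13\dx$ piece. After this one-derivative loss, a cubic monomial coming from the first sum in \eqref{Scons2} (odd $k=2\kk-1$, $j=3$, $\#\dx=2\kk-3$) can land on the fundamental form $v\,\dx^{\kk-1}v\,\dx^{\kk-1}v$, which sits above the almost-sure regularity $W^{\kk-1-\eps,\infty}$ of $\wt X^\dl_{\kk-\frac12}$ and therefore \emph{requires} the probabilistic treatment of your step~3, not the deterministic one. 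Subcase~A.4 is a convergence argument in $H^\kk$, not a variational bound of the shape $C_{\eps_0}(\dots)+\eps_0\|\Dr_N\|_{H^{k/2}}^2$, and does not transfer.

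The paper avoids this detour altogether by never introducing $\Qd$ in this proof. It uses only \eqref{SV2}: every occurrence of $\dl\Gd$ is dropped via the uniform $L^r$-boundedness of $\dl\Gd$ (Lemma~\ref{LEM:T1}\,(iii)), exactly as $\H$ and $\Qdl$ were dropped in the deep-water case, and the single unpaired $\Gd$ in the odd case is simply replaced by $\dx$. After that reduction one is left with the two explicit fundamental-form cases \eqref{SV3}--\eqref{SV3x} (odd $k$) and \eqref{SV5}--\eqref{SV5x} (even $k$), and the rest is a verbatim repetition of the deep-water analysis \eqref{DDV2}--\eqref{DDV6} and \eqref{DDV11a}--\eqref{DDV17}. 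In particular your remark in step~3 about moving $\Gd$ off the high-derivative factor by anti-self-adjointness is not needed: once $\dl\Gd$ has been dropped and the unpaired $\Gd$ absorbed as a derivative, nothing remains to move. For the $\dl=0$ case with even $k$, the paper simply observes (Remark~\ref{REM:kdv1}) that $\wt R^\KDV_\kk$ is the $\dl$-free part of $\wt R^\dl_\kk$, so every monomial at $\dl=0$ is already among those bounded uniformly for $0<\dl\le 1$; your appeal to ``the already-proven BO/KdV case'' conflates $\dl=\infty$ with $\dl=0$.
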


The remaining part of this subsection
is devoted to a proof of Proposition \ref{PROP:SV2}.
In the current shallow-water regime, 
the Bou\'e-Dupuis variational formula reads as follows.

\begin{lemma}\label{LEM:Svar0}
Let $k \ge 2$ be an integer,   $0 <   \dl <  \infty$, 
and $N \in \N$.
Suppose that  $G:C^\infty(\T) \to \R$
is measurable such that $\E\big[|G(\P_N \wt X^\dl_{\frac k2})|^p\big] < \infty$
and $\E\big[\big|\exp  \big( G(\P_N\wt X^\dl_{\frac k2} )\big)\big|^q \big] < \infty$ 
for some $1 < p, q < \infty$ with $\frac 1p + \frac 1q = 1$.
Then, we have
\begin{align}
\begin{split}
&  \log \E\Big[\exp\big(G(\P_N \wt X^\dl_{\frac k2})\big)\Big]\\
& \quad = \sup_{\dr \in  \Ha}
\E\bigg[ G\big( \P_N \wt X^\dl_{\frac k2} + \P_N \wt I_{\dl, \frac k 2}(\dr)(1)\big) - \frac{1}{2} \int_0^1 \| \dr(t) \|_{L^2_x}^2 dt \bigg],
\label{Svar0}
\end{split}
\end{align}

\noi
where the expectation $\E = \E_\PP$
is taken with respect to the underlying probability measure~$\PP$.
Here, 
$\wt I_{\dl, \frac k 2}(\dr)$ is  defined by
\begin{align*}
\wt I_{\dl, \frac k 2}  (\dr)(t) = \int_0^t \wt \TT_{\dl, \frac{k}{2}}^{-\frac 12}     \dr(t') dt', 
\end{align*}

\noi
where $\wt{\mathcal{T}}_{\dl, \frac k 2}$
is the Fourier multiplier operator
with multiplier $\wt{T}_{\dl, \frac k 2}(n)$ defined  in \eqref{T2}.
When $k$ is even, \eqref{Svar0} also holds
for $\dl = 0$
with  $\wt{T}_{0,\frac k2}(n) 
= n^{k}$ as in~\eqref{T4}.

\end{lemma}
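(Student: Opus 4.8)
The plan is to establish the Bou\'e--Dupuis variational formula (Lemma~\ref{LEM:Svar0}) as a direct specialization of the abstract formula. The key point is that $\wt X^\dl_{\frac k2}$ in \eqref{Xdl2} (and \eqref{Xdl3} when $\dl = 0$ with $k$ even) is realized as a Gaussian Fourier series whose coefficients are $g_n/(\wt T_{\dl, \frac k2}(n))^{1/2}$, so that $\wt X^\dl_{\frac k2} = \wt{\mathcal{T}}_{\dl, \frac k2}^{-1/2} W$, where $W$ is the periodic white noise (i.e.\ $W = \sum_{n \in \Z^*} g_n e_n / \sqrt{2\pi}$). Equivalently, writing $W(t) = \sum_{n \in \Z^*} B_n(t) e_n/\sqrt{2\pi}$ for a cylindrical Brownian motion $\{B_n\}_{n \in \Z^*}$ on $L^2_0(\T)$ adapted to a filtration $\{\mathcal{F}_t\}$, we have $\wt X^\dl_{\frac k2} = \wt I_{\dl, \frac k2}(\dot W)(1)$ in law, where $\wt I_{\dl, \frac k2}$ is the operator in the statement. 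This is precisely the setup in which the classical Bou\'e--Dupuis formula \cite{BD98, Ust14} applies.

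First I would recall the statement of the abstract Bou\'e--Dupuis formula: for a measurable functional $F$ of the Wiener path $W|_{[0,1]}$ satisfying suitable integrability, one has
\begin{align*}
\log \E\big[e^{F(W)}\big]
= \sup_{\dr \in \Ha} \E\bigg[ F\Big(W + \int_0^\cdot \dr(t')\,dt'\Big) - \frac12 \int_0^1 \|\dr(t)\|_{L^2_x}^2\,dt\bigg],
\end{align*}
where the supremum is over progressively measurable drifts in $L^2([0,1];L^2_0(\T))$, $\PP$-a.s. Then I would apply this with $F(W) = G\big(\P_N \wt{\mathcal{T}}_{\dl, \frac k2}^{-1/2} W(1)\big)$; by linearity, $\wt{\mathcal{T}}_{\dl, \frac k2}^{-1/2}(W + \int_0^\cdot \dr\,dt')(1) = \wt X^\dl_{\frac k2} + \wt I_{\dl, \frac k2}(\dr)(1)$ (in law/path), which yields exactly \eqref{Svar0}. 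The Dirichlet projector $\P_N$ commutes with everything and poses no issue.

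The next step is to check the integrability hypotheses needed to invoke the abstract formula, i.e.\ that with $G$ as in the statement one has $\E[|G(\P_N \wt X^\dl_{\frac k2})|^p] < \infty$ and $\E[|e^{G(\P_N \wt X^\dl_{\frac k2})}|^q] < \infty$. These are assumed in the hypothesis of Lemma~\ref{LEM:Svar0} itself, so strictly there is nothing to prove here; in the applications, when $G(v) = -p(\wt R^\dl_{\frac k2}(v) + A\|v\|_{L^2}^\al)$, the bound $\wt R^\dl_{\frac k2}(\P_N v) \le C_N$ uniformly in $v \in L^2(\T)$ (arguing as in the proof of Lemma~\ref{LEM:FN2}, via Bernstein's inequality, Lemma~\ref{LEM:T1}, and the fact that the relevant cutoff confines $\|\P_N v\|_{L^2}$) together with Proposition~\ref{PROP:gauss2}\,(i) gives the requisite integrability. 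The $\dl = 0$, $k$ even case is covered identically, using \eqref{gauss4}, \eqref{Xdl3}, and $\wt T_{0,\frac k2}(n) = n^k$ from \eqref{T4}.

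I do not anticipate a genuine obstacle here: the content of the lemma is bookkeeping — identifying the Gaussian measure $\wt\mu^\dl_{\frac k2}$ with the law at time $1$ of the stochastic integral $\wt I_{\dl, \frac k2}$ applied to white noise, and then quoting the classical variational formula. The only mild care needed is to confirm that the multiplier $\wt T_{\dl,\frac k2}(n)$ is strictly positive for all $n \in \Z^*$ (so that $\wt{\mathcal{T}}_{\dl,\frac k2}^{-1/2}$ is well defined on mean-zero functions), which follows from the positivity of the coefficients $\wt a_{k,\l}$ in \eqref{DEs3a} together with Lemma~\ref{LEM:L1}\,(i) (and $\wt T_{0,\frac k2}(n) = n^k > 0$), and that the integrand $\int_0^1 \|\dr(t)\|_{L^2_x}^2\,dt$ is the correct Cameron--Martin cost — which is automatic since $\dr \in \Ha$ already controls the $L^2([0,1];L^2_0(\T))$-norm. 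Hence the proof is essentially a citation of \cite{BD98, Ust14} (see also \cite[Theorem~3.2]{Zhang} and \cite[Appendix~A]{TW}) after this identification, and I would present it as such with a short remark on the $\dl = 0$ case.
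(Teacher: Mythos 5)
Your proposal is correct and matches the paper's (implicit) treatment: both the deep-water Lemma~\ref{LEM:var0} and the shallow-water Lemma~\ref{LEM:Svar0} are stated in the paper as direct instances of the abstract Bou\'e--Dupuis formula from \cite{BD98, Ust14} (see also \cite{Zhang, TW}), without a separate argument, and your identification of $\wt X^\dl_{\frac k2}$ with $\wt{\mathcal{T}}_{\dl,\frac k2}^{-1/2}W(1)$ together with the positivity of $\wt T_{\dl,\frac k2}(n)$ is exactly the bookkeeping that justifies this citation.
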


As in the deep-water case (see \eqref{var2}), we have
\begin{align}
\| \wt I_{\dl, \frac k 2}(\theta)(1) \|_{H^\frac{k}{2}_x}^2 \leq \wt a_\dl \int_0^t \| \theta (t) \|^2_{L^2_x}  dt
\label{Svar2}
\end{align}

\noi
 for any $\theta \in \Ha$, 
where the constant $\wt a_\dl>0$ is independent of  $0 <  \dl \leq 1$.
When $k$ is even, \eqref{Svar2} also holds
for $\dl = 0$
with the constant $\wt a_\dl$ independent of 
$0\le \dl \le 1$.

Our main task is to prove the following lemma on the interaction potential 
$\wt R^\dl_{\frac k2 }(v)$.
Once we prove this lemma, 
arguing as in the proof of Proposition \ref{PROP:V2} in Subsection \ref{SUBSEC:DGM2}
with \eqref{Svar2} and \eqref{SDV1}, 
we can prove Proposition \ref{PROP:SV2}
(and hence Proposition \ref{PROP:SV1}).
We omit details.

\begin{lemma}\label{LEM:Svar2}
Let $k\ge 2$ be an integer.
Given $0<\dl< \infty$ and small $\eps_0 > 0$,  
 there exist $\al \ge 1$,  independent of $\dl$ and $\eps_0$,  and 
  $C_{\dl, \eps_0}>0$ such that
\begin{align}
\big|\E \big[ \wt R^\dl_{\frac k2 }(\wt X^\dl_N+\Dr_N) \big] \big|
 \leq 
 C_{\dl, \eps_0}+ 
 \E\Big[ C_{\dl, \eps_0} \|\Dr_N\|_{L^2}^{\al}   + \eps_0 \|\Dr_N\|_{H^\frac{k}{2}}^2 \Big], 
\label{SDV1}
\end{align}

\noi
uniformly in $N \in \N$, 
where the constant   $C_{\dl, \eps_0}>0$ 
is independent of $0<  \dl \le 1$.
Here, 
$\wt X^\dl_N :=  \P_N \wt X^\dl_{\frac k 2}$ and 
$\Dr_N$ is as in \eqref{var4}
and, in particular, they have mean zero on $\T$.
When $k$ is even, \eqref{SDV1} also holds
for $\dl = 0$.

\end{lemma}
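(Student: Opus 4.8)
The plan is to prove Lemma~\ref{LEM:Svar2} by exploiting the detailed structure of the shallow-water interaction potential $\wt R^\dl_{\frac k2}(v)$ recorded in \eqref{Scons2}, in direct parallel with the proof of Lemma~\ref{LEM:var2} in the deep-water regime, but paying extra attention to the $\dl$-dependence of the operator $\Gd$ via Lemma~\ref{LEM:L1} and Lemma~\ref{LEM:T1}\,(ii). From \eqref{Scons2} I would split $\wt R^\dl_{\kk-\frac12}(v)$ and $\wt R^\dl_{\kk}(v)$ into the ``lower order'' monomials, those with $|p(v)| \le \kk - 1$ (resp.\ $\le \kk-1$ in the half-integer case), which will be handled deterministically, and the ``leading order'' monomials with $|p(v)| = \kk$, which require probabilistic cancellation since $\wt X^\dl_{\frac k2}$ does not belong to $W^{\frac{k-1}2, r}(\T)$ by Proposition~\ref{PROP:gauss2}\,(i)/Remark~\ref{REM:gauss1}. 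Recall that in $\wt R^\dl_{\kk-\frac12}(v)$ the only monomials carrying $|p(v)| = \kk$ come from the first sum in \eqref{Scons2} with $\#\dx(p) = 2\kk+3-2j$ and in $\wt R^\dl_{\kk}(v)$ from the sum with $\#\dx(p)\le 2\kk+2-j$; in both cases a cubic ($j=3$) term with two factors carrying $\dx^{\kk}$ (or $|\dx|^{\frac12}\dx^{\kk-1}$ when $k$ is even) plus possibly a $\Qd$ factor, exactly analogous to $A^\dl_{\frac k2,\frac k2}(u)$ in the deep-water case.

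\textbf{Step 1 (lower order, deterministic).} For every monomial $p(v)\in\wt\Pc_j(v)$ in \eqref{Scons2} with $|p(v)|\le \kk-1$, I would estimate $\dl^m\int_\T p(v)dx$ by dropping $\Gd$ after paying a derivative via $|\ft{\Gd}(n)| \le \frac13|n|$ (Lemma~\ref{LEM:T1}\,(ii)), then running the same chain of inequalities as in \eqref{DV2}--\eqref{DV7}: (fractional) integration by parts, Cauchy--Schwarz, the fractional Leibniz rule \eqref{leib}, interpolation \eqref{interp1}, and Sobolev embedding, landing on a bound of the form $C(1 + \|v\|_{W^{\frac{k-1}2-\eps,\infty}}^{\al} + \|v\|_{L^2}^{\al}) + \eps_0\|v\|_{H^{k/2}}^2$ after Young's inequality, using crucially that the number of $\Gd$'s is controlled by $\#\dl$ (from \eqref{DEs1a}, \eqref{DEs1b}) so that each extra $\Gd$ consumes one power of $\dl$ that keeps the constant bounded for $0<\dl\le1$. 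Substituting $v = \wt X^\dl_N + \Dr_N$, using multilinearity, and then taking expectation with Proposition~\ref{PROP:gauss2}\,(i) yields the contribution to \eqref{SDV1}.

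\textbf{Step 2 (leading order, probabilistic).} For the cubic leading terms, I would substitute $v = \wt X^\dl_N+\Dr_N$, expand into the eight (resp.\ fewer) terms as in \eqref{DDV11a}/\eqref{DDV2}, observe that the purely Gaussian term $\E[\int \wt X^\dl_N(\dx^m\wt X^\dl_N)^2]=0$ since $\E[g_{n_1}g_{n_2}g_{n_3}]=0$, estimate the term linear in $\Dr_N$ with two $\wt X^\dl_N$ factors via Cauchy--Schwarz ($\Dr_N$ has mean zero) putting the rough product $\P_{\ne0}((\dx^{\kk-1}\wt X^\dl_N)(\dx^{\kk}\wt X^\dl_N))$ (or $(\dx^{\frac{k-1}2}\wt X^\dl_N)^2$) in $H^{-k/2}$ and controlling it by a shallow-water analogue of Lemma~\ref{LEM:var0a} (which I would state and prove using \eqref{T6} in Lemma~\ref{LEM:Tdl} in place of \eqref{DX1}, the argument being identical modulo replacing $T_{\dl,\frac k2}$ by $\wt T_{\dl,\frac k2}\sim_\dl |n|^k$), and estimate the remaining terms (at most linear in $\wt X^\dl_N$, hence with $\ge 2$ factors of $\Dr_N$) by interpolation so that the total power of $\|\Dr_N\|_{H^{k/2}}$ stays strictly below $2$, then Young's inequality. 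Any $\Qd$ factor is dropped after putting everything in absolute value on the Fourier side, using $|\ft{\Qd}(n)| = \frac13|n||\hf(\dl,n)|\le \frac13|n|$ (Lemma~\ref{LEM:L1}\,(iv), \eqref{hdef}), which is uniform in $\dl$; this is why $\Qd$ causes no trouble here (unlike in the convergence statement of Proposition~\ref{PROP:Scons2}).

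\textbf{Main obstacle.} The hard part will be bookkeeping the interplay between $\#\dl$ and $\#\Gd$ so that the constants in Step~1 are genuinely uniform in $0<\dl\le1$: whenever a monomial carries a factor $\Gd$ unpaired with $\dl$ one must spend a derivative via $|\ft{\Gd}(n)|\le\frac13|n|$, and one must check from \eqref{Scons2} (with $\#\Gd \le \min(m,\#\dx)$ in the relevant sums and the rank identity $\#\Gd = \#\dl+1$ forced in the first sum of $\wt R^\dl_{\kk-\frac12}$) that the resulting total derivative count never exceeds what the deterministic/probabilistic estimates can absorb, and that a half-integer mismatch does not spoil the leading-order Wiener-chaos bound when $k$ is even (where the two rough factors carry $|\dx|^{\frac12}\dx^{\kk-1}$). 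The case $\dl=0$ for even $k$ is then automatic since $\Gd$ is simply replaced by $-\frac13\dx$ and $\wt T_{0,\kk}(n)=n^{2\kk}$, so all estimates degenerate to the KdV Gaussian measure; I would remark this at the end rather than redo the argument.
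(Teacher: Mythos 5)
Your overall strategy matches the paper's: split the monomials of $\wt R^\dl_{\frac k2}(v)$ in \eqref{Scons2} into lower-order ones (handled deterministically as in Lemma~\ref{LEM:var1}) and leading-order cubic ones with rough factors (handled probabilistically with a shallow-water analogue of Lemma~\ref{LEM:var0a}, which the paper records as Lemma~\ref{LEM:Svar0a}), and the $\dl = 0$ remark via Remark~\ref{REM:kdv1} is also what the paper does. However, there are two substantive inaccuracies.

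The main one concerns your $\Gd$-accounting in Step~1. You propose to drop \emph{every} occurrence of $\Gd$ at the cost of a derivative via $|\ft{\Gd}(n)|\le\frac13|n|$. This is too lossy. Take a monomial in the first sum of $\wt R^\dl_{\kk-\frac12}$ with $j = \#v$ and $\#\Gd = m+1$, $\#\dx = 2\kk+3-2j$: paying $m+1$ extra derivatives gives a total of $2\kk+4+m-2j$, which for $j = 3$ and $m$ close to its maximum far exceeds $2\kk$, so the factors can no longer all be placed in $H^{\kk}$. The correct accounting (which your own ``main obstacle'' paragraph identifies, contradicting Step~1) is: pair each $\Gd$ with a $\dl$ and drop the $m$ instances of $\dl\Gd$ as uniformly $L^r$-bounded operators via \eqref{Q3} in Lemma~\ref{LEM:T1}\,(iii) at \emph{no} derivative cost; only the single unpaired $\Gd$ (which occurs in the first sum, where $\#\Gd = \#\dl+1$) costs a derivative. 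This keeps the effective derivative count at $\#\dx+1 = 2\kk+4-2j$ and lets the deterministic estimates close. Note this cannot be replicated by the pointwise bound $\dl^m|\ft\Gd(n)|^{m+1}\le \dl^m|n|^{m+1}/3^{m+1}$: the $\dl^m$ prefactor does not tame the extra powers of $|n|$ uniformly.

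The second inaccuracy: the operator $\Qd$ does not appear anywhere in \eqref{Scons2}; the classes $\wt\Pc_j(v)$ in Definition~\ref{DEF:mono2} are built from $\Gd$ and $\dx$ only, and $\Qd$ enters the analysis solely in the convergence proof (Proposition~\ref{PROP:Scons2}). Your Step~2 discussion of dropping $\Qd$ factors is therefore moot for this lemma. Relatedly, your description of the leading cubic forms is off for odd $k$: after paying a derivative for the unpaired $\Gd$, the worst fundamental form is $v\,\dx^{\kk-1}v\,\dx^{\kk-1}v$ (two factors at $\kk-1 = \frac{k-1}{2}$ derivatives), not two factors with $\dx^{\kk}$; for even $k$ the worst form is $v\,\dx^{\kk-1}v\,\dx^{\kk}v$. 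Once these are fixed, your plan coincides with the paper's proof.
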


Before proceeding to a proof of Lemma \ref{LEM:Svar2}, 
we  first state  a preliminary lemma
which is a shallow-water analogue of Lemma \ref{LEM:var0a}.

\begin{lemma}\label{LEM:Svar0a}

Let $k \ge 2$ be an integer.
Let $0\le\dl < \infty$
and
$s, \s_1, \s_2 \in \R$ with $\s_1 \le \s_2 \le \frac k2$
such that
$2k - 2\s_1 - 2\s_2 > 1$ and 
\eqref{VL3} holds.
Then, given any finite $p \ge 1$, $1\le r \le \infty$, and $\al_1, \al_2 \in \Z_{\ge 0}$, we have 
\begin{align}
\E\bigg[ \big\|\P_{\ne 0} \big((\jb{\dx}^{\s_1}(\dl \Gd)^{\al_1} \P_N \wt X^\dl_{\frac k 2})(\jb{\dx}^{\s_2}
(\dl \Gd)^{\al_2}\P_N  \wt X^\dl_{\frac k 2})\big)\big\|_{W^{s, r}}^p  \bigg]
< C_\dl<\infty,
\label{SVL2}
\end{align}

\noi
uniformly in $N \in \N$, 
where the constant $C_\dl>0$ is independent of  $0\leq \dl \leq 1$.
The bound \eqref{SVL2} also holds
even if we replace
$\jb{\dx}^{\s_j}$ by $\jb{\dx}^{\s_j - \kk_j}\dx^{\kk_j} $
for any $\kk_j \in \N$ with $\kk_j \le \s_j$, $j = 1, 2$.

\end{lemma}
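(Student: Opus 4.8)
The plan is to reduce Lemma \ref{LEM:Svar0a} to the already-established deep-water analogue (Lemma \ref{LEM:var0a}) by comparing the two multipliers. The key observation is that the random variable $\wt X^\dl_{\frac k 2}$ in \eqref{Xdl2}, \eqref{Xdl3} is built from the multiplier $\wt T_{\dl, \frac k 2}(n)$, and by Lemma \ref{LEM:Tdl} (specifically \eqref{T6}, together with \eqref{T4} for $\dl = 0$) we have $\wt T_{\dl, \frac k 2}(n) \sim_\dl |n|^k$ with the implicit constant \emph{uniform in} $0 \le \dl \le 1$ in the even case (and uniform in $0 < \dl \le 1$ from below in the odd case, which is what matters). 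Thus $\wt T_{\dl,\frac k2}(n)$ plays exactly the same role as $T_{\dl, \frac k 2}(n)$ did, and the only genuinely new feature is the presence of the operators $(\dl \Gd)^{\al_j}$ rather than $\H^{\al_j}$. By Lemma \ref{LEM:T1}\,(ii) (see \eqref{dlGd}), the multiplier of $\dl \Gd$ satisfies $|\dl \ft{\Gd}(n)| = |\dl^{-1} \cdot \dl \cdot \dl \ft{\Gd}(n)| $... more precisely $|\ft{\Gd}(n)| \le \min(\tfrac1\dl, \tfrac13 |n|)$, so $|\dl \ft{\Gd}(n)| \le \min(1, \tfrac\dl3 |n|) \le 1$, i.e.\ $\dl \Gd$ is an $L^2$-bounded (indeed uniformly-in-$\dl$ bounded) Fourier multiplier with symbol bounded by $1$.

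First I would reduce, as in the proof of Lemma \ref{LEM:var0a}, to the case $p = r = 2$ via Minkowski's inequality and the Wiener chaos estimate (Lemma \ref{LEM:hyp}); the reduction is insensitive to the presence of $(\dl \Gd)^{\al_j}$. Next, expanding $\P_N \wt X^\dl_{\frac k 2}$ in its Gaussian Fourier series \eqref{Xdl2} and computing the $L^2(\O)$-norm of the projected product, the $(\dl\Gd)^{\al_j}$ factors contribute a pointwise multiplicative weight $|\dl \ft \Gd(n_j)|^{\al_j} \le 1$ on the Fourier side, so they can simply be discarded from the upper bound. One is then left with exactly the discrete sum
\begin{align*}
\sum_{n \in \Z} \jb{n}^{2s} \sum_{\substack{n = n_1 + n_2 \\ 0 < |n_j| \le N}} \frac{\jb{n_1}^{2\s_1} \jb{n_2}^{2\s_2}}{\wt T_{\dl, \frac k 2}(n_1) \wt T_{\dl, \frac k 2}(n_2)},
\end{align*}
which, upon replacing $\wt T_{\dl, \frac k 2}(n_j)$ by $\jb{n_j}^k$ up to a $\dl$-dependent constant (uniform on $0 \le \dl \le 1$, resp.\ $0 < \dl \le 1$), is precisely the sum estimated in Lemma \ref{LEM:var0a} and handled there via the discrete convolution bound Lemma \ref{LEM:SUM} under the hypotheses $2k - 2\s_1 - 2\s_2 > 1$ and \eqref{VL3}. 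The replacement of $\jb{\dx}^{\s_j}$ by $\jb{\dx}^{\s_j - \kk_j}\dx^{\kk_j}$ changes nothing since $|n_j|^{\kk_j} \le \jb{n_j}^{\kk_j}$, so the upper bound on the summand is unchanged. For the case $\dl = 0$ with $k$ even, $\wt X^0_{\lceil k/2 \rceil}$ is just the Gaussian measure with $\wt T_{0, \frac k 2}(n) = |n|^k$ (see \eqref{gauss4}, \eqref{Xdl3}), and $\dl \Gd = 0$ there, so the statement is a direct special case of Lemma \ref{LEM:var0a}.

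The main obstacle — really the only point requiring care rather than routine bookkeeping — is ensuring that all implicit constants behave correctly with respect to $\dl$: we need the bound \eqref{SVL2} with $C_\dl$ \emph{independent of} $0 \le \dl \le 1$ (even case) resp.\ $0 \le \dl \le 1$ as stated. This hinges on the two-sided bounds in \eqref{T6} of Lemma \ref{LEM:Tdl}: the lower bound $\wt T_{\dl, \kk - \frac12}(n) \gtrsim_\dl |n|^{2\kk - 1}$ is only uniform in $\dl \in (0,1]$ from below, but that is exactly the direction used (we divide by $\wt T$), so uniformity is preserved; in the even case $\wt T_{\dl, \kk}(n) \ge n^{2\kk}$ gives a clean $\dl$-free lower bound. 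One should also double-check that dropping the weights $|\dl\ft\Gd(n_j)|^{\al_j} \le 1$ is legitimate uniformly in $\dl$, which it is since the bound $\le 1$ is $\dl$-independent. Everything else — the reduction to $p = r = 2$, the Gaussian computation, and the convolution estimate — is a verbatim transcription of the proof of Lemma \ref{LEM:var0a}, so the write-up can be kept brief, essentially pointing to that proof and noting the two modifications (the $(\dl\Gd)^{\al_j}$ weights and the substitution of $\wt T_{\dl,\frac k2}$ for $T_{\dl,\frac k2}$ via Lemma \ref{LEM:Tdl}).
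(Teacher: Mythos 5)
Your proposal is correct and takes essentially the same approach as the paper: reduce to Lemma \ref{LEM:var0a} by expanding in the Gaussian Fourier series, replace $\wt T_{\dl,\frac k2}(n)$ by $\jb{n}^k$ via the lower bound in \eqref{T6}, and discard the $(\dl\Gd)^{\al_j}$ factors using $|\dl\ft\Gd(n)|\les 1$. The only difference is presentational — you spell out the $p=r=2$ reduction and the $\dl=0$ special case explicitly where the paper simply cites the earlier proof.
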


\begin{proof}
When $\al _1 = \al_2 = 0$, 
the bound \eqref{SVL2} follows 
from a slight modification of the proof of Lemma~\ref{LEM:var0a}
with~\eqref{Xdl2} and \eqref{T6} in 
Lemma \ref{LEM:Tdl} (which gives the same lower bound $|n|^k$ as in the deep-water case).
Otherwise, we proceed as in 
 the proof of Lemma~\ref{LEM:var0a}
and then use the bound
$|\dl \ft \Gd(n)| \les1$
which follows from  \eqref{sym2} and Lemma \ref{LEM:L1}.
\end{proof}

We now present a proof of Lemma \ref{LEM:Svar2}.

\begin{proof}[Proof of Lemma \ref{LEM:Svar2}]

We only consider $0 < \dl < \infty$.
For the $\dl = 0$ case with even $k$, see Remark \ref{REM:kdv1}.
We separately treat the cases $k = 2\kk$ and $k = 2\kk - 1$, $\kk \in \N$.
In the following, all the estimates hold uniformly in $N \in \N$
and
 all the constants are understood to be uniform in $0 <  \dl \le 1$.

\medskip

\noi
$\bullet$
{\bf Case 1:}
 $k = 2\kk-1$ is odd.\\
\indent
From  Proposition \ref{PROP:gauss2}
(see also Remark \ref{REM:gauss1}), 
we know that  $\wt X^\dl_N$ belongs
to $W^{\kk - 1-\eps, \infty}(\T)\setminus
W^{\kk - 1, 1}(\T)$, 
almost surely (in the limiting sense as $N \to \infty$).
Namely,  if a monomial $p(v)$
appearing on the right-hand side of~\eqref{Scons2} 
has two factors with $\kk - 1$ derivatives, 
we need to rely on a probabilistic estimate
(as in Lemma \ref{LEM:var2}).
Otherwise, by (fractional) integration by parts, 
we may assume that all the factors in $p(v)$
has at most $\kk - \frac 32$ derivatives
and we can proceed  deterministically
(as in Lemma \ref{LEM:var1}).

Let $\wt R_i$, $i = 1, 2$, be the $i$th term on the right-hand side of \eqref{Scons2}.
We  consider the first term $\wt R_1$.
From Lemma \ref{LEM:T1}\,(ii), we have 
\begin{align}
|\dl \ft \Gd(n)| \les1
\qquad \text{and}\qquad
| \ft \Gd(n)| \les|n|.
\label{SV2}
\end{align}

\noi
Moreover, the first bound 
is equivalence for 
$|n| \gg \dl^{-1}$, 
while 
 the second bound 
is equivalence for 
$|n| \les \dl^{-1}$.
See also 
Lemma \ref{LEM:T1}\,(iii).
Any monomial  in $\wt R_1$ satisfies
$\#\Gd = \#\dl + 1$, namely, 
there is exactly one factor of $\Gd$ which is not paired with $\dl$, 
causing a derivative loss  in view of~\eqref{SV2}.
Hence, although we have $\#\dx \le 2\kk +3 - 2j$
for $\wt R_1$,
by accounting for this derivative loss, 
we virtually need to work with a new constraint 
$\#\dx \le 2\kk +4 - 2j$.\footnote{For simplicity of the presentation, 
we simply replace such unpaired $\Gd$ by $\dx$ in the following.}
In particular, when $j = \# v = 3$, we have $\#\dx = 2\kk - 2$
such that there can be two factors with $\kk - 1$ derivatives
(which is 
 also the case for any cubic monomial in $\wt R_2$
under $\#\dx \le 2\kk + 1 - j$ (with $j = 3$)).

Consequently, in order to prove \eqref{SDV1}
for $\wt R^\dl_{\kk - \frac 12} = \wt R_1 + \wt R_2$, 
it suffices to consider a monomial $p(v)$ whose fundamental form $\wt p(v)$ is given by 
\begin{align}
\text{(a)\ $\wt p(v) = v \dx^{\kk-1} v \dx^{\kk-1} v$ 
\quad 
and \quad (b)\ $\wt p(v)= \prod_{i =1}^j \dx^{\al_i}v$}
\label{SV3}
\end{align}

\noi
for  
\begin{align}
\begin{split}
& j=3, \ldots, 2\kk+1, \quad 0 \le \al_j \le \cdots \le \al_1 \le \kk -1\quad 
\text{such that}\\
& (\al_1,\al_2)\ne(\kk-1,\kk-1), 
\quad 
\text{and}\quad  \\
& \al_{1\cdots j} \le \max(2\kk+4-2j, 2\kk+1-j)
= 2\kk+1-j.
\end{split}
\label{SV3x}
\end{align}

\medskip

\noi
$\bullet$ {\bf Subcase 1.a:}
$\wt p(v) = v \dx^{\kk-1} v \dx^{\kk-1} v$.\\
\indent
Since $k = 2\kk - 1$, we have
$\wt p(v) = v \dx^{\frac{k-1}{2}} v \dx^{\frac{k-1}{2}} v$.
Recall that,  
in Case 2 of the proof of Lemma \ref{LEM:var2}, 
 we treated
$p(u) = u  \dx^m u  \dx^m u$ with $m = \frac{k-1}{2}$
(see  \eqref{DDV2}-\eqref{DDV6}).
Suppose that $ p(v) = v \dx^{\frac{k-1}{2}} v \dx^{\frac{k-1}{2}} v$, 
namely, $\#\Gd = 0$.
Then,  proceeding as in 
\eqref{DDV2}-\eqref{DDV6}
but with Lemma~\ref{LEM:Svar0a} instead of 
Lemma~\ref{LEM:var0a}, 
we obtain \eqref{SDV1}.

In general, in view of the $L^r$-boundedness of $\dl \Gd$
 for $ 1 < r < \infty$ (Lemma \ref{LEM:T1}\,(iii)), 
 we can simply drop
any occurrence of  $\dl \Gd$, 
just as we dropped the Hilbert transform $\H$
and the perturbation operator $\Qdl$
in the proofs of Lemmas \ref{LEM:var1} and \ref{LEM:var2}. 
Then, proceeding as above, 
we obtain \eqref{SDV1}.

\medskip

\noi
$\bullet$ {\bf Subcase 1.b:}
$\wt p(v)= \prod_{i =1}^j \dx^{\al_i}v$ as in (b) of \eqref{SV3}, satisfying 
 \eqref{SV3x}.\\
\indent
In this case, from H\"older's inequality and \eqref{Q3} in Lemma \ref{LEM:T1}\,(iii), we have\footnote{As mentioned in the proofs
of Lemmas \ref{LEM:var1}  and \ref{LEM:var2}, the $W^{\al_i, \infty}$-norm should really be
the $W^{\al_i, r}$-norm for $r \gg1 $ which can then be controlled by 
the $W^{\al_i+\eps, \infty}$-norm. We, however, suppress this point in the following.}
 \begin{align}
 \bigg| \int_\T p(v)  dx \bigg| 
   \le
 C_\dl 
  \| u\|_{H^{\al_1}} \|v\|_{H^{\al_2}} 
 \prod_{i =3}^j \|v\|_{W^{\al_i, \infty}}, 
\label{SV3a}
 \end{align}

\noi
where $C_\dl $ is independent of $ 0<  \dl \le 1$.
Then, 
we can proceed as in Case 2 of the proof of Lemma \ref{LEM:var1}
by noting the following two points:
(i) after partial integration by parts, we may assume that 
$\al_i \le \kk - \frac 32 = \frac{k-2}{2}$ for any $i = 1, \dots, j$
such that \eqref{DV6} holds, 
and 
(ii) under the condition 
$ \al_{1\cdots j} \le 2\kk+1-j$ with $j \ge 3$, we have 
\begin{align}
\begin{split}
\g  
 = \frac{2}{k}\bigg(
\al_1 +   \al_2 + \sum_{i = 3}^j \Big(\al_i + \frac 12 + \eps\Big)\bigg)
 \le \frac{2}{k}
 \bigg(
k+1 - \frac 12j + (j-2)\eps\bigg) < 2, 
\end{split}
\label{SV4}
\end{align}

\noi
and thus the bound \eqref{DV7} holds with $\g$  in \eqref{SV4}.
Hence, the bound \eqref{SDV1} in this case follows from substituting 
 $v = \wt X^\dl_N+\Dr_N$ into
the left-hand side of  \eqref{SV3a}
and applying~\eqref{DV6},  \eqref{DV7},  Young's inequality, 
and Proposition \ref{PROP:gauss2}\,(i).

\medskip

\noi
$\bullet$
{\bf Case 2:}
 $k = 2\kk$ is even.\\
\indent
We now estimate $\wt R^\dl_\kk(v)$ in \eqref{Scons2}.
In this case, we have $\#\Gd \le \#\dl$, 
namely, every $\Gd$ is paired with $\dl$ in each monomial.
In view of the discussion in Case 1, 
we may simply drop occurrences of $\dl \Gd$.
Hence, under the condition $\#\dx\le 2\kk+2 -j$, 
$ |p(v)| \le \kk$, and $j \ge 3$, 
it suffices to consider a monomial $p(v)$ whose fundamental form $\wt p(v)$ is given by 
\begin{align}
\text{(a)\ $\wt p(v) = v \dx^{\kk-1} v \dx^{\kk} v$ 
\quad 
and \quad (b)\ $\wt p(v)= \prod_{i =1}^j \dx^{\al_i}v$}
\label{SV5}
\end{align}

\noi
for 
\begin{align}
 j=3, \ldots, 2\kk+2, \quad 0 \le \al_i\le \kk -1, i = 1, \dots, j, 
 \quad 
\text{such that}
\quad  \al_{1\cdots j} \le  2\kk+2-j.
\label{SV5x}
\end{align}

\medskip

\noi
$\bullet$ {\bf Subcase 2.a:}
$\wt p(v) = v \dx^{\kk-1} v \dx^{\kk} v$.\\
\indent
Since $k = 2\kk $, we have
$\wt p(v) = v \dx^{\frac{k}{2} - 1} v \dx^{\frac{k}{2}} v$.
Thus, we can proceed as in 
in Case 1 of the proof of Lemma~\ref{LEM:var2}, 
where we treated
$p(u) = u  \dx^{m -1}u  \dx^m u$ with $m = \frac{k}{2}$
(see  \eqref{DDV11a}-\eqref{DDV17}), 
and 
we obtain \eqref{SDV1}.

\medskip

\noi
$\bullet$ {\bf Subcase 2.b:}
$\wt p(v)= \prod_{i =1}^j \dx^{\al_i}v$ as in (b) of \eqref{SV5}, 
satisfying 
\eqref{SV5x}.\\
\indent
In this case,
we have 
(i) 
$\al_i \le \kk - 1 = \frac{k-2}{2}$ for any $i = 1, \dots, j$
such that \eqref{DV6} holds, 
and 
(ii)~under the condition 
$ \al_{1\cdots j} \le  2\kk+2-j$ with $j \ge 3$,
\eqref{DV6a} with $2m = k$
 holds.
Hence, by proceeding as in Subcase 1.b, we obtain
\eqref{SDV1}.

This concludes the proof of Lemma \ref{LEM:Svar2}.
\end{proof}

\subsection{Construction of the shallow-water \GGMs}
\label{SUBSEC:SGM3}

Arguing as in Subsection \ref{SUBSEC:DGM3}, 
Theorem~\ref{THM:4}\,(i)
on the construction of the shallow-water \GGMs~
follows once we prove
the following  lemma
on 
uniform (in $N$ and $\dl$)
convergence of the truncated interaction potential 
$\wt R^\dl_{\frac{k}{2}} (\P_N \wt X^\dl_{\frac{k}{2}})$, 
where $\wt R^\dl_{\frac{k}{2}}(v)$ 
and $\wt X^\dl_{\frac{k}{2}}$ are as in 
\eqref{Scons2}
 and~\eqref{Xdl2}, respectively.

\begin{lemma}\label{LEM:SR1}
Let $k \ge 2$ be an integer.
Given any $0 <   \dl  < \infty$
and  finite $p \ge 1$,  
the sequence $\{\wt R^\dl_{\frac{k}{2}} (\P_N \wt X^\dl_{\frac{k}{2}})\}_{N\in\N}$
converges
to   $\wt R^\dl_{\frac{k}{2}} (\wt X^\dl_{\frac{k}{2}})$
 in $L^p(\Omega)$ as $N \to \infty$.
 Moreover, given any finite $p \ge 1$, 
 there exist    $C_{\dl} = C_{\dl}(p) > 0$  and $\ta > 0$ such that 
\begin{align}
\sup_{N\in\N} \sup_{0 <\dl \leq 1} \|\wt R^\dl_{\frac{k}{2}} (\P_N \wt X^\dl_{\frac{k}{2}}) \|_{L^p(\Omega) } &
\le C_\dl < \infty,  \label{SRN1}\\
\| \wt R^\dl_{\frac{k}{2}} (\P_M \wt X^\dl_{\frac{k}{2}}) - \wt R^\dl_{\frac{k}{2}} (\P_N \wt X^\dl_{\frac{k}{2}}) \|_{L^p(\Omega) } & \leq \frac{C_{\dl}}{N^\theta}
 \label{SRN2}
\end{align}

\noi
for any $M\geq N \geq 1$, 
where the constant $C_{\dl}$ is  independent of $0 <  \dl \leq 1$.
In particular, the rate of convergence is uniform in $0 <  \dl \leq 1$.
When $k$ is even, \eqref{SRN1} and~\eqref{SRN2} also hold
for $\dl = 0$.

\end{lemma}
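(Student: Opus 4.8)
\textbf{Proof strategy for Lemma \ref{LEM:SR1}.}
The plan is to mirror the proof of Lemma \ref{LEM:R1} in the deep-water regime, replacing the use of Proposition \ref{PROP:gauss1} with Proposition \ref{PROP:gauss2}, the multiplier bounds of Lemma \ref{LEM:K1} with those of Lemma \ref{LEM:Tdl} (in particular the two-sided bound \eqref{T6}, which yields the same lower bound $|n|^k$ as in the deep-water case), and the operator bounds on $\H, \Qdl$ with the corresponding bounds \eqref{Q3}, \eqref{Q3a}, \eqref{dlGd} on $\dl\Gd$ from Lemma \ref{LEM:T1}. Recalling the explicit structure \eqref{Scons2} of the interaction potentials $\wt R^\dl_{\kk-\frac12}(v)$ and $\wt R^\dl_{\kk}(v)$, it suffices (by the Wiener chaos estimate, Lemma \ref{LEM:hyp}, so that we may take $p = 2$) to prove a difference estimate of the form
\[
\big\| \textstyle\int_\T p(\P_M \wt X^\dl_{\frac k2})\,dx - \int_\T p(\P_N \wt X^\dl_{\frac k2})\,dx \big\|_{L^2(\Omega)} \le \frac{C_\dl}{N^\ta}
\]
for each monomial $p(v)$ appearing in \eqref{Scons2}, with $C_\dl$ uniform in $0 < \dl \le 1$; the uniform bound \eqref{SRN1} follows from the same computations with $M = \infty$ (i.e.\ dropping the truncation), and convergence in $L^p(\Omega)$ is then immediate from \eqref{SRN2}.

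For the estimates themselves I would split, exactly as in the proof of Lemma \ref{LEM:R1}, into the cubic monomials (which live in the homogeneous Wiener chaos of order $3$, so no frequency pairings occur under the zero-sum condition, cf.\ Definition \ref{DEF:pair}) and the higher-order monomials (for which one exploits $|p(v)| \le \kk - 1$ in the relevant sums of \eqref{Scons2}, estimates deterministically by H\"older / fractional Leibniz as in Lemma \ref{LEM:var1}, and then takes an expectation using Proposition \ref{PROP:gauss2}\,(i)). In treating any monomial $p(v) \in \wt\Pc_j(v)$, I would first use the $L^r$-boundedness of $\dl\Gd$ (Lemma \ref{LEM:T1}\,(iii), uniform in $0 < \dl \le 1$) to simply drop every occurrence of $\dl\Gd$, just as $\H$ and $\Qdl$ were dropped in Section \ref{SEC:DGM1}; the sole point requiring care is the first sum in $\wt R^\dl_{\kk-\frac12}(v)$ of \eqref{Scons2}, where $\#\Gd = \#\dl + 1$ forces one unpaired $\Gd$ and hence (via the second bound in \eqref{dlGd}, i.e.\ $|\ft\Gd(n)| \lesssim |n|$) a loss of one derivative: there the effective derivative budget becomes $\#\dx \le 2\kk + 4 - 2j$ rather than $2\kk + 3 - 2j$, which still leaves at most two factors carrying $\kk - 1$ derivatives when $j = 3$, exactly the threshold case handled probabilistically via Lemma \ref{LEM:Svar0a} (see also Remark following it), and strictly fewer derivatives otherwise. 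The cubic difference estimates then reduce to discrete-convolution sums of the schematic form
\[
\sum_{\substack{n_{123} = 0 \\ \max(|n_1|,|n_2|) \gtrsim N}} \frac{1}{\jb{n_1}^k \jb{n_2} \jb{n_3}} \lesssim \frac{1}{N^{1-\eps}},
\]
handled by Lemma \ref{LEM:SUM} and the constraint $k \ge 2$, precisely as in \eqref{RN8}--\eqref{RN9}; the quartic terms (those with fundamental form $v^2 \dx^{\kk-1}v\,\dx^\kk v$, arising only in the odd-order potential) split into a no-pair and a two-pair contribution exactly as for $\V_1, \V_2$ in the proof of Lemma \ref{LEM:R1}.

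The main obstacle, and the only genuinely new ingredient relative to the deep-water case, is ensuring \emph{uniformity of all constants in $0 < \dl \le 1$} in the presence of the unpaired $\Gd$ and of the worst quadratic-in-$v$ cubic monomials: one must verify that estimating $\dl^m \Gd^{\wt m}$ (with $\wt m = \#\Gd$) never consumes more than the available $\#\dx$-plus-one derivatives while keeping $\dl$-uniform constants, which is exactly where Lemma \ref{LEM:T1}\,(iii) (uniform $L^r$-boundedness of $\dl\Gd$) and the uniform lower bound $\wt T_{\dl,\frac k2}(n) \gtrsim |n|^k$ from \eqref{T6} are indispensable. Finally, the case $k$ even, $\dl = 0$ requires essentially no new work: there $\#\Gd \le \#\dl$ so no unpaired $\Gd$ occurs, $\wt\mu^0_\kk = \mu^\infty_\kk$ with multiplier $n^{2\kk}$, and the argument is literally the deep-water one.
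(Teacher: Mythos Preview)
Your proposal is correct and matches the paper's approach exactly: the paper states only that ``Lemma \ref{LEM:SR1} also follows from a straightforward modification of the proof of the corresponding lemma in the deep-water case (Lemma \ref{LEM:R1}). We omit details,'' and your outline supplies precisely those omitted details, with the right substitutions (Proposition \ref{PROP:gauss2} for \ref{PROP:gauss1}, the lower bound \eqref{T6} for \eqref{DX1}, and the $L^r$-boundedness of $\dl\Gd$ from Lemma \ref{LEM:T1} in place of the bounds on $\H,\Qdl$).

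One small imprecision: in the shallow-water potentials \eqref{Scons2} the quartic monomials never reach the threshold form $v^2\,\dx^{\kk-1}v\,\dx^\kk v$ (for $j=4$ the derivative budget is $\#\dx \le 2\kk-2$ in $\wt R^\dl_\kk$ and $\#\dx \le 2\kk-3$ in $\wt R^\dl_{\kk-\frac12}$, with $|p(v)|\le \kk-1$ in the latter), so the pair/no-pair split of $\V_1,\V_2$ from Lemma \ref{LEM:R1} is not actually needed here---all quartic and higher terms fall under the deterministic case. This is harmless (you are just proposing to handle phantom terms), but it slightly simplifies the argument.
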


In the previous subsection, 
together with the $L^r$-boundedness of $\dl \Gd$ 
and the boundedness of $\Gd$ from 
$W^{1, r}(\T)$ from $L^r(\T)$
for $1 < r< \infty$
(see~\eqref{Q3} and \eqref{Q3a} in Lemma \ref{LEM:T1}), 
we saw how Lemma \ref{LEM:Svar2}
followed from a slight modification 
of the proofs of Lemmas \ref{LEM:var1} and \ref{LEM:var2}.
Lemma \ref{LEM:SR1} also follows
from a straightforward modification 
of the proof of the corresponding lemma in the deep-water case
(Lemma \ref{LEM:R1}). 
We omit details.

\begin{remark}\rm\label{REM:kdv1} 

Let $k\ge 2$ be an even integer.
Recall from \eqref{EE0} that 
$ \wt E^\KDV_{\kk}(v) = \wt{E}^\dl_{\kk, 0}(v)$, 
where 
$ \wt{E}^\dl_{\kk, 0}(v)$
is  the $\dl$-free part of 
the shallow-water conservation law
$\wt{E}^\dl_{\kk}(v)$  in \eqref{DEs},
defined as  the collection of all terms that do not depend on $\dl$ explicitly;
see Footnote \ref{FT:2}.
Then, 
from~\eqref{E6}, we see that 
$\wt R^\KDV_\kk(v) =  \wt R^\dl_{\kk, 0}(v)$, 
where 
$ \wt R^\dl_{\kk, 0}(v)$
denotes   the $\dl$-free part of 
$\wt R^\dl_{\kk }(v)$  in~\eqref{Scons2}.
Namely, all the monomials in 
$ \wt R^\dl_{\kk, 0}(v)$
appear in 
$\wt R^\dl_{\kk }(v)$.
Hence, 
 all the results established in Subsections~\ref{SUBSEC:SGM2}
 and~\ref{SUBSEC:SGM3} for even-order quantities extend to
 the $\dl=0$ case.

\end{remark}

\subsection{Shallow-water convergence of the \GGMs}
\label{SUBSEC:SGM4}

As in the deep-water case, 
we have the following lemma on 
  the $L^p(\O)$-convergence properties of the truncated density
$\wt F^\dl_{\frac{k}{2}} (\P_N v)$ defined  in \eqref{SV0}.

\begin{lemma}\label{LEM:SFN1}
Let $k \ge 2$ be an integer and $K > 0$.
Then, given any finite $p \ge 1$ and $N\in\N$, we have 
\begin{align*}
\lim_{\dl\to0} \|\wt F^\dl_{\frac{k}{2}} (\P_N \wt X^\dl_{\frac{k}{2}}) - \wt  F^0_{\lceil \frac{k}{2}\rceil} (\P_N 
\wt X^0_{\lceil \frac k 2\rceil}) \|_{L^p(\O)} & = 0, \\
\lim_{\dl\to0} \|\wt F^\dl_{\frac{k}{2}} (\P_N\wt  X^0_{\frac k 2}) - 
\wt F^0_{\frac{k}{2}} (\P_N\wt  X^0_{\frac k 2}) \|_{L^p(\O)} & = 0. 
\end{align*}

\end{lemma}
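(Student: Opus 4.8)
The plan is to prove Lemma \ref{LEM:SFN1} by following closely the argument used in the deep-water regime to prove Lemma \ref{LEM:FN1}, replacing the role of Proposition \ref{PROP:cons1}\,(ii) by Proposition \ref{PROP:Scons2} and the role of Proposition \ref{PROP:gauss1}\,(i) by Proposition \ref{PROP:gauss2}\,(i)--(ii). We will also need the shallow-water analogue of the uniform deterministic bound in Lemma \ref{LEM:FN2}, namely that for each fixed $N \in \N$ there is a finite constant $C_{k, K, N} > 0$ with $\sup_{0 < \dl \le 1} |\wt F^\dl_{\frac k2}(\P_N v)| \le C_{k, K, N}$ for all $v \in L^2(\T)$; this follows exactly as in the proof of Lemma \ref{LEM:FN2}, using Bernstein's inequality, the bounds $\| \Gd\|_{L^r_0 \to L^r_0} \le C_r \dl^{-1}$ and $|\dl \ft\Gd(n)| \les 1$ from Lemma \ref{LEM:T1}, and the structure of $\wt R^\dl_{\frac k2}(v)$ in \eqref{Scons2} (every monomial comes with at least as many $\dl$'s as $\Gd$'s for even $k$, and with $\#\Gd = \#\dl + 1$ for the leading block when $k$ is odd, producing one unpaired $\Gd$ which costs one derivative but no divergent power of $\dl$). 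Once we have this $N$-dependent but $\dl$-uniform bound together with almost sure (in $\o$) convergence of the truncated densities, both limits follow from the bounded convergence theorem.

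For the second limit, I would first note that $\wt F^0_{\frac k2}(\P_N \wt X^0_{\frac k2}(\o))$ is well-defined for every $\o$ (here $\wt X^0_{\frac k2}$ makes sense for even $k$, and for odd $k$ we interpret $\wt X^0_{\frac k2}$ as the random variable of law $\wt\mu^0_{\lceil k/2\rceil}$, consistent with the statement). Since $\P_N \wt X^0_{\frac k2}(\o)$ is a fixed smooth function, Proposition \ref{PROP:Scons2} --- which gives $\lim_{\dl\to 0} \wt R^\dl_{\kk - \frac12}(v) = \lim_{\dl \to 0}\wt R^\dl_{\kk}(v) = \wt R^\KDV_{\kk}(v)$ for $v \in H^\kk(\T)$ (equivalently $\lim_{\dl \to 0}\wt\EE^\dl_{\frac k2}(v) = 0$, as $\wt E^\dl_{\frac k2, 0}$ is the $\dl$-free part and equals $\wt E^\KDV_{\lceil k/2\rceil}$) --- yields $\lim_{\dl \to 0}\wt R^\dl_{\frac k2}(\P_N \wt X^0_{\frac k2}(\o)) = \wt R^0_{\frac k2}(\P_N \wt X^0_{\frac k2}(\o))$ pointwise in $\o$. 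Combined with continuity of $\eta_K$, this gives pointwise convergence $\wt F^\dl_{\frac k2}(\P_N \wt X^0_{\frac k2}(\o)) \to \wt F^0_{\frac k2}(\P_N \wt X^0_{\frac k2}(\o))$, and the $L^p(\O)$ convergence follows from the $\dl$-uniform bound above and dominated convergence.

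For the first limit, the extra subtlety is that the base Gaussian measure itself varies with $\dl$, so one must handle the convergence of $\P_N \wt X^\dl_{\frac k2}$ to $\P_N \wt X^0_{\lceil k/2\rceil}$. By Proposition \ref{PROP:gauss2}\,(ii) (via its proof), $\wt X^\dl_{\frac k2}$ converges to $\wt X^0_{\lceil k/2\rceil}$ in $L^p(\O; W^{\frac{k-1}{2}-\eps, r}(\T))$, hence along a subsequence almost surely; since for each fixed $N$ the truncation $\P_N$ is a bounded operator into $H^\kk(\T)$ (indeed into $C^\infty(\T)$), we get $\P_N \wt X^\dl_{\frac k2}(\o) \to \P_N \wt X^0_{\lceil k/2\rceil}(\o)$ in $H^\kk(\T)$ for a.e.\ $\o$ along that subsequence. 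Then, splitting $\wt R^\dl_{\frac k2}$ into its $\dl$-free part $\wt R^\dl_{\frac k2, 0} = \wt R^\KDV_{\lceil k/2\rceil}$ and the remainder $\wt\EE^\dl_{\frac k2}$ (cf.\ \eqref{EEb}), we use (a) $\dl$-uniform smallness of $\wt\EE^\dl_{\frac k2}$ on $L^2$-balls --- which follows from the estimates in the proof of Proposition \ref{PROP:Scons2}, since every monomial there carries a positive power of $\dl$ or of $\Qd$ and is bounded on the fixed-$N$ band by $\dl^{\eps_0}$ times a polynomial in the $H^\kk$-norm, cf.\ the subcase bounds like \eqref{EEodd2b} --- and (b) $H^\kk$-continuity of the polynomial $\wt R^\KDV_{\lceil k/2\rceil}$ together with the convergence $\P_N \wt X^\dl_{\frac k2}(\o) \to \P_N \wt X^0_{\lceil k/2\rceil}(\o)$ in $H^\kk$, exactly as in the treatment of \eqref{FN6} in the deep-water case. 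This gives pointwise convergence (along the subsequence) $\wt F^\dl_{\frac k2}(\P_N \wt X^\dl_{\frac k2}(\o)) \to \wt F^0_{\lceil k/2\rceil}(\P_N \wt X^0_{\lceil k/2\rceil}(\o))$, the bounded convergence theorem upgrades this to $L^p(\O)$ convergence along the subsequence, and since the limit is independent of the subsequence, the full limit $\dl \to 0$ follows. The main obstacle is point (a): unlike the deep-water case where $\wt\EE^\dl$ carries an explicit factor $\dl^{-1} \to \infty$ and smallness is immediate, here we must extract $\dl$-uniform smallness from the more delicate mechanisms in Proposition \ref{PROP:Scons2} (slow convergence of $\Qd\dx$, insufficient powers of $\dl$), but on a fixed frequency band $\{|n| \le N\}$ all those estimates collapse to a clean bound of the form $|\wt\EE^\dl_{\frac k2}(\P_N v)| \le C_N \dl^{\eps_0}(1 + \|v\|_{L^2})^{\deg}$, which suffices.
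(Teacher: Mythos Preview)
Your proposal is correct and follows essentially the same route as the paper, which simply states that Lemma \ref{LEM:SFN1} ``follows from a straightforward modification of the proof of Lemma \ref{LEM:FN1} with Lemma \ref{LEM:Tdl} and \eqref{Sunif1}'' after recording the uniform deterministic bound \eqref{Sunif1}. One small simplification: your subsequence argument for the almost sure convergence of $\P_N \wt X^\dl_{\frac k2}(\o)$ is unnecessary, since for fixed $N$ this is a finite Fourier sum and Lemma \ref{LEM:Tdl} gives $\wt T_{\dl,\frac k2}(n)^{-1/2} \to |n|^{-\lceil k/2\rceil}$ for each $n$, so $\P_N \wt X^\dl_{\frac k2}(\o) \to \P_N \wt X^0_{\lceil k/2\rceil}(\o)$ in $H^\kk(\T)$ for \emph{every} $\o$ (exactly as in \eqref{FN5} in the deep-water proof); this is precisely why the paper singles out Lemma \ref{LEM:Tdl}.
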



Arguing as in the proof of  
Lemma \ref{LEM:FN2}, 
we have 
the following 
deep-water analogue of Lemma \ref{LEM:FN2}
on an 
$N$-dependent 
(but uniform in $0 <  \dl\le 1$)
deterministic bound:
\begin{align}\label{Sunif1}
\sup_{0 < \dl \leq 1}
\big|\wt F^\dl_{\frac{k}{2}}(\P_N v) \big| \leq C_{k, K,  N} < \infty
\end{align}

\noi
for any $v \in L^2(\T)$, 
which 
holds including $\dl = 0$, when $k$ is even.
Then, 
Lemma \ref{LEM:SFN1}
follows from a straightforward modification 
of the proof of 
Lemma \ref{LEM:FN1}
with Lemma \ref{LEM:Tdl} and 
\eqref{Sunif1}.
We omit details.

We conclude this section by presenting a proof of  Theorem~\ref{THM:4}\,(ii) and~(iii).
\begin{proof}[Proof of Theorem~\ref{THM:4}\,(ii)  and~(iii)]
Let $k \ge 2$ be an integer.
The claim on singularity\,/\,equivalence 
follows from the corresponding statement 
for the base Gaussian measures (Proposition \ref{PROP:gauss2}\,(iii) and (iv))
and the equivalence of the shallow-water \GGMs~
and the base Gaussian measures with an $L^2$-cutoff
(Theorem \ref{THM:4}\,(i)).

Next, we show that 
 the measure $\wt{\rho}^\dl_{\frac k2}$  converges weakly to $\wt{\rho}^0_{\lceil \frac k2 \rceil}$
 as $\dl \to 0$.
 While we follow closely the argument in Subsection 4.3 of \cite{LOZ}, 
 we present details for readers' convenience.
Fix 
a Borel subset $A$ of $H^{\frac{k-1}{2}-\eps}(\T)$ 
with 
$\wt{\rho}^0_{\lceil \frac k2 \rceil} (\dd A) =0$, where $\dd A$ denotes the boundary of the set $A$. 
By the portmanteau lemma, weak convergence of 
$\wt{\rho}^\dl_{\frac k2}$ to 
$\wt{\rho}^0_{\lceil \frac k2 \rceil}$
 follows once we show that
\begin{equation}\label{SV6}
\lim_{\dl \to 0} 
\wt{\rho}^\dl_{\frac k2}(A) = \wt{\rho}^0_{\lceil \frac k2 \rceil}(A)
\end{equation}

\noi 
for every such $A$.
By the triangle inequality, we have 
\begin{equation}\label{SV7}
\begin{aligned}
\big|   \wt \rho^\dl_\frac k2 (A)- \wt{\rho}^0_{\lceil \frac k2 \rceil}(A) \big| 
& \leq  
 \sup_{0<  \dl\le 1}\big|  \wt{\rho}^\dl_{\frac k2}(A)  
- \wt{\rho}^\dl_{\frac k2, N}(A)  \big| \\
& \quad +  \big| \wt{\rho}^\dl_{\frac k2, N}(A) - \wt{\rho}^0_{\lceil \frac k2 \rceil, N}(A)\big|
+ \big|\wt{\rho}^0_{\lceil \frac k2 \rceil, N}(A)- \wt \rho^0_{\lceil \frac k2 \rceil}(A)\big|.
\end{aligned}
\end{equation}

\noi
In view of Theorem \ref{THM:4}\,(i), 
the first and third terms
on the right-hand side of \eqref{SV7} vanish
as $N \to \infty$.
Hence, once we show that
\begin{equation}\label{SV8a}
\lim_{\dl\to0} 
 \wt{\rho}^\dl_{\frac k2, N}(A) = \wt{\rho}^0_{\lceil \frac k2 \rceil, N}(A)
\end{equation}

\noi
for some $N \in \N$, 
then
 \eqref{SV6} follows from first taking a limit as $\dl\to0$ 
 and then  a limit as $N\to\infty$ in \eqref{SV7},

It remains to show \eqref{SV8a}.
As in \cite[(4.18)]{LOZ}, it suffices to prove
\begin{equation}\label{SV9}
\lim_{\dl\to0} 
 \E \Big[ \wt F^\dl_\frac k2(\P_N \wt{X}^\dl_\frac k2) \ind_A( \wt{X}^\dl_\frac k2)\Big]
 = 
\E\Big[ \wt{F}^0_{\lceil \frac k2\rceil}(\P_N \wt{X}^0_{\lceil \frac k2\rceil}) 
\ind_A(\wt{X}^0_{\lceil \frac k2\rceil}) \Big].
\end{equation}

\noi
By the triangle inequality and Cauchy-Schwarz's inequality, we have
\begin{align}
\begin{split}
\bigg| \E & \Big[ \wt F^\dl_\frac k2(\P_N \wt{X}^\dl_\frac k2) \ind_A( \wt{X}^\dl_\frac k2)\Big]
- 
\E\Big[ \wt{F}^0_{\lceil \frac k2\rceil}(\P_N \wt{X}^0_{\lceil \frac k2\rceil}) 
\ind_A(\wt{X}^0_{\lceil \frac k2\rceil}) \Big]\bigg|\\
& \le 
\big\|\wt F^\dl_{\frac{k}{2}} (\P_N \wt X^\dl_{\frac{k}{2}}) - \wt  F^0_{\lceil \frac{k}{2}\rceil} (\P_N 
\wt X^0_{\lceil \frac k 2\rceil}) \big\|_{L^1(\O)}\\
& \quad 
 + 
\|\wt{F}^0_{\lceil \frac k2\rceil}(\P_N \wt{X}^0_{\lceil \frac k2\rceil})\|_{L^2(\O)}
\E\Big[\big|\ind_A( \wt{X}^\dl_\frac k2)- \ind_A(\wt{X}^0_{\lceil \frac k2\rceil}) \big|
\ind_B ( \wt{X}^0_{\lceil \frac k2\rceil})
\Big], 
\end{split}
\label{SV10a}
\end{align}

\noi
where 
\begin{align*}
B =  \big\{ v \in H^{\frac{k-1}{2}-\eps}(\T): 
 \eta_K\big(\|v\|_{L^2}\big) >0\big\}.
\end{align*}

\noi
Note that by the continuity of $\eta_K$, 
we see that $B$ is open in $H^{\frac{k-1}{2}-\eps}(\T)$.
Since
$\wt{\rho}^0_{\lceil \frac k2 \rceil} (\dd A) =0$, we have 
$\wt{\mu}^0_{\lceil \frac k2 \rceil}(\partial A\cap B) =0$.
Then, we have
\begin{align}
\begin{split}
 \E& \Big[\big|\ind_A( \wt{X}^\dl_\frac k2)- \ind_A(\wt{X}^0_{\lceil \frac k2\rceil}) \big|
\ind_B ( \wt{X}^0_{\lceil \frac k2\rceil})\Big]\\
& = 
\E\Big[\ind_{(\textup{int} A)\cap B}(\wt{X}^0_{\lceil \frac k2\rceil})
\big|\ind_A( \wt{X}^\dl_\frac k2)- \ind_A(\wt{X}^0_{\lceil \frac k2\rceil}) \big|\Big]\\
& \quad + \E\Big[\ind_{(\textup{int} A^c)\cap B}(\wt{X}^0_{\lceil \frac k2\rceil})
\big|\ind_A( \wt{X}^\dl_\frac k2)- \ind_A(\wt{X}^0_{\lceil \frac k2\rceil}) \big|\Big], 
\end{split}
\label{SV11}
\end{align}

\noi
where $\text{int} A$ denotes the interior of $A$
given by $\text{int} A = A \setminus \dd A$.
From 
Proposition \ref{PROP:gauss2}\,(ii) 
and  the openness of $(\text{int} A)\cap B$ and $(\text{int} A^c)\cap B$, 
we see that the integrands of the terms on the right-hand side
of \eqref{SV11} tend to $0$ as $\dl \to 0$.
Hence, by the bounded convergence theorem, 
we obtain
\begin{align}
\lim_{\dl \to 0}\E\Big[\big|\ind_A( \wt{X}^\dl_\frac k2)- \ind_A(\wt{X}^0_{\lceil \frac k2\rceil}) \big|
\ind_B ( \wt{X}^0_{\lceil \frac k2\rceil})\Big] = 0.
\label{SV12}
\end{align}

Finally, the desired identity \eqref{SV9}
follows from \eqref{SV10a}, 
 Lemma \ref{LEM:SFN1}, 
Proposition \ref{PROP:SV1}, 
and~\eqref{SV12}.
This concludes
the proof of Theorem 
\ref{THM:4}\,(ii) and~(iii).
\end{proof}

\begin{remark}\label{REM:cutoff3}
\rm
If we consider the \GGM~ 
with 
a sharp $L^2$-cutoff $\ind_{\{\|v \|_{L^2} \le K\}}$
in place of the smooth $L^2$-cutoff
$ \eta_K\big(\| v\|_{L^2}\big)$, 
we can not exploit the continuity 
of $\eta_K$ as above.
Instead, we simply can use 
\begin{align*}
\PP\Big(\| \wt X^0_{\lceil \frac{k}{2}\rceil}\|_{L^2} = K\Big) = 0.
\end{align*}

\noi
See \eqref{RN3}.
Then, we can exploit the openness
of the sets
$\big\{ v \in H^{\frac{k-1}{2}-\eps}(\T): \|\wt X^0_{\lceil \frac{k}{2}\rceil}\|_{L^2} < K\big\}$.

\end{remark}

\section{Probabilistic  asymptotic conservation}
\label{SEC:AC}

In this section, 
we establish the probabilistic asymptotic 
conservation of the form \eqref{alc1}
for both the deep-water and shallow-water regimes
(Propositions \ref{PROP:AAS1} and \ref{PROP:AAS2}), 
which is a key ingredient for proving invariance
of the \GGMs.
In the next section, 
we prove  invariance for each {\it fixed} $\dl$.
For this reason, there is no need to establish
probabilistic  asymptotic conservation
which is uniform in $\dl \gg1$ (or $\dl \ll1$).

\subsection{Main statements}
\label{SUBSEC:AC0}

%

We first consider the deep-water regime.
Fix  $0 < \dl \le \infty$.
Given $N \in \N$, 
consider the truncated ILW~\eqref{ILW2}.
It is easy to check that the $L^2$-norm 
is conserved for the truncated dynamics~\eqref{ILW2}
and thus~\eqref{ILW2} is globally well-posed in $L^2(\T)$.
Moreover, \eqref{ILW2} is a Hamiltonian PDE
with the Hamiltonian:
\begin{align*}
E^\dl_{\frac k2, N}(u) = \frac12 \| \Gdl^\frac12 u\|^2_{\dot{H}^\frac12}+ 
R^\dl_\frac 12(\P_Nu)
= 
\frac12 \| \Gdl^\frac12 u\|^2_{\dot{H}^\frac12}+ \frac13  \int_\T  (\P_N u)^3 dx, 
\end{align*}

\noi
which is also conserved under the truncated ILW \eqref{ILW2}.
As observed in \cite{TV0, TV1, TV2}, 
the main source of difficulty in proving invariance 
of the \GGMs~ $\rho^\dl_\frac k2$ in~\eqref{rho3} under ILW \eqref{ILW}
comes 
from the non-conservation of 
the truncated $k$th energy:
\begin{align}
E^\dl_{\frac{k}{2},N }(u)
&  = \frac12\sum_{\substack{\l=0\\ \text{even}}}^{k} a_{k, \l} \| \Gdl^{\frac{k-\l}{2}} u \|^2_{\dot{H}^{\frac{k}{2}}} + R^\dl_{\frac{k}{2}}(\P_N u)
\label{AE2}
\end{align}

\noi
for $k \ge 2$, 
 which causes non-invariance of the truncated \GGMs~
 $\rho^\dl_{\frac k2, N}$ in~\eqref{rho4}
 under the truncated ILW dynamics \eqref{ILW2}.

Let us take a closer look.
By writing $u = \P_N u + \P_{>N} u$, where 
$\P_{>N} = \Id - \P_N$, 
we can decouple a solution $u$ to~\eqref{ILW2}
into 
 the high frequency part $\P_{>N}u$, satisfying  the linear ILW equation:
 \begin{equation}
\partial_t \P_{>N}u - \Gdl \partial_x^2 \P_{>N}u = 0, 
\label{ILW3}
\end{equation}

 \noi
 and 
 the  low frequency part $\P_N u$, satisfying
 \begin{equation}
\partial_t \P_N u - \Gdl \partial_x^2 \P_N u = \P_N \dx ( \P_N u)^2, 
\label{ILW4}
\end{equation}

 \noi
 which is  a finite-dimensional system of ODEs  when viewed on the Fourier side, 
 preserving the Hamiltonian 
 $E^\dl_{\frac 12}(\P_N u)$.
On the one hand, the linear equation \eqref{ILW3}
 preserves 
  \begin{align}
E^\dl_{\frac{k}{2},> N }(u)
&  = \frac12\sum_{\substack{\l=0\\ \text{even}}}^{k} a_{k, \l} \| \Gdl^{\frac{k-\l}{2}} \P_{>N} u \|^2_{\dot{H}^{\frac{k}{2}}}
\label{AE3}
\end{align}

 \noi
for any $k \in \Z_{\ge 0}$.
On the other hand,  
 as pointed out in the introduction, 
 $E^\dl_{\frac k2}(\P_N u)$
 is not conserved under \eqref{ILW4} for any $k \ge 2$, 
 which is the source of the non-conservation of the truncated $k$th energy
$E^\dl_{\frac{k}{2},N }(u)$ in \eqref{AE2}
under the truncated ILW \eqref{ILW2}.

As proposed in~\cite{TV1, TV2}, 
we aim to prove almost invariance of 
the truncated \GGMs~
 $\rho^\dl_{\frac k2, N}$
 (Proposition \ref{PROP:main3})
 which, in the limit as $N \to \infty$, 
 yields invariance of the limiting \GGM~$\rho^\dl_\frac k2$.
A key ingredient is 
the following probabilistic asymptotic conservation
of the truncated energy
$E^\dl_{ \frac k 2} (\P_N u)$ 
under  \eqref{ILW4}.
Before stating a result, let us introduce a notation;
given $0 <  \dl\leq \infty$ and $N \in \N$, 
let
$\Phi_N(t) = \Phi_N(t;\dl)$ be the solution map for the truncated ILW dynamics \eqref{ILW2},
sending initial data $u(0)$ at time $0$ to a solution $u(t) = \Phi_N(t) (u(0))$ at time $t$.

\begin{proposition}[deep-water regime]\label{PROP:AAS1}
Let $k \ge 2$ be an integer and $0 <  \dl\leq \infty$.
Then, given any finite $p \ge 1$, 
we have 
\begin{align}
\lim_{N\to\infty} \bigg\| \frac{d}{dt} E^\dl_{ \frac k 2} (\P_N \Phi_N(t) (u) ) \Big\vert_{t=0} 
\bigg\|_{L^p(d\mu^\dl_{\frac{k}{2}})} =0.
\label{AAS1}
\end{align}
\end{proposition}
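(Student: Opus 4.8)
\textbf{Proof plan for Proposition \ref{PROP:AAS1}.}
The plan is to reduce the time derivative $\frac{d}{dt} E^\dl_{\frac k2}(\P_N\Phi_N(t)(u))|_{t=0}$ to an explicit polynomial expression in $u$ (multilinear on the Fourier side) and then show that this expression, when evaluated at the Gaussian random variable $X^\dl_{\frac k2}$, converges to $0$ in $L^p(\O)$ as $N\to\infty$. First I would use the decomposition $u = \P_N u + \P_{>N}u$ together with the conservation of $E^\dl_{\frac k2,>N}$ in \eqref{AE3} under the linear flow \eqref{ILW3} and of $E^\dl_{\frac12}(\P_N u)$ under \eqref{ILW4}: since $E^\dl_{\frac k2}(\P_N\Phi_N(t)(u)) = E^\dl_{\frac k2,N}(\Phi_N(t)(u)) - E^\dl_{\frac k2,>N}(\Phi_N(t)(u)) + (\text{cubic-and-higher terms})$, the time derivative at $t=0$ only picks up the contribution of the interaction potential $R^\dl_{\frac k2}$ and the coupling between low and high frequencies in the quadratic part. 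Concretely, writing the truncated ILW \eqref{ILW2} as $\dt u = \Gdl\dx^2 u + \P_N\dx(\P_N u)^2$ and differentiating $E^\dl_{\frac k2}(\P_N u)$, the linear (dispersive) part of the equation contributes $0$ to $\frac{d}{dt}$ of the quadratic part of $E^\dl_{\frac k2}$ by skew-adjointness; what remains is a finite sum of terms obtained by pairing the nonlinearity $\P_N\dx(\P_N u)^2$ against the variational derivatives of the quadratic and higher-order parts of $E^\dl_{\frac k2}(\P_N u)$, all of which are supported on frequencies $\{|n_j|\le N\}$ with a frequency interaction forcing at least one $|n_j|\gtrsim N$ (this is exactly the mechanism in \cite{TV0, TV1, TV2}).

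The second step is to use the detailed structure of $E^\dl_{\frac k2}(u)$ established in Section \ref{SEC:cons1} (Propositions \ref{PROP:cons1}, Lemmas \ref{LEM:cub1}, \ref{LEM:cub2}, and especially the final structure in Subsection \ref{SUBSEC:A4}, \eqref{R1}--\eqref{Aodd1}) to control the worst terms. The leading contributions come from the cubic terms $B^\dl_{\frac k2}(u)$ with two factors carrying $\lceil\frac{k-1}2\rceil$ derivatives; their time derivative under \eqref{ILW4} produces quartic (and higher) expressions in $u$, and the key gain is that the frequency-cutoff mismatch (resonance cancellation plus the extra $\P_N$ projections) forces a smoothing of order $1$ relative to a naively divergent estimate. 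I would then expand in the Gaussian Fourier series \eqref{Xdl1}, apply the Wiener chaos estimate (Lemma \ref{LEM:hyp}) to reduce to $p=2$, and carry out an $L^2(\O)$ computation exactly as in the proof of Lemma \ref{LEM:R1} (with the pairing bookkeeping of Definition \ref{DEF:pair}): each term is bounded by a convergent lattice sum of the form $\sum \jb{n}^{-1-\eps}$ times a negative power of $N$, using \eqref{DX1}, \eqref{Q1}, Lemma \ref{LEM:SUM}, and the constraint $|n_j|\gtrsim N$. The lower-order terms in $R^\dl_{\frac k2}$ and in $\EE^\dl_{\frac k2}$ are easier: they either carry explicit negative powers of $\dl$ (irrelevant here, as $\dl$ is fixed) or have strictly fewer derivatives, so a deterministic estimate via Sobolev embedding, interpolation \eqref{interp1}, and the fractional Leibniz rule \eqref{leib}, followed by Proposition \ref{PROP:gauss1}\,(i), suffices.

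The main obstacle I anticipate is the precise tracking of the frequency constraints for the leading cubic/quartic contributions: one must verify that after differentiating $B^\dl_{\frac k2}(\P_N u)$ along \eqref{ILW4} and integrating by parts to distribute derivatives, there genuinely remains a net smoothing of order $\ge 1$ (equivalently, that the worst lattice sum converges and decays in $N$), which requires using the skew-adjointness of $\Gdl\dx$, the algebra of the multiplier $\Kdl(n)$ (Lemma \ref{LEM:K1}), and the cancellation identities for $\H$ (Cotlar's identity \eqref{Cot2}) and $\Tdl$ (Lemma \ref{LEM:T2}) to avoid spurious top-order terms. Since we only need the result for fixed $0<\dl\le\infty$, no uniformity in $\dl$ is needed, so all implicit constants may depend on $\dl$; this is precisely why the argument reduces to a straightforward adaptation of \cite{TV0, TV1, TV2} once the structural results of Section \ref{SEC:cons1} are in hand. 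Finally, the case $\dl=\infty$ (BO) is literally the content of \cite{TV0, TV1, TV2}, and the finite-$\dl$ case differs only by the presence of the $\dl$-dependent lower-order terms and the replacement of $\H\dx$ by $\Gdl\dx$ (equivalently $\H\dx + \Qdl$ via \eqref{Qdl1}), which are harmless by Lemma \ref{LEM:T1}.
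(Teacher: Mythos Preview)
Your proposal is essentially correct and follows the paper's approach (formalized there via the $p^*_N$ notation of \eqref{AC2}): compute the time derivative as a sum of multilinear terms with a $\P_{>N}$ inserted, reduce the $0<\dl<\infty$ case to the BO case of \cite{TV0,TV1,TV2} via the perturbative splitting $\Gdl\dx = \H\dx + \Qdl$ and Lemma~\ref{LEM:T1}, and handle the leading cubic terms by exploiting a cancellation. One correction worth flagging: the cancellation that kills the top-order contribution is \emph{not} Cotlar's identity \eqref{Cot2} or the $\Tdl$ identity of Lemma~\ref{LEM:T2}, but rather the elementary frequency-support identity $\P_{>N}(\P_N u_1 \cdot \H\P_N u_2) = \H\P_{>N}(\P_N u_1 \cdot \P_N u_2)$ of \eqref{AC6} (valid because $|n_1+n_2|>N$ with $|n_j|\le N$ forces $n_1,n_2$ to have the same sign, cf.\ \eqref{AC5}) combined with the anti self-adjointness of $\H$; see Remark~\ref{REM:cancel} for the explicit computation showing that the worst pieces of $p^*_{0,N}$ and $p^*_{\vec\al,N}$ vanish identically.
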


We present a proof of 
 Proposition~\ref{PROP:AAS1} in Subsection~\ref{SUBSEC:AC1}.
As we explain below, thanks to~\eqref{DX1} and \eqref{Q1}, 
most of the terms can be treated 
by a slight modification
of the argument in \cite{TV0, TV1, TV2}.
Thus, we keep our presentation brief.

\begin{remark} \label{REM:div}\rm
As it is clear from the proof, 
the rate of convergence in \eqref{AAS1} is uniform in $ 2\le \dl \le \infty$.
On the other hand, 
our argument for 
 \eqref{AAS2} 
 in the shallow-water regime
 does not 
provide a uniform  rate of convergence 
 for $ 0< \dl \le 1$
 and in fact it diverges as $\dl \to 0$.
 We provide a separate argument for $\dl = 0$.
\end{remark}

\medskip

Next, we consider the shallow-water regime.
Given  $0 \le  \dl < \infty$ and $N \in \N$, 
we consider the following truncated scaled ILW:
\begin{equation}
\label{ILW5}
\partial_t v - \Gd \partial_x^2 v= \P_N \dx(\P_Nv)^2,
\end{equation}

\noi
where a solution  $v$ decouples into 
 the high frequency part $\P_{>N}v$, satisfying   the linear scaled ILW equation:
 \begin{equation*}
\partial_t \P_{>N}v - \Gd \partial_x^2 \P_{>N}v = 0, 
\end{equation*}

 \noi
 and 
 the  low frequency part $\P_N v$, satisfying
 \begin{equation}
\partial_t \P_N v - \Gd \partial_x^2 \P_N v = \P_N \dx ( \P_N v)^2.
\label{ILW7}
\end{equation}

\noi
Let 
$\wt \Phi_N(t) = \wt \Phi_N(t;\dl)$ be the solution map for the truncated scaled ILW~\eqref{ILW5},
sending initial data $v(0)$ at time $0$ to a solution $v(t) = \wt \Phi_N(t) (v(0))$ at time $t$.
Then, our main goal is
to establish 
the following probabilistic asymptotic conservation
of the truncated energy
$\wt E^\dl_{ \frac k 2} (\P_N v)$ 
under  \eqref{ILW7}.

\begin{proposition}[shallow-water regime]\label{PROP:AAS2}
Let $k \ge 2$ be an integer and $0 <   \dl < \infty$.
Then, given any finite $p \ge 1$, 
we have 
\begin{align}
\lim_{N\to\infty}  \bigg\| \frac{d}{dt} \wt{E}^\dl_{ \frac k 2}(\P_N \wt{\Phi}_N(t) (v)) \Big|_{t=0} 
\bigg\|_{L^p(d\wt{\mu}^\dl_{ \frac k 2})} =0.
\label{AAS2}
\end{align}

\noi
When $k$ is even, \eqref{AAS2} also holds for $\dl = 0$.

\end{proposition}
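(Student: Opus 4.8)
\textbf{Plan of proof for Proposition \ref{PROP:AAS2}.}
The plan is to mirror the strategy used for the deep-water regime in Proposition \ref{PROP:AAS1}, replacing the Hilbert transform and the perturbation operator $\Qdl$ by $\Gd$ and the shallow-water perturbation operator $\Qd = \Gd + \frac13\dx$ in \eqref{Qdl2}, and exploiting the detailed structure of the shallow-water conservation laws recorded in \eqref{Scons1}--\eqref{Scons2}. First I would compute $\frac{d}{dt}\wt E^\dl_{\frac k2}(\P_N\wt\Phi_N(t)(v))\big|_{t=0}$ explicitly. Since the linear flow \eqref{ILW3}--analogue preserves the purely quadratic high-frequency energy $\wt E^\dl_{\frac k2, >N}$ (the shallow-water analogue of \eqref{AE3}), and since $\wt E^\dl_{\frac k2}$ is exactly conserved by the full (untruncated) scaled ILW \eqref{sILW} by Proposition \ref{PROP:Scons1}, the time derivative at $t=0$ collapses to a finite sum of multilinear expressions in $\P_N v$ and $\P_{>N}v$ coming from the mismatch between $\P_N\dx(\P_N v)^2$ and $\dx(v^2)$; schematically, each term is of the form $\int_\T \P_{>N}(\,\cdot\,)\,\dx(\P_N v)^2\,dx$ with derivatives and operators $\dl^m\Gd^j$ distributed according to the monomials appearing in \eqref{Scons2}. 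Because $\P_{>N}$ forces at least one frequency $\gtrsim N$, each such term carries a gain of a negative power of $N$ once we spend a fraction of a derivative; the task is to show this gain survives after placing the Gaussian random variable $\wt X^\dl_{\frac k2}$ (whose regularity threshold is $H^{\frac{k-1}2-}$ by Proposition \ref{PROP:gauss2}).

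Next I would estimate these terms in $L^p(d\wt\mu^\dl_{\frac k2})$. By the Wiener chaos estimate (Lemma \ref{LEM:hyp}) it suffices to take $p=2$, and we substitute $v = \wt X^\dl_{\frac k2}$ via the Gaussian Fourier series \eqref{Xdl2}, using the two-sided bound $\wt T_{\dl,\frac k2}(n)\sim_\dl |n|^k$ from \eqref{T6} in Lemma \ref{LEM:Tdl} to convert each factor into a power-counting sum over frequencies with one index constrained to be $\gtrsim N$. For the quadratic-in-$\wt X^\dl_{\frac k2}$ pieces one argues as in Lemma \ref{LEM:var0a}/Lemma \ref{LEM:Svar0a} and in the proof of Lemma \ref{LEM:R1} (equations \eqref{RN5a}--\eqref{RN9}): the restriction $\max|n_j|\gtrsim N$ together with $k\ge 2$ produces a convergent sum with an $N^{-\theta}$ factor. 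For the cubic and higher-order pieces with all factors carrying at most $\lceil\frac k2\rceil - 1 < \frac{k-1}2$ derivatives, one proceeds deterministically as in Lemma \ref{LEM:Svar2}, using the $L^r$-boundedness of $\dl\Gd$ and $\Gd\colon W^{1,r}_0\to L^r_0$ from Lemma \ref{LEM:T1}\,(iii) to drop the operators $\dl^m\Gd^j$ (for $0<\dl\le 1$ the implicit constants are uniform), then Sobolev embedding, interpolation \eqref{interp1}, and Proposition \ref{PROP:gauss2}\,(i) to bound everything by a negative power of $N$ times $\dl$-dependent constants. The even-$k$ case at $\dl=0$ is handled separately, exactly as in Remark \ref{REM:kdv1}: all monomials of the KdV interaction potential $\wt R^\KDV_\kk = \wt R^\dl_{\kk,0}$ already appear in $\wt R^\dl_\kk$, so the same estimates apply verbatim with $\Gd$ replaced by $-\frac13\dx$.

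The main obstacle, as flagged in Remark \ref{REM:div}, is the lack of uniformity in $\dl$ as $\dl\to 0$: in the shallow-water regime the ``bad'' unpaired $\Gd$ operators appearing in $\wt R^\dl_{\kk-\frac12}$ (where $\#\Gd = \#\dl + 1$) cost a full extra derivative that cannot be absorbed by an explicit power of $\dl$, so the constants $C_\dl$ in the resulting bounds blow up as $\dl\to 0$. Since for Proposition \ref{PROP:AAS2} we only need the conclusion for each \emph{fixed} $0<\dl<\infty$, this is not a real problem here --- we simply allow the rate of convergence in $N$ to depend on $\dl$ --- but one must be careful in the odd-$k$ case to use, wherever an unpaired $\Gd$ occurs, the bound $|\ft\Gd(n)|\lesssim |n|$ from Lemma \ref{LEM:T1}\,(ii) (costing one derivative) rather than $|\dl\ft\Gd(n)|\lesssim 1$, and then verify that the total number of derivatives landing on $\P_N v$-factors still leaves room, after spending $\theta>0$ derivatives on the $\P_{>N}$ gain, to close the power-counting sum. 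I expect this bookkeeping --- tracking, for each monomial in \eqref{Scons2}, that $\#\dx + (\text{derivative cost of unpaired }\Gd\text{'s}) + \theta \le 2\lceil\tfrac k2\rceil$ with strict inequality in the exponent of the frequency sum --- to be the one nontrivial point; it follows from the rank identity $\ord(p) = \kk+2$ and the constraints \eqref{DEs1a}, \eqref{DEs1b} established in Proposition \ref{PROP:Scons1}, exactly as the analogous bookkeeping in the deep-water case rests on \eqref{DE11}--\eqref{DE11a}.
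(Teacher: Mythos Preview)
Your overall strategy is right, and the treatment of the lower-order monomials (those whose fundamental form carries at most $\lceil k/2\rceil-1$ derivatives on every factor) works exactly as you describe. The genuine gap is in the leading-order cubic terms, those in classes (iii)--(iv) of Subsection~\ref{SUBSEC:AC2}: $\wt p(v)=v\,\dx^{\kk-1}v\,\dx^{\kk-1}v$ when $k=2\kk-1$ and $\wt p(v)=v\,\dx^{\kk-1}v\,\dx^{\kk}v$ when $k=2\kk$. After applying the $p^*_N$ operation \eqref{AC2} these become quartic with one extra derivative, and the worst configurations (two factors carrying $m=\lceil k/2\rceil$ derivatives each) sit exactly at the threshold where the frequency sum diverges --- direct power counting fails even probabilistically, as explained in Remark~\ref{REM:cancel}. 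In the deep-water proof (Lemma~\ref{LEM:AAS3}) these terms are controlled only after exploiting structural \emph{cancellations} based on the identity \eqref{AC6}, $\P_{>N}(\P_N u_1\,\H\P_N u_2)=\H\P_{>N}(\P_N u_1\,\P_N u_2)$, together with the anti-self-adjointness of $\H$. Your proposal never invokes these cancellations, and the bookkeeping inequality you state (``$\#\dx + \cdots + \theta \le 2\lceil k/2\rceil$ with strict inequality'') is precisely what fails for the leading monomials.

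The point you are missing is that \eqref{AC6} is \emph{false} for $\Gd$ and for $\dl\Gd$, so the cancellation mechanism is unavailable in the shallow-water variables. The paper's fix is to rewrite each occurrence of $\dl\Gd\dx$ as $\H\dx+\Qdl$ via \eqref{SAC3}; the piece with only $\H\dx$ is then of the exact form treated in Lemma~\ref{LEM:AAS3} (where the cancellation goes through), while every remaining piece contains at least one $\Qdl$ and is therefore genuinely lower order by the $L^r$-boundedness of $\Qdl$ (Lemma~\ref{LEM:T1}\,(iii)). When $\#\Gd=\#\dl+1$ (the unpaired $\Gd$ in the odd case), one first inserts an artificial factor $\dl^{-1}\cdot\dl$ to pair the stray $\Gd$ before substituting; see \eqref{SACX}. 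It is \emph{this} substitution, not the bare bound $|\ft\Gd(n)|\lesssim|n|$, that produces the $\dl$-dependent constants diverging as $\dl\to 0$ --- the obstacle you identify is real, but its source is the passage through $\Qdl$ and the artificial $\dl^{-1}$, not merely the unpaired $\Gd$. For $\dl=0$ with even $k$ one has $\#\Gd=0$, so the leading monomial is operator-free and the cancellation from Remark~\ref{REM:cancel} applies directly (identity \eqref{AC6} for the trivial operator).
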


Proposition \ref{PROP:AAS2} follows 
from a slight modification of the proof of Proposition \ref{PROP:AAS1}
in the deep-water regime.
Due to the non-uniformity in $0< \dl \le 1$
of the rate of convergence, we need to provide a separate
argument for $\dl = 0$.
See 
Subsection~\ref{SUBSEC:AC2}
for a further discussion.

\medskip

\noi
{\bf Notations.}
Before proceeding further, 
let us introduce some notations, 
following Section~8 in  \cite{TV0}.
Let $p(u)$ be a monomial in $ \Pc_j(u)$
for some $j \in \N$
with the fundamental form
\begin{align*}
\wt{p}(u) = \prod_{i=1}^j \dx^{\al_i} u
\end{align*}

\noi
for some $\al_i \in \Z_{\ge 0}$;
see Definition \ref{DEF:mono1}.
Then, given $N \in \N$, by viewing 
$p(u)
= p(\dx^{\al_1} u, \dots, \dx^{\al_j} u)$
as a $j$-linear function, 
we define 
a new monomial  $p_{i,N}^*(u)$, 
$i  = 1, \dots, j$, 
 by setting
\begin{align*}
p_{i,N}^*(u) = p(u)\big|_{\partial_x^{\al_i} u=
-2 \partial_x^{\al_i} (\P_{>N} (u\partial_x u))},
\end{align*}

\noi
namely, 
$p_{i,N}^*(u)$
is the monomial  obtained from $p(u)$ by 
replacing $\dx^{\al_i} u$  in the $i$th factor  by $-2\dx^{\al_i}(\P_{>N}(u \dx u))$.
Furthermore, we define $p_N^*(u)$ as
\begin{align} \label{AC2}
p^*_N(u)=\sum_{i=1}^j p_{i,N}^*(u).
\end{align}

\noi
Note that 
\begin{align}
\#u(p^*_N) = \#u(p) + 1
\qquad \text{and}
\qquad 
\#\dx(p^*_N) = \#\dx(p) + 1
\label{AC2a}
\end{align}

\noi
in the sense of Definition \ref{DEF:ord1}.
The notation $p^*_N(u)$ in \eqref{AC2}
 will become useful in expressing $\dt E^\dl_{\frac k 2}(\P_N \Phi_N(t)(u))$
 in a concise manner.
We extend these definitions
to monomials in $\wt \Pc_j(v)$ defined in Definition \ref{DEF:mono2}.

We use $u^+$ (and $u^-$, respectively)
to denote the projection of $u$
onto positive frequencies (and negative frequencies, respectively).
Then, we have
\begin{align}
\H u^+ = -i u^+\qquad \text{and}
\qquad \H u^- = i u^-
\label{AC4}
\end{align}

\noi
and 
\begin{align}
\P_{>N} (\P_N u_1\P_N u_2)
= \P_{>N} (\P_N u_1^+\P_N u_2^+)
+ \P_{>N} (\P_N u_1^-\P_N u_2^-).
\label{AC5}
\end{align}

\noi
In particular, from \eqref{AC4} and \eqref{AC5}, we have 
\begin{align}
\P_{>N} (\P_N u_1 \H \P_N u_2)
= \H \P_{>N} (\P_N u_1  \P_N u_2).
\label{AC6}
\end{align}

\noi
Given $N \in \N$, we use the following short-hand notation:
\begin{align}
u_N = \P_N u.
\label{AC3}
\end{align}

\subsection{Deep-water regime} \label{SUBSEC:AC1}

Fix an integer $k\geq 2$, $0 < \dl \le \infty$, and $N\in\N$.
Let $u$ be a solution to the truncated ILW \eqref{ILW2}.
Then, the low-frequency part $\P_Nu$ satisfies \eqref{ILW4}, 
which can be rewritten as
\begin{equation*}
\partial_t \P_N u - \Gdl \partial_x^2 \P_N
u =
2(\P_N u)\partial_x( \P_N u)-
2\P_{>N} \big((\P_{N}u)\partial_x(\P_{N} u)\big).
\end{equation*}

\noi
Then, from the conservation of $E^\dl_\frac k2(u)$
under the (untruncated) ILW \eqref{ILW}
(Theorem \ref{THM:1}), 
\eqref{cons1},  and \eqref{R1}, we have 
\begin{align*}
\frac{d}{dt} E^\dl_{ \frac k 2}(\P_N u )
& = \sum_{\l=1}^{k} \frac{1}{\dl^{k-\l} } A_{\frac k 2 , \frac{\l}{2}, N}^*(\P_N u),
\end{align*}

\noi
where $A_{\frac k 2, \frac{\l}{2}, N}^*(u)$ is 
defined by replacing each monomial  $p(u)$ in
$A^\dl_{\frac k 2, \frac{\l}{2}}(u)$, appearing in 
 \eqref{Aeven}, \eqref{Aodd}, 
 and  \eqref{Aodd1}, 
  by $p_N^*(u)$ defined in \eqref{AC2}.
Then, Proposition \ref{PROP:AAS1} follows once we prove the following proposition
on 
$A^*_{\frac k 2, \frac{\l}{2}, N} (\P_N u) $.

\begin{proposition}\label{PROP:AAS1a}
Let $k \ge 2$ be an integer and $0 <  \dl\leq \infty$.
Then, given any  finite $p \ge 1$, 
we have 
\begin{align}
\lim_{N\to\infty} \Big\| A^*_{\frac k 2, \frac{\l}{2}, N} (\P_N u) \Big\|_{L^p(d\mu^\dl_{\frac{k}{2}})} = 0
\label{decay1}
\end{align}

\noi
for $\l = 1, \dots, k$.
\end{proposition}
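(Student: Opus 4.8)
\textbf{Proof proposal for Proposition \ref{PROP:AAS1a}.}
The plan is to exploit the structural description of the deep-water conservation laws obtained in Subsection~\ref{SUBSEC:A4} together with the Gaussian integrability established in Proposition~\ref{PROP:gauss1} and in Lemma~\ref{LEM:var0a}. First I would use the Wiener chaos estimate (Lemma~\ref{LEM:hyp}) and Proposition~\ref{PROP:gauss1}\,(i) to reduce \eqref{decay1} to the case $p=2$, bounding each relevant $L^2(\O)$-norm; since we work at a fixed $\dl$, all $\dl$-dependent constants are harmless. Next I would recall from~\eqref{R1} and \eqref{Aeven}--\eqref{Aodd1} the explicit list of monomials $p(u)$ making up $A^\dl_{\frac k2,\frac\l2}(u)$, and for each such $p$ pass to $p^*_N(u)$ as in~\eqref{AC2}, using the key bookkeeping identity~\eqref{AC2a}: forming the star operation costs exactly one extra factor of $u$ and one extra derivative, but crucially the new factor carries the high-frequency projection $\P_{>N}$ (acting on $\P_N(u\dx u)$), which is the source of the gain in $N$.

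The bulk of the argument is the following dichotomy, exactly as in \cite{TV0, TV1, TV2}. For a monomial $p(u)$ with $|p(u)|\le \lceil\frac{k-1}{2}\rceil - 1$ (so after integration by parts every factor carries at most $\frac{k-1}{2}-\frac12 < \frac{k-1}{2}$ derivatives), the star operation produces at worst one factor with $\frac{k-1}{2}$-many derivatives dressed with $\P_{>N}$; here I would estimate deterministically, viewing the product as an iterated convolution on the Fourier side and applying Young's and Cauchy--Schwarz's inequalities together with the fractional Leibniz rule, exactly as in the proofs of Lemmas~\ref{LEM:var1} and~\ref{LEM:var2}, so that the $\P_{>N}$ localization gains a power $N^{-\ta}$, $\ta>0$, after which Proposition~\ref{PROP:gauss1}\,(i) closes the bound. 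For the leading monomials ($|p(u)| = \lceil\frac{k-1}{2}\rceil$), i.e.\ the cubic terms in $B^\dl_{\frac k2}(u)$ described by Lemmas~\ref{LEM:cub1} and~\ref{LEM:cub2}, the random variable $X^\dl_{\frac k2}$ fails to lie in $W^{\frac{k-1}{2},r}$, so I would instead separate out the worst factor by Cauchy--Schwarz (using that $\P_{>N}(\P_N u\,\dx\P_N u)$ has mean zero) and invoke Lemma~\ref{LEM:var0a} (and Remark~\ref{REM:var0a}) applied to the quadratic expression $\P_{\ne0}(\dx^{\frac k2-1}\P_N X^\dl_{\frac k2}\cdot\dx^{\frac k2}\P_N X^\dl_{\frac k2})$ in a negative Sobolev space, together with the $L^r$-boundedness of $\H$ (Lemma~\ref{LEM:T1}\,(iii)); as observed in \cite{TV1, TV2}, the subtle cancellations encoded in the particular coefficient structure of $B^\dl_{\frac k2}(u)$ (Lemma~\ref{LEM:cub2}) are precisely what renders this Wiener-chaos estimate $o(1)$ in $N$, rather than merely $O(1)$. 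For the monomials in $A^\dl_{\frac k2,\frac\l2}(u)$ with $\l<k$ that carry a factor of $\Qdl$ or a power of $\dl^{-1}$, I would simply absorb them using~\eqref{Q1} in Lemma~\ref{LEM:T1} and the deterministic bounds above; these are strictly easier since they carry fewer derivatives.

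The main obstacle is the leading cubic contribution: unlike the subleading terms, there is no room to integrate by parts down to a subcritical number of derivatives, so one genuinely needs the refined cancellation structure of $B^\dl_{\frac k2}(u)$ from Lemma~\ref{LEM:cub2} combined with a careful Wiener-chaos (resonance) analysis on the Fourier side, distinguishing the cases with and without pairings in the sense of Definition~\ref{DEF:pair}. Concretely, after expanding $X^\dl_{\frac k2}$ via~\eqref{Xdl1}, the star operation's high-frequency projection forces $\max_j|n_j|\gtrsim N$ on the remaining frequencies, and one checks that the resulting discrete sums (with the weights $T_{\dl,\frac k2}(n)\sim_\dl |n|^k$ from~\eqref{DX1}) are summable with a power-of-$N$ gain; this is where the argument is sensitive and where the bulk of the bookkeeping lies. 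Once Proposition~\ref{PROP:AAS1a} is in hand, Proposition~\ref{PROP:AAS1} follows by summing over $\l=1,\dots,k$ and over the finitely many monomials, with the finite constants $\dl^{-(k-\l)}$ harmless for fixed $\dl$.
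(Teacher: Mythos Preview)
Your overall strategy---dichotomize on $|p(u)|$, treat subcritical monomials by deterministic product estimates and Proposition~\ref{PROP:gauss1}, and handle the leading cubics probabilistically with input from \cite{TV0,TV1,TV2}---matches the paper's organization into Lemmas~\ref{LEM:AAS3} and~\ref{LEM:AAS4} (after first isolating $\l=1$ by an explicit computation and then reducing $2\le\l<k$ to $\l=k$). There is, however, a genuine gap in your mechanism for the leading cubic monomials.

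The step ``separate out the worst factor by Cauchy--Schwarz and invoke Lemma~\ref{LEM:var0a} on $\P_{\ne0}(\dx^{\frac k2-1}X\cdot\dx^{\frac k2}X)$'' treats only the \emph{easiest} of the three sub-terms produced by the star operation~\eqref{AC2}, the one in which $\P_{>N}$ falls on the undifferentiated factor. The dangerous sub-terms are those in which $\P_{>N}\dx^{m-1}$ or $\P_{>N}\dx^m$ hits the bilinear factor (the terms $\II_N$ and $\III_N$ in Remark~\ref{REM:cancel}); after Leibniz expansion these contain contributions of the schematic form $\int_\T u_N(\H\dx^m u_N)\,\P_{>N}(u_N\dx^m u_N)\,dx$ with two $\dx^m$-factors. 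For such a term, Cauchy--Schwarz plus Lemma~\ref{LEM:var0a} would require an exponent $s$ with both $s$ and $-s$ strictly below $\tfrac{k}{2}-m-\tfrac12\le 0$, which is impossible. These worst pieces are not eliminated by any coefficient structure recorded in Lemma~\ref{LEM:cub2} (that lemma only fixes the \emph{form} of the leading cubics); rather, they \emph{vanish identically} via the identity~\eqref{AC6} together with the anti self-adjointness of~$\H$, as worked out in Remark~\ref{REM:cancel} (and for $k=2$ the entire $\int_\T p_{0,N}^*(u_N)\,dx$ is zero). Only after this algebraic cancellation do the surviving terms admit the Fourier-side Wiener chaos bounds with a power-of-$N$ gain that you describe in your final paragraph; the $\P_{>N}$ projection alone does not provide it.
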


We first prove \eqref{decay1} when $\l = 1$.

\begin{proof}[Proof of Proposition \ref{PROP:AAS1a} when $\l = 1$]

Let $u =  X^\dl_\frac k2$.
Then, 
from \eqref{Aodd1}, 
\eqref{AC2}, \eqref{AC3}, 
and \eqref{Xdl1} we have 
\begin{align}
\begin{split}
  A^*_{\frac k 2 , \frac12,N}(u_N) 
& = -6 c_0
 \int_\T  u_N^2 \P_{>N} ( u_N \dx  u_N)  dx \\
& 
= -\frac{6c_0}{(2\pi)^\frac 32}
\sum_{\substack{n_{1\cdots 4}=0\\ 0<|n_j| \leq N \\ |n_{34}|>N}} 
\frac{\prod_{j = 1}^4 g_{n_j} (in_4)}{ \prod_{j = 1}^4 (T_{\dl, \frac k 2}(n_j))^\frac 12}\\
& =:\I + \II,  
\end{split}
\label{decay1a}
\end{align}

\noi
where $\I$
denotes the contribution from frequencies with no pair
and $\II$
 denotes the contribution from frequencies with two pairs
in the sense of Definition \ref{DEF:pair}.
Then, by 
the  Wiener chaos estimate (Lemma~\ref{LEM:hyp}), 
Jensen's inequality as in \eqref{RN5a}-\eqref{RN6},  \eqref{DX1}, 
and Lemma \ref{LEM:SUM} (recall that $k\ge 2$), 
we have 
\begin{align}
\| \I\|_{L^p(d\mu^\dl_{\frac k 2})}^2 
\les_p \sum_{\substack{n_{1\cdots 4}=0\\ 0<|n_j| \leq N \\ |n_{34}|>N}} 
\frac{1}{|n_1|^k |n_2|^k |n_3|^k |n_4|^{k-2}}
\les N^{-1+\eps} \too 0, 
\label{decay1b}
\end{align}

\noi
as $N\to \infty$, where we used the fact that $|n_{12}| = |n_{34}| > N$.

Next, we consider the case when 
 there are  two pairs in \eqref{decay1a}
(note that $n_{12} = -n_{34} \ne 0$ due to $\P_{> N}$).  
Without loss of generality, we suppose that $n_{13} = n_{24} = 0$.
We note from~\eqref{AC5}  that  $n_1$ and $n_2$  have the same sign in \eqref{decay1a}.
Then, by splitting the sum into two sums, 
 where $n_1, n_2 > 0$ and $n_1, n_2 < 0$, respectively, 
and relabeling (with $|g_{-n}| = |g_n|$ and $T_{\dl, \frac k 2}(-n) = T_{\dl, \frac k 2}(n)$), 
we have 
\begin{align}
\begin{split}
\II 
& = -\frac{6c_0}{(2\pi)^\frac 32}
\sum_{\substack{ 0<|n_1|, |n_2| \leq N \\ |n_{12}|>N}} 
\frac{|g_{n_1}|^2|g_{n_2}|^2 (-in_2)}{ T_{\dl, \frac k 2}(n_1) T_{\dl, \frac k 2}(n_2)}\\
& = -\frac{6c_0}{(2\pi)^\frac 32}
\sum_{\substack{ 0<n_1, n_2 \leq N \\ n_{12}>N}} 
\frac{|g_{n_1}|^2|g_{n_2}|^2 (-in_2 + i n_2)}{ T_{\dl, \frac k 2}(n_1) T_{\dl, \frac k 2}(n_2)}
= 0.
\end{split}
\label{decay1c}
\end{align}

Hence, \eqref{decay1} follows from 
\eqref{decay1a}, \eqref{decay1b}, and \eqref{decay1c}
\end{proof}

In the remaining part of this subsection, 
we discuss a proof of  \eqref{decay1}
for $\l = 2, \dots, k$.
In view of the structure of $A_{\frac k2, \frac \l2}(u)$
with \eqref{Xdl1} and \eqref{DX1}, 
it suffices to prove \eqref{decay1} for $\l = k$;
see \cite[Remark 8.3]{TV2}.
Once we prove Lemmas \ref{LEM:AAS3}
and \ref{LEM:AAS4} below, 
Proposition \ref{PROP:AAS1a}
for $\l = k \ge 2$ follows.

Let us first consider the leading order contributions.
It follows from  Lemma \ref{LEM:cub2} that 
the leading order cubic  terms are given by 
\begin{align}
p_0(u) = u (\H\dx^{m-1} u ) (\dx^{m} u)
\label{dec2}
\end{align}

\noi
when $k = 2m$ is even, 
and 
\begin{equation}\label{dec3}
p_{\vec \al} (u)  = 
(\H^{\al_1} u ) (\H^{\al_2} \dx^{m} u) (\H^{\al_3} \dx^m u )
\end{equation}

\noi
for $\vec \al
=  (\al_1, \al_2, \al_3)
\in 
\mathcal A = \big\{(0, 0, 0), (0, 0, 1),  (0, 1, 1)\big\}$
when $k = 2m+1$ is odd.

\begin{lemma}\label{LEM:AAS3}
Let $k \ge 2$ be an integer and $0 <  \dl\leq \infty$.

\smallskip

\noi
\textup{(i)}
Let $k = 2m$, $m \in \N$.
 Then, given any finite $p \ge 1$, 
we have 
\begin{align}
\lim_{N\to\infty} \bigg\| 
\int_\T p_{0, N}^*(\P_N u)  dx 
\bigg\|_{L^p(d\mu^\dl_{\frac{k}{2}})} =0, 
\label{AAS3}
\end{align}

\noi
where $p_0(u)$ is as in \eqref{dec2}
and $p_{0, N}^*(u)$ is as in \eqref{AC2}.

\smallskip

\noi
\textup{(ii)}
Let $k = 2m+1$, $m \in \N$, 
and $\mathcal A = \big\{(0, 0, 0), (0, 0, 1),  (0, 1, 1)\big\}$
be as in \eqref{DF6}.
 Then, given any finite $p \ge 1$, 
we have 
\begin{align}
\lim_{N\to\infty} \bigg\| 
\int_\T p_{\vec \al, N}^*(\P_N u)  dx 
\bigg\|_{L^p(d\mu^\dl_{\frac{k}{2}})} =0
\label{AAS4}
\end{align}

\noi
for
any $\vec \al
=  (\al_1, \al_2, \al_3)
\in 
\mathcal A$, 
where $p_{\vec \al}(u)$ is as in \eqref{dec3}
and  $p_{\vec \al, N}^*(u)$ is as in \eqref{AC2}.

\end{lemma}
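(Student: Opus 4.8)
\textbf{Proof proposal for Lemma \ref{LEM:AAS3}.}
The plan is to exploit the algebraic structure of the leading cubic monomials \eqref{dec2}--\eqref{dec3} together with the parity cancellations encoded in \eqref{AC4}--\eqref{AC6}. After substituting $u = X^\dl_{\frac k2}$ and expanding via the Gaussian Fourier series \eqref{Xdl1}, each quantity $\int_\T p^*_{\bullet, N}(\P_N u)\,dx$ becomes a (random) quartic expression in $\{g_n\}$. By the Wiener chaos estimate (Lemma~\ref{LEM:hyp}) it suffices to work in $L^2(d\mu^\dl_{\frac k2})$, and by Jensen's inequality as in the computation \eqref{RN5a}--\eqref{RN6} the $L^2$-norm is controlled (up to the resonant diagonal) by the square root of a deterministic frequency sum. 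The two distinct contributions are: the \emph{non-resonant} part, where the four frequencies contain no pair in the sense of Definition~\ref{DEF:pair}, and the \emph{resonant} part, where there are two pairs.

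First I would treat the non-resonant contribution. Here $p^*_{\bullet, N}$ carries, via \eqref{AC2}--\eqref{AC2a}, one extra factor of $u$ and one extra derivative compared with the cubic monomial, and the projection $\P_{>N}$ inside the starred monomial forces $|n_{i_1} + n_{i_2}| > N$ on the two frequencies entering the $\P_{>N}(u\dx u)$ factor, hence $\max|n_j| \gtrsim N$. Using \eqref{DX1} (namely $T_{\dl, \frac k2}(n) \sim_\dl |n|^k$, uniformly in $2\le\dl\le\infty$), the resulting frequency sum is, after dropping the $\H$'s by \eqref{AC4} (which only multiply summands by unimodular factors), bounded by
\begin{align*}
\sum_{\substack{n_{1\cdots4}=0\\ 0<|n_j|\le N,\ \max|n_j|\gtrsim N}}
\frac{|n_{i_0}|^2}{\prod_{j=1}^4 |n_j|^k}
\end{align*}
for the appropriate single distinguished index $i_0$ carrying the two derivatives; since $k\ge 2$, the extra $|n_{i_0}|^2$ is absorbed, Lemma~\ref{LEM:SUM} sums the remaining three convolutions, and the constraint $\max|n_j|\gtrsim N$ produces a power $N^{-\theta}$ (up to $\eps$), exactly as in \eqref{decay1b} and \eqref{RN8}--\eqref{RN9}. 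This gives the decay of the non-resonant part for all admissible $\vec\al$, uniformly in $2\le\dl\le\infty$.

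The main obstacle is the resonant part, where a priori the frequency sum need not be small: for a quartic expression with two pairs the two free frequencies can each run up to $N$. The resolution is the same parity cancellation used in \eqref{decay1c}: one shows that the coefficient of each resonant term is \emph{odd} in the surviving frequency, so that pairing $n \leftrightarrow -n$ (using $|g_{-n}| = |g_n|$, $T_{\dl,\frac k2}(-n) = T_{\dl,\frac k2}(n)$, and the sign constraint \eqref{AC5} which forces the two members of a pair entering $\P_{>N}(u\dx u)^+$ to have equal sign) makes the resonant sum vanish identically. Concretely, in $p^*_{0,N}$ the factor $-2\dx^{\bullet}\P_{>N}(u\dx u)$ contributes a derivative $in_{i_2}$ on the inner variable, and after the pairing the two terms $(-in_{i_2})$ and $(+in_{i_2})$ cancel; the Hilbert transforms in \eqref{dec2}--\eqref{dec3}, by \eqref{AC4} and \eqref{AC6}, only insert fixed signs $\pm i$ depending on the sign of the (paired) frequency and do not spoil the oddness. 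The one point requiring care is to check that when $\H$ acts on the $\P_{>N}(u\dx u)$ factor one first commutes it through $\P_{>N}$ via \eqref{AC6}, so that the sign it produces is governed solely by the \emph{sum} $n_{i_1}+n_{i_2}$, which is pinned by the pairing. Carrying this bookkeeping out for each of $p_0$ and each $\vec\al\in\mathcal A$ — a short but parity-sensitive computation — kills the resonant contribution. Combining the two parts yields \eqref{AAS3} and \eqref{AAS4}, and, as noted after the statement, the remaining terms $A^*_{\frac k2,\frac\l2,N}$ with $\l<k$ or with more than the leading number of derivatives are strictly better behaved (they carry extra powers of $\dl^{-1}$ or fewer derivatives) and are handled by the same scheme following \cite[Remark~8.3]{TV2}.
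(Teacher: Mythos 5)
There is a genuine gap: your non-resonant estimate is wrong, and the cancellation you invoke for the resonant part is not the one that actually saves this lemma.

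Consider the case $k = 2$, $m = 1$, so $p_0(u) = u(\H u)(\dx u)$. After the star operation and $\P_N$, the three quartic expressions $\int_\T(\P_{>N}(u_N\dx u_N))(\H u_N)(\dx u_N)\,dx$, etc., each have $\#\dx = 2$ derivatives distributed over \emph{two} of the four factors (one factor from $\dx u$ and one from $u\dx u$), not a "single distinguished index $i_0$." With $T_{\dl,1}(n) \sim |n|^2$, the summand in your bound is $\sim |n_j||n_k|/\prod_{i=1}^4 |n_i|$, and for the non-resonant constraint $n_{1234} = 0$, $0<|n_i|\le N$, $|n_{12}|>N$, the sum of $1/(|n_1||n_3|)$ (say) is $\gtrsim N(\log N)^2$, not $o(1)$. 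More generally, for any $m$, the star operation produces terms with two factors carrying $m$ derivatives, and for these the purely-frequency-counting argument fails even in the non-resonant regime, uniformly in $\dl$. This is stated explicitly in Remark \ref{REM:cancel}: "There are terms with two factors having $m=1$ derivatives... which can not be handled directly, even with a probabilistic argument."

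The cancellation that actually saves these terms is \emph{deterministic} and \emph{structural}, not a parity cancellation of the resonant diagonal: one integrates by parts, applies the Leibniz rule to $\dx^m\P_{>N}(u\dx u)$ and $\P_{>N}\dx^{m-1}\H(u\dx u)$, commutes $\H$ through $\P_{>N}$ via \eqref{AC6}, and uses anti self-adjointness of $\H$ to kill the contributions with two $m$-derivative factors \emph{before} any probabilistic estimate. For $m=1$ the whole integral $\int_\T p_{0,N}^*(u_N)\,dx$ vanishes identically; for $m\ge 2$ one is reduced to terms with at most one $m$-derivative factor (see \eqref{dec4b}–\eqref{dec4c}), and only \emph{then} does the non-resonant/resonant frequency analysis close. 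The parity cancellation $n\leftrightarrow -n$ you describe is the one used for the simpler $\l=1$ monomial $u^3$ in \eqref{decay1c}; it is not what handles the derivative-bearing leading terms here. Finally, the paper's actual proof of this lemma is short: it cites Tzvetkov–Visciglia for $\dl=\infty$ and observes that, since $p_0$ and $p_{\vec\al}$ have $\#\Qdl = 0$ and $T_{\dl,\frac k2}(n)\sim_\dl |n|^k$ uniformly in $2\le\dl\le\infty$ by \eqref{DX1}, the same estimates carry over verbatim; your proposal attempts to reconstruct those estimates from scratch and loses the crucial structural step.
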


\begin{proof}
When $\dl = \infty$, 
Tzvetkov and Visciglia proved
\eqref{AAS3}
in 
\cite[Lemma 11.1]{TV0}
(see also 
\cite[Proposition 3.1, in particular (3.10)]{TV1})
for even $k \ge 6$, 
\cite[Subsection 6.1]{TV2} for $k = 4$, 
and 
\cite[Proposition 4.1]{TV2} for $k = 2$, 
and also proved
\eqref{AAS4}
in  \cite[Lemma 7.2]{TV2}
for any odd $k \ge 3$.
 In view of  \eqref{Xdl1}, \eqref{DX1}, and $\#\Qdl = 0$, 
the  bounds in \cite{TV0, TV1, TV2}
in proving \eqref{AAS3} and \eqref{AAS4} when $\dl = \infty$
also hold for $0 < \dl < \infty$
with uniform constants for $2\le \dl \le \infty$, 
thus yielding~\eqref{AAS3} and \eqref{AAS4}
with  a  rate of convergence uniform in $2 \le \dl \le \infty$.
\end{proof}

The next lemma treats the remaining  terms.

\begin{lemma}\label{LEM:AAS4}
Let $k \ge 2$ be an integer and $0 <  \dl\leq \infty$.
 Then, given any finite $p \ge 1$, 
we have 
\begin{align}
\lim_{N\to\infty} \bigg\| 
\int_\T p_{N}^*(\P_N u)  dx 
\bigg\|_{L^p(d\mu^\dl_{\frac{k}{2}})} =0
\label{AAS5}
\end{align}

\noi
for any monomial $p(u)$ appearing in 
\begin{itemize}
\item
the second term on the right-hand side of \eqref{Aeven}
when $k = 2m$ is even, 

\smallskip

\item 
the second, third, and fourth terms on the right-hand side of \eqref{Aodd}
when $k = 2m+1$ is odd.

\end{itemize}

\end{lemma}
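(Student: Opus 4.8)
\textbf{Proof proposal for Lemma \ref{LEM:AAS4}.}
The plan is to treat $\int_\T p_N^*(\P_N u)\,dx$ exactly as the leading-order terms were treated in Lemma \ref{LEM:AAS3}, but now exploiting the crucial fact that the monomials under consideration are \emph{strictly subleading}: they all satisfy $|p(u)| \le m-1$ (in the notation of Definition \ref{DEF:mono1}), so that after passing to $p_N^*$, which by \eqref{AC2a} increases $\#u$ by one and $\#\dx$ by one, the factors still carry strictly fewer than $\frac k2$ derivatives on all but a bounded number of factors. First I would fix $u = X^\dl_{\frac k2}$ and, recalling that $p_N^*(u) = \sum_i p_{i,N}^*(u)$ where $p_{i,N}^*$ replaces the $i$th factor $\dx^{\al_i}u$ by $-2\dx^{\al_i}(\P_{>N}(u\dx u))$, write each $\int_\T p_{i,N}^*(\P_N u)\,dx$ on the Fourier side via \eqref{Xdl1}, using the $L^r$-boundedness of $\H$ and $\Qdl$ (Lemma \ref{LEM:T1}\,(iii)) — working directly on the Fourier side so that we may freely drop the Hilbert transform and apply \eqref{Q1} with a uniform-in-$\dl\ge 2$ bound, as was done in the proofs of Lemmas \ref{LEM:var1}, \ref{LEM:var2}, and \ref{LEM:R1}. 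By the Wiener chaos estimate (Lemma \ref{LEM:hyp}) it suffices to work in $L^2(d\mu^\dl_{\frac k2})$.

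The key structural input is \eqref{Aeven}--\eqref{Aodd}, which gives the precise monomials: for $k=2m$ the subleading monomials $p(u)\in\Pc_j(u)$ (for $3\le j\le 2m+2$) satisfy $\#\dx(p)+\#\Qdl(p) = 2m+2-j$ and $|p(u)|\le m-1$; for $k=2m+1$ the second/third/fourth sums in \eqref{Aodd} have either $\#u = 3$ with $\wt p(u) = \dx^\kk u\,\dx^{m-1}u\,\dx^m u$ ($\kk=0,1$) or $\#u=4$ with $\wt p(u) = u^2\dx^{m-1}u\,\dx^m u$, while the last sum has $|p(u)|\le m-1$ with $\#\dx(p)+\#\Qdl(p) = 2m+3-j$. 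In every case, upon applying the $*$-operation and integrating by parts (moving derivatives off the factor containing $\P_{>N}$ onto the others, as in the proofs of Lemma \ref{LEM:var2}, e.g. \eqref{DDV15}), one arrives at a multilinear Fourier sum of the form
\begin{align*}
\sum_{\substack{n_1+\cdots+n_r=0\\ 0<|n_j|\le N,\ |n_{i_0}+n_{i_0+1}|>N}}
\frac{\prod_j |g_{n_j}|}{\prod_j (T_{\dl,\frac k2}(n_j))^{1/2}} \prod_j |n_j|^{\be_j},
\end{align*}
with $\be_j$ summing to at most the total derivative count of $p_N^*$, and with at least one frequency gap forced to be $>N$ by the projector $\P_{>N}$. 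I would then follow the pairing/Jensen argument of \eqref{RN5a}--\eqref{RN6}: split into no-pair and full-pair contributions (and, for the $\#u=4$ monomials, handle the two-pairs case as in the treatment of $\V_2$ in the proof of Lemma \ref{LEM:R1}); for the full-pair contribution one should check, as in \eqref{decay1c}, that symmetrization of the momentum factor forces cancellation or at worst a finite sum, while the no-pair contribution is estimated by \eqref{DX1} and Lemma \ref{LEM:SUM} (using $k\ge 2$), producing a bound $\les N^{-\ta}$ for some $\ta>0$. The constraint $|p(u)|\le m-1$ — equivalently the fact that at most \emph{one} factor carries $m-1 = \frac{k-1}2-\frac12$ or fewer derivatives past the $L^2$-scaling threshold — is precisely what ensures the resulting series converges and decays; the negative power of $N$ comes from the frequency on the gap $>N$.

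The main obstacle I anticipate is bookkeeping rather than conceptual: one must verify, case by case across \eqref{Aeven} and \eqref{Aodd}, that after the $*$-operation and integration by parts the total number of derivatives never exceeds what the Gaussian weights $T_{\dl,\frac k2}^{1/2}$ can absorb, i.e. that one always has enough room to extract a power of $N^{-\ta}$ from the $\P_{>N}$ frequency gap while keeping the Fourier sum summable via Lemma \ref{LEM:SUM}. For the $\#u=4$ monomial $u^2\dx^{m-1}u\,\dx^m u$ in \eqref{Aodd} (odd $k$), extra care is needed because the $*$-operation can act on any of the four factors and the two-pairs contribution requires a separate Minkowski-inequality estimate; but since these are of strictly lower homogeneity-adjusted rank than the cubic leading terms already handled in Lemma \ref{LEM:AAS3}, the bounds are strictly better and no new idea is required. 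As with Lemma \ref{LEM:AAS3}, the resulting rate of convergence is uniform in $2\le\dl\le\infty$, though we will not need that uniformity here. This, together with Lemma \ref{LEM:AAS3}, completes the proof of Proposition \ref{PROP:AAS1a} for $\l=k$, and hence (by the reduction noted after its statement, using \eqref{Xdl1} and \eqref{DX1}) for all $\l=1,\dots,k$, establishing \eqref{AAS1}.
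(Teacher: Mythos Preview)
Your approach is correct and is in substance the same methodology the paper relies on: Wiener chaos reduction to $p=2$, Fourier expansion via \eqref{Xdl1}, pairing\,/\,Jensen for the no-pair part, Minkowski for the paired part, and Lemma \ref{LEM:SUM} to sum. The paper's own proof is much terser: for $k\ge 3$ it simply cites the corresponding BO results of Tzvetkov--Visciglia \cite{TV0,TV1,TV2} for $\dl=\infty$ and observes that, since the relevant monomials involve only $\H$ and $\Qdl$, the same bounds carry over to $0<\dl<\infty$ via \eqref{DX1} and \eqref{Q1}; only the genuinely new case $k=2$, $p(u)=u^2\Qdl u$, is computed explicitly (this is \eqref{AX1}--\eqref{AX3}). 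Your sketch is essentially what those cited lemmas do, so both routes lead to the same place.

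Two small corrections to your write-up. First, your opening claim that ``they all satisfy $|p(u)|\le m-1$'' is inaccurate for the second and third sums in \eqref{Aodd} when $k=2m+1$: those have $|p(u)|=m$ (as you in fact list correctly a few lines later). They are subleading not because $|p(u)|\le m-1$ but because only \emph{one} factor carries $m$ derivatives, so the Hilbert-transform cancellation needed for the leading terms (Remark \ref{REM:cancel}) is unnecessary here. Second, the paired contribution does not in general vanish by symmetrization as in \eqref{decay1c}; rather one bounds it by Minkowski as in \eqref{AX3}, using that the $\P_{>N}$ constraint forces a large frequency.
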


\begin{proof}
When $\dl = \infty$, 
Lemma \ref{LEM:AAS4} was established
in 
\cite[Lemmas 11.2, 11.3, and 11.4]{TV0}
(see also 
\cite[Proposition 3.1, in particular (3.11)]{TV1})
for even $k \ge 6$, 
\cite[Subsection 6.1, in particular Second case and Third case]{TV2} for $k = 4$, 
\cite[Proposition 4.1]{TV2} for $k = 2$, 
and 
 \cite[Lemmas 7.3, 7.4, and 7.5]{TV2}
 for odd $k \ge 3$.
We point out that, when $k \ge 3$, 
\eqref{AAS5} for $0 < \dl < \infty$ follows from 
the  bounds in \cite{TV0, TV1, TV2} for the $\dl = \infty$ case
with  \eqref{Xdl1} with \eqref{DX1} and
\eqref{Q1}, giving a  rate of convergence uniform in $2 \le \dl \le \infty$.

It remains to consider the case $k = 2$.
In this case, we have $p(u) = u^4$ or $p(u) = u^2 \Qdl u$ (see Lemma \ref{LEM:cub1}).
When $p(u) = u^4$, \eqref{AAS5}
follows from 
\cite[Proposition 4.1]{TV2} 
with \eqref{DX1}, 
giving a  rate of convergence uniform in $2 \le \dl \le \infty$.

We consider the remaining case:
$p(u) = u^2 \Qdl u$.
Let $u = X^\dl_1$.
Then, with \eqref{AC3}, 
we have
\begin{align}
\begin{split}
\int_\T p^*_N(u_N) dx
& = 
- 2  \int_\T  u_N^2  \P_{>N} \Qdl ( u_N \dx  u_N)  dx \\
&  \quad 
- 4  \int_\T  u_N \Qdl  u_N \P_{>N} ( u_N \dx u_N)  dx \\
& 
= - \frac2{(2\pi)^\frac 32}
\sum_{\substack{n_{1\cdots 4}=0\\ 0<|n_j| \leq N \\ |n_{34}|>N}} 
\frac{\prod_{j = 1}^4 g_{n_j} (\ft \Qdl(n_{34})
+ 2 \ft \Qdl(n_{2})) (in_4)}{ \prod_{j = 1}^4 (T_{\dl, 1}(n_j))^\frac 12}\\
& =:\I + \II,  
\end{split}
\label{AX1}
\end{align}

\noi
where $\I$
denotes the contribution from frequencies with no pair
and $\II$
 denotes the contribution from frequencies with two pairs
in the sense of Definition \ref{DEF:pair},  
Then, by 
the  Wiener chaos estimate (Lemma~\ref{LEM:hyp}), 
Jensen's inequality as in \eqref{RN5a}-\eqref{RN6},  \eqref{DX1}, \eqref{Q1}, 
and Lemma \ref{LEM:SUM} (recall that $k= 2$), 
we have 
\begin{align}
\| \I\|_{L^p(d\mu^\dl_{1})}^2 
\les_{\dl, p} \sum_{\substack{n_{1\cdots 4}=0\\ 0<|n_j| \leq N \\ |n_{12}|>N}} 
\frac{1}{|n_1|^2 |n_2|^2 |n_3|^2}
\les N^{-1} \too 0, 
\label{AX2}
\end{align}

\noi
as $N\to \infty$, 
where the implicit constant is independent of $2 \le \dl \le \infty$.

Next, we consider the case when 
 there are  two pairs in \eqref{AX1}
(note that $n_{12} \ne 0$ due to $\P_{> N}$).  
Without loss of generality, we suppose that $n_{13} = n_{24} = 0$.
Then, by applying Minkowski's inequality with \eqref{DX1} and \eqref{Q1}, 
 splitting the sum into two sums, 
where $|n_1|\ges N$ and $|n_2|\ges N$, respectively, 
and using $N - |n_1|\le |n_2|\le N $ in the second sum, we have
\begin{align}
\begin{split}
\| \II\|_{L^p(d\mu^\dl_{1})}
& \les_{\dl, p} \sum_{\substack{ 0<|n_1|, |n_2| \leq N \\ |n_{12}|>N}} 
\frac{1}{|n_1|^2 |n_2|}\\
& \les \sum_{\substack{ 0<|n_1|, |n_2| \leq N \\ |n_{1}|\ges N}} 
\frac{1}{|n_1|^2 |n_2|}
+ \sum_{\substack{ 0<|n_1| \leq N \\ 
N - |n_1|\le |n_2|\le N}} 
\frac{1}{|n_1|^2N}\\
& \les \frac{\log N}{N} \too 0, 
\end{split}
\label{AX3}
\end{align}

\noi
as $N\to \infty$, 
where the implicit constant is independent of $2 \le \dl \le \infty$;
see also \cite[Lemma~10.1]{TV0}.

Hence, \eqref{AAS5} in this case follows from \eqref{AX1}, 
\eqref{AX2}, and \eqref{AX3}. 
\end{proof}

\begin{remark}\label{REM:cancel}\rm
In \cite{TV0, TV1, TV2}, 
Tzvetkov and Visciglia proved
Lemma \ref{LEM:AAS3} for $\dl = \infty$, 
from which the general case $0 < \dl < \infty$ follows as we explained.
We point out that in treating these 
 leading order contributions, 
 it is crucial to exploit certain cancellations, using the identity~\eqref{AC6}
 and the anti self-adjointness of $\H$.
In the following, we provide an example.
See \cite{TV0, TV1, TV2} for further details.

 Let $k = 2m$, $m \in \N$, 
 and consider $p_0(v)$ in \eqref{dec2}.
We first consider the case $m = 1$.
Namely, $p_0(u) = u \H u \dx u$.
Then, from \eqref{AC2} with \eqref{AC3}, we have
\begin{align*}
\int_\T p_{0, N}^*(u_N) dx
& = - 2 \int_\T \P_{>N} (u_N \dx u_N) (\H u_N)( \dx u_N)dx\\
& \quad 
- 2 \int_\T u_N \big(\H\P_{>N} (u_N \dx u_N)\big) (\dx u_N)dx \\
& \quad 
-2 \int_\T u_N (\H u_N)\dx \P_{>N} (u_N \dx u_N)dx.
\end{align*}

\noi
There are terms with two factors having $m =1$ derivatives
(even after integration by parts), which 
can not be handled directly, even with a probabilistic argument.
In this case, by integration by parts (for the third term),~\eqref{AC6},  and the anti self-adjointness of $\H$, 
we obtain
\begin{align*}
\int_\T p_{0, N}^*(u_N) dx
 = 0. 
\end{align*}

Next, we consider the case $m \ge 2$.
From  \eqref{dec2} and \eqref{AC2}, we have 
\begin{align*}
\int_\T p_{0,N}^*(u)  dx 
& = - 2\int_\T \big(\P_{>N}(u\dx u)\big)(\H\dx^{m-1} u)(\dx^{m} u)  dx\\
& \quad - 2\int_\T u \big(\P_{>N} \dx^{m-1} \H(u \dx u)\big)(\dx^{m} u) dx  \\
& \quad -2 \int_\T u (\H\dx^{m-1} u)\P_{>N}\dx^{m} (u\dx u)  dx \\
& =: \text{I}_N(u) + \II_{N}(u) + \III_N(u).
\end{align*}

\noi
The first term $\text{I}_N(u)$
can be handled as in \eqref{AX2} and \eqref{AX3}.
On the other hand, 
as for $\II_{N}(u)$ and $\III_N(u)$, 
there are terms with two factors having $m$ derivatives
(even after integration by parts), which 
can cause a potential issue.
By the Leibniz rule   and the anti self-adjointness of $\H$, 
we have 
\begin{align}
\II_N(u) = \sum_{j = 1}^{m-1} c_{m, j}
 \int_\T u \dx^{m} u  \P_{>N}  \H(\dx^j u \dx^{m-j} u)dx.
\label{dec4b}
\end{align}

\noi
Namely, the contribution from the worst term $u \dx^{m} u  \P_{>N}  \H( u \dx^{m} u)$ vanishes.
Similarly,
by integration by parts, the Leibniz rule, \eqref{AC6},  and the anti self-adjointness of $\H$, 
 we have
\begin{align}
\begin{split}
\III_N(u_N)
& = 2 \int_\T \dx u_N \H \dx^{m-1} u_N \P_{>N}\dx^{m-1} (u_N\dx u_N)  dx \\
& \quad +2  \int_\T  u_N \H \dx^{m} u_N \P_{>N}\dx^{m-1} (u_N\dx u_N)  dx\\ 
& = \sum_{j = 0}^{m-1}
c_{m, j}
 \int_\T \dx u_N \H \dx^{m-1} u_N \P_{>N} (\dx^j u_N\dx^{m-j} u_N)  dx\\
& \quad  +  \sum_{j = 1}^{m-1}
c_{m, j}
 \int_\T  u_N \H \dx^{m} u_N 
\P_{>N} (\dx^j u_N\dx^{m-j} u_N)  dx.
\end{split}
\label{dec4c}
\end{align}

\noi
Namely, the worst contribution from $j = 0$ in the second sum vanishes.
Once
we reduce
$\II_{N}(u)$ and $\III_N(u)$ to the forms
on the right-hand sides of \eqref{dec4b} and \eqref{dec4c}, 
we can proceed as in 
 as in \eqref{AX2} and \eqref{AX3}.
We omit details.

\end{remark}

\subsection{Shallow-water regime}
\label{SUBSEC:AC2}

Proposition \ref{PROP:AAS2} follows
from a slight modification of the proof of 
Proposition \ref{PROP:AAS1}.
We sketch the argument in the following.

Let $v$ be a solution to the truncated scaled ILW \eqref{ILW5}.
Then, the low-frequency part $\P_Nv$ satisfies \eqref{ILW7}, 
which can be rewritten as
\begin{equation*}
\partial_t \P_N v - \Gd \partial_x^2 \P_Nv =
2(\P_N v)\partial_x( \P_N v)-
2\P_{>N} \big((\P_{N}v)\partial_x(\P_{N} v)\big).
\end{equation*}

\noi
Then, from the conservation of $\wt E^\dl_\frac k2(v)$
under the (untruncated) scaled ILW \eqref{sILW}
(Theorem~\ref{THM:2}), 
we have 
\begin{align*}
\frac{d}{dt} \wt{E}^\dl_{ \frac k 2}(\P_N v)  = \wt{R}^{\dl, *}_{\frac k 2, N}(\P_N v), 
\end{align*}

\noi
where $\wt{R}^{\dl, *}_{\frac k 2, N}( v)$  is 
defined by replacing each monomial  $p(v)$ in
$\wt R^\dl_\frac k2$, appearing in \eqref{Scons2}, 
  by $p_N^*(v)$ defined in \eqref{AC2}.
Therefore, Proposition~\ref{PROP:AAS2} 
follows once we prove
\begin{align}\label{SAC2}
\lim_{N\to\infty} \Big\| \wt{R}^{\dl, *}_{\frac k 2, N}(\P_N v) \Big\|_{L^p(d\wt{\mu}^\dl_{\frac k2})} =0 .
\end{align}

Let us first consider the contribution from 
the lower order terms.
Here, the lower order terms are given as follows:

\smallskip
\begin{itemize}
\item[(i)] when $k = 2\kk - 1$ is odd, 
a monomial $p(v)$ whose fundamental form $\wt p(v)$ is of the form 
(b) in \eqref{SV3}, satisfying \eqref{SV3x}, and

\smallskip
\item[(ii)]
 when $k = 2\kk $ is even, 
a monomial $p(v)$ whose fundamental form $\wt p(v)$ is of the form 
(b) in \eqref{SV5}, satisfying \eqref{SV5x}.

\end{itemize}

\smallskip

\noi
In this case, there is no need to exploit
cancellation.
In particular,
 as mentioned in 
 the proof of Lemma \ref{LEM:Svar2}, 
we drop  any occurrence of $\dl \Gd$ 
thanks to 
 the $L^r$-boundedness of $\dl \Gd$ for $1 < r< \infty$
(see~\eqref{Q3} in Lemma \ref{LEM:T1});
see also the discussion after Lemma \ref{LEM:SR1}.
More concretely, 
 by  noting that 
 the condition~\eqref{SV3x}
with $m = \kk-1$ when $k = 2\kk -1$ is odd,
and 
the condition~\eqref{SV5x}
with $m = \kk$ 
 when $k = 2\kk $ is even
 both satisfy 
the hypothesis of Lemma~\ref{LEM:AAS4}, 
we see that  
 a slight modification of the proof of Lemma \ref{LEM:AAS4} 
with Lemma \ref{LEM:T1}
(to bound $\dl \Gd$)
yields
\begin{align}
\lim_{N\to\infty} \bigg\| 
\int_\T p_{N}^*(\P_N v)  dx 
\bigg\|_{L^p(d\wt \mu^\dl_{\frac{k}{2}})} =0
\label{SAC3a}
\end{align}

\noi
in this case, 
where
$p_{N}^*(v)$  is as in \eqref{AC2}.
This part of analysis extends to 
the  $\dl = 0$ case 
in view of  \eqref{EE0} and  Remark~\ref{REM:kdv1}.

\medskip

Hence, it remains to consider the leading order terms:
\smallskip
\begin{itemize}
\item[(iii)] when $k = 2\kk - 1$ is odd, 
a monomial $p(v)$ whose fundamental form $\wt p(v)$ is of the form 
(a)~$\wt p(v) = v \dx^{\kk-1} v \dx^{\kk-1} v$  in \eqref{SV3}
with the understanding that, when 
$\#\Gd (p)=  \#\dl(p) + 1$
(namely, there is a factor of  $\Gd$ unpaired with $\dl$), 
one of the $\dx$-factors is replaced by $\Gd$.
See the proof of Lemma \ref{LEM:Svar2}
for a further discussion.

\smallskip
\item[(iv)]
 when $k = 2\kk $ is even, 
a monomial $p(v)$ whose fundamental form $\wt p(v)$ is of the form 
(a)~$\wt p(v) = v \dx^{\kk-1} v \dx^{\kk} v$  in \eqref{SV5}.

\end{itemize}

\noi
For these leading order terms, we need to exploit cancellations
as explained in Remark \ref{REM:cancel}.
However, the identity \eqref{AC6}
does not hold for $\Gd$ and $\dl \Gd$ and thus we need to proceed with care.

\medskip

\noi
$\bullet$ {\bf Case 1:} $\dl > 0$.\\
\indent
Fix a monomial $p(v)$ as in (iii) or (iv) above.
Suppose that $\#\Gd (p)\le \#\dl(p)$.
In this case, we can pair 
every $\Gd$ in $p(v)$  with $\dl$. 
Recall from 
\eqref{h1} with \eqref{BTX1aa}
that (before integration by parts
to reduce the monomial to the form 
appearing in~\eqref{Scons2}) 
every $\Gd$ must come with $\dx$.
Given $p(v)$ in \eqref{Scons2}, 
let $p_0(v)$ be its original form before integration by parts.
By substituting 
\begin{align}
\dl \Gd\dx = \Gdl\dx = \H\dx + \Qdl, 
\label{SAC3}
\end{align}

\noi
which follows from~\eqref{sILW} and~\eqref{Qdl1}, 
into  
every occurrence $\dl \Gd\dx $ in $p_0(v)$
and expanding via the multilinearity, 
we end up with new monomials,
involving $\H\dx$ and 
$\Qdl$.
Except for one monomial (call it $p_1(v)$), 
all the monomials involve 
at least one factor of 
$\Qdl$.
In view of the $L^r$-boundedness of $\Qdl$
(Lemma \ref{LEM:T1}), in particular, without any derivative loss, 
these monomials are now of lower order, belonging
to the classes (i) or~(ii) described above, 
and thus can be handled as in 
(the proof of) Lemma \ref{LEM:AAS4}
(but with constants diverging as $\dl \to 0$).
On the other hand, 
$p_1(v)$ involves only $\H\dx$
and thus must be of the form described in Lemma~\ref{LEM:cub2}, 
which then can be treated by Lemma \ref{LEM:AAS3}.
As a conclusion,  we obtain \eqref{SAC3a}.

Next, we consider the case 
$\#\Gd (p) =  \#\dl(p) + 1$ which happens only in (iii).
In this case, the semi-fundamental form $\cj p(v)$ of $p(v)$
(see Definition \ref{DEF:mono2})
 is of the form:
$\cj p(v) = v (\dx^{\kk-1} v)( \Gd \dx^{\kk-2} v)$. 
Then, by artificially inserting $\dl^{-1}\cdot \dl$ and using \eqref{SAC3}, 
we have 
\begin{align}
p(v) = \dl^{-1} p_2(v) +  \dl^{-1} p_3(v) 
\label{SACX}
\end{align}

\noi
such that the semi-fundamental forms of $p_2(v)$ and $p_3(v)$
are given by 
we have
\begin{align}
\cj p_2(v)  =   v \dx^{\kk-1} v \H \dx^{\kk-1} v
\qquad \text{and}
\qquad \cj p_3(v) =v  \dx^{\kk-1} v \Qdl  \dx^{\kk-2} v
\label{SAC3b}
\end{align}

\noi
We see that $p_3(v)$ is a monomial
of lower order
belonging to the class (i)  above, 
which can be treated as in 
(the proof of) Lemma \ref{LEM:AAS4}.
As for $p_2(v)$, 
we substitute \eqref{SAC3}
in every occurrence of $\dl  \Gd$ (if any)
and treat those monomials involving $\Qdl$
 as in 
(the proof of) Lemma \ref{LEM:AAS4},  since they are of lower order.
Lastly, 
by arguing as in the proofs of Lemmas \ref{LEM:cub1} and \ref{LEM:cub2}, 
the remaining monomial, only involving $\H$, must be of the form 
$p_{\vec \al}(v)$ in \eqref{dec3} (with $m = \kk -1$)
which is already treated in  Lemma \ref{LEM:AAS3}.

This concludes the proof of Proposition \ref{PROP:AAS2}
for each {\it fixed} $0 < \dl < \infty$.
In view of the use of Lemma \ref{LEM:T1} on $\Qdl$
and \eqref{SACX} which artificially introduced a factor of $\dl^{-1}$, 
the rate of convergence in \eqref{AAS2} 
diverges as $\dl \to 0$, and thus the argument presented above
does {\it not} extend to the $\dl = 0$ case.

\medskip

\noi
$\bullet$ {\bf Case 2:} $\dl = 0$ and $k$ is even.\\
\indent
We only need to consider monomials in the class (iv).
In this case, we have $\#\Gd = \#\dl = 0$.
Thus, we have 
$\wt p(v) = v \dx^{\kk-1} v \dx^{\kk} v$,  
which is  treated in Lemma \ref{LEM:AAS3}, 
yielding~\eqref{SAC3b}.
This proves  Proposition \ref{PROP:AAS2}
when $\dl = 0$.

\section{Dynamical problem}\label{SEC:INV}
In this section, 
by studying the associated dynamical problem, 
we prove invariance of the \GGMs~
and deep-water (and shallow-water) convergence of the invariant dynamics
(Theorems~\ref{THM:5} and~\ref{THM:6}).

We first recall the following well-posedness
and convergence results for the (scaled) ILW~\eqref{ILW} and \eqref{sILW};
see  \cite[Theorem~1.9 and Corollary~1.10]{MV15}
and  \cite[Theorems 1.3 and 1.4 and Corollary~1.5]{Li2022}.
See also \cite{BO93, Moli1, Moli2, MP, CLOP}.
While the well-posedness result in \cite{MV15}
is stated for the full dynamics, 
it also applies 
to the truncated (scaled) ILW equations \eqref{ILW2} and \eqref{ILW5}
in view of the uniform (in $N$) boundedness of 
the frequency projector $\P_N$.

\begin{lemma}[deep-water regime]
\label{LEM:GWP1}

\noi{\rm(i)} 
Let  $0 < \dl \le \infty$.
Then, the ILW equation \eqref{ILW} 
\textup{(}and the BO equation \eqref{BO} when $\dl = \infty$\textup{)} is globally well-posed in $H^s(\T)$ for ${s\ge \frac12}$.
In particular, 
given $u_0 \in H^s(\T)$, 
there exists $T = T\big(\dl, \|u_0 \|_{H^{\frac 12}}\big)> 0$
such that there exists a unique solution $u(t) = \Phi(t) (u_0)
= \Phi(t;\dl) (u_0)$ to \eqref{ILW}
with $u|_{t = 0} = u_0$
in the class $C(\R; H^s(\T))$, satisfying the bound\textup{:}
\begin{equation*}
\|\Phi(t)(u_0) \|_{L^\infty([-T,T];H^s)}, 
\, \|\Phi_N(t)(u_0) \|_{L^\infty([-T,T];H^s)}  \les_\dl \|u_0\|_{H^s}, 
\end{equation*}

\noi
uniformly in $N \in \N$, 
where
 $\Phi_N(t) = \Phi_N(t;\dl)$ denotes  the solution map for the truncated ILW~\eqref{ILW2}.

\smallskip

\noi{\rm(ii)} 
Let $ s> \frac 12$.
Suppose that
 $u_{0}^\dl \in H^s(\T)$, $2 \le \dl < \infty$, 
 converges to 
some $u_0 \in H^s(\T)$ as $\dl \to \infty$.
Then, as $\dl \to \infty$, 
the corresponding global-in-time solution $u^\dl$ to ILW \eqref{ILW}
\textup{(}with the depth parameter $\dl$\textup{)}
with $u^\dl|_{t = 0} = u^\dl_{0}$
converges 
to the solution  $u$ to BO \eqref{BO} with $u|_{t = 0} = u_0$
in $C(\R; H^s(\T))$, 
endowed with the compact-open topology in time.

\end{lemma}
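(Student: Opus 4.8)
\textbf{Proof proposal for Lemma \ref{LEM:GWP1}.}

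The plan is to treat (i) and (ii) by importing the corresponding results from \cite{MV15} and \cite{Li2022} and observing that the arguments are robust with respect to the depth parameter $0 < \dl \le \infty$ and with respect to the frequency truncation $\P_N$. For Part (i), the starting point is the unconditional global well-posedness of ILW in $H^s(\T)$, $s \ge \frac 12$, established in \cite[Theorem~1.9 and Corollary~1.10]{MV15}. First I would recall that the local theory in \cite{MV15} is built on a gauge-type argument combined with short-time bilinear estimates in Bourgain-type spaces adapted to the ILW dispersion relation $\Kdl(n)$ in \eqref{sym1}, and that, by Lemma~\ref{LEM:K1}, one has $\Kdl(n) \sim_\dl |n|$ uniformly in $2 \le \dl \le \infty$; this uniformity guarantees that the implicit constants in the local existence time depend only on $\|u_0\|_{H^{1/2}}$ (and on $\dl$, but uniformly once $\dl \ge 2$). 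Global well-posedness then follows from the conservation of the $L^2$-norm (and, if needed, the Hamiltonian $E^\dl_{1/2}$ in~\eqref{E1}), which controls the $H^{1/2}$-norm along the flow. The key additional point is that the truncated ILW equation \eqref{ILW2}, obtained by inserting the projector $\P_N$, satisfies the same bilinear estimates with constants uniform in $N \in \N$, since $\P_N$ is bounded on all the relevant function spaces uniformly in $N$; moreover the $L^2$-norm is still conserved for \eqref{ILW2}, so the global-in-time bound $\|\Phi_N(t)(u_0)\|_{L^\infty([-T,T];H^s)} \lesssim_\dl \|u_0\|_{H^s}$ holds uniformly in $N$. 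This yields the stated bound for both $\Phi(t)$ and $\Phi_N(t)$.

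For Part (ii), the plan is to invoke the deterministic deep-water convergence result of \cite[Theorems~1.3 and 1.4 and Corollary~1.5]{Li2022}. The strategy there is to write ILW \eqref{ILW} as a perturbation of BO, namely $\dt u - \H\dx^2 u = \dx(u^2) + \Qdl\dx u$ with the perturbation operator $\Qdl$ from \eqref{Qdl1}, and to exploit the strong smoothing bound $|\ft{\Qdl}(n)| \le \dl^{-1}$ of Lemma~\ref{LEM:T1}\,(i), which forces $\Qdl \to 0$ uniformly in the frequency as $\dl \to \infty$. Given a convergent sequence of data $u_0^\dl \to u_0$ in $H^s(\T)$ with $s > \frac 12$, one runs the local well-posedness argument on a common time interval $[-T,T]$ (with $T$ depending only on $\sup_\dl \|u_0^\dl\|_{H^{1/2}}$, which is finite by convergence), obtains a uniform bound on $\|u^\dl\|_{L^\infty_T H^s_x}$, and then estimates the difference $u^\dl - u$ by the bilinear estimates plus the smoothing gain from $\Qdl$; this gives $\|u^\dl - u\|_{L^\infty_T H^s_x} \to 0$. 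One then propagates the convergence to all of $\R$ by iterating on successive time intervals, using the global $H^{1/2}$-bound from conservation of mass to control the local existence times from below; this is exactly the passage to the compact-open topology in time.

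The main obstacle, such as it is, is purely bookkeeping: one must check that the function-space machinery of \cite{MV15} (the precise short-time $X^{s,b}$-type spaces, the frequency-localized bilinear estimates, and the energy estimates controlling the gauge) behaves uniformly in $2 \le \dl \le \infty$ and in $N \in \N$. This follows from the two structural inputs already recorded in the excerpt — the uniform comparability $\Kdl(n) \sim |n|$ for $2 \le \dl \le \infty$ (Lemma~\ref{LEM:K1}) and the uniform boundedness of $\P_N$ — so there is no genuine new difficulty; the content of the lemma is a careful citation of \cite{MV15, Li2022} together with the observation that their proofs are uniform in the parameters relevant here. I would present the proof as a short paragraph recalling these two facts and then deferring to those references for the remaining estimates, rather than reproducing the full perturbative argument.
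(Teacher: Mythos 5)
Your proposal follows the same route as the paper: Lemma~\ref{LEM:GWP1} is a citation lemma, and the paper simply recalls \cite[Theorem~1.9, Corollary~1.10]{MV15} for part (i) and \cite[Theorems~1.3, 1.4, Corollary~1.5]{Li2022} for part (ii), adding exactly the observation you make — that the local theory of \cite{MV15} carries over verbatim to the truncated equation \eqref{ILW2} because $\P_N$ is bounded uniformly in $N$ on the relevant spaces and the $L^2$-norm is still conserved.

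One factual inaccuracy is worth flagging, because the paper explicitly emphasizes the opposite in the remark immediately following this lemma: you describe the local theory of \cite{MV15} as "built on a gauge-type argument combined with short-time bilinear estimates in Bourgain-type spaces," but Molinet--Vento's result is an \emph{energy method} argument that deliberately avoids the gauge transform (the title of \cite{MV15} says as much). The paper stresses this point precisely because the gauge transform is problematic here: the later remark notes that the $L^2$-well-posedness of \cite{CLOP}, which does use a gauge, "is not suitable for our purpose." If you reproduce the claim that the local theory is gauge-based, you would be asserting the wrong mechanism, and you would lose the feature (unconditional uniqueness in $C(\R; H^s)$, compatibility with the sharp truncation $\P_N$, persistence under the $\dl$-perturbation) that makes the citation usable here. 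Also, a minor point: your discussion of uniformity in $2 \le \dl \le \infty$ in part (i) overshoots what the lemma claims — the stated bound is $\lesssim_\dl$ with $T = T(\dl, \|u_0\|_{H^{1/2}})$, i.e.\ the constants are allowed to depend on $\dl$; $\dl$-uniformity only enters later (and even then only for the scaled equation on $0 < \dl \le 1$ in Lemma~\ref{LEM:GWP2}, which is used in the proof of Proposition~\ref{PROP:Scons2}).
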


\begin{lemma}[shallow-water regime]
\label{LEM:GWP2}

\noi{\rm(i)} 
Let  $0 \le  \dl \le 1$.
Then, the scaled ILW equation~\eqref{sILW} 
\textup{(}and the KdV equation \eqref{kdv} when $\dl = 0$\textup{)} is globally well-posed in $H^s(\T)$ for ${s\ge \frac12}$.
In particular, 
given $v_0 \in H^s(\T)$, 
there exists $T = T\big(\dl, \|v_0 \|_{H^{\frac 12}}\big)> 0$
such that there exists a unique solution $v(t) = \wt \Phi(t) (v_0)
= \wt \Phi(t;\dl) (v_0)
$ to \eqref{sILW}
with $v|_{t = 0} = v_0$
in the class $C(\R; H^s(\T))$, satisfying the bound\textup{:}
\begin{equation*}
\|\wt \Phi(t)(v_0) \|_{L^\infty([-T,T];H^s)}, 
\, \|\wt \Phi_N(t)(v_0) \|_{L^\infty([-T,T];H^s)} \les_\dl \|v_0\|_{H^s}, 
\end{equation*}

\noi
uniformly in $N \in \N$, 
where
 $\wt \Phi_N(t) = \wt \Phi_N(t;\dl)$ denotes  the solution map for 
the truncated scaled ILW \eqref{ILW5}

\smallskip

\noi{\rm(ii)} 
Let $ s> \frac 12$.
Suppose that
 $v_{0}^\dl \in H^s(\T)$, $0 <  \dl \le 1$, 
 converges to 
some $v_0 \in H^s(\T)$ as $\dl \to 0$.
Then, as $\dl \to 0$, 
the corresponding global-in-time solution $v^\dl$ to 
the scaled ILW \eqref{sILW} \textup{(}with the depth parameter $\dl$\textup{)}
with $v^\dl|_{t = 0} = v^\dl_{0}$
converges 
to the solution  $v$ to KdV \eqref{kdv} with $v|_{t = 0} = v_0$
in $C(\R; H^s(\T))$, 
endowed with the compact-open topology in time.

\end{lemma}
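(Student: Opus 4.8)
The plan is to deduce Lemma \ref{LEM:GWP2} from the existing deterministic theory for ILW, KdV, and their shallow-water limit, transported to the scaled ILW \eqref{sILW} via the scaling \eqref{scale1}. For fixed $0 < \dl \le 1$, note that $v(t,x) = \frac1\dl u(\frac1\dl t, x)$ sets up a bijection between solutions of \eqref{ILW} and of \eqref{sILW} (and, likewise, between solutions of the truncated equations \eqref{ILW2} and \eqref{ILW5}), which, for each fixed $\dl$, is a bounded linear isomorphism of $H^s(\T)$. Hence the unconditional global well-posedness of ILW \eqref{ILW} in $H^s(\T)$, $s \ge \frac12$, established by Molinet and Vento \cite{MV15}, transfers to \eqref{sILW}: it yields a local existence time depending only on $\dl$ and $\|v_0\|_{H^{\frac12}}$, the persistence-of-regularity bound $\|\wt\Phi(t)(v_0)\|_{L^\infty([-T,T];H^s)} \lesssim_\dl \|v_0\|_{H^s}$, and --- via conservation of the $L^2$-norm --- global existence. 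For $\dl = 0$ the equation is KdV \eqref{kdv}, for which the same package (GWP in $H^s(\T)$, $s \ge \frac12$, together with the analogous a priori bound) is classical; see \cite{BO94} and the references therein.

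For the uniform-in-$N$ assertions, I would observe that the truncated scaled ILW \eqref{ILW5} is again a Hamiltonian PDE conserving the $L^2$-norm and the truncated Hamiltonian, and that the Dirichlet projector $\P_N$ is bounded on $H^s(\T)$ uniformly in $N$; consequently the local well-posedness argument of Molinet--Vento (whether phrased via the gauge\,/\,normal-form reduction or via a Bona--Smith-type energy estimate) applies verbatim to \eqref{ILW5} with constants independent of $N$. This produces a common local existence time $T = T(\dl, \|v_0\|_{H^{\frac12}})$ and the uniform bound $\|\wt\Phi_N(t)(v_0)\|_{L^\infty([-T,T];H^s)}\lesssim_\dl \|v_0\|_{H^s}$, with global extension following again from the conserved $L^2$-norm; the same remark covers the full flow $\wt\Phi(t)$, establishing part (i).

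For part (ii), the plan is to transport the deterministic shallow-water convergence result of the second author \cite{Li2022} (see \cite[Theorems~1.3 and~1.4 and Corollary~1.5]{Li2022}) through the scaling \eqref{scale1}: given $v_0^\dl \to v_0$ in $H^s(\T)$ with $s > \frac12$, the corresponding solutions $v^\dl$ of \eqref{sILW} converge to the solution $v$ of \eqref{kdv} with $v|_{t=0} = v_0$, first uniformly on every compact time interval, and then --- using the global-in-time $H^s$-bound from part (i) to control the behavior on longer time intervals --- in $C(\R; H^s(\T))$ endowed with the compact-open topology. The step requiring the most care, and which I regard as the main (though not conceptually deep) obstacle, is the bookkeeping needed to confirm that \cite{Li2022} indeed covers the scaled equation \eqref{sILW} in the shallow-water regime (rather than only \eqref{ILW} in the deep-water limit) and that it accommodates initial data varying in $H^s$ rather than a fixed datum; the genuine analytic difficulty --- the non-uniform (``singular'') convergence of the Fourier multiplier of $\Gd\dx$ as $\dl\to0$, cf.\ Lemma \ref{LEM:L1} --- is precisely what \cite{Li2022} is designed to handle, so once the scaling is accounted for, the conclusion follows from that reference.
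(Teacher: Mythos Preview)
Your proposal is correct and takes essentially the same approach as the paper: the paper does not give a standalone proof of Lemma~\ref{LEM:GWP2} but simply records it as a consequence of \cite[Theorem~1.9 and Corollary~1.10]{MV15} and \cite[Theorems~1.3 and~1.4 and Corollary~1.5]{Li2022}, noting that the truncated equations are covered by the uniform (in $N$) boundedness of $\P_N$. Your write-up supplies exactly the same citations with a bit more explanation (the scaling transfer and the $L^2$-conservation for global extension); the only minor point is that \cite{Li2022} already treats the scaled ILW \eqref{sILW} directly in the shallow-water regime, so the ``bookkeeping'' step you flag as the main obstacle is in fact immediate.
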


\begin{remark}\rm
(i) 
Lemmas \ref{LEM:GWP1} and \ref{LEM:GWP2}
are based on an energy method without using a gauge transform.
In \cite{CLOP}, we extended Lemma \ref{LEM:GWP1}
to the range  $s \ge 0$.  However, the argument in~\cite{CLOP}
uses the gauge transform for BO \eqref{BO}
and thus is not suitable for our purpose.
We point out that the solutions in Lemmas \ref{LEM:GWP1} and \ref{LEM:GWP2}
are unconditionally unique
in $C(\R;  H^s(\T))$.

\smallskip

\noi
(ii) In view of the $H^\frac 12$-well-posedness 
stated in Lemmas \ref{LEM:GWP1}\,(i) and \ref{LEM:GWP2}\,(i), 
it is likely that the convergence claims in 
Lemmas \ref{LEM:GWP1}\,(ii) and \ref{LEM:GWP2}\,(ii)
extend to the endpoint $s = \frac 12$.
We, however, do not pursue this issue, 
and simply use the convergence results from \cite{Li2022}.

\end{remark}

In the remaining part of this paper, we restrict our attention to the deep-water regime, 
since the same argument applies to the shallow-water regime with a straightforward modification.

\subsection{Invariance of the \GGMs}
\label{SUBSEC:INV1}

In this subsection, 
we prove invariance of the deep-water \GGMs~ (Theorem \ref{THM:5}\,(i)).
With the tools developed in the previous sections, 
Theorem \ref{THM:5}\,(i) 
 follows from the lines in
\cite{TV1, TV2}, 
and thus 
we will be very brief here.
Let us fix notations.

\smallskip

\begin{itemize}
\item
Fix an integer $k \ge 3$, 
 $0 < \dl \le \infty$, and $K > 0$ in the following.
Set $\frac 12 < s < \s < \frac {k-1}2$.

\smallskip
\item 
$F^\dl_{\frac k 2}(u)$ denotes the density of the \GGM~ $\rho^\dl_\frac k2$ as in \eqref{rho3}.
Recall that they depend on an $L^2$-cutoff size $K > 0$ but we suppress the $K$-dependence
from our notation.

\smallskip
\item 
Given $N \in \N$, 
 $\Phi_N(t) = \Phi_N(t;\dl)$ denotes  the solution map for the truncated ILW~\eqref{ILW2}.
We use $\Phi(t)$ to denote
 the solution map for  ILW~\eqref{ILW}.

\smallskip
\item 
Given $s \in \R$ and $R >0$, 
we use $B^s(R)$ to denote the ball in $H^s(\T)$ of radius $R > 0$
centered at the origin.

\end{itemize}


\smallskip

As in \cite{TV1, TV2}, 
the main goal is to prove the following
almost invariance of the 
\GGM, which can be regarded as a probabilistic counterpart
of the method of almost conservation laws \cite{CKSTT}.

\begin{proposition}
\label{PROP:main3}
Given any $T > 0$, we have 
\begin{equation}
\lim_{N\rightarrow \infty}
\sup_{\substack{t\in [0, T]\\A\in {\mathcal B}(H^\sigma)}}
\bigg |\frac d{dt} \int_{\Phi_N(t)(A)} F^\dl_{\frac k 2} (\P_N u) d\mu^\dl_{\frac k 2}(u)\bigg |=0, 
\label{iv0}
\end{equation}

\noi
where
${\mathcal B}(H^\s)$ denotes the Borel sets in $H^\s(\T)$.

\end{proposition}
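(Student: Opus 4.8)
The plan is to follow the scheme of Tzvetkov--Visciglia \cite{TV1, TV2}, adapted to our simplified setting where the \GGM~ $\rho^\dl_\frac k2$ carries only an $L^2$-cutoff. Fix $T > 0$. First I would use the Liouville theorem for the truncated ILW flow \eqref{ILW2}: since \eqref{ILW2} is a Hamiltonian ODE (on the Fourier side) with conserved $L^2$-norm, the change of variables $u \mapsto \Phi_N(t)(u)$ preserves the Gaussian measure $\mu^\dl_\frac k2$ up to the elementary linear flow on high frequencies, which is a rotation and hence preserves $\mu^\dl_\frac k2$ as well. Consequently,
\begin{align*}
\int_{\Phi_N(t)(A)} F^\dl_{\frac k 2}(\P_N u)\, d\mu^\dl_{\frac k 2}(u)
= \int_A F^\dl_{\frac k 2}\big(\P_N \Phi_N(t)(u)\big)\, d\mu^\dl_{\frac k 2}(u),
\end{align*}
so that differentiating in $t$ moves the derivative onto the integrand:
\begin{align*}
\frac d{dt}\int_{\Phi_N(t)(A)} F^\dl_{\frac k 2}(\P_N u)\, d\mu^\dl_{\frac k 2}(u)
= \int_A \frac d{dt} F^\dl_{\frac k 2}\big(\P_N \Phi_N(t)(u)\big)\, d\mu^\dl_{\frac k 2}(u).
\end{align*}

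\textbf{Computing the time derivative.} Next I would compute $\frac d{dt} F^\dl_{\frac k2}(\P_N\Phi_N(t)(u))$ explicitly using \eqref{rho3}. Writing $u(t) = \Phi_N(t)(u)$ and recalling $F^\dl_{\frac k2}(w) = \eta_K(\|w\|_{L^2})\exp(-R^\dl_{\frac k2}(w))$, we get by the chain rule a sum of two contributions: one from the derivative hitting $\eta_K$, which vanishes because $\|\P_N u(t)\|_{L^2}$ is conserved under \eqref{ILW2}; and one from the derivative hitting $\exp(-R^\dl_{\frac k2})$, which produces a factor $-\frac{d}{dt}R^\dl_{\frac k2}(\P_N u(t))$. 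Here is precisely the point where using only an $L^2$-cutoff simplifies matters, since in \cite{TV1, TV2} one also had to control time derivatives of smooth cutoffs on the lower-order conservation laws; see Remark \ref{REM:cutoff}. Using that $E^\dl_{\frac k2}(u)$ is conserved under the \emph{untruncated} ILW \eqref{ILW} (Theorem \ref{THM:1}) together with the decomposition \eqref{cons1}, one finds that $\frac{d}{dt}R^\dl_{\frac k2}(\P_N u(t))$ equals, at $t = 0$, minus the time derivative of the quadratic part, which by \eqref{AE2}--\eqref{AE3} and \eqref{ILW4} is exactly the quantity $\frac{d}{dt}E^\dl_{\frac k2}(\P_N\Phi_N(t)(u))|_{t=0}$ studied in Section \ref{SEC:AC}. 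Thus
\begin{align*}
\bigg|\frac d{dt}\int_{\Phi_N(t)(A)} F^\dl_{\frac k 2}(\P_N u)\, d\mu^\dl_{\frac k 2}(u)\bigg|
\le \int_{H^\s} F^\dl_{\frac k2}(\P_N\Phi_N(t)(u))\, \Big|\tfrac{d}{dt}E^\dl_{\frac k2}(\P_N\Phi_N(t)(u))\Big|\, d\mu^\dl_{\frac k2}(u).
\end{align*}

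\textbf{Closing the estimate uniformly in $t \in [0,T]$.} By H\"older's inequality with exponents $p, p'$, the right-hand side is bounded by
$\|F^\dl_{\frac k2}(\P_N\Phi_N(t)(\,\cdot\,))\|_{L^{p'}(d\mu^\dl_{\frac k2})}\cdot\|\frac{d}{dt}E^\dl_{\frac k2}(\P_N\Phi_N(t)(\,\cdot\,))\|_{L^p(d\mu^\dl_{\frac k2})}$. The first factor is handled by a bootstrap/approximation argument: one first shows, using the uniform-in-$N$ $L^q$-integrability of $F^\dl_{\frac k2}(\P_N u)$ from Proposition \ref{PROP:V1}, the Gaussian tail bounds from Proposition \ref{PROP:gauss1}\,(i), and the well-posedness bound in Lemma \ref{LEM:GWP1}\,(i) to control $\|\Phi_N(t)(u)\|_{H^s}$ on a time interval $[0,T]$, that this factor stays bounded uniformly in $t \in [0,T]$ and $N \in \N$ (this is where the restriction to $k \ge 3$, ensuring $\rho^\dl_{\frac k2}$ lives on $H^s$ with $s > \frac12$, enters). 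The second factor is the probabilistic asymptotic conservation: by the change of variables and the near-invariance of $\mu^\dl_{\frac k2}$ under $\Phi_N(t)$ one reduces the $L^p(d\mu^\dl_{\frac k2})$-norm at time $t$ to that at time $0$, which tends to $0$ as $N \to \infty$ by Proposition \ref{PROP:AAS1}. Combining these two facts and taking the supremum over $t \in [0,T]$ and $A \in \mathcal B(H^\s)$ yields \eqref{iv0}. The main obstacle is the uniform-in-$t$ control needed for the first H\"older factor: one must carefully exploit the $L^2$-conservation to freeze the cutoff, the uniform integrability of the densities, and the deterministic growth bound on $\|\Phi_N(t)(u)\|_{H^s}$ over $[0,T]$, all at once; this is a routine but delicate adaptation of the arguments in \cite[Section 5]{TV1} and \cite{TV2}.
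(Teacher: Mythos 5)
Your change-of-variables step is incorrect, and the error propagates into a genuinely wrong reduction. You assert that the truncated flow $\Phi_N(t)$ preserves the Gaussian measure $\mu^\dl_{\frac k 2}$; this is false. What $\Phi_N(t)$ preserves is the product $dL_N \otimes d\mu^{\dl,>N}_{\frac k 2}$ (Lebesgue measure on low frequencies, via Liouville for the Hamiltonian ODE, tensored with the Gaussian on high frequencies, via the linear rotation), \emph{not} the full Gaussian $\mu^\dl_{\frac k 2} = \exp\big(-\tfrac12\text{quad}(\P_N u)\big)\,dL_N \otimes d\mu^{\dl,>N}_{\frac k 2}$, precisely because the quadratic energy is not conserved under the truncated low-frequency flow. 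The correct change of variables is Lemma~\ref{LEM:inv2}, which exhibits the truncated \GGM~ via \eqref{rhoZ} as $\eta_K(\|\P_N u\|_{L^2})\exp\big(-E^\dl_{\frac k 2}(\P_N u)\big)\,dL_N\otimes d\mu^{\dl,>N}_{\frac k 2}$, so that after the change of variables and differentiation in $t$ you produce the factor $\frac{d}{dt}E^\dl_{\frac k 2}(\P_N\Phi_N(t)(u))\big|_{t=0}$. Your formula instead produces $\frac{d}{dt}R^\dl_{\frac k 2}(\P_N\Phi_N(t)(u))\big|_{t=0}$, which is a different object. The two differ by $\frac{d}{dt}\text{quad}(\P_N\Phi_N(t)(u))\big|_{t=0}$, a cubic expression with $k$ derivatives spread over three low-frequency factors that does \emph{not} vanish as $N\to\infty$; it is only because $E^\dl_{\frac k 2}$ is conserved under the untruncated equation that the quadratic and interaction contributions cancel to leave the high-frequency truncation error hitting $R^\dl_{\frac k 2}$, which is exactly the combination Proposition~\ref{PROP:AAS1} controls. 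Your chain of equalities (``$\frac{d}{dt}R = -\frac{d}{dt}\text{quad} = \frac{d}{dt}E$'') applied to the truncated flow is internally inconsistent: if all three held one would get $\frac{d}{dt}E = 2\frac{d}{dt}R$ and hence $\frac{d}{dt}E = 0$, which is false for the truncated dynamics.

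A secondary point: the way you close the estimate uniformly in $t\in[0,T]$ is also more complicated than needed and again invokes the ``near-invariance of $\mu^\dl_{\frac k 2}$'' under $\Phi_N(t)$, which is the same incorrect premise. The paper's treatment of the supremum in $t$ is simpler: by the semigroup property, $\frac{d}{dt}\big|_{t=\tau}$ equals $\frac{d}{dt}\big|_{t=0}$ with $A$ replaced by $\Phi_N(\tau)(A)$, and since after Cauchy--Schwarz the bound \eqref{iv2} does not depend on the Borel set $A$, it is automatically uniform in $\tau\in[0,T]$. This removes the need for the uniform-in-$t$ well-posedness bound from Lemma~\ref{LEM:GWP1}\,(i) and the growth control on $\|\Phi_N(t)(u)\|_{H^s}$ that you invoke.
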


See 
\cite[Proposition 5.4]{TV1}, 
where such a result was first introduced.
Once we have Proposition~\ref{PROP:main3}, 
the rest of the argument follows the lines in \cite{TV1, TV2}
by using Lemma~\ref{LEM:inv1} below
(see \cite[pp.\,4710-4713]{TV1}), 
yielding invariance of the \GGM~ $\rho^\dl_\frac k2$.
(Theorem~\ref{THM:5}\,(i)).  We omit details.

As the end of this subsection, 
we present a  proof of Proposition \ref{PROP:main3}, 
following the strategy developed in \cite{TV1, TV2}, 
where we  show how our simplified construction of the \GGMs~
simplifies the proof of Proposition \ref{PROP:main3}; 
see
Remark \ref{REM:TV}.

We state several preliminary lemmas.
The first lemma is on the approximation 
property of the truncated ILW dynamics \eqref{ILW2};
see \cite[Proposition 4.1]{TV1}.
This lemma is not needed for proving Proposition \ref{PROP:main3}, 
but is needed for proving an analogue of \cite[Lemma 5.6]{TV1}.

\begin{lemma}
\label{LEM:inv1}

Given $R > 0$,  there exists
$T=T(R)>0$ such that
for every $\eps_0>0$ there exists $N_0 = N_0(\eps_0) \in \N$ 
such that 
\begin{align*}
\Phi_N(t) (A) & \subset \Phi(t)(A) + B^s(\eps_0)
\end{align*}

\noi
for any $N \ge N_0$, $t \in [-T, T]$, 
and $A\in B^\s(R)$.

\end{lemma}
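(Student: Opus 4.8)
The plan is to establish Lemma~\ref{LEM:inv1} by a standard PDE approximation argument, exploiting the unconditional well-posedness of ILW in $H^s(\T)$ for $s > \frac 12$ provided by Lemma~\ref{LEM:GWP1}\,(i). First I would fix $R > 0$ and use Lemma~\ref{LEM:GWP1}\,(i) to select a common existence time $T = T(R, \dl) > 0$ such that, for every $u_0 \in B^\s(R)$ (and hence in particular $u_0 \in B^{\frac 12}(CR)$ by the embedding $H^\s \embeds H^{\frac 12}$, since $\s > \frac 12$), both the full ILW solution $\Phi(t)(u_0)$ and all the truncated solutions $\Phi_N(t)(u_0)$ exist on $[-T, T]$ and satisfy a uniform (in $N$) bound $\|\Phi(t)(u_0)\|_{L^\infty_T H^\s_x} + \sup_N \|\Phi_N(t)(u_0)\|_{L^\infty_T H^\s_x} \les_\dl R$. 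Here I would use that the $H^\s$-norm is propagated by the energy method of \cite{MV15} (the bound in Lemma~\ref{LEM:GWP1}\,(i) is stated at the $H^s$-level for any $s \ge \frac 12$, uniformly in $N$).

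Next, the core of the argument is a difference estimate: writing $w_N = \Phi_N(t)(u_0) - \Phi(t)(u_0)$, I would derive an equation for $w_N$ by subtracting \eqref{ILW2} from \eqref{ILW}, schematically
\begin{align*}
\dt w_N - \Gdl \dx^2 w_N = \P_N \dx\big((\P_N \Phi_N)^2 - (\P_N \Phi)^2\big) + \dx\big((\P_N \Phi)^2 - \Phi^2\big).
\end{align*}
The first term on the right is a difference of quadratic nonlinearities in $w_N$ (after telescoping) and is handled by the same multilinear/energy estimates used to prove local well-posedness, giving a bound of the form $\les_\dl (\|\Phi_N\|_{H^\s} + \|\Phi\|_{H^\s}) \|w_N\|_{H^s}$ in a suitable norm; the second term is a commutator-type error $\dx(\P_{>N}\Phi \cdot (\P_N + \Id)\Phi)$ which, since $\Phi(t) \in B^\s(CR)$ with $\s > s$, is $O_\dl(N^{-(\s - s)} R^2)$ in $H^{s}$ (or whatever auxiliary space the energy method uses), uniformly on $[-T,T]$. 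A Gronwall argument in $t$ on $[-T, T]$ then yields $\|w_N\|_{L^\infty_T H^s_x} \le C(\dl, R, T)\, N^{-(\s - s)} \to 0$ as $N \to \infty$. Given $\eps_0 > 0$, choosing $N_0 = N_0(\eps_0)$ so that $C(\dl, R, T) N_0^{-(\s-s)} < \eps_0$ gives $\|\Phi_N(t)(u_0) - \Phi(t)(u_0)\|_{H^s} < \eps_0$ for all $N \ge N_0$, $|t| \le T$, and $u_0 \in B^\s(R)$, which is exactly the inclusion $\Phi_N(t)(A) \subset \Phi(t)(A) + B^s(\eps_0)$ for any $A \in B^\s(R)$.

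The main obstacle I anticipate is the difference estimate itself: the energy method of Molinet--Vento~\cite{MV15} for ILW does not close in $H^s$ alone but requires auxiliary (Bourgain-type or short-time) function spaces, so the Gronwall argument must be run in the same framework used there, and one must check that the frequency-truncation error term $\dx(\P_{>N}\Phi \cdot (\P_N+\Id)\Phi)$ is controlled with a genuine negative power of $N$ in that framework — this uses $\s > s$ crucially. This is why the statement of Lemma~\ref{LEM:inv1} loses a bit of regularity ($s < \s$), and why $T$ may depend on $\dl$ (which is harmless here since $\dl$ is fixed throughout Section~\ref{SEC:INV}). I would also note that unconditional uniqueness in $C(\R; H^s(\T))$ from Lemma~\ref{LEM:GWP1}\,(i) guarantees that the limiting object is genuinely $\Phi(t)(u_0)$ and not some spurious limit, so no compactness argument is needed. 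The shallow-water analogue follows verbatim using Lemma~\ref{LEM:GWP2}\,(i) in place of Lemma~\ref{LEM:GWP1}\,(i).
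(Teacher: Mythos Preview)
Your proposal is correct and follows essentially the same approach as the paper: the paper states that Lemma~\ref{LEM:inv1} follows from a slight modification of the difference estimate in \cite[Proposition~3.5]{MV15}, with the only new term being $\P_{>N}\dx(\P_N u)^2$, which is $O(N^{s-\s})$ thanks to the high-frequency projection and the uniform (in $N$) $H^\s$-bound from Lemma~\ref{LEM:GWP1}\,(i). Your decomposition of the difference equation, identification of the truncation error as $O(N^{-(\s-s)})$, anticipation that the Gronwall step must be run in the Molinet--Vento framework, and the observation that $T$ may depend on $\dl$ all match the paper's (terse) argument.
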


Lemma \ref{LEM:inv1}
follows from a slight modification of the proof of 
 \cite[Proposition~3.5]{MV15} 
 on the difference estimate.
 The only additional term 
 is $ \P_{>N} \dx ( \P_N u)^2$
which is in fact $O(N^{s-\s})$
 for 
  $\s > s > \frac 12$
  thanks to the 
projection $ \P_{>N}$ onto high frequencies
and the uniform (in $N$)
bound on the (higher) $H^\s$-norm of 
a solution to the truncated ILW \eqref{ILW2}, 
coming from Lemma~\ref{LEM:GWP1}\,(i).

Given $N \in \N$, 
we write $\mu^\dl_\frac k2 = \mu^{\dl, \le N}_\frac k2\otimes \mu^{\dl, > N}_\frac k2$, 
where 
$\mu^{\dl, \le N}_\frac k2 $
and 
$ \mu^{\dl, > N}_\frac k2$
are defined as the pushforward measures
of $\mu^\dl_\frac k2$ under the maps
$\P_N$ and $\P_{< N}$, respectively.
In particular, we have
$d \mu^{\dl, > N}_\frac k2
= Z^{-1}_{\dl, \frac k2, >N} 
\exp\big(- E^\dl_{\frac{k}{2},> N }(u)\big)d\P_{>N} u$, 
where
$E^\dl_{\frac{k}{2},> N }(u)$ is as in 
\eqref{AE3}.
Let 
$dL_N = \prod_{0<n \le N} d \ft u(n)$ 
denotes the Lebesgue measure on $E_N = \text{span} \big\{ \cos (nx), \sin (nx)\big\}_{0< n \le N}
\cong \C^N$.
Then, we can write the truncated \GGM~ $\rho^\dl_{\frac k2, N}$ in \eqref{rho4}
as
\begin{align}
\begin{split}
\rho^\dl_{\frac{k}{2}, N}(du) & = Z^{-1}_{\dl, \frac{k}{2}, N}  \,
F^\dl_{\frac{k}{2}}(\P_N u)  d\mu^\dl_{\frac{k}{2}}(u)\\
 & 
 = \ft Z_{\dl, \frac k 2,  N}^{-1}  
\eta_K \big(\| \P_N u\|_{L^2}\big) 
\exp\Big(-E^\dl_{\frac k 2}(\P_Nu) \Big)  
dL_N  \otimes d\mu^{\dl, > N}_\frac k2.
\end{split}
\label{rhoZ}
\end{align}

Next, we state a 
a change-of-variables formula; 
\cite[Proposition 5.1]{TV1}.
In recent years, such a change-of-variables formula 
has also played an important role
in the  study of
quasi-invariant Gaussian measures for Hamiltonian PDEs;
see, for example,  \cite{Tz15, OTz1, OTz2, GOTW}.

\begin{lemma}
\label{LEM:inv2}
Let $N \in \N$. Then, we have 
\begin{align*}
\rho^\dl_\frac k2 (\Phi_N(t)(A)) 
& = Z_{\dl,\frac k2,  N}^{-1} \int_{\Phi_N(t)(A)} F^\dl_\frac k2 (\P_N u)  d\mu^\dl_\frac k2 (u)\\
&
= \ft Z_{\dl, \frac k 2,  N}^{-1}  \int_{A} 
\eta_K \big(\| \P_N \Phi_N(t)(u)\|_{L^2}\big) \\
& \hphantom{XXXXL}\times 
\exp\Big(-E^\dl_{\frac k 2}(\P_N(\Phi_N(t)(u)) \Big)  
dL_N  \otimes d\mu^{\dl, > N}_\frac k2
\end{align*}

\noi
for  any $A\in\mathcal{B}(H^\s)$, 
where 
$Z_{\dl,\frac k2,  N}$ and $\ft Z_{\dl,\frac k2,  N}$
are as in \eqref{rhoZ}.

\end{lemma}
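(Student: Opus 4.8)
The plan is to prove Lemma~\ref{LEM:inv2} by combining the Hamiltonian structure of the truncated ILW dynamics \eqref{ILW4} on the low-frequency space $E_N$ with the invariance of the linear high-frequency flow \eqref{ILW3}. First I would recall from \eqref{rhoZ} that, on the product space $E_N \times E_N^\perp$, the truncated \GGM~ factors as
\begin{align*}
\rho^\dl_{\frac k2, N}(du) = \ft Z_{\dl, \frac k2, N}^{-1} \eta_K\big(\|\P_N u\|_{L^2}\big)
\exp\Big(-E^\dl_{\frac k2}(\P_N u)\Big) \, dL_N \otimes d\mu^{\dl, >N}_{\frac k2}(u),
\end{align*}
and that the truncated flow $\Phi_N(t)$ acts diagonally: it evolves $\P_{>N} u$ by the linear ILW~\eqref{ILW3} and $\P_N u$ by the finite-dimensional Hamiltonian ODE system~\eqref{ILW4}. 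The key point is that \eqref{ILW4}, written on the Fourier side, is a Hamiltonian system on $E_N \cong \C^N$ with Hamiltonian $E^\dl_{\frac12}(\P_N u)$, and hence the flow $\P_N \Phi_N(t)$ preserves the Lebesgue measure $dL_N$ by Liouville's theorem (the vector field is divergence-free as a Hamiltonian vector field with respect to the standard symplectic form on $E_N$). Meanwhile the linear flow \eqref{ILW3} preserves $\mu^{\dl, >N}_{\frac k2}$, since it preserves the quadratic form $E^\dl_{\frac k2, >N}$ appearing in the exponential density of $\mu^{\dl, >N}_{\frac k2}$ and also preserves the Gaussian Fourier coefficients in law (each mode rotates on a circle).

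Next I would apply the change of variables $u \mapsto \Phi_N(t)(u)$ in the integral $\int_{\Phi_N(t)(A)} F^\dl_{\frac k2}(\P_N u)\, d\mu^\dl_{\frac k2}(u)$. Using the product decomposition, the preservation of $dL_N$ under $\P_N \Phi_N(t)$, and the preservation of $\mu^{\dl, >N}_{\frac k2}$ under the linear high-frequency flow, the Jacobian factors are trivial and we obtain
\begin{align*}
\rho^\dl_{\frac k2}(\Phi_N(t)(A))
&= \ft Z_{\dl, \frac k2, N}^{-1} \int_A
\eta_K\big(\|\P_N \Phi_N(t)(u)\|_{L^2}\big)
\exp\Big(-E^\dl_{\frac k2}\big(\P_N(\Phi_N(t)(u))\big)\Big) \, dL_N \otimes d\mu^{\dl, >N}_{\frac k2}(u),
\end{align*}
where I have used that $\rho^\dl_{\frac k2}$ and $\rho^\dl_{\frac k2, N}$ agree when integrated against sets measurable with respect to the truncated $\sigma$-algebra, or more directly that $F^\dl_{\frac k2}(\P_N u)\, d\mu^\dl_{\frac k2}(u) = \eta_K(\|\P_N u\|_{L^2}) \exp(-E^\dl_{\frac k2}(\P_N u))\, dL_N \otimes d\mu^{\dl, >N}_{\frac k2}(u)$ by \eqref{rhoZ} together with $R^\dl_{\frac k2}(\P_N u) + \frac12\sum a_{k,\l}\|\cdots\|^2 = E^\dl_{\frac k2}(\P_N u)$ from \eqref{cons1}. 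The normalization constants $Z_{\dl,\frac k2, N}$ and $\ft Z_{\dl, \frac k2, N}$ relate through the change from $d\mu^{\dl, \le N}_{\frac k2}$ to $dL_N$, exactly as in \eqref{rhoZ}.

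The main obstacle, as in \cite[Proposition 5.1]{TV1}, is bookkeeping the measure-theoretic decomposition carefully: one must check that $\Phi_N(t)$ genuinely leaves the high-frequency modes invariant (this requires the frequency truncation structure of \eqref{ILW2}, i.e.\ that the nonlinearity is $\P_N \dx(\P_N u)^2$, so no energy leaks from low to high modes), verify that the linear flow on $E_N^\perp$ is measure-preserving for the infinite-dimensional Gaussian $\mu^{\dl, >N}_{\frac k2}$ (which follows coordinatewise since each Gaussian mode $\ft u(n)$ evolves by $e^{i\Kdl(n) n t}\ft u(n)$, a rotation preserving the complex Gaussian law), and confirm that the finite-dimensional Liouville theorem applies to \eqref{ILW4}. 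Since \eqref{ILW4} is a Hamiltonian ODE in finitely many variables and Lemma~\ref{LEM:GWP1}\,(i) provides global-in-time well-posedness with uniform-in-$N$ bounds, there is no issue with global existence of the flow map $\Phi_N(t)$ needed to make the change of variables rigorous. I would then state that the identity extends from $A$ in the truncated $\sigma$-algebra to all $A \in \mathcal{B}(H^\sigma)$ by the usual approximation, closing the proof.
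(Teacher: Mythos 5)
Your proof is correct and follows essentially the same approach the paper relies on (namely the change-of-variables argument from Tzvetkov--Visciglia \cite[Proposition 5.1]{TV1}, which the paper cites rather than reproving): the product decomposition of $\mu^\dl_{\frac k2}$, Liouville's theorem for the finite-dimensional Hamiltonian flow on $E_N$, invariance of $\mu^{\dl,>N}_{\frac k2}$ under the linear rotation of each high-frequency Gaussian mode, and the identity \eqref{rhoZ}. One small caveat: your aside that ``$\rho^\dl_{\frac k2}$ and $\rho^\dl_{\frac k2, N}$ agree when integrated against sets measurable with respect to the truncated $\sigma$-algebra'' is not actually true and is not needed; the direct route you give in the very next clause, rewriting $F^\dl_{\frac k2}(\P_N u)\,d\mu^\dl_{\frac k2}(u)$ via \eqref{rhoZ}, is the correct and sufficient step, and there is likewise no need for the closing remark about extending from the truncated $\sigma$-algebra, since $\Phi_N(t)$ is a homeomorphism of $H^\s$ and the change of variables is valid for every Borel $A$ directly.
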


We conclude this subsection 
by presenting a proof of 
 Proposition \ref{PROP:main3}.

\begin{proof}[Proof of Proposition \ref{PROP:main3}]

We proceed as in the proof of \cite[Proposition~5.4]{TV1}.
 We first show that
\begin{equation}
\label{iv1}
\lim_{N\rightarrow \infty} \sup_{A\in {\mathcal B}(H^\sigma)}
\bigg |
\frac d{dt} \int_{\Phi_N(t)(A)} F^\dl_{\frac k 2} (\P_N u) d\mu^\dl_{\frac k 2}(u)\Big|_{t = 0}
\bigg |=0.
\end{equation}

\noi
From the change-of-variable formula 
(Lemma~\ref{LEM:inv2}) and the $L^2$-conservation under the truncated dynamics \eqref{ILW4}, 
we have
\begin{align}
\begin{split}
& \frac d{dt} \int_{\Phi_N(t)(A)} F^\dl_{\frac k 2} (\P_N u) d\mu^\dl_{\frac k 2}(u)
\Big|_{t = 0}\\
& \quad = 
-\int_A \frac d{dt} E^\dl_{\frac k 2}(\P_N\Phi_N(t)(u))\Big|_{t = 0}
F^\dl_{\frac k 2} (\P_N u) d\mu^\dl_{\frac k 2}(u).
\end{split}
\label{invX}
\end{align}

\noi
Then, by Cauchy-Schwarz's inequality
and Proposition~\ref{PROP:V1}, we have 
\begin{align}
\begin{split}
&  \sup_{A\in {\mathcal B}(H^\sigma)}\bigg |
\frac d{dt} \int_{\Phi_N(t)(A)} F^\dl_{\frac k 2} (\P_N u) d\mu^\dl_{\frac k 2}(u)\Big|_{t = 0}
\bigg |\\
& \quad \le 
\sup_{N\in\N} \| F^\dl_{\frac{k}{2}}( \P_N u) \|_{L^2(d\mu^\dl_{\frac{k}{2}})}
\cdot 
\bigg\| \frac{d}{dt} E^\dl_{ \frac k 2} (\P_N \Phi_N(t) (u) ) \Big\vert_{t=0} 
\bigg\|_{L^2(d\mu^\dl_{\frac{k}{2}})}\\
& \quad \les 
\bigg\| \frac{d}{dt} E^\dl_{ \frac k 2} (\P_N \Phi_N(t) (u) ) \Big\vert_{t=0} 
\bigg\|_{L^2(d\mu^\dl_{\frac{k}{2}})}.
\end{split}
\label{iv2}
\end{align}

\noi
Then, \eqref{iv1} follows from 
Proposition~\ref{PROP:AAS1}.

Let $0 < \tau \le T$.
Since we have
\begin{align*}
\frac d{dt} \int_{\Phi_N(t)(A)} F^\dl_{\frac k 2} (\P_N u) d\mu^\dl_{\frac k 2}(u)
\Big|_{t = \tau}
= \frac d{dt} \int_{\Phi_N(t)(\Phi_N(\tau)(A))} F^\dl_{\frac k 2} (\P_N u) d\mu^\dl_{\frac k 2}(u)
\Big|_{t = 0}, 
\end{align*}

\noi
we can repeat the computation in \eqref{invX} and \eqref{iv2}, 
where we replace $A$ by $\Phi_N(\tau)(A)$.
In particular, 
\eqref{iv2} holds uniformly in $0 \le \tau \le T$, 
allowing us to insert a supremum over $0 \le \tau \le T$
in \eqref{iv2}.
This proves \eqref{iv0}.
\end{proof}

\begin{remark}\label{REM:TV} \rm

Thanks to our simplified construction of the \GGMs~
only with an $L^2$-cutoff, 
the step \eqref{invX} is simpler,  since it does not involve
the time derivatives
of lower order conservations laws.
Compare \eqref{invX} with 
the bottom math display  on \cite[p.\,4709]{TV1}.
\end{remark}

\subsection{Dynamical convergence}
\label{SUBSEC:INV2}

We conclude this paper by presenting a proof of Theorem~\ref{THM:5}\,(ii)
on the deep-water convergence of the invariant ILW dynamics
constructed in the previous subsection.
Theorem \ref{THM:6}\,(iii) and (iv)
on shallow-water convergence follow from a similar discussion
which we omit.

Fix $k \ge 3$.
From Theorem \ref{THM:3}\,(ii), 
we know that, as $\dl \to \infty$,  $\rho^\dl_\frac k2$ converges to 
$\rho^\infty_\frac k2$ in total variation and hence weakly.
Then, 
by  the 
Skorokhod representation theorem
along a continuous parameter (Lemma \ref{LEM:Sk}), 
there exist random functions
$\{u^\dl_0\}_{2\le \dl \le  \infty}\subset H^{\frac{k-1}{2}-\eps}(\T)$
such that $\Law (u^\dl_0) = \rho^\dl_\frac k2$ for each $2\le \dl \le \infty$
and $u^\dl_0$ converges almost surely to $u^\infty_0$ in $H^{\frac{k-1}{2}-\eps}(\T)$
as $\dl \to \infty$.

Let $u^\dl$ be the unique global-in-time solution to ILW \eqref{ILW}
(with the depth parameter $\dl$; BO~\eqref{BO} when $\dl = \infty$) with $u^\dl|_{t = 0} = u^\dl _0$.
Then, it follows from 
Lemma \ref{LEM:GWP1}\,(ii)
that $u^\dl$ converges almost surely to $u^\infty$
in $C(\R; H^s(\T))$, 
endowed with the compact-open topology in time.
This proves Theorem~\ref{THM:5}\,(ii).

\begin{ackno}\rm

The authors would like to thank 
Philippe Sosoe and Guangqu Zheng
for helpful discussions on the 
Skorokhod representation theorem
along a continuous parameter (Lemma \ref{LEM:Sk}).
This material is based upon work supported by the Swedish
Research Council under grant no.~2016-06596
while
the first and  third authors were in residence at
Institut  Mittag-Leffler in Djursholm, Sweden
during the
program ``Order and Randomness in Partial Differential Equations''
in Fall, 2023.
The third author is grateful
for the hospitality and support provided by 
the Erwin Schr\"odinger International Institute for Mathematics and Physics 
during 
the thematic program ``Nonlinear Waves and Relativity''
in June, 2024.
A.C., G.L, and T.O.~were supported by the European Research Council
(grant no. 864138 ``SingStochDispDyn'').
G.L. was also supported by the EPSRC New Investigator Award (grant no. EP/S033157/1).

\end{ackno}


\end{document}